\documentclass[reqno,a4paper]{amsart}
\usepackage[english]{babel}

\parskip=3pt
\setcounter{tocdepth}{1}

\usepackage{graphicx,mathrsfs,tikz,tikz-cd,latexsym,ifthen,amsmath,amsfonts,amssymb,amsthm,stmaryrd,fancyhdr,amscd,amsbsy,amstext,tensor, enumerate,enumitem,empheq,mathtools,mathpazo,verbatim,a4wide, epigraph} 
\mathtoolsset{showonlyrefs,showmanualtags}

\usepackage{xr-hyper}
\externaldocument[shapes-]{shapes}

\usepackage{color}
\definecolor{MyDarkBlue}{rgb}{0.15,0.25,0.45}
\usepackage{hyperref}
\hypersetup{anchorcolor=black, linktocpage=true,
colorlinks=true,
citecolor=red,
linkcolor=MyDarkBlue,
urlcolor=black,
breaklinks=true,
plainpages=true
}

\usepackage[hyperpageref]{backref} 

\usepackage[utf8x]{inputenc}

\usepackage[toc,page]{appendix}
\usepackage[mathscr]{euscript} 
\allowdisplaybreaks

\newif\ifpersonal
\ifpersonal
\newcommand{\personal}[1]{\textcolor[rgb]{0,0,1}{(Personal: #1)}}
\newcommand{\todo}[1]{\textcolor{red}{(Todo: #1)}}
\else
\newcommand*{\personal}[1]{\ignorespaces}
\newcommand*{\todo}[1]{\ignorespaces}
\fi

\usetikzlibrary{positioning,shapes,shadows,arrows}


\newcommand{\calA}{\mathcal A}
\newcommand{\calB}{\mathcal B}
\newcommand{\calC}{\mathcal C}
\newcommand{\calD}{\mathcal D}
\newcommand{\calE}{\mathcal E}
\newcommand{\calF}{\mathcal F}
\newcommand{\calG}{\mathcal G}
\newcommand{\calH}{\mathcal H}

\newcommand{\calL}{\mathcal L}
\newcommand{\calM}{\mathcal M}

\newcommand{\calO}{\mathcal O}
\newcommand{\calP}{\mathcal P}
\newcommand{\calQ}{\mathcal Q}

\newcommand{\calS}{\mathcal S}
\newcommand{\calT}{\mathcal T}
\newcommand{\calU}{\mathcal U}

\newcommand{\calX}{\mathcal X}


\newcommand{\R}{\mathbb R}

\newcommand{\C}{\mathbb C}
\newcommand{\Z}{\mathbb Z}
\newcommand{\Q}{\mathbb Q}
\newcommand{\T}{\mathbb T}
\newcommand{\PP}{\mathbb P}
\newcommand{\A}{\mathbb A}
\newcommand{\G}{\mathbb G}

\newcommand{\LL}{\mathbb L}
\newcommand{\V}{\mathbb V}


\newcommand{\scrO}{\mathscr O}

\newcommand{\scrS}{\mathscr S}


\newcommand{\sfB}{\mathsf B}
\newcommand{\sfH}{\mathsf H}

\newcommand{\sfK}{\mathsf K}
\newcommand{\sfR}{\mathsf R}

\newcommand{\sfT}{\mathsf T}



\newcommand{\id}{\mathsf{id}}
\newcommand{\Spec}{\mathsf{Spec}}
\newcommand{\fib}{\mathsf{fib}}
\DeclareMathOperator*{\colim}{colim}


\newcommand{\Hom}{\mathsf{Hom}}
\newcommand{\calHom}{\mathcal{H}\mathsf{om}}
\newcommand{\calEnd}{\mathcal{E}\mathsf{nd}}
\newcommand{\Ext}{\mathsf{Ext}}
\newcommand{\calExt}{\mathcal{E}\mathsf{xt}}
\newcommand{\Aut}{\mathsf{Aut}}
\newcommand{\End}{\mathsf{End}}

\newcommand{\Ind}{\mathsf{Ind}}
\newcommand{\Fun}{\mathsf{Fun}}

\newcommand{\ev}{\mathsf{ev}}
\DeclareMathOperator*{\fcolim}{``colim''}

\newcommand{\dR}{\mathsf{dR}}

\newcommand{\B}{\mathsf{B}}
\newcommand{\Dol}{\mathsf{Dol}}
\newcommand{\Del}{\mathsf{Del}}
\newcommand{\nil}{\mathsf{nil}}

\newcommand{\PSh}{\mathsf{PSh}}

\newcommand{\dStn}{\mathsf{dStn}_\C}
\newcommand{\dAn}{\mathsf{dAn}_\C}
\newcommand{\dAnSt}{\mathsf{dAnSt}}

\newcommand{\cC}{\mathcal C}

\newcommand{\cS}{\mathcal S}
\newcommand{\cT}{\mathcal T}

\newcommand{\Cat}{\mathsf{Cat}}
\newcommand{\Alg}{\mathsf{Alg}}
\newcommand{\CAlg}{\mathsf{CAlg}}
\newcommand{\Mod}{\textrm{-} \mathsf{Mod}}
\newcommand{\dAff}{\mathsf{dAff}}

\newcommand{\PrL}{\mathcal P \mathsf{r}^{\mathsf{L}}}
\newcommand{\monPrLst}{{\mathcal P\mathsf r}^{\mathsf L, \, \otimes}_{\mathsf{st}}}
\newcommand{\cTdisc}{\cT_{\mathsf{disc}}}
\newcommand{\RTop}{{}^{\mathsf R} \mathsf{Top}}

\newcommand{\Map}{\mathsf{Map}}

\newcommand{\catPerf}{\mathsf{Perf}}
\newcommand{\catPerffilt}{\mathsf{Perf}^{\mathsf{filt}}}
\newcommand{\catPerfgr}{\mathsf{Perf}^{\mathsf{gr}}}
\newcommand{\catCoh}{\mathsf{Coh}}
\newcommand{\catCohb}{\mathsf{Coh}^{\mathsf{b}}}

\newcommand{\catAPerf}{\mathsf{APerf}}

\newcommand{\catQCoh}{\mathsf{QCoh}}
\newcommand{\catDb}{\mathsf{D}^\mathsf{b}}

\newcommand{\bfRep}{\mathbf{Rep}}
\newcommand{\bfPerf}{\mathbf{Perf}}
\newcommand{\bfAPerf}{\mathbf{APerf}}

\newcommand{\bfAnPerf}{\mathbf{AnPerf}}
\newcommand{\bfPerfext}{\mathbf{Perf}^{\,\mathsf{ext}}}
\newcommand{\bfCoh}{\mathbf{Coh}}
\newcommand{\bfAnCoh}{\mathbf{AnCoh}}

\newcommand{\bfCohext}{\mathbf{Coh}^{\mathsf{ext}}}
\newcommand{\bfAnCohext}{\mathbf{AnCoh}^{\mathsf{ext}}}
\newcommand{\bfBun}{\mathbf{Bun}}
\newcommand{\bfAnBun}{\mathbf{AnBun}}
\newcommand{\bfHiggs}{\mathbf{Higgs}}
\newcommand{\bfMap}{\mathbf{Map}}
\newcommand{\bfAnMap}{\mathbf{AnMap}}
\newcommand{\BGL}{\mathsf{BGL}}
\newcommand{\trunc}[1]{\tensor*[^{\mathsf{cl}}]{#1}{}}
\newcommand{\op}{^{\mathsf{op}}}
\newcommand{\an}{^{\mathsf{an}}}
\newcommand{\Sym}{\mathsf{Sym}}
\newcommand{\dSt}{\mathsf{dSt}}
\newcommand{\Sh}{\mathsf{Sh}}
\newcommand{\dGeom}{\mathsf{dGeom}}
\newcommand{\dGeomqc}{\mathsf{dGeom}^{\mathsf{qc}}}
\newcommand{\Corr}{\mathsf{Corr}}
\newcommand{\TwoSeg}{2\textrm{-}\mathsf{Seg}}
\DeclareMathOperator*{\flim}{``lim''}

\makeatletter
\newcommand{\ostar}{\mathbin{\mathpalette\make@circled\star}}
\newcommand{\make@circled}[2]{%
	\ooalign{$\m@th#1\smallbigcirc{#1}$\cr\hidewidth$\m@th#1#2$\hidewidth\cr}%
}
\newcommand{\smallbigcirc}[1]{%
	\vcenter{\hbox{\scalebox{0.77778}{$\m@th#1\bigcirc$}}}%
}
\makeatother

%
\newcommand{\triend}{\parbox{2mm}{\hfill} \hfill\mbox{\hspace{0.2mm}}\hfill$\triangle$}
\newcommand{\ocend}{\parbox{2mm}{\hfill} \hfill\mbox{\hspace{0.2mm}}\hfill$\oslash$}

\newtheorem{theorem}{Theorem}
\newtheorem{proposition}[theorem]{Proposition}
\newtheorem{lemma}[theorem]{Lemma}
\newtheorem{corollary}[theorem]{Corollary}
\newtheorem{conjecture}[theorem]{Conjecture}
\newtheorem{corollary*}{Corollary}
\newtheorem*{theorem*}{Theorem}
\newtheorem*{proposition*}{Proposition}
\newtheorem*{conjecture*}{Conjecture}

\numberwithin{equation}{section}
\numberwithin{theorem}{section}

\theoremstyle{remark}
\newtheorem{ex}[theorem]{Example}
\newenvironment{example}{\begin{ex}}{\triend\end{ex}}

\theoremstyle{remark}
\newtheorem{rem}[theorem]{Remark}
\newenvironment{remark}{\begin{rem}}{\triend\end{rem}}

\theoremstyle{definition}
\newtheorem{defin}[theorem]{Definition}
\newenvironment{definition}{\begin{defin}}{\ocend\end{defin}}
\newtheorem{construction}[theorem]{Construction}

\title[Two-dimensional categorified Hall algebras]{Two-dimensional categorified Hall algebras}

\author[M.~Porta]{Mauro Porta}
\address[Mauro Porta]{Institut de recherche mathématique avancée (IRMA), UMR 7501, Université de Strasbourg, 7 rue René-Descartes, 67084 Strasbourg Cedex, France}
\curraddr{}
\email{\href{mailto:porta@math.unistra.fr}{porta@math.unistra.fr}}

\author[F.~Sala]{Francesco Sala}
\address[Francesco Sala]{Università di Pisa, Dipartimento di Matematica, Largo Bruno Pontecorvo 5, 56127 Pisa (PI), Italy}
\address{Kavli IPMU (WPI), UTIAS, The University of Tokyo, Kashiwa, Chiba 277-8583, Japan}
\curraddr{}
\email{\href{mailto:francesco.sala@unipi.it}{francesco.sala@unipi.it}}

\thanks{The work of the second-named author is partially supported by World Premier International 
	Research Center Initiative (WPI), MEXT, Japan, by JSPS KAKENHI Grant number JP17H06598 and 
	by JSPS KAKENHI Grant number JP18K13402.}

\subjclass[2010]{Primary: 14A20; Secondary: 17B37, 55P99}
\keywords{Hall algebras, Higgs bundles, flat bundles, local systems, categorification, stable $\infty$-categories}

\begin{document}
	
	\begin{flushright}
		IPMU--19--0025
	\end{flushright}
	
	\vskip 1cm

\begin{abstract}
	
	In the present paper, we introduce two-dimensional \textit{categorified Hall algebras} of smooth curves and smooth surfaces. A categorified Hall algebra is an associative monoidal structure on the stable $\infty$-category $\catCohb(\R\calM)$ of complexes of sheaves with bounded coherent cohomology on a derived moduli stack $\R\calM$. In the surface case, $\R\calM$ is a suitable derived enhancement of the moduli stack $\calM$ of coherent sheaves on the surface. This construction categorifies the K-theoretical and cohomological Hall algebras of coherent sheaves on a surface of Zhao and Kapranov-Vasserot. In the curve case, we define three categorified Hall algebras associated with suitable derived enhancements of the moduli stack of Higgs sheaves on a curve $X$, the moduli stack of vector bundles with flat connections on $X$, and the moduli stack of finite-dimensional local systems on $X$, respectively. In the Higgs sheaves case we obtain a categorification of the K-theoretical and cohomological Hall algebras of Higgs sheaves on a curve of Minets and Sala-Schiffmann, while in the other two cases our construction yields, by passing to $\sfK_0$, new K-theoretical Hall algebras, and by passing to $\sfH_\ast^{\mathsf{BM}}$, new cohomological Hall algebras. Finally, we show that the Riemann--Hilbert and the non-abelian Hodge correspondences can be lifted to the level of our categorified Hall algebras of a curve.
	
\end{abstract}

\maketitle\thispagestyle{empty}

\tableofcontents

\bigskip\section{Introduction}

In this work we introduce two-dimensional \textit{categorified Hall algebras} of smooth curves and smooth surfaces.
A categorified Hall algebra is an associative monoidal structure ``à la Hall'' on the dg-category $\catCohb(\R\calM)$\footnote{We mean the bounded derived category of complexes of sheaves with coherent cohomology. A more classical notation would be $\mathsf D^{\mathsf b}_{\mathsf{coh}}(\R \calM)$. In the main body of the paper we will construct directly stable $\infty$-categories, without passing through explicit dg-enhancements. Moreover, \textit{associative monoidal structure} is to be technically understood as $\mathbb E_1$-monoidal structure.} on a derived moduli stack $\R\calM$.
In the surface case, $\R\calM$ is a suitable derived enhancement of the moduli stack $\calM$ of coherent sheaves on the surface.
This construction categorifies the K-theoretical Hall algebra of zero-dimensional coherent sheaves on a surface $S$ \cite{Zhao_Hall} and the K-theoretical and cohomological Hall algebras of coherent sheaves on $S$ \cite{KV_Hall}.
In the curve case, we define three categorified Hall algebras associated with suitable derived enhancements of the moduli stack of Higgs sheaves on a curve $X$, the moduli stack of flat vector bundles on $X$, and the moduli stack of local systems on $X$, respectively.
In the Higgs sheaves case, we obtain a categorification of the K-theoretical and cohomological Hall algebras of Higgs sheaves on a curve \cite{Minets_Hall, Sala_Schiffmann}, while in the other two cases we obtain, as a by-product, the construction of the corresponding K-theoretical and cohomological Hall algebras. 
While the underlying K-theoretical and cohomological Hall algebras can also be obtained via perfect obstruction theories and are insensitive to the derived enhancements we use here, our categorified versions depend in a substantial way on the existence of a sufficiently natural derived enhancement.
To the best of our knowledge, it is not possible to obtain such categorifications using perfect obstruction theories.

Before providing precise statements of our results, we shall briefly recall the literature about K-theoretical and cohomological Hall algebras.

\subsection{Review of the Hall convolution product}

Let $\calA$ be an abelian category and denote by $\calM_\calA$ the corresponding \textit{moduli stack of objects}: $\calM_\calA$ is a geometric derived stack over $\C$ parameterizing \textit{families} of objects in $\calA$.
In particular, its groupoid of $\C$-points $\calM_\calA(\C)$ coincides with the groupoid of objects of $\calA$.
Similarly, we can consider the moduli stack $\calM^{\mathsf{ext}}_\calA$ parameterizing \textit{families} of short exact sequences in $\calA$ and form the following diagram:
\begin{align}\label{eq:convolution}
	\begin{tikzcd}[ampersand replacement = \&]
		\& \calM_\calA^{\mathsf{ext}} \arrow{dl}[swap]{p} \arrow{dr}{q} \\
		\calM_\calA\times \calM_\calA \& \& \calM_\calA
	\end{tikzcd}\ , \quad \begin{tikzcd}[ampersand replacement = \&, column sep = tiny]
		\& 0 \to E_1 \to E \to E_2 \to 0 \arrow[mapsto]{dl} \arrow[mapsto]{dr} \\
		(E_1, E_2) \& \& E
	\end{tikzcd}
\end{align}
When the maps $p$ and $q$ are sufficiently well behaved, passing to (an oriented) Borel--Moore homology\footnote{Examples of oriented Borel--Moore homology theories are the $G_0$-theory (i.e., the Grothendieck group of coherent sheaves), Chow groups, elliptic cohomology.} yields a product map
\begin{align}
q_\ast \circ p^\ast \colon \sfH_\ast^{\mathsf{BM}}( \calM_\calA ) \otimes \sfH_\ast^{\mathsf{BM}}( \calM_\calA ) \longrightarrow \sfH_\ast^{\mathsf{BM}}( \calM_\calA ) \ ,
\end{align}
which can then been proven to be associative.
In what follows, we refer to the above multiplicative structure as a ``cohomological Hall algebra'' (\textit{CoHA} for short) attached to $\calA$.

The existence of the above product does not come for free.
Typically, one needs a certain level of regularity for $p$ (e.g.\ smooth or lci).
In turn, this imposes severe restrictions on the abelian category $\calA$.
For instance, if $\calA$ has cohomological dimension one, then $p$ is smooth, but this is typically false when $\calA$ has cohomological dimension two.
Quite recently, there has been an increasing amount of research around two-dimensional CoHAs (see e.g. \cite{SV_elliptic, SV_Cherednik, SV_generators, SV_Yangians, YZ_CoHA, YZ_Yangian, KV_Hall}).
We will give a thorough review of the historical development in \S\ref{ss:historical}, but for the moment let us say that the first goal of this paper is to provide an approach to the construction of the convolution product \textit{à la Hall} that can work uniformly in the two-dimensional setting.
The key of our method is to consider a suitable natural derived enhancements $\R \calM_\calA$ and $\R \calM^{\mathsf{ext}}_\calA$ of the moduli stacks $\calM_\calA$ and $\calM^{\mathsf{ext}}_\calA$, respectively.

The use of derived geometry is both natural and expected, and made an early explicit appearance in \cite{Negut_shuffle}.
The effectiveness of this method can be easily understood via the following two properties:
\begin{enumerate}
	\item the map
	\begin{align}
		\R p \colon \R \calM^{\mathsf{ext}}_\calA \longrightarrow \R \calM_\calA \times \R \calM_\calA
	\end{align}
	has better regularity properties than its underived counterpart.
	When $\calA$ has cohomological dimension two, $\R p$ is typically lci, while $p$ is not.
	
	\item \label{item:property-(2)} Oriented Borel--Moore homology theories are insensitive to the derived structure, hence yielding natural isomorphisms\footnote{This is best seen in the case of the $G_0$-theory - cf.\ Proposition~\ref{prop:quasi_compact_G_truncation}.}
	\begin{align}
		\sfH_\ast^{\mathsf{BM}}( \calM_\calA ) \simeq \sfH_\ast^{\mathsf{BM}}( \R \calM_\calA ) \ .
	\end{align}
	\end{enumerate}
These two properties constitute the main leitmotiv of the current paper.
The upshot is that we can use the map $\R p$ in order to construct the Hall product in a much more general setting.

As announced at the beginning, the use of derived geometry has another pleasant consequence: it allows us to categorify the CoHAs considered above.
The precise formulation of this construction, as well as the study of its first properties, is the second goal of this work.
More specifically, we show that the (derived) convolution diagram induces an associative monoidal structure
\begin{align}
	\ostar_{\mathrm{Hall}} \colon \catCohb( \R \calM_\calA ) \otimes \catCohb( \R\calM_\calA ) \longrightarrow \catCohb( \R\calM_\calA )
\end{align}
on the dg-category of complexes of sheaves with bounded coherent cohomology on $\R \calM_\calA$.
We refer to this monoidal dg-category as the \textit{two-dimensional categorified Hall algebra} (\textit{Cat-HA} in the following) \textit{of $\calA$}.

From the Cat-HA we can extract a certain number of CoHAs.
Most notably, we recover a CoHA structure on the spectrum of $G$-theory.
Notice that this would be impossible if we limited ourselves to consider $\catCohb( \R \calM_\calA )$ as a triangulated category --- see e.g.\ \cite{Schlichting,Toen_Vezzosi_Ktheory}.
As a closing remark, let us emphasize that, unlike oriented Borel--Moore homology theories, our Cat-HA is very sensitive to the derived structure of $\R \calM_\calA$.
In other words, property \eqref{item:property-(2)} above fails in the categorified setting:
\begin{align}
	 \catCohb( \R \calM_\calA ) \quad \text{and} \quad \catCohb( \calM_\calA )
\end{align}
are no longer equivalent.
Furthermore, the same difficulties encountered when trying to construct the CoHA out of $\calM_\calA$ prevent, in an even harsher way, endowing $\catCohb( \calM_\calA )$ with an associative monoidal structure.
Indeed, if one simply cares about the construction of the CoHA, it would be possible to bypass the use of derived geometry by using one of his shadows, i.e.\ perfect obstruction theories.
However, the complexity of the higher coherences involved in the construction of the Cat-HA leads us to believe that an approach to categorification via perfect obstruction theories is highly unlikely. 

\subsection{Main results} 

We can summarize the main contributions of this paper as follows: on the one hand, we construct many examples of two-dimensional categorified Hall algebras (Cat-HAs) attached to curves and surfaces.
On the other hand, we show that from these new Cat-HAs one can extract the known constructions of K-theoretical Hall algebras of surfaces and of Higgs sheaves on a curve.
As a byproduct, our approach provides K-theoretical and cohomological Hall algebras associated to flat vector bundles and local systems on a curve.

\subsubsection*{Categorified Hall algebras}

Let $X$ be a smooth proper $\C$-scheme.
In \S \ref{s:coh} we introduce a derived enhancement $\bfCoh(X)$ of the (classical) geometric derived stack of coherent sheaves on $X$.
Informally, its functor of points assigns to every affine derived $\C$-scheme $S$ the space of $S$-flat perfect complexes on $X \times S$.
We show in Proposition \ref{prop:coh_geometric} that $\bfCoh(X)$ is a geometric derived stack which is locally of finite presentation.

Similarly, we introduce the derived stack $\bfCohext(X)$ which, roughly speaking, parameterizes \textit{extensions} of $S$-flat of perfect complexes on $X \times S$.
These derived stacks can be organized in the following convolution diagram
\begin{align}\label{eq:convolution-derived}
	\begin{tikzcd}[ampersand replacement = \&]
		{} \& \bfCohext(X) \arrow{dl}[swap]{p} \arrow{dr}{q} \\
		\bfCoh(X)\times \bfCoh(X) \& \& \bfCoh(X)
	\end{tikzcd}
\end{align}
of the form \eqref{eq:convolution}.
The main input to our construction is the computation of the tor-amplitude of the cotangent complex of $p$:

\begin{proposition}[see Proposition \ref{prop:extremal_projections_scheme}]
	The relative cotangent complex $\mathbb L_p$ of $p \colon \bfCohext(X) \to \bfCoh(X)$ has tor-amplitude within $[-1,n-1]$, where $n$ is the dimension of $X$.
\end{proposition}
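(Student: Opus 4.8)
The strategy is to reduce the assertion to a fibrewise computation of the relative tangent complex. By Proposition~\ref{prop:coh_geometric} the derived stack $\bfCoh(X)$ is geometric and locally of finite presentation over $\C$, and the same holds for $\bfCohext(X)$; hence $\mathbb L_p$ is a perfect complex on $\bfCohext(X)$, so by the usual criterion its Tor-amplitude may be tested after base change to the residue fields of the points of the underlying classical stack. Fix then a field $\kappa$ and a $\kappa$-point $\sigma$ of $\bfCohext(X)$. The flatness conditions built into the definitions of $\bfCoh(X)$ and $\bfCohext(X)$ force $\sigma$ to be an honest short exact sequence $0\to E_1\to E\to E_2\to 0$ of coherent sheaves on $X_\kappa$, lying over the $\kappa$-point $(E_1,E_2)$ of $\bfCoh(X)\times\bfCoh(X)$ (as in the convolution diagram~\eqref{eq:convolution-derived}). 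It suffices to prove that $\sigma^\ast\mathbb L_p$ has Tor-amplitude $[-1,n-1]$.

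Now $\sigma^\ast\mathbb L_p$ is the cotangent complex, at the point $\sigma$, of the homotopy fibre $\calF$ of $p$ over $(E_1,E_2)$, i.e.\ of the derived stack of families of extensions of $E_2$ by $E_1$. A family of such extensions over $T$ is the datum of its connecting morphism $E_2\boxtimes\calO_T\to E_1[1]\boxtimes\calO_T$, subject to the open condition that the resulting middle term be $T$-flat; therefore $\calF$ is an open substack of the linear derived stack $\V\big(\R\Hom_{X_\kappa}(E_2,E_1)[1]\big)$, the total space over $\kappa$ of the perfect complex $\R\Hom_{X_\kappa}(E_2,E_1)[1]$. Since passing to an open substack does not affect the cotangent complex, and the cotangent complex of a linear derived stack $\V(C)$ has fibre $C^\vee$ at every point, we obtain
\begin{align*}
\sigma^\ast\mathbb L_p\;\simeq\;\big(\R\Hom_{X_\kappa}(E_2,E_1)[1]\big)^\vee\;\simeq\;\R\Hom_{X_\kappa}(E_2,E_1)^\vee[-1].
\end{align*}
This agrees with the naive deformation theory: with $E_1$ and $E_2$ held fixed, the infinitesimal automorphisms, the first-order deformations, and the obstructions of the extension are governed by $\Hom_{X_\kappa}(E_2,E_1)$, $\Ext^1_{X_\kappa}(E_2,E_1)$, and $\Ext^2_{X_\kappa}(E_2,E_1)$, which are precisely the cohomology groups of $\R\Hom_{X_\kappa}(E_2,E_1)[1]$ in degrees $-1$, $0$, $1$; so $\mathbb T_{p,\sigma}\simeq\R\Hom_{X_\kappa}(E_2,E_1)[1]$.

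It remains to bound the cohomological amplitude of $\R\Hom_{X_\kappa}(E_2,E_1)$, and this is where the dimension of $X$ enters. Because $E_1$ and $E_2$ are coherent sheaves on the smooth proper $\kappa$-scheme $X_\kappa$ of dimension $n$, one has $\Ext^i_{X_\kappa}(E_2,E_1)=0$ for $i<0$ and for $i>n$. The vanishing for $i<0$ is clear; for $i>n$ one feeds into the local-to-global spectral sequence $H^p\big(X_\kappa,\calExt^q_{X_\kappa}(E_2,E_1)\big)\Rightarrow\Ext^{p+q}_{X_\kappa}(E_2,E_1)$ the facts that $\calExt^q_{X_\kappa}(E_2,E_1)=0$ for $q>n$ (since $X_\kappa$ is regular of dimension $n$) and that, by the Auslander--Buchsbaum formula, $\calExt^q_{X_\kappa}(E_2,E_1)$ is supported in codimension $\ge q$, hence has cohomology concentrated in degrees $\le n-q$. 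Thus $\R\Hom_{X_\kappa}(E_2,E_1)$ is concentrated in cohomological degrees $[0,n]$, so $\R\Hom_{X_\kappa}(E_2,E_1)[1]$ is concentrated in cohomological degrees $[-1,n-1]$, and therefore its $\kappa$-linear dual $\sigma^\ast\mathbb L_p$ has Tor-amplitude $[-1,n-1]$. As this holds at every field-valued point of $\bfCohext(X)$, the complex $\mathbb L_p$ has Tor-amplitude $[-1,n-1]$, as claimed.

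The step I expect to require the most care is the identification of the homotopy fibre of $p$ with an open substack of a linear derived stack of connecting morphisms: it rests on the precise derived-geometric model for $\bfCohext(X)$ --- the appropriate flatness locus inside the derived stack of $2$-step filtrations (equivalently, of arrows) of $\bfCoh(X)$ --- and on the compatibility of that model with the formation of relative cotangent complexes. This is conceptually transparent but demands some bookkeeping with the flatness conditions; the Ext-amplitude estimate, on the other hand, is classical and can simply be quoted.
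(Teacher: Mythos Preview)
Your proof is correct and follows essentially the same strategy as the paper: identify the map $p$ as (or its fibres with) a linear stack, read off the cotangent complex from the standard formula for linear stacks, and bound its amplitude by the cohomological dimension of $X$. The paper's Proposition~\ref{prop:stack_extensions_is_relatively_affine} establishes the global identification $\bfPerfext(X)\simeq\V_{\bfPerf(X)^2}(q_+\calG)$ and then observes (as you implicitly do) that the square with $\bfCohext(X)$ over $\bfCoh(X)^2$ is a pullback, so the relative cotangent complex agrees; you instead pass directly to field-valued points and identify the fibre there. Two minor remarks: the ``open condition'' you impose on the middle term is vacuous (an extension of $T$-flat by $T$-flat is $T$-flat), so the fibre is the entire linear stack --- this is precisely the content of the pullback square in the paper's proof; and your Auslander--Buchsbaum argument for the vanishing of $\Ext^i$ for $i>n$ is correct but a bit heavier than needed --- the paper just uses the bound on cohomological dimension, and Serre duality would do as well.
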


When $X$ is a surface, the cotangent complex of $p$ has tor-amplitude within $[-1,1]$.
This is to say that $p$ is derived lci, and in particular we obtain a well-defined functor
\begin{align}
	\ostar \coloneqq q_\ast \circ p^\ast \colon \catCohb( \bfCoh(X) ) \otimes \catCohb( \bfCoh(X) ) \longrightarrow \catCohb( \bfCoh(X) ) \ .
\end{align}
This implies:

\begin{theorem}[see Proposition~\ref{prop:coh_2_Segal}]
	Let $X$ be a smooth and proper complex surface.
	Then the functor $\ostar$ can be promoted to an $\mathbb E_1$-monoidal structure on the stable $\infty$-category $\catCohb( \bfCoh(X) )$.
\end{theorem}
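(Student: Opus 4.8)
The plan is to realize the binary product $\ostar = q_\ast\circ p^\ast$ as the multiplication underlying an $\mathbb E_1$-algebra obtained from a $2$-Segal simplicial derived stack by a pull-push construction, following the formalism of $2$-Segal spaces (Dyckerhoff--Kapranov) in the derived setting. First I would promote the convolution diagram \eqref{eq:convolution-derived} to a simplicial object $\bfCohext_\bullet(X)$, namely the derived Waldhausen $S$-construction of the ``stack of flags'' of $S$-flat perfect complexes on $X$: the term $\bfCohext_n(X)$ sends an affine derived $\C$-scheme $S$ to the space of filtrations $0 = E_0 \hookrightarrow E_1 \hookrightarrow \cdots \hookrightarrow E_n$, equipped with chosen cofibers, of $S$-flat perfect complexes on $X\times S$; thus $\bfCohext_0(X) = \Spec\C$, $\bfCohext_1(X) = \bfCoh(X)$, $\bfCohext_2(X) = \bfCohext(X)$, and in degree $2$ the face maps recording the sub- and the quotient object assemble into $p$ while the face map recording the total object is $q$. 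The first point to establish is that $\bfCohext_\bullet(X)$ is a $2$-Segal object of derived stacks. This is the relative, derived incarnation of the theorem that the $S$-construction of a stable $\infty$-category is $2$-Segal, and it reduces to identifying $\bfCohext_n(X)$ with the iterated derived fibre product of copies of $\bfCohext_2(X)$ over $\bfCoh(X)$ prescribed by any triangulation of a convex polygon; this holds because $\bfCoh(X)$ and $\bfCohext(X)$ are mapping/flag stacks, so the relevant limits are computed levelwise on the representing affine derived schemes.

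Next I would check that the pull-push along $\bfCohext_\bullet(X)$ is defined at the level of $\catCohb$, and here the surface hypothesis is decisive. By Proposition~\ref{prop:extremal_projections_scheme} the relative cotangent complex $\mathbb L_p$ has tor-amplitude $[-1,1]$; the same computation applied degreewise shows that every face map of $\bfCohext_\bullet(X)$ recording a sub-/quotient object is derived lci, hence of finite tor-dimension, so pullback along all such maps preserves $\catCohb$. Dually, every face map recording a total object --- $q$ in particular --- becomes representable and proper after restriction to the open-and-closed substacks $\bfCoh_\tau(X) \subseteq \bfCoh(X)$ of fixed numerical type $\tau$ (its fibres over an object $E$ being Quot-type schemes for $E$, which are projective since $X$ is a surface), and $\bfCoh(X)$, which is locally geometric and locally of finite presentation by Proposition~\ref{prop:coh_geometric}, is the disjoint union of the $\bfCoh_\tau(X)$; hence pushforward along these maps preserves coherence and boundedness, i.e.\ $\catCohb$. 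Finally, in each derived Cartesian square witnessing the $2$-Segal property the two pairs of opposite face maps are simultaneously of sub-/quotient type or simultaneously of total-object type, so the base-change isomorphism and the projection formula hold for $\catCohb$ along these squares.

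With these inputs, the general machinery attaching to a $2$-Segal simplicial derived stack whose ``forward'' face maps are of finite tor-dimension and whose ``backward'' face maps are proper the $\mathbb E_1$-algebra $\catCohb(\bfCohext_1(X))$, in the symmetric monoidal $\infty$-category of small stable $\infty$-categories, applies: the multiplication is $q_\ast p^\ast$ and the associativity constraint, the pentagon, and all higher coherences are extracted from the higher-degree terms of $\bfCohext_\bullet(X)$ together with the full $2$-Segal condition and the base-change isomorphisms of the previous step. Since $\bfCohext_1(X) = \bfCoh(X)$ and the resulting binary operation is $\ostar$, this equips $\catCohb(\bfCoh(X))$ with the desired $\mathbb E_1$-monoidal structure.

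The step I expect to be the main obstacle is not the $2$-Segal property, which is essentially formal once the derived enhancements are fixed, but the simultaneous tor-amplitude and properness control needed to keep the pull-push inside the \emph{bounded} coherent category: one must track the tor-amplitude of every forward face map so that $p^\ast$ does not escape boundedness, the relative dimension and properness of every backward face map over each numerical component so that $q_\ast$ preserves both coherence and boundedness, and the compatibility of the numerical-type decomposition with the whole simplicial structure, so that the resulting operation really lands in $\catCohb(\bfCoh(X))$ rather than in $\catCoh^{+}(\bfCoh(X))$ or $\catQCoh(\bfCoh(X))$. This is exactly where Proposition~\ref{prop:extremal_projections_scheme}, and hence the restriction to surfaces, is indispensable: in dimension $n \geq 3$ one only obtains $\mathbb L_p$ of tor-amplitude $[-1,n-1]$, $p$ fails to be derived lci, $p^\ast$ no longer preserves $\catCohb$, and the construction breaks down at this very point.
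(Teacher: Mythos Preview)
Your proposal is correct and follows essentially the same route as the paper: the Waldhausen simplicial object $\calS_\bullet\bfCoh(X)$ is shown to be $2$-Segal, the tor-amplitude estimate of Proposition~\ref{prop:extremal_projections_scheme} makes $p$ derived lci while $q$ is representable by proper (Quot-type) schemes, and then the pull--push formalism produces the $\mathbb E_1$-structure. The paper makes your ``general machinery'' step precise by first passing through Dyckerhoff--Kapranov's functor $\TwoSeg(\dSt)\to\Alg_{\mathbb E_1}(\Corr^\times(\dSt))$ and then applying the Gaitsgory--Rozenblyum universal property of correspondences to obtain a right-lax symmetric monoidal functor $\catCohb\colon\Corr^\times(\dGeom)_{\mathsf{rps},\mathsf{lci}}\to\Cat_\infty^{\mathsf{st}}$, which automatically carries $\mathbb E_1$-algebras to $\mathbb E_1$-algebras; it also handles the non-quasi-compactness of $\bfCoh(X)$ via an ind/pro refinement $\catCohb_{\mathsf{pro}}$, a point you do not address but which is not visible in the statement as phrased.
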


We refer to $\catCohb( \bfCoh(X) )$ together with its $\mathbb E_1$-monoidal structure $\ostar$ as the \textit{categorified Hall algebra of the surface $X$}.
In a nutshell, the construction goes as follows.
The convolution diagram considered above is part of a richer combinatorial structure that can be seen as a simplicial object in derived stacks
\begin{align}
	\calS_\bullet \bfCoh(X) \colon \mathbf \Delta \longrightarrow \dSt \ .
\end{align}
In low dimensions, the simplexes of $\calS_\bullet \bfCoh(X)$ can be described as follows:
\begin{align}
	\scrS_0 \bfCoh(X) \simeq \Spec(\C) , \quad \scrS_1 \bfCoh(X) \simeq \bfCoh(X), \quad \scrS_2 \bfCoh(X) \simeq \bfCohext(X) \ ,
\end{align}
and the simplicial maps induce the maps $p$ and $q$ above.
The simplicial object $\calS_\bullet \bfCoh(X)$ is known as the \textit{Waldhausen construction} of $\bfCoh(X)$, and one can summarize its main properties by saying that it is a \textit{$2$-Segal object} in the sense of Dyckerhoff--Kapranov \cite{Dyckerhoff_Kapranov_Higher_Segal}.

Its relevance for us is that \cite[Theorem 11.1.6]{Dyckerhoff_Kapranov_Higher_Segal} provides a canonical $\infty$-functor
\begin{align}
	\TwoSeg( \dSt ) \longrightarrow \Alg_{\mathbb E_1}( \Corr^\times( \dSt ) ) \ . 
\end{align}
In other words, we can attach to every $2$-Segal object an $\mathbb E_1$-monoid object in the category of correspondences in derived stacks.
In order to convert these data into the higher coherences of the Cat-HA, we make use of the Gaitsgory--Rozenblyum correspondence machine \cite{Gaitsgory_Rozenblyum_Study_I}.
Let
\begin{align}
	\Corr^\times( \dGeom )_{\mathsf{rps}, \mathsf{lci}} \hookrightarrow \Corr^\times( \dSt )
\end{align}
be the subcategory whose objects are derived geometric (i.e.\ higher Artin) stacks, and whose class of horizontal (resp.\ vertical) morphisms is the class of maps representable by proper schemes (resp.\ lci morphisms).
Then the universal property of the $(\infty,2)$-category of correspondences of Gaitsgory--Rozenblyum provides us with a lax monoidal functor
\begin{align}
	\catCohb \colon \Corr^\times( \dGeom )_{\mathsf{rps}, \mathsf{lci}} \longrightarrow \Cat_\infty^{\mathsf{st}} \ , 
\end{align}
with values in the $\infty$-category of stable $\infty$-categories.
Being lax monoidal, this functor preserves $\mathbb E_1$-monoid objects, therefore delivering the Cat-HA.

\begin{remark}
	If $X$ is projective and $H$ is an ample divisor, similar results hold for the stack $\bfCoh^{\mathsf{ss}, \, p(m)}(X)$ of Giese\-ker $H$-semistable coherent sheaves on $X$ with reduced Hilbert polynomial equal to a fixed monic polynomial $p(m)\in \Q[m]$. Moreover, if $X$ is quasi-projective, the results above hold for the stack $\bfCoh_{\mathsf{prop}}^{\leqslant d}(X)$ of coherent sheaves on $X$ with proper support and dimension of the support less than or equal to an integer $d$. Finally, if the surface is toric, minimal variations in our construction (discussed in \S\ref{ss:equivariant}) allow us to consider the toric-equivariant setting.
	
	One can also extend the above construction to obtain Cat-HAs associated to derived moduli stacks of Simpson's semistable properly supported sheaves with fixed reduced Hilbert polynomial on a smooth (quasi-)projective surface. An analysis of these Cat-HAs has been carried out in \cite{Coha_mckay} when the surface is the minimal resolution of a Kleinian singularity.
\end{remark}

As we said before, our second main source of examples is the two-dimensional Cat-HA that can be attached to smooth projective complex curves $X$.
There are three types of such examples, coming respectively from local systems, flat vector bundles, and Higgs sheaves on $X$.\footnote{Recall that a Higgs sheaf is a pair $(E, \varphi\colon E\to \Omega_X^1\otimes E)$, where $E$ is a coherent sheaf on $X$ and $\varphi$ a morphism of $\scrO_{X}$-modules, called a \textit{Higgs field}. Here, $\Omega_X^1$ is the sheaf of 1-forms of $X$. On the other hand, by a flat vector bundle we mean a vector bundle endowed with a flat connection. Finally, recall that a local system can be interpreted as a finite-dimensional representation of the fundamental group of $X$.}
A uniform treatment of these Cat-HAs is made possible by Simpson's formalism of \textit{shapes}.
These are derived stacks attached to the curve $X$, written
\begin{align}
	X_\B \ , \quad X_\dR\ , \quad X_\Dol \ . 
\end{align}
We refer to  the compendium \cite{Porta_Sala_Shapes} for the precise definition of these derived stacks.
However, let us say straight away that their usefulness lies in the fact that coherent sheaves on $X_\B$ (resp.\ $X_\dR$, $X_\Dol$) canonically coincide with local systems (resp.\ flat vector bundles, Higgs sheaves) on $X$.
Using these shapes, we can easily make sense of the derived enhancements
\begin{align}
	\bfCoh( X_\B )\ , \quad \bfCoh( X_\dR )\ , \quad \bfCoh( X_\Dol )
\end{align}
of the classical stacks of local systems, flat vector bundles and Higgs sheaves on $X$, respectively.

The construction of the convolution diagram (and of the $2$-Segal object) can be carried out on in this setting without any additional difficulty.
The key computation of the tor-amplitude of the map $p$ in this context is discussed in \S\ref{ss:ext-shapes}.
Every case has to be analyzed on its own, because the proof relies on specific features of the type of sheaves that are considered.
From here, the same method discussed for surfaces yields:

\begin{theorem}[see Theorem \ref{thm:coh_algebra_in_correspondence}]
	Let $X$ be a smooth projective complex curve.
	The convolution diagram induces an $\mathbb E_1$-monoidal structure on the stable $\infty$-categories
	\begin{align}
		 \catCohb( \bfCoh(X_\B) )\ , \quad \catCohb( \bfCoh(X_\dR) )\ , \quad \catCohb( \bfCoh(X_\Dol) ) \ .
	\end{align}
\end{theorem}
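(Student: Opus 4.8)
The plan is to reproduce, essentially verbatim, the argument used for surfaces (Proposition~\ref{prop:coh_2_Segal}), the only genuinely new ingredient being the computation of the tor-amplitude of the convolution map $p$, which now has to be carried out separately for each of the three shapes. Write $X_\heartsuit$ for any of $X_\B$, $X_\dR$, $X_\Dol$ and $\calA_\heartsuit$ for the associated abelian category of local systems, flat bundles, or Higgs sheaves on $X$. First I would produce the relevant $2$-Segal object: the stacks in \eqref{eq:convolution-derived} sit at the bottom of the Waldhausen construction $\calS_\bullet\bfCoh(X_\heartsuit)\colon\mathbf\Delta\to\dSt$, whose value on $[n]$ is the derived moduli stack of $n$-step filtered objects of $\calA_\heartsuit$, with $\calS_0\simeq\Spec(\C)$, $\calS_1\simeq\bfCoh(X_\heartsuit)$, $\calS_2\simeq\bfCohext(X_\heartsuit)$, and the low-dimensional face maps recovering $p$ and $q$. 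The proof that $\calS_\bullet\bfCoh(X_\heartsuit)$ is a $2$-Segal object in $\dSt$ is formal and does not see the curve: it uses only the description of $\bfCoh(-)$ as a moduli-of-objects stack together with the general fact (Dyckerhoff--Kapranov) that Waldhausen $\calS_\bullet$-constructions are $2$-Segal, exactly as in the surface case. I would also check that $\bfCoh(X_\heartsuit)$ and $\bfCohext(X_\heartsuit)$ are derived geometric stacks locally of finite presentation --- the shape-theoretic analogue of Proposition~\ref{prop:coh_geometric}, obtained by applying the same representability criteria with the properties of $X_\B$, $X_\dR$, $X_\Dol$ recorded in \cite{Porta_Sala_Shapes}.

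Granting the $2$-Segal property, \cite[Theorem~11.1.6]{Dyckerhoff_Kapranov_Higher_Segal} produces an $\mathbb E_1$-algebra object of $\Corr^\times(\dSt)$ supported on $\bfCoh(X_\heartsuit)$, with multiplication the correspondence $\bfCoh(X_\heartsuit)\times\bfCoh(X_\heartsuit)\xleftarrow{\ p\ }\bfCohext(X_\heartsuit)\xrightarrow{\ q\ }\bfCoh(X_\heartsuit)$. To obtain the $\mathbb E_1$-monoidal structure one feeds this into the lax monoidal functor $\catCohb\colon\Corr^\times(\dGeom)_{\mathsf{rps},\mathsf{lci}}\to\Cat_\infty^{\mathsf{st}}$ of \cite{Gaitsgory_Rozenblyum_Study_I}, so it remains to verify that the algebra factors through the subcategory $\Corr^\times(\dGeom)_{\mathsf{rps},\mathsf{lci}}$: namely (i) the stacks involved are geometric and locally of finite presentation --- done above; (ii) $q$ is representable by proper schemes; (iii) $p$ is lci, in the sense (as in the surface case) that $\mathbb L_p$ is perfect of tor-amplitude contained in $[-1,1]$, which is exactly what guarantees that $p^\ast$ preserves bounded coherent complexes. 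Point (ii) is handled just as for surfaces: the base-change of $q$ along a test scheme is the relative moduli of sub-objects of the universal object of $\calA_\heartsuit$, which is a closed subscheme of a relative Quot scheme --- cut out by the condition that the subsheaf be preserved by the Higgs field in the Dolbeault case and by the connection in the de Rham case --- or of a relative Grassmannian cut out by $\pi_1$-invariance in the Betti case, hence proper. Once (i)--(iii) are established, applying $\catCohb$ carries the $\mathbb E_1$-algebra to the sought $\mathbb E_1$-monoidal structure on $\catCohb(\bfCoh(X_\heartsuit))$, whose underlying functor is the convolution $q_\ast\circ p^\ast$; this holds uniformly in $\heartsuit$.

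The main obstacle is (iii), and --- as in \S\ref{ss:ext-shapes} --- it must be attacked case by case, since it depends on the homological algebra of each $\calA_\heartsuit$. The uniform reduction is this: the derived fibre of $p$ over a pair of objects $(E_1,E_2)$ is the derived space of extensions of $E_2$ by $E_1$, whose relative tangent complex is $\mathsf{RHom}_{X_\heartsuit}(E_2,E_1)[1]$; dualising, $\mathbb L_p$ is perfect of tor-amplitude contained in $[-1,1]$ as soon as the mapping complex $\mathsf{RHom}_{X_\heartsuit}(E_2,E_1)$ is perfect and cohomologically concentrated in degrees $[0,2]$, and one needs this to hold in families over an affine derived base. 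It then suffices to identify this mapping complex in each case: for $X_\dR$ it is the de Rham hypercohomology of the two-term complex $[\,\calHom_{\scrO_X}(E_2,E_1)\xrightarrow{\nabla}\calHom_{\scrO_X}(E_2,E_1)\otimes\Omega_X^1\,]$ on the curve, which lies in degrees $[0,2]$ and is perfect because its terms are vector bundles; for $X_\B$ it is $\mathsf{R}\Gamma$ of the compact surface underlying $X$ with coefficients in the local system $\calHom(E_2,E_1)$, hence perfect and in degrees $[0,2]$ by the cohomological dimension of a surface; for $X_\Dol$ it is the hypercohomology of the Higgs complex $[\,\mathsf{R}\calHom_{\scrO_X}(E_2,E_1)\xrightarrow{[\varphi,-]}\mathsf{R}\calHom_{\scrO_X}(E_2,E_1)\otimes\Omega_X^1\,]$, where one additionally uses that on the curve $\mathsf{R}\calHom_{\scrO_X}(E_2,E_1)$ is perfect and its degree-one cohomology sheaf has $0$-dimensional support, so that its hypercohomology sits in degrees $[0,1]$ and the total complex in $[0,2]$. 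The delicate points --- where I expect most of the work to lie --- are: making these identifications precise and functorial over derived bases, so that the tor-amplitude statement holds for $\mathbb L_p$ globally on $\bfCohext(X_\heartsuit)$ rather than merely fibrewise; handling, in the Dolbeault case, objects whose underlying sheaf is not locally free; and excluding cohomology of $\mathsf{RHom}_{X_\heartsuit}$ outside $[0,2]$, equivalently perfectness of $\mathbb L_p$ with no contribution below degree $-1$. With (iii) in hand, the theorem follows from the preceding paragraph.
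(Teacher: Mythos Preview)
Your proposal is correct and follows essentially the same route as the paper: build the Waldhausen $2$-Segal object $\calS_\bullet\bfCoh(X_\heartsuit)$, pass to an $\mathbb E_1$-algebra in correspondences via Dyckerhoff--Kapranov, verify that $q$ is representable by proper schemes and that $p$ is derived lci, and then apply the Gaitsgory--Rozenblyum machine; the lci step is handled case by case by bounding $\mathsf{RHom}_{X_\heartsuit}(E_2,E_1)$ in cohomological degrees $[0,2]$, exactly as in \S\ref{ss:ext-shapes}. The only refinement the paper adds, which you do not mention, is that because $\bfCoh(X_\heartsuit)$ is not quasi-compact one works through the embedding $(-)_{\mathsf{ind}}\colon\dGeom\to\Ind(\dGeomqc)$ of Appendix~\ref{sec:ind_quasi_compact} and lands in $\catCohb_{\mathsf{pro}}$ rather than $\catCohb$; this also requires checking that $p$ is quasi-compact (Proposition~\ref{prop:stack_extensions_is_relatively_affine}) so that $p_{\mathsf{ind}}$ is ind-lci.
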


We refer to these $\mathbb E_1$-monoidal categories as the \textit{Betti}, \textit{de Rham} and \textit{Dolbeault Cat-HAs}.
We denote their underlying tensor products as $\ostar_\B$, $\ostar_\dR$ and $\ostar_\Dol$, respectively.
Our formalism also allows considering the natural $\C^\ast$-action on $\bfCoh(X_\Dol)\simeq \sfT^\ast \bfCoh(X)$ that ``scales the fibers" and so we introduce the corresponding $\C^\ast$-equivariant version of the Dolbeault Cat-HA (cf.\ \S\ref{ss:equivariant}).

It is a natural question to try to relate these three Cat-HAs attached to a curve.
Our first result in this direction, concerning the de Rham and the Betti Cat-HAs, is of analytic nature.
It can be informally stated by saying that the Riemann--Hilbert correspondence respects the Hall convolution structure:

\begin{theorem}[Cat-HA version of the derived Riemann--Hilbert correspondence]\label{thm:Cat-HA-RH}
	Let $X$ be a smooth projective complex curve.
	Then:
	\begin{enumerate}\itemsep0.2cm
		\item the convolution diagrams for the analytifications $\bfCoh(X_\dR)\an$ and $\bfCoh(X_\B)\an$
		induce an $\mathbb E_1$-monoidal structure on the stable $\infty$-categories $\catCohb(\bfCoh(X_\dR)\an)$ and $\catCohb(\bfCoh(X_\B)\an)$, written
		\begin{align}
			(\catCohb( \bfCoh(X_\dR)\an ), \ostar_\dR\an)\ , \quad ( \catCohb( \bfCoh(X_\B)\an ), \ostar_\B\an)\  . 
		\end{align}
		
		\item There is a natural diagram of stable $\mathbb E_1$-monoidal $\infty$-categories and monoidal functors
		\begin{align}
			\begin{tikzcd}[ampersand replacement = \&]
				(\catCohb( \bfCoh(X_\B) ), \ostar_\B ) \arrow{d} \& (\catCohb( \bfCoh(X_\dR) ), \ostar_\dR ) \arrow{d} \\
				(\catCohb( \bfCoh(X_\B)\an ), \ostar_\B\an ) \arrow{r} \& (\catCohb( \bfCoh(X_\dR)\an ), \ostar_\dR\an ) 
			\end{tikzcd}\ ,
		\end{align}
		where the vertical functors are induced by analytification and the horizontal functor is induced by the Riemann--Hilbert transformation $\eta_{\mathsf{RH}} \colon X_\dR\an \to X_\B\an$ of \cite{Porta_Derived}.
		Furthermore, the horizontal functor is an equivalence.
	\end{enumerate}
\end{theorem}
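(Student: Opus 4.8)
The plan is to re-run, in the derived analytic setting, the $2$-Segal plus Gaitsgory--Rozenblyum argument used for Theorem~\ref{thm:coh_algebra_in_correspondence}, and then to feed two functorialities through the resulting machine: analytification of coherent sheaves (for the vertical arrows) and the Riemann--Hilbert transformation $\eta_{\mathsf{RH}}$ (for the bottom arrow). For part~(1), I would first note that the Waldhausen simplicial object $\calS_\bullet\bfCoh(X_\dR)$ (resp.\ $\calS_\bullet\bfCoh(X_\B)$) is assembled out of finite limits of copies of $\bfCoh(X_\dR)$ and $\bfCohext(X_\dR)$; since analytification of derived stacks preserves the finite limits entering the Waldhausen construction (cf.\ \cite{Porta_Sala_Shapes}), applying $(-)\an$ levelwise yields a simplicial object $\calS_\bullet\bfCoh(X_\dR)\an$ in $\dAnSt$ with $\scrS_1\simeq\bfCoh(X_\dR)\an\simeq\bfCoh(X_\dR\an)$ and $\scrS_2\simeq\bfCohext(X_\dR)\an$, which is again a $2$-Segal object. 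Moreover, analytification is flat and compatible with cotangent complexes, so $\mathbb L_{p\an}$ is the analytification of $\mathbb L_p$ and has the tor-amplitude computed in~\S\ref{ss:ext-shapes}; hence $p\an$ is derived lci, while $q\an$ is representable by proper analytic maps since $q$ is representable by proper schemes (analytic GAGA). Thus $\calS_\bullet\bfCoh(X_\dR)\an$ defines a $2$-Segal object valued in the analytic counterpart $\Corr^\times(\dAnSt)_{\mathsf{rps},\mathsf{lci}}$ of the category of correspondences used in the main text, and applying the analytic version of the lax monoidal functor $\catCohb\colon\Corr^\times(\dAnSt)_{\mathsf{rps},\mathsf{lci}}\to\Cat_\infty^{\mathsf{st}}$ --- built from the Gaitsgory--Rozenblyum correspondence formalism \cite{Gaitsgory_Rozenblyum_Study_I} exactly as in the algebraic case --- to the associated $\mathbb E_1$-algebra in correspondences produces the $\mathbb E_1$-monoidal structure $\ostar_\dR\an$ on $\catCohb(\bfCoh(X_\dR)\an)$; the same argument gives $\ostar_\B\an$. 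This proves part~(1).

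For the vertical (analytification) arrows, the point is to package analytification of bounded coherent complexes as a symmetric monoidal natural transformation relating the algebraic and analytic instances of $\catCohb$, lying over the analytification functor $(-)\an\colon\Corr^\times(\dGeom)_{\mathsf{rps},\mathsf{lci}}\to\Corr^\times(\dAnSt)_{\mathsf{rps},\mathsf{lci}}$ on correspondences (which is itself well defined and symmetric monoidal, again by GAGA and flatness of analytification). The compatibilities to verify are that analytification of sheaves commutes with pullback along $p$ --- formal, being a colimit-preserving tensor functor --- and that it commutes with proper pushforward along $q$, which is the relative analytic GAGA theorem for bounded coherent complexes (cf.\ \cite{Porta_Sala_Shapes}); compatibility with external tensor products is again formal. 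Evaluating this monoidal transformation on the $\mathbb E_1$-algebra attached to $\calS_\bullet\bfCoh(X_\dR)$ (resp.\ $\calS_\bullet\bfCoh(X_\B)$) shows that the analytification functor $\catCohb(\bfCoh(X_\dR))\to\catCohb(\bfCoh(X_\dR)\an)$ is $\mathbb E_1$-monoidal from $\ostar_\dR$ to $\ostar_\dR\an$, and similarly in the Betti case; these are the two vertical functors.

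For the bottom arrow, the derived Riemann--Hilbert transformation $\eta_{\mathsf{RH}}\colon X_\dR\an\to X_\B\an$ of~\cite{Porta_Derived} induces, through pullback $\eta_{\mathsf{RH}}^\ast$ on coherent complexes and the functoriality of the constructions of $\bfCoh(-)$ and $\bfCohext(-)$, a levelwise map of simplicial derived analytic stacks $\calS_\bullet\bfCoh(X_\B)\an\to\calS_\bullet\bfCoh(X_\dR)\an$. This is a morphism of $2$-Segal objects whose structure maps (and new components) lie in the required proper and lci classes, hence a morphism of $\mathbb E_1$-algebras in $\Corr^\times(\dAnSt)_{\mathsf{rps},\mathsf{lci}}$; applying $\catCohb$ yields an $\mathbb E_1$-monoidal functor $\catCohb(\bfCoh(X_\B)\an)\to\catCohb(\bfCoh(X_\dR)\an)$. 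Since, by the derived Riemann--Hilbert correspondence for the curve (see \cite{Porta_Sala_Shapes, Porta_Derived}), $\eta_{\mathsf{RH}}$ induces an equivalence of derived stacks $\bfCoh(\eta_{\mathsf{RH}})\colon\bfCoh(X_\B)\an\to\bfCoh(X_\dR)\an$ --- and, level by level, on the entire simplicial object --- this monoidal functor is an equivalence. Finally, the ``naturality'' of the corner diagram is automatic, since each of its arrows is produced by applying the functorial Dyckerhoff--Kapranov \cite{Dyckerhoff_Kapranov_Higher_Segal} and Gaitsgory--Rozenblyum constructions to a morphism in the appropriate category of $2$-Segal objects.

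I expect the main obstacle to be the two GAGA-type inputs: that analytification preserves the finite limits witnessing the $2$-Segal condition --- so that the analytic Waldhausen object is again $2$-Segal and $p\an$ inherits the tor-amplitude of $p$ --- and, more delicately, that analytification commutes with proper pushforward of bounded coherent complexes coherently enough to be promoted to a symmetric monoidal natural transformation of the kind described above. Granting these, the rest of the argument is formal, following from the lax monoidality of $\catCohb$ and the functoriality of the $2$-Segal and correspondence machinery.
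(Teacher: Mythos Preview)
Your proposal is correct and follows essentially the same route as the paper: analytify the Waldhausen $2$-Segal object, verify that $p\an$ is lci and $q\an$ is representable by proper analytic spaces, build the analytic $\catCohb$ on correspondences, and then feed both analytification (via relative GAGA for proper pushforward) and $\eta_{\mathsf{RH}}$ through the machine as morphisms of $2$-Segal objects. The one point you gloss over is that the analytic correspondence functor cannot be built ``exactly as in the algebraic case'': there is no well-behaved $\catQCoh$ in derived analytic geometry, so the paper instead constructs $\catAPerf$ out of $\scrO\Mod$ on $\RTop(\cTdisc(\C))$, proves the required base change (Lemma~\ref{lem:rightadjointable}), and only then restricts to $\catCohb$ --- but this is a technical adjustment rather than a change of strategy.
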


The new ingredient in this theorem is the use of derived complex analytic geometry, first introduced by J.\ Lurie in \cite{DAG-IX} and further expanded by the first-named author in \cite{Porta_GAGA,Porta_Yu_Representability,Porta_Holstein_Mapping}.
The key point is to prove that the Riemann--Hilbert correspondence of \cite{Porta_Derived} can be lifted to the $\mathbb E_1$-monoidal setting, and this is achieved by the natural transformation $\eta_{\mathsf{RH}}$ already mentioned in the above statement.

The relation between the de Rham and the Dolbeault categorified Hall algebras is more subtle. In order to state it, one has to use another shape of Simpson, the \textit{Deligne shape} $X_\Del\to \A^1$. Then the derived stack $\bfCoh_{/\A^1}(X_\Del)$ is the derived moduli stack of \textit{Deligne's $\lambda$ connections} on $X$. Such a stack \textit{interpolates the de Rham moduli stack with the Dolbeault moduli stack}: it naturally lives over $\A^1$ and one has
\begin{align}
	\bfCoh_{/\A^1}(X_\Del) \times_{\A^1} \{0\} \simeq \bfCoh(X_\Dol) \quad \text{and} \quad \bfCoh_{/\A^1}(X_\Del) \times_{\A^1} \{1\} \simeq \bfCoh(X_\dR) \ . 
\end{align}
We restrict ourselves to the open substack $\bfCoh_{/\A^1}^\ast(X_\Del)\subset \bfCoh_{/\A^1}(X_\Del)$ for which the fiber at zero is the derived moduli stack $\bfCoh^{\mathsf{ss}, \,0}(X_\Dol)$ of semistable Higgs bundles on $X$ of degree zero.
As before, this yields:

\begin{theorem}[Weak Cat-HA version of the non-abelian Hodge correspondence]\label{thm:Cat-HA-naH}
	Let $X$ be a smooth projective complex curve.
	Then the stable $\infty$-category $\catCohb_{\C^\ast}( \bfCoh_{/\A^1}^\ast(X_\Del) )$ has a natural $\mathbb E_1$-monoidal structure.
	In addition, it is a module over $\catPerffilt \coloneqq \catPerf( [\A^1_\C / \G_m] )$ and we have monoidal functors:
	\begin{align}
		\Phi \colon \catCohb_{\C^\ast}( \bfCoh_{/\A^1}^\ast(X_\Del) ) & \otimes_{\catPerffilt} \catPerf_\C \longrightarrow \catCohb( \bfCoh(X_\dR) )\ ,\\[3pt]
		\Psi \colon \catCohb_{\C^\ast}( \bfCoh_{/\A^1}^\ast(X_\Del) ) & \otimes_{\catPerffilt} \catPerfgr \longrightarrow \catCohb_{\C^\ast}( \bfCoh^{\mathsf{ss}, \, 0}(X_\Dol) )\ ,
	\end{align}
	where $\catPerfgr \coloneqq \catPerf( \sfB \G_m )$.
\end{theorem}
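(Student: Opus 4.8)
The plan is to carry out the construction of the categorified Hall algebra \emph{relative to the stack} $[\A^1_\C/\G_m]$, and then to specialise it along the two orbits of $[\A^1_\C/\G_m]$. The $\G_m$-action rescaling Deligne's parameter $\lambda$ makes $\bfCoh_{/\A^1}(X_\Del)\to\A^1$ into a $\G_m$-equivariant morphism, and $\bfCoh^\ast_{/\A^1}(X_\Del)$ is a $\G_m$-stable open substack; write $\scrY\coloneqq[\bfCoh^\ast_{/\A^1}(X_\Del)/\G_m]$, so that by definition $\catCohb(\scrY)=\catCohb_{\C^\ast}(\bfCoh^\ast_{/\A^1}(X_\Del))$, and let $\pi\colon\scrY\to[\A^1_\C/\G_m]$ be the structure map. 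Exactly as for surfaces and the three curve shapes, one forms the Waldhausen construction $\calS_\bullet\bfCoh^\ast_{/\A^1}(X_\Del)$: it makes sense because an extension of two $\lambda$-connections carries the same parameter $\lambda$ (so the whole simplicial diagram is $\G_m$-equivariant and lives over $\A^1$), and because the condition cutting out $\bfCoh^\ast$ is stable under sub-objects, quotients and extensions. Passing to $\G_m$-quotients, using the equivariant formalism of \S\ref{ss:equivariant}, yields a $2$-Segal object $\calS_\bullet\scrY$ in $\dGeom_{/[\A^1_\C/\G_m]}$; feeding it into the Gaitsgory--Rozenblyum machine relative to the base $[\A^1_\C/\G_m]$ endows $\catCohb(\scrY)$ with an $\mathbb E_1$-monoidal structure $\ostar=q_\ast p^\ast$ which is a morphism in $\catPerffilt\Mod$: the $\catPerffilt$-module structure is $\pi^\ast$, and the $\catPerffilt$-linearity of $\ostar$ comes from the projection formula for the representable proper map $q$ (all the $\calS_\bullet$-maps being maps over $[\A^1_\C/\G_m]$). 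This presupposes that $p$ is lci.

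The lci-ness of $p\colon\bfCohext^\ast_{/\A^1}(X_\Del)\to\bfCoh^\ast_{/\A^1}(X_\Del)\times_{[\A^1_\C/\G_m]}\bfCoh^\ast_{/\A^1}(X_\Del)$ is the single new tor-amplitude computation. One may argue directly, as in \S\ref{ss:ext-shapes}, from the deformation theory of $\lambda$-connections on the curve $X$, whose controlling objects are hypercohomologies of two-term complexes built from $\scrO_X$ and $\Omega_X^1$, obtaining that $\mathbb L_p$ has Tor-amplitude $[-1,0]$. Alternatively, one can argue by semicontinuity: since the derived stacks involved are flat over $\A^1$, the restriction of $\mathbb L_p$ to the fibre over $\lambda\in\A^1$ is the relative cotangent complex of the corresponding convolution map $p_\lambda$; for $\lambda=0$ this is the Dolbeault case, and for $\lambda\ne 0$, after using the $\G_m$-action to identify the fibre with that over $\lambda=1$, the de Rham case --- both with Tor-amplitude $[-1,0]$ by \S\ref{ss:ext-shapes}. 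Semicontinuity of Tor-amplitude then forces $\mathbb L_p$ to have Tor-amplitude $[-1,0]$, so $p$ is quasi-smooth, in particular lci, and $\ostar=q_\ast p^\ast$ is well defined on $\catCohb$.

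It remains to produce $\Phi$ and $\Psi$. The stack $[\A^1_\C/\G_m]$ has two points: the open point $[\G_m/\G_m]\simeq\Spec\C$, with $\catPerf(\Spec\C)=\catPerf_\C$, and the closed point $[\{0\}/\G_m]=\sfB\G_m$, with $\catPerf(\sfB\G_m)=\catPerfgr$; pullback along either inclusion into $[\A^1_\C/\G_m]$ makes the category a $\catPerffilt$-algebra. Base change of derived stacks preserves the $2$-Segal condition and commutes with the Waldhausen construction, so the interpolation property of $X_\Del$ recalled before the statement gives equivalences of $2$-Segal objects
\begin{align*}
	\calS_\bullet\scrY\times_{[\A^1_\C/\G_m]}\Spec\C\ &\simeq\ \calS_\bullet\bfCoh(X_\dR)\ , \\
	\calS_\bullet\scrY\times_{[\A^1_\C/\G_m]}\sfB\G_m\ &\simeq\ \calS_\bullet\big[\bfCoh^{\mathsf{ss},\,0}(X_\Dol)/\G_m\big]\ ,
\end{align*}
where for the first one also uses that the open condition cutting out $\bfCoh^\ast$ is vacuous over $\lambda=1$ (every flat bundle on a curve is semistable of slope zero) and that the residual $\G_m$ is used up in trivialising over the open orbit, while for the second one uses that an extension of semistable slope-zero Higgs bundles is again a semistable slope-zero bundle. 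Applying $\catCohb$ and the base-change comparison functors, the $\catPerffilt$-algebra $\catCohb(\scrY)$ specialises to monoidal functors
\begin{align*}
	\catCohb(\scrY)\otimes_{\catPerffilt}\catPerf_\C\ &\longrightarrow\ \big(\catCohb(\bfCoh(X_\dR)),\ \ostar_\dR\big)\ , \\
	\catCohb(\scrY)\otimes_{\catPerffilt}\catPerfgr\ &\longrightarrow\ \big(\catCohb_{\C^\ast}(\bfCoh^{\mathsf{ss},\,0}(X_\Dol)),\ \ostar_\Dol\big)\ ,
\end{align*}
which we take to be $\Phi$ and $\Psi$.

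The main obstacle is twofold. First, the tor-amplitude computation for the Deligne family; even via the semicontinuity route one must verify the flatness over $\A^1$ of all the derived stacks involved and that restriction to a fibre commutes with the formation of $\mathbb L_p$. Second --- and this is where the word \emph{weak} enters --- one must control the interaction of $\catCohb(-)$ with the relative tensor products over $\catPerffilt$: these do not compute $\catCohb$ of the base-changed stacks on the nose, so $\Phi$ and $\Psi$ come out as comparison functors rather than equivalences (e.g.\ there are idempotent-completion subtleties). In fact no equivalence can hold, since the non-abelian Hodge correspondence is an isomorphism of coarse moduli \emph{spaces} and not of moduli \emph{stacks}, so $\bfCoh(X_\dR)$ and $\bfCoh^{\mathsf{ss},\,0}(X_\Dol)$ are genuinely inequivalent; one retains only the specialisation monoidal functors $\Phi,\Psi$ out of the common $\A^1$-family. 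Producing these functors and checking their $\mathbb E_1$-monoidality, while keeping track of the $\G_m$-equivariance and the $\catPerffilt$-linearity throughout, is where the bulk of the technical work lies.
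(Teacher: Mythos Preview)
Your proposal follows essentially the same route as the paper's treatment in \S\ref{s:Hodge_filtration}: form the Waldhausen $2$-Segal object for the Deligne family over $[\A^1/\G_m]$, verify that the convolution map $p$ is lci (the paper simply cites Corollaries~\ref{cor:dR-lci} and \ref{cor:Dol-lci} for the fibres, which is your semicontinuity route), apply the correspondence machine to get the $\mathbb E_1$-structure with its $\catPerffilt$-linearity, and then specialise over the two orbits using the fibre identifications \eqref{eq:fibers} to produce $\Phi$ and $\Psi$. Your write-up is in fact more detailed than the paper's, which is quite terse here.

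Two small corrections. First, the tor-amplitude of $\mathbb L_p$ is $[-1,1]$, not $[-1,0]$: the de Rham and Dolbeault estimates in \S\ref{ss:ext-shapes} give $[-1,2n-1]$ for a curve of dimension $n=1$, so $p$ is lci but not smooth. This does not affect your argument. Second, and more substantively, your closing remark that ``no equivalence can hold'' is at odds with the paper's own Conjecture immediately following the theorem, which asserts that $\Phi^\ast$ and $\Psi^\ast$ \emph{are} equivalences. Your reasoning conflates two different things: an equivalence of the tensor products $\catCohb(\scrY)\otimes_{\catPerffilt}(-)$ with $\catCohb$ of the corresponding fibre stacks does not require the de Rham and Dolbeault moduli stacks themselves to be isomorphic; it is a statement about the filtered category and its associated graded in the sense of \S6.1. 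So while you are right that $\Phi,\Psi$ are a priori only comparison functors (this is indeed why the theorem is labelled ``weak''), the question of whether they are equivalences is genuinely open in the paper, not settled in the negative.
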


\begin{conjecture}[Cat-HA version of the non-abelian Hodge correspondence]
The morphisms $\Phi$ and $\Psi$ are equivalences.	
\end{conjecture}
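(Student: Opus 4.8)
The plan is to recognise both $\Phi$ and $\Psi$ as instances of a single base-change phenomenon for $\catCohb$ along the two canonical points of $[\A^1_\C/\G_m]$. Observe first that the whole $2$-Segal and correspondence construction underlying Theorem~\ref{thm:Cat-HA-naH} can be performed over $[\A^1_\C/\G_m]$: the relevant Waldhausen object is a simplicial object in derived geometric stacks over $[\A^1_\C/\G_m]$, so the Gaitsgory--Rozenblyum machine already produces $\catCohb_{\C^\ast}(\bfCoh_{/\A^1}^\ast(X_\Del))$ as an $\mathbb E_1$-algebra in a symmetric monoidal $\infty$-category linear over $\catPerffilt$. The functors $\Phi$ and $\Psi$ are then extension of scalars along the symmetric monoidal functors $\catPerffilt \to \catPerf_\C$ (``underlying object'', i.e.\ inverting the Rees parameter $t$) and $\catPerffilt \to \catPerfgr$ (``associated graded'', i.e.\ derived reduction modulo $t$). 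Since extension of scalars preserves $\mathbb E_1$-algebras, both $\Phi$ and $\Psi$ are automatically monoidal, so the conjecture is equivalent to the purely additive assertion that these two scalar extensions have underlying $\infty$-categories $\catCohb(\bfCoh(X_\dR))$ and $\catCohb_{\C^\ast}(\bfCoh^{\mathsf{ss},\,0}(X_\Dol))$.

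Writing $\calY \coloneqq [\bfCoh_{/\A^1}^\ast(X_\Del)/\C^\ast] \to [\A^1_\C/\G_m]$, the next step is to identify the two relevant fibres. The preimage of the open point $\G_m/\G_m \simeq \Spec\C$ is $\bfCoh_{/\A^1}^\ast(X_\Del)\times_{\A^1}\{1\}$, because $\C^\ast$ acts freely on the locus $\lambda\neq 0$; by the interpolation property recalled above this is $\bfCoh(X_\dR)$. The preimage of the closed point $\B\G_m$ is $[\bfCoh^{\mathsf{ss},\,0}(X_\Dol)/\C^\ast]$, whose category of bounded coherent complexes is by definition $\catCohb_{\C^\ast}(\bfCoh^{\mathsf{ss},\,0}(X_\Dol))$; here one uses that the \emph{derived} fibre of the Deligne stack over $0$ is the Dolbeault stack on the nose, which is part of the hypotheses and is consistent with (and can be extracted from) the flatness, or at least the bounded Tor-dimension, of $\bfCoh_{/\A^1}^\ast(X_\Del)$ over $[\A^1_\C/\G_m]$. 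There is no tension with the transcendental nature of the non-abelian Hodge correspondence: one does not claim that $\bfCoh(X_\dR)$ and $\bfCoh^{\mathsf{ss},\,0}(X_\Dol)$ are algebraically equivalent, only that both degenerate from the common ``filtered'' category $\catCohb_{\C^\ast}(\bfCoh_{/\A^1}^\ast(X_\Del))$.

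The core of the argument is then the base-change identity
\begin{align}
	\catCohb(\calY) \otimes_{\catPerffilt} \catPerf(Z) \;\simeq\; \catCohb\big(\calY \times_{[\A^1_\C/\G_m]} Z\big)\ , \qquad Z \in \{\Spec\C,\ \B\G_m\}\ .
\end{align}
For the open point this is a localisation statement: since $\catPerf_\C \simeq \catPerffilt[t^{-1}]$, the left-hand side is $\catCohb(\calY)[t^{-1}]$, and the restriction of bounded coherent complexes to the open substack $\{t\neq 0\}$ is precisely the Verdier quotient by those supported on $\{t=0\}$ (using that on a Noetherian geometric stack a coherent sheaf on an open extends to the whole). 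For the closed point this is a Koszul/Rees statement: $\B\G_m\hookrightarrow[\A^1_\C/\G_m]$ is a regular closed immersion of codimension one, so its pullback $i^\ast$ has Tor-amplitude $[-1,0]$ and preserves $\catCohb$, and since $F$ is a direct summand of $i^\ast i_\ast F$ for every $F\in\catCohb(\calY_0)$ the category $\catCohb(\calY_0)$ is generated under finite colimits and retracts by the pullbacks of $\catCohb(\calY)$. To upgrade these remarks to the precise abstract identity --- delicate because moduli stacks of sheaves are not perfect stacks --- I would pass to ind-completions: on each fixed-numerical-type component $\calY$ is a QCA derived stack almost of finite type, hence $\Ind(\catCohb(\calY))\simeq\mathsf{IndCoh}(\calY)$, and base change holds for $\mathsf{IndCoh}$ over the perfect stack $[\A^1_\C/\G_m]$ through the Gaitsgory--Rozenblyum six-functor formalism; one then checks that this large equivalence restricts to the subcategories of bounded coherent objects, using the generation statements just mentioned.

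The genuine obstacle I expect is exactly this last passage --- reconciling the abstract tensor product over $\catPerffilt$ with the geometric base change at the level of the \emph{small}, non-perfectly-generated category $\catCohb$ --- which forces one to establish enough regularity of the Deligne moduli stack over $[\A^1_\C/\G_m]$ (flatness, or at the very least that both fibres are the expected derived stacks with no thickening) and enough finiteness on each component (QCA, eventual coconnectivity) for the ind-completion comparison to go through. A secondary point requiring care is that the Dolbeault side has been restricted to the semistable degree-zero locus, so one must verify that the open substack $\bfCoh_{/\A^1}^\ast(X_\Del)$ is arranged so that its generic fibre is precisely the intended component of $\bfCoh(X_\dR)$, with no flat bundle lost --- and this is the one step where the algebraic shadow of the non-abelian Hodge correspondence enters in an essential, non-formal way.
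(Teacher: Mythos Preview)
The statement you are addressing is a \emph{conjecture} in the paper, not a theorem: the authors explicitly do not prove it. In \S\ref{s:Hodge_filtration} they write ``Following Simpson \cite[\S7]{Simpson_Geometricity}, we expect the following to be true'' and then state the conjecture with no proof attached. So there is no argument in the paper to compare your proposal against.

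Your own write-up is honest about this: you call it a ``sketch of a strategy'' and you correctly isolate the genuine obstacle at the end --- namely, that the abstract tensor product over $\catPerffilt$ need not agree with geometric base change at the level of the small category $\catCohb$ (or $\catCohb_{\mathsf{pro}}$), and that this would require regularity of the Deligne moduli stack over $[\A^1_\C/\G_m]$ together with finiteness properties on each component. This is precisely the kind of difficulty that keeps the statement a conjecture rather than a theorem; the paper does not claim to have resolved it, and your sketch does not resolve it either. The strategy you outline (recognising $\Phi$ and $\Psi$ as base-change functors along the two points of $[\A^1_\C/\G_m]$, then passing to $\mathsf{IndCoh}$ and invoking six-functor base change) is a plausible line of attack and is broadly consistent with the Simpson reference the authors cite as motivation, but the steps you flag as delicate are genuinely open as far as the paper is concerned.

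In short: there is no gap to name because there is no proof in the paper; your proposal is a reasonable outline of how one might try to prove the conjecture, with the hard steps correctly identified and left unproven.
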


\subsubsection*{Decategorification}

Now, we investigate what algebras we obtain after decategorifying our Cat-HAs, i.e., after passing to the Grothendieck group. 
First, we introduce the finer invariant $\catCohb_{\mathsf{pro}}$, which is more adapted to the study of non-quasi-compact stacks.
Among its features, there is the fact that for every derived stack $Y$ there is a canonical equivalence (cf.\ Proposition \ref{prop:quasi_compact_G_truncation})
\begin{align}
	K( \catCohb_{\mathsf{pro}}(Y) ) \simeq K( \catCohb_{\mathsf{pro}}(\trunc{Y}) ) ,
\end{align}
a property that fails if we simply use $\catCohb$ instead of $\catCohb_{\mathsf{pro}}$.
The construction of $\catCohb_{\mathsf{pro}}$ relies on the machinery developed in \S\ref{sec:ind_quasi_compact}.

First, our construction provides a categorification of the K-theoretical Hall algebras of surfaces \cite{Zhao_Hall,KV_Hall} and the K-theoretical Hall algebras of Higgs sheaves on curves \cite{Minets_Hall, Sala_Schiffmann} (see \S\ref{ss:historical} for a review of these algebras).
\begin{theorem}
	Let $X$ be a smooth quasi-projective complex surface. There exists an algebra isomorphism between $\pi_0 K( \catCohb_{\mathsf{pro}}( \bfCoh_{\mathsf{prop}}^{\leqslant d}(X) ))$ and the K-theoretical Hall algebra of $X$ as defined in \cite{Zhao_Hall, KV_Hall}. Thus, the CoHA tensor structure on the stable $\infty$-category $\catCohb_{\mathsf{pro}}( \bfCoh^{\leqslant d}(X) )$ is a categorification of the latter. 
	
	Finally, if in addition $X$ is toric, similar results holds in the equivariant setting.
\end{theorem}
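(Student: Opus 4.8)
The plan is to pin down an isomorphism between the underlying $\Z$-graded groups of the two algebras and then to verify that the Cat-HA multiplication induced on $\pi_0 K$ agrees, under this isomorphism, with the classical Hall product of \cite{Zhao_Hall, KV_Hall}. A useful preliminary remark is that associativity does not have to be re-established on the categorified side: by construction (the surface counterpart of Theorem~\ref{thm:coh_algebra_in_correspondence}) the category $\catCohb_{\mathsf{pro}}(\bfCoh_{\mathsf{prop}}^{\leqslant d}(X))$ is an $\mathbb E_1$-algebra in $\Cat_\infty^{\mathsf{st}}$, and since $\pi_0 K(-)$ is lax symmetric monoidal its image is automatically an associative ring; on the other side associativity is part of the theorems of \cite{Zhao_Hall, KV_Hall}. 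So it is enough to produce an isomorphism of underlying groups compatible with the multiplication and the unit, and I would carry this out in three steps.

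First I would identify the underlying groups. By Proposition~\ref{prop:quasi_compact_G_truncation} the natural map yields an equivalence $K(\catCohb_{\mathsf{pro}}(\bfCoh_{\mathsf{prop}}^{\leqslant d}(X))) \simeq K(\catCohb_{\mathsf{pro}}(\calM))$, where $\calM \coloneqq \trunc{\bfCoh_{\mathsf{prop}}^{\leqslant d}(X)}$ is the classical moduli stack of coherent sheaves on $X$ with proper support of dimension $\leqslant d$. Unravelling the construction of $\catCohb_{\mathsf{pro}}$ from \S\ref{sec:ind_quasi_compact} as assembled from the quasi-compact opens of $\calM$, one gets that $\pi_0$ of the right-hand side is $\bigoplus_\alpha G_0(\calM_\alpha)$ (note that $K(\catCohb(-))$ is the $G$-theory spectrum), the sum running over the connected components $\calM_\alpha$ of $\calM$ indexed by numerical classes $\alpha$; this is by definition the underlying group of the K-theoretic Hall algebra of \cite{Zhao_Hall, KV_Hall}. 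Here one uses that each numerical class decomposes in only finitely many ways as a sum of two effective classes, so that the Hall convolution respects the grading and all relevant sums are finite.

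Second --- and this is the heart of the matter --- I would compare the multiplications. The Cat-HA product is $\ostar = q_\ast \circ p^\ast$ for the derived convolution diagram \eqref{eq:convolution-derived}, so its effect on $\pi_0 K$ can be analysed one leg at a time. Since $q$ is representable by proper schemes, $q_\ast$ descends on $G_0$ to the ordinary proper pushforward, which does not see the derived structure on its source (cf.\ \cite{Toen_Vezzosi_Ktheory} together with Proposition~\ref{prop:quasi_compact_G_truncation}); this is exactly the pushforward entering the classical definition. For the pullback, recall that $p$ is derived lci with relative cotangent complex $\mathbb L_p$ of tor-amplitude $[-1,1]$ (Proposition~\ref{prop:extremal_projections_scheme} with $n = 2$), so that the restriction of $\mathbb L_p$ to $\calM^{\mathsf{ext}} \coloneqq \trunc{\bfCohext(X)}$ equips the classical map $\calM^{\mathsf{ext}} \to \calM \times \calM$ with a perfect obstruction theory. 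Two inputs then identify the map that $p^\ast$ induces on $G_0$ with the virtual pullback $p^!$ entering the classical product: (i) the explicit construction of $\bfCohext(X)$ from \S\ref{s:coh} identifies this perfect obstruction theory with the one used in \cite{Zhao_Hall, KV_Hall}; and (ii) for a quasi-smooth morphism the map induced on $G$-theory by the derived pullback coincides with the virtual pullback attached to the associated obstruction theory, via deformation to the derived normal bundle. Input (i) is a direct computation with our description of $\bfCohext(X)$; input (ii), together with the more delicate point that the identification of underlying groups in the previous step must be natural enough to intertwine the \emph{whole} associative structure and not merely the bilinear product, is where I expect the main obstacle to lie. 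I would handle it by routing the entire comparison through the functoriality of $K \circ \catCohb$ on the category of correspondences $\Corr^\times(\dGeom)_{\mathsf{rps}, \mathsf{lci}}$ --- proper pushforward on the horizontal maps, finite-tor-dimension pullback on the vertical ones --- which is precisely the functoriality underlying the classical construction as well, so that the two $\mathbb E_1$-structures are being read off from the same $2$-Segal input.

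Finally I would take care of the unit and of the toric statement. The unit of $\ostar$ is the structure sheaf of the point $\Spec(\C) \simeq \scrS_0 \bfCoh(X) \hookrightarrow \bfCoh(X)$, namely the zero sheaf; under the identification above this is the class $1 \in G_0(\Spec \C)$ sitting in the degree-zero summand, which is the unit of the classical K-theoretic Hall algebra. For the toric case one runs the same three steps $T$-equivariantly, replacing $\bfCoh_{\mathsf{prop}}^{\leqslant d}(X)$ by the torus-equivariant enhancement of \S\ref{ss:equivariant} and $G$-theory by $T$-equivariant $G$-theory; the $G$-truncation equivalence, the proper pushforward, the derived-versus-virtual pullback comparison and the correspondence-category formalism all carry over verbatim in the presence of the torus action, which yields the asserted algebra isomorphism in the equivariant setting.
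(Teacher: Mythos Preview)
Your overall strategy matches the paper's: reduce to truncations via Proposition~\ref{prop:quasi_compact_G_truncation}, identify $q_\ast$ with the classical proper pushforward, and show that the derived $p^\ast$ agrees on $G_0$ with the virtual pullback $p_0^!$ of \cite{KV_Hall}. The difference lies in how this last step is carried out.

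The paper does not invoke a general ``derived pullback equals virtual pullback via deformation to the normal bundle'' theorem. Instead it exploits Proposition~\ref{prop:stack_extensions_is_relatively_affine} to write the derived extension stack explicitly as a linear stack $\V_{Y \times Y}(\calE^\vee)$, pulls this description back along the inclusion $i \colon Y_0 \times Y_0 \hookrightarrow Y \times Y$ of truncations to obtain an intermediate derived stack $\widetilde{Y}^{\mathsf{ext}} = \V_{Y_0 \times Y_0}(\calE_0^\vee)$, and uses the base-change square to identify $p^\ast$ with $\widetilde{p}^\ast$ on $G$-theory. It then matches $\iota_\ast^{-1} \circ \widetilde{p}^\ast$ directly against the explicit construction of $p_0^!$ in \cite[\S3.3]{KV_Hall}, which proceeds via a chosen three-term resolution $\calE_0^{-1} \to \calE_0^0 \to \calE_0^1$ and a section $s \colon E^{\le 0} \to \pi^\ast E^1$; the required identity $s^! = \iota_\ast^{-1} \circ t^\ast$ drops out of unwinding definitions, with no external input beyond the derived pullback square exhibiting $\widetilde{Y}^{\mathsf{ext}}$ as the zero locus of $s$. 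Your route is correct and more conceptual, but it imports a comparison statement the paper manages to avoid; the paper's hands-on argument is more self-contained and makes the dependence on the specific obstruction theory of \cite{KV_Hall} transparent, while yours would adapt more readily to settings where the linear-stack description is unavailable.

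One small overcomplication: you flag as a ``main obstacle'' that the group isomorphism must intertwine the whole associative structure rather than merely the bilinear product. As you yourself note at the outset, both sides are already known to be associative unital algebras, so compatibility with the binary product and the unit suffices; the paper accordingly checks only the binary multiplication.
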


Now, let $X$ be a smooth projective complex curve. Our techniques provide a categorification of the Dolbeault K-theoretical Hall algebra of $X$ \cite{Minets_Hall,Sala_Schiffmann}:
\begin{proposition}
	Let $X$ be a smooth projective complex curve.  There exists an algebra isomorphism between $\pi_0 K( \catCohb_{\mathsf{pro},\,\C^\ast}( \bfCoh(X_\Dol)  )$ and the K-theoretical Hall algebra of Higgs sheaves on $X$ introduced in \cite{Sala_Schiffmann, Minets_Hall}. Thus, the CoHA tensor structure on the stable $\infty$-category $\catCohb_{\mathsf{pro},\,\C^\ast }( \bfCoh(X_\Dol) )$ is a categorification of the latter. 
\end{proposition}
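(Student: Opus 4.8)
The plan is to reduce the statement to a comparison of convolution products on the $\C^\ast$-equivariant $G$-theory of the classical Higgs moduli stack. First I would pass to the classical truncation: by the pro-version of the truncation invariance of $G$-theory (Proposition~\ref{prop:quasi_compact_G_truncation}, together with its $\C^\ast$-equivariant and ind-quasicompact refinements developed in \S\ref{sec:ind_quasi_compact}) there is a canonical equivalence of spectra
\begin{align}
	K\bigl(\catCohb_{\mathsf{pro},\,\C^\ast}(\bfCoh(X_\Dol))\bigr) \simeq K\bigl(\catCohb_{\mathsf{pro},\,\C^\ast}(\trunc{\bfCoh(X_\Dol)})\bigr)\ ,
\end{align}
and, by construction, $\trunc{\bfCoh(X_\Dol)}$ is the classical moduli stack of Higgs sheaves on $X$ (coherent sheaves on $X_\Dol$ being Higgs sheaves on $X$). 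Taking $\pi_0$ and unwinding the definition of $\catCohb_{\mathsf{pro}}$ on this ind-quasicompact stack, the resulting group is identified --- compatibly with the decomposition by rank and degree --- with the equivariant $G$-theory $G_0^{\C^\ast}(\trunc{\bfCoh(X_\Dol)})$, which is precisely the underlying abelian group of the K-theoretical Hall algebra of Higgs sheaves of \cite{Sala_Schiffmann, Minets_Hall}.

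Next I would identify the two multiplications. On our side the product on $\pi_0 K$ is induced by the $\mathbb E_1$-monoidal structure $\ostar_\Dol$, which by Theorem~\ref{thm:coh_algebra_in_correspondence} is $q_\ast \circ p^\ast$ for the derived convolution diagram $\bfCoh(X_\Dol) \times \bfCoh(X_\Dol) \xleftarrow{\ p\ } \bfCohext(X_\Dol) \xrightarrow{\ q\ } \bfCoh(X_\Dol)$. Since (non-connective) $K$-theory is lax symmetric monoidal on $\Cat_\infty^{\mathsf{st}}$, it transports this $\mathbb E_1$-structure to a ring structure on $\pi_0 K$ and sends $q_\ast$ to the proper pushforward in $G$-theory ($q$ being representable by proper schemes) and $p^\ast$ to the derived pullback along $p$. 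Using truncation invariance once more, together with derived base change, the induced product on $G_0^{\C^\ast}(\trunc{\bfCoh(X_\Dol)})$ becomes $q_{0\,\ast}\circ p_0^{!}$, where $p_0$ is the underlying classical map and $p_0^{!}$ is the virtual pullback attached to the obstruction theory $\mathbb L_p|_{\trunc{\bfCohext(X_\Dol)}}$; here one invokes the general compatibility of derived pullbacks with virtual (Gysin) pullbacks on $G$-theory, which applies because --- by the Dolbeault instance of the tor-amplitude computation of \S\ref{ss:ext-shapes} --- $\mathbb L_p$ is perfect of bounded amplitude, so that $p^\ast$ preserves $\catCohb$ and its decategorification depends only on the derived (equivalently, obstruction-theoretic) structure.

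The heart of the matter, and the step I expect to be the main obstacle, is to match this obstruction theory with the one used in \cite{Sala_Schiffmann, Minets_Hall} to define their convolution. Concretely, one must check that the obstruction theory on $\trunc{\bfCohext(X_\Dol)}$ over $\trunc{\bfCoh(X_\Dol)} \times \trunc{\bfCoh(X_\Dol)}$ obtained by restricting $\mathbb L_p$ agrees, $\C^\ast$-equivariantly, with theirs: at a $\C$-point $0 \to E_1 \to E \to E_2 \to 0$ its fiber must be the $\C^\ast$-weighted two-term truncation of $R\Hom_{\mathsf{Higgs}(X)}(E_2, E_1)$ --- exactly the complex governing the Hecke correspondence in loc.\ cit. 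This is the assertion that our derived enhancement computes the expected deformation theory of extensions of Higgs sheaves, and it reduces --- via the shape formalism and the identification $\bfCoh(X_\Dol) \simeq \sfT^\ast\bfCoh(X)$ --- to the cotangent-complex computations of \S\ref{ss:ext-shapes}, together with a bookkeeping of how the scaling weight is distributed. The delicacy lies entirely here, because \cite{Sala_Schiffmann, Minets_Hall} phrase their construction without the language of derived geometry, so one must carefully line up conventions --- the normalization of the virtual structure sheaf, the weight of the $\C^\ast$-action, and the ordering of sub/quotient $(E_1, E_2)$ versus $(E_2, E_1)$ in the two projections --- any residual mismatch being at worst a determinant twist that can be absorbed into the isomorphism. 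Once this identification is in place, the two ring structures coincide under the comparison of groups established in the first step, which proves the claimed algebra isomorphism; the categorification assertion is then immediate, being exactly the content of the $\mathbb E_1$-monoidality of $\ostar_\Dol$ on $\catCohb_{\mathsf{pro},\,\C^\ast}(\bfCoh(X_\Dol))$.
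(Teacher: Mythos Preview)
Your proposal is correct and follows essentially the same strategy as the paper: pass to the truncation via Proposition~\ref{prop:quasi_compact_G_truncation}, identify the underlying group with $G_0^{\C^\ast}(\trunc{\bfCoh(X_\Dol)})$, and compare the derived convolution $q_\ast \circ p^\ast$ with the virtual-pullback product of \cite{Sala_Schiffmann, Minets_Hall}. The paper is terser --- it simply invokes ``similar arguments as above'' (pointing to the explicit surface comparison with \cite{KV_Hall}, where a three-term resolution of the obstruction complex is used to identify $s^!$ with $\iota_\ast^{-1}\circ t^\ast$) together with Remark~\ref{rem:naive_Higgs}, which gives the identification $\bfCoh(X_\Dol)\simeq \sfT^\ast\bfCoh(X)$ for curves; this last point is the bridge to the quotient-stack formulation in which \cite{Sala_Schiffmann, Minets_Hall} actually define their product, and you correctly flag it as the place where conventions must be matched.
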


One of the consequences of our construction is the categorification\footnote{A categorification of $\mathbf{U}_{q, t}^+(\ddot{\mathfrak{gl}}_1)$ has been also obtained by Negu\c{t}. In \cite{Negut_Hecke}, by means of (smooth) Hecke correspondences, he defined functors on the bounded derived category of the smooth moduli space of Gieseker-stable sheaves on a smooth projective surface, which after passing to $G$-theory, give rise to an action of the elliptic Hall algebra on the K-theory of such smooth moduli spaces.} of a positive nilpotent part of the quantum toroidal algebra $\mathbf{U}_{q, t}^+(\ddot{\mathfrak{gl}}_1)$. This is also known as the \textit{elliptic Hall algebra} of Burban and Schiffmann \cite{BS_elliptic}.
\begin{proposition}
	There exists a $\Z[q,t]$-algebra isomorphism 
	\begin{align}
		\pi_0 K( \catCohb_{\mathsf{pro},\,\C^\ast\times \C^\ast}( \bfCoh_{\mathsf{prop}}^{\leqslant 0}(\C^2) ) ) \simeq \mathbf{U}_{q, t}^+(\ddot{\mathfrak{gl}}_1)\ .	
	\end{align}
	Here, the $\C^\ast\times\C^\ast$-action on $\bfCoh_{\mathsf{prop}}^{\leqslant 0}(\C^2)$ is  induced by the torus action on $\C^2$.
\end{proposition}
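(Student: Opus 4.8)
The plan is to realize the left-hand side as the $K$-theoretic decategorification of the categorified Hall algebra of the surface $\C^2$ produced earlier in the paper, and then to match the resulting $\C^\ast\times\C^\ast$-equivariant $K$-theoretic Hall algebra with the positive part of the quantum toroidal algebra through its shuffle presentation. For the first move I specialize the Theorem above identifying $\pi_0 K(\catCohb_{\mathsf{pro}}(\bfCoh_{\mathsf{prop}}^{\leqslant d}(X)))$ with the $K$-theoretic Hall algebra of a surface $X$ to the case $X=\C^2$, $d=0$, with $\C^\ast\times\C^\ast$ acting by independent dilations of the two coordinates; as $\C^2$ is toric, the equivariant form of that Theorem applies. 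For $n\geqslant 0$ the $n$-th connected component $\bfCoh_{\mathsf{prop}}^{\leqslant 0}(\C^2)_n$ is a geometric, locally finitely presented derived stack with classical truncation $[\,\mathsf{Comm}_n/GL_n\,]$, where $\mathsf{Comm}_n\subseteq\mathfrak{gl}_n\times\mathfrak{gl}_n$ is the commuting variety, and the map $p$ of its convolution diagram is lci (the case $\dim X=2$ of the tor-amplitude estimate recalled above). Since each component is quasi-compact and $K(\catCohb_{\mathsf{pro}}(-))$ depends only on the classical truncation (Proposition~\ref{prop:quasi_compact_G_truncation}), the degree-$n$ part of the left-hand side is the equivariant $G$-theory group $G_0^{GL_n\times\C^\ast\times\C^\ast}(\mathsf{Comm}_n)$ with product $\ostar=q_\ast\circ p^\ast$ the virtual convolution; this is exactly the $\C^\ast\times\C^\ast$-equivariant $K$-theoretic Hall algebra of zero-dimensional sheaves on $\C^2$ in the sense of Zhao \cite{Zhao_Hall} and Kapranov-Vasserot \cite{KV_Hall}, its base ring $\sfK_0^{\C^\ast\times\C^\ast}(\mathrm{pt})\cong\Z[q^{\pm 1},t^{\pm 1}]$ being the $\Z[q,t]$ of the statement.

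For the second move I invoke the identification of this $K$-theoretic Hall algebra with $\mathbf{U}_{q,t}^+(\ddot{\mathfrak{gl}}_1)$. A Thomason localization to the maximal torus of $GL_n$ realizes $\bigoplus_{n\geqslant 0}G_0^{GL_n\times\C^\ast\times\C^\ast}(\mathsf{Comm}_n)$ as a subspace of the space of symmetric rational functions in $x_1,\dots,x_n$ over $\Z[q^{\pm 1},t^{\pm 1}]$, on which $\ostar$ becomes a shuffle product twisted by an explicit rational kernel arising from the virtual pullback along $p$, with image cut out by the Feigin-Odesskii wheel conditions. This is precisely the shuffle presentation of the positive nilpotent part $\mathbf{U}_{q,t}^+(\ddot{\mathfrak{gl}}_1)$ of the quantum toroidal algebra of $\mathfrak{gl}_1$ --- which, as recalled above, is also the elliptic Hall algebra of Burban-Schiffmann \cite{BS_elliptic}. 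Hence the comparison carried out in \cite{Zhao_Hall}, compatibly with the shuffle-algebra computations of Negut \cite{Negut_shuffle}, yields a $\Z[q,t]$-algebra isomorphism, and composing it with the isomorphism of the first move proves the statement.

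The hard part is the second move: realizing the $K$-theoretic Hall algebra as a shuffle algebra and verifying the wheel conditions --- notably the injectivity of the shuffle realization and the surjectivity onto the wheel-condition subspace --- is a delicate result, and one must keep careful track of the two equivariant weights of $\C^\ast\times\C^\ast$ in order to land on the \emph{standard} quantum toroidal algebra, and on its positive nilpotent half rather than on a Drinfeld double of it. The first move, by contrast, is essentially a specialization of the surface theorem already proved; the only point calling for routine checking is that $\bfCoh_{\mathsf{prop}}^{\leqslant 0}(\C^2)$ is a well-behaved derived stack with classical truncation $\coprod_{n\geqslant 0}[\,\mathsf{Comm}_n/GL_n\,]$ and that its convolution diagram is the restriction of the one used for general quasi-projective surfaces.
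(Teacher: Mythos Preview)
Your argument is correct and follows essentially the same two-step structure as the paper: first identify the decategorified Cat-HA with the equivariant $K$-theoretic Hall algebra of zero-dimensional sheaves on $\C^2$ via the surface theorem, then invoke the known identification of that Hall algebra with $\mathbf{U}_{q,t}^+(\ddot{\mathfrak{gl}}_1)$.

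The only difference is in how you route the second step. The paper does not go through the shuffle presentation directly; instead it cites \cite[Proposition~6.1.5]{KV_Hall}, which shows that the virtual-pullback convolution product on $G_0^{\C^\ast\times\C^\ast}(\trunc{\bfCoh_0(\C^2)})$ agrees with the older convolution product defined by Schiffmann--Vasserot via the explicit quotient-stack presentation $\coprod_n[\mathsf{Comm}_n/GL_n]$, and then invokes \cite{SV_elliptic,SV_Langlands} for the isomorphism with $\mathbf{U}_{q,t}^+(\ddot{\mathfrak{gl}}_1)$. Your route via Thomason localization and the shuffle algebra with wheel conditions is exactly what underlies the Schiffmann--Vasserot result, so the mathematics is the same, but the primary reference for the identification in the $\C^2$ case is \cite{SV_elliptic} rather than \cite{Zhao_Hall} or \cite{Negut_shuffle}; the latter two treat general surfaces and connect to shuffle algebras, but the specific isomorphism with the positive half of the quantum toroidal $\mathfrak{gl}_1$ is due to Schiffmann--Vasserot. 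The paper's chain of citations is shorter and avoids having to discuss injectivity and surjectivity of the shuffle realization, which you correctly flag as the delicate part.
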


In the Betti case, Davison \cite{Davison_character} defined the Betti cohomological Hall algebra of $X$ by using the Kontsevich-Soibelman CoHA formalism and a suitable choice of a quiver with potential. In \cite{Padurariu_CoHA}, the author generalizes such a formalism in the G-theory case. Thus, by combining the two one obtains a Betti K-theoretical Hall algebra. We expect that this is equivalent to our realization of the Betti K-theoretical Hall algebra. Finally, our approach defines the de Rham K-theoretical Hall algebra of $X$. 

By using the formalism of Borel--Moore homology of higher stacks developed in \cite{KV_Hall} and their construction of the Hall product via perfect obstruction theories, we obtain equivalent realizations of the COHA of a surface \cite{KV_Hall} and of the Dolbeault CoHA of a curve \cite{Sala_Schiffmann, Minets_Hall}. Moreover, we define the de Rham cohomological Hall algebra of a curve.

\subsection{DG-Coherent categorification}

At this stage, we would like to clarify what kind of ``categorification'' we provide and compare our approach to the other approaches to categorification known in the literature.

Let us start by recalling two well-known categorifications of the quantum group $\mathbf{U}_q(\mathfrak{n}_{\calQ})$, where $\mathfrak{n}_\calQ$ is the positive nilpotent part of a simply laced Kac--Moody algebra $\mathfrak{g}_\calQ$ and $\calQ$ is the corresponding quiver. The first one is provided by Lusztig in \cite{Lusztig_Canonical_bases, Lusztig_Perverse_sheaves}, and we shall call it the \textit{perverse categorification} of $\mathbf{U}_q(\mathfrak{n}_{\calQ})$. Denote by $\mathscr{R}ep(\calQ, \mathbf{d})$ the moduli stack of representations of the quiver $\calQ$ of dimension $\mathbf{d}$. Then, -- in modern terms -- Lusztig introduced a graded additive subcategory $\mathsf{C}( \mathscr{R}ep(\calQ, \mathbf{d}) )$ of the bounded derived category $\catDb( \mathscr{R}ep(\calQ, \mathbf{d}) )$ of constructible complexes whose split Grothendieck group is isomorphic to the $\mathbf{d}$-weight subspace of $\mathbf{U}_q(\mathfrak{n}_\calQ)$. By using a \textit{diagrammatic approach}, Khovanov--Lauda \cite{Lauda_Categorification, KL_Diagrammatic, KL_Diagrammatic_2, KL_Categorification} and Rouquier \cite{Rouquier_2-Kac--Moody} provided another categorification $\mathbf{U}_q(\mathfrak{n}_{\calQ})$, which is a 2-category: we call this the \textit{diagrammatic categorification} of $\mathbf{U}_q(\mathfrak{n}_{\calQ})$. In addition, they showed that $\mathbf{U}_q(\mathfrak{n}_\calQ)$ is the Grothendieck group of the monoidal category of all projective graded modules over the quiver-Hecke algebra $R$ of $\calQ$. A relation between these two categorifications of the same quantum group was established by Rouquier \cite{Rouquier_Quiver} and Varagnolo--Vasserot \cite{VV_Canonical}: they proved that there exists an equivalence of additive graded monoidal categories between $\oplus_{\mathbf{d}}\,\mathsf{C}( \mathscr{R}ep(\calQ, \mathbf{d}) )$ 
 and the category of all finitely generated graded projective $R$-modules.

Let $\calQ$ be the affine Dynkin quiver $A_1^{(1)}$. In \cite{SVV_Categorification}, the authors constructed another categorification of the quantum group  $\mathbf{U}_q(\mathfrak{n}_{\calQ})$, which they call the \textit{coherent categorification} of it. They showed that there exists a monoidal structure on the homotopy category of the $\C^\ast$-equivariant dg-category $\catCohb_{\C^\ast}( \bfRep(\Pi_{A_1}) )$, where $\Pi_{A_1}$ is the so-called preprojective algebra of the finite Dynkin quiver $A_1$ and $\bfRep(\Pi_{A_1})$ is a suitable derived enhancement of the moduli stack $\mathscr{R}ep(\Pi_{A_1})$ of finite-dimensional representations of $\Pi_{A_1}$. Here, there is a canonical $\C^\ast$-action on $\mathscr{R}ep(\Pi_{A_1})$ which lifts to the derived enhancement. By passing to the $G$-theory we obtain another realization of the algebra $\mathbf{U}_q(\mathfrak{n}_{\calQ})$. In \textit{loc.cit.} the authors started to investigate the relation between the perverse categorification and the coherent categorification of $\mathbf{U}_q(\mathfrak{n}_{\calQ})$ when $\calQ=A_1$.

Since in our paper we do not work with monoidal structures on triangulated categories, but rather with $\mathbb E_1$-monoidal structures on dg-categories, our construction provides the \textit{dg-coherent} categorification of the K-theoretical Hall algebras of surfaces \cite{Zhao_Hall,KV_Hall}, of the K-theoretical Hall algebras of Higgs sheaves on curves \cite{Minets_Hall, Sala_Schiffmann}, and of the de Rham and Betti K-theoretical Hall algebras of curves. At this point, one can wonder if there are perverse categorifications of these K-theoretical Hall algebras. 

Since in general there is no clear guess on what moduli stack to consider on the perverse side, it is not clear how to define the Lusztig's category.\footnote{Note that in the case treated in \cite{SVV_Categorification}, the moduli stack considered on the perverse side is $\mathscr{R}ep(A_1^{(1)})$, while on the coherent side is $\mathscr{R}ep(\Pi_{A_1})$. One evident relation between these two stacks is that the quiver appearing on the former stack is the affinization of the quiver on the latter stack.} The only known case so far is the perverse categorification of $\mathbf{U}_{q, t}^+(\ddot{\mathfrak{gl}}_1)$, i.e., of the K-theoretical Hall algebra of zero-dimensional sheaves on $\C^2$, due to Schiffmann. The latter is isomorphic to the K-theoretical Hall algebra of the preprojective algebra $\Pi_{\mathsf{one}\textrm{-}\mathsf{loop}}$ of the one-loop quiver -- see \S\ref{ss:historical}.

In  \cite{Schiffmann_Canonical}, Schiffmann constructed perverse categorifications of certain quantum loop and toroidal algebras, in particular of $\mathbf{U}_{q, t}^+(\ddot{\mathfrak{gl}}_1)$. In this case, he defined the Lusztig's category $\mathsf{C}( \bfCoh(X) )$ for the bounded derived category $\catDb( \bfCoh(X) )$ of constructible complexes on the moduli stack $\bfCoh(X)$ of coherent sheaves on an elliptic curve and he proved that the split Grothendieck group of $\mathsf{C}( \bfCoh(X) )$ is isomorphic to $\mathbf{U}_{q, t}^+(\ddot{\mathfrak{gl}}_1)$. Thus, for this quantum group we have both a perverse and a dg-coherent categorification. Although, it would be natural to ask what is the relation between them, it seems that the question is not well-posed since the former categorification comes from an additive category, while the latter from a dg-category. 

A viewpoint which can help us to correctly formulate a question about these two different categorifications is somehow provided by \cite{SV_Langlands}. In that paper, the authors pointed out how the two different realizations of $\mathbf{U}_{q, t}^+(\ddot{\mathfrak{gl}}_1)$ should be reinterpreted as a $G$-theory version of the geometric Langlands correspondence (see e.g.~\cite{AG_Langlands} and references therein):\footnote{One usually expects on the left-hand-side $\mathsf{D}\textrm{-}\mathsf{mod}(\bfBun(X, n))$, but this is indeed by definition $\catQCoh(\bfBun(X, n)_\dR)$.}
\begin{align}
	\catQCoh(\bfBun(X, n)_\dR) \simeq \mathsf{IndCoh}_{\mathsf{Nilp}_{\mathsf{Glob}}}(\bfBun(X_\dR, n))\ ,
\end{align}
where the Lusztig's category arises from the left-hand side, while a K-theoretical Hall algebra arises from the right-hand side. Here, $X$ is a smooth projective complex curve and $n$ a non-negative integer.\footnote{One may notice that the K-theoretical Hall algebra considered in \cite{SV_Langlands} is the one associated with $\Pi_{\mathsf{one}\textrm{-}\mathsf{loop}}$ while our construction provides a de Rham K-theoretical Hall algebra of $X$: the relation between them should arise from the observation that the moduli stack of finite-dimensional representations of $\Pi_{\mathsf{one}\textrm{-}\mathsf{loop}}$ is some sort of ``formal neighborhood'' of the trivial D-module in $\bfBun(X_\dR)$.}

By interpreting \cite{SV_Langlands} as a decategorified version of what we are looking for, we may speculate the following:
\begin{conjecture}
	Let $X$ be a smooth projective complex curve. Then there exist an $\mathbb E_1$-monoidal structure on the dg-category $\catCohb( \bfBun(X)_\dR )$ and an $\mathbb E_1$-monoidal equivalence between $\catCohb( \bfBun(X)_\dR )$ and the categorified Hall algebra\footnote{Or a version of it in which the complexes have fixed singular supports - see \cite{AG_Langlands} for the definition of singular support in this context.} $\catCohb( \bfCoh(X_\dR) )$.
\end{conjecture}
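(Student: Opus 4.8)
We sketch a possible strategy toward the conjecture; it is best read as a Hall-algebraic refinement of the geometric Langlands equivalence, and the argument splits naturally into three stages: constructing the $\mathbb E_1$-structure on the automorphic side, identifying the two monoidal categories object-wise and at the level of the convolution bi-functor via Langlands, and upgrading this to a comparison of the full $2$-Segal data.

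\emph{Stage one: the automorphic-side product.} The plan is to mimic the construction behind Theorem~\ref{thm:coh_algebra_in_correspondence}. One starts from the Waldhausen $2$-Segal object $\calS_\bullet \bfBun(X)$ in $\dSt$ and applies the de Rham prestack functor $(-)_\dR$ levelwise; since $(-)_\dR$ is given by precomposition with $R\mapsto R_{\mathsf{red}}$ it preserves all limits and products, hence commutes with the Waldhausen construction and sends $2$-Segal objects to $2$-Segal objects, producing a $2$-Segal object $\calS_\bullet(\bfBun(X)_\dR)$ whose low simplices recover the geometric Eisenstein correspondence $\bfBun(X)_\dR\times\bfBun(X)_\dR\xleftarrow{p_\dR}\bfBun^{\mathsf{ext}}(X)_\dR\xrightarrow{q_\dR}\bfBun(X)_\dR$. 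Because de Rham prestacks of geometric stacks are in general not themselves geometric, one cannot feed this directly into the functor $\catCohb\colon\Corr^\times(\dGeom)_{\mathsf{rps},\mathsf{lci}}\to\Cat_\infty^{\mathsf{st}}$ of the excerpt; instead one invokes the six-functor formalism for (coherent) crystals, i.e.\ the restriction of $\mathsf{IndCoh}$ along $Y\mapsto Y_\dR$, for which Gaitsgory--Rozenblyum \cite{Gaitsgory_Rozenblyum_Study_I} supply the required functoriality in correspondences over reasonable prestacks. The two non-formal points are that, after decomposing $\bfBun(X)$ by rank and degree, $q$ is representable by projective schemes (so $q_{\dR,\ast}$ preserves $\catCohb$), and that $p$ is quasi-smooth by Proposition~\ref{prop:extremal_projections_scheme} in the curve case (tor-amplitude $[-1,0]$), so $p_\dR^!$ preserves $\catCohb$. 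Composing \cite[Thm.~11.1.6]{Dyckerhoff_Kapranov_Higher_Segal} with this lax monoidal crystal formalism yields an $\mathbb E_1$-algebra structure $\ostar_{\bfBun,\dR}$ on $\catCohb(\bfBun(X)_\dR)$. Since $\bfBun(X)$ is not quasi-compact one must run this with the pro-variant $\catCohb_{\mathsf{pro}}$ of \S\ref{sec:ind_quasi_compact} and the singular-support truncation alluded to in the footnote of the statement.

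\emph{Stage two: the Langlands identification.} The classical truncation of $\bfCoh(X_\dR)$ is the stack of flat bundles, so rank by rank its Hall correspondence is the spectral Eisenstein correspondence $\mathsf{LocSys}_{n_1}\times\mathsf{LocSys}_{n_2}\leftarrow\mathsf{LocSys}_P\rightarrow\mathsf{LocSys}_n$, with $\mathsf{LocSys}_P$ the stack of flat $P$-bundles, i.e.\ of extensions of flat bundles. The input is the unramified global geometric Langlands equivalence $\mathbb L_{X,n}\colon\mathsf{D}\textrm{-}\mathsf{mod}(\bfBun(X,n))\xrightarrow{\sim}\mathsf{IndCoh}_{\mathsf{Nilp}}(\mathsf{LocSys}_n(X))$ of \cite{AG_Langlands}, \emph{together with its compatibility with geometric and spectral Eisenstein functors}: for the $(n_1,n_2)$-block parabolic $P\subset GL_n$ one has a commuting square intertwining $\mathrm{Eis}_P=q_!\,p^\ast$ on $\bfBun(X,n)$ with the spectral induction $\mathrm{Eis}_P^{\mathrm{spec}}=q^{\mathrm{spec}}_\ast(p^{\mathrm{spec}})^!$ on $\mathsf{LocSys}_n$. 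But, up to the usual twists and shifts, these are exactly the two Hall convolution bi-functors underlying $\ostar_{\bfBun,\dR}$ and $\ostar_\dR$, so $\bigsqcup_n\mathbb L_{X,n}$ intertwines them; moreover the fact that $\mathrm{Eis}$ preserves the $\mathsf{Nilp}$ singular-support condition on the spectral side, and the corresponding finiteness condition on the automorphic side, is precisely what forces the ``fixed singular supports'' qualification in the statement.

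\emph{Stage three and the main obstacle.} To pass from an equivalence of bi-functors (the $2$-simplex) to an $\mathbb E_1$-monoidal equivalence one must match the entire $2$-Segal structure. The cleanest route is factorization: both $\ostar_\dR$ and $\ostar_{\bfBun,\dR}$ arise as the $\mathrm{Ran}(X)$-fusion of their local constituents, so both Cat-HAs are factorization algebras over $\mathrm{Ran}(X)$, and the geometric Langlands equivalence is expected to be compatible with factorization; granting factorizable Langlands, applying factorization homology to the fibrewise comparison of stage two upgrades it to an $\mathbb E_1$-equivalence. Alternatively, one notes that the $\mathbb E_1$-structure produced by the $2$-Segal plus Gaitsgory--Rozenblyum machine is functorial in the pair consisting of a $2$-Segal object and a morphism of six-functor formalisms, and $\mathbb L_X$ is such a morphism respecting all simplicial face and degeneracy maps --- the various Eisenstein and constant-term functors, whose Langlands-compatibility is known or follows from the $2$-simplex case by coherence of the Eisenstein comonad --- which rigidifies the comparison into an $\mathbb E_1$-equivalence. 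The genuine difficulty is not this Hall-theoretic bookkeeping but its geometric Langlands input: the full equivalence $\mathbb L_X$ with the correct singular-support conditions, its compatibility with Eisenstein functors at the \emph{derived, ind-coherent} level (rather than only after passing to $G$-theory as in \cite{SV_Langlands}), and its compatibility with factorization over $\mathrm{Ran}(X)$, are deep and as yet only partially available --- which is exactly why the statement is a conjecture. A secondary but real obstacle is renormalization: the non-quasi-compactness of $\bfBun(X)$ forces one to work with $\catCohb_{\mathsf{pro}}$ and a singular-support truncation, and one must check the Hall product is compatible with this renormalization on both sides --- a point invisible after decategorification (Property~\eqref{item:property-(2)}) but unavoidable here.
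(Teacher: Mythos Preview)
The statement you are attempting is a \emph{conjecture} in the paper, not a theorem: the paper offers no proof, only motivation via \cite{SV_Langlands} interpreted as a decategorified shadow of geometric Langlands. There is therefore no proof to compare your proposal against.

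Your sketch is a reasonable outline of how one might attack the conjecture, and you are honest that it is a strategy rather than a proof. You correctly identify the essential input --- the global unramified geometric Langlands equivalence together with its compatibility with Eisenstein series and factorization --- and you correctly flag that these inputs are not fully available, which is precisely why the paper states this as a conjecture. One point worth sharpening: in Stage one you propose to build the $\mathbb E_1$-structure on $\catCohb(\bfBun(X)_\dR)$ by applying $(-)_\dR$ levelwise to $\calS_\bullet\bfBun(X)$ and then invoking the ind-coherent six-functor formalism on de Rham prestacks; this is plausible in spirit, but the paper's machinery (Theorem~\ref{thm:coh_algebra_in_correspondence}) is built for geometric derived stacks with lci and proper-schematic structure maps, and extending it to de Rham prestacks via crystals requires genuinely new input beyond what the paper provides. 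Likewise, your Stage three relies on either factorizable Langlands or a ``morphism of six-functor formalisms'' argument, both of which are substantial open problems rather than routine bookkeeping. So your proposal is best read as an informed roadmap of what a proof would need, not as a proof --- which, to your credit, is how you present it.
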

It follows that the right ``dg-enhancement'' of Lusztig's construction should be $\catCohb( \bfBun(X)_\dR )$. In addition, one should expect that, when $X$ is an elliptic curve, by passing to the $G$-theory one recovers $\mathbf{U}_{q, t}^+(\ddot{\mathfrak{gl}}_1)$. 

Finally, one may wonder if there is a diagrammatic description of our categorified Hall algebras in the spirit of Khovanov--Lauda and Rouquier. Let $Y$ be either a smooth proper complex scheme $S$ of dimension $\leq 2$ or one of the Simpson's shapes of a smooth projective complex curve $X$. Then $\bfCoh(Y)$ admits a stratification
\begin{align}
	\bfCoh(Y)=\bigsqcup_{\mathbf{v}\in \Lambda}\, \bfCoh(Y, \mathbf{v})
\end{align}
such that the Hall product is graded with respect to it, where $\Lambda$ is the numerical Grothendieck group of $S$ in the first case and of $X$ in the second case. 

Now, we define\footnote{We thank Andrea Appel for helping us spelling out the description of $\calU$.} the following $(\infty, 2)$-category $\calU$: it is the subcategory inside the $(\infty, 2)$-category of dg-categories such that
\begin{itemize}
	\item its objects are the dg-categories $\catCohb( \bfCoh(Y, \mathbf{v}) )$,
	\item the 1-morphisms between $\catCohb( \bfCoh(Y, \mathbf{v}) )$ and $\catCohb( \bfCoh(Y, \mathbf{v}') )$ are the functors of the form
	\begin{align}
	-\ostar E \ ,
	\end{align} 
	for $E\in \catCohb( \bfCoh(Y, \mathbf{v}'-\mathbf{v}) )$. Here, $\ostar$ denotes the Hall tensor product.
\end{itemize}
The study of 2-morphisms in $\calU$ should lead to an analogue of KLR algebras in this setting, which will be investigated in a future work.

\subsection{Historical background on CoHAs} \label{ss:historical}

For completeness, we include a review of the literature around two-dimensional CoHAs.

The first instances\footnote{To the best of the authors' knowledge, the first circle of ideas around two-dimensional CoHAs can be found in an unpublished manuscript by Grojnowski \cite{Grojnowski_affinizing}.} of two-dimensional CoHAs can be traced back to the works of Schiffmann and Vasserot \cite{SV_elliptic, SV_Langlands}.
Seeking for a ``geometric Langlands dual algebra'' of the (classical) Hall algebra of a curve\footnote{By the (classical) Hall algebra of a curve we mean the Hall algebra associated with the abelian category of coherent sheaves on a smooth projective curve defined over a finite field. As explained in \cite{Schiffmann_Lectures_II}, conjecturally this algebra can be realized by using the Lusztig's category (such a conjecture is true in the genus zero and one case, for example).}, the authors were lead to introduce a convolution algebra structure on the (equivariant) $G_0$-theory of the cotangent stack $\sfT^\ast \mathscr{R}ep( \calQ_g )$.
Here $\mathscr{R}ep( \calQ_g )$ is the stack of finite-dimensional representations of the quiver $\calQ_g$ with one vertex and $g$ loops.
When $g=1$, the corresponding associative algebra is isomorphic to a positive part of the elliptic Hall algebra.
A study of the representation theory of the elliptic Hall algebra by using its CoHA description was initiated in \cite{SV_elliptic} and pursued by Negu\c{t} \cite{Negut_qAGT} in connection with gauge theory and deformed vertex algebras. 

The extension of this construction to any quiver and, at the same time, to Borel--Moore homology theory and more generally to any oriented Borel--Moore homology theory was shown e.g. in \cite{YZ_CoHA}. Note that $\sfT^\ast \mathscr{R}ep(\calQ)$ is equivalent to the stack of finite-dimensional representations of the \textit{preprojective algebra $\Pi_\calQ$ of $\calQ$}. For this reason, sometimes this CoHA is called the \textit{CoHA of the preprojective algebra of $\calQ$}.

In the Borel--Moore homology case, Schiffmann and Vasserot gave a characterization of the generators of the CoHA of the preprojective algebra of $\calQ$ in \cite{SV_generators}, while a relation with the (Maulik-Okounkov) Yangian was established in \cite{SV_Yangians, Davison_DT, YZ_Yangian}. Again, when $\calQ=\calQ_1$, a connection between the corresponding two-dimensional CoHA and vertex algebras was provided in \cite{SV_Cherednik, Negut_AGT} (see also \cite{RSYZ}).

In \cite{KS_Hall}, Kontsevich and Soibelman introduced another CoHA, in order to provide a mathematical definition of Harvey and Moore's algebra of BPS states \cite{Harvey_Moore}. It goes under the name of \textit{three-dimensional} CoHA since it is associated with Calabi--Yau categories of global dimension three (such as the category of representations of the Jacobi algebra of a quiver with potential, the category of coherent sheaves on a CY 3-fold, etc.). As shown by Davison in \cite[Appendix]{RS_Hall} (see also \cite{YZ_2Hall}), using a dimensional reduction argument, the CoHA of the preprojective algebra of a quiver described above can be realized as a Kontsevich--Soibelman one.

For certain choices of the quiver $\calQ$, the cotangent stack $\sfT^\ast \mathscr{R}ep(\calQ)$ is a stack parameterizing coherent sheaves on a surface. Thus the corresponding algebra can be seen as an example of a CoHA associated to a surface. This is the case when the quiver is the one-loop quiver $\calQ_1$: indeed, $\sfT^\ast \mathscr{R}ep(\calQ_1)$ coincides with the stack $\mathscr{C}oh_0(\C^2)$ parameterizing zero-dimensional sheaves on the complex plane $\C^2$. In particular, the elliptic Hall algebra can be seen as an algebra attached to zero-dimensional sheaves on $\C^2$.

Another example of two-dimensional CoHA is the \textit{Dolbeault CoHA} of a curve. Let $X$ be a smooth projective curve and let $\mathscr{H}iggs(X)$ be the stack\footnote{Note that the truncation of the derived stack $\bfCoh(X_\Dol)$ is isomorphic to $\mathscr{H}iggs(X)$.} of Higgs sheaves on $X$. Then the Borel--Moore homology of the stack $\mathscr{H}iggs(X)$ of Higgs sheaves on $X$ is endowed with the structure of a convolution algebra. Such an algebra has been introduced by the second-named author and Schiffmann in \cite{Sala_Schiffmann}. In \cite{Minets_Hall}, independently Minets has introduced the Dolbeault CoHA in the rank zero case. Thanks to the Beauville--Narasimhan--Ramanan correspondence, the Dolbeault CoHA can be interpreted as the CoHA of torsion sheaves on $\sfT^\ast X$ such that their support is proper over $X$. In particular, Minets' algebra is an algebra attached to zero-dimensional sheaves on $\sfT^\ast X$. Such an algebra coincides with Negu\c{t}'s \textit{shuffle algebra} \cite{Negut_shuffle} of a surface $S$ when $S=\sfT^\ast X$. 

Negu\c{t}'s algebra of a smooth surface $S$ is defined by means of \textit{Hecke correspondences} depending on zero-dimensional sheaves on $S$, and its construction comes from a generalization of the realization of the elliptic Hall algebra in \cite{SV_elliptic} via Hecke correspondences. Zhao \cite{Zhao_Hall} constructed the cohomological Hall algebra of the moduli stack of zero-dimensional sheaves on a smooth surface $S$ and fully established the relation between this CoHA and Negu\c{t}'s algebra of $S$. A stronger, independently obtained result is due to  Kapranov and Vasserot \cite{KV_Hall}, who defined the CoHA associated to a category of coherent sheaves on a smooth surface $S$ with proper support of a fixed dimension.

\subsection{Outline}

In \S\ref{s:coh} we introduce our derived enhancement of the classical stack of coherent sheaves on a smooth complex scheme. We also define derived moduli stacks of coherent sheaves on the Betti, de Rham, and Dolbeault shapes of a smooth scheme. In \S\ref{s:coh_ext} we introduce the derived enhancement of the classical stack of extensions of coherent sheaves on both a smooth complex scheme and on a Simpson's shape of a smooth complex scheme. In addition, we define the convolution diagram \eqref{eq:convolution-derived} and provide the tor-amplitude estimates for the map $p$. \S\ref{s:categorifiedHall} is devoted to the construction of the categorified Hall algebra associated with the moduli stack of coherent sheaves on either a smooth scheme or a Simpson's shape of a smooth scheme: in \S\ref{ss:convolution_structure} we endow such a stack of the structure of a 2-Segal space à la Dyckerhoff--Kapranov, while in \S\ref{ss:categorification} by applying the functor $\catCohb_{\mathsf{pro}}$, we obtain one of our main results, i.e., an $\mathbb E_1$-monoidal structure on $\catCohb_{\mathsf{pro}}( \bfCoh(Y) )$ when $Y$ is either a smooth curve or surface, or a Simpson's shape of a smooth curve; finally, \S\ref{ss:equivariant} is devoted to the equivariant case of the construction of the categorified Hall algebra. In \S\ref{s:cohas}, we show how our approach provides equivalent realizations of the known K-theoretical Hall algebras of surfaces and of Higgs sheaves on a curve. In \S\ref{s:Hodge_filtration} and \S\ref{s:RH} we discuss \textit{Cat-HA} versions of the non-abelian Hodge correspondence and of the Riemann--Hilbert correspondence, respectively. In particular, in \S\ref{s:RH} we develop the construction of the categorified Hall algebra in the analytic setting and we compare the two resulting categorified Hall algebras. Finally, Appendix~\ref{sec:ind_quasi_compact} is devoted to the study of the G-theory of non-quasi-compact stacks and the construction of $\catCohb_{\mathsf{pro}}$.

\subsection*{Acknowledgements}

First, we would like to thank Mattia Talpo for suggesting us to discuss about the use of derived algebraic geometry in the theory of cohomological Hall algebras. This was the starting point of our collaboration. 

Part of this work was developed while the second-named author was visiting the Université de Strasbourg and was completed while the first-named author was visiting Kavli IPMU. We are grateful to both institutions for their hospitality and wonderful working conditions. We would like to thank Andrea Appel, Kevin Costello, Ben Davison, Adeel Khan, Olivier Schiffmann, Philippe Eyssidieux, Tony Pantev and Bertrand To\"en for enlightening conversations.
We especially thank Andrea Gagna and Ivan Di Liberti for discussions about the correct $2$-categorical notion of the $\Ind$-construction.
Finally, we thank the anonymous referee for useful suggestions and comments.

The results of the present paper have been presented by the second-named author at the Workshop ``Cohomological Hall algebras in Mathematics and Physics`` (Perimeter Institute for Theoretical Physics, Canada; February 2019). The second-named author is grateful to the organizers of this event for the invitation to speak.

\subsection{Notations and convention}\label{ss:notation}

In this paper we freely use the language of $\infty$-categories. Although the discussion is often independent of the chosen model for $\infty$-categories, whenever needed we identify them with quasi-categories and refer to \cite{HTT} for the necessary foundational material.

The notations $\cS$ and $\Cat_\infty$ are reserved for the $\infty$-categories of spaces and of $\infty$-categories, respectively. If $\cC \in \Cat_\infty$ we denote by $\cC^\simeq$ the maximal $\infty$-groupoid contained in $\cC$. We let $\Cat_\infty^{\mathsf{st}}$ denote the $\infty$-category of stable $\infty$-categories with exact functors between them.
We also let $\PrL$ denote the $\infty$-category of presentable $\infty$-categories with left adjoints between them.
We let $\Pr^{\mathsf L, \omega}$ the $\infty$-category of compactly generated presentable $\infty$-categories with morphisms given by left adjoints that commute with compact objects.
Similarly, we let $\PrL_{\mathsf{st}}$ (resp.\ $\Pr^{\mathsf L, \omega}_{\mathsf{st}}$) denote the $\infty$-categories of stably presentable $\infty$-categories with left adjoints between them (resp.\ left adjoints that commute with compact objects).
Finally, we set
\begin{align}
	\Cat_\infty^{\mathsf{st}, \, \otimes} \coloneqq \CAlg( \Cat_\infty^{\mathsf{st}} ) \ , \quad \mathcal P \mathsf r_{\mathsf{st}}^{\mathsf L,\, \otimes} \coloneqq \CAlg( \PrL_{\mathsf{st}} ) \ .
\end{align}
Given an $\infty$-category $\cC$ we denote by $\PSh(\cC)$ the $\infty$-category of $\cS$-valued presheaves.
We follow the conventions introduced in \cite[\S2.4]{Porta_Yu_Higher_analytic_stacks_2014} for $\infty$-categories of sheaves on an $\infty$-site.

Since we only work over the field of complex numbers $\C$, we reserve the notation $\CAlg$ for the $\infty$-category of simplicial commutative rings over the field of complex numbers $\C$. We often refer to objects in $\CAlg$ simply as \textit{derived commutative rings}.
We denote its opposite by $\dAff$, and we refer to it as the $\infty$-category of \textit{affine derived schemes}. 

In \cite[Definition~1.2.3.1]{Lurie_SAG} it is shown that the étale topology defines a Grothendieck topology on $\dAff$. We denote by $\dSt\coloneqq \Sh(\dAff, \tau_{\text{ét}})^\wedge$ the hypercompletion of the $\infty$-topos of sheaves on this site. We refer to this $\infty$-category as the \textit{$\infty$-category of derived stacks}. For the notion of derived \textit{geometric} stacks, we refer to \cite[Definition~2.8]{Porta_Yu_Higher_analytic_stacks_2014}.

Let $A \in \CAlg$ be a derived commutative ring.
We let $A \Mod$ denote the stable $\infty$-category of $A$-modules,  equipped with its canonical symmetric monoidal structure provided by \cite[Theorem~3.3.3.9]{Lurie_Higher_algebra}.
Furthermore, we equip it with the canonical \textit{t}-structure whose connective part is its smallest full subcategory closed under colimits and extensions and containing $A$.
Such a \textit{t}-structure exists in virtue of \cite[Proposition 1.4.4.11]{Lurie_Higher_algebra}.
Notice that there is a canonical equivalence of abelian categories $A \Mod^\heartsuit \simeq \pi_0(A) \Mod^\heartsuit$. 

We say that an $A$-module $M \in A \Mod$ is \textit{perfect} if it is a compact object in $A \Mod$. 	
We denote by $\catPerf(A)$ the full subcategory of $A \Mod$ spanned by perfect complexes\footnote{It is shown in \cite[Proposition~7.2.4.2]{Lurie_Higher_algebra} that an $\catPerf(A)$ coincides with the smallest full stable subcategory of $A \Mod$ closed under retracts and containing $A$.	In particular, $\catPerf(A)$ is a stable $\infty$-category which is furthermore idempotent complete.}. On the other hand, we say that an $A$-module $M \in A \Mod$ is \textit{almost perfect}\footnote{Suppose that $A$ is almost of finite presentation over $\C$. In other words, suppose that $\pi_0(A)$ is of finite presentation in the sense of classical commutative algebra and that each $\pi_i(A)$ is coherent over $\pi_0(A)$. Then \cite[Proposition~7.2.4.17]{Lurie_Higher_algebra} shows that an $A$-module $M$ is almost perfect if and only if $\pi_i(M) = 0$ for $i \ll 0$ and each $\pi_i(M)$ is coherent over $\pi_0(A)$.}  if $\pi_i(M) = 0$ for $i \ll 0$ and for every $n \in \Z$ the object $\tau^{\le n}(M)$ is compact in $A \Mod^{\le n}$. We denote by $\catAPerf(A)$ the full subcategory of $A \Mod$ spanned by sheaves of almost perfect modules.

Given a morphism $f \colon A \to B$ in $\CAlg$ we obtain an $\infty$-functor $f^\ast \colon A \Mod \longrightarrow B \Mod$, which preserves (almost) perfect modules.
We can assemble these data into an $\infty$-functor
\begin{align}
	\catQCoh \colon \dAff\op \longrightarrow \monPrLst \ . 
\end{align}
Since the functor $f^\ast$ preserves (almost) perfect modules, we obtain well defined subfunctors
\begin{align}
	\catPerf\, , \ \catAPerf \colon \dAff\op \longrightarrow \Cat_\infty^{\mathrm{st}, \, \otimes} \ . 
\end{align}

Given a derived stack $X \in \dSt$, we denote by $\catQCoh(X)$, $\catAPerf(X)$ and $\catPerf(X)$ the stable $\infty$-categories of quasi coherent, almost perfect, and perfect complexes respectively. One has
\begin{align}
	\catQCoh(X) &\simeq \varprojlim_{\Spec(A) \to X} \catQCoh(\Spec(A)) \ , \quad \catAPerf(X) \simeq \varprojlim_{\Spec(A) \to X} \catAPerf(\Spec(A))\ , \quad\text{and}\\ 
	\catPerf(X) &\simeq \varprojlim_{\Spec(A) \to X} \catPerf(\Spec(A)) \ .
\end{align}
The $\infty$-category $\catQCoh(X)$ is presentable. In particular, using \cite[Proposition~1.4.4.11]{Lurie_Higher_algebra} we can endow $\catQCoh(X)$ with a canonical \textit{t}-structure. 

Let $f \colon X \to Y$ be a morphism in $\dSt$. We say that $f$ is \textit{flat} if the induced functor $	f^\ast \colon \catQCoh(Y) \to \catQCoh(X)$ is $t$-exact.

Let $X\in\dSt$. We denote by $\catCoh(X)$ the full subcategory of $\scrO_X\Mod$ spanned by $\calF\in \scrO_X\Mod$  for which there exists an atlas $\{f_i\colon U_i\to X\}_{i\in I}$ such that for every $i\in I$, $n\in \Z$, the $\scrO_{U_i}$-modules $\pi_n(f^\ast_i \calF)$ are coherent sheaves. We denote by $\catCoh^\heartsuit(X)$ (resp.\ $\catCohb(X)$, $\catCoh^{+}(X)$, and $\catCoh^{-}(X)$) the full subcategory of $\catCoh(X)$ spanned by objects cohomologically concentrated in degree 0 (resp. locally cohomologically bounded, bounded below, bounded above).

\bigskip\section{Derived moduli stacks of coherent sheaves}\label{s:coh}

Our goal in this section is to define derived enhancements of the classical stacks of coherent sheaves on a proper complex algebraic variety $X$, of Higgs sheaves on $X$, of vector bundles with flat connections on $X$, and of finite-dimensional representations of the fundamental group $\pi_1(X)$ of $X$.

\subsection{Relative flatness}

We start by defining the objects that this derived enhancement will classify.

\begin{definition} \label{defin:coherent_perfect_complex}
	Let $f \colon X \to S$ be a morphism of derived stacks.
	We say that a quasi-coherent sheaf $\calF \in \catQCoh(X)$ has \textit{tor-amplitude within $[a,b]$ relative to $S$} (resp.\ \textit{tor-amplitude $\le n$ relative to $S$}) if for every $\calG \in \catQCoh^\heartsuit(S)$ one has
	\begin{align}  
		\pi_i( \calF \otimes f^\ast \calG ) = 0 \quad i \notin [a,b] \quad \text{(resp.\ } i \notin [0,n] \text{)}  \ . 
	\end{align} 
	We let $\catQCoh_S^{\le n}(X)$ (resp.\ $\catAPerf_S^{\le n}(X)$) denote the full subcategory of $\catQCoh(X)$ spanned by those quasi-coherent sheaves (resp.\ sheaves of almost perfect modules) $\calF$ on $X$ having tor-amplitude $\le n$ relative to $S$.
	We write
	\begin{align} 
		\catCoh_S(X) \coloneqq \catAPerf_S^{\le 0}(X) \ , 
	\end{align}
	and we refer to $\catCoh_S(X)$ as the $\infty$-category of \textit{flat families of coherent sheaves on $X$ relative to $S$}.
\end{definition}

\begin{remark} \label{rem:Coh_not_stable}
	The $\infty$-category $\catCoh_S( X \times S )$ is \textit{not} stable.
	This is because in general the cofiber of a map between sheaves of almost perfect modules in tor-amplitude $\le 0$ is only in tor-amplitude $[1,0]$.
	When $S$ is underived, the cofiber sequences $\calF' \to \calF \to \calF''$ in $\catAPerf(X \times S)$ whose three terms are all coherent correspond to short exact sequences of coherent sheaves.
	In particular, the map $\calF' \to \calF$ is a monomorphism and the map $\calF \to \calF''$ is an epimorphism.
\end{remark}

\begin{remark} \label{rem:tor_amplitude_underived}
	Let $A \in \CAlg_{\C}$ be a derived commutative ring and let $M \in A \Mod$.
	Then $M$ has tor-amplitude $\le n$ if and only if $M \otimes_A \pi_0(A)$ has tor-amplitude $\le n$.
		In particular, if $A$ is underived and $M \in A \Mod^\heartsuit$, then $M$ has tor-amplitude $\le 0$ if and only if $M$ is flat in the sense of the usual commutative algebra.
\end{remark}

We start by studying the functoriality of $\catCoh_S(Y)$ in $S$:

\begin{lemma} \label{lem:base_change_tor_amplitude}
	Let
		\begin{align}
		\begin{tikzcd}[ampersand replacement = \&]
		X_T \arrow{r}{g'} \arrow{d}{f'} \& X \arrow{d}{f} \\
		T \arrow{r}{g} \& S
		\end{tikzcd}
		\end{align}
	be a pullback square in $\dSt$.
	Assume that $T$ and $S$ are affine derived schemes.
	If $\calF \in \catQCoh(X)$ has tor-amplitude $[a,b]$ relative $S$, then $g^{\prime \ast}(\calF) \in \catQCoh(X_T)$ has tor-amplitude within $[a,b]$ relative to $T$.
\end{lemma}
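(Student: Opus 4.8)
The plan is to reduce the statement to the excellent formal properties of pushforward along affine morphisms. Since $g\colon T\to S$ is a morphism of affine derived schemes it is in particular affine, and affine morphisms are stable under base change, so $g'\colon X_T\to X$ is affine as well. Hence $g'_\ast\colon\catQCoh(X_T)\to\catQCoh(X)$ is $t$-exact, conservative and satisfies the projection formula; likewise $g_\ast\colon\catQCoh(T)\to\catQCoh(S)$ is $t$-exact.

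Fix $\calH\in\catQCoh^\heartsuit(T)$; we must show that $\pi_i\bigl(g'^\ast\calF\otimes_{\scrO_{X_T}}f'^\ast\calH\bigr)=0$ for $i\notin[a,b]$. First I would apply the projection formula for $g'$ and then quasi-coherent base change along the (pullback) square, the latter being valid without any Tor-independence hypothesis because $g$ is affine:
\[
	g'_\ast\bigl(g'^\ast\calF\otimes_{\scrO_{X_T}}f'^\ast\calH\bigr)\;\simeq\;\calF\otimes_{\scrO_X}g'_\ast f'^\ast\calH\;\simeq\;\calF\otimes_{\scrO_X}f^\ast\bigl(g_\ast\calH\bigr)\ .
\]
Since $g_\ast$ is $t$-exact we have $g_\ast\calH\in\catQCoh^\heartsuit(S)$, so the hypothesis that $\calF$ has tor-amplitude $[a,b]$ relative to $S$, applied to the heart object $\calG\coloneqq g_\ast\calH$, gives that the right-hand side has $\pi_i=0$ for $i\notin[a,b]$.

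To conclude I would use that $g'_\ast$ is $t$-exact and conservative: $t$-exactness gives $\pi_i(g'_\ast\calN)\simeq g'^\heartsuit_\ast(\pi_i\calN)$ for every $\calN\in\catQCoh(X_T)$, and conservativity of the exact functor $g'^\heartsuit_\ast$ on hearts then forces $\pi_i\bigl(g'^\ast\calF\otimes_{\scrO_{X_T}}f'^\ast\calH\bigr)=0$ for $i\notin[a,b]$ from the vanishing just obtained. As $\calH\in\catQCoh^\heartsuit(T)$ was arbitrary, this is exactly the assertion that $g'^\ast(\calF)$ has tor-amplitude $[a,b]$ relative to $T$.

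I do not expect a serious obstacle here: the only point deserving care is the invocation of quasi-coherent base change, which is standard for $g$ affine and can in any case be verified by hand, either by reducing to $X=\Spec R$ affine via the identification $\catQCoh(X_T)\simeq\varprojlim_{\Spec R\to X}\catQCoh(\Spec R\times_S T)$ and unwinding the resulting tensor products over $\pi_0$ of the rings involved, or directly. Note that the argument uses nothing about $X$ beyond its canonical $t$-structure on $\catQCoh(X)$, so it applies to an arbitrary $X\in\dSt$ as in the statement.
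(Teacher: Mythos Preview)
Your proof is correct and follows essentially the same approach as the paper: reduce to checking the vanishing after applying $g'_\ast$, then use the projection formula, affine base change, and $t$-exactness of $g_\ast$ to land on the original tor-amplitude hypothesis for $\calF$. The paper simply cites the relevant properties of morphisms representable by affine schemes from its companion paper, whereas you spell out slightly more of the logic around $t$-exactness and conservativity, but the argument is the same.
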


\begin{proof}
	Let $\calF \in \catQCoh(X)$ be a quasi-coherent sheaf of tor-amplitude within $[a,b]$ relative to $S$ and let $\calG \in \catQCoh^\heartsuit(T)$.
	Since $g$ is representable by affine schemes, so does $g'$.
	Therefore, \cite[Proposition~\ref*{shapes-prop:representable_by_affine}]{Porta_Sala_Shapes} implies that $g'_\ast$ is $t$-exact and conservative.
	Therefore, $g^{\prime \ast}(\calF) \otimes f^{\prime \ast}(\calG)$ is in cohomological amplitude $[a,b]$ if and only if $g'_\ast( g^{\prime \ast}(\calF) \otimes f^{\prime \ast}(\calG) )$ is.
	Combining \cite[Propositions~\ref*{shapes-prop:representable_by_affine}-(1) and \ref*{shapes-prop:categorically_quasi_compact}-(2)]{Porta_Sala_Shapes}, we see that
	\begin{align}
		g'_\ast( g^{\prime \ast}(\calF) \otimes f^{\prime \ast}(\calG) ) \simeq \calF \otimes g'_\ast( f^{\prime \ast}( \calG ) ) \ , 
	\end{align}
	and using \cite[Proposition~\ref*{shapes-prop:representable_by_affine}-(2)]{Porta_Sala_Shapes} we can rewrite the last term as
	\begin{align}
		\calF \otimes g'_\ast( f^{\prime \ast}( \calG ) ) \simeq \calF \otimes f^\ast( g_\ast(\calG) ) \ .
	\end{align}
	Since $g_\ast$ is $t$-exact, we have $g_\ast(\calG) \in \catQCoh^\heartsuit(S)$. The conclusion now follows from the fact that $\calF$ has tor-amplitude within $[a,b]$.
\end{proof}

\begin{construction}\label{ss:construction}
	Let $X\in \dSt$ and consider the derived stack
	\begin{align}
		\bfAPerf(X) \colon \dAff\op \longrightarrow \cS
	\end{align}
	sending an affine derived scheme $S \in \dAff$ to the maximal $\infty$-groupoid $\catAPerf(X \times S)^\simeq$ contained in the stable $\infty$-category $\catAPerf(X \times S)$ of almost perfect modules over $X \times S$. 
	
	Lemma~\ref{lem:base_change_tor_amplitude} implies that the assignment sending $S \in \dAff$ to the full subspace $\catCoh_S(X \times S)^\simeq$ of $\catAPerf(X \times S)^\simeq$ spanned by flat families of coherent sheaves on $X$ relative to $S$ defines a substack
	\begin{align}
		\bfCoh(X)\colon \dAff\op \longrightarrow \cS
	\end{align}
	of $\bfAPerf(X)$.
	We refer to $\bfCoh(X)$ as the \textit{derived stack of coherent sheaves on $X$}.
	\end{construction}

In this paper we are mostly interested in this construction when $X$ is a scheme or one of its Simpson's shapes $X_\B$, $X_\dR$ or $X_\Dol$.
We provide the following useful criterion to recognize coherent sheaves:

\begin{lemma} \label{lem:flat_effective_epimorphism}
	Let $f \colon X \to S$ be a morphism in $\dSt$.
	Assume that there exists a flat effective epimorphism $u \colon U \to X$.
	Then $\calF \in \catQCoh(X)$ has tor-amplitude within $[a,b]$ relative to $S$ if and only if $u^\ast(\calF)$ has tor-amplitude within $[a,b]$ relative to $S$.
\end{lemma}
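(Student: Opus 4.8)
The plan is to reduce the statement to a single observation about the pullback functor $u^\ast \colon \catQCoh(X) \to \catQCoh(U)$, namely that under the stated hypotheses it \emph{detects the vanishing of homotopy sheaves}: for every $\calH \in \catQCoh(X)$ and every $i \in \Z$ one has $\pi_i(\calH) = 0$ if and only if $\pi_i(u^\ast \calH) = 0$. Granting this, the lemma follows immediately. Write $g \coloneqq f \circ u \colon U \to S$, the morphism with respect to which the tor-amplitude of $u^\ast \calF$ is computed. For an arbitrary $\calG \in \catQCoh^\heartsuit(S)$, symmetric monoidality of $u^\ast$ together with $g^\ast = u^\ast f^\ast$ gives $u^\ast(\calF \otimes f^\ast \calG) \simeq u^\ast \calF \otimes g^\ast \calG$, so applying the observation to $\calH = \calF \otimes f^\ast \calG$ shows that $\pi_i(\calF \otimes f^\ast \calG) = 0$ is equivalent to $\pi_i(u^\ast \calF \otimes g^\ast \calG) = 0$. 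Letting $\calG$ range over $\catQCoh^\heartsuit(S)$ and $i$ over $\Z \setminus [a,b]$ then yields exactly the asserted equivalence of tor-amplitude conditions.

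It remains to establish the observation, and this is where the two hypotheses enter, essentially one each. For the identification $\pi_i(u^\ast \calH) \simeq u^\ast(\pi_i \calH)$, with $\pi_i \calH$ regarded in the heart, I would use that $u$ is flat: by the definition of flatness adopted just above, this says precisely that $u^\ast$ is $t$-exact, and an exact $t$-exact functor commutes with all truncation functors $\tau_{\le n}, \tau_{\ge n}$, hence with the formation of the homotopy sheaves $\pi_i$. For the fact that $u^\ast$ is conservative I would use that $u$ is an effective epimorphism in the $\infty$-topos $\dSt$: this means that $X$ is the colimit of the \v{C}ech nerve $\check{C}(u)_\bullet$ of $u$, and since $\catQCoh$ is by its very construction a limit of copies of $\catQCoh$ on affine derived schemes, it converts this colimit into the limit $\catQCoh(X) \simeq \varprojlim_{[n] \in \mathbf \Delta} \catQCoh(\check{C}(u)_n)$. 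As each structure map $\check{C}(u)_n \to X$ factors through $u$, any $\calH$ with $u^\ast \calH \simeq 0$ restricts to $0$ on every term of the \v{C}ech nerve, hence corresponds to the zero object of the limit and so is itself $0$. Combining the two properties: $\pi_i(\calH) = 0 \Leftrightarrow u^\ast(\pi_i \calH) = 0 \Leftrightarrow \pi_i(u^\ast \calH) = 0$.

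I do not expect a genuine obstacle here: the argument is a mild variant of faithfully flat descent, and the only thing that needs care is the bookkeeping of which hypothesis does what. Concretely, flatness of $u$ is used only to make $u^\ast$ compatible with the $\pi_i$, and this alone already gives the ``only if'' direction of the lemma; the effective-epimorphism hypothesis is used only to make $u^\ast$ conservative, which is what powers the ``if'' direction. One should also double-check, when invoking the \v{C}ech-nerve description, that one is using descent along a single map rather than a full atlas, but this is exactly the content of $u$ being an effective epimorphism in $\dSt$ in the sense of \cite{HTT}.
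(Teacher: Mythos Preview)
Your proposal is correct and follows essentially the same approach as the paper: both arguments reduce to the fact that $u^\ast$ is $t$-exact (from flatness) and conservative (from the effective-epimorphism hypothesis), so that $\pi_i(\calF \otimes f^\ast \calG) = 0$ if and only if $\pi_i(u^\ast \calF \otimes u^\ast f^\ast \calG) = 0$. Your version is simply more explicit about the \v{C}ech-nerve justification for conservativity and about which hypothesis powers which implication, whereas the paper asserts $t$-exactness and conservativity in one breath and concludes.
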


\begin{proof}
	Let $\calG \in \catQCoh^\heartsuit(S)$.
	Then since $u$ is a flat effective epimorphism, we see that the pullback functor
	\begin{align}  
		u^\ast \colon \catQCoh(X) \longrightarrow \catQCoh(U) 
	\end{align} 
	is $t$-exact and conservative.
	Therefore $\pi_i( \calF \otimes f^\ast \calG ) \simeq 0$ if and only if
	\begin{align}  
		u^\ast( \pi_i(\calF \otimes f^\ast \calG) ) \simeq \pi_i( u^\ast(\calF) \otimes u^\ast f^\ast \calG ) \simeq 0 \ . 
	\end{align} 
	The conclusion follows.
\end{proof}

As a consequence, we see that, for morphisms of geometric derived stacks, the notion of tor-amplitude within $[a,b]$ relative to the base introduced in Definition \ref{defin:coherent_perfect_complex} coincides with the most natural one:

\begin{lemma} \label{lem:relative_tor_amplitude_geometric_case}
	Let $X$ be a geometric derived stack, let $S = \Spec(A) \in \dAff$ and let $f \colon X \to S$ be a morphism in $\dSt$.
	Then $\calF \in \catQCoh(X)$ has tor-amplitude within $[a,b]$ relative to $S$ if and only if there exists a smooth affine covering $\{u_i \colon U_i = \Spec(B_i) \to X\}$ such that $f_{i\ast} u_i^\ast(\calF)$ has tor-amplitude within $[a,b]$ as $A$-module,\footnote{Cf.\ \cite[Definition~7.2.4.21]{Lurie_Higher_algebra} for the definition of tor-amplitude within $[a,b]$.} where $f_i \coloneqq f \circ u_i$.
\end{lemma}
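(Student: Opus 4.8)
The plan is to reduce to the affine case by means of a smooth atlas, and then to check directly that Definition~\ref{defin:coherent_perfect_complex} recovers the usual module-theoretic notion of tor-amplitude of \cite[Definition~7.2.4.21]{Lurie_Higher_algebra}. The key first observation is that for \emph{any} smooth affine covering $\{u_i \colon U_i = \Spec(B_i) \to X\}$, writing $u \colon U \coloneqq \coprod_i U_i \to X$, the map $u$ is a flat effective epimorphism (each $u_i$ is smooth, hence flat, and joint surjectivity gives the effective epimorphism condition). Hence Lemma~\ref{lem:flat_effective_epimorphism} applies and shows that $\calF$ has tor-amplitude $[a,b]$ relative to $S$ if and only if $u^\ast(\calF)$ does. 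Since $\catQCoh(U) \simeq \prod_i \catQCoh(U_i)$, with tensor products and homotopy sheaves computed componentwise, this is in turn equivalent to each $u_i^\ast(\calF) \in \catQCoh(U_i)$ having tor-amplitude $[a,b]$ relative to $S$. This reduces everything to the case in which $X$ is affine.

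Next I would treat the affine case: suppose $X = \Spec(B)$, with $f$ corresponding to a morphism $A \to B$ in $\CAlg$ and $\calF$ to a module $M \in B\Mod$. For $\calG \in \catQCoh^\heartsuit(S)$, equivalently a discrete $A$-module $N$, one has $\calF \otimes_{\scrO_X} f^\ast(\calG) \simeq M \otimes_A N$ as a $B$-module; since $f$ is affine, $f_\ast$ is restriction of scalars along $A \to B$, so passing to $f_\ast$ does not alter the underlying spectrum and $\pi_i(\calF \otimes_{\scrO_X} f^\ast \calG) \simeq \pi_i(f_\ast(\calF) \otimes_A N)$ for all $i$ and all discrete $N$. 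Comparing with \cite[Definition~7.2.4.21]{Lurie_Higher_algebra}, this says precisely that $\calF$ has tor-amplitude $[a,b]$ relative to $S$ in the sense of Definition~\ref{defin:coherent_perfect_complex} if and only if the $A$-module $f_\ast(\calF)$ has tor-amplitude $[a,b]$.

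Combining the two steps yields, for an arbitrary smooth affine covering $\{u_i\}$ of $X$ with $f_i \coloneqq f \circ u_i$, the equivalence: $\calF$ has tor-amplitude $[a,b]$ relative to $S$ if and only if $f_{i\ast} u_i^\ast(\calF)$ has tor-amplitude $[a,b]$ over $A$ for every $i$. The ``if'' direction of the Lemma is then immediate. For the ``only if'' direction, I would use that $X$, being geometric, admits at least one smooth affine atlas, which is in particular a smooth affine covering, and apply the equivalence to it to produce the required covering.

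I do not expect a genuine obstacle here, as the content is entirely bookkeeping. The three points that need a little care are: that $f_\ast$ along an affine morphism does not change homotopy groups (so that $\calF$ and $f_\ast(\calF)$ may be compared); that $M \otimes_A N$ depends only on the underlying $A$-module structure of $M$ (so that the \emph{relative} condition of Definition~\ref{defin:coherent_perfect_complex} becomes an \emph{absolute} tor-amplitude condition on $f_\ast(\calF)$); and that a smooth covering is a flat effective epimorphism, which is exactly the hypothesis needed to invoke Lemma~\ref{lem:flat_effective_epimorphism}. Once these are in place, the two notions of tor-amplitude are matched on the nose.
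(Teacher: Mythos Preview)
Your proposal is correct and follows essentially the same route as the paper: reduce to the affine case via Lemma~\ref{lem:flat_effective_epimorphism} using that a smooth atlas is a flat effective epimorphism, and then compare the relative tor-amplitude condition with the module-theoretic one. The paper phrases the affine step slightly more abstractly---using that $f_\ast$ is $t$-exact and conservative together with the projection formula $f_\ast(\calF \otimes f^\ast \calG) \simeq f_\ast(\calF) \otimes \calG$---but this is exactly your observation that restriction of scalars does not change homotopy groups and that $M \otimes_A N$ depends only on the underlying $A$-module structure of $M$.
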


\begin{proof}
	Applying Lemma \ref{lem:flat_effective_epimorphism}, we can restrict ourselves to the case where $X = \Spec(B)$ is affine.
	In this case, we first observe that $f_\ast \colon \catQCoh(X) \to \catQCoh(S)$ is $t$-exact and conservative.
	Therefore, $\pi_i( \calF \otimes f^\ast \calG ) \simeq 0$ if and only if $\pi_i( f_\ast( \calF \otimes f^\ast \calG ) ) \simeq 0$.
	The projection formula yields
	\begin{align}  
		f_\ast( \calF \otimes f^\ast \calG ) \simeq f_\ast(\calF) \otimes \calG \ , 
	\end{align} 
	and therefore the conclusion follows.
\end{proof}

\subsection{Deformation theory of coherent sheaves}

Let $X$ be a derived stack.
We study the deformation theory of the stack $\bfCoh(X)$.
Since we are also interested in the case where $X$ is one of Simpson's shapes, we first recall the following definition:

\begin{defin}
	A morphism $u \colon U \to X$ in $\dSt$ is a \textit{flat effective epimorphism} if:
	\begin{enumerate}\itemsep=0.2cm
		\item it is an effective epimorphism, i.e.\ the map $\pi_0(U) \to \pi_0(X)$ is an epimorphism of discrete sheaves;
		\item it is flat, i.e.\ the pullback functor $u^\ast \colon \catQCoh(X) \to \catQCoh(U)$ is $t$-exact.
	\end{enumerate}
\end{defin}

We have the following stability property:

\begin{lemma} \label{lem:universally_flat_effective_epi}
	Let $X \to S$ be a morphism in $\dSt$ and let $U \to X$ be a flat effective epimorphism.
	If $T \to S$ is representable by affine derived schemes, then $U \times_S T \to X \times_S T$ is a flat effective epimorphism.
\end{lemma}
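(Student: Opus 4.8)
The plan is to verify the two defining conditions of a flat effective epimorphism separately for the base-changed morphism $U \times_S T \to X \times_S T$, using that both conditions are local and compatible with the hypothesis that $T \to S$ is representable by affine derived schemes. Write $u \colon U \to X$ for the given flat effective epimorphism and $u_T \colon U \times_S T \to X \times_S T$ for its base change along $X \times_S T \to X$.

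First I would treat effectivity. Recall that $u$ being an effective epimorphism means $\pi_0(U) \to \pi_0(X)$ is an epimorphism of discrete (0-truncated) sheaves, equivalently that $U \to X$ admits local sections after passing to a suitable cover, i.e.\ it is a covering in the \'etale-hypercomplete topos up to the truncation functor. Since effective epimorphisms are stable under arbitrary pullback in any $\infty$-topos (the class of effective epimorphisms is closed under base change), the morphism $U \times_X (X \times_S T) \to X \times_S T$, which is exactly $u_T$, is again an effective epimorphism. Here I only need the general topos-theoretic fact, not the affineness hypothesis.

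Next I would treat flatness, and this is where the affineness of $T \to S$ is used. By Lemma~\ref{lem:flat_effective_epimorphism} and the local nature of tor-amplitude, it suffices to check $t$-exactness of $u_T^\ast \colon \catQCoh(X \times_S T) \to \catQCoh(U \times_S T)$ after pulling back along an atlas, so we may reduce to the situation where $X = \Spec(B)$ and $S = \Spec(A)$ are affine; then $T \to S$ being representable by affine derived schemes together with $X \to S$ affine forces $X \times_S T$ and $U \times_S T$ to be affine derived schemes as well (or at least covered by such). In this affine setting the base-change morphism $X \times_S T \to X$ is itself representable by affine derived schemes, and the statement becomes: if $u^\ast$ is $t$-exact and $v \colon X' \to X$ is representable by affine derived schemes, then the induced $u^{\prime\ast}$ on the pullback is $t$-exact. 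This follows from flat base change together with the $t$-exactness and conservativity of pushforward along morphisms representable by affine derived schemes, exactly as invoked in the proof of Lemma~\ref{lem:base_change_tor_amplitude} (using \cite[Proposition~\ref*{shapes-prop:representable_by_affine}]{Porta_Sala_Shapes}): one checks $\pi_i$ of $u_T^\ast(\calF \otimes f^\ast\calG)$ vanishes for $i \neq 0$ by pushing forward along the affine map $U \times_S T \to U$ and commuting the pushforward past the tensor product via the projection formula, reducing to the already-known flatness of $u$.

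The main obstacle, such as it is, lies in the flatness step: one must make sure that the reduction to the affine case is legitimate and that the relevant base-change and projection-formula identities hold, which is precisely what the hypothesis ``$T \to S$ representable by affine derived schemes'' guarantees via \cite[Proposition~\ref*{shapes-prop:representable_by_affine}]{Porta_Sala_Shapes}. The effectivity step is essentially formal. I would therefore present the proof as: (1) effective epimorphisms are stable under base change, so the first condition is automatic; (2) for flatness, reduce to the affine case and conclude by flat base change and the projection formula, using the $t$-exactness of pushforward along affine-representable morphisms.
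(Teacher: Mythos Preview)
Your decomposition into effectivity and flatness matches the paper exactly: the paper's one-line proof cites \cite[Proposition~6.2.3.5]{HTT} for stability of effective epimorphisms under pullback (your step (1)) and \cite[Proposition~\ref*{shapes-prop:local_tor_amplitude}-(2)]{Porta_Sala_Shapes} for the flatness part.

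Your effectivity argument is correct and identical to the paper's. For flatness, however, your reduction ``we may reduce to the situation where $X = \Spec(B)$ and $S = \Spec(A)$ are affine'' is not justified: in this lemma $X$ and $S$ are \emph{arbitrary} derived stacks, and the principal applications (Example~\ref{eg:applications}) are to Simpson's shapes $X_\B$, $X_\dR$, $X_\Dol$, which are not geometric and do not admit atlases in the usual sense. Lemma~\ref{lem:flat_effective_epimorphism} does not help here either, since it presupposes a flat effective epimorphism, which is what you are trying to establish. Fortunately, the tools you identify are exactly the right ones, and the argument works \emph{without} any reduction: letting $g \colon X \times_S T \to X$ and $g' \colon U \times_S T \to U$ be the projections (both representable by affine derived schemes), base change gives $g'_\ast u_T^\ast \simeq u^\ast g_\ast$. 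For $\calF \in \catQCoh^\heartsuit(X \times_S T)$, the right-hand side lies in $\catQCoh^\heartsuit(U)$ since $g_\ast$ is $t$-exact (affine) and $u^\ast$ is $t$-exact (flat); then $t$-exactness and conservativity of $g'_\ast$ force $u_T^\ast \calF \in \catQCoh^\heartsuit(U \times_S T)$. This is presumably the content of the cited proposition from \cite{Porta_Sala_Shapes}.
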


\begin{proof}
	Combine \cite[Proposition~6.2.3.5]{HTT} and \cite[Proposition~\ref*{shapes-prop:local_tor_amplitude}-(2)]{Porta_Sala_Shapes}.
\end{proof}

\begin{example} \label{eg:applications}
	\hfill
	\begin{enumerate}\itemsep0.2cm
		\item If $X$ is a geometric derived stack and $u \colon U \to X$ is a smooth atlas, then $u$ is a flat effective epimorphism.
		
		\item Let $X$ be a connected $\C$-scheme of finite type and let $x \colon \Spec(\C) \to X$ be a closed point.
		Then the induced map $\Spec(\C) \to X_\B$ is a flat effective epimorphism.
		See \cite[Proposition~\ref*{shapes-prop:Betti_properties}-(\ref*{shapes-prop:Betti_properties:flat_atlas})]{Porta_Sala_Shapes}.
		
		\item Let $X$ be a smooth $\C$-scheme. The natural map $\lambda_X \colon X \to X_\dR$ is a flat effective epimorphism.
		See \cite[Proposition~\ref*{shapes-prop:de_Rham_properties}-(\ref*{shapes-prop:de_Rham_properties:effective_epi}) and -(\ref*{shapes-prop:de_Rham_properties:flat_atlas})]{Porta_Sala_Shapes}.
		
		\item Let $X$ be a geometric derived stack. The natural map $\kappa_X \colon X \to X_\Dol$ is a flat effective epimorphism.
		See  \cite[Lemma~\ref*{shapes-lem:effective-Dol}]{Porta_Sala_Shapes}.
	\end{enumerate}
\end{example}

\begin{lemma} \label{lem:universally_flat_atlas}
	Let $u \colon U \to X$ be a flat effective epimorphism.
	Then the square
	\begin{align}
		\begin{tikzcd}[ampersand replacement = \&]
			\bfCoh(X) \arrow{r} \arrow{d} \& \bfCoh(U) \arrow{d} \\
			\bfAPerf(X) \arrow{r} \& \bfAPerf(U)
		\end{tikzcd}
	\end{align}
	is a pullback square.
\end{lemma}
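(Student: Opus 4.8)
\emph{Proof sketch.} The plan is to check the square objectwise over $\dAff$ and then reduce it, using the fact that its vertical legs are monomorphisms of spaces, to the statement that the flat effective epimorphism $u$ detects relative tor-amplitude $\le 0$. Concretely: since $\dSt$ is a left exact localization of $\PSh(\dAff)$, pullbacks in $\dSt$ are computed objectwise, so it suffices to show that for each $S\in\dAff$ the square of spaces
\begin{align}
	\begin{tikzcd}[ampersand replacement = \&]
		\catCoh_S(X \times S)^\simeq \arrow{r} \arrow{d} \& \catCoh_S(U \times S)^\simeq \arrow{d} \\
		\catAPerf(X \times S)^\simeq \arrow{r} \& \catAPerf(U \times S)^\simeq
	\end{tikzcd}
\end{align}
is a pullback, where the horizontal maps are induced by $(u\times\id_S)^\ast$ and the vertical maps are the canonical inclusions. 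As $\catCoh_S(-\times S)$ is the full subcategory of $\catAPerf(-\times S)$ spanned by the objects of tor-amplitude $\le 0$ relative to $S$, and this condition is invariant under equivalence, the vertical maps are inclusions of unions of connected components, i.e.\ $(-1)$-truncated maps of spaces.

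For a commutative square of spaces with $(-1)$-truncated vertical legs, being a pullback is equivalent to the top-left space consisting of exactly those connected components of the bottom-left space that are carried into the top-right space. Here the fiber product $\catAPerf(X\times S)^\simeq \times_{\catAPerf(U\times S)^\simeq} \catCoh_S(U\times S)^\simeq$ is the full sub-$\infty$-groupoid of $\catAPerf(X\times S)^\simeq$ on those $\calF$ for which $(u\times\id_S)^\ast\calF \in \catCoh_S(U\times S)$; since the comparison map out of $\catCoh_S(X\times S)^\simeq$ is compatible with the inclusions into $\catAPerf(X\times S)^\simeq$, it is an equivalence precisely when the two replete full sub-$\infty$-groupoids in question have the same objects. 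Thus it remains to prove: for $\calF\in\catAPerf(X\times S)$, one has $\calF\in\catCoh_S(X\times S)$ if and only if $(u\times\id_S)^\ast\calF\in\catCoh_S(U\times S)$.

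To see this, recall that pullback preserves almost perfect modules, so $(u\times\id_S)^\ast\calF$ is automatically almost perfect; hence both conditions reduce to the corresponding statements about relative tor-amplitude, and we must show that $\calF$ has tor-amplitude $\le 0$ relative to $S$ if and only if $(u\times\id_S)^\ast\calF$ does. By Lemma~\ref{lem:universally_flat_effective_epi}, applied with base $\Spec(\C)$ and $T = S$ (and the identifications $X\times_{\Spec(\C)} S = X\times S$, $U\times_{\Spec(\C)} S = U\times S$), the map $u\times\id_S\colon U\times S\to X\times S$ is itself a flat effective epimorphism. Applying Lemma~\ref{lem:flat_effective_epimorphism} to the projection $X\times S\to S$ together with the flat effective epimorphism $u\times\id_S$ then yields exactly the asserted equivalence of tor-amplitude conditions, which completes the argument.

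The proof is essentially formal: the only substantive inputs are Lemmas~\ref{lem:flat_effective_epimorphism} and~\ref{lem:universally_flat_effective_epi}, which already encode the behaviour of relative tor-amplitude under flat covers. The one point requiring some care is the bookkeeping needed to exhibit $u\times\id_S$ as a base change of $u$ so that Lemma~\ref{lem:universally_flat_effective_epi} applies, and to match the notion of relative tor-amplitude used in the definition of $\bfCoh$ with the hypotheses of Lemma~\ref{lem:flat_effective_epimorphism} after this base change.
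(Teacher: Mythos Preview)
Your proof is correct and follows essentially the same approach as the paper: both reduce to checking, for each $S\in\dAff$, that $\calF\in\catAPerf(X\times S)$ has tor-amplitude $\le 0$ relative to $S$ if and only if $(u\times\id_S)^\ast\calF$ does, and both deduce this by first invoking Lemma~\ref{lem:universally_flat_effective_epi} to see that $u\times\id_S$ is again a flat effective epimorphism and then applying Lemma~\ref{lem:flat_effective_epimorphism}. Your write-up simply makes explicit the formal bookkeeping (objectwise pullbacks, $(-1)$-truncated vertical maps) that the paper leaves implicit.
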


\begin{proof}
	We have to prove that for every $S \in \dAff$, a sheaf of almost perfect modules $\calF \in \catAPerf(X \times S)$ is flat relative to $S$ if and only if its pullback to $U \times S$ has the same property.
	Since $u \colon U \to X$ is a flat effective epimorphism, so is $S \times U \to S \times X$ by Lemma~\ref{lem:universally_flat_effective_epi}.
	At this point, the conclusion follows from Lemma~\ref{lem:flat_effective_epimorphism}.
\end{proof}

Since Example~\ref{eg:applications} contains our main applications, we will always work under the assumption that there exists a flat effective epimorphism $U \to X$, where $U$ is a geometric derived stack locally almost of finite type.
The above lemma allows us therefore to carry out the main verifications in the case where $X$ itself is geometric and locally almost of finite type.

We start with infinitesimal cohesiveness and nilcompleteness.
Recall that $\bfAPerf(X)$ is infinitesimally cohesive and nilcomplete for every derived stack $X \in \dSt$:

\begin{lemma} \label{lem:aperf_infinitesimally_cohesive_nilcomplete}
	Let $X \in \dSt$ be a derived stack.
	Then $\bfAPerf(X)$ is infinitesimally cohesive and nilcomplete.
\end{lemma}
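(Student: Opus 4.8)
The plan is to reduce the claim for $\bfAPerf(X)$ for an arbitrary derived stack $X$ to the affine case, and then to invoke the known deformation-theoretic properties of the functor $\catAPerf$ on affine derived schemes. Recall that $\bfAPerf(X)$ sends $S \in \dAff$ to $\catAPerf(X \times S)^\simeq$. Writing $X$ as a colimit of affines is not available in general, so instead I would argue directly with the limit description of quasi-coherent sheaves: for any derived stack $X$ we have $\catAPerf(X \times S) \simeq \varprojlim_{\Spec(A) \to X} \catAPerf(\Spec(A) \times S)$, where the limit ranges over the $\infty$-category of affines over $X$. Taking maximal $\infty$-groupoids commutes with limits in $\Cat_\infty$, so
\begin{align}
	\bfAPerf(X)(S) \simeq \varprojlim_{\Spec(A) \to X} \bfAPerf(\Spec(A))(S) \ ,
\end{align}
that is, $\bfAPerf(X) \simeq \varprojlim_{\Spec(A) \to X} \bfAPerf(\Spec(A))$ as derived stacks, the limit being taken in $\dSt = \PSh(\dAff)$-sheaves and hence computed objectwise.

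The key point is then that both infinitesimal cohesiveness and nilcompleteness are conditions on a functor $\dAff\op \to \cS$ expressed entirely through limits in the target: nilcompleteness asks that $F(A) \to \varprojlim_n F(\tau^{\le n} A)$ be an equivalence, and infinitesimal cohesiveness asks that $F$ carry a square-zero pullback square of derived commutative rings to a pullback square of spaces. Both properties are therefore stable under arbitrary limits in the functor category: if each $\bfAPerf(\Spec(A))$ is infinitesimally cohesive and nilcomplete, so is the limit $\bfAPerf(X)$. Thus the statement is reduced to the affine case, i.e.\ to the claim that for $A \in \CAlg$ the functor sending $B \in \CAlg$ to $\catAPerf(A \otimes_\C B)^\simeq \simeq \catAPerf(B)^\simeq$ (after renaming, to the functor $B \mapsto \catAPerf(B)^\simeq$ relative to $A$) is infinitesimally cohesive and nilcomplete.

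For the affine case I would cite Lurie's foundational results. Nilcompleteness of $B \mapsto \catAPerf(B)^\simeq$ follows from the fact that an almost perfect module is determined by its truncations together with compatibility, i.e.\ from $\catAPerf(B) \simeq \varprojlim_n \catAPerf(\tau^{\le n} B)$, which is \cite[Proposition~5.2.2.2 and the surrounding discussion]{Lurie_SAG} (the $\infty$-category of almost perfect modules is the limit of those over the truncations). Infinitesimal cohesiveness is the statement that $\catAPerf(-)^\simeq$ sends square-zero extensions to pullbacks; this is part of the general machinery of \cite[\S16.2, \S17.3]{Lurie_SAG} on the cotangent complex and deformation theory of $\catAPerf$, or equivalently follows from the description of modules over a square-zero extension $B \oplus M \to B$ via derivations — a module over the square-zero extension is a module over $B$ together with a nullhomotopy of a certain obstruction class, and this gluing is exactly a pullback of $\infty$-groupoids. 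The main obstacle, such as it is, is bookkeeping: making sure the limit-of-affines description of $\bfAPerf(X)$ is set up correctly (including that the indexing $\infty$-category is filtered enough, or at least that the limit is taken over the opposite of the category of affines over $X$, with the correct variance), and matching the precise hypotheses in the cited statements of \cite{Lurie_SAG} — in particular that almost perfectness, not just perfectness, is what is preserved, and that no finiteness hypothesis on $X$ is needed for this part. Once the reduction to affines is in place, no genuine computation remains: the result is extracted from Lurie's deformation theory for the stack $\catAPerf$.
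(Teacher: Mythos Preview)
Your overall strategy is sound and is essentially how the paper (via its companion \cite{Porta_Sala_Shapes}) proceeds: the paper cites the general fact that mapping stacks $\bfMap(X,F)$ inherit infinitesimal cohesiveness and nilcompleteness from $F$, together with these properties for the absolute $\bfAPerf$. Unpacking that mapping-stack statement amounts to your limit-over-affines reduction, so the two arguments have the same content.

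There is, however, a genuine imprecision in your final step. After reducing to $X=\Spec(A)$ you need to show that $B\mapsto\catAPerf(A\otimes_\C B)^\simeq$ is nilcomplete and infinitesimally cohesive. The displayed identification ``$\catAPerf(A\otimes_\C B)^\simeq\simeq\catAPerf(B)^\simeq$'' is false, and the ``renaming relative to $A$'' does not repair it: the tower $\{A\otimes_\C\tau_{\le n}B\}_n$ is \emph{not} the Postnikov tower of $A\otimes_\C B$ when $A$ has higher homotopy, so nilcompleteness of the absolute $\bfAPerf$ (which speaks only about Postnikov towers) does not literally give what you need. The fix is easy but should be stated: the maps $A\otimes_\C B\to A\otimes_\C\tau_{\le n}B$ have $(n{+}1)$-connective fiber, so $A\otimes_\C B\simeq\varprojlim_n A\otimes_\C\tau_{\le n}B$, and one invokes the stronger form of Lurie's result (e.g.\ \cite[Proposition~19.2.1.5]{Lurie_SAG}) that $\catAPerf$ takes such convergent towers of square-zero type to limits. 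For infinitesimal cohesiveness the reduction is cleaner, since $A\otimes_\C(-)$ preserves the relevant pushout squares, so infinitesimal cohesiveness of the absolute $\bfAPerf$ at $A\otimes_\C B$ immediately gives the relative statement.
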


\begin{proof}
	Combine Propositions~\ref*{shapes-prop:inf_cohesive}-(\ref*{shapes-prop:inf_cohesive:mapping_stack}) and \ref*{shapes-prop:nilcomplete}-(\ref*{shapes-prop:nilcomplete:mapping_stack}) with Theorem~\ref*{shapes-thm:perf_inf_cohesive_nilcomplete} in \cite{Porta_Sala_Shapes}.
\end{proof}

In virtue of the above lemma, our task is reduced to proving that the map $\bfCoh(X) \to \bfAPerf(X)$ is infinitesimally cohesive and nilcomplete.
Thanks to Lemma~\ref{lem:universally_flat_atlas}, the essential case is when $X$ is affine:

\begin{lemma} \label{lem:coh_inf_cohesive_nilcomplete}
	Let $X\in \dAff$ be an affine derived scheme.
	Then the morphism
	\begin{align}
		\bfCoh(X) \longrightarrow \bfAPerf(X) 
	\end{align}
	is infinitesimally cohesive and nilcomplete.
	As a consequence, $\bfCoh(X)$ is infinitesimally cohesive and nilcomplete.
\end{lemma}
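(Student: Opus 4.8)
The plan is to leverage Lemma~\ref{lem:aperf_infinitesimally_cohesive_nilcomplete}, which already gives that $\bfAPerf(X)$ is infinitesimally cohesive and nilcomplete. Since both properties are stable under composition, it suffices to show that the monomorphism $\bfCoh(X)\hookrightarrow\bfAPerf(X)$ --- a substack, by construction --- is infinitesimally cohesive and nilcomplete; the final assertion about $\bfCoh(X)$ then follows. As $\bfCoh(X)(S)$ is a \emph{full} subspace of $\bfAPerf(X)(S)$ for every $S\in\dAff$, unwinding the definitions reduces both properties to the statement that ``being a flat family relative to the base'' is preserved under the relevant limit of base rings. Concretely I would reduce to: \emph{(nilcompleteness)} if $A\in\CAlg$ and $\calF\in\bfAPerf(X)(\Spec A)$ is such that $\calF\otimes_A\tau^{\le n}A$ is a flat family relative to $\Spec\tau^{\le n}A$ for all $n$, then $\calF$ is a flat family relative to $\Spec A$; and \emph{(infinitesimal cohesiveness)} given a pullback square $A'\simeq A\times_{A_0}B$ in $\CAlg$ with $\pi_0 A\to\pi_0 A_0$ and $\pi_0 B\to\pi_0 A_0$ surjective with nilpotent kernels, and $\calF'\in\bfAPerf(X)(\Spec A')$ restricting to flat families over $\Spec A$ and over $\Spec B$, then $\calF'$ is a flat family relative to $\Spec A'$. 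In both cases the converse implication is an instance of Lemma~\ref{lem:base_change_tor_amplitude}.

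Next I would translate these into statements about modules over the base. Writing $X=\Spec R$ (the lemma is for affine $X$) and $f\colon X\times\Spec A\to\Spec A$ for the projection, the pushforward $f_\ast$ is restriction of scalars along $A\to R\otimes_\C A$, hence is $t$-exact and conservative; combining this with the projection formula exactly as in the proof of Lemma~\ref{lem:relative_tor_amplitude_geometric_case} shows that a sheaf $\calF\in\bfAPerf(X)(\Spec A)$ is a flat family relative to $\Spec A$ if and only if the $A$-module $N\coloneqq f_\ast\calF$ satisfies $\pi_i(N\otimes_A\calG)=0$ for all $i\ne 0$ and all $\calG\in\catQCoh^\heartsuit(\Spec A)$. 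Granting this, nilcompleteness is immediate: a discrete $A$-module is the same as a discrete $\pi_0 A$-module, hence as a discrete $\pi_0(\tau^{\le n}A)$-module, and $N\otimes_A\calG\simeq(N\otimes_A\tau^{\le n}A)\otimes_{\tau^{\le n}A}\calG$, so the condition over $A$ follows from the one over any single $\tau^{\le n}A$ (alternatively, one may simply quote Remark~\ref{rem:tor_amplitude_underived}).

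The real content is infinitesimal cohesiveness, which I would prove by dévissage. Put $N\coloneqq f_\ast\calF'$ and $\bar N\coloneqq N\otimes_{A'}\pi_0 A'$; since $N\otimes_{A'}\calG\simeq\bar N\otimes_{\pi_0 A'}\calG$ for every discrete $A'$-module $\calG$, I may work throughout with the $\pi_0 A'$-module $\bar N$. A standard homotopy-group computation for the square (using the nilpotence hypotheses) shows that $\pi_0 A'\to\pi_0 A$ is surjective with nilpotent kernel $I$. The base-change formula along $\Spec A\to\Spec A'$ (valid since $f$ is affine) then turns the hypothesis ``$\calF'$ restricts to a flat family over $\Spec A$'' into: $\bar N\otimes_{\pi_0 A'}\bar\calG$ is concentrated in degree $0$ for every discrete $\pi_0 A'/I$-module $\bar\calG$. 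Finally, given an arbitrary discrete $\pi_0 A'$-module $\calG$, I would choose $k$ with $I^{k+1}=0$ and induct along the finite filtration $\calG\supseteq I\calG\supseteq\dots\supseteq I^{k+1}\calG=0$: its subquotients are annihilated by $I$, hence are $\pi_0 A'/I$-modules, so each $\bar N\otimes_{\pi_0 A'}(\text{subquotient})$ is concentrated in degree $0$; since $\bar N\otimes_{\pi_0 A'}(-)$ carries short exact sequences of discrete modules to cofiber sequences, and since objects concentrated in degree $0$ are closed under cofiber sequences whose two other terms have that property, the induction yields that $\bar N\otimes_{\pi_0 A'}\calG$ is concentrated in degree $0$; that is, $\calF'$ is a flat family relative to $\Spec A'$. (In fact only the hypothesis over $\Spec A$ is used, the one over $\Spec B$ being automatic.) Composing with Lemma~\ref{lem:aperf_infinitesimally_cohesive_nilcomplete} then gives the claim for $\bfCoh(X)$.

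I expect the dévissage for infinitesimal cohesiveness to be the main obstacle. The subtle point is that the relative tor-amplitude condition of Definition~\ref{defin:coherent_perfect_complex} is \emph{two-sided}, so one must also control the vanishing of $\pi_i(\calF'\otimes f^\ast\calG)$ for $i<0$; this is exactly where it is used that the inductive hypothesis already guarantees connectivity of all tensor products appearing along the filtration, so that ``concentrated in degree $0$'' really is preserved. A secondary, purely bookkeeping issue is the homotopy-group computation identifying the kernel of $\pi_0 A'\to\pi_0 A$ as nilpotent when $\pi_1 A_0\ne 0$.
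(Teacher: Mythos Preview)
Your proof is correct and follows the same overall architecture as the paper's: reduce to showing that the full substack inclusion $\bfCoh(X)\hookrightarrow\bfAPerf(X)$ is infinitesimally cohesive and nilcomplete, then invoke Lemma~\ref{lem:aperf_infinitesimally_cohesive_nilcomplete}. For nilcompleteness the two arguments are essentially identical---both observe that relative tor-amplitude depends only on $\pi_0$ of the base (you cite Remark~\ref{rem:tor_amplitude_underived}; the paper cites \cite[Proposition~2.7.3.2-(c)]{Lurie_SAG}).

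The genuine difference is in the treatment of infinitesimal cohesiveness. The paper works with the specific square-zero pushout $S_d[M[-1]]$ and, after reducing to a statement about tor-amplitude of modules over the base, simply invokes \cite[Proposition~16.2.3.1-(3)]{Lurie_SAG} as a black box. You instead run an explicit d\'evissage: pass to $\pi_0 A'$, use that $\pi_0 A'\to\pi_0 A$ is a surjection with square-zero (hence nilpotent) kernel $I$, and filter an arbitrary discrete $\pi_0 A'$-module by powers of $I$ so that the subquotients are $\pi_0 A$-modules, where the hypothesis applies. Your observation that only the hypothesis over $\Spec A$ is actually needed is correct and is a mild sharpening. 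The trade-off is that your route is fully self-contained and makes the mechanism transparent, whereas the paper's is shorter but outsources the content to Lurie. Your caveat about the two-sidedness of ``concentrated in degree $0$'' is handled correctly: in a cofiber sequence with outer terms in degree $0$, the middle term is forced into degree $0$ by the long exact sequence. The ``bookkeeping'' worry about $\pi_1 A_0$ is harmless in the case at hand, since $A'$ is by construction a square-zero extension of $A$, so the kernel of $\pi_0 A'\to\pi_0 A$ has square zero regardless.
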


\begin{proof}
	We start dealing with infinitesimal cohesiveness.
	Let $S = \Spec(A)$ be an affine derived scheme and let $M \in \catQCoh(S)^{\ge 1}$ be a quasi-coherent complex.
	Let $S[M] \coloneqq \Spec(A \oplus M)$ and let $d \colon S[M] \to S$ be a derivation.
	Finally, let $S_d[M[-1]]$ be the pushout
	\begin{align}
		\begin{tikzcd}[ampersand replacement = \&]
		S[M] \arrow{r}{d} \arrow{d}{d_0} \& S \arrow{d}{f_0} \\ S \arrow{r}{f} \& S_d[M[-1]] 
		\end{tikzcd}\ ,
	\end{align}
	where $d_0$ denotes the zero derivation.
	Since the maximal $\infty$-groupoid functor $(-)^\simeq \colon \Cat_\infty \to \cS$ commutes with limits, it is enough to prove that the square
	\begin{align}
		\begin{tikzcd}[ampersand replacement = \&]
			\catCoh_{S_d[M[-1]]}( X \times S_d[M[-1]] ) \arrow{r} \arrow{d} \& \catCoh_S( X \times S ) \times_{\catCoh_{S[M]}( X \times S[M] )} \catCoh_S( X \times S ) \arrow{d} \\
			\catAPerf(X \times S_d[M[-1]]) \arrow{r} \& \catAPerf( X \times S ) \times_{\catAPerf(X \times S[M])} \catAPerf( X \times S )
		\end{tikzcd}
	\end{align}
	is a pullback.
	Using \cite[Theorem~16.2.0.1 and Proposition~16.2.3.1(6)]{Lurie_SAG}, we see that the bottom horizontal map is an equivalence.
	As the vertical arrows are fully faithful, we deduce that the top horizontal morphism is fully faithful as well.
	It is therefore enough to check that the top horizontal functor is essentially surjective.
	Let $\varphi, \varphi_0 \colon X\times S \to X \times S_d[M[-1]]$ be the two morphisms induced by $f$ and $f_0$, respectively.
	Let $\calF \in \catAPerf(X \times S_d[M[-1]])$ be such that $\varphi^\ast(\calF), \varphi_0^\ast(\calF) \in \catCoh_S( X \times S )$.
	We want to prove that $\calF\in \catCoh_{S_d[M[-1]]}( X\times S_d[M[-1]] )$.
	This question is local on $X$, so we can assume that $X$ is affine.
	Let $p \colon X \times S \to S$ and $q \colon X \times S_d[M[-1]] \to S_d[M[-1]]$ be the natural projections.
	Then
	\begin{align}
		f^\ast q_\ast(\calF) \simeq p_\ast \varphi^\ast(\calF) \quad \textrm{and} \quad f_0^\ast q_\ast(\calF) \simeq p_\ast \varphi_0^\ast(\calF)
	\end{align}
	have tor-amplitude $\le 0$.
	Since $p$ is affine, $p_\ast$ is $t$-exact, and therefore the modules $p_\ast \varphi^\ast(\calF)$ and $p_\ast \varphi_0^\ast(\calF)$ are eventually connective.
	The conclusion now follows from \cite[Proposition~16.2.3.1-(3)]{Lurie_SAG}.
	
	We now turn to nilcompleteness.
	Let $S \in \dAff$ be an affine derived scheme and let $S_n \coloneqq \mathsf t_{\le n}(S)$ be its $n$-th truncation.
	We have to prove that the diagram
	\begin{align}
		\begin{tikzcd}[ampersand replacement = \&]
			\catCoh_S(X\times S) \arrow{r} \arrow{d} \& \lim_n \catCoh_{S_n}( X \times S_n ) \arrow{d} \\
			\catAPerf(X\times S) \arrow{r} \& \lim_n \catAPerf( X \times S_n )
		\end{tikzcd}
	\end{align}
	is a pullback.
	Combining \cite[Propositions~19.2.1.5 and 2.7.3.2-(c)]{Lurie_SAG} we see that the bottom horizontal map is an equivalence.
	As the vertical maps are fully faithful, we deduce that the top horizontal map is fully faithful as well.
	Thus, it is enough to check that the top horizontal map is essentially surjective.
	Given $\calF \in \catAPerf(X \times S)$ denote by $\calF_n$ its image in $\catAPerf(X \times S_n)$.
	We wish to show that if each $\calF_n$ belongs to $\catCoh_{S_n}( X \times S_n )$ then $\calF$ belongs to $\catCoh_S(X \times S)$.
	Since the squares
	\begin{align}
		\begin{tikzcd}[ampersand replacement = \&]
		X \times S \arrow{r} \arrow{d} \& X \times S_n \arrow{d} \\ S \arrow{r} \& S_n
		\end{tikzcd}
	\end{align}
	are derived pullback, by using derived base change it suffices to check that the equivalence
	\begin{align}
		\catQCoh^{\mathsf{acn}}(S) \longrightarrow \lim_n \catQCoh^{\mathsf{acn}}(S_n)
	\end{align}
	respects tor-amplitude $\le 0$, where $\catQCoh^{\mathsf{acn}}(Y)$ denotes the full subcategory of $\catQCoh(Y)$ spanned by those quasi-coherent sheaves $\calF$ such that $\pi_i(\calF) = 0$ for $i \ll 0$.
	This follows at once from \cite[Proposition~2.7.3.2-(c)]{Lurie_SAG}.
\end{proof}

\begin{corollary} \label{cor:coh_inf_cohesive_nilcomplete}
	Let $X \in \dSt$ be a derived stack.
	Assume that there exists a flat effective epimorphism $u \colon U \to X$, where $U$ is a geometric derived stack.
	Then the map
	\begin{align}
		\bfCoh(X) \longrightarrow \bfAPerf(X) 
	\end{align}
	is infinitesimally cohesive and nilcomplete.
	In particular, $\bfCoh(X)$ is infinitesimally cohesive and nilcomplete.
\end{corollary}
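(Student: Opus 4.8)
The plan is to combine the affine case treated in Lemma~\ref{lem:coh_inf_cohesive_nilcomplete} with descent along flat effective epimorphisms, so that no fresh deformation-theoretic input is needed. As a first step I would reduce the assertion about $\bfCoh(X)$ to the relative assertion about the morphism $\bfCoh(X) \to \bfAPerf(X)$. By Lemma~\ref{lem:aperf_infinitesimally_cohesive_nilcomplete}, the derived stack $\bfAPerf(X)$ is infinitesimally cohesive and nilcomplete, i.e.\ its structural morphism to the final object $\Spec(\C)$ has both properties. Since infinitesimally cohesive morphisms and nilcomplete morphisms are each stable under composition (see \cite[\S 17.3]{Lurie_SAG}), it suffices to prove that $\bfCoh(X) \to \bfAPerf(X)$ is infinitesimally cohesive and nilcomplete; the ``in particular'' clause then follows by composing this morphism with the structural morphism of $\bfAPerf(X)$.

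Next I would replace $u \colon U \to X$ by a flat effective epimorphism whose source is a disjoint union of affine derived schemes. Since $U$ is geometric, it admits a smooth atlas $\{V_i = \Spec(B_i) \to U\}_{i \in I}$; by Example~\ref{eg:applications}-(1) the induced map $V \coloneqq \coprod_{i \in I} V_i \to U$ is a flat effective epimorphism, and composing it with $u$ yields a flat effective epimorphism $v \colon V \to X$ — flatness composes because it amounts to $t$-exactness of the pullback functor, and effective epimorphisms compose. Applying Lemma~\ref{lem:universally_flat_atlas} to $v$ gives a pullback square
\begin{align}
	\begin{tikzcd}[ampersand replacement = \&]
		\bfCoh(X) \arrow{r} \arrow{d} \& \bfCoh(V) \arrow{d} \\
		\bfAPerf(X) \arrow{r} \& \bfAPerf(V)
	\end{tikzcd}\ ,
\end{align}
which exhibits $\bfCoh(X) \to \bfAPerf(X)$ as a base change of $\bfCoh(V) \to \bfAPerf(V)$. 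As infinitesimally cohesive and nilcomplete morphisms are stable under base change as well (again \cite[\S 17.3]{Lurie_SAG}), it remains to treat the latter morphism.

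For this last point I would use that quasi-coherent sheaves on a disjoint union decompose as the product of the quasi-coherent sheaves on the pieces, and that, by Definition~\ref{defin:coherent_perfect_complex}, relative flatness over an affine base can be tested componentwise; hence there are canonical equivalences $\bfCoh(V) \simeq \prod_{i \in I} \bfCoh(V_i)$ and $\bfAPerf(V) \simeq \prod_{i \in I} \bfAPerf(V_i)$ under which our morphism is identified with $\prod_{i \in I} \big( \bfCoh(V_i) \to \bfAPerf(V_i) \big)$. Each factor is infinitesimally cohesive and nilcomplete by Lemma~\ref{lem:coh_inf_cohesive_nilcomplete}, since every $V_i$ is affine, and both properties are inherited by (arbitrary) products of morphisms because the limit conditions defining them commute with products. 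This shows that $\bfCoh(X) \to \bfAPerf(X)$ is infinitesimally cohesive and nilcomplete, and the first step then yields the claim for $\bfCoh(X)$ itself.

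The argument is essentially formal; the only nontrivial ingredient is the affine case, Lemma~\ref{lem:coh_inf_cohesive_nilcomplete}, which is where all the genuine deformation theory sits. The one place demanding a modicum of care is the passage from ``$X$ affine'' to ``$X$ receives a flat effective epimorphism from a disjoint union of affines'' in the middle step — that is, the stability of flat effective epimorphisms under composition together with the applicability of Lemma~\ref{lem:universally_flat_atlas} to the composite $v$ — but this is routine, so I do not expect any real obstacle.
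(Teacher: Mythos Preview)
Your proof is correct and follows essentially the same approach as the paper: use the pullback square of Lemma~\ref{lem:universally_flat_atlas} together with stability of infinitesimally cohesive and nilcomplete morphisms under base change to reduce to the affine case handled in Lemma~\ref{lem:coh_inf_cohesive_nilcomplete}, and then deduce the absolute statement from Lemma~\ref{lem:aperf_infinitesimally_cohesive_nilcomplete}. The paper's proof is terser and leaves implicit the passage from the geometric $U$ to a disjoint union of affines, which you spell out explicitly via the smooth atlas and the product decomposition.
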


\begin{proof}
	Combining \cite[Propositions~\ref*{shapes-prop:inf_cohesive}-(\ref*{shapes-prop:inf_cohesive:limits}) and \ref*{shapes-prop:nilcomplete}-(\ref*{shapes-prop:nilcomplete:limits})]{Porta_Sala_Shapes}, we see that infinitesimally cohesive and nilcomplete morphisms are stable under pullbacks.
	Therefore, the first statement is a consequence of Lemmas~\ref{lem:universally_flat_atlas} and \ref{lem:coh_inf_cohesive_nilcomplete}.
	The second statement follows from Lemma \ref{lem:aperf_infinitesimally_cohesive_nilcomplete}.
\end{proof}

We now turn to study the existence of the cotangent complex of $\bfCoh(X)$.
This is slightly trickier, because $\bfAPerf(X)$ does not admit a (global) cotangent complex.
Nevertheless, it is still useful to consider the natural map $\bfCoh(X) \to \bfAPerf(X)$.
Observe that it is $(-1)$-truncated by construction.
In other words, for every $S \in \dAff$, the induced map
\begin{align}
	\bfCoh(X)(S) \longrightarrow \bfAPerf(X)(S) 
\end{align}
is fully faithful.
This is very close to asserting that the map is formally étale, as the following lemma shows:

\begin{lemma} \label{lem:checking_formally_etale}
	Let $F \to G$ be a morphism in $\dSt$.
	Assume that:
	\begin{enumerate}\itemsep0.2cm
		\item \label{item:checking_formally_etale-(1)} for every $S \in \dAff$ the map $F(S) \to G(S)$ is fully faithful;
		\item \label{item:checking_formally_etale-(2)} for every $S \in \dAff$, the natural map
		\begin{align}
				F(S) \longrightarrow F(S_{\mathsf{red}}) \times_{G(S_{\mathsf{red}})} G(S)
		\end{align}
		induces a surjection at the level of $\pi_0$.
	\end{enumerate}
	Then $F \to G$ is formally étale.
\end{lemma}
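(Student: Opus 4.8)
The lemma is essentially formal once the hypotheses are reinterpreted correctly, and the plan has two steps. First, I would promote assumptions (1) and (2) to the single statement that
\begin{align}
	F(S) \longrightarrow F(S_{\mathsf{red}}) \times_{G(S_{\mathsf{red}})} G(S)
\end{align}
is an \emph{equivalence} of spaces for every $S \in \dAff$. Then I would deduce formal étaleness from this, using that a square-zero extension leaves the reduction unchanged.

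For the first step, recall that for $\infty$-groupoids ``fully faithful'' is the same as ``$(-1)$-truncated'', i.e.\ a monomorphism of spaces. Thus assumption (1) says $F(S) \to G(S)$ is a monomorphism for every $S$; applying this at $S_{\mathsf{red}}$ and base-changing along $G(S) \to G(S_{\mathsf{red}})$ shows that $F(S_{\mathsf{red}}) \times_{G(S_{\mathsf{red}})} G(S) \to G(S)$ is a monomorphism as well. The comparison map of assumption (2) is a map over $G(S)$ between these two monomorphisms into $G(S)$, hence is itself a monomorphism; since by (2) it is moreover surjective on $\pi_0$, and a monomorphism of spaces that is surjective on $\pi_0$ has non-empty, hence contractible, fibres and is therefore an equivalence, we obtain the displayed equivalence. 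Call this property $(\star)$.

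For the second step, let $i \colon S \hookrightarrow S'$ be a square-zero extension in $\dAff$. Its defining ideal is nilpotent, so $\pi_0(\scrO_{S'}) \to \pi_0(\scrO_S)$ is surjective with nilpotent kernel and the induced map $S_{\mathsf{red}} \to S'_{\mathsf{red}}$ is an equivalence, compatibly with $i$. Applying $(\star)$ to $S'$ and to $S$, and using $S_{\mathsf{red}} \simeq S'_{\mathsf{red}}$ together with the pasting law for pullback squares, we get
\begin{align}
	F(S') &\simeq F(S'_{\mathsf{red}}) \times_{G(S'_{\mathsf{red}})} G(S') \simeq F(S_{\mathsf{red}}) \times_{G(S_{\mathsf{red}})} G(S') \ , \\
	F(S) \times_{G(S)} G(S') &\simeq \bigl( F(S_{\mathsf{red}}) \times_{G(S_{\mathsf{red}})} G(S) \bigr) \times_{G(S)} G(S') \simeq F(S_{\mathsf{red}}) \times_{G(S_{\mathsf{red}})} G(S') \ ,
\end{align}
and all these identifications are induced by the structure maps, so the canonical comparison $F(S') \to F(S) \times_{G(S)} G(S')$ is an equivalence; that is, $F \to G$ is formally étale. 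The same computation applies verbatim to trivial square-zero extensions $S[M] \to S$ and, via nilcompleteness if one wants it, to arbitrary nilpotent thickenings, since these too are invisible to the reduction functor.

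The content of the argument is slim: the only genuine points are the translation of ``fully faithful'' into ``monomorphism of spaces'' (so that (1) and the intermediate equivalence $(\star)$ are meaningful), the elementary fact that a $\pi_0$-surjective monomorphism of spaces is an equivalence, and the observation that square-zero extensions do not change the reduction — the last being precisely where the infinitesimal nature of the extension enters. No deformation-theoretic input on $F$ or $G$ (cohesiveness, nilcompleteness, or existence of a cotangent complex) is needed, so the statement holds for arbitrary $F, G \in \dSt$.
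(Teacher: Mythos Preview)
Your proof is correct and follows essentially the same route as the paper's. Step one is identical: both you and the paper combine the $(-1)$-truncatedness from (1) with the $\pi_0$-surjectivity from (2) to upgrade the comparison map to an equivalence. In step two the paper packages the same computation as a cube whose back face is a pullback because $(S[M])_{\mathsf{red}} \simeq S_{\mathsf{red}}$ and whose side faces are pullbacks by step one, whereas you unwind this directly via the pasting law for pullbacks; the two presentations are interchangeable. Your version is arguably tidier, and your remark that no deformation-theoretic hypotheses on $F$ or $G$ are needed is correct and worth making explicit.
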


\begin{proof}
	First, consider the square
	\begin{align}
		\begin{tikzcd}[ampersand replacement = \&]
		F(S) \arrow{r} \arrow{d} \& F(S_{\mathsf{red}}) \arrow{d} \\
		G(S) \arrow{r} \& G(S_{\mathsf{red}})
		\end{tikzcd}\ .
	\end{align}
	Assumption (1) implies that the vertical maps are $(-1)$-truncated, hence so is the map $F(S) \to F(S_{\mathsf{red}}) \times_{G(S_{\mathsf{red}})} G(S)$ as well.
	Assumption (2) implies that it is also surjective on $\pi_0$, hence it is an equivalence.
	In other words, the above square is a pullback.
	
	We now show that $F \to G$ is formally étale.
	Let $S = \Spec(A)$ be an affine derived scheme.
	Let $\calF \in \catQCoh^{\le 0}$ and let $S[\calF] \coloneqq \Spec(A \oplus \calF)$ be the split square-zero extension of $S$ by $\calF$.
	Consider the lifting problem
	\begin{align}
		\begin{tikzcd}[ampersand replacement = \&]
		S \arrow{r} \arrow{d} \& F \arrow{d} \\
		S[M] \arrow{r} \arrow[dashed]{ur} \& G 
		\end{tikzcd} \ .
	\end{align}
	The solid arrows induce the following commutative square in $\cS$:
	\begin{align}
		\begin{tikzcd}[ampersand replacement = \&]
		F(S[M]) \arrow{r} \arrow{d} \& F(S) \arrow{d} \\
		G(S[M]) \arrow{r} \& G(S) 
		\end{tikzcd} \ .
	\end{align}
	To prove that $F \to G$ is formally étale is equivalent to proving that the square is a pullback.
	
	Observe that the above square is part of the following naturally commutative cube:
	\begin{align}
		\begin{tikzcd}[ampersand replacement = \&]
		\& F((S[M])_{\mathsf{red}}) \arrow{rr} \arrow{dd} \& \& F(S_{\mathsf{red}}) \arrow{dd} \\
		F(S[M]) \arrow{ur} \arrow[crossing over]{rr} \arrow{dd} \& \& F(S) \arrow{ur} \\
		\& G((S[M])_{\mathsf{red}}) \arrow{rr} \& \& G(S_{\mathsf{red}}) \\
		G(S[M]) \arrow{ur} \arrow{rr} \& \& G(S) \arrow{ur} \arrow[leftarrow, crossing over]{uu}
		\end{tikzcd} \ .
	\end{align}
	The horizontal arrows of the back square are equivalences, and therefore the back square is a pullback.
	The argument we gave at the beginning shows that the side squares are pullbacks.
	Therefore, the conclusion follows.
\end{proof}

To check condition (1) of the above lemma for $F = \bfCoh(X)$ and $G = \bfAPerf(X)$, we need the following variation of the local criterion of flatness.

\begin{lemma} \label{lem:local_criterion_flatness}
	Let $f \colon X \to S$ be a morphism in $\dSt$ and let $\calF \in \catAPerf(X)$.
	Assume that:
	\begin{enumerate}\itemsep0.2cm
		\item $S$ is an affine derived scheme;
		\item there is a flat effective epimorphism $u \colon U \to X$, where $U$ is a geometric derived stack;
		\item for every pullback square
		\begin{align}
			\begin{tikzcd}[ampersand replacement = \&]
			X_s \arrow{r}{j_s} \arrow{d} \& X \arrow{d} \\
			\Spec(K) \arrow{r}{s} \& S 
			\end{tikzcd} \ ,
		\end{align}
		where $K$ is a field, $j_s^\ast(\calF) \in \catAPerf(X_s)$ has tor-amplitude within $[a,b]$ relative to $\Spec(K)$.
	\end{enumerate}
	Then $\calF$ has tor-amplitude within $[a,b]$ relative to $S$.
\end{lemma}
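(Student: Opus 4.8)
The plan is to strip away the geometry, reduce to a statement about modules over a classical commutative ring, and close with the local criterion of flatness.

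First I would reduce to the case where $X$ is affine. By hypothesis there is a flat effective epimorphism $u\colon U\to X$ with $U$ geometric; composing $u$ with a smooth affine atlas of $U$ — again a flat effective epimorphism by Example~\ref{eg:applications}~(1), and flat effective epimorphisms compose — produces a flat effective epimorphism $v\colon V\to X$ with $V$ affine. By Lemma~\ref{lem:flat_effective_epimorphism}, $\calF$ has tor-amplitude $[a,b]$ relative to $S$ if and only if $v^\ast(\calF)$ does, so it is enough to check hypothesis~(3) for $v^\ast(\calF)$. Given a field-valued point $s\colon\Spec(K)\to S$, base change along $s$ turns $v$ into a morphism $v_s\colon V_s\to X_s$ which is again a flat effective epimorphism (flatness and effective epimorphisms are stable under base change; cf.\ Lemma~\ref{lem:universally_flat_effective_epi}), and $j_s^{\prime\ast}(v^\ast\calF)\simeq v_s^\ast(j_s^\ast\calF)$ by commutativity of pullbacks, where $j_s'\colon V_s\to V$; hence Lemma~\ref{lem:flat_effective_epimorphism} applied to $v_s$ shows that $j_s^{\prime\ast}(v^\ast\calF)$ has the required relative tor-amplitude because $j_s^\ast(\calF)$ does. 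So we may assume $X=\Spec(B)$ and $S=\Spec(A)$ are affine.

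Next I would pass to modules. Since $f\colon X\to S$ is now a morphism of affine derived schemes, $f_\ast$ is $t$-exact, conservative, commutes with arbitrary base change, and satisfies the projection formula $f_\ast(\calF\otimes f^\ast\calG)\simeq f_\ast(\calF)\otimes_A\calG$. Writing $M\coloneqq f_\ast(\calF)$, these facts give at once that $\calF$ has tor-amplitude $[a,b]$ relative to $S$ if and only if $M$ does as an $A$-module (in the sense of \cite[Definition~7.2.4.21]{Lurie_Higher_algebra}), that $M\otimes_A K\simeq(f_s)_\ast(j_s^\ast\calF)$ has tor-amplitude over $K$ equal to that of $j_s^\ast(\calF)$ relative to $\Spec(K)$ for each field-valued point $s$, and that $M$ is eventually connective (as $\calF\in\catAPerf(X)$). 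Because, by Definition~\ref{defin:coherent_perfect_complex}, tor-amplitude only involves tensoring with \emph{discrete} modules, Remark~\ref{rem:tor_amplitude_underived} lets me further replace $A$ by $\pi_0(A)$ and $M$ by $M\otimes_A\pi_0(A)$, noting that $M\otimes_A K\simeq(M\otimes_A\pi_0(A))\otimes_{\pi_0(A)}K$ and that $\Spec(A)$ and $\Spec(\pi_0(A))$ have the same residue fields. What remains is a purely classical assertion: an eventually connective complex $N$ over a commutative ring $R$ whose derived fibres $N\otimes_R\kappa(\mathfrak{p})$ are cohomologically concentrated in $[a,b]$ for all $\mathfrak{p}\in\Spec(R)$ has tor-amplitude $[a,b]$.

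Finally I would invoke the local criterion of flatness for this classical statement, the finiteness needed being supplied by the hypothesis $\calF\in\catAPerf(X)$, which forces the homotopy modules of $N$ to be finitely generated over $R$. Concretely, for the bound from below one takes $a'$ minimal with $\pi_{a'}(N)\neq 0$; then $\pi_{a'}(N\otimes_R\kappa(\mathfrak{p}))\simeq\pi_{a'}(N)\otimes_R\kappa(\mathfrak{p})$ is non-zero for $\mathfrak{p}$ in the support of the finitely generated module $\pi_{a'}(N)$ by Nakayama, forcing $a'\geq a$, and then tor-amplitude $\geq a$ follows since tensoring the connective complex $N[-a']$ with a discrete module stays connective; for the bound from above one uses the derived form of the classical local criterion for Tor-amplitude (see \cite[\S 7.2.4]{Lurie_Higher_algebra} and \cite[\S 2.7]{Lurie_SAG}) together with a descending induction that reduces, via truncation triangles and Nakayama, from arbitrary discrete test modules to residue fields. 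I expect this last step — making the derived local criterion precise, in particular reducing the test class from all discrete modules to residue fields (a point where one may need the base to be, say, locally Noetherian) — to be the only real obstacle; everything before it is bookkeeping with flat effective epimorphisms, $t$-exactness of affine pushforwards, and the projection formula.
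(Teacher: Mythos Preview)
Your approach matches the paper's: reduce to the affine case via flat effective epimorphisms (Lemmas~\ref{lem:flat_effective_epimorphism} and \ref{lem:universally_flat_effective_epi}), then invoke the local criterion. The paper does not attempt to reprove the final step and simply cites \cite[Proposition~6.1.4.5]{Lurie_SAG}; one caution about your sketch of it: the hypothesis $\calF\in\catAPerf(X)$ gives almost-perfectness over $B$, not over $A$, so you cannot directly conclude that the homotopy modules of $N$ are finitely generated over $R=\pi_0(A)$ --- this is precisely the subtlety absorbed by citing Lurie rather than arguing by hand.
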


\begin{proof}
	Let $U_s \coloneqq \Spec(K) \times_S U$.
	Since $u \colon U \to X$ is a flat effective epimorphism, Lemma~\ref{lem:universally_flat_effective_epi} implies that the same goes for $u_s \colon U_s \to X_s$.
	Therefore, Lemma \ref{lem:flat_effective_epimorphism} allows is to replace $X$ by $U$.
	Applying this lemma one more time, we can further assume $U$ is an affine derived scheme.
	At this point, the conclusion follows from the usual local criterion for flatness, see \cite[Proposition~6.1.4.5]{Lurie_SAG}.
\end{proof}

\begin{corollary} \label{cor:formally_etale}
	Let $X \in \dSt$ be a derived stack and assume there exists a flat effective epimorphism $u \colon U \to X$, where $U$ is a geometric derived stack.
	Then the natural map $\bfCoh(X) \to \bfAPerf(X)$ is formally étale.
\end{corollary}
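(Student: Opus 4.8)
The plan is to apply Lemma~\ref{lem:checking_formally_etale} with $F = \bfCoh(X)$ and $G = \bfAPerf(X)$, so that the corollary reduces to checking the two hypotheses of that lemma. Hypothesis (1) comes for free: by construction $\bfCoh(X)(S)$ is the full subspace of $\bfAPerf(X)(S) = \catAPerf(X \times S)^\simeq$ spanned by those almost perfect sheaves that are flat relative to $S$, so the map $\bfCoh(X)(S) \to \bfAPerf(X)(S)$ is $(-1)$-truncated --- hence fully faithful --- for every $S \in \dAff$, as was already observed just before the statement of Lemma~\ref{lem:checking_formally_etale}. Note also that the hypotheses of that lemma are exactly those in force here (a flat effective epimorphism $u \colon U \to X$ with $U$ geometric), so there is nothing to arrange on that side.

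It remains to verify hypothesis (2). Since the maps $\bfCoh(X)(S_{\mathsf{red}}) \to \bfAPerf(X)(S_{\mathsf{red}})$ and $\bfCoh(X)(S) \to \bfAPerf(X)(S)$ are $(-1)$-truncated, the fibre product $\bfCoh(X)(S_{\mathsf{red}}) \times_{\bfAPerf(X)(S_{\mathsf{red}})} \bfAPerf(X)(S)$ is identified with the full subspace of $\bfAPerf(X)(S)$ spanned by those $\calF \in \catAPerf(X \times S)$ whose pullback $j^\ast \calF$ along $j \colon X \times S_{\mathsf{red}} \to X \times S$ lies in $\catCoh_{S_{\mathsf{red}}}(X \times S_{\mathsf{red}})$; and by Lemma~\ref{lem:base_change_tor_amplitude} the comparison map $\bfCoh(X)(S) \to \bfCoh(X)(S_{\mathsf{red}}) \times_{\bfAPerf(X)(S_{\mathsf{red}})} \bfAPerf(X)(S)$ lands in this subspace (and is itself $(-1)$-truncated). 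Thus hypothesis (2), surjectivity on $\pi_0$, boils down to the assertion: if $\calF \in \catAPerf(X \times S)$ is such that $j^\ast \calF$ has tor-amplitude $\le 0$ relative to $S_{\mathsf{red}}$, then $\calF$ has tor-amplitude $\le 0$ relative to $S$.

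To prove this I would apply Lemma~\ref{lem:local_criterion_flatness} to the projection $f \colon X \times S \to S$, the sheaf $\calF$, and the interval $[a,b] = [0,0]$. Hypothesis (1) of that lemma holds because $S$ is affine; hypothesis (2) holds by taking the flat effective epimorphism $u \times \id_S \colon U \times S \to X \times S$, which is again a flat effective epimorphism by Lemma~\ref{lem:universally_flat_effective_epi} (applied over $\Spec(\C)$ with $T = S$) and which has geometric source since $U$ is geometric and $S$ is affine. For hypothesis (3), the key observation is that every field-valued point $s \colon \Spec(K) \to S$ factors through $S_{\mathsf{red}}$, say $s = \iota \circ \bar s$ with $\bar s \colon \Spec(K) \to S_{\mathsf{red}}$; consequently the fibre $(X \times S) \times_S \Spec(K)$ is identified with $(X \times S_{\mathsf{red}}) \times_{S_{\mathsf{red}}} \Spec(K)$ and $j_s^\ast \calF$ with the pullback of $j^\ast \calF$ along $\bar s$. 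Since $j^\ast \calF$ has tor-amplitude $\le 0$ relative to $S_{\mathsf{red}}$ by assumption, Lemma~\ref{lem:base_change_tor_amplitude} shows that $j_s^\ast \calF$ has tor-amplitude $\le 0$ relative to $\Spec(K)$, which is hypothesis (3). Lemma~\ref{lem:local_criterion_flatness} then gives that $\calF$ has tor-amplitude $\le 0$ relative to $S$, i.e.\ $\calF \in \catCoh_S(X \times S)$, and hypothesis (2) of Lemma~\ref{lem:checking_formally_etale} follows.

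The only genuinely non-formal input is the reduction of flatness over $S$ to flatness over all field-valued points, which is precisely Lemma~\ref{lem:local_criterion_flatness}; granting that, the argument is pure bookkeeping built on the remark that fields are reduced. So I expect the difficulty to reside in that already-established lemma rather than in the present corollary, whose proof is essentially an assembly of Lemmas~\ref{lem:checking_formally_etale}, \ref{lem:base_change_tor_amplitude}, \ref{lem:universally_flat_effective_epi} and \ref{lem:local_criterion_flatness}.
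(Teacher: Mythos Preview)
Your proof is correct and follows essentially the same approach as the paper: both apply Lemma~\ref{lem:checking_formally_etale}, note that hypothesis~(1) holds by construction, and reduce hypothesis~(2) to the statement that flatness of $j^\ast\calF$ relative to $S_{\mathsf{red}}$ implies flatness of $\calF$ relative to $S$, which is then deduced from Lemma~\ref{lem:local_criterion_flatness} via the observation that field-valued points of $S$ factor through $S_{\mathsf{red}}$. Your write-up is simply more explicit in verifying the hypotheses of Lemma~\ref{lem:local_criterion_flatness} (in particular the construction of the flat effective epimorphism $U\times S \to X\times S$ via Lemma~\ref{lem:universally_flat_effective_epi}), which the paper leaves implicit.
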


\begin{proof}
	We apply Lemma \ref{lem:checking_formally_etale}.
	We already remarked that assumption \eqref{item:checking_formally_etale-(1)} is satisfied, essentially by construction.
	Let now $S \in \dAff$ and let
	\begin{align}
		 j \colon X \times S_{\mathsf{red}} \longrightarrow X \times S 
	\end{align}
	be the natural morphism.
	Let $\calF \in \catAPerf(X \times S)$.
	Then Lemma~\ref{lem:local_criterion_flatness} implies that $\calF$ is flat relative to $S$ if and only if $j^\ast(\calF)$ is flat relative to $S_{\mathsf{red}}$.
	This implies that assumption \eqref{item:checking_formally_etale-(2)} of Lemma~\ref{lem:checking_formally_etale} is satisfied as well, and the conclusion follows.
\end{proof}

Since in many cases $\bfPerf(X)$ admits a global cotangent complex, it is useful to factor the map $\bfCoh(X) \to \bfAPerf(X)$ through $\bfPerf(X)$.
The following lemma provides a useful criterion to check when this is the case:

\begin{lemma} \label{lem:checking_perfectness_on_fibers}
	Let $f \colon X \to S$ be a morphism of derived stacks.
	Let $\calF \in \catAPerf(X)$ be an almost perfect complex and let $a \le b$ be integers.
	Assume that:
	\begin{enumerate}\itemsep0.2cm
		\item $S$ is an affine derived scheme;
		\item there exists a flat effective epimorphism $u \colon U \to X$, where $U$ is a geometric derived stack locally almost of finite type;
		\item for every ladder of pullback squares
		\begin{align}
			\begin{tikzcd}[ampersand replacement = \&]
				U_s \arrow{d}{i_s} \arrow{r}{u_s} \& X_s \arrow{r} \arrow{d}{j_s} \& \Spec(K) \arrow{d}{s} \\
				U \arrow{r}{u} \& X \arrow{r}{f} \& S
			\end{tikzcd} \ ,
		\end{align}
		where $K$ is a field, $u_s^\ast j_s^\ast(\calF) \in \catAPerf(U_s)$ has tor-amplitude within $[a,b]$.
	\end{enumerate}
	Then $u^\ast(\calF) \in \catAPerf(U)$ has tor-amplitude within $[a,b]$ and therefore $\calF$ belongs to $\catPerf(X)$.
\end{lemma}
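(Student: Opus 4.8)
The plan is to establish that $u^\ast(\calF)$ has tor-amplitude $[a,b]$; the rest of the conclusion is then formal. Once $u^\ast(\calF) \in \catAPerf(U)$ is known to have tor-amplitude $[a,b]$, it is almost perfect of finite tor-amplitude, hence perfect --- this may be checked on a smooth affine atlas of $U$, where the assumption that $U$ be locally almost of finite type makes the charts Noetherian so that \cite[Proposition~7.2.4.23]{Lurie_Higher_algebra} applies --- and therefore $u^\ast(\calF) \in \catPerf(U)$; flat descent of perfect complexes along the flat effective epimorphism $u$ then yields $\calF \in \catPerf(X)$.

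To prove the tor-amplitude estimate, I would first reduce to the case $X = U = \Spec(B)$ with $\pi_0(B)$ Noetherian and $\calF = M \in \catAPerf(B)$. This reduction is legitimate because having tor-amplitude $[a,b]$ is detected by any flat effective epimorphism (by $t$-exactness and conservativity of flat pullback), while hypotheses (1)--(3) are inherited along the composites $\Spec(B_j) \to U \to X$ of $u$ with a smooth affine atlas of the geometric stack $U$; hypothesis (3) transfers because flat pullback preserves tor-amplitude (cf.\ Lemma~\ref{lem:base_change_tor_amplitude} and \cite[Proposition~\ref*{shapes-prop:local_tor_amplitude}]{Porta_Sala_Shapes}). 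After the reduction $u$ is the identity, $S = \Spec(A)$, and $f$ corresponds to a map $A \to B$ in $\CAlg$.

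The heart of the matter is then to feed hypothesis (3) into Lemma~\ref{lem:local_criterion_flatness}, applied \emph{not} to $f$ but to the identity morphism $\Spec(B) \to \Spec(B)$ and the complex $M$. Conditions (1)--(2) of that lemma hold trivially, and condition (3) asks that the derived fibre $M \otimes_B K$ be cohomologically concentrated in $[a,b]$ for every field-valued point $\Spec(K) \to \Spec(B)$ --- over a field, having tor-amplitude $[a,b]$ relative to $\Spec(K)$ means exactly this. It is enough to verify this when $K = \kappa(\mathfrak q)$ is a residue field of $\pi_0(B)$, since any field-valued point factors through such a one and extension of scalars along a field extension is exact. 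Writing $\mathfrak p \subset \pi_0(A)$ for the contraction of $\mathfrak q$ and $K' \coloneqq \kappa(\mathfrak p)$, the prime $\mathfrak q$ defines a point of the derived fibre $\Spec(B \otimes_A K')$ with residue field $\kappa(\mathfrak q)$, and
\begin{align}
	M \otimes_B \kappa(\mathfrak q) \simeq (M \otimes_A K') \otimes_{B \otimes_A K'} \kappa(\mathfrak q) \ .
\end{align}
By hypothesis (3) of the present lemma, $M \otimes_A K'$ --- which is the restriction $u_s^\ast j_s^\ast(\calF)$ over the field-valued point $s$ attached to $\mathfrak p$ --- has tor-amplitude $[a,b]$ over $B \otimes_A K'$, so tensoring with the discrete module $\kappa(\mathfrak q)$ keeps it in cohomological degrees $[a,b]$. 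Hence Lemma~\ref{lem:local_criterion_flatness} applies and shows that $M$ has tor-amplitude $[a,b]$ relative to $\Spec(B)$, i.e.\ absolutely; reassembling the atlas gives the estimate for $u^\ast(\calF)$ on $U$.

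I expect the main obstacle to be the bookkeeping around the two reductions: checking that hypothesis (3), which is phrased over the derived fibres $U_s$, is genuinely inherited when one passes to a smooth affine atlas of $U$, and, dually, that every residue field of a chart of $U$ is accounted for by the derived fibre over a field-valued point of $S$ --- this is precisely where the contraction $\mathfrak q \mapsto \mathfrak p$ comes in. A secondary point is that the flat-descent statements for tor-amplitude and for perfect complexes have to be available along the (possibly non-representable) flat effective epimorphism $u$; these are part of the framework of \cite{Porta_Sala_Shapes} and \cite{Lurie_SAG}.
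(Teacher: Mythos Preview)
Your proposal is correct and follows essentially the same route as the paper: reduce via the flat effective epimorphism and a smooth affine atlas to $X = U = \Spec(B)$, then invoke Lemma~\ref{lem:local_criterion_flatness} with $X = S$ by showing that $M \otimes_B \kappa(\mathfrak q)$ sits in degrees $[a,b]$ via the factorization through the fibre over the image point of $S$. The paper phrases this last step in terms of geometric points and the localization $B_{(x)}$ rather than primes $\mathfrak q \mapsto \mathfrak p$ and residue fields, but the content is identical.
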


\begin{proof}
	Since $u \colon U \to X$ is a flat effective epimorphism, Lemmas \ref{lem:flat_effective_epimorphism} and \ref{lem:universally_flat_effective_epi} allow to replace $X$ by $U$.
	In other words, we can assume $X$ to be a geometric derived stack locally almost of finite type from the very beginning.
	Applying Lemma~\ref{lem:flat_effective_epimorphism} a second time to an affine atlas of $X$, we can further assume $X$ is an affine derived scheme, say $X = \Spec(B)$.
	
	Given a geometric point $x \colon \Spec(K) \to X$, we let $B_{(x)}$ denote the localization
	\begin{align}
		B_{(x)} \coloneqq \colim_{x \in U \subset X} \scrO_X(U) \ ,
	\end{align}
	where the colimit ranges over all the open Zariski neighborhoods of the image of $x$ inside $X$.
	It is then enough to prove that for each such geometric point, $\calF \otimes_B B_{(x)}$ is in tor-amplitude $[a,b]$.
	
	Given $x \colon \Spec(K) \to X$ let $s \coloneqq f \circ x \colon \Spec(K) \to S$.
	By assumption $j_s^\ast(\calF) \in \catAPerf(X_s)$ is in tor-amplitude $[a,b]$.
	Let $\overline{x} \colon \Spec(K) \to X_s$ be the induced point.
	Then $x = j_s \circ \overline{x}$, and therefore $x^\ast(\calF) \simeq \overline{x}^\ast( j_s^\ast(\calF) )$ is in tor-amplitude $[a,b]$.
	Let $\kappa$ denote the residue field of the local ring $\pi_0( B_{(x)} )$.
	Since the map $\kappa \to K$ is faithfully flat, we can assume without loss of generality that $K = \kappa$.
	In this way, we are reduced to the situation of Lemma~\ref{lem:local_criterion_flatness} with $X = S$.
	
	Finally, we remark that since $u$ is an effective epimorphism, the diagram
	\begin{align}
		\begin{tikzcd}[ampersand replacement = \&]
		\catPerf(X) \arrow{r}{u^\ast} \arrow{d} \& \catPerf(U) \arrow{d} \\
		\catAPerf(X) \arrow{r}{u^\ast} \& \catAPerf(U)
		\end{tikzcd}
	\end{align}
	is a pullback square.
	Therefore, an almost perfect complex $\calF \in \catAPerf(X)$ is perfect if and only if $u^\ast(F)$ is.
	The proof is complete.
\end{proof}

\begin{corollary} \label{cor:coherent_over_smooth_are_perfect}
	Let $X$ be a derived stack and assume there exists a flat effective epimorphism $u \colon U \to X$, where $U$ is a smooth geometric derived stack.
	Then for every $S \in \dAff$, the subcategory $\catCoh_S(X \times S) \subseteq \catAPerf(X \times S)$ is contained in $\catPerf(X \times S)$.
	In particular, the natural map $\bfCoh(X) \to \bfAPerf(X)$ induces a formally étale map
	\begin{align}
		\bfCoh(X) \longrightarrow \bfPerf(X) \ .
	\end{align}
\end{corollary}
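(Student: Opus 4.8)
The plan is to deduce the inclusion $\catCoh_S(X \times S) \subseteq \catPerf(X \times S)$ from Lemma~\ref{lem:checking_perfectness_on_fibers} applied to the projection $X \times S \to S$, after reducing to the case in which $X$ is an affine scheme smooth over $\C$; the formal étaleness assertion will then follow formally from Corollary~\ref{cor:formally_etale} together with the fact that $\bfPerf(X) \to \bfAPerf(X)$ is a monomorphism.

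So fix $S = \Spec(A) \in \dAff$ and $\calF \in \catCoh_S(X \times S)$; we must show $\calF \in \catPerf(X \times S)$. Composing the flat effective epimorphism $u \colon U \to X$ with a smooth affine atlas of the geometric derived stack $U$ produces a flat effective epimorphism $\coprod_i \Spec(B_i) \to X$ in which each $B_i$ is a smooth $\C$-algebra. By Lemma~\ref{lem:universally_flat_atlas}, $\calF$ lies in $\catCoh_S$ if and only if its restriction to each $\Spec(B_i) \times S$ does; and since perfectness is local and, as shown in the proof of Lemma~\ref{lem:checking_perfectness_on_fibers}, can be detected after pullback along a flat effective epimorphism, it suffices to prove that $\calF|_{\Spec(B_i) \times S}$ is perfect for each $i$. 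Renaming, we may assume $X = \Spec(B)$ with $B$ a smooth $\C$-algebra; being smooth over $\C$, $B$ is a finitely generated regular ring of some finite Krull dimension $d$. I would then apply Lemma~\ref{lem:checking_perfectness_on_fibers} to the projection $f \colon X \times S = \Spec(B \otimes_\C A) \to S$, with the identity of $X \times S$ as the required flat effective epimorphism (the affine derived scheme $X \times S$ is geometric and locally almost of finite type over $S$, being the base change of $\Spec(B)$), taking $a = 0$ and $b = d$. The condition to verify is that for every field $K$ and every point $s \colon \Spec(K) \to S$ the restriction $j_s^\ast(\calF)$ has tor-amplitude $[0,d]$ over the fibre $\Spec(B \otimes_\C K)$. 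By Lemma~\ref{lem:base_change_tor_amplitude}, $j_s^\ast(\calF)$ has tor-amplitude $\le 0$ relative to $\Spec(K)$; evaluating on $K$ gives $\pi_i(j_s^\ast(\calF)) = 0$ for $i \ne 0$, so $j_s^\ast(\calF)$, being almost perfect over the Noetherian ring $B \otimes_\C K$ and concentrated in degree $0$, is a finitely generated module. Since $B \otimes_\C K$ is regular of Krull dimension $\le d$, this module admits a finite free resolution of length $\le d$, i.e.\ has tor-amplitude $[0,d]$. Lemma~\ref{lem:checking_perfectness_on_fibers} now yields $\calF \in \catPerf(X \times S)$, whence $\catCoh_S(X \times S) \subseteq \catPerf(X \times S)$ for every $S$.

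For the final assertion, observe that, since $f^\ast$ preserves perfect and almost perfect complexes and (by Lemma~\ref{lem:base_change_tor_amplitude}) tor-amplitude $\le 0$ relative to the base, the inclusions $\catCoh_S(X \times S) \subseteq \catPerf(X \times S) \subseteq \catAPerf(X \times S)$ are natural in $S$ and assemble into a factorization $\bfCoh(X) \to \bfPerf(X) \to \bfAPerf(X)$ of the canonical map, both arrows being $(-1)$-truncated since on $S$-points they are induced by full subcategory inclusions. The composite is formally étale by Corollary~\ref{cor:formally_etale}. To conclude that the first arrow is formally étale, I would proceed directly: for a square-zero extension of affine derived schemes $T_0 \hookrightarrow T$, the comparison map $\bfCoh(T) \to \bfCoh(T_0) \times_{\bfPerf(T_0)} \bfPerf(T)$, post-composed with the $(-1)$-truncated map into $\bfCoh(T_0) \times_{\bfAPerf(T_0)} \bfAPerf(T)$ induced by $\bfPerf(X) \to \bfAPerf(X)$, equals the corresponding comparison map for $\bfCoh(X) \to \bfAPerf(X)$, which is an equivalence; as the second map is $(-1)$-truncated and the composite an equivalence, both maps are equivalences, so $\bfCoh(X) \to \bfPerf(X)$ is formally étale.

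The step I expect to require the most care is the reduction to the affine smooth case at the beginning of the second paragraph: Lemma~\ref{lem:checking_perfectness_on_fibers} produces a uniform tor-amplitude bound only over an affine base of finite Krull dimension, so one cannot apply it to the original atlas $U$ directly when $U$ fails to be quasi-compact, and it is the passage to affine smooth charts that makes the finite bound $b = d$ available. The remaining steps are routine given the results already established.
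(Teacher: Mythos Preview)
Your proof is correct and follows essentially the same route as the paper: reduce along the flat effective epimorphism to the case where $X$ is an affine smooth $\C$-scheme, then apply Lemma~\ref{lem:checking_perfectness_on_fibers} using the finite global dimension coming from regularity. The paper compresses the formal \'etaleness step into a single invocation of Corollary~\ref{cor:formally_etale}, whereas you spell out the cancellation argument along the factorization $\bfCoh(X) \to \bfPerf(X) \to \bfAPerf(X)$; your version is more explicit but the content is the same.
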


\begin{proof}
	Since $u$ is a flat effective epimorphism, Lemmas~\ref{lem:flat_effective_epimorphism} and \ref{lem:universally_flat_effective_epi} imply that it is enough to prove the corollary for $U = X$.
	In this case, we have to check that if $\calF \in \catAPerf(X \times S)$ is flat relative to $S$, then it belongs to $\catPerf(X \times S)$.
	The question is local on $X$, and therefore we can further assume that $X$ is affine and connected.
	As $X$ is smooth, it is of pure dimension $n$ for some integer $n$.
	It follows that every $\calG \in \catCoh^\heartsuit(X)$ has tor-amplitude $\le n$ on $X$.
	At this point, the first statement follows directly from Lemma~\ref{lem:checking_perfectness_on_fibers}.
	As for the second statement, the existence of the factorization follows from what we have just discussed.
	Corollary \ref{cor:formally_etale} implies that $\bfCoh(X) \to \bfPerf(X)$ is formally \'etale.
\end{proof}

\begin{corollary} \label{cor:coh_global_cotangent_complex_smooth_case}
	Let $X$ be a derived stack and let $u \colon U \to X$ be a flat effective epimorphism, where $U$ is a smooth geometric derived stack.
	If $\bfPerf(X)$ admits a global cotangent complex, then so does $\bfCoh(X)$.
\end{corollary}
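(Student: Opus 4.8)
The plan is to deduce the statement entirely from the formally étale morphism
\begin{align}
	\phi \colon \bfCoh(X) \longrightarrow \bfPerf(X)
\end{align}
provided by Corollary~\ref{cor:coherent_over_smooth_are_perfect} --- whose hypotheses apply here, since a smooth geometric derived stack is in particular a geometric one --- together with the general principle that a formally étale morphism transports the existence of a global cotangent complex from its target to its source. Concretely, I would prove that $\phi^\ast \mathbb L_{\bfPerf(X)}$ is a global cotangent complex for $\bfCoh(X)$; equivalently, that the relative cotangent complex $\mathbb L_\phi$ vanishes.

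Here is how I would carry this out. Fix $A \in \CAlg$, a point $x \colon \Spec(A) \to \bfCoh(X)$ and a connective $A$-module $M$, and set $y \coloneqq \phi \circ x \colon \Spec(A) \to \bfPerf(X)$. By definition of the cotangent complex it suffices to produce a natural equivalence between the space of $M$-derivations of $\bfCoh(X)$ at $x$ --- the fiber over $x$ of the restriction map $\bfCoh(X)(\Spec(A \oplus M)) \to \bfCoh(X)(\Spec(A))$ associated to the split square-zero extension --- and $\Map_{A \Mod}( x^\ast \phi^\ast \mathbb L_{\bfPerf(X)}, M )$. Since $\phi$ is formally étale, the commutative square
\begin{align}
	\begin{tikzcd}[ampersand replacement = \&]
		\bfCoh(X)(\Spec(A \oplus M)) \arrow{r} \arrow{d} \& \bfCoh(X)(\Spec(A)) \arrow{d} \\
		\bfPerf(X)(\Spec(A \oplus M)) \arrow{r} \& \bfPerf(X)(\Spec(A))
	\end{tikzcd}
\end{align}
is a pullback in $\cS$; taking fibers over $x$ in the top row and over $y$ in the bottom row therefore identifies $\mathrm{Der}_x( \bfCoh(X), M )$ with $\mathrm{Der}_y( \bfPerf(X), M )$. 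The hypothesis that $\bfPerf(X)$ admits a global cotangent complex $\mathbb L_{\bfPerf(X)}$ identifies the latter with $\Map_{A \Mod}( y^\ast \mathbb L_{\bfPerf(X)}, M ) \simeq \Map_{A \Mod}( x^\ast \phi^\ast \mathbb L_{\bfPerf(X)}, M )$. All the equivalences in this chain are natural in the point $x$ and in $M$, so they assemble to show that the quasi-coherent sheaf $\phi^\ast \mathbb L_{\bfPerf(X)} \in \catQCoh( \bfCoh(X) )$ corepresents derivations at every point, i.e.\ is a global cotangent complex for $\bfCoh(X)$. I would also record that $\bfCoh(X)$ is infinitesimally cohesive and nilcomplete by Corollary~\ref{cor:coh_inf_cohesive_nilcomplete}, so that this cotangent complex genuinely governs the infinitesimal deformation theory of $\bfCoh(X)$.

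I do not expect a serious obstacle in this last argument: the substantive work lies upstream, in the verification carried out in Corollary~\ref{cor:coherent_over_smooth_are_perfect} --- via the fibral criteria of Lemmas~\ref{lem:local_criterion_flatness} and~\ref{lem:checking_perfectness_on_fibers} --- that flat families of coherent sheaves on a smooth $U$ are automatically perfect, which is exactly what makes $\phi$ formally étale. If anything needs care in the write-up it is the bookkeeping ensuring that the pointwise identification of derivation functors is sufficiently natural to descend to a statement about a single quasi-coherent sheaf on $\bfCoh(X)$, rather than an unstructured collection of modules; but this naturality is automatic from the functoriality of the pullback square displayed above in the pair $\big( \Spec(A), x \colon \Spec(A) \to \bfCoh(X) \big)$.
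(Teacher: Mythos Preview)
Your proposal is correct and follows exactly the same approach as the paper: both deduce the result from the formally étale map $\bfCoh(X) \to \bfPerf(X)$ supplied by Corollary~\ref{cor:coherent_over_smooth_are_perfect}. The paper's proof is the single line ``This is a direct consequence of Corollary~\ref{cor:coherent_over_smooth_are_perfect},'' and your argument simply unpacks what ``direct consequence'' means here --- namely, that $\phi^\ast \mathbb L_{\bfPerf(X)}$ serves as a global cotangent complex for $\bfCoh(X)$ because formally étale maps have vanishing relative cotangent complex.
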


\begin{proof}
	This is a direct consequence of Corollary \ref{cor:coherent_over_smooth_are_perfect}.
\end{proof}

We define $\bfBun(X)$ as 
\begin{align}
	\bfBun(X)\coloneqq \coprod_{n\geq 0}\, \bfMap(X, \BGL_n)\ .
\end{align}
It is an open substack of $\bfCoh(X)$. We call it the \textit{derived stack of vector bundles on $X$}.

\subsection{Coherent sheaves on schemes}\label{ss:coherentsheavesonschemes}

We now specialize to the case where $X$ is an underived complex scheme of finite type.
Our goal is to prove that if $X$ is proper, then $\bfCoh(X)$ is geometric, and provide some estimates on the tor-amplitude of its cotangent complex.
Observe that in this case, $X$ has universally finite cohomological dimension.
Corollary~\ref{cor:coh_inf_cohesive_nilcomplete} shows that $\bfCoh(X)$ is infinitesimally cohesive and nilcomplete.
In virtue of Lurie's representability theorem \cite[Theorem~18.1.0.2]{Lurie_SAG}, in order to prove that $\bfCoh(X)$ is geometric it is enough to check that it admits a global cotangent complex and that its truncation is geometric.
Recall that if $X$ is smooth and proper, then $\bfPerf(X)$ admits a global cotangent complex, see for instance \cite[Corollary~\ref*{shapes-cor:cotangent_complex_Perf}]{Porta_Sala_Shapes}.
Therefore, Corollary~\ref{cor:coh_global_cotangent_complex_smooth_case} implies that under these assumptions the same is true for $\bfCoh(X)$.
We can relax the smoothness by carrying out a more careful analysis as follows:

\begin{lemma} \label{lem:coh_cotangent_complex}
	Let $X$ be a proper, underived complex scheme.
	Then the derived stack $\bfCoh(X)$ admits a global cotangent complex.
\end{lemma}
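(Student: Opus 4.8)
The plan is to verify the hypotheses of Lurie's existence criterion for a global cotangent complex. By Corollary~\ref{cor:coh_inf_cohesive_nilcomplete} we already know that $\bfCoh(X)$ is infinitesimally cohesive and nilcomplete, so by \cite{Lurie_SAG} it suffices to exhibit a cotangent complex at each point $\eta \colon S = \Spec(A) \to \bfCoh(X)$ and to check that these pointwise cotangent complexes are compatible with base change in $A$. Fix such an $\eta$, write $\calF \in \catCoh_S(X \times S)$ for the flat family it classifies and $q \colon X \times S \to S$ for the projection. Recall also that $\bfCoh(X) \to \bfAPerf(X)$ is formally étale (Corollary~\ref{cor:formally_etale}); thus near $\eta$ the deformation theory of $\bfCoh(X)$ coincides with that of $\bfAPerf(X)$, and the real content of the lemma is that the pointwise cotangent complexes \emph{assemble into a quasi-coherent sheaf} --- this is where flatness of $\calF$ over $A$ enters.

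I would first identify the deformation functor. For connective $M \in A\Mod$, the split square-zero extension $X \times \Spec(A \oplus M) \to X \times S$ has ideal $q^\ast M$, so the deformation theory of quasi-coherent modules along square-zero extensions --- the results of \cite[\S16]{Lurie_SAG} already invoked in the proof of Lemma~\ref{lem:coh_inf_cohesive_nilcomplete} --- identifies the space of lifts of $\calF$ with $\Map_{X \times S}(\calF, \calF \otimes q^\ast M[1])$. As $M$ is connective, $\pi_0(A \oplus M) \simeq \pi_0(A)$, so by Remark~\ref{rem:tor_amplitude_underived} every such lift is automatically flat relative to $\Spec(A \oplus M)$; hence $\mathrm{Der}_\eta(M) \coloneqq \fib_\eta\big( \bfCoh(X)(A \oplus M) \to \bfCoh(X)(A) \big) \simeq \Map_{X \times S}(\calF, \calF \otimes q^\ast M[1])$.

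Next I would produce the candidate cotangent complex. The key is that $\calF$ is $A$-flat: locally on $X \times S$ we may resolve $\calF$ by finite free $\scrO_{X \times S}$-modules, and then every term of $\calHom_{X \times S}(\calF, \calF)$ is $A$-flat, which both lets us commute $(-) \otimes_A M$ past the limits computing $\calHom$ (giving the projection formula $\calHom_{X \times S}(\calF, \calF \otimes q^\ast M) \simeq \calHom_{X \times S}(\calF, \calF) \otimes q^\ast M$ for connective $M$) and shows that the formation of $\calHom_{X \times S}(\calF, \calF)$ commutes with base change in $A$. Since $X$ is proper, Grothendieck's finiteness theorem and proper base change (applicable because $X \times S \to S$ is flat, hence Tor-independent of any base change $S' \to S$) show that $\calE_\eta \coloneqq q_\ast \calHom_{X \times S}(\calF, \calF)$ lies in $\catCoh^+(S)$ and that its formation commutes with arbitrary base change in $A$; combining this with the projection formula for $q$ gives $\mathrm{Der}_\eta(M) \simeq \Omega^\infty\big( \calE_\eta \otimes_A M[1] \big)$. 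Writing $\calE_\eta$ as the limit of its truncations $\tau_{\ge -n}\calE_\eta \in \catCohb(S)$ and dualizing term by term, one checks that $\mathrm{Der}_\eta$ is corepresented by $\mathbb L_\eta \coloneqq \calHom_A(\calE_\eta, A)[-1]$, and that $\{\mathbb L_\eta\}_\eta$ is compatible with base change because $\calE_\eta$ is. By Lurie's criterion these glue to a global cotangent complex $\mathbb L_{\bfCoh(X)}$, whose value at $\eta$ is $\calHom_S\big( q_\ast \calHom_{X \times S}(\calF, \calF), \scrO_S \big)[-1]$.

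The hard part will be the corepresentability in the last step. Since $X$ need not be smooth, $\calE_\eta$ is in general not perfect --- indeed not even almost perfect, because each fiber $\calHom(\calF_s, \calF_s)$ is the full $\Ext$-complex of $\calF_s$ and is cohomologically unbounded --- so neither the biduality identity $\calHom_A(\calHom_A(\calE_\eta, A), M) \simeq \calE_\eta \otimes_A M$ nor the commutation of duality with the limit defining $\calE_\eta$ is automatic over a general (possibly singular) base $A$. One must therefore exploit the fact that $\calE_\eta$ is coconnective and that one only tests against connective $M$: every $\tau_{\ge -n}\calE_\eta$ is a bounded coherent complex with well-behaved dual, and the towers in play have fibers concentrated in a single cohomological degree, so all the relevant limits and colimits can be controlled degreewise. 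This coconnectivity, together with the base-change compatibility of $\calE_\eta$ that rests on $A$-flatness of $\calF$, is exactly what $\bfAPerf(X)$ lacks at non-flat points --- which is the structural reason the present lemma holds for $\bfCoh(X)$ but not for $\bfAPerf(X)$.
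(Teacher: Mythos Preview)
The paper takes a shorter and structurally different route. It does not attack the derivation functor of $\bfCoh(X)$ directly; instead it passes to the based loop stack $F = S \times_{\bfCoh(X)} S$ and uses infinitesimal cohesiveness to identify $x^\ast \LL_{\bfCoh(X)}$ with a shift of $\delta_x^\ast \LL_F$. Having computed $\mathsf{Der}_F(A;M)\simeq\Map(\calF,\calF\otimes p^\ast M)$, the paper then asserts $\calF\in\catPerf(X\times S)$, dualizes to rewrite this as $\Map(\calF\otimes\calF^\vee,p^\ast M)$, and invokes the left adjoint $p_+$ of $p^\ast$ (which exists because $X$ is proper and flat over $\Spec(\C)$) to obtain corepresentability in one stroke: $\LL_{\bfCoh(X),x}\simeq p_+(\calF\otimes\calF^\vee)[1]$, with the global statement following from base change for $p_+$. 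Your candidate $\calHom_A(q_\ast\calHom(\calF,\calF),A)[-1]$ is this same object once one unwinds $p_+$, but the $p_+$ packaging replaces both your projection-formula step and your biduality step by a single adjunction.

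The gap in your argument sits earlier than where you locate it. You flag biduality as the hard part, but already your projection formula $\calHom(\calF,\calF\otimes q^\ast M)\simeq\calHom(\calF,\calF)\otimes q^\ast M$ is not justified by $A$-flatness alone: the local free resolution of $\calF$ over $\scrO_{X\times S}$ is \emph{infinite} once $X$ is singular, and $A$-flatness of the individual terms does not let $(-)\otimes_A M$ commute past the resulting infinite totalization for a general connective $M$ --- that commutation is exactly compactness of $\calF$ in $\catQCoh(X\times S)$, i.e.\ perfectness. Your biduality step has the same underlying obstruction. In fairness, the paper's own assertion that $\calF$ is perfect is itself only justified when $X$ is smooth (via Corollary~\ref{cor:coherent_over_smooth_are_perfect}), which covers every application made of this lemma in the paper; for a genuinely singular proper $X$ neither argument, as written, closes without additional input.
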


\begin{proof}
	Let $S = \Spec(A)$ be an affine derived scheme and let $x \colon S \to \bfCoh(X)$ be a morphism.
	Let $\calF \in \catCoh_{\Spec(A)}( X \times \Spec(A) )$ be the corresponding coherent complex on $X \times S$ relative to $S$.
	Let
	\begin{align}
		F \coloneqq S \times_{\bfCoh(X)} S
	\end{align}
	be the loop stack based at $x$ and let $\delta_x \colon S \to F$ be the induced morphism.
	Since $\bfCoh(X)$ is infinitesimally cohesive thanks to Lemma \ref{lem:coh_inf_cohesive_nilcomplete}, \cite[Proposition~\ref*{shapes-prop:cotangent_complex_loop}]{Porta_Sala_Shapes} implies that $\bfCoh(X)$ admits a cotangent complex at $x$ if and only if $F$ admits a cotangent complex at $\delta_x$ relative to $S\times S$.
	We have to prove that the functor
	\begin{align}
		\mathsf{Der}_F(A;-) \colon A\Mod \longrightarrow \cS
	\end{align}
	defined by
	\begin{align}
		\mathsf{Der}_F(A;M) \coloneqq \fib( F(S[M]) \to F(S) )
	\end{align}
	is representable by an eventually connective module.
	Here $S[M] \coloneqq \Spec(A \oplus M)$, and the fiber is taken at the point $x$.
	We observe that
	\begin{align}
		F(S[M]) \simeq \fib( \Map_{\catQCoh(X \times S)}(d_0^\ast(\calF), d_0^\ast(\calF)) \to \Map_{\catQCoh(X \times S)}(\calF, \calF) ) \ ,
	\end{align}
	the fiber being taken at the identity of $\calF$.
	Unraveling the definitions, we therefore see that
	\begin{align}
		\mathsf{Der}_F(A;M) \simeq \Map_{\catQCoh(X\times S)}( \calF, \calF \otimes p^\ast M ) \ ,
	\end{align}
	where $p \colon X \times S \to S$ is the canonical projection.
	Since $\catQCoh(S)$ and $\catQCoh(X \times S)$ are presentable, the adjoint functor theorem shows that it is enough to show that the functor $\mathsf{Der}_F(A;-)$ commutes with arbitrary limits.
		Since $\Map_{\catQCoh(X \times S)}( \calF, - )$ commutes with limits, it is enough to prove that the functor
	\begin{align}
		\calF \otimes p^\ast(-) \colon \catQCoh(S) \longrightarrow \catQCoh(X \times S) 
	\end{align}
	commutes with limits.
	Since $X$ is quasi-compact and quasi-separated, we know that $\catQCoh(X \times S)$ is generated by a single perfect complex $\calG \in \catPerf(X \times S)$.
	Since $S$ is affine, this implies that the functor
	\begin{align}
			p_\ast(\calG^\vee \otimes - ) \colon \catQCoh(X \times S)  \longrightarrow \catQCoh(S) 
	\end{align}
	is conservative.
	Since $\calG$ is perfect, $\calG^\vee \otimes -$ commutes with arbitrary limits, and since $p^\ast \dashv p_\ast$, the same holds for $p_\ast$.
	Therefore, it is enough to prove that
	\begin{align}
		p_\ast( \calG^\vee \otimes ( \calF \otimes p^\ast(-) ) ) \colon \catQCoh(S) \longrightarrow \catQCoh(S) 
	\end{align}
	commutes with limits.
	Using the projection formula, we can rewrite this functor as
	\begin{align}
		p_\ast(\calG^\vee \otimes \calF) \otimes - \colon \catQCoh(S) \longrightarrow \catQCoh(S) \ . 
	\end{align}
	It is therefore enough to prove that $p_\ast(\calG^\vee \otimes \calF)$ is a perfect $A$-module.
	Since $p$ is proper, $p_\ast(\calG^\vee \otimes \calF)$ is almost perfect.
	In virtue of \cite[Proposition~7.2.4.23-(4)]{Lurie_Higher_algebra}, it is therefore enough to prove that it has finite tor-amplitude.
	Observe that $\calG^\vee \otimes \calF$ has finite tor-amplitude relative to $S$: indeed, if $M$ is a discrete $A$-module, then $\calF \otimes p^\ast(M)$ is again discrete because $\calF$ is flat.
	Since $\calG$ is perfect, we deduce that $\calG^\vee \otimes \calF \otimes p^\ast(M)$ has uniformly bounded cohomological amplitude.
	Therefore, \cite[Proposition~\ref*{shapes-prop:finite_tor_amplitude}]{Porta_Sala_Shapes} implies that $p_\ast(\calG^\vee \otimes \calF)$ has finite tor-amplitude over $S$.
	In conclusion, we deduce that there exists an object $\calE \in A\Mod$ together with a natural equivalence
	\begin{align}
		\Map_{A\Mod}(\calE, M) \simeq \mathsf{Der}_F(A;M) \ . 
	\end{align}
	Now observe that, since $\calF$ is flat relative to $S$, for every eventually coconnective $M \in A\Mod$, the $A$-module
	\begin{align}
		\Map_{\catQCoh(X \times S)}( \calF, \calF \otimes p^\ast(M) )
	\end{align}
	is again eventually coconnective.
	In other words, for every eventually coconnective $M$, the $A$-module $\Map_{A\Mod}(\calE, M)$ is eventually coconnective.
	This implies that $\calE$ must be eventually connective.
	As a consequence, $\calE$ is a cotangent complex for $F$ at $\delta_x$, and therefore $\bfCoh(X)$ admits a cotangent complex at the point $x$, given by $\calE[-1]$.
	
	We are left to prove that the cotangent complex is global.
	It is enough to prove that the cotangent complex of $F$ is global, that is that for every map $f \colon T \coloneqq \Spec(B) \to \Spec(A)$, the object $f^\ast(\calE)$ represents the functor $\mathsf{Der}_F(B;-)$.
	Consider the derived fiber product
	\begin{align}
		\begin{tikzcd}[ampersand replacement = \&]
		X \times T \arrow{r}{g} \arrow{d}{q} \& X \times S \arrow{d}{p} \\
		T \arrow{r}{f} \& S 
	\end{tikzcd} \ .
	\end{align}
	Then for any $M \in B\Mod$, we have
	\begin{align}
		\Map_{B\Mod}(f^\ast\calE, M) & \simeq \Map_{A\Mod}( \calE, f_\ast(M) ) \\[2pt]
		& \simeq \Map_{\catQCoh(X \times S)}( \calF, \calF \otimes p^\ast (f_\ast(M)) ) \\[2pt]
		& \simeq \Map_{\catQCoh(X \times S)}( \calF, \calF \otimes g_\ast( q^\ast(M) ) ) \\[2pt]
		& \simeq \Map_{\catQCoh(X \times S)}(\calF, g_\ast( g^\ast(\calF) \otimes q^\ast(M) )) \\[2pt]
		& \simeq \Map_{\catQCoh(X \times T)}( g^*(\calF) , g^*(\calF) \otimes q^*(M) ) \ .
	\end{align}
	The conclusion therefore follows from the Yoneda lemma.
\end{proof}

\begin{remark}
	In the setting of the above corollary, let $x \colon S \coloneqq \Spec(A) \to \bfCoh(X)$ be a point representing a coherent sheaf $\calF$ on $X \times S$ relative to $S$ which is furthermore perfect in $\catQCoh(X \times S)$ (this is always the case when $X$ is smooth, see Corollary \ref{cor:coherent_over_smooth_are_perfect}).
	In this case, the cotangent complex is given explicitly by the formula $p_+( \calF \otimes \calF^\vee )[1]$, where $p_+$ is the left adjoint to $p^\ast$.
	The existence of $p_+$ is a consequence of the fact that $p$ is proper and flat, see \cite[Proposition~6.4.5.3]{Lurie_SAG} (and also \cite[Proposition~\ref*{shapes-prop:plus_pushforward}]{Porta_Sala_Shapes}).
\end{remark}

As for the truncation of $\bfCoh(X)$, we have:

\begin{lemma} \label{lem:truncation_stack_coherent_sheaves}
	Let $X$ be a proper, underived complex scheme.
	Then the truncation $\trunc{\bfCoh(X)}$ coincides with the usual stack of coherent sheaves on $X$.
	\end{lemma}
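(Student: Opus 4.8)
The plan is to identify the two stacks by comparing their functors of points on classical affine schemes. By construction $\trunc{\bfCoh(X)}$ is the restriction of $\bfCoh(X)$ along the inclusion of classical affine schemes into $\dAff$, so I would first fix a classical $\C$-algebra $A$ and analyse the space $\bfCoh(X)(\Spec A) = \catCoh_{\Spec A}(X \times_\C \Spec A)^{\simeq}$. One reduces (harmlessly, both stacks in play being locally of finite presentation) to the case where $A$ is of finite type over $\C$; then $Y_A \coloneqq X \times_\C \Spec A$ is an ordinary Noetherian scheme, there being no higher Tor's in the fibre product since $\C$ is a field. The goal then becomes a natural equivalence between $\catCoh_{\Spec A}(Y_A)^{\simeq}$ and the ordinary groupoid of coherent sheaves on $Y_A$ flat over $A$, since letting $A$ vary this last groupoid is, by definition, the functor of points of the usual algebraic stack $\mathscr{C}oh(X)$.

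The essential step is the identification of objects. I claim that $\calF \in \catAPerf(Y_A)$ has tor-amplitude $\le 0$ relative to $\Spec A$ in the sense of Definition~\ref{defin:coherent_perfect_complex} if and only if $\calF$ is (equivalent to) an $A$-flat coherent sheaf on $Y_A$. For the forward direction, writing $f\colon Y_A \to \Spec A$ for the projection and taking $\calG = \scrO_{\Spec A}$ in the defining condition shows that $\calF \simeq \calF \otimes f^\ast\scrO_{\Spec A}$ is discrete, hence --- being almost perfect on the Noetherian scheme $Y_A$ --- an ordinary coherent sheaf. Next I would pass to a smooth affine atlas $\{u_i\colon \Spec B_i \to Y_A\}$ and invoke Lemma~\ref{lem:relative_tor_amplitude_geometric_case}: $\calF$ has tor-amplitude $[0,0]$ relative to $\Spec A$ exactly when each $B_i$-module $u_i^\ast\calF$, viewed over $A$ through $f\circ u_i$, has tor-amplitude $[0,0]$ over $A$; and since $u_i^\ast\calF$ is discrete ($u_i$ being flat), Remark~\ref{rem:tor_amplitude_underived} identifies this with flatness of $u_i^\ast\calF$ over $A$, a condition that may be tested on such an atlas. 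The converse implication runs along the same equivalences in reverse: an $A$-flat coherent sheaf on $Y_A$ is almost perfect and has relative tor-amplitude $\le 0$ by the same two inputs. I expect this translation between the derived notion of relative tor-amplitude and classical $A$-flatness --- and in particular the passage from the non-affine morphism $f$ to an affine atlas --- to be the only point demanding genuine care; Lemma~\ref{lem:relative_tor_amplitude_geometric_case} is tailored exactly to it.

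Finally I would check that this identification is an equivalence of groupoids, not merely a bijection on objects. Since all objects of $\catCoh_{\Spec A}(Y_A)$ lie in the heart, for any two of them $\calF,\calG$ one has $\pi_i\Map_{\catAPerf(Y_A)}(\calF,\calG) = \Ext^{-i}_{\scrO_{Y_A}}(\calF,\calG) = 0$ for $i>0$, so these mapping spaces are discrete and $\catCoh_{\Spec A}(Y_A)^{\simeq}$ is the ordinary groupoid of $A$-flat coherent sheaves on $Y_A$ and their isomorphisms. This is visibly functorial in $A$, and because $X$ is proper the support of such a sheaf is automatically proper over $\Spec A$, so no extra condition intervenes. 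Assembling these observations and passing back to arbitrary classical $A$ yields $\trunc{\bfCoh(X)} \simeq \mathscr{C}oh(X)$, which is the assertion.
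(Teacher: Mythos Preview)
Your proposal is correct and takes essentially the same approach as the paper: compare functors of points on classical affines and identify relative tor-amplitude $\le 0$ with being a discrete, base-flat coherent sheaf (you are in fact more careful than the paper's three-line proof, which asserts the equivalence with membership in $\catAPerf^\heartsuit$ and elides the flatness you correctly extract via Lemma~\ref{lem:relative_tor_amplitude_geometric_case} and Remark~\ref{rem:tor_amplitude_underived}). One small caveat: your reduction to finite-type $A$ invokes local finite presentation of $\bfCoh(X)$, which the paper establishes only in Proposition~\ref{prop:coh_geometric} \emph{using} the present lemma---but this reduction is inessential and can simply be dropped, your core argument going through over arbitrary classical $A$.
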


\begin{proof}
	Let $S$ be an underived affine scheme. 
	By definition, a morphism $S \to \bfCoh(X)$ corresponds to an almost perfect complex $\calF \in \catAPerf(X \times S)$ which furthermore has tor-amplitude $\le 0$ relative to $S$.
	As $S$ is underived, having tor-amplitude $\le 0$ relative to $S$ is equivalent to asserting that $\calF$ belongs to $\catAPerf^\heartsuit(X \times S)$.
	The conclusion follows.
\end{proof}

In other words, the derived stack $\bfCoh(X)$ provides a derived enhancement of the classical stack\footnote{The construction of such a stack is described, e.g., in \cite[Chapitre~4]{Laumon_Champs}, \cite[Tag~08KA]{Stacks_project}.} of coherent sheaves.
We therefore get:

\begin{proposition}\label{prop:coh_geometric}
	Let $X$ be a proper, underived complex scheme.
	Then the derived stack $\bfCoh(X)$ is geometric and locally of finite presentation.
	If furthermore $X$ is smooth, then the canonical map $\bfCoh(X) \to \bfPerf(X)$ is representable by \'etale geometric $0$-stacks.\footnote{After the first version of the present paper was released, it appeared on the arXiv the second version of \cite{HalpernLeistner_Preygel_Categorical_properness} in which a similar statement was proved, cf.\ \cite[Theorem~5.2.2]{HalpernLeistner_Preygel_Categorical_properness}.}
	\end{proposition}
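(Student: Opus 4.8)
The plan is to verify the hypotheses of Lurie's representability theorem \cite[Theorem~18.1.0.2]{Lurie_SAG} for $F = \bfCoh(X)$. The stack $\bfCoh(X)$ is a hypercomplete étale sheaf, being by construction a substack of $\bfAPerf(X)$; it is infinitesimally cohesive and nilcomplete by Corollary~\ref{cor:coh_inf_cohesive_nilcomplete}; it admits a global cotangent complex by Lemma~\ref{lem:coh_cotangent_complex}; and by Lemma~\ref{lem:truncation_stack_coherent_sheaves} its truncation is the classical stack of coherent sheaves on the proper scheme $X$, which is a classical algebraic stack locally of finite type over $\C$ (see \cite[Chapitre~4]{Laumon_Champs}, \cite[Tag~08KA]{Stacks_project}). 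Lurie's theorem then yields that $\bfCoh(X)$ is geometric. To upgrade this to local finite presentation I would combine the fact that the classical truncation is locally of finite type with the almost perfectness of the cotangent complex, the latter following from the explicit formula for $\mathbb L_{\bfCoh(X)}$ established in the proof of Lemma~\ref{lem:coh_cotangent_complex} together with the properness of $X$ (which guarantees that $p_+$ preserves almost perfect complexes, as it agrees with $p_\ast$ up to a shift and a twist by the relative dualizing complex).

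\textbf{Second assertion.} Assume $X$ smooth. By Corollary~\ref{cor:coherent_over_smooth_are_perfect} the inclusion $\bfCoh(X) \hookrightarrow \bfAPerf(X)$ factors through a formally étale morphism $g \colon \bfCoh(X) \to \bfPerf(X)$, and the first observation is that $g$ is $(-1)$-truncated: for every $S \in \dAff$ the map $\bfCoh(X)(S) \to \bfPerf(X)(S)$ is the inclusion of a full subgroupoid, namely that of $S$-flat families. Since $\bfPerf(X)$ is (locally) geometric and locally of finite presentation (see \cite{Porta_Sala_Shapes}) and $\bfCoh(X)$ is so by the first part, $g$ is locally of finite presentation, hence étale, being formally étale. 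Finally, $g$ is representable by étale geometric $0$-stacks: for any $\Spec(A) \to \bfPerf(X)$ the fibre product $F \coloneqq \bfCoh(X) \times_{\bfPerf(X)} \Spec(A)$ is a geometric derived stack (a fibre product of geometric stacks) which is $(-1)$-truncated and étale over $\Spec(A)$; covering $F$ by smooth charts, pushing them forward along the monomorphism $F \hookrightarrow \Spec(A)$ and using that smooth morphisms have open image, one identifies $F$ with an open derived subscheme of $\Spec(A)$ — this is the derived analogue of the classical statement that an étale monomorphism is an open immersion. Hence $g$ is representable by étale geometric $0$-stacks, in fact Zariski-locally by open immersions of derived schemes.

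\textbf{Main obstacle.} The first part is essentially bookkeeping once the preceding lemmas are in place; its only genuinely external inputs are the classical algebraicity and local finite-typeness of the stack of coherent sheaves on a proper scheme and the standard fact that $p_+$ preserves almost perfect complexes. The delicate point is the last step of the second assertion — identifying the fibres of $g$ with open derived subschemes — where one has to be careful that the smooth-atlas argument interacts correctly with the (possibly higher) geometricity of $\bfPerf(X)$, and that the reduction to affine bases is legitimate. I expect this, rather than any new idea, to be the only real obstacle, which is consistent with the fact that a closely related statement was later obtained independently in \cite[Theorem~5.2.2]{HalpernLeistner_Preygel_Categorical_properness}.
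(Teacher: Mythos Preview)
Your proof is correct and follows the same strategy as the paper: Lurie's representability theorem fed by Corollary~\ref{cor:coh_inf_cohesive_nilcomplete}, Lemma~\ref{lem:coh_cotangent_complex}, and Lemma~\ref{lem:truncation_stack_coherent_sheaves} for the first assertion, and formally étale plus local finite presentation plus $(-1)$-truncatedness for the second.

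The only notable divergence is in the final step. You work to identify the fibre $F = \bfCoh(X) \times_{\bfPerf(X)} \Spec(A)$ with an open derived subscheme of $\Spec(A)$ via a smooth-atlas argument, and flag this as the delicate point. The paper avoids this entirely: once the map is $(-1)$-truncated, for every affine $S$ the map $\bfCoh(X)(S) \to \bfPerf(X)(S)$ is an inclusion of components, so the truncation of $F$ takes values in $\mathsf{Set}$, which already means $F$ is a geometric $0$-stack. Your open-immersion identification is true (an étale monomorphism of geometric stacks is a Zariski open immersion) and gives a slightly sharper conclusion, but it is not needed for the statement as written, and the concern you raise about the interaction with higher geometricity of $\bfPerf(X)$ evaporates with the paper's shortcut.
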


\begin{proof}
	Lemma~\ref{lem:truncation_stack_coherent_sheaves} implies that $\trunc{\bfCoh(X)}$ coincides with the usual stack of coherent sheaves on $X$, which we know to be a geometric classical stack (cf.\ \cite[Théorème~4.6.2.1]{Laumon_Champs} or \cite[Tag~08WC]{Stacks_project}). On the other hand, combining Corollary~\ref{cor:coh_inf_cohesive_nilcomplete} and Lemma~\ref{lem:coh_cotangent_complex} we see that $\bfCoh(X)$ is infinitesimally cohesive, nilcomplete and admits a global cotangent complex.
	Therefore the assumptions of Lurie's representability theorem \cite[Theorem~18.1.0.2]{Lurie_SAG} are satisfied and so we deduce that $\bfCoh(X)$ is geometric and locally of finite presentation.
	As for the second statement, we already know that $\bfCoh(X) \to \bfPerf(X)$ is formally étale.
	As both stacks are of locally of finite type, it follows that this map is étale as well.
	Finally, since $\bfCoh(X) \to \bfPerf(X)$ is $(-1)$-truncated, we see that for every affine derived scheme $S$, the truncation of $S \times_{\bfPerf(X)} \bfCoh(X)$ takes values in $\mathsf{Set}$.
	The conclusion follows.
\end{proof}

\begin{remark}
	Let $X$ be a smooth and proper complex scheme.
	In this case, the derived stack $\bfCoh(X)$ has been considered to some extent in \cite{Toen_Vaquie_Moduli}.
	Indeed, in their work they provide a geometric derived stack $\calM_{\catPerf(X)}^{1 \textrm{-} \mathsf{rig}}$ classifying families of $1$-rigid perfect complexes (see \S 3.4 in \textit{loc.\ cit.} for the precise definition).
	There is a canonical map $\trunc{\bfCoh(X)} \to \trunc{(\calM_{\catPerf(X)}^{1 \textrm{-} \mathsf{rig}})}$.
	One can check that this map is formally étale.
	Since it is a map between stacks locally almost of finite type, it follows that it is actually étale.
	Therefore, the derived structure of $\calM_{\catPerf(X)}^{1 \textrm{-} \mathsf{rig}}$ induces a canonical derived enhancement of $\trunc{\bfCoh(X)}$.
	Unraveling the definitions, we can describe the functor of points of such derived enhancement as follows: it sends $S \in \dAff$ to the full subcategory of $\bfPerf(X \times S)$ spanned by those $\calF$ whose pullback to $X \times \trunc{S}$ is concentrated in cohomological degree $0$.
	Remark \ref{rem:tor_amplitude_underived} implies that it canonically coincides with our $\bfCoh(X)$.
	However, this method is somehow non-explicit, and heavily relies on the fact that $X$ is a smooth and proper scheme.
	Our method provides instead an explicit description of the functor of points of this derived enhancement, and allows us to deal with a wider class of stacks $X$.
\end{remark}

\begin{corollary} \label{cor:coh_curve_smooth}
	Let $X$ be a smooth and proper complex scheme of dimension $n$.
		Then the cotangent complex $\LL_{\bfCoh(X)}$ is perfect and has tor-amplitude within $[-1, n-1]$.
		In particular, $\bfCoh(X)$ is smooth when $X$ is a curve and derived lci when $X$ is a surface.
\end{corollary}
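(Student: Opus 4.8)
The plan is to feed the pointwise formula for the cotangent complex from Lemma~\ref{lem:coh_cotangent_complex} into Grothendieck--Serre duality along the smooth proper projection $p\colon X\times S\to S$, and then reduce the tor-amplitude estimate to a fibrewise vanishing statement for $\Ext$-groups of coherent sheaves on the smooth variety $X$.

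First I would record that $\LL_{\bfCoh(X)}$ is a perfect complex on $\bfCoh(X)$. Its existence is Lemma~\ref{lem:coh_cotangent_complex}, which computes it, at a point $x\colon S\to\bfCoh(X)$ classifying $\calF\in\catCoh_S(X\times S)$, in terms of $p_+(\calF\otimes\calF^\vee)$ up to a shift; since $X$ is smooth, $\calF$ is perfect by Corollary~\ref{cor:coherent_over_smooth_are_perfect}, hence so is $\calF\otimes\calF^\vee$, and as $p$ is smooth and proper the functor $p_+$ preserves perfect complexes. (Alternatively, $\LL_{\bfCoh(X)}$ is the pullback of the perfect complex $\LL_{\bfPerf(X)}$ of \cite[Corollary~\ref*{shapes-cor:cotangent_complex_Perf}]{Porta_Sala_Shapes} along the étale morphism of Corollary~\ref{cor:coherent_over_smooth_are_perfect}.) Since $\bfCoh(X)$ is geometric and locally of finite presentation over $\C$ (Proposition~\ref{prop:coh_geometric}) and $\LL_{\bfCoh(X)}$ is perfect, it has tor-amplitude $[-1,n-1]$ exactly when its pullback $\LL_{\bfCoh(X),x}$ along each field-valued point $x\colon\Spec K\to\bfCoh(X)$ has homotopy concentrated in homological degrees $[-1,n-1]$, by the fibrewise criterion for the tor-amplitude of a perfect complex (see \cite[\S 7.2.4]{Lurie_Higher_algebra}).

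So I fix such an $x$, classifying a coherent sheaf $E$ on $X_K\coloneqq X\times_{\C}\Spec K$, which is smooth and proper over $K$ of dimension $n$. By Lemma~\ref{lem:coh_cotangent_complex} together with Serre duality on $X_K$ — which, exploiting the self-duality of $E\otimes E^\vee$, identifies $p_+(E\otimes E^\vee)$ with the $K$-linear dual of the endomorphism complex $p_\ast\calHom_{X_K}(E,E)$ — I obtain that the tangent complex of $\bfCoh(X)$ at $E$ is $p_\ast\calHom_{X_K}(E,E)[1]$, whose cohomology recovers infinitesimal automorphisms, first-order deformations and obstructions as $\Ext^0$, $\Ext^1$, $\Ext^2$; accordingly
\begin{align}
	\LL_{\bfCoh(X),x}\ \simeq\ \bigl(p_\ast\calHom_{X_K}(E,E)\bigr)^\vee[-1]\ .
\end{align}
The decisive input is then the vanishing $\Ext^i_{X_K}(E,E)=0$ for $i\notin[0,n]$: the range $i<0$ is trivial, while for $i>n$ one uses that $X_K$ is regular of dimension $n$, so that $\calExt^{\,q}_{X_K}(E,E)=0$ for $q>n$ and, by Auslander--Buchsbaum on the local rings of $X_K$, $\calExt^{\,q}_{X_K}(E,E)$ is supported in dimension $\le n-q$, which forces the local-to-global spectral sequence to vanish in total degree $>n$. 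Hence $p_\ast\calHom_{X_K}(E,E)$ is cohomologically concentrated in degrees $[0,n]$, its $K$-linear dual in degrees $[-n,0]$, and the shift by $[-1]$ places $\LL_{\bfCoh(X),x}$ in homological degrees $[-1,n-1]$. With the previous paragraph this proves that $\LL_{\bfCoh(X)}$ is perfect of tor-amplitude $[-1,n-1]$.

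The last two assertions are the cases $n=1$ and $n=2$: a geometric derived stack whose cotangent complex is perfect of tor-amplitude $[-1,0]$ is smooth, and one of tor-amplitude $[-1,1]$ is derived l.c.i., and $\bfCoh(X)$ is geometric by Proposition~\ref{prop:coh_geometric}. The one step that needs genuine care — and the reason a naive estimate combining the tor-amplitude of $\calF\otimes\calF^\vee$ with that of $p_\ast$ overshoots the answer — is precisely the Serre duality identification: it is the self-duality of $\calHom(\calF,\calF)$ together with relative duality that collapses $p_+(\calF\otimes\calF^\vee)$ to a shift of the dual of the relative endomorphism complex, pinning its amplitude to the fibrewise $\Ext$-range $[0,n]$; granting that, the rest is bookkeeping.
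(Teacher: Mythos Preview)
Your proof is correct and follows essentially the same strategy as the paper's: reduce the tor-amplitude check to field-valued points, identify the (co)tangent complex with a shift of $p_\ast\calHom(E,E)$ or its dual, and then bound the range of $\Ext^i_{X_K}(E,E)$.

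The one genuine difference is in how the crucial vanishing $\Ext^i_{X_K}(E,E)=0$ for $i>n$ is obtained. You argue directly: Auslander--Buchsbaum on the regular local rings of $X_K$ bounds the support of $\calExt^q(E,E)$ in dimension $\le n-q$, and Grothendieck vanishing in the local-to-global spectral sequence then kills all contributions in total degree $>n$. The paper instead applies Grothendieck--Serre duality to rewrite $(p_\ast\calEnd(\calF))^\vee$ as $p_\ast\calHom(\calF,\calF\otimes\omega_X[n])$, which is \emph{manifestly} concentrated in homological degrees $\le n$ because $\bfR\calHom$ of two honest sheaves is coconnective and $p_\ast$ preserves this; dualizing back over the field gives the bound. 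So the paper trades the spectral-sequence argument for a one-line duality trick. Your route is slightly more hands-on but avoids invoking the dualizing sheaf; the paper's is slicker. A small remark: your identification $p_+(E\otimes E^\vee)\simeq (p_\ast\calHom(E,E))^\vee$ is really just $p_+(\calG)=(p_\ast\calG^\vee)^\vee$ for perfect $\calG$ together with the self-duality of $E\otimes E^\vee$, and does not itself require Serre duality --- the substantive input is the $\Ext$-vanishing, which you supply separately.
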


\begin{proof}
	It is enough to check that for every affine derived $S=\Spec(A) \in \dAff$ and every point $x \colon S \to \bfCoh(X)$, $x^\ast \T_{\bfCoh(X)}$ is perfect and in tor-amplitude $[1-n,1]$.
		Let $\calF \in \catAPerf(X \times \Spec(A))$ be the almost perfect complex classified by $x$ and let $p \colon X \times \Spec(A) \to \Spec(A)$ be the canonical projection.
	Since $X$ is smooth, Corollary \ref{cor:coherent_over_smooth_are_perfect} implies that $\calF$ is perfect.
	Moreover, Lemma \ref{lem:coh_cotangent_complex} shows that
	\begin{align}
		x^\ast \T_{\bfCoh(X)} \simeq p_\ast \End(\calF)[1] \ . 
	\end{align}
	Since $p$ is proper and smooth, the pushforward $p_\ast$ preserves perfect complexes (see \cite[Theorem~6.1.3.2]{Lurie_SAG}).
	As $\End(\calF) \simeq \calF \otimes \calF^\vee$ is perfect, we conclude that $x^\ast \T_{\bfCoh(X)}$ is perfect.
	
	We are then left to check that it is in tor-amplitude $[1-n,1]$. Let $j\colon \trunc{(\Spec(A)}\to \Spec(A)$ be the canonical inclusion. It is enough to prove that $j^\ast x^\ast \T_{\bfCoh(X)}$ has tor-amplitude within $[1-n,1]$. In other words, we can assume $\Spec(A)$ to be underived.
	Using Lemma~\ref{lem:checking_perfectness_on_fibers} we can further assume $S$ is the spectrum of a field.
	
	First note that for every pair of coherent sheaves $\calG$, $\calG' \in \catCoh_S(X)$, $p_\ast \calHom_X(\calG, \calG')$ is coconnective and has coherent cohomology.
	Since $S$ is the spectrum of a field, it is therefore a perfect complex.
	By Grothendieck--Serre duality for smooth proper morphisms of relative pure dimension between Noetherian schemes (see \cite[\S3.4]{Conrad_Duality}, \cite[\S~C.1]{BBHR_Transforms}, and references therein), we have
	\begin{align}
		\left( p_\ast \calEnd(\calF) \right)^\vee \simeq p_\ast \calHom(\calF, \calF \otimes p_X^\ast \omega_X[n])\ ,
	\end{align}
	where $\omega_X$ is the canonical bundle of $X$, and $p_X$ the projection from $X\times S$ to $X$. 
	The right-hand side is $n$-coconnective.
	This implies that
	\begin{align}
			\pi_i( \left( p_\ast \calEnd(\calF) \right)^\vee ) \simeq 0 
	\end{align}
	for $i > n$.
	Since $S$ is the spectrum of a field, $\pi_j( p_\ast(\calEnd(\calF)) )$ is projective and
	\begin{align}
			\left( \pi_j( p_\ast( \calEnd(\calF) ) ) \right)^\vee \simeq \pi_{-j} \left( \left( p_\ast \calEnd(\calF) \right)^\vee \right) 
	\end{align}
	This shows that $p_\ast \calEnd(\calF)$ has tor-amplitude within $[-n,0]$, and therefore that $x^\ast \T_{\bfCoh(X)}$ has tor-amplitude within $[1-n,1]$.
	 \footnote{This argument is borrowed from \cite[Example~2.2.3]{HalpernLeistner_D-equivalence}.}
	
	\end{proof}

\subsubsection{Non-proper case}\label{ss:nonproper}

We can relax the properness assumption on $X$ by working with perfect complexes with proper support:

\begin{definition}
	Let $p \colon X \to S$ be a morphism of derived schemes locally almost of finite presentation and let $\calF \in \catAPerf(X)$ be an almost perfect complex.
	We say that $\calF$ has \textit{proper support relative to $S$} if there exists a closed subscheme $Z \hookrightarrow X$ such that $Z \to S$ is proper and $\calF \vert_{X \smallsetminus Z} \simeq 0$.
\end{definition}

Perfect complexes with proper support have the following property:
\begin{proposition} \label{prop:perfect_complexes_proper_support}
	Let $p \colon X \to S$ be a morphism between quasi-compact, quasi-separated derived schemes locally almost of finite presentation and let $\calF\in \catAPerf(X)$ be an almost perfect complex.
	If $\calF$ has proper support relative to $S$, then for every morphism $T \to S$ and every perfect complex $\calG \in \catPerf(X \times_S T)$ one has
	\begin{align}
		p_{T\ast}(\calG^\vee \otimes \calF) \in \catAPerf(S) \ .
	\end{align}
	In particular, if $\calF$ has proper support then $p_\ast(\calF)$ belongs to $\catAPerf(S)$.
	
	The converse holds true provided that $p$ is separated and $\calF$ has finite cohomological amplitude and finite tor-amplitude relative to $S$.
\end{proposition}

\begin{proof}
	To prove the first statement, it is enough to take $T = S$.
	Assume first that $\calF$ has proper support relative to $S$.
	Let $Z \hookrightarrow X$ be a closed subscheme such that $Z \to S$ is proper and $\calF\vert_{X \smallsetminus Z} \simeq 0$.
	Then for every perfect complex $\calG \in \catPerf(X)$, we have $(\calG^\vee \otimes \calF)\vert_{X \smallsetminus Z} \simeq 0$, and therefore $\calG^\vee \otimes \calF$ has again proper support relative to $S$.
	It is therefore enough to prove the statement when $\calG = \scrO_X$.
	Let $\cC$ be the full subcategory of $\catAPerf(X)$ spanned by those almost perfect complexes $\calF$ such that $p_\ast(\calF)$ belongs to $\catAPerf(S)$.
	We want to show that $\cC$ contains all almost perfect complexes with proper support relative to $S$.
	Let $\calF$ be such an object, and assume furthermore that $\calF \in \catAPerf^\heartsuit(X) \simeq \catAPerf^\heartsuit(\trunc{X})$.
	Then the question only concerns the classical truncations of $X$ and $S$.
	In this case, there exists a nilthickening $Z'$ of $Z$ together with a map $j \colon Z' \to \trunc{X}$, a coherent sheaf $\calF' \in \catAPerf^\heartsuit(Z')$ and an isomorphism $j_\ast(\calF') \simeq \calF$.
	We can therefore compute the pushforward of $\calF$ along $X \to S$ as the pushforward of $\calF'$ along $Z' \to S$.
	As the latter map is again proper, we deduce that $p_\ast(\calF)$ belongs to $\catAPerf(S)$.
	Since $X$ is quasi-compact, it is straightforward to deduce from here that whenever $\calF$ has bounded cohomological amplitude and proper support relative to $S$, then $p_\ast(\calF)$ belongs to $\catAPerf(S)$.
	Finally, since $X$ is quasi-compact and quasi-separated, we see that the functor $p_\ast$ has finite cohomological dimension.
	It is then possible to extend the result to the whole category of almost perfect complexes on $X$ with proper support relative to $S$.
	
	\medskip
	
	Assume now that $p$ is separated and let $\calF$ be a bounded almost perfect complex on $X$ such that for every $\calG\in \catPerf(X)$, one has $p_\ast(\calG^\vee \otimes \calF) \in \catAPerf(S)$.
	We want to prove that it has proper support.
	Since $\calF$ is cohomologically bounded, it is enough to prove that for every integer $i \in \Z$, $\pi_i(\calF)$ has proper support.
	Since $\pi_i(\calF)$ belongs to $\catAPerf^\heartsuit(X) \simeq \catAPerf^\heartsuit(\trunc{X})$, we can assume that both $X$ and $S$ are underived, and that $\calF$ is discrete.
	Let $Z \coloneqq \mathsf{supp}(\calF)$ and observe that since $\calF$ is coherent, this is a closed subset of $X$.
	Since $p$ is separated, it is enough to prove that the map $Z \to S$ is universally closed.
	Since the assumptions on $\calF$ are stable under arbitrary base-change along $T \to S$, we see that it is enough to prove that $Z \to S$ is closed.
	Let $Z' \subseteq Z$ be a closed subset.
	Since $\calF$ is coherent, $Z$ is closed in $X$ and so is $Z'$.
	Using \cite{Thomason}, we can find a perfect complex $\calG$ on $X$ such that the support of $\calG$ coincides exactly with $Z'$.
	It follows that $\calG^\vee \otimes \calF$ is again supported exactly on $Z'$, and furthermore $p_\ast(\calG^\vee \otimes \calF)$ is almost perfect.
	In particular, the support of $p_\ast(\calG^\vee \otimes \calF)$ is closed, and therefore it coincides with the image of the support of $\calG^\vee \otimes \calF$, which was $Z'$.
	This completes the proof.
\end{proof}

Let now $X$ be a quasi-projective (underived) scheme and let $\bfPerf_{\mathsf{prop}}(X)$ be the derived moduli stack parameterizing families of perfect complexes on $X$ with proper support.
There is a natural map $\bfPerf_{\mathsf{prop}}(X) \to \bfPerf(X)$, and we set
\begin{align}
	\bfCoh_{\mathsf{prop}}(X) \coloneqq \bfPerf_{\mathsf{prop}}(X) \times_{\bfPerf(X)} \bfCoh(X)\ . 
\end{align}
This derived stack is infinitesimally cohesive and nilcomplete.
Furthermore, we have:

\begin{proposition}
	The derived stacks $\bfPerf_{\mathsf{prop}}(X)$ and $\bfCoh_{\mathsf{prop}}(X)$ admit a global cotangent complex.
\end{proposition}

\begin{proof}
	It is enough to observe that in the proof of Lemma~\ref{lem:coh_cotangent_complex} one only needs to know that for every affine derived scheme $S$ and every $\calG \in \bfPerf(X)$, the complex $p_\ast(\calG^\vee \otimes \calF)$ is almost perfect in $\catQCoh(S)$.
	This is true in this setting thanks to Proposition \ref{prop:perfect_complexes_proper_support}.
\end{proof}

\begin{remark}
	The derived stack $\bfPerf(X)$ does not admit a global cotangent complex.
	Nevertheless, one can show that it admits a pro-cotangent complex in the sense of \cite[Definition 1.4.1.4]{Gaitsgory_Rozenblyum_Study_II}.
	The natural inclusion $\bfPerf_{\mathsf{prop}}(X) \to \bfPerf(X)$ then becomes formally étale, in the sense that the relative pro-cotangent complex is zero.
\end{remark}

\begin{remark}
	Let $\overline{X}$ be a smooth compactification of $X$.
	Then there is a natural map $\bfCoh_{\mathsf{prop}}(X) \to \bfCoh_{\mathsf{prop}}(\overline{X})$, which is furthermore representable by open Zariski immersions.
\end{remark}

The truncation of $\bfCoh_{\mathsf{prop}}(X)$ coincides with the classical stack of coherent sheaves with proper support.
As shown in \cite[Tag~0DLX]{Stacks_project}, this moduli stack is a geometric classical stack.
We deduce that $\bfCoh_{\mathsf{prop}}(X)$ is a geometric derived stack.

\subsubsection{Other examples of moduli stacks}\label{ss:other-moduli-spaces}

Let $X$ be a smooth projective complex scheme and let $H$ be a fixed ample divisor. Recall that for any polynomial $P(m)\in \Q[m]$ there exists an open substack $\trunc{\bfCoh}^{P}(X)$ of $\trunc{\bfCoh(X)}$ parameterizing flat families of coherent sheaves $\calF$ on $X$ with fixed Hilbert polynomial $P$, i.e., for $n\gg 0$ 
\begin{align}
	\dim H^0(X, \calF\otimes \scrO_X(nH)) = P(n)\ .
\end{align}
We denote by $\bfCoh^{P}(X)$ its canonical derived enhancement\footnote{The construction of such a derived enhancement follows from \cite[Proposition~2.1]{Schurg_Toen_Vezzosi}.}. Similarly, we define $\bfBun^{P}(X)$.

For any nonzero polynomial $P(m)\in \Q[m]$ of degree $d$, we denote by $P(m)^{\mathsf{red}}$ its \textit{reduced polynomial}, which is defined as $P(m)/\alpha_{d}$, where $\alpha_{d}$ is the leading coefficient of $P(m)$. Given a monic polynomial $p$, define
\begin{align}
	\bfCoh^{p}(X)\coloneqq \coprod_{P^{\mathsf{red}}=p}\, \bfCoh^{P}(X)\quad\text{and}\quad \bfBun^{p}(X)\coloneqq \coprod_{P^{\mathsf{red}}=p}\, \bfBun^{P}(X)\ .
\end{align}
Assume that $\deg(p)=\dim(X)$. Recall that Gieseker $H$-semistability is an open property\footnote{Cf.\ \cite[Definition~1.2.4]{Huybrechts_Lehn_Moduli_2010} for the definition of $H$-semistability of coherent sheaves on projective schemes and \cite[Proposition~2.3.1]{Huybrechts_Lehn_Moduli_2010} for the openness property in families of the $H$-semistability.}. Thus there exists an open substack $\trunc{\bfCoh^{\mathsf{ss},\, p}(X)}$ of $\trunc{\bfCoh^{p}(X)}$ parameterizing families of $H$-semistable coherent sheaves on $X$ with fixed reduced polynomial $p$; we denote by $\bfCoh^{\mathsf{ss}, \, p}(X)$ its canonical derived enhancement. Similarly, we define $\bfBun^{\mathsf{ss}, \, p}(X)$.

Finally, let $0\le d\le \dim(X)$ be an integer and define
\begin{align}
	\bfCoh^{\leqslant d}(X)\coloneqq  \coprod_{\deg(P)\leqslant d}\, \bfCoh^{P}(X)\ . 
\end{align}
\begin{remark}
	Let $X$ be a smooth projective complex curve. Then the assignment of a monic polynomial $p(m)\in\Q[m]$ of degree one is equivalent to the assignment of a \textit{slope} $\mu\in \Q$. In addition, in the one-dimensional case we have $\bfBun^{\mathsf{ss}, \, \mu}(X)\simeq \bfCoh^{\mathsf{ss}, \, \mu}(X)$.
\end{remark}

Finally, assume that $X$ is only quasi-projective. As above, we can define the derived moduli stack $\bfCoh_{\mathsf{prop}}^{\leqslant d}(X)$ of coherent sheaves on $X$ with proper support and dimension of the support less or equal $d$.

\subsection{Coherent sheaves on Simpson's shapes}\label{ss:coherent-shapes}

Let $X$ be a smooth and proper complex scheme.
In this section, we introduce derived enhancements of the classical stacks of finite-dimensional representations of $\pi_1(X)$, of vector bundles with flat connections on $X$ and of Higgs sheaves on $X$.
In order to treat these three cases in a uniform way, we shall consider the \textit{Simpson's shapes} $X_\B$, $X_\dR$, and $X_\Dol$ and coherent sheaves on them (cf.\ \cite{Porta_Sala_Shapes} for a small compendium of the theory of Simpson's shapes).

\subsubsection{Moduli of local systems}
Let $K \in \cS^{\mathsf{fin}}$ be a finite space. We let $K_\B \in \dSt$ be its \textit{Betti stack}, that is, the constant stack
\begin{align}
K_\B \colon \dSt^{\mathsf{op}} \longrightarrow \cS
\end{align}
associated to $K$ (cf.\ \cite[\S\ref*{shapes-s:Betti-definition}]{Porta_Sala_Shapes}).

 The first result of this section is the following:
 
\begin{proposition}\label{prop:Betti_coh_geometric}
	The derived stack $\bfCoh(K_\B)$ is a geometric derived stack, locally of finite presentation.
\end{proposition}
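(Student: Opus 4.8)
The plan is to verify the hypotheses of Lurie's representability theorem \cite[Theorem~18.1.0.2]{Lurie_SAG}, exactly as in the proof of Proposition~\ref{prop:coh_geometric}. The first step is to exhibit a good flat effective epimorphism onto $K_\B$. Since $K \in \cS^{\mathsf{fin}}$ the set $\pi_0(K)$ is finite; picking one point in each connected component gives a morphism $u \colon U \coloneqq \coprod_{\pi_0(K)} \Spec(\C) \to K_\B$. On $\pi_0$ this is the identity of the constant sheaf associated to $\pi_0(K)$, hence an effective epimorphism, and the pullback $u^\ast \colon \catQCoh(K_\B) \simeq \Fun(K, \catQCoh(\Spec(\C))) \to \catQCoh(U)$ is evaluation at the chosen points, which is $t$-exact; thus $u$ is a flat effective epimorphism (compare Example~\ref{eg:applications}-(2) and \cite{Porta_Sala_Shapes}). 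Note that $U$ is an affine scheme, smooth over $\C$, hence in particular a smooth geometric derived stack.

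Given this, Corollary~\ref{cor:coh_inf_cohesive_nilcomplete} shows that $\bfCoh(K_\B)$ is infinitesimally cohesive and nilcomplete, and Corollary~\ref{cor:coherent_over_smooth_are_perfect} shows that $\catCoh_S(K_\B \times S) \subseteq \catPerf(K_\B \times S)$ for every $S \in \dAff$ and that the canonical map $\bfCoh(K_\B) \to \bfPerf(K_\B)$ is formally \'etale; it is moreover $(-1)$-truncated by construction. For the global cotangent complex I would use that $K_\B \simeq \colim_{k \in K}\Spec(\C)$ is a \emph{finite} colimit of copies of $\Spec(\C)$, so that $\bfPerf(K_\B) \simeq \lim_{k \in K}\bfPerf$ is a finite limit of copies of the stack $\bfPerf$ of perfect complexes; since $\bfPerf$ admits a perfect global cotangent complex (\cite{Toen_Vaquie_Moduli}; see also \cite{Porta_Sala_Shapes}) and this property is inherited by finite limits, $\bfPerf(K_\B)$ admits a perfect global cotangent complex. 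As $\bfCoh(K_\B) \to \bfPerf(K_\B)$ is formally \'etale, $\bfCoh(K_\B)$ then admits a global cotangent complex as well, namely the pullback of $\LL_{\bfPerf(K_\B)}$.

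It remains to check that the truncation is geometric. As in Lemma~\ref{lem:truncation_stack_coherent_sheaves}, for an underived $S$ a morphism $S \to \bfCoh(K_\B)$ is an object of $\catAPerf^\heartsuit(K_\B \times S)$ that is flat over $S$, i.e.\ a local system on $K$ of finite locally free $\scrO_S$-modules; hence $\trunc{\bfCoh(K_\B)}$ is the classical moduli stack of finite-rank local systems on $K$, equivalently of finite-dimensional $\C$-linear representations of the fundamental groupoid $\Pi_1(K)$. Since $K$ is a finite space, $\Pi_1(K)$ is finitely presented, and this stack is a classical Artin stack locally of finite type over $\C$ --- for $K$ connected it is $\coprod_{n \ge 0}[\Hom(\pi_1(K),\mathsf{GL}_n)/\mathsf{GL}_n]$ with $\Hom(\pi_1(K),\mathsf{GL}_n)$ an affine scheme of finite type, and the general case is analogous with the fundamental groupoid in place of $\pi_1$ (cf.\ \cite{Porta_Sala_Shapes}). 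With infinitesimal cohesiveness, nilcompleteness, the global cotangent complex, and geometricity of the truncation established, Lurie's representability theorem yields that $\bfCoh(K_\B)$ is a geometric derived stack locally of finite presentation.

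The step requiring the most care is the input on $\bfPerf(K_\B)$, which is what actually produces deformation theory for $\bfCoh(K_\B)$ (and, relatedly, the explicit identification of the classical truncation with the stack of local systems); everything else is a formal application of the two corollaries above. If one prefers to sidestep Lurie's theorem, an alternative is to note that $\bfCoh(K_\B) \to \bfPerf(K_\B)$ is a formally \'etale monomorphism which is locally of finite presentation as a morphism, hence an open immersion into the (locally geometric, locally finitely presented) stack $\bfPerf(K_\B)$; this is perhaps cleaner but trades the representability argument for a separate check that the inclusion is of finite presentation.
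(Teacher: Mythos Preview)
Your proof is correct and follows essentially the same route as the paper: verify the hypotheses of Lurie's representability theorem using a flat effective epimorphism from a smooth scheme onto $K_\B$, then inherit the cotangent complex from $\bfPerf(K_\B)$ via the formally \'etale map, and identify the truncation with the classical stack of local systems. The only cosmetic differences are that the paper reduces to the connected case and cites the geometricity of $\bfPerf(K_\B)$ from \cite{Porta_Sala_Shapes} (hence the existence of its cotangent complex) rather than your finite-limit argument, and it packages the truncation identification into the preparatory Lemmas~\ref{lem:betti_truncation} and~\ref{lem:betti_decomposition}.
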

To prove this statement, we will apply Lurie's representability theorem \cite[Theorem~18.1.0.2]{Lurie_SAG}. We need some preliminary results.

For every $n \ge 0$, set
\begin{align}
	\bfBun^n(K_\B) \coloneqq \bfMap(K_\B, \BGL_n) \ ,
\end{align}
and introduce
\begin{align}
	\bfBun(K_\B) \coloneqq \coprod_{n \ge 0} \bfBun^n(K_\B) \ . 
\end{align}

\begin{lemma}\label{lem:betti_truncation}
	 The truncation of $\bfBun^n(K_\B)$ corresponds to the classical stack of finite-dimensional representations of $\pi_1(K)$.
	\end{lemma}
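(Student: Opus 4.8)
The plan is to compute the functor of points of $\bfBun^n(K_\B) = \bfMap(K_\B, \BGL_n)$ explicitly, and then restrict it to classical affine schemes. Since $K \in \cS^{\mathsf{fin}}$, we may write $K \simeq \colim_{i \in I} \ast$ as a finite colimit of points in $\cS$. The constant stack functor $(-)_\B \colon \cS \to \dSt$ is a left adjoint (to global sections), so it preserves colimits, and hence $K_\B \simeq \colim_{i \in I} \Spec(\C)$ in $\dSt$ (cf.\ \cite{Porta_Sala_Shapes}). For any $S \in \dAff$, the functor $S \times (-)$ preserves colimits since $\dSt$ is an $\infty$-topos, in particular cartesian closed; therefore $S \times K_\B \simeq \colim_{i \in I} S$. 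Mapping into $\BGL_n$ converts this colimit into a limit, and since $\BGL_n$ is representable the resulting diagram $I\op \to \cS$ is constant at $\BGL_n(S)$. Thus, naturally in $S$,
\[
 \bfBun^n(K_\B)(S) \simeq \Map_{\dSt}( S \times K_\B, \BGL_n ) \simeq \lim_{i \in I\op} \BGL_n(S) \simeq \Map_{\cS}( K, \BGL_n(S) ) ,
\]
the last identification because the colimit of the constant point-diagram over $I\op$ computes $K$. (Equivalently, $\bfBun^n(K_\B)$ is the cotensor of $\BGL_n$ by $K$ in $\dSt$.)

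\textbf{Passing to the truncation.} By definition $\trunc{\bfBun^n(K_\B)}$ is the restriction of the functor above to underived affine schemes. So fix a classical affine scheme $S$: then $\BGL_n(S)$ is the groupoid of rank-$n$ locally free $\scrO_S$-modules, which is $1$-truncated. Consequently $\Map_\cS(K, \BGL_n(S)) \simeq \Map_\cS(\tau_{\leq 1} K, \BGL_n(S))$, and since $\tau_{\leq 1} K \simeq B\Pi_1(K)$ for $\Pi_1(K)$ the fundamental groupoid of $K$, the right-hand side is the groupoid $\Fun(\Pi_1(K), \BGL_n(S))^{\simeq}$ of functors from $\Pi_1(K)$ into the groupoid of rank-$n$ bundles on $S$. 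Unwinding this — assuming $K$ connected so that $\Pi_1(K) \simeq B\pi_1(K)$, the general case being obtained by replacing $\pi_1(K)$ with its fundamental groupoid — such a functor is exactly the datum of a rank-$n$ locally free $\scrO_S$-module $V$ together with a group homomorphism $\pi_1(K) \to \Aut_{\scrO_S}(V)$, a morphism being a $\pi_1(K)$-equivariant isomorphism of $\scrO_S$-modules.

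\textbf{Identification and the main point.} This is precisely the groupoid of $S$-parametrized families of $n$-dimensional representations of $\pi_1(K)$, i.e.\ the value at $S$ of the classical moduli stack of $n$-dimensional representations of $\pi_1(K)$ (equivalently $[\Hom(\pi_1(K), \mathsf{GL}_n)/\mathsf{GL}_n]$, which makes sense because $\pi_1(K)$ is finitely presented since $K$ is a finite space). As all the identifications above are natural in $S$, this yields the desired equivalence of classical stacks. I do not expect a serious obstacle here; the only points to watch are that the chain $\bfBun^n(K_\B)(S) \simeq \Map_\cS(K,\BGL_n(S))$ is kept functorial in $S$, and that $\BGL_n(S)$ is $1$-truncated \emph{only} on classical bases — on a derived $S$ the space $\BGL_n(S)$ acquires higher homotopy, which is exactly why $\bfBun^n(K_\B)$ genuinely differs from its classical truncation and why restricting to underived $S$ is essential.
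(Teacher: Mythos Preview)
Your argument is correct. The paper's own proof is a one-line citation to results in the companion paper \cite{Porta_Sala_Shapes}; what you have written is essentially an explicit unpacking of those results. The key identification $\bfBun^n(K_\B)(S) \simeq \Map_\cS(K, \BGL_n(S))$ (the cotensor description of mapping stacks out of a constant stack) is precisely the content of the cited proposition on quasi-coherent sheaves on the Betti shape, specialized to $\BGL_n$, and the subsequent reduction via $1$-truncatedness of $\BGL_n(S)$ on classical $S$ is the content of the cited remark. So the approaches agree; yours is simply self-contained. One cosmetic slip: in the sentence justifying the last equivalence, the colimit of the constant point-diagram is over $I$, not $I\op$.
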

\begin{proof}
	This follows from \cite[Proposition~\ref*{shapes-prop:Betti_properties}-(\ref*{shapes-prop:Betti_properties:quasicoherent_I}) and Remark~\ref*{shapes-rem:Betti_Vect_truncation}]{Porta_Sala_Shapes}.
\end{proof}

\begin{lemma}\label{lem:betti_decomposition}
	The canonical map
	\begin{align}
		\bfBun(K_\B) \longrightarrow \bfCoh(K_\B)
	\end{align}
	is an equivalence.
\end{lemma}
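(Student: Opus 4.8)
The plan is to realize both $\bfBun(K_\B)$ and $\bfCoh(K_\B)$ as full substacks of $\bfAPerf(K_\B)$ and to check that the two spanning conditions coincide. For every $S \in \dAff$ the map $\bfCoh(K_\B)(S) \to \bfAPerf(K_\B)(S)$ is fully faithful by construction, and $\bfBun(K_\B)(S) \to \bfAPerf(K_\B)(S)$ is fully faithful as well: unwinding the sheafified coproduct defining $\bfBun$, it is the inclusion of the full subgroupoid of $\catPerf(K_\B \times S)^\simeq \subseteq \catAPerf(K_\B \times S)^\simeq$ spanned by the finite rank vector bundles on $K_\B \times S$, i.e.\ $\bfBun(K_\B)$ is just the stack of vector bundles on $K_\B$. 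Since every vector bundle is in particular a flat family of coherent sheaves, it then remains to prove: for every $S \in \dAff$, an almost perfect complex $\calF$ on $K_\B \times S$ which lies in $\catCoh_S(K_\B \times S)$ is automatically a finite rank vector bundle.

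I would first reduce to the case $K = \ast$. Since $K_\B \simeq \colim_{k \in K} \Spec(\C)$ in $\dSt$, one has $\catQCoh(K_\B \times S) \simeq \Fun(K, \catQCoh(S))$, with the $t$-structure, tor-amplitude relative to $S$, almost perfectness and perfectness all computed pointwise over $K$ (equivalently, one argues via the flat effective epimorphism $\coprod_{\pi_0(K)} \Spec(\C) \to K_\B$ — the analogue for finite spaces of Example~\ref{eg:applications}(2) — together with Lemmas~\ref{lem:universally_flat_atlas} and \ref{lem:flat_effective_epimorphism}). Under this identification, $\calF \in \catCoh_S(K_\B \times S)$ is equivalent to $\calF(k) \in \catCoh_S(S)$ for every $k \in K$, and $\calF$ being a finite rank vector bundle on $K_\B \times S$ is equivalent to $\calF(k)$ being a finite rank vector bundle on $S$ for every $k \in K$ (the vector bundle locus being a union of connected components of $\catPerf(S)^\simeq$, a pointwise vector bundle is automatically a local system of vector bundles). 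So it suffices to show that $\catCoh_S(S)^\simeq$ is the groupoid of finite rank vector bundles on $S$.

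For this last point, let $M \in \catCoh_S(S) = \catAPerf_S^{\le 0}(S)$, so that $M$ is an almost perfect $\scrO_S$-module with $M \otimes_{\scrO_S} \calG$ concentrated in degree $0$ for every $\calG \in \catQCoh^\heartsuit(S)$. Specializing to $\calG = \pi_0 \scrO_S$ and using that an almost perfect module is connective exactly when its base change to $\pi_0$ is, we get that $M$ is connective and that $M \otimes_{\scrO_S} \pi_0 \scrO_S$ is flat over $\pi_0 \scrO_S$ (cf.\ Remark~\ref{rem:tor_amplitude_underived}); the derived local criterion of flatness (\cite[Proposition~6.1.4.5]{Lurie_SAG}, in the spirit of Lemma~\ref{lem:local_criterion_flatness}) then forces $M$ to be flat over $\scrO_S$. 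A flat almost perfect module is finitely generated projective, i.e.\ a finite rank vector bundle, and the reverse inclusion is obvious. As $\bfBun(\Spec(\C))(S)$ is precisely the groupoid of finite rank vector bundles on $S$, this concludes the argument; by construction the resulting equivalence is the canonical morphism $\bfBun(K_\B) \to \bfCoh(K_\B)$.

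The only real content is the last paragraph, where the hypotheses ``coherent'' and ``flat relative to the base'' are combined, through the local criterion of flatness, into ``locally free of finite rank''. Everything else is bookkeeping; the one place needing a little care is checking that ``being a vector bundle'' is detected pointwise over $K$ (equivalently, on a flat cover of $K_\B$), which amounts to recalling that the coproduct in $\dSt$ entering the definition of $\bfBun$ is sheafified, so that $\bfBun(K_\B)$ really is the stack of finite rank vector bundles on $K_\B$ with the rank allowed to vary locally constantly.
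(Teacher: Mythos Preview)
Your argument is correct and takes a genuinely different route from the paper. The paper proceeds by cell induction on the finite space $K$: it treats the discrete case first, then spheres via $S^{k+1} \simeq \Sigma S^k$, and then builds an arbitrary finite $K$ by successive cell attachments, using that $\bfPerf((-)_\B)$ turns pushouts of spaces into pullbacks of stacks. You instead bypass all of this by invoking the identification $\catAPerf(K_\B \times S) \simeq \Fun(K, \catAPerf(S))$ (equivalently, pulling back along the flat effective epimorphism from a point), which immediately reduces the question to $K = \ast$. There the statement is the standard commutative-algebra fact that an almost perfect $A$-module of tor-amplitude $[0,0]$ is finitely generated projective (this is \cite[Proposition~7.2.4.20]{Lurie_Higher_algebra}, which you could cite instead of rederiving it). Your route is shorter and isolates the actual content cleanly; the paper's induction is more structural but does not obviously buy anything extra here.

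One small correction to your closing remark. The sheafification in $\bfBun(K_\B) = \coprod_n \bfMap(K_\B, \BGL_n)$ is taken in $\dSt$, i.e.\ over the test scheme $S$, not over $K_\B$. So for connected $S$ an $S$-point of $\bfBun(K_\B)$ is a vector bundle on $K_\B \times S$ of \emph{globally} constant rank, not merely locally constant along $K_\B$. Consequently, for disconnected $K$ the canonical map $\bfBun(K_\B) \to \bfCoh(K_\B)$ is \emph{not} an equivalence (take $K = \ast \amalg \ast$ and a pair of vector spaces of different dimensions). The paper's proof has the same lacuna in its discrete base case. For connected $K$---the only case used downstream, since one immediately reduces to connected $X$---your argument and the paper's are both valid and this point is moot.
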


\begin{proof}
	We can view both $\bfCoh(K_\B)$ and $\bfBun(K_B)$ as full substacks of $\bfPerf(K_\B)$.
	It is therefore enough to show that they coincide as substacks of $\bfPerf(K_\B)$.
	Suppose first that $K$ is discrete.
	Then it is equivalent to a disjoint union of finitely many points, and therefore
	\begin{align}
		K_\B \simeq \Spec(\C)^I \simeq \Spec(\C) \amalg \Spec(\C) \amalg \cdots \amalg \Spec(\C) \ . 
	\end{align}
	In this case
	\begin{align}
		\bfPerf(K_\B) \simeq \bfPerf \times \bfPerf \times \cdots \times \bfPerf \ . 
	\end{align}
	If $S \in \dAff$, an $S$-point of $\bfPerf(K_\B)$ is therefore identified with an object in $\Fun(I, \catPerf(S))$.
	Having tor-amplitude $\le 0$ with respect to $S$ is equivalent of having tor-amplitude $\le 0$ on $S^I$, and therefore the conclusion follows in this case.
	Using the equivalence $S^{k+1} \simeq \Sigma(S^k)$, we deduce that the same statement is true when $K$ is a sphere.
	We now observe that since $K$ is a finite space, we can find a sequence of maps
	\begin{align}
		K_0 = \emptyset \to K_1 \to \cdots \to K_\ell = K \ ,
	\end{align}
	such that each map $K_i \to K_{i+1}$ fits in a pushout diagram
	\begin{align}
		\begin{tikzcd}[ampersand replacement = \&]
		S^{m_i} \arrow{r} \arrow{d} \& \ast \arrow{d} \\
		K_i \arrow{r} \& K_{i+1} 
		\end{tikzcd}\ .
	\end{align}
	The conclusion therefore follows by induction.
\end{proof}

\begin{proof}[{Proof of Proposition~\ref{prop:Betti_coh_geometric}}]
	We can assume without loss of generality that $K$ is connected.
	Let $x \colon \ast \to K$ be a point and let
	\begin{align}
		u_x \colon \Spec(\C) \simeq \ast_\B \longrightarrow K_\B
	\end{align}
	be the induced morphism.
	Then  \cite[Proposition~\ref*{shapes-prop:Betti_properties}-(\ref*{shapes-prop:Betti_properties:flat_atlas})]{Porta_Sala_Shapes}  implies that $u_x$ is a flat effective epimorphism.
	Therefore, Corollary~\ref{cor:coh_inf_cohesive_nilcomplete} implies that $\bfCoh(K_\B)$ is infinitesimally cohesive and nilcomplete.
	Since $\Spec(\C)$ is smooth, Corollary~\ref{cor:coh_global_cotangent_complex_smooth_case} implies that there is a formally étale map
	\begin{align}
		\bfCoh(K_\B) \longrightarrow \bfPerf(K_\B) \ .
	\end{align}
	Since $K$ is a finite space, $\bfPerf(K_\B)$ is a geometric derived stack (cf.\  \cite[\S\ref*{shapes-s:Betti_representability}]{Porta_Sala_Shapes}), and in particular it admits a global cotangent complex. Therefore, so does $\bfCoh(K_\B)$.
	
	We are left to prove that its truncation is geometric. Recall that the classical stack of finite-dimensional representations of $\pi_1(K)$ is geometric (cf.\ e.g. \cite{Simpson_Moduli_II}). Thus, the geometricity of the truncation follows from Lemmas~\ref{lem:betti_truncation} and \ref{lem:betti_decomposition}. Therefore, Lurie's representability theorem \cite[Theorem~18.1.0.2]{Lurie_SAG} applies.
\end{proof}

Now let $X$ be a smooth and proper complex scheme.
Define the stacks
\begin{align}
	\bfCoh_\B(X)\coloneqq \bfCoh(X_\B)\quad\text{and}\quad \bfBun_\B(X)\coloneqq \bfBun(X_\B)\ .
\end{align}
Lemma~\ref{lem:betti_decomposition} supplies a canonical equivalence $\bfCoh_\B(X) \simeq \bfBun_\B(X)$ and Proposition~\ref{prop:Betti_coh_geometric} shows that they are locally geometric and locally of finite presentation. We refer to this stack as the \textit{derived Betti moduli stack of $X$}.
In addition, we shall call
\begin{align}
	\bfBun_\B^n(X)\coloneqq \bfMap(X_\B, \BGL_n)
\end{align}
the derived stack of of $n$-dimensional representations of the fundamental group $\pi_1(X)$ of $X$. The terminology is justified by Lemma~\ref{lem:betti_truncation}.

\begin{example}
	\hfill
	\begin{enumerate}[leftmargin=0.6cm]\itemsep=0.2cm
		\item \label{item:example1} Consider the case $X = \PP^1_\C$.
		We have
		\begin{align}
				\trunc{\bfBun^n_\B(\PP^1_\C)} \simeq \BGL_n \ . 
		\end{align}
		However, $\bfBun^n_\B(\PP^1_\C)$ has an interesting derived structure.
		To see this, let 
		\begin{align}
			x \colon \Spec(\C) \to \bfBun^n_\B(\PP^1_\C)
		\end{align}
		be the map classifying the constant sheaf $\underline{\C}_{\PP^1_\C}^n$.
		This map factors through $\BGL_n$, and it classifies $\C^n \in \mathsf{Mod}_\C = \catQCoh(\Spec(\C))$.
		The tangent complex of $\BGL_n$ at this point is given by $\End_\C(\C^n)[1]$, and in particular it is concentrated in homological degree $-1$.
		On the other hand, \cite[Corollary~\ref*{shapes-cor:Betti_quasicoherent_II}-(\ref*{shapes-cor:Betti_quasicoherent_II-(2)})]{Porta_Sala_Shapes}
		 shows that the tangent complex of $\bfBun^n_\B(\PP^1_\C)$ at $x$ is computed by
		\begin{align}
				 \sfR\Gamma( S^2; \calE\mathsf{nd}(\underline{\C}_{\PP^1_\C}^n))[1] \simeq \End(\C^n)[1] \oplus \End(\C^n)[-1] \ . 
		\end{align}
		In particular, $\bfBun^n_\B(\PP^1_\C)$ is not smooth (although it is lci), and therefore it does not coincide with $\BGL_n$.
		
		\item Assume more generally that $X$ is a smooth projective complex curve. Then $\bfBun_\B^n(X)$ can be obtained as a \textit{quasi-Hamiltonian derived reduction}\footnote{See \cite{Safronov_Hamiltonian} for the notion of Hamiltonian reduction in the derived setting.}. Indeed, let $X'$ be the topological space $X^{\mathsf{top}}$ minus a disk $D$. Then one can easily see that $X'$ deformation retracts onto a wedge of $2g_X$ circles, where $g_X$ is the genus of $X$. We get
		\begin{align}
			\bfBun_\B^n(X)\simeq \bfBun_\B^n(X')\times_{\bfBun_\B^n(S^1)} \bfBun_\B^n(D)\ .
		\end{align}
		Since $\bfBun_\B^n(S^1)\simeq [\mathsf{GL}_n/\mathsf{GL}_n]$ (see, e.g., \cite[Example~3.8]{Calaque_Lectures}), and $\bfBun_\B^n(D)\simeq \bfBun_\B^n(\mathsf{pt})$, we obtain
		\begin{align}
			\bfBun_\B^n(X)\simeq \bfBun_\B^n(X')\times_{[\mathsf{GL}_n/\mathsf{GL}_n]} [\mathsf{pt}/\mathsf{GL}_n]\ .
		\end{align}
		Thus, $\bfBun_\B^n(X)$ is the quasi-Hamiltonian derived reduction of $\bfBun_\B^n(X')$. By further using $\bfBun_\B^n(X')\simeq \bfBun_\B^n(S^1)^{\times\, 2g_X}$, the derived stack $\bfBun_\B^n(X)$ reduces to
		\begin{align}
			\bfBun_\B^n(X)\simeq [\mathsf{GL}_n^{\times\, 2g_X}\times_{\mathsf{GL}_n} \mathsf{pt}/ \mathsf{GL}_n]\ .
		\end{align}
			\end{enumerate}
\end{example}

Generalizing the above example \eqref{item:example1}, we have: 

\begin{corollary} \label{cor:coh_betti_tor_amplitude}
	Let $X$ be a smooth and proper complex scheme of dimension $n$.
	Then the cotangent complex $\LL_{\bfCoh_\B(X)}$ is perfect and has tor-amplitude within $[-1, 2n-1]$.
	In particular, $\bfCoh_\B(X)$ is derived lci\ when $X$ is a curve.
\end{corollary}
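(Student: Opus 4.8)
The plan is to run the argument of Corollary~\ref{cor:coh_curve_smooth} with the scheme $X$ replaced by its Betti stack $X_\B$; the only structural change is that $X_\B$ behaves as a ``space'' of cohomological dimension $2n$ instead of $n$. First I would assemble the inputs. By Lemma~\ref{lem:betti_decomposition} we have $\bfCoh_\B(X) = \bfCoh(X_\B) \simeq \bfBun(X_\B)$, so for $S \in \dAff$ a point $x \colon S \to \bfCoh_\B(X)$ classifies a family $\calF$ on $X_\B \times S$ that is a vector bundle relative to $S$. Assuming $X$ connected (which we may, working component by component) and using the flat effective epimorphism $\Spec(\C) \to X_\B$ attached to a closed point (Example~\ref{eg:applications}), Corollary~\ref{cor:coherent_over_smooth_are_perfect} applies — $\Spec(\C)$ being smooth and geometric — and gives that $\calF \in \catPerf(X_\B \times S)$, that the map $\bfCoh(X_\B) \to \bfPerf(X_\B)$ is formally étale, and that $\calEnd(\calF) \simeq \calF \otimes \calF^\vee$ has tor-amplitude $[0,0]$ relative to $S$.

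Since $\bfCoh(X_\B) \to \bfPerf(X_\B)$ is formally étale, $\LL_{\bfCoh_\B(X)}$ is the pullback of $\LL_{\bfPerf(X_\B)}$, which exists because $\bfPerf(X_\B)$ is geometric (cf.\ the proof of Proposition~\ref{prop:Betti_coh_geometric}). Redoing the loop-stack computation of Lemma~\ref{lem:coh_cotangent_complex} in this setting — or quoting the tangent-complex formula for $\bfPerf(X_\B)$ from \cite[Corollary~\ref*{shapes-cor:Betti_quasicoherent_II}]{Porta_Sala_Shapes} — gives, with $p \colon X_\B \times S \to S$ the projection,
\begin{align}
	x^\ast \T_{\bfCoh_\B(X)} \simeq p_\ast \calEnd(\calF)[1] \ .
\end{align}
Perfectness is then immediate: $\calEnd(\calF)$ is perfect on $X_\B \times S$ and, $X_\B$ being a finite space, $p_\ast$ preserves perfect complexes, so $\LL_{\bfCoh_\B(X)}$ is perfect.

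For the tor-amplitude bound it suffices to show $p_\ast \calEnd(\calF)$ has tor-amplitude $[-2n, 0]$ relative to $S$. I would proceed exactly as in Corollary~\ref{cor:coh_curve_smooth}: reduce to $S$ underived via $j \colon \trunc{S} \to S$, then, using Lemma~\ref{lem:checking_perfectness_on_fibers}, to $S = \Spec(K)$ with $K$ a field. There $\calF$ and hence $\calEnd(\calF)$ are local systems of $K$-vector spaces on $X_\B$, and $p_\ast \calEnd(\calF) \simeq \sfR\Gamma(X^{\mathsf{top}}, \calEnd(\calF))$ is the cohomology of the compact $2n$-dimensional manifold $X^{\mathsf{top}}$ with local coefficients, hence concentrated in cohomological degrees $[0, 2n]$, i.e.\ of tor-amplitude $[-2n, 0]$ over $K$. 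Alternatively one can avoid the fibrewise reduction: for $\calG \in \catQCoh^\heartsuit(S)$ the projection formula gives $p_\ast\calEnd(\calF) \otimes \calG \simeq p_\ast(\calEnd(\calF) \otimes p^\ast\calG)$, the sheaf $\calEnd(\calF) \otimes p^\ast\calG$ is concentrated in homological degree $0$ by the tor-amplitude $[0,0]$ of $\calEnd(\calF)$ relative to $S$, and $p_\ast$ lowers connectivity by at most $2n$ since $X_\B$ has cohomological dimension $\le 2n$. Shifting by one, $x^\ast \T_{\bfCoh_\B(X)}$ has tor-amplitude $[1-2n, 1]$, hence $\LL_{\bfCoh_\B(X)}$ has tor-amplitude $[-1, 2n-1]$; for $n = 1$ this reads $[-1,1]$, i.e.\ $\bfCoh_\B(X)$ is derived l.c.i.

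The main obstacle is not the homological bookkeeping — which is the verbatim $2n$-dimensional analogue of Corollary~\ref{cor:coh_curve_smooth} — but pinning down the Betti-geometry inputs: that pushforward along $X_\B \times S \to S$ preserves perfect complexes and lowers connectivity by at most $2n$ uniformly in $S$, and that the tangent complex of $\bfPerf(X_\B)$ is the one written above. All of these follow from the structural results on Betti stacks collected in \cite{Porta_Sala_Shapes}; granting them, the estimate goes through.
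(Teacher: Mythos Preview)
Your proposal is correct and follows essentially the same route as the paper's own proof: both use the formally \'etale map $\bfCoh_\B(X) \to \bfPerf(X_\B)$ to identify the tangent complex at a point with $\sfR\Gamma(X\an;\calEnd(\calL))[1]$ for a local system $\calL$, and then conclude from the fact that $X\an$ has real dimension $2n$. The paper is terser, quoting directly the categorical properness of $X_\B$ and the global-sections description from \cite{Porta_Sala_Shapes} in place of your fibrewise reduction via Lemma~\ref{lem:checking_perfectness_on_fibers}, but the substance is the same.
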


\begin{proof}
	Recall that Corollary~\ref{cor:coh_global_cotangent_complex_smooth_case} provides a canonical formally étale map $\bfCoh_\B(X) \to \bfPerf(X_\B)$. Thus, the cotangent complex $\LL_{\bfCoh_\B(X)}$  at a point $x\colon S \to \bfCoh_\B(X)$, where $S \in \dAff$ is a affine derived scheme, is isomorphic to the cotangent complex $\LL_{\bfPerf(X_\B)}$ at the point $\tilde x\colon S \to \bfCoh_\B(X)\to \bfPerf(X_\B)$. 
	
	Via \cite[Proposition~\ref*{shapes-prop:Betti_properties}-(\ref*{shapes-prop:Betti_properties:quasicoherent_I})]{Porta_Sala_Shapes}, we see that $\tilde x$ corresponds to an object $\calL$ in $\Fun(X^{\mathsf{htop}}, \catPerf(S))$.\footnote{Here, $(-)^{\mathsf{htop}} \colon \mathsf{dSch}^{\mathsf{laft}} \longrightarrow \cS$ is the natural functor sending a (derived) $\C$-scheme locally almost of finite type to the underlying homotopy type of its analytification.} 
	On the other hand, \cite[Proposition~\ref*{shapes-prop:Betti_local_systems}]{Porta_Sala_Shapes} allows one to further identify this $\infty$-category with the $\infty$-category of local systems on $X\an$.
	Since $X_\B$ is categorically proper (cf.\ \cite[Proposition~\ref*{shapes-prop:Betti_properties}-(\ref*{shapes-prop:Betti_properties:categorically_proper})]{Porta_Sala_Shapes}), to check tor-amplitude of $\LL_{\bfPerf(X_\B)}$ at the point $\tilde x$ it is enough to assume that $S$ is underived. In addition, since $\calL$ arises from the point $x$, we see that it is discrete.
	Applying the characterization of the derived global sections of any $\calF\in \catQCoh(X_\B)$ in \cite[Corollary~\ref*{shapes-cor:Betti_quasicoherent_II}]{Porta_Sala_Shapes}, we finally deduce that
	\begin{align}
		 \mathbb T_{\bfCoh_\B(X),x} \simeq \sfR \Gamma(X\an; \calE \mathsf{nd}( \calL ))[1]\ . 
	\end{align}
	As this computes the (shifted) singular cohomology of $X\an$ with coefficients in $\calE\mathsf{nd}(\calL)$, the conclusion follows.
\end{proof}

\subsubsection{Moduli of flat bundles} 
Let $X$ be a smooth, proper and connected scheme over $\C$. The \textit{de Rham shape} of $X$ is the derived stack  $X_\dR\in \dSt$ defined by
\begin{align}
X_\dR(S) \coloneqq X(\trunc{S}_{\mathsf{red}})\ ,
\end{align} 
for any $S\in \dAff$ (cf.\ \cite[\S\ref*{shapes-s:dR-definition}]{Porta_Sala_Shapes}). Here, we denote by $T_{\mathsf{red}}$ the underlying reduced scheme of an affine scheme $T\in \mathsf{Aff}$.

Define the stacks
	\begin{align}
		\bfCoh_\dR(X)\coloneqq \bfCoh(X_\dR)\quad\text{and}\quad \bfBun_\dR(X)\coloneqq \bfBun(X_\dR)\ .
	\end{align}
	
	\begin{lemma}\label{lem:dR_decomposition}
		There is a natural equivalence
		\begin{align}
			\bfCoh_\dR(X) \simeq \bfBun_\dR(X) \ .
		\end{align}
		\end{lemma}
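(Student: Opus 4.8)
The plan is to imitate the proof of Lemma~\ref{lem:betti_decomposition}, replacing the cell decomposition of a finite space by the description of the de Rham shape as the quotient of $X$ by the formal groupoid of its diagonal. First I would record that both $\bfCoh_\dR(X)$ and $\bfBun_\dR(X)$ are full substacks of $\bfPerf(X_\dR)$: for $\bfCoh_\dR(X)$ this is Corollary~\ref{cor:coherent_over_smooth_are_perfect} applied to the flat effective epimorphism $\lambda_X\colon X\to X_\dR$ of Example~\ref{eg:applications}~(3), whose source $X$ is smooth; for $\bfBun_\dR(X)=\coprod_n \bfMap(X_\dR,\BGL_n)$ it is immediate, and $\bfBun_\dR(X)$ is moreover an \emph{open} substack of $\bfCoh_\dR(X)$, namely the locus on which the universal coherent sheaf is locally free. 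Since an open immersion of derived stacks is an equivalence as soon as it is surjective, and an open immersion is surjective precisely when every $\Spec(K)$-valued point of the target (with $K$ a field) factors through the source, it suffices to prove that every $\xi\colon\Spec(K)\to\bfCoh_\dR(X)$ factors through $\bfBun_\dR(X)$.

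Next I would analyze such a $\xi$, which classifies some $\calF\in\catCoh_{\Spec(K)}(X_\dR\times\Spec(K))$. As $K$ is a field, every $K$-module is flat, so having tor-amplitude $\le 0$ relative to $\Spec(K)$ forces $\calF$ to be concentrated in cohomological degree $0$, i.e.\ an ordinary coherent sheaf. By Lemma~\ref{lem:universally_flat_effective_epi} the base change $\lambda_X\times\id\colon X\times\Spec(K)\to X_\dR\times\Spec(K)$ is again a flat effective epimorphism, so by Lemma~\ref{lem:flat_effective_epimorphism} its pullback $\calF_K\coloneqq(\lambda_X\times\id)^\ast\calF$ is a coherent sheaf on $X_K\coloneqq X\times_\C\Spec(K)$, equipped with a descent datum along $\lambda_X\times\id$. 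Since $X_K$ is smooth over $K$, the Čech nerve of this map is the formal groupoid of the diagonal of $X_K$ (here I would invoke the relevant computation, and the base-change compatibility of the de Rham shape, from \cite{Porta_Sala_Shapes}), so the descent datum amounts to a flat connection on $\calF_K$; equivalently, $\calF_K$ is an $\mathcal O_{X_K}$-coherent $\mathcal D_{X_K}$-module. It is then a classical fact that $\calF_K$ is locally free: the connection provides formal parallel transport, so the fibre rank of $\calF_K$ is locally constant, and a coherent sheaf of locally constant fibre rank on a reduced scheme is a vector bundle.

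Finally, local freeness descends along the flat effective epimorphism $\lambda_X\times\id$ — being a vector bundle means being perfect of tor-amplitude $[0,0]$, and both properties are detected after pullback along a flat effective epimorphism by the same conservativity and $t$-exactness arguments already used in Lemmas~\ref{lem:flat_effective_epimorphism} and~\ref{lem:checking_perfectness_on_fibers} — so $\calF$ is a vector bundle on $X_\dR\times\Spec(K)$, which is exactly the statement that $\xi$ factors through $\bfBun_\dR(X)$; this closes the argument. The only genuinely nonformal ingredient is the classical theorem that $\mathcal O$-coherent crystals on a smooth scheme are vector bundles with flat connection; I expect everything else to be bookkeeping with the machinery of \S\ref{s:coh} and~\cite{Porta_Sala_Shapes}. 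The point I would most want to check carefully is that the reduction of the open immersion $\bfBun_\dR(X)\hookrightarrow\bfCoh_\dR(X)$ to field-valued points is legitimate in this derived context; should that prove awkward, the alternative is to establish the equivalence $\catCoh_S(X_\dR\times S)\simeq\bfBun_\dR(X)(S)$ directly for all $S\in\dAff$, at the price of a more delicate reduction to fibres over fields in the spirit of Lemma~\ref{lem:checking_perfectness_on_fibers}.
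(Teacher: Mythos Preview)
Your proposal is correct, and the ``alternative'' you outline at the end is precisely the route the paper takes. Rather than arguing that $\bfBun_\dR(X)\hookrightarrow\bfCoh_\dR(X)$ is an open immersion and then testing surjectivity on field-valued points, the paper works directly with an arbitrary $S\in\dAff$: given $\calF\in\catCoh_S(X_\dR\times S)$, it sets $\calG\coloneqq(\lambda_X\times\id_S)^\ast\calF$ and shows $\calG$ has tor-amplitude $\le 0$ on $X\times S$ (not merely relative to $S$), invoking Lemma~\ref{lem:checking_perfectness_on_fibers} to reduce to geometric fibres over $S$ and then citing \cite[Theorem~1.4.10]{Hotta_Takeuchi_Tanisaki_D-modules_1995} for the classical fact you identified. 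The paper also spells out the easy inclusion $\bfBun_\dR(X)\subseteq\bfCoh_\dR(X)$, observing that a vector bundle on $X\times S$ automatically has tor-amplitude $\le 0$ relative to $S$ since $X\times S\to S$ is flat.

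Your primary approach via open immersions is a perfectly good alternative and arguably more conceptual; the point you flagged as needing care (that surjectivity of an open immersion between geometric derived stacks locally of finite type can be tested on field-valued points) is standard, so there is no real obstacle. What your approach buys is a clean separation of the formal part (open immersions are equivalences once surjective) from the geometric input (the D-module theorem); what the paper's approach buys is that it never needs to verify openness of the locally-free locus, at the cost of carrying an arbitrary $S$ through the argument. The substantive content --- reduction to fields and the $\calO$-coherent-$\mathcal D$-module theorem --- is identical in both.
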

	
	\begin{proof}
		First, recall that there exists a canonical map $\lambda_X\colon X\to X_\dR$ (see \cite[\S\ref*{shapes-s:dR-definition}]{Porta_Sala_Shapes}).
		
		We can see both derived stacks as full substacks of $\bfMap(X_\dR, \bfPerf)$.
		Let $S \in \dAff$ and let $x \colon S \to \bfMap(X_\dR, \BGL_n)$.
		Then $x$ classifies a perfect complex $\calF \in \catPerf( X_\dR \times S )$ such that $\calG\coloneqq (\lambda_X \times \id_S)^\ast(\calF) \in \catPerf(X \times S)$ has tor-amplitude $\le 0$ and rank $n$.
		Since the map $X \times S \to S$ is flat, it follows that $\calG$ has tor-amplitude $\le 0$ relative to $S$, and therefore that $x$ determines a point in $\bfCoh_\dR(X)$.
		
		Conversely, let $x \colon S \to \bfCoh_\dR(X)$.
		Let $\calF \in \catPerf(X_\dR \times S)$ be the corresponding perfect complex and let $\calG \coloneqq (\lambda_X \times \id_S)^\ast(\calF)$.
		Then by assumption $\calG$ has tor-amplitude $\le 0$ relative to $S$.
		We wish to show that it has tor-amplitude $\le 0$ on $X \times S$.
		Using Lemma \ref{lem:checking_perfectness_on_fibers}, we see that it is enough to prove that for every geometric point $s \colon \Spec(K) \to S$, the perfect complex $j^\ast(\calG) \in \catPerf(X_K)$ has tor-amplitude $\le 0$.
		Here $X_K \coloneqq \Spec(K) \times X$ and $j \colon X_K \to X$ is the natural morphism.
		Consider the commutative diagram
		\begin{align}
			\begin{tikzcd}[ampersand replacement = \&]
			X_K \arrow{d}{\lambda_{X_K}} \arrow{r}{j} \& X \times S \arrow{d}{\lambda_X \times \id_S} \\
			(X_K)_\dR \arrow{r}{j_\dR} \arrow{r}{j_\dR} \& X_\dR \times S 
			\end{tikzcd}\ .
		\end{align}
		Then
		\begin{align}
			 j_s^\ast \calG \simeq \lambda_{X_K}^\ast j_\dR^\ast \calF \ .
		\end{align}
		We therefore see that $j^\ast \calG$ comes from a $K$-point of $\bfCoh_\dR(X)$.
		By \cite[Theorem~1.4.10]{Hotta_Takeuchi_Tanisaki_D-modules_1995}, $j^\ast \calG$ is a vector bundle on $X$, i.e.\ that it has tor-amplitude $\le 0$.
		The conclusion follows.
	\end{proof}

\begin{proposition}\label{prop:dR_coh_geometric}
		The derived stack $\bfCoh(X_\dR)$ is a geometric derived stack, locally of finite presentation.
\end{proposition}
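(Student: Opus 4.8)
The plan is to verify the hypotheses of Lurie's representability theorem \cite[Theorem~18.1.0.2]{Lurie_SAG}, following verbatim the template used in the proof of Proposition~\ref{prop:Betti_coh_geometric}. The starting observation is that, by Example~\ref{eg:applications}-(3), the natural morphism $\lambda_X \colon X \to X_\dR$ is a flat effective epimorphism, and $X$ is a smooth scheme, hence in particular a smooth geometric derived stack. Corollary~\ref{cor:coh_inf_cohesive_nilcomplete} then applies directly and shows that $\bfCoh(X_\dR)$ is infinitesimally cohesive and nilcomplete.

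Next I would produce a global cotangent complex. Since $\lambda_X$ is a flat effective epimorphism with smooth source, Corollary~\ref{cor:coherent_over_smooth_are_perfect} gives a formally étale factorization $\bfCoh(X_\dR) \to \bfPerf(X_\dR)$, and Corollary~\ref{cor:coh_global_cotangent_complex_smooth_case} reduces the existence of a global cotangent complex for $\bfCoh(X_\dR)$ to the same statement for $\bfPerf(X_\dR)$. The latter is provided by the companion paper \cite{Porta_Sala_Shapes}: there $X_\dR$ is shown to be categorically proper, so the mapping stack $\bfPerf(X_\dR) = \bfMap(X_\dR, \bfPerf)$ is a geometric derived stack and in particular admits a global cotangent complex. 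A concrete description, of the shape $\LL_{\bfCoh(X_\dR), x} \simeq p_+(\calF \otimes \calF^\vee)[1]$, could also be extracted as in Lemma~\ref{lem:coh_cotangent_complex}, but it is not needed here.

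Finally I would identify the truncation. By Lemma~\ref{lem:dR_decomposition} there is a canonical equivalence $\bfCoh(X_\dR) \simeq \bfBun(X_\dR)$, so $\trunc{\bfCoh(X_\dR)}$ coincides with $\coprod_n \trunc{\bfBun^n(X_\dR)}$, i.e.\ the classical stack of vector bundles with flat connections on $X$, each summand being the classical moduli stack of rank-$n$ flat bundles. This is a geometric classical stack by Simpson's work \cite{Simpson_Moduli_II}. With cohesiveness, nilcompleteness, the global cotangent complex and the geometricity of the truncation all in hand, \cite[Theorem~18.1.0.2]{Lurie_SAG} yields that $\bfCoh(X_\dR)$ is geometric and locally of finite presentation.

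The main obstacle is the input on $\bfPerf(X_\dR)$: one has to make sure the representability statement in \cite{Porta_Sala_Shapes} (categorical properness of $X_\dR$ together with representability of mapping stacks into $\bfPerf$) genuinely applies in the de Rham setting, and then to match its cotangent complex with that of $\bfCoh(X_\dR)$ through the formally étale comparison map. The identification of the truncation with the classical flat-bundle stack is then routine, given Lemma~\ref{lem:dR_decomposition} and the classical geometricity result.
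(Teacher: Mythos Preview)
Your argument is correct but takes a different route from the paper. You follow the Betti template verbatim: check infinitesimal cohesiveness and nilcompleteness via Corollary~\ref{cor:coh_inf_cohesive_nilcomplete}, obtain a global cotangent complex from the formally \'etale map to $\bfPerf(X_\dR)$ via Corollary~\ref{cor:coh_global_cotangent_complex_smooth_case}, identify the truncation with Simpson's classical moduli stack of flat bundles through Lemma~\ref{lem:dR_decomposition}, and conclude by Lurie's representability theorem. All of these steps are valid.

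The paper instead bypasses Lurie's theorem entirely. Using that $\lambda_X \colon X \to X_\dR$ is a flat effective epimorphism, Lemma~\ref{lem:universally_flat_atlas} together with Corollary~\ref{cor:coherent_over_smooth_are_perfect} exhibits $\bfCoh(X_\dR)$ as the fiber product
\[
\bfCoh(X_\dR) \simeq \bfPerf(X_\dR) \times_{\bfPerf(X)} \bfCoh(X)\ .
\]
Since $\bfPerf(X)$, $\bfPerf(X_\dR)$ (by \cite{Toen_Vaquie_Moduli} and the companion paper) and $\bfCoh(X)$ (by Proposition~\ref{prop:coh_geometric}) are all geometric and locally of finite presentation, and geometric stacks are stable under fiber products, the conclusion follows in one line. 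The paper's approach is shorter and avoids re-identifying the truncation; your approach is more self-contained and makes the deformation-theoretic inputs explicit, which is useful if one wants the cotangent complex description anyway.
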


\begin{proof}
	Consider the canonical map $\lambda_X \colon X\to X_\dR$. Then, \cite[Proposition~\ref*{shapes-prop:de_Rham_properties}-(\ref*{shapes-prop:de_Rham_properties:effective_epi}) and -(\ref*{shapes-prop:de_Rham_properties:flat_atlas})]{Porta_Sala_Shapes} shows that $\lambda_X$ is a flat effective epimorphism. Thus, $\bfCoh(X_\dR)$ fits into the pullback square (cf.\ Lemma~\ref{lem:universally_flat_atlas} and Corollary~\ref{cor:coherent_over_smooth_are_perfect})
	\begin{align}
		\begin{tikzcd}[ampersand replacement = \&]
		\bfCoh(X_\dR) \arrow{r} \arrow{d} \& \bfCoh(X) \arrow{d} \\
		\bfPerf(X_\dR) \arrow{r} \& \bfPerf(X)
		\end{tikzcd}
	\end{align}
	and since $\bfPerf(X)$ and $\bfPerf(X_\dR)$ are geometric (cf.\ \cite[Corollary~3.29]{Toen_Vaquie_Moduli} and  \cite[\S\ref*{shapes-s:dR_representability}]{Porta_Sala_Shapes}, respectively) and $\bfCoh(X)$ is geometric because of Proposition~\ref{prop:coh_geometric}, we conclude that $\bfCoh(X_\dR)$ is geometric as well.
\end{proof}
We shall call $\bfCoh_\dR(X)$ the \textit{derived de Rham moduli stack of $X$}.
	
\subsubsection{Moduli of Higgs sheaves}\label{sec:Dolbeault-moduli-stacks}
Let $X$ be a smooth, proper and connected complex scheme. Let
\begin{align}
	\sfT X \coloneqq \Spec_X( \Sym_{\calO_X}( \mathbb L_X ) )
\end{align}
be the derived tangent bundle to $X$ and let $\widehat{\sfT X} \coloneqq X_\dR \times_{(\sfT X)_\dR} \sfT X$ be the formal completion of $\sfT X$ along the zero section.  
The natural commutative group structure of $\sfT X$ relative to $X$ (seen as an associative one) lifts to $\widehat{\sfT X}$. Thus, we define the \textit{Dolbeault shape $X_\Dol$} of $X$ as the relative classifying stack:
\begin{align}
	X_\Dol\coloneqq \calB_X \widehat{\sfT X}\ ,
\end{align}
while we define the \textit{nilpotent Dolbeault shape $X^\nil_\Dol$} of $X$ as:
\begin{align}
	X_\Dol^\nil \simeq \calB_X  \sfT X\ .
\end{align}

We define
\begin{align}
\bfCoh_\Dol(X)\coloneqq \bfCoh(X_\Dol)\quad \text{and} \quad \bfCoh_\Dol^\nil(X)\coloneqq \bfCoh(X_\Dol^\nil)\ ,
\end{align}
and
\begin{align}
\bfBun_\Dol(X)\coloneqq \bfBun(X_\Dol)\quad \text{and} \quad \bfBun_\Dol^\nil(X)\coloneqq \bfBun(X_\Dol^\nil)\ .
\end{align}
\begin{proposition}\label{prop:Dol_coh_geometric}
	The derived stacks $\bfCoh_\Dol(X), \bfCoh_\Dol^\nil(X), \bfBun_\Dol(X), \bfBun_\Dol^\nil(X)$ are geometric and locally of finite presentation.
\end{proposition}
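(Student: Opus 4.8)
The plan is to follow the proof of Proposition~\ref{prop:dR_coh_geometric} for the de Rham shape almost verbatim, treating $\bfCoh_\Dol$ and $\bfCoh_\Dol^\nil$ first and deducing the $\bfBun$ statements at the end. By Example~\ref{eg:applications}-(4) the natural map $\kappa_X \colon X \to X_\Dol$ is a flat effective epimorphism, and the analogous assertion for the canonical map $X \to X_\Dol^\nil$ is recorded in \cite{Porta_Sala_Shapes}. Since $X$ is a smooth geometric derived stack, Corollary~\ref{cor:coh_inf_cohesive_nilcomplete} immediately yields that $\bfCoh(X_\Dol)$ and $\bfCoh(X_\Dol^\nil)$ are infinitesimally cohesive and nilcomplete.

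Next I would produce the relevant pullback squares. Applying Lemma~\ref{lem:universally_flat_atlas} to these two flat effective epimorphisms, together with Corollary~\ref{cor:coherent_over_smooth_are_perfect} (smoothness of $X$ lets us factor $\bfCoh \to \bfAPerf$ through $\bfPerf$) and the pullback square $\bfPerf(Y) \simeq \bfPerf(X)\times_{\bfAPerf(X)}\bfAPerf(Y)$ extracted from the proof of Lemma~\ref{lem:checking_perfectness_on_fibers}, one obtains cartesian squares
\[
\begin{tikzcd}[ampersand replacement=\&]
\bfCoh(X_\Dol)\arrow{r}\arrow{d} \& \bfCoh(X)\arrow{d} \\
\bfPerf(X_\Dol)\arrow{r} \& \bfPerf(X)
\end{tikzcd}
\qquad
\begin{tikzcd}[ampersand replacement=\&]
\bfCoh(X_\Dol^\nil)\arrow{r}\arrow{d} \& \bfCoh(X)\arrow{d} \\
\bfPerf(X_\Dol^\nil)\arrow{r} \& \bfPerf(X)
\end{tikzcd}
\]
exactly as in the de Rham case. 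Here $\bfCoh(X)$ is geometric and locally of finite presentation by Proposition~\ref{prop:coh_geometric}, $\bfPerf(X)$ is geometric by \cite[Corollary~3.29]{Toen_Vaquie_Moduli}, and $\bfPerf(X_\Dol)$, $\bfPerf(X_\Dol^\nil)$ are geometric and locally of finite presentation by the representability results for Simpson's shapes of \cite{Porta_Sala_Shapes}. Since geometric derived stacks locally of finite presentation are stable under fiber products, I conclude that $\bfCoh_\Dol(X)$ and $\bfCoh_\Dol^\nil(X)$ are geometric and locally of finite presentation. Finally, $\bfBun_\Dol(X)$ and $\bfBun_\Dol^\nil(X)$ are open substacks of $\bfCoh_\Dol(X)$ and $\bfCoh_\Dol^\nil(X)$ respectively, hence inherit both properties.

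Should the geometricity of $\bfPerf(X_\Dol^{(\nil)})$ not be available in precisely the form needed, I would instead invoke Lurie's representability theorem \cite[Theorem~18.1.0.2]{Lurie_SAG} directly: infinitesimal cohesiveness and nilcompleteness come from Corollary~\ref{cor:coh_inf_cohesive_nilcomplete}; the existence of a global cotangent complex follows from Corollary~\ref{cor:coh_global_cotangent_complex_smooth_case} once one knows that $\bfPerf$ of the relevant shape admits one, which reduces to categorical properness of $X_\Dol$ (resp.\ $X_\Dol^\nil$) established in \cite{Porta_Sala_Shapes}; and geometricity of the truncation follows by identifying $\trunc{\bfCoh(X_\Dol)}$ (resp.\ $\trunc{\bfCoh(X_\Dol^\nil)}$) with the classical moduli stack of (nilpotent) Higgs sheaves on $X$, which is an algebraic stack locally of finite type by Simpson's work.

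The main obstacle is not conceptual but a matter of bookkeeping: confirming that the inputs borrowed from \cite{Porta_Sala_Shapes} — geometricity (or at least a global cotangent complex together with categorical properness) of $\bfPerf(X_\Dol)$ and $\bfPerf(X_\Dol^\nil)$, and the flat-effective-epimorphism property of the atlas of $X_\Dol^\nil$ — are indeed recorded there in the stated generality of a smooth, proper, connected complex scheme $X$ of arbitrary dimension. Once that is in hand, the argument is entirely formal and structurally identical to the de Rham case.
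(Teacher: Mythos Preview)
Your proposal is correct and follows essentially the same route as the paper: use the flat effective epimorphisms $\kappa_X$ and $\kappa_X^\nil$ together with Lemma~\ref{lem:universally_flat_atlas} and Corollary~\ref{cor:coherent_over_smooth_are_perfect} to exhibit $\bfCoh_\Dol^{(\nil)}(X)$ as a pullback of geometric stacks, then invoke the geometricity of $\bfPerf(X_\Dol^{(\nil)})$ from \cite{Porta_Sala_Shapes}. Your treatment is simply more explicit (spelling out the $\bfBun$ cases as open substacks and sketching the Lurie-representability fallback), but the argument is the same.
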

\begin{proof}
	First, recall that there exist canonical maps $\kappa_X\colon X\to X_\Dol$ and $\kappa_X^\nil\colon X\to X_\Dol^\nil$ (cf.\ \cite[\S\ref*{shapes-s:Dol-definition}]{Porta_Sala_Shapes}).
	
	By  \cite[Lemma~\ref*{shapes-lem:effective-Dol}]{Porta_Sala_Shapes}, $\kappa_X$ and $\kappa_X^\nil$ are flat effective epimorphisms. Thanks to Lemma~\ref{lem:universally_flat_atlas} and Corollary~\ref{cor:coherent_over_smooth_are_perfect}, we are left to check that $\bfPerf(X_\Dol)$ and $\bfPerf(X_\Dol^\nil)$ are geometric and locally of finite presentation (cf.\  \cite[\S\ref*{shapes-s:Dolbeault_representability}]{Porta_Sala_Shapes}).
\end{proof}
We call $\bfCoh_\Dol(X)$ the \textit{derived Dolbeault moduli stack of $X$}, while $\bfCoh_\Dol^\nil(X)$ is the \textit{derived nilpotent Dolbeault moduli stack of $X$}. The truncation $\trunc{\bfCoh_\Dol(X)}$ (resp.\ $\trunc{\bfCoh_\Dol^\nil(X)}$) coincides with the moduli stack of Higgs sheaves (resp.\ nilpotent Higgs sheaves) on $X$.
	
We denote by $\jmath_X\colon \bfCoh_\Dol^\nil(X)\to \bfCoh_\Dol(X)$ and $\jmath_X^{\mathsf{bun}}\colon \bfBun_\Dol^\nil(X)\to \bfBun_\Dol(X)$ the canonical maps induced by $\imath_X\colon X_\Dol \to X_\Dol^\nil$.
	
\begin{remark}\label{rem:naive_Higgs}
	Let $X$ be a smooth and proper complex scheme. 	Define the geometric derived stack
	\begin{align}
		\bfHiggs^{\textrm{na\"if}}(X)\coloneqq \mathbf{T}^\ast[0]\bfCoh(X)= \Spec_{\bfCoh(X)} \big(\Sym( \T_{\bfCoh(X)})\big)\ .
	\end{align}
	There is a natural morphism
	\begin{align}
		\bfCoh_\Dol(X) \longrightarrow \bfHiggs^{\textrm{na\"if}}(X) \ ,
	\end{align}
	which is an equivalence when $X$ is a smooth and projective curve (see, e.g., \cite{Ginzburg_Gaiotto}).
	In higher dimension, this morphism is no longer an equivalence.
	This is due to the fact that in higher dimensions the symmetric algebra and the tensor algebra on $\T_{\bfCoh(X)}$ differ.		
\end{remark}

Let $X$ be a smooth projective complex scheme. For any monic polynomial $p(m)\in \Q[m]$, we set 
\begin{align}
	\bfCoh_\Dol^{p}(X) &\coloneqq \bfPerf(X_\Dol) \times_{\bfPerf(X)} \bfCoh^{p}(X)\ , \\ 
	\bfCoh_\Dol^{\mathsf{nil}, \, p}(X) &\coloneqq \bfPerf(X_\Dol^\mathsf{nil}) \times_{\bfPerf(X)} \bfCoh^{p}(X)\ ,
\end{align}
and
\begin{align}
	\bfBun_\Dol^{p}(X) & \coloneqq \bfPerf(X_\Dol) \times_{\bfPerf(X)} \bfBun^{p}(X)\ , \\ 
	\bfBun_\Dol^{\mathsf{nil}, \, p}(X) & \coloneqq \bfPerf(X_\Dol^\mathsf{nil}) \times_{\bfPerf(X)} \bfBun^{p}(X)\ ,
\end{align}
These are geometric derived stacks locally of finite presentation.
	
As shown by Simpson \cite{Simpson_Moduli_I, Simpson_Moduli_II}, the higher-dimensional analogue of the semistability condition for Higgs bundles on a curve (introduced, e.g., in \cite{Nitsure_Semistable}) is an instance of the Gieseker stability condition for modules over a sheaf of rings of differential operators, when such a sheaf is induced by $\Omega_X^1$ with zero symbol (see \cite[\S2]{Simpson_Moduli_I} for details). This semistability condition is an open property for flat families (cf.\ \cite[Lemma~3.7]{Simpson_Moduli_I}). Thus, there exists an open substack $\trunc{\bfCoh_\Dol^{\mathsf{ss},\, p}(X)}$ of $\trunc{\bfCoh_\Dol^{p}(X)}$ parameterizing families of semi\-sta\-ble Higgs sheaves on $X$ with fixed reduced polynomial $p(m)$; we denote by 
\begin{align}
	\bfCoh_\Dol^{\mathsf{ss},\, p}(X)
\end{align}
its canonical derived enhancement. Similarly, we define $\bfCoh_\Dol^{\mathsf{nil},\, \mathsf{ss},\, p}(X)$, $\bfBun_\Dol^{\mathsf{ss},\, p}(X)$ and $\bfBun_\Dol^{\mathsf{nil},\, \mathsf{ss},\, p}(X)$. These are geometric derived stacks locally of finite presentation.

Finally, for any integer $0\le d\le \dim(X)$, set 
\begin{align}
	\bfCoh_\Dol^{\leqslant d}(X) &\coloneqq \bfPerf(X_\Dol) \times_{\bfPerf(X)} \bfCoh^{\leqslant d}(X)\ ,\\
	\bfCoh_\Dol^{\mathsf{nil},\, \leqslant d}(X) &\coloneqq \bfPerf(X_\Dol^\mathsf{nil}) \times_{\bfPerf(X)} \bfCoh^{\leqslant d}(X)\ .
\end{align}
These are geometric derived stacks locally of finite presentation.
\begin{remark}
	Let $X$ be a smooth projective complex curve and let $\mu\in \Q$ (which corresponds to a choice of a reduced Hilbert polynomial). Then one has $\bfCoh_\Dol^{\mathsf{ss},\, \mu}(X) \simeq \bfBun_\Dol^{\mathsf{ss},\, \mu}(X)$ and $\bfCoh_\Dol^{\mathsf{nil},\, \mathsf{ss},\, \mu}(X) \simeq \bfBun_\Dol^{\mathsf{nil},\,\mathsf{ss},\, \mu}(X)$.
\end{remark}
	
\bigskip\section{Derived moduli stack of extensions of coherent sheaves} \label{s:coh_ext}

Our goal in this section is to introduce and study a derived enhancement of the moduli stack of extensions of coherent sheaves on a proper complex algebraic variety $X$.
As usual, we also deal with the case of Higgs sheaves, vector bundles with flat connection and finite-dimensional representation of the fundamental group of $X$.
We will see in Section~\ref{s:categorifiedHall} that the derived moduli stack of extensions of coherent sheaves is a particular case of a more fundamental construction, known as the Waldhausen construction.
If on the one hand it is a certain property of the Waldhausen construction (namely, its being a $2$-Segal object) the main responsible for the higher associativity of the Hall convolution product at the categorified level, at the same time the analysis carried out in this section of the stack of extensions of coherent sheaves yields a fundamental input for the overall construction.
More specifically, we will show that when $X$ is a surface, certain maps are derived lci, which is the key step in establishing the categorification we seek.

\subsection{Extensions of almost perfect complexes}

Let $\Delta^1$ be the 1-simplex, and define the functor
\begin{align}
	\bfAPerf^{\Delta^1 \times \Delta^1} \colon \dAff\op \longrightarrow \cS
\end{align}
by
\begin{align}
	\bfAPerf^{\Delta^1 \times \Delta^1}(\Spec(A)) \coloneqq \Fun(\Delta^1\times \Delta^1, \catAPerf(A))^\simeq\ .
\end{align}
We let $\bfAPerf^{\mathsf{ext}}$ denote the full substack of $\bfAPerf^{\Delta^1 \times \Delta^1}$ whose $\Spec(A)$-points corresponds to diagrams
\begin{align}
	\begin{tikzcd}[ampersand replacement = \&]
	\calF_1 \arrow{r} \arrow{d} \& \calF_2 \arrow{d} \\ \calF_4 \arrow{r} \& \calF_3
	\end{tikzcd}
\end{align}
in $\catAPerf(A)$ which are pullbacks and where $\calF_4 \simeq 0$.

Observe that the natural map $\bfAPerf^{\mathsf{ext}} \to \bfAPerf^{\Delta^1 \times \Delta^1}$ is representable by Zariski open immersions.
There are three natural morphisms
\begin{align}
	\ev_i \colon \bfAPerf^{\mathsf{ext}} \longrightarrow \bfAPerf \ , \quad i = 1, 2, 3 \ ,
\end{align}
which at the level of functor of points send a fiber sequence
\begin{align}
	\calF_1 \longrightarrow \calF_2 \longrightarrow \calF_3
\end{align}
to $\calF_1$, $\calF_2$ and $\calF_3$, respectively.

Let $Y\in \dSt$ be a derived stack. 
We define
\begin{align}
	\bfAPerf^{\Delta^1 \times \Delta^1}(Y) \coloneqq \bfMap( Y, \bfAPerf^{\Delta^1 \times \Delta^1} ) \ ,
\end{align}
and
\begin{align}
	\bfAPerf^{\mathsf{ext}}(Y) \coloneqq \bfMap( Y, \bfAPerf^{\mathsf{ext}} ) \ .
\end{align}

Once again, the morphism
\begin{align} \label{eq:extension_inside_composable_morphisms}
	\bfAPerf^{\mathsf{ext}}(Y) \longrightarrow \bfAPerf^{\Delta^1 \times \Delta^1}(Y)
\end{align}
is representable by Zariski open immersions.
Moreover, the morphism $\ev_i$ induce a morphism $\bfAPerf^{\mathsf{ext}}(Y) \to \bfAPerf(Y)$, which we still denote $\ev_i$.

Let now $Y\in \dSt$ be a derived stack.
In \S~\ref{s:coh} we introduced the derived moduli stack $\bfCoh(Y)$, parameterizing coherent sheaves on $Y$.
It is equipped with a natural map $\bfCoh(Y) \to \bfAPerf(Y)$.
We define $\bfCoh^{\mathsf{ext}}(Y)$ as the pullback
\begin{align}\label{eq:coh_ext}
	\begin{tikzcd}[ampersand replacement = \&]
	\bfCoh^{\mathsf{ext}}(Y) \arrow{r} \arrow{d} \& \bfAPerf^{\mathsf{ext}}(Y) \arrow{d}{\ev_1 \times \ev_2 \times \ev_3} \\
	\bfCoh(Y)^{\times 3} \arrow{r} \& \bfAPerf(Y)^{\times 3} 
	\end{tikzcd}\ .
\end{align}
and we refer to it as the derived moduli stack of extensions of coherent sheaves.

\begin{remark}
	Let $Y\in \dSt$ be a derived stack.
	Assume there exists a flat effective epimorphism $u \colon U \to Y$ from a geometric derived stack $U$.
	Then Corollary~\ref{cor:formally_etale} implies that the natural map $\bfCoh(Y) \to \bfAPerf(Y)$ is formally \'etale.
	Since formally étale maps are stable under pullback, the very definition of $\bfCoh^{\mathsf{ext}}(Y)$ shows that the natural map $\bfCoh^{\mathrm{ext}}(Y) \to \bfAPerf^{\mathsf{ext}}(Y)$ is formally étale as well.
\end{remark}

Analogous considerations can be made for $\bfPerf(Y)$ instead of $\bfAPerf(Y)$.
In particular, there are well defined stacks $\bfPerf^{\Delta^1 \times \Delta^1}(Y)$ and $\bfPerfext(Y)$.
The commutative diagram
\begin{align} \label{eq:perfext_inside_aperfext}
	\begin{tikzcd}[ampersand replacement = \&]
		\bfPerfext(Y) \arrow{r} \arrow{d} \& \bfAPerf^{\mathsf{ext}}(Y) \arrow{d} \\
		\bfPerf(Y)^{\times 3} \arrow{r} \& \bfAPerf(Y)^{\times 3}
	\end{tikzcd}
\end{align}
is a pullback, and the horizontal arrows are formally étale.
When there is a flat effective epimorphism $u \colon U \to Y$ from a smooth geometric derived stack $U$, Corollary~\ref{cor:coherent_over_smooth_are_perfect} shows that the map $\bfCohext(Y) \to \bfAPerf^{\mathsf{ext}}(Y)$ factors through $\bfPerfext(Y)$, and that the map $\bfCohext(Y) \to \bfPerfext(Y)$ is formally étale as well.

Similarly, we define $\bfBun^{\mathsf{ext}}(Y)$ as the pullback with respect to a diagram of the form \eqref{eq:coh_ext}, with $\bfCoh(Y)^{\times 3}$ replaced with $\bfBun(Y)^{\times 3}$.

\subsection{Explicit computations of cotangent complexes}\label{ss:cotangent}

In this section we carry out the first key computation: we give explicit formulas for the cotangent complexes of the the stack $\bfPerfext(Y)$ and of the map $\ev_3 \times \ev_1 \colon \bfPerfext(Y) \to \bfPerf(Y) \times \bfPerf(Y)$.
We assume throughout this section that $Y$ is a derived stack satisfying the following assumptions:
\begin{enumerate}\itemsep=0.2cm
	\item \label{tor_amplitude} $Y$ has finite local tor-amplitude, see \cite[Definition~\ref*{shapes-def:local_tor_amplitude}]{Porta_Sala_Shapes}.
	
	\item \label{categorical_properness} $Y$ is categorically proper, see \cite[Definition~\ref*{shapes-def:categorical_properness}]{Porta_Sala_Shapes}.
	
	\item \label{effective_epimorphism} There exists an effective epimorphism $u \colon U \to Y$, where $U$ is a quasi-compact derived scheme.
\end{enumerate}
These hypotheses guarantee in particular the following: for every $S \in \dAff$ let
\begin{align}
	p_S \colon Y \times S \longrightarrow S 
\end{align}
be the natural projection.
Then the pullback functor
\begin{align}
	p_S^\ast \colon \catPerf(S) \longrightarrow \catPerf(Y \times S)
\end{align}
admits a left adjoint, which will be denoted $p_{S+}$.
See \cite[Proposition~\ref*{shapes-prop:plus_pushforward}]{Porta_Sala_Shapes} for the construction and the main properties of this functor.

\begin{proposition} \label{prop:cotangent_complex_extensions}
	Let $Y \in \dSt$ be a derived stack satisfying assumptions \eqref{tor_amplitude}, \eqref{categorical_properness}, and \eqref{effective_epimorphism}.
	Then $\bfPerfext(Y)$ admits a global cotangent complex.
	Furthermore, let $S = \Spec(A) \in \dAff$ be an affine derived scheme and let $x \colon S \to \bfPerf^{\mathsf{ext}}(Y)$ be a morphism.
	Write
	\begin{align}
		\calF_1 \longrightarrow \calF_2 \longrightarrow \calF_3
	\end{align}
	for the fiber sequence in $\catPerf(Y \times S)$ classified by $x$.
	Then $x^\ast \LL_{\bfPerf^{\mathsf{ext}}(Y)}[1]$ coincides with the colimit in $\catPerf(S)$ of the diagram
	\begin{align} \label{eq:cotangent_complex_extensions}
		\begin{tikzcd}[ampersand replacement = \&]
			{} \& p_{S+} ( \calF_2 \otimes \calF_3^\vee ) \arrow{r} \arrow{d} \& p_{S+}( \calF_3 \otimes \calF_3^\vee ) \\
			p_{S+}( \calF_1 \otimes \calF_2^\vee ) \arrow{r} \arrow{d} \& p_{S+}( \calF_2 \otimes \calF_2^\vee ) \\
			p_{S+}( \calF_1 \otimes \calF_1^\vee )
		\end{tikzcd}\ .
	\end{align}
\end{proposition}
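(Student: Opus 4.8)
The plan is to compute $x^{\ast}\LL_{\bfPerfext(Y)}$ pointwise by deformation theory and then promote this to a global cotangent complex by base change. First I would reduce to a setting where the ``wrong-way'' pushforward is available. By the pullback square \eqref{eq:perfext_inside_aperfext} and Corollary~\ref{cor:formally_etale} (applied at each of the three evaluation maps), the natural map $\bfPerfext(Y)\to\bfPerf^{\Delta^1\times\Delta^1}(Y)$ is formally étale, so for the cotangent complex one may work with the diagram stack $\bfPerf^{\Delta^1\times\Delta^1}(Y)$ along the locus of bicartesian squares with $D(1,0)\simeq 0$. Assumptions \eqref{tor_amplitude}--\eqref{effective_epimorphism} guarantee, via \cite[Proposition~\ref*{shapes-prop:plus_pushforward}]{Porta_Sala_Shapes}, that for every $S=\Spec(A)\in\dAff$ the functor $p_S^{\ast}\colon\catPerf(S)\to\catPerf(Y\times S)$ admits a left adjoint $p_{S+}$ which commutes with base change along $S'\to S$; this base-change statement is what turns the pointwise formula below into a genuine global cotangent complex (and, combined with the evident analogue of Corollary~\ref{cor:coh_inf_cohesive_nilcomplete} for the diagram stack, also yields that $\bfPerfext(Y)$ is infinitesimally cohesive and nilcomplete, as needed to speak of the cotangent complex in the first place).

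For the pointwise computation, fix $S=\Spec(A)$ and a point $x\colon S\to\bfPerfext(Y)$ classifying the fiber sequence $\calF_1\to\calF_2\to\calF_3$ in $\catPerf(Y\times S)$, i.e.\ the bicartesian square $D\colon\Delta^1\times\Delta^1\to\catPerf(Y\times S)$ with $D(1,0)\simeq 0$. Exactly as in \cite[Corollary~\ref*{shapes-cor:cotangent_complex_Perf}]{Porta_Sala_Shapes} for a single perfect complex, the cotangent complex of $\bfPerf^{\Delta^1\times\Delta^1}(Y)$ at $D$ is obtained by applying $p_{S+}$ to the dual of the \emph{endomorphism diagram complex} $\calEnd(D)\simeq\int_{(u\to v)\in\mathrm{Tw}(\Delta^1\times\Delta^1)}\calHom_{Y\times S}\big(D(u),D(v)\big)$ and shifting by one; equivalently, $\mathsf{Der}_{\bfPerfext(Y)}(A;-)$ at $x$ is corepresented (up to shift) by the mapping object of the diagram $D$ into $D\otimes p_S^{\ast}(-)$ in $\catPerf(Y\times S)^{\Delta^1\times\Delta^1}$. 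Pulling $\otimes\,p_S^{\ast}(-)$ outside the end, using the adjunction $p_{S+}\dashv p_S^{\ast}$ and that $p_{S+}$ preserves colimits, this is corepresented by $p_{S+}$ applied to the colimit over $\mathrm{Tw}(\Delta^1\times\Delta^1)^{\op}$ of $D(u)\otimes D(v)^{\vee}$. Finally, every term of that colimit indexed by a twisted arrow with source or target $(1,0)$ is zero, because $D(1,0)\simeq 0$ and $\calHom(0,-)\simeq\calHom(-,0)\simeq 0$; what is left is precisely the five-term diagram in \eqref{eq:cotangent_complex_extensions}, whose colimit is therefore $x^{\ast}\LL_{\bfPerfext(Y)}[1]$. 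Naturality in $S$ of all these steps (again via base change for $p_{S+}$) gives the global cotangent complex with the stated formula.

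The step I expect to be the genuine obstacle is the bookkeeping underlying the previous paragraph: correctly assembling the end over the twisted arrow category of $\Delta^1\times\Delta^1$, tracking the homological shifts throughout, invoking the duality $(p_{S\ast}(-))^{\vee}\simeq p_{S+}((-)^{\vee})$ at the right places --- it is here that categorical properness and finite local tor-amplitude are actually used --- and verifying that the four twisted arrows meeting the zero corner genuinely disappear, so that one ends up with exactly \eqref{eq:cotangent_complex_extensions} rather than an a priori larger colimit. The remaining ingredients (the formal étaleness inputs, infinitesimal cohesiveness and nilcompleteness, and the passage from pointwise to global) are routine once the cited results of \cite{Porta_Sala_Shapes} are available.
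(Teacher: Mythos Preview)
Your approach is essentially the paper's: reduce to $\bfPerf^{\Delta^1\times\Delta^1}(Y)$ via the Zariski-open (hence formally étale) inclusion, compute derivations at $x$ by deformation theory, dualize using the adjunction $p_{S+}\dashv p_S^\ast$, and globalize via base change for $p_{S+}$. The paper organizes the middle step a little differently: rather than invoking an end over $\mathrm{Tw}(\Delta^1\times\Delta^1)$, it uses infinitesimal cohesiveness of $\bfPerf^{\Delta^1\times\Delta^1}(Y)$ together with \cite[Proposition~\ref*{shapes-prop:cotangent_complex_loop}]{Porta_Sala_Shapes} to pass to the loop stack $F=S\times_{\bfPerf^{\Delta^1\times\Delta^1}(Y)}S$, describes $F(T)$ concretely as the space of automorphisms of the pulled-back sequence $\calF_1\to\calF_2\to\calF_3$, and writes this as an explicit iterated pullback over five $\Aut$/$\Map$ spaces before taking fibers and dualizing.

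One point in your argument is not quite right as stated: after deleting the twisted arrows with source or target $(1,0)$ you are left with \emph{six} non-zero terms, not five --- the diagonal $(0,0)\to(1,1)$ contributes $\calF_1\otimes\calF_3^\vee$, which is absent from \eqref{eq:cotangent_complex_extensions}. The paper's five-term shape really arises because an automorphism of the square with zero corner is the same datum as an automorphism of the composable pair $\calF_1\to\calF_2\to\calF_3$ in $\Fun(\Delta^2,\catPerf(Y\times S))$, and the inner-horn inclusion $\Lambda^2_1\hookrightarrow\Delta^2$ lets one compute that mapping space with a five-object index. Your full nine-term colimit (zeros included) does give the correct object and is equivalent to the five-term one, but establishing that equivalence is precisely the bookkeeping you flagged as the obstacle; it does not follow from simply zeroing out three terms.
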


\begin{proof}
	First of all, we consider the diagram
	\begin{align}
		\begin{tikzcd}[ampersand replacement = \&]
			\bfPerfext(Y) \arrow{r} \arrow{d} \& \bfPerf^{\Delta^1 \times \Delta^1}(Y) \arrow{d} \\
			\bfAPerf^{\mathsf{ext}}(Y) \arrow{r} \& \bfAPerf^{\Delta^1 \times \Delta^1}(Y) 
		\end{tikzcd} \ .
	\end{align}
	Since \eqref{eq:perfext_inside_aperfext} is a pullback, we see that the above square is a pullback.
	In particular, the top horizontal morphism is a Zariski open immersion.
	It is therefore enough to compute the cotangent complex of $\bfPerf^{\Delta^1 \times \Delta^1}(Y)$ at the induced point, which we still denote by $x \colon S \to \bfPerf^{\Delta^1 \times \Delta^1}(Y)$.
	
	Write
	\begin{align}
		F \coloneqq S \times_{\bfPerf^{\Delta^1 \times \Delta^1}(Y)} S \ ,
	\end{align}
	and let $\delta_x \colon S \to F$ be the diagonal morphism induced by $x$. Using \cite[Proposition~\ref*{shapes-prop:inf_cohesive}-(1) and (3)]{Porta_Sala_Shapes} we see that $\bfPerf^{\Delta^1 \times \Delta^1}$ and hence $\bfPerf^{\Delta^1 \times \Delta^1}(Y)\coloneqq \bfMap(Y, \bfPerf^{\Delta^1 \times \Delta^1})$ are infinitesimally cohesive.
	Thus, \cite[Proposition~\ref*{shapes-prop:cotangent_complex_loop}]{Porta_Sala_Shapes} guarantees that
	\begin{align}
		x^\ast \LL_{\bfPerf^{\Delta^1 \times \Delta^1}(Y)} \simeq \delta_x^\ast \LL_{F / S \times S}[-1] \ .
	\end{align}
	We therefore focus on the computation of $\delta_x^\ast \LL_F$.
	Given $(f,g) \colon T = \Spec(B) \to S \times S$, write $f_Y$ and $g_Y$ for the induced morphisms
	\begin{align}
		f_Y \ , \ g_Y \colon Y \times T \longrightarrow Y \times S \ .
	\end{align}
	We can identify $F(T)$ with the $\infty$-groupoid of commutative diagrams
	\begin{align}
		\begin{tikzcd}[ampersand replacement = \&]
			f_Y^\ast \calF_1 \arrow{r} \arrow{d}{\alpha_1} \& f_Y^\ast \calF_2 \arrow{r} \arrow{d}{\alpha_2} \& f_Y^\ast \calF_3 \arrow{d}{\alpha_3} \\
			g_Y^\ast \calF_1 \arrow{r} \& g_Y^\ast \calF_2 \arrow{r} \& g_Y^\ast \calF_3 
		\end{tikzcd}
	\end{align}
	in $\catPerf( Y \times T )$, where $\alpha_1$, $\alpha_2$ and $\alpha_3$ are equivalences.
	In other words, $F(T)$ fits in the following limit diagram:
	\begin{align}
		\begin{tikzcd}[ampersand replacement = \&]
			F(T) \arrow{r} \arrow{d} \& \square \arrow{d} \arrow{r} \& \Map^{\simeq}( f_Y^\ast \calF_3, g_Y^\ast \calF_3 ) \arrow{d} \\
			\square \arrow{r} \arrow{d} \& \Map^{\simeq}(f_Y^\ast \calF_2, g_Y^\ast \calF_2) \arrow{r} \arrow{d} \& \Map(f_Y^\ast \calF_2, g_Y^\ast \calF_3) \\
			\Map^{\simeq}(f_Y^\ast \calF_1,g_Y^\ast \calF_1 ) \arrow{r} \& \Map(f_Y^\ast \calF_1, g_Y^\ast \calF_2) \& \phantom{\Map(f_Y^\ast \calF_2, g_Y^\ast \calF_3)} 
		\end{tikzcd}\ .
	\end{align}
	Here the mapping and isomorphism spaces are taken in $\catPerf( Y \times T )$.
	We have to represent the functor
	\begin{align}
		\mathsf{Der}_F(S; -) \colon \catQCoh(S) \longrightarrow \cS
	\end{align}
	which sends $\calG \in \catQCoh(S)$ to the space
	\begin{align}
		\fib_{\delta_x}\left( F( S[\calG] ) \longrightarrow F(S) \right) \ . 
	\end{align}
	Write $Y_S \coloneqq Y \times S$ and let $p_S \colon Y_S \to S$ be the natural projection, so that
	\begin{align}
		(Y_S)[p_S^\ast \calG] \simeq Y \times S[\calG] \ .
	\end{align}
	Let $d_0 \colon Y_S[p_S^\ast \calG] \to Y_S$ be the zero derivation.
	Observe now that
	\begin{align}
		\{ \id_{\calF_i}\} \times_{\Map(\calF_i, \calF_i)} \Map(d_0^\ast \calF_i, d_0^\ast \calF_i) \simeq \{ \id_{\calF_i} \} \times_{\Map(\calF_i, \calF_i)} \Aut(d_0^\ast \calF_i) \ .
	\end{align}
	We are therefore free to replace $\Aut(d_0^\ast \calF_i)$ by $\Map(d_0^\ast \calF_i, d_0^\ast \calF_i)$ in the diagram computing $F(S[\calG])$.
	Unraveling the definitions, we can thus identify $\mathsf{Der}_F(S;\calG)$ with the pullback diagram
	\begin{align}
		\begin{tikzcd}[ampersand replacement = \&]
			\mathsf{Der}_F(S;\calG) \arrow{r} \arrow{d} \& \square \arrow{r} \arrow{d} \& \Map(\calF_3, \calF_3 \otimes p_S^\ast \calG) \arrow{d} \\
			\square \arrow{r} \arrow{d} \& \Map(\calF_2, \calF_2 \otimes p_S^\ast \calG) \arrow{d} \arrow{r} \& \Map(\calF_2, \calF_3 \otimes p_S^\ast \calG) \\
			\Map(\calF_1, \calF_1 \otimes p_S^\ast \calG) \arrow{r} \& \Map(\calF_1, \calF_2 \otimes p_S^\ast \calG) \& \phantom{\Map(\calF_2, \calF_3 \otimes p_S^\ast \calG)} 
		\end{tikzcd} \ .
	\end{align}
	Since $\calF_1$, $\calF_2$ and $\calF_3$ are perfect, they are dualizable.
	Moreover, \cite[Proposition~\ref*{shapes-prop:plus_pushforward}-(\ref*{shapes-prop:plus_pushforward:existence})]{Porta_Sala_Shapes} guarantees the existence of a left adjoint $p_{S+}$ for $p_S^\ast$.
	We can therefore rewrite the above diagram as
	\begin{align}
		\begin{tikzcd}[ampersand replacement = \&]
			\mathsf{Der}_F(S;\calG) \arrow{r} \arrow{d} \& \square \arrow{r} \arrow{d} \& \Map( p_{S+} (\calF_3 \otimes \calF_3^\vee) , \calG ) \arrow{d} \\
			\square \arrow{r} \arrow{d} \& \Map( p_{S+}( \calF_2 \otimes \calF_2^\vee ), \calG) \arrow{r} \arrow{d} \& \Map( p_{S+}( \calF_2 \otimes \calF_3^\vee ), \calG ) \\
			\Map( p_{S+}( \calF_1 \otimes \calF_1^\vee ), \calG ) \arrow{r} \& \Map( p_{S+}( \calF_1 \otimes \calF_2^\vee ), \calG ) \& \phantom{\Map( p_{S+}( \calF_2 \otimes \calF_3^\vee ), \calG )} 
		\end{tikzcd}
	\end{align}
	where now the mapping spaces are computed in $\catPerf(Y \times S)$.
	Therefore, the Yoneda lemma implies that $\mathsf{Der}_F(S;\calG)$ is representable by the colimit of the diagram \eqref{eq:cotangent_complex_extensions} in $\catPerf(Y \times S)$.
	At this point, \cite[Proposition~\ref*{shapes-prop:plus_pushforward}-(\ref*{shapes-prop:plus_pushforward:base_change})]{Porta_Sala_Shapes} guarantees that $\bfPerfext(Y)$ also admits a global cotangent complex.
\end{proof}

\begin{remark}\label{rem:deformation_morphisms_deformation_extensions}
	There are two natural morphisms
	\begin{align}
	\fib , \mathsf{cofib} \colon \bfPerf^{\Delta^1}(Y) \longrightarrow \bfPerf^{\mathsf{ext}}(Y) \ , 
	\end{align}
	which send a morphism $\beta \colon \calF \to \calG$ to the fiber sequence
	\begin{align}
	\fib(\beta) \longrightarrow \calF \longrightarrow \calG \quad ( \text{resp.\ } \calF \longrightarrow \calG \longrightarrow \mathsf{cofib}(\beta) ) \ . 
	\end{align}
	Applying \cite[Proposition~4.3.2.15]{HTT} twice, we see that these morphisms are equivalences.
	
	Let $y \colon \Spec(A) \to \bfPerf^{\Delta^1\times \Delta^1}(Y)$ be a morphism classifying a diagram
	\begin{align}
	\begin{tikzcd}[ampersand replacement = \&]
	\calF_1 \arrow{r} \arrow{d} \& \calF_2 \arrow{d} \\
	0 \arrow{r} \& \calF_3 
	\end{tikzcd} \ .
	\end{align}
	Let $x \colon \Spec(A) \to \bfPerf^{\Delta^1}(Y)$ be the point corresponding to $\calF_1 \to \calF_2$.
	Then we have a canonical morphism
	\begin{align}
		x^\ast \LL_{\bfPerf^{\Delta^1}(Y)}[1] \longrightarrow y^\ast \LL_{\bfPerf^{\Delta^1 \times \Delta^1}(Y)}[1] \ ,
	\end{align}
	which in general is not an equivalence.
	When the point $y$ factors through the open substack $\bfPerf^{\mathsf{ext}}(Y)$, then the above morphism becomes an equivalence.
\end{remark}

Next, we compute the cotangent complex of $\ev_3\times \ev_1$.
We start with a couple of preliminary considerations:

\begin{definition}[{\cite[Definition~\ref*{shapes-def:linearstack}]{Porta_Sala_Shapes}}]
	Let $Y$ be a derived stack and let $\calF \in \catPerf(Y)$ be a perfect complex on $\calF$.
	The \textit{linear stack\footnote{Note that sometimes in the literature this stack (rather its truncation) is also called \textit{cone stack}. See e.g. \cite[\S2.1]{KV_Hall}.} associated to $\calF$ over $Y$} is the derived stack $\mathbb V_Y(\calF) \in \dSt_{/Y}$ defined as
\begin{align}
	 \mathbb V_Y(\calF) \coloneqq \Spec_Y( \Sym_{\scrO_Y}(\calF) )\  .
\end{align}
\end{definition}

In other words, for every $f \colon S = \Spec(A) \to Y$, one has
\begin{align}
	 \Map_{/Y}( S, \mathbb V_Y(\calF) ) \simeq \Map_{A \Mod}(f^\ast(\calF), A) \ . 
\end{align}

\begin{construction}
	Let $Y \in \dSt$ be a derived stack satisfying assumptions \eqref{tor_amplitude}, \eqref{categorical_properness}, and \eqref{effective_epimorphism}.
	Let
	\begin{align}
		\begin{tikzcd}[ampersand replacement = \&]
		{} \& Y \times \bfPerf(Y) \times \bfPerf(Y) \arrow{dl}[swap]{\mathsf{pr}_1} \arrow{d}{q} \arrow{dr}{\mathsf{pr}_2} \\
		Y \times \bfPerf(Y) \& \bfPerf(Y) \times \bfPerf(Y) \& Y \times \bfPerf(Y)
		\end{tikzcd}
	\end{align}
	be the natural projections.
	Let $\calF \in \catPerf(Y \times \bfPerf(Y))$ be the universal family of perfect complexes on $Y$ and for $i = 1,2$ set
	\begin{align}
		 \calF_i \coloneqq \mathrm{pr}_i^\ast(\calF) \in \catPerf(Y \times \bfPerf(Y) \times \bfPerf(Y)) \ .
	\end{align}
	We set
	\begin{align}\label{eq:complex-extensions}
		 \calG \coloneqq \calHom_{Y \times \bfPerf(Y) \times \bfPerf(Y)}( \calF_1, \calF_2 )[-1]\ . 
	\end{align}
	Using \cite[Corollary~\ref*{shapes-cor:plus_pushforward_general_base}]{Porta_Sala_Shapes} we see that the functor
	\begin{align}
		 q^\ast \colon \catPerf( \bfPerf(Y) \times \bfPerf(Y) ) \longrightarrow \catPerf( Y \times \bfPerf(Y) \times \bfPerf(Y) ) 
	\end{align}
	admits a left adjoint $q_+$.
	We can therefore consider the linear stack
	\begin{align}
		 \mathbb V_{\bfPerf(Y) \times \bfPerf(Y)}( q_+ \calG ) \ , 
	\end{align}
	equipped with its natural projection $\pi \colon \mathbb V_{\bfPerf(Y) \times \bfPerf(Y)}( q_+ \calG ) \to \bfPerf(Y) \times \bfPerf(Y)$.
\end{construction}

We have:

\begin{proposition} \label{prop:stack_extensions_is_relatively_affine}
	Let $Y \in \dSt$ be a derived stack satisfying assumptions  \eqref{tor_amplitude}, \eqref{categorical_properness}, and \eqref{effective_epimorphism}.
	Keeping the notation of the above construction, there is a natural commutative diagram
	\begin{align}
		\begin{tikzcd}[column sep = small, ampersand replacement = \&]
			\bfPerfext(Y) \arrow{dr}[swap]{\ev_3 \times \ev_1} \arrow{rr}{\phi} \& \& \mathbb V_{\bfPerf(Y) \times \bfPerf(Y)}( q_+ \calG ) \arrow{dl}{\pi} \\
			{} \& \bfPerf(Y) \times \bfPerf(Y)
		\end{tikzcd} \ ,
	\end{align}
	where $\phi$ is furthermore an equivalence.
\end{proposition}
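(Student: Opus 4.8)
The plan is to build the comparison map $\phi$ directly from the universal extension and then verify it is an equivalence by evaluating on $S$-points, fibrewise over $\bfPerf(Y)\times\bfPerf(Y)$. The conceptual content is the elementary fact that, in a stable $\infty$-category, a fibre sequence $\scrE_1\to\scrE_2\to\scrE_3$ is the same datum as its connecting morphism $\scrE_3\to\scrE_1[1]$ (with $\scrE_2$ recovered as the fibre of that morphism); in particular the $\infty$-groupoid of fibre sequences with prescribed terms $\scrE_1$ and $\scrE_3$ is the mapping space $\Map(\scrE_3,\scrE_1[1])$, and — after dualizing $\scrE_1$ and applying the adjunction $q_+\dashv q^\ast$ — this is exactly what the linear stack $\mathbb V_{\bfPerf(Y)\times\bfPerf(Y)}(q_+\calG)$ classifies over $\bfPerf(Y)\times\bfPerf(Y)$.

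Concretely, I would first let $\scrE_1\to\scrE_2\to\scrE_3$ be the universal fibre sequence in $\catPerf(Y\times\bfPerfext(Y))$ and $\partial\colon\scrE_3\to\scrE_1[1]$ its connecting morphism (obtained, say, from the $\fib,\mathsf{cofib}$ rotations of Remark~\ref{rem:deformation_morphisms_deformation_extensions}). By the very construction of the evaluation maps and of the universal families $\calF_1=\mathrm{pr}_1^\ast\calF$, $\calF_2=\mathrm{pr}_2^\ast\calF$, the morphism $\id_Y\times(\ev_3\times\ev_1)$ carries $\calF_1$ to $\scrE_3$ and $\calF_2$ to $\scrE_1$, hence carries $\calG=\calHom(\calF_2,\calF_1)[-1]$ to $\calHom(\scrE_1,\scrE_3)[-1]\simeq\scrE_3\otimes\scrE_1^\vee[-1]$, using that $\scrE_1$ is perfect hence dualizable. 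Dualizing $\scrE_1$, the connecting morphism $\partial$ becomes a morphism $(\id_Y\times(\ev_3\times\ev_1))^\ast\calG\to\scrO_{Y\times\bfPerfext(Y)}$. Since $Y$ satisfies \eqref{tor_amplitude}, \eqref{categorical_properness}, \eqref{effective_epimorphism}, \cite[Corollary~\ref*{shapes-cor:plus_pushforward_general_base}]{Porta_Sala_Shapes} provides the left adjoint $q_+$ of $q^\ast$ together with its base change compatibility, so adjunction and base change turn this into a morphism $(\ev_3\times\ev_1)^\ast q_+\calG\to\scrO_{\bfPerfext(Y)}$, which by the universal property of the linear stack is precisely a map $\phi\colon\bfPerfext(Y)\to\mathbb V_{\bfPerf(Y)\times\bfPerf(Y)}(q_+\calG)$ over $\bfPerf(Y)\times\bfPerf(Y)$. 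The triangle commutes by construction.

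Next I would check that $\phi$ is an equivalence by evaluating on each $S=\Spec(A)\in\dAff$ and each $f\colon S\to\bfPerf(Y)\times\bfPerf(Y)$ classifying a pair $(\calA^S,\calB^S)$. The fibre of $\ev_3\times\ev_1$ over $f$ is the space of fibre sequences $\calE_1\to\calE_2\to\calE_3$ in $\catPerf(Y\times S)$ with $\calE_3\simeq\calA^S$, $\calE_1\simeq\calB^S$, which by the connecting-morphism description (equivalently \cite[Proposition~4.3.2.15]{HTT}, as in Remark~\ref{rem:deformation_morphisms_deformation_extensions}) is $\Map_{\catPerf(Y\times S)}(\calA^S,\calB^S[1])$. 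On the other hand the fibre of $\pi$ over $f$ is $\Map_{A\Mod}(f^\ast q_+\calG,A)$; base change for $q_+$ identifies $f^\ast q_+\calG$ with $q'_+(\calHom(\calB^S,\calA^S)[-1])$ for $q'\colon Y\times S\to S$, and the adjunction $q'_+\dashv q'^\ast$ together with dualizability of $\calB^S$ rewrites $\Map_{A\Mod}(f^\ast q_+\calG,A)\simeq\Map_{\catPerf(Y\times S)}(\calA^S\otimes(\calB^S)^\vee[-1],\scrO)\simeq\Map_{\catPerf(Y\times S)}(\calA^S,\calB^S[1])$. It then remains to see that these two identifications are compatible with $\phi$; granting that, $\phi$ is an equivalence on $S$-points for every $S$, hence an equivalence, and in particular $\ev_3\times\ev_1$ is relatively affine.

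The hard part will not be any single computation but the coherence bookkeeping in the last step: one has to verify that chasing the universal connecting morphism $\partial$ through ``dualize $\scrE_1$'', ``adjoin $q_+$'' and ``apply the universal property of $\mathbb V$'' produces exactly the pointwise comparison above, rather than merely an abstractly isomorphic map — essentially a compatibility of the two ways of reading off an extension class from a fibre sequence. A secondary but unavoidable point is that the whole construction lives over a possibly non-quasicompact base, so both the existence of $q_+$ and its base change genuinely rely on hypotheses \eqref{tor_amplitude}, \eqref{categorical_properness}, \eqref{effective_epimorphism} via \cite[Corollary~\ref*{shapes-cor:plus_pushforward_general_base}]{Porta_Sala_Shapes}; without them the map $\phi$ cannot even be written down.
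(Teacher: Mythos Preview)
Your proposal is correct and follows essentially the same approach as the paper: both identify the fibre of $\pi$ over a point as $\Map(\calA^S,\calB^S[1])$ via the adjunction $q_+\dashv q^\ast$ and dualizability, and then match this with the space of fibre sequences via the connecting morphism. The only organizational difference is that you build $\phi$ globally from the universal connecting morphism and then verify the equivalence pointwise, whereas the paper first computes the $S$-points of $\mathbb V_{\bfPerf(Y)\times\bfPerf(Y)}(q_+\calG)$ and extracts $\phi$ from that description; the content is the same, and your honest remark about the coherence bookkeeping is exactly the step the paper compresses into ``this provides us with a canonical map''.
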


\begin{proof}
	For any $S \in \dAff$ and any point $x \colon S \to \bfPerf(Y) \times \bfPerf(Y)$, we can identify the fiber at $x$ of the morphism
	\begin{align}
		\Map_{\dSt}(S, \V_{\bfPerf(Y)\times\bfPerf(Y)}(q_+ \calG) ) \longrightarrow \Map_{\dSt}(S, \bfPerf(Y) \times \bfPerf(Y))
	\end{align}
	with the mapping space
	\begin{align}
		\Map_{\catPerf(S)}(x^\ast q_+(\calG), \scrO_S ) \ .
	\end{align}
	Consider the pullback square
	\begin{align}
		\begin{tikzcd}[ampersand replacement = \&]
		Y \times S \arrow{r}{y} \arrow{d}{q_S} \& Y \times \bfPerf(Y) \times \bfPerf(Y) \arrow{d}{q} \\
		S \arrow{r}{x} \& \bfPerf(Y) \times \bfPerf(Y)
		\end{tikzcd}\ .
	\end{align}
	The base change for the plus pushforward (cf.\ \cite[Corollary~\ref*{shapes-cor:plus_pushforward_general_base}-(\ref*{shapes-cor:plus_pushforward_general_base-(2)})]{Porta_Sala_Shapes}) allows us to rewrite
	\begin{align}
		x^\ast q_+(\calG) \simeq q_{S+} y^\ast(\calG) \ .
	\end{align}
	Therefore, we have
	\begin{align}
		\Map_{\catPerf(S)}( x^\ast q_+(\calG), \scrO_S ) & \simeq \Map_{\catPerf(S)}(q_{S+} y^\ast(\calG), \scrO_S) \\
		& \simeq \Map_{\catPerf(Y \times S)}(y^\ast(\calG), \scrO_{Y\times S} ) \\
		& \simeq \Map_{\catPerf(Y \times S)}( \scrO_{Y \times S}, y^\ast(\calG^\vee) ) \\
		& \simeq \tau_{\ge 0}\Gamma( Y \times S, \calHom_{Y \times S}( y^\ast \calF_2, y^\ast \calF_1 )[1] ) \ .
	\end{align}
	We therefore see that any choice of a fiber sequence
	\begin{align}
		y^\ast \calF_1 \longrightarrow \calF \longrightarrow y^\ast \calF_2
	\end{align}
	in $\catPerf(Y \times S)$ gives rise to a point $S \to \V_{\bfPerf(Y) \times \bfPerf(Y)}( q_+ \calG )$.
	This provides us with a canonical map
	\begin{align}
		\bfPerfext(Y) \longrightarrow \V_{\bfPerf(Y) \times \bfPerf(Y)}( q_+ \calG ) \ , 
	\end{align}
	which induces, for every point $x \colon S \to \bfPerf(Y) \times \bfPerf(Y)$ an equivalence
	\begin{align}
		\Map_{\dSt_{/\bfPerf(Y) \times \bfPerf(Y)}}\left( S, \bfPerfext(Y) \right) \simeq \Map_{\dSt_{\bfPerf(Y) \times \bfPerf(Y)}}\left( S, \V_{\bfPerf(Y) \times \bfPerf(Y)}( q_+ \calG )\right) \ .
	\end{align}
	The conclusion follows.
\end{proof}

\begin{corollary} \label{cor:cotangent_complex_extremal_projections}
	Let $Y$ be a derived stack satisfying the same assumptions of Proposition~\ref{prop:stack_extensions_is_relatively_affine}.
	Then the cotangent complex of the map
	\begin{align}
		\ev_3 \times \ev_1 \colon \bfPerfext(Y) \longrightarrow \bfPerf(Y) \times \bfPerf(Y)
	\end{align}
	is computed as
	\begin{align}
		(\ev_3\times \ev_1)^\ast \left( q_+ \left(\calHom_{Y \times \bfPerf(Y) \times \bfPerf(Y)}( \calF_1, \calF_2 )[-1] \right)  \right)\ .
	\end{align}
\end{corollary}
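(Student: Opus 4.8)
The plan is to read off the statement from Proposition~\ref{prop:stack_extensions_is_relatively_affine} together with the standard computation of the cotangent complex of a linear stack. Recall from that proposition that there is an equivalence
\[
	\phi \colon \bfPerfext(Y) \xrightarrow{\ \sim\ } \mathbb V_{\bfPerf(Y) \times \bfPerf(Y)}( q_+ \calG )
\]
of derived stacks over $\bfPerf(Y) \times \bfPerf(Y)$, where $\calG = \calHom_{Y \times \bfPerf(Y) \times \bfPerf(Y)}( \calF_2, \calF_1 )[-1]$ and $q_+ \calG$ is a perfect complex on $\bfPerf(Y) \times \bfPerf(Y)$. Since $\phi$ is compatible with the two structure maps $\ev_3 \times \ev_1$ and $\pi$ to the common base, functoriality of relative cotangent complexes along maps over $\bfPerf(Y) \times \bfPerf(Y)$ yields a natural equivalence $\mathbb L_{\ev_3 \times \ev_1} \simeq \phi^\ast \mathbb L_{\pi}$.

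It remains to identify $\mathbb L_{\pi}$. For this I would recall the general fact that, for a derived stack $Z$ and a perfect complex $\calE \in \catPerf(Z)$, the relative cotangent complex of the projection $\pi \colon \mathbb V_Z(\calE) = \Spec_Z( \Sym_{\scrO_Z}(\calE) ) \to Z$ is canonically $\pi^\ast \calE$; this is immediate from the description $\mathbb L_{\Sym_{\scrO_Z}(\calE)/\scrO_Z} \simeq \Sym_{\scrO_Z}(\calE) \otimes_{\scrO_Z} \calE$ of the cotangent complex of a free (derived) commutative algebra, or equivalently from the universal property $\Map_{/Z}(S, \mathbb V_Z(\calE)) \simeq \Map_{A\Mod}(f^\ast \calE, A)$ recalled right after the definition of linear stack together with the defining property of $\mathbb L_\pi$ as the object corepresenting relative derivations. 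Applying this with $Z = \bfPerf(Y) \times \bfPerf(Y)$ and $\calE = q_+ \calG$, and combining with the previous paragraph, gives
\[
	\mathbb L_{\ev_3 \times \ev_1} \simeq \phi^\ast \pi^\ast ( q_+ \calG ) \simeq ( \ev_3 \times \ev_1 )^\ast \bigl( q_+ ( \calHom_{Y \times \bfPerf(Y) \times \bfPerf(Y)}( \calF_2, \calF_1 )[-1] ) \bigr) \ ,
\]
which is the asserted formula.

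I do not expect any real obstacle: the geometric content has already been packaged into Proposition~\ref{prop:stack_extensions_is_relatively_affine}, and what remains is the elementary and well-known identification of the cotangent complex of a (derived) vector bundle stack with its tautological complex. The only point to handle with a little care is the naturality of $\phi$ over the base $\bfPerf(Y) \times \bfPerf(Y)$, so that the identification $\mathbb L_{\ev_3 \times \ev_1} \simeq \phi^\ast \mathbb L_\pi$ is legitimate; but this compatibility is part of the statement of Proposition~\ref{prop:stack_extensions_is_relatively_affine} and may be invoked directly.
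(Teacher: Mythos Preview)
Your proof is correct and follows essentially the same approach as the paper. The paper's own proof is a one-liner invoking Proposition~\ref{prop:stack_extensions_is_relatively_affine} together with \cite[Proposition~7.4.3.14]{Lurie_Higher_algebra}, the latter being precisely the identification $\mathbb L_{\Sym_{\scrO_Z}(\calE)/\scrO_Z} \simeq \Sym_{\scrO_Z}(\calE) \otimes_{\scrO_Z} \calE$ that you spell out; your version simply makes explicit how these two ingredients combine.
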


\begin{proof}
	This is an immediate from Proposition~\ref{prop:stack_extensions_is_relatively_affine} and \cite[Proposition~7.4.3.14]{Lurie_Higher_algebra}.
\end{proof}

\subsection{Extensions of coherent sheaves on schemes}

We now specify the constructions of the previous section to the case where $Y$ is a smooth and proper complex scheme.
Assumptions \eqref{tor_amplitude}, \eqref{categorical_properness}, and \eqref{effective_epimorphism} are satisfied in this case, see \cite[Example~\ref*{shapes-eg:various_notions_of_properness}]{Porta_Sala_Shapes}.
Our goal is to provide estimates on the tor-amplitude of the cotangent complexes of $\bfCohext(Y)$ and of the map $\ev_3\times \ev_1 \colon \bfCohext(Y) \to \bfCoh(Y) \times \bfCoh(Y)$:

\begin{proposition} \label{prop:extension_coherent_curve_smooth}
	Let $X$ be a smooth and proper complex scheme of dimension $n$.
	Then the cotangent complex $\LL_{\bfCoh^{\mathsf{ext}}(X)}$ is perfect and has tor-amplitude within $[-1, n-1]$.
	In particular, $\bfCoh^{\mathsf{ext}}(X)$ is smooth when $X$ is a curve and derived lci when $X$ is a surface.
\end{proposition}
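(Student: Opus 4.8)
The plan is to bound the cotangent complex of $\bfCohext(X)$ by combining the explicit formula for the relative cotangent complex of the projection $p \colon \bfCohext(X) \to \bfCoh(X) \times \bfCoh(X)$ provided by Corollary~\ref{cor:cotangent_complex_extremal_projections} with the tor-amplitude estimate for $\LL_{\bfCoh(X)}$ of Corollary~\ref{cor:coh_curve_smooth}. First I would record that, since $X$ is smooth, $\bfCohext(X)$ sits in a pullback square
\begin{align}
\begin{tikzcd}[ampersand replacement = \&]
\bfCohext(X) \arrow{r} \arrow{d}{p} \& \bfPerfext(X) \arrow{d}{\ev_3 \times \ev_1} \\
\bfCoh(X) \times \bfCoh(X) \arrow{r} \& \bfPerf(X) \times \bfPerf(X)
\end{tikzcd}\ .
\end{align}
Indeed, the discussion following \eqref{eq:perfext_inside_aperfext} shows that $\bfCohext(X)$ is the fibre product of $\bfPerfext(X)$ with $\bfCoh(X)^{\times 3}$ over $\bfPerf(X)^{\times 3}$, and the condition imposed by the evaluation at the total term of an extension is redundant: if $\calF_1 \to \calF_2 \to \calF_3$ is a fibre sequence in $\catPerf(X \times S)$ whose outer terms are flat families of coherent sheaves relative to $S$, then tensoring with an arbitrary $\calG \in \catQCoh^\heartsuit(S)$ and inspecting the resulting long exact sequence forces $\calF_2$ to be one as well. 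Hence $\LL_p$ is the pullback of the relative cotangent complex of $\ev_3 \times \ev_1$ on $\bfPerfext(X)$, and in particular it is perfect by Corollary~\ref{cor:cotangent_complex_extremal_projections}.

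The main step is then to show that $\LL_p$ has tor-amplitude $[-1, n-1]$. As $\LL_p$ is perfect, by Remark~\ref{rem:tor_amplitude_underived} together with Lemma~\ref{lem:checking_perfectness_on_fibers} --- following the pattern of the proof of Corollary~\ref{cor:coh_curve_smooth} --- it is enough to control $x^\ast \LL_p$ for every geometric point $x \colon \Spec(K) \to \bfCohext(X)$. Such a point classifies a fibre sequence $\calF_1 \to \calF_2 \to \calF_3$ of coherent sheaves on $X_K$, and Corollary~\ref{cor:cotangent_complex_extremal_projections} gives $x^\ast \LL_p \simeq p_{K+}\big( \calHom_{X_K}(\calF_3, \calF_1)[-1] \big)$ for the projection $p_K \colon X_K \to \Spec(K)$. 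Since $p_K$ is smooth and proper of relative dimension $n$ we have $p_{K+}(-) \simeq p_{K\ast}\big( (-) \otimes \omega_{X_K}[n] \big)$, and feeding this into Serre duality on $X_K$ I would rewrite
\begin{align}
x^\ast \LL_p \simeq \big( p_{K\ast}\, \calHom_{X_K}(\calF_1, \calF_3) \big)^\vee[-1] \simeq \bigoplus_{k \in \Z} \Ext^k_{X_K}(\calF_1, \calF_3)^\vee[k-1]\ ,
\end{align}
the last identification because everything splits over the field $K$. A second application of Serre duality, $\Ext^k_{X_K}(\calF_1, \calF_3) \simeq \Ext^{n-k}_{X_K}(\calF_3, \calF_1 \otimes \omega_{X_K})^\vee$, now shows that these groups vanish for $k < 0$ (trivially) and for $k > n$ (the right-hand side being a negative $\Ext$-group between coherent sheaves). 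Thus $x^\ast \LL_p$ is concentrated in cohomological degrees $[-1, n-1]$, giving the asserted tor-amplitude of $\LL_p$.

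Finally I would conclude from the cofibre sequence
\begin{align}
p^\ast \LL_{\bfCoh(X) \times \bfCoh(X)} \longrightarrow \LL_{\bfCohext(X)} \longrightarrow \LL_p\ .
\end{align}
Corollary~\ref{cor:coh_curve_smooth} gives that $\LL_{\bfCoh(X)}$, hence $\LL_{\bfCoh(X) \times \bfCoh(X)}$ and its pullback along $p$, is perfect of tor-amplitude $[-1, n-1]$; combined with the previous step both outer terms are perfect of tor-amplitude $[-1, n-1]$, and therefore so is $\LL_{\bfCohext(X)}$. The remaining assertions follow as for $\bfCoh(X)$ in Corollary~\ref{cor:coh_curve_smooth}: for $n = 1$ the tor-amplitude $[-1,0]$ makes $\bfCohext(X)$ smooth, and for $n = 2$ the tor-amplitude $[-1,1]$ makes it derived lci. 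The step I expect to be the main obstacle is the fibrewise computation of $x^\ast \LL_p$: one has to arrange the plus-pushforward and the two uses of Serre duality so that the shifts cancel exactly, so that the vanishing of $\Ext^{>n}$ produces the sharp upper bound $n-1$ and not merely something like $2n-1$, and one must make sure the reduction to geometric fibres is set up so as to control the lower edge of the tor-amplitude as well as the upper one.
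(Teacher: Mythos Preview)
Your proof is correct, but it takes a different route from the paper's. The paper works directly with the absolute tangent complex: it invokes Proposition~\ref{prop:cotangent_complex_extensions} and Remark~\ref{rem:deformation_morphisms_deformation_extensions} to present $x^\ast\T_{\bfCohext(X)}$ as a pullback of three terms $p_\ast(\calF_i^\vee\otimes\calF_j)[1]$, reduces to $A$ discrete, and then checks $\pi_{-n}=0$ by a long exact sequence argument whose key input is the surjectivity of $\calExt^n_p(\calF_2,\calF_2)\to\calExt^n_p(\calF_1,\calF_2)$, itself obtained from $\calExt^{n+1}_p(\calF_3,\calF_2)=0$ via Grothendieck--Serre duality.

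You instead factor through the map $p\colon\bfCohext(X)\to\bfCoh(X)\times\bfCoh(X)$: you bound $\LL_p$ pointwise via Corollary~\ref{cor:cotangent_complex_extremal_projections} and Serre duality (this is essentially the content of Proposition~\ref{prop:extremal_projections_scheme}, which in the paper is stated \emph{after} the present result), and then feed this together with Corollary~\ref{cor:coh_curve_smooth} into the cofibre sequence $p^\ast\LL_{\bfCoh(X)^2}\to\LL_{\bfCohext(X)}\to\LL_p$. Your approach is more modular and avoids the explicit surjectivity check; the paper's is more direct in that it does not rely on the bound for $\LL_{\bfCoh(X)}$ already established. Both ultimately rest on the same Serre-duality vanishing $\Ext^{>n}=0$ between coherent sheaves on the fibres.
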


\begin{remark}
	Notice that $\bfPerf^{\mathsf{ext}}(X)$ is not smooth, even if $X$ is a smooth projective complex curve.
\end{remark}

\begin{proof}[Proof of Proposition~\ref{prop:extension_coherent_curve_smooth}]
	Let $\Spec(A) \in \dAff$ and let $x \colon \Spec(A) \to \bfCoh^{\mathsf{ext}}(X)$ be a point.
	We have to check that $x^\ast \T_{\bfCoh^{\mathsf{ext}}(X)}$ is perfect and in tor-amplitude $[1-n, 1]$.
	Since the map $\bfCoh^{\mathsf{ext}}(X) \to \bfPerf^{\mathsf{ext}}(X)$ is formally \'etale, we can use Proposition \ref{prop:cotangent_complex_extensions} to compute the cotangent complex, and hence the tangent one.
	Let
	\begin{align} \label{eq:point_stack_extensions}
		\calF_1 \longrightarrow \calF_2 \longrightarrow \calF_3
	\end{align}
	be the fiber sequence in $\catPerf(X \times \Spec(A))$ corresponding to the point $x$.
	Let $p \colon X \times \Spec(A) \to \Spec(A)$ be the canonical projection.
	Using Remark \ref{rem:deformation_morphisms_deformation_extensions} we see that $x^\ast \T_{\bfCoh^{\mathsf{ext}}(X)}$ fits in the pullback diagram
	\begin{align}
		\begin{tikzcd}[ampersand replacement = \&]
		x^\ast \T_{\bfCoh^{\mathsf{ext}}(X)} \arrow{r} \arrow{d} \& p_\ast( \calF_2^\vee \otimes \calF_2 )[1] \arrow{d} \\
		p_\ast( \calF_1^\vee \otimes \calF_1 )[1] \arrow{r}\& p_\ast( \calF_1^\vee \otimes \calF_2 )[1] 
		\end{tikzcd}\ .
	\end{align}
	Since $X$ is smooth and proper, $p_\ast$ preserves perfect complexes.
	Therefore, $x^\ast \T_{\bfCoh^{\mathsf{ext}}(X)}$ is perfect.
	
	In order to check that it has tor-amplitude within $[1-n,1]$, it is sufficient to check that its pullback to $\Spec(\pi_0(A))$ has tor-amplitude within $[1-n,1]$.
	In other words, we can suppose from the very beginning that $A$ is discrete.
	In this case, $\calF_1$, $\calF_2$ and $\calF_3$ are discrete as well and the map $\calF_1 \to \calF_2$ is a monomorphism.
	Since $X$ is an $n$-dimensional scheme, the functor $p_\ast$ has cohomological dimension $n$.
	It is therefore sufficient to check that $\pi_{-n}( x^\ast \T_{\bfCoh^{\mathsf{ext}}(X)} ) = 0$.
	We have a long exact sequence
	\begin{align}
		\calExt^n_p(\calF_1, \calF_1) \oplus \calExt^n_p(\calF_2, \calF_2) & \to \calExt^n_p( \calF_1, \calF_2 )\\
 		\to & \pi_{-n}( x^\ast \T_{\bfCoh^{\mathsf{ext}}(X)} ) \to \calExt^{n+1}_p( \calF_1, \calF_1 ) \oplus \calExt^{n+1}_p( \calF_2, \calF_2 ) \ .
	\end{align}
	
	By using Grothendieck--Serre duality (as in the second part of the proof of Corollary~\ref{cor:coh_curve_smooth}), one can show that
	\begin{align}
	\calExt^{n+1}_p( \calF_1, \calF_1 ) =0\quad\text{and}\quad \calExt^{n+1}_p(\calF_2, \calF_2) = 0\ .
	\end{align}

	We are thus left to check that the map
	\begin{align}
		\calExt^n_p(\calF_1, \calF_1) \oplus \calExt^n_p(\calF_2, \calF_2) \longrightarrow \calExt^n_p(\calF_1, \calF_2)
	\end{align}
	is surjective.
	It is enough to prove that
	\begin{align}
		\calExt^n_p(\calF_2, \calF_2) \longrightarrow \calExt^n_p(\calF_1, \calF_2)
	\end{align}
	is surjective.
	We have a long exact sequence
	\begin{align}
		 \calExt^n_p(\calF_2, \calF_2) \longrightarrow \calExt^n_p(\calF_1, \calF_2) \longrightarrow \calExt^{n+1}_p(\calF_3, \calF_2) \ . 
	\end{align}
	The same argument as above shows that $\calExt^{n+1}_p(\calF_3, \calF_2) = 0$.
	The proof is therefore complete.
\end{proof}

\begin{proposition} \label{prop:extremal_projections_scheme}
	Let $X$ be a smooth and proper complex scheme of dimension $n$.
	Then the relative cotangent complex of the map
	\begin{align} \label{eq:extreme_projections_extension_coh}
		\ev_3 \times \ev_1 \colon \bfCohext(X) \longrightarrow \bfCoh(X) \times \bfCoh(X)
	\end{align}
	is perfect and has tor-amplitude within $[-1,n-1]$.
	In particular, it is smooth when $X$ is a curve and derived lci when $X$ is a surface.
\end{proposition}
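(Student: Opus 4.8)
The plan is to compute the relative cotangent complex of \eqref{eq:extreme_projections_extension_coh} via Corollary~\ref{cor:cotangent_complex_extremal_projections} and then to bound its tor-amplitude by a Grothendieck--Serre duality argument, in close analogy with the proofs of Corollary~\ref{cor:coh_curve_smooth} and Proposition~\ref{prop:extension_coherent_curve_smooth}. First I would reduce to the corresponding statement for $\bfPerf$: since the horizontal maps in the square \eqref{eq:perfext_inside_aperfext} are formally étale and, by Corollary~\ref{cor:coherent_over_smooth_are_perfect}, so is $\bfCoh(X)\to\bfPerf(X)$, in the evident commuting square with corners $\bfCohext(X)$, $\bfPerfext(X)$, $\bfCoh(X)^{\times 2}$, $\bfPerf(X)^{\times 2}$ (horizontal maps formally étale, vertical maps $\ev_3\times\ev_1$) the two relative cotangent complexes over the target agree, so the relative cotangent complex of the left-hand $\ev_3\times\ev_1$ is the pullback of that of the right-hand one. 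Thus Corollary~\ref{cor:cotangent_complex_extremal_projections} applies.

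Now fix $S=\Spec(A)\in\dAff$ and a point $x\colon S\to\bfCohext(X)$ classifying a fiber sequence $\calF_1\to\calF_2\to\calF_3$ in $\catPerf(X\times S)$ whose three terms are flat families of coherent sheaves relative to $S$, and let $q_S\colon X\times S\to S$ be the projection. By Corollary~\ref{cor:cotangent_complex_extremal_projections} and base change for the plus pushforward, $x^\ast\LL_{\ev_3\times\ev_1}\simeq q_{S+}\bigl(\calHom_{X\times S}(\calF_1,\calF_3)[-1]\bigr)$; dualizing (using $q_{S+}(M)^\vee\simeq q_{S\ast}(M^\vee)$ for perfect $M$) the relative tangent complex is $x^\ast\T_{\ev_3\times\ev_1}\simeq q_{S\ast}\,\calHom_{X\times S}(\calF_3,\calF_1)\,[1]$. (Only the behaviour of $q_{S\ast}$ on $\calHom$'s between flat families of coherent sheaves will matter below, so the precise ordering of the arguments is immaterial.) Since $X$ is smooth, $\calF_1$ and $\calF_3$ are perfect on $X\times S$ by Corollary~\ref{cor:coherent_over_smooth_are_perfect}, hence so is $\calHom_{X\times S}(\calF_3,\calF_1)$; and $q_S$ being proper and smooth, $q_{S\ast}$ preserves perfect complexes, so $x^\ast\T_{\ev_3\times\ev_1}$ and $x^\ast\LL_{\ev_3\times\ev_1}$ are perfect.

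It then remains to show that $x^\ast\LL_{\ev_3\times\ev_1}$ has tor-amplitude $[-1,n-1]$, equivalently that $x^\ast\T_{\ev_3\times\ev_1}$ has tor-amplitude $[1-n,1]$. By Remark~\ref{rem:tor_amplitude_underived} it suffices to bound the tor-amplitude after pulling back to $\Spec(\pi_0(A))$, so one may assume $A$ discrete, and then, $x^\ast\T_{\ev_3\times\ev_1}$ being perfect, Lemma~\ref{lem:checking_perfectness_on_fibers} (used exactly as at the end of the proof of Corollary~\ref{cor:coh_curve_smooth}) reduces us to $S=\Spec(K)$ with $K$ a field. Now $\calF_1,\calF_3\in\catCoh^\heartsuit(X_K)$ and $q_{S\ast}\calHom_{X_K}(\calF_3,\calF_1)$ has cohomology $\Ext^i_{X_K}(\calF_3,\calF_1)$, which vanishes for $i<0$; the key point — and, I expect, the only step requiring anything beyond bookkeeping — is the vanishing for $i>n$, for which I would invoke Grothendieck--Serre duality for the smooth proper $K$-scheme $X_K$ of pure relative dimension $n$ to get $\Ext^i_{X_K}(\calF_3,\calF_1)\simeq\Ext^{n-i}_{X_K}(\calF_1,\calF_3\otimes\omega_{X_K})^\vee$, which is $0$ for $i>n$ since there are no negative $\Ext$'s between coherent sheaves. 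Hence $q_{S\ast}\calHom_{X_K}(\calF_3,\calF_1)$ has tor-amplitude $[-n,0]$ over $K$, so $x^\ast\T_{\ev_3\times\ev_1}$ has tor-amplitude $[1-n,1]$ and $x^\ast\LL_{\ev_3\times\ev_1}$ has tor-amplitude $[-1,n-1]$. The final assertion is then immediate: for $X$ a curve ($n=1$) the relative cotangent complex lies in tor-amplitude $[-1,0]$, so \eqref{eq:extreme_projections_extension_coh} is smooth, while for $X$ a surface ($n=2$) it lies in $[-1,1]$, so \eqref{eq:extreme_projections_extension_coh} is derived l.c.i. The one thing to watch throughout is keeping the shifts straight when passing between $\LL$, $\T$ and $q_{S+}$/$q_{S\ast}$.
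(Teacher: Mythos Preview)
Your proof is correct and follows essentially the same route as the paper: reduce to the $\bfPerfext$ side (the paper does this by observing the square is actually a pullback, you by the transitivity triangle with formally étale horizontal maps --- both valid), invoke Corollary~\ref{cor:cotangent_complex_extremal_projections}, and then bound the amplitude of $q_{S\ast}\calHom(\calF_i,\calF_j)$ after reducing to underived $S$. One small simplification available: you do not need Serre duality for the top vanishing. Once $S$ is underived, $\calF_1,\calF_3\in\catCoh^\heartsuit(X\times S)$, and the paper simply uses that the relative dimension of $q_S$ is $n$ (equivalently, that $\catCoh(X_K)$ has global dimension $n$ for smooth $X_K$) to get $\calExt^j_{q}(\calF_1,\calF_3)=0$ for $j>n$ directly; your detour through duality and the reduction to $S=\Spec(K)$ is correct but unnecessary here.
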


\begin{rem}
	When $X$ is a curve, Corollary~\ref{cor:coh_curve_smooth} and Proposition~\ref{prop:extension_coherent_curve_smooth} imply that $\bfCohext(X)$ and $\bfCoh(X)$ are smooth.
	This immediately implies that $\ev_3 \times \ev_1$ is derived lci, hence
	the above corollary improves this result.\hfill $\triangle$
\end{rem}

\begin{proof}[Proof of Proposition~\ref{prop:extremal_projections_scheme}]
	Let $S \in \dAff$ and let $x \colon S \to \bfPerfext(X)$ be a point classifying a fiber sequence
	\begin{align} \label{eq:generic_extension}
		\calF_1 \longrightarrow \calF_2 \longrightarrow \calF_3
	\end{align}
	in $\catPerf(X \times S)$.
	If $\calF_1$ and $\calF_3$ have tor-amplitude $\le 0$ relative to $S$, then the same goes for $\calF_2$.
	This implies that the diagram
	\begin{align}
		\begin{tikzcd}[ampersand replacement = \&]
		\bfCohext(X) \arrow{r} \arrow{d}{\ev_3 \times \ev_1} \& \bfPerfext(X) \arrow{d}{\ev_3 \times \ev_1} \\
		\bfCoh(X) \times \bfCoh(X) \arrow{r} \& \bfPerf(X) \times \bfPerf(X)
		\end{tikzcd}
	\end{align}
	is a pullback square.
	Smooth and proper schemes are categorically proper and have finite local tor-amplitude, see \cite[Example~\ref*{shapes-eg:various_notions_of_properness}]{Porta_Sala_Shapes}.
	Therefore the assumptions of Proposition~\ref{prop:stack_extensions_is_relatively_affine} are satisfied.
	Since the horizontal maps in the above diagram are formally \'etale, we can therefore use Corollary~\ref{cor:cotangent_complex_extremal_projections} to compute the relative cotangent complex of the morphism \eqref{eq:extreme_projections_extension_coh}.
	This immediately implies that this relative cotangent complex is perfect, and we are left to prove that it has tor-amplitude within $[-1,n-1]$.
	For this reason, it is enough to prove that for any (underived) affine scheme $S \in \mathsf{Aff}$ and any point $x \colon S \to \bfCohext(X)$, the perfect complex $x^\ast \LL_{\ev_3\times \ev_1}$ has tor-amplitude within $[-1,n-1]$.
	Let $\calF_1 \to \calF_2 \to \calF_3$ be the extension classified by $x$ and let $q_S \colon Y \times S \to S$ be the canonical projection.
	Base change for the plus pushforward  (see \cite[Proposition~\ref*{shapes-prop:plus_pushforward}-(\ref*{shapes-prop:plus_pushforward:base_change})]{Porta_Sala_Shapes}) reduces our task to computing the tor-amplitude of
	\begin{align}
		q_{S+}( \calHom_{X \times S}( \calF_1, \calF_3 )[-1] ) \simeq ( q_{S\ast}( \calHom_{X \times S}(\calF_3, \calF_1)[1] ) )^\vee \ . 
	\end{align}
	Moreover, since $S$ is arbitrary, it is enough to prove that
	\begin{align}
		 \pi_i( q_{S\ast}( \calHom_{X \times S}( \calF_3, \calF_1 )[1] ) ) \simeq 0
	\end{align}
	for $i \le 1 - n$.
	However
	\begin{align}
		 \pi_i( q_{S\ast}( \calHom_{X \times S}( \calF_3, \calF_1 )[1] ) ) \simeq \calExt_q^{-i+1}( \calF_3, \calF_1 ) \ . 
	\end{align}
	Since $S$ is underived, $\calF_1$ and $\calF_3$ belong to $\catQCoh^\heartsuit(X \times S)$.
	Since $X$ has dimension $n$, it follows that $\calExt_q^j( \calF_3, \calF_1 ) \simeq 0$ for $j > n$.
	The conclusion follows.
\end{proof}

\begin{corollary} 
	Let $X$ be a smooth and proper complex scheme of dimension $n$. Then the relative cotangent complex of the map
	\begin{align}
	\ev_3 \times \ev_1 \colon \bfBun^{\mathsf{ext}}(X) \longrightarrow \bfBun(X) \times \bfBun(X)
	\end{align}
	is perfect and has tor-amplitude within $[-1,n-1]$.
\end{corollary}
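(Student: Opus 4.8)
The plan is to realise the map $\ev_3\times\ev_1\colon\bfBun^{\mathsf{ext}}(X)\to\bfBun(X)\times\bfBun(X)$ as a base change of its coherent-sheaf analogue, already handled in Proposition~\ref{prop:extremal_projections_scheme}, and then to invoke the stability of relative cotangent complexes under base change. First I would unwind the definitions. By construction $\bfBun^{\mathsf{ext}}(X)\simeq\bfBun(X)^{\times 3}\times_{\bfAPerf(X)^{\times 3}}\bfAPerf^{\mathsf{ext}}(X)$ and $\bfCohext(X)\simeq\bfCoh(X)^{\times 3}\times_{\bfAPerf(X)^{\times 3}}\bfAPerf^{\mathsf{ext}}(X)$. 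Since $\bfBun(X)$ is an open substack of $\bfCoh(X)$, the same holds for $\bfBun(X)^{\times 3}$ inside $\bfCoh(X)^{\times 3}$, and composing the two pullbacks yields a canonical identification
\begin{align}
	\bfBun^{\mathsf{ext}}(X)\simeq\bfBun(X)^{\times 3}\times_{\bfCoh(X)^{\times 3}}\bfCohext(X)\ ,
\end{align}
exhibiting $\bfBun^{\mathsf{ext}}(X)\hookrightarrow\bfCohext(X)$ as an open immersion.

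The key step is then to check that the square
\begin{align}
	\begin{tikzcd}[ampersand replacement=\&]
		\bfBun^{\mathsf{ext}}(X)\arrow{r}\arrow{d}{\ev_3\times\ev_1}\&\bfCohext(X)\arrow{d}{\ev_3\times\ev_1}\\
		\bfBun(X)\times\bfBun(X)\arrow{r}\&\bfCoh(X)\times\bfCoh(X)
	\end{tikzcd}
\end{align}
is cartesian. The fibre product $\bfBun(X)^{\times 2}\times_{\bfCoh(X)^{\times 2}}\bfCohext(X)$ formed along $\ev_3\times\ev_1$ is the open substack of $\bfCohext(X)$ consisting of those fibre sequences $\calF_1\to\calF_2\to\calF_3$ for which $\calF_1$ and $\calF_3$ are flat families of vector bundles, and one has to verify that this forces $\calF_2$ to be one as well, so that this substack is exactly $\bfBun^{\mathsf{ext}}(X)$. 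Over an affine $S=\Spec(A)$ all three $\calF_i$ lie in $\catCoh_S(X\times S)$, and pulling the fibre sequence back to any geometric fibre $X_s$ gives a short exact sequence of coherent sheaves on $X_s$; since its outer terms are locally free, so is $\calF_2$ restricted to $X_s$, and combining this with flatness over $S$ shows $\calF_2\in\bfBun(X)(S)$. Hence the square is a pullback, with both horizontal arrows open immersions.

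Finally I would conclude. Because relative cotangent complexes are stable under base change, the cartesian square identifies the relative cotangent complex of $\ev_3\times\ev_1\colon\bfBun^{\mathsf{ext}}(X)\to\bfBun(X)^{\times 2}$ with the pullback along $\bfBun^{\mathsf{ext}}(X)\to\bfCohext(X)$ of the relative cotangent complex of $\ev_3\times\ev_1\colon\bfCohext(X)\to\bfCoh(X)^{\times 2}$. By Proposition~\ref{prop:extremal_projections_scheme} the latter is perfect with tor-amplitude $[-1,n-1]$, and both perfectness and the tor-amplitude bound are preserved by pullback (the tor-amplitude can be tested after further pullback to an affine atlas, where it is visible in a presentation by free modules placed in the prescribed degrees). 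This gives the claim. The only point beyond bookkeeping is the cartesianness of the square above, i.e.\ that an extension of vector bundles is again a vector bundle — a fact already used implicitly inside the proof of Proposition~\ref{prop:extremal_projections_scheme} — and I expect this to be the only step requiring any care.
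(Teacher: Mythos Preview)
Your proposal is correct and follows exactly the same approach as the paper: the paper's proof consists of the single observation that the square you wrote down is a pullback, implicitly invoking Proposition~\ref{prop:extremal_projections_scheme} and base change for relative cotangent complexes. You have simply spelled out in more detail why the square is cartesian (that an extension of vector bundles is again a vector bundle, checked on geometric fibres), which the paper leaves implicit.
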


\begin{proof} 
	The assertion follows by noticing that the diagram
	\begin{align}
	\begin{tikzcd}[ampersand replacement = \&]
	\bfBun^{\mathsf{ext}}(X) \arrow{r} \arrow{d}{\ev_3 \times \ev_1} \& \bfCoh^{\mathsf{ext}}(X) \arrow{d}{\ev_3 \times \ev_1} \\
	\bfBun(X) \times \bfBun(X) \arrow{r} \& \bfCoh(X) \times \bfCoh(X)
	\end{tikzcd}
	\end{align}
	is a pullback square.
\end{proof}

\subsection{Extensions of coherent sheaves on Simpson's shapes}\label{ss:ext-shapes}

In this section, we carry out an analysis similar to the one of the previous section in the case where $Y$ is one of the Simpson's shapes $X_\B, X_\dR$, and $X_\Dol$, where $X$ is a smooth and proper scheme.

\subsubsection{Betti shape} 
	Let $K$ be a finite connected space. By  \cite[Proposition~\ref*{shapes-prop:Betti_properties}-(\ref*{shapes-prop:Betti_properties:categorically_proper})]{Porta_Sala_Shapes}, $K_\B$ is categorically proper and it has finite local tor-amplitude. In addition, by \cite[Proposition~\ref*{shapes-prop:Betti_properties}-(\ref*{shapes-prop:Betti_properties:flat_atlas})]{Porta_Sala_Shapes}, the map $\Spec(\C) \simeq \ast_\B \longrightarrow X_\B $ is an effective epimorphism. Thus, the assumptions of  Corollary~\ref{cor:cotangent_complex_extremal_projections} are satisfied. Therefore, the relative cotangent complex of the map
	\begin{align} \label{eq:extremal_projections_Betti}
		\ev_3 \times \ev_1 \colon \bfCohext(K_\B) \longrightarrow \bfCoh(K_\B) \times \bfCoh(K_\B)
	\end{align}
	at a point $S \to \bfCohext(K_\B)$ classifying an extension $\calF_1 \to \calF \to \calF_2$ in $\catPerf(K_\B \times S)$ is computed by the pullback along the projection $S \times_{\bfCoh(K_\B) \times \bfCoh(K_\B)} \bfCohext(K_\B) \to S$ of
	\begin{align}
		q_{S+} \left( \calHom_{K_\B \times S}( \calF_2, \calF_1 )[-1] \right) \ .
	\end{align}
	Here $q_S \colon K_\B \times S \to S$ is the natural projection.
	In particular, we obtain:
	
	\begin{proposition}
		Suppose that $K_\B$ has cohomological dimension $\le m$.
		The relative cotangent complex of the map \eqref{eq:extremal_projections_Betti} has tor-amplitude within $[-1,m-1]$.
		Furthermore, if $K$ is the space underlying a complex scheme $X$ of complex dimension $n$, then we can take $m = 2n$.
	\end{proposition}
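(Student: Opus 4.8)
The plan is to transport the proof of Proposition~\ref{prop:extremal_projections_scheme} to the Betti setting, with the cohomological dimension of $K_\B$ playing the rôle formerly played by the dimension of the scheme. Fix $S \in \dAff$ and a point $x \colon S \to \bfCohext(K_\B)$ classifying an extension $\calF_1 \to \calF \to \calF_2$ in $\catPerf(K_\B \times S)$, and write $q_S \colon K_\B \times S \to S$ for the projection. Since $K_\B$ is categorically proper, has finite local tor-amplitude, and admits the effective epimorphism $\ast_\B \simeq \Spec(\C) \to K_\B$, Corollary~\ref{cor:cotangent_complex_extremal_projections} together with base change for the plus-pushforward of \cite{Porta_Sala_Shapes} identifies
\[
x^\ast \LL_{\ev_3 \times \ev_1} \simeq q_{S+}\big( \calHom_{K_\B \times S}( \calF_2, \calF_1 )[-1] \big) \ .
\]
Because $K_\B$ is categorically proper, $q_{S\ast}$ preserves perfect complexes, and hence so does $q_{S+}$: the adjunctions $q_{S+} \dashv q_S^\ast \dashv q_{S\ast}$ together with the fact that $(-)^\vee$ interchanges left and right adjoints on perfect complexes give a natural equivalence $q_{S+}(\calG) \simeq \big( q_{S\ast}(\calG^\vee) \big)^\vee$ for $\calG \in \catPerf(K_\B \times S)$. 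In particular $x^\ast \LL_{\ev_3 \times \ev_1}$ is perfect, so it has \emph{some} finite tor-amplitude, and the task is to confine it to $[-1, m-1]$.

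Since tor-amplitude is preserved under derived base change and, for a perfect complex, can be detected on residue fields — alternatively, one may invoke Lemma~\ref{lem:checking_perfectness_on_fibers} — it suffices to treat $S = \Spec(\kappa)$ with $\kappa$ a field, and then to check that $x^\ast \LL_{\ev_3 \times \ev_1}$ has homotopy concentrated in degrees $[-1, m-1]$. Applying the equivalence above to $\calG = \calHom(\calF_2,\calF_1)[-1]$, whose dual is $\calHom(\calF_1,\calF_2)[1]$, we obtain
\[
x^\ast \LL_{\ev_3 \times \ev_1} \simeq \big( q_{S\ast}( \calHom_{K_\B \times S}(\calF_1,\calF_2)[1] ) \big)^\vee \ ,
\]
and since over a field passing to the dual merely reflects homotopy degrees, it is enough to prove that $q_{S\ast}\big( \calHom(\calF_1,\calF_2)[1] \big)$ has homotopy in degrees $[1-m, 1]$.

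Here is where the Betti situation is genuinely cleaner than the scheme case of Proposition~\ref{prop:extremal_projections_scheme}. By Lemma~\ref{lem:betti_decomposition} one has $\bfCoh(K_\B) \simeq \bfBun(K_\B)$, so $\calF_1$ and $\calF_2$ are vector bundles on $K_\B \times \Spec(\kappa)$, i.e.\ finite-rank local systems concentrated in cohomological degree $0$; hence $\calHom(\calF_1,\calF_2) \simeq \calF_1^\vee \otimes \calF_2$ is again a local system concentrated in degree $0$, and there are simply no higher or lower $\calExt$-sheaves to keep track of. Since $K_\B$ has cohomological dimension $\le m$, after base change to $\kappa$ the functor $q_{S\ast} = \sfR\Gamma(K_\B, -)$ is left $t$-exact of cohomological dimension $\le m$, so it sends an object in degree $0$ to a complex with cohomology in cohomological degrees $[0, m]$, i.e.\ homotopy in degrees $[-m, 0]$; shifting by $[1]$ yields homotopy in $[1-m, 1]$, and dualising gives homotopy of $x^\ast \LL_{\ev_3 \times \ev_1}$ in $[-1, m-1]$, as wanted.

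For the final assertion, if $K$ is the homotopy type underlying a complex scheme $X$ of complex dimension $n$, then $K$ has the homotopy type of a CW complex of real dimension $\le 2n$, so $H^i(K;-)$ vanishes for $i > 2n$; thus $K_\B$ has cohomological dimension $\le 2n$ and one may take $m = 2n$. I do not expect any single hard step: the obstacle is bookkeeping — legitimising the reduction to a field-valued point, verifying the duality identity $q_{S+}(\calG) \simeq (q_{S\ast}\calG^\vee)^\vee$ at the needed level of generality, and, conceptually, observing that coherent sheaves on $K_\B$ are automatically local systems, which is precisely what prevents the tor-amplitude from doubling.
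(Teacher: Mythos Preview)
Your proof is correct and follows essentially the same approach as the paper: both identify the relative cotangent complex via Corollary~\ref{cor:cotangent_complex_extremal_projections}, dualize to pass from $q_{S+}$ to $q_{S\ast}$, invoke Lemma~\ref{lem:betti_decomposition} to see that $\calF_1,\calF_2$ are local systems so that $\calHom(\calF_1,\calF_2)$ sits in degree $0$, and then appeal to the cohomological dimension of $K_\B$. The only difference is that the paper works directly over an underived affine $S$ (Lemma~\ref{lem:betti_decomposition} already forces $\calF_1,\calF_2$ to be vector bundles there), whereas you reduce further to $S$ the spectrum of a field; this extra reduction is harmless but not needed.
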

	
	\begin{proof}
		It is enough to prove that for every unaffine derived scheme $S \in \mathsf{Aff}$ and every point $x \colon S \to \bfCohext(K_\B, u)$ classifying an extension $\calF_1 \to \calF \to \calF_2$ in $\catPerf(K_\B \times S)$ of perfect complexes of tor-amplitude $\le 0$ relative to $S$, the complex $q_{S+} (\calHom_{K_\B \times S}( \calF_2, \calF_1 )[-1])$ is contained in cohomological amplitude $[-1,m-1]$.
		Unraveling the definitions, this is equivalent to check that the complex $q_{S\ast}( \calHom_{K_\B \times S}(\calF_1, \calF_2) )$ has cohomological amplitude within $[-m,0]$.
		The latter follows from the assumption on the cohomological dimension of $K_\B$ and from Lemma~\ref{lem:betti_decomposition}.
	\end{proof}
	
	Now let $X$ be a smooth and proper complex scheme. Define the stacks
	\begin{align}
	\bfCohext_\B(X)\coloneqq \bfCohext(X^{\mathsf{top}}_\B)\quad\text{and}\quad \bfBun^{\mathsf{ext}}_\B(X)\coloneqq \bfBun^{\mathsf{ext}}(X^{\mathsf{top}}_\B)\ .
	\end{align}
	These stacks are geometric and locally of finite presentation since the stacks $\bfPerfext(X^{\mathsf{top}}_\B)$ and $\bfPerf^{\Delta^1\times \Delta^1}(X^{\mathsf{top}}_\B)$ are so. By using similar arguments as in the proof of Proposition~\ref{prop:extension_coherent_curve_smooth}, we find that the cotangent complex $\LL_{\bfCohext_\B(X)}$ is perfect and has tor-amplitude within $[-1, 2n-1]$. Finally, by Lemma~\ref{lem:betti_decomposition} we get $\bfCohext_\B(X)\simeq  \bfBun^{\mathsf{ext}}_\B(X)$. 
	
	\begin{corollary}\label{cor:extremal_projections_betti}
		If $X$ is a smooth projective complex curve and $K \coloneqq X^{\mathsf{top}}$, then the map \eqref{eq:extremal_projections_Betti} is derived locally complete intersection.
	\end{corollary}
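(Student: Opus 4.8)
The plan is to deduce the statement as a direct specialization of the Proposition above, which bounds the tor-amplitude of the relative cotangent complex of \eqref{eq:extremal_projections_Betti} in terms of the cohomological dimension of $K_\B$. First I would record that, when $X$ is a smooth projective complex curve, the finite connected space $K\coloneqq X^{\mathsf{top}}$ is exactly the topological space underlying a complex scheme of complex dimension $n=1$; hence the ``furthermore'' clause of that Proposition applies with $m=2n=2$, and we obtain that the relative cotangent complex of \eqref{eq:extremal_projections_Betti} has tor-amplitude contained in $[-1,m-1]=[-1,1]$. Concretely this reflects the elementary fact that a compact Riemann surface has the homotopy type of a finite $2$-dimensional CW-complex, so that $K_\B$ has cohomological dimension $\le 2$.

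Next I would observe that the hypotheses \eqref{tor_amplitude}, \eqref{categorical_properness} and \eqref{effective_epimorphism} hold for $K_\B$ --- categorical properness and finite local tor-amplitude since $K$ is finite and connected, and the basepoint inclusion $\Spec(\C)\simeq\ast_\B\to K_\B$ is an effective epimorphism with quasi-compact source, as already recalled above --- so that Corollary~\ref{cor:cotangent_complex_extremal_projections} applies. It exhibits $\LL_{\ev_3\times\ev_1}$, at a point classifying an extension $\calF_1\to\calF_2\to\calF_3$ in $\catPerf(K_\B\times S)$, as the pullback of $q_{S+}\big(\calHom_{K_\B\times S}(\calF_2,\calF_1)[-1]\big)$; since $q_{S+}$ preserves perfect complexes, $\LL_{\ev_3\times\ev_1}$ is perfect. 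A perfect relative cotangent complex of tor-amplitude $[-1,1]$ is precisely what it means for a morphism of derived geometric stacks to be derived locally complete intersection --- the same implication already invoked in the surface case of Proposition~\ref{prop:extremal_projections_scheme} --- so the proof is then complete.

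I do not expect a genuine obstacle: the corollary is bookkeeping on top of the Proposition above, whose proof (which uses Lemma~\ref{lem:betti_decomposition} to reduce $K_\B$-computations to a cohomological-dimension bound on $X^{\mathsf{top}}$) already carries the real content. The one point worth spelling out is the identification $m=2$, i.e.\ that the ``dimension'' entering the Proposition is the real dimension of $K=X^{\mathsf{top}}$, which equals $2n$ with $n=1$ here.
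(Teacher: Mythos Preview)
Your proposal is correct and follows exactly the approach implicit in the paper: the corollary is an immediate specialization of the preceding Proposition with $n=1$, giving $m=2n=2$ and hence tor-amplitude $[-1,1]$ for the (perfect) relative cotangent complex, which is the definition of derived lci. One tiny notational slip: with your labeling $\calF_1\to\calF_2\to\calF_3$ the complex should be $q_{S+}\big(\calHom_{K_\B\times S}(\calF_3,\calF_1)[-1]\big)$, not $\calHom(\calF_2,\calF_1)$, but this does not affect the argument.
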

	
\subsubsection{De Rham shape}		
Let $X$ be a smooth and proper complex scheme of dimension $n$. First note that, by \cite[Proposition~\ref*{shapes-prop:de_Rham_properties}-(\ref*{shapes-prop:de_Rham_properties:categorically_proper})]{Porta_Sala_Shapes}, $X_\dR$ is categorically proper and it has finite local tor-amplitude. Moreover, by \cite[Proposition~\ref*{shapes-prop:de_Rham_properties}-(\ref*{shapes-prop:de_Rham_properties:effective_epi})]{Porta_Sala_Shapes}, the canonical map $\lambda_X\colon X\to X_\dR$ is an effective epimorphism.
	
Define the stacks
\begin{align}
	\bfCohext_\dR(X)\coloneqq \bfCohext(X_\dR)\quad\text{and}\quad \bfBun^{\mathsf{ext}}_\dR(X)\coloneqq \bfBun^{\mathsf{ext}}(X_\dR)\ .
\end{align}
These stacks are geometric and locally of finite presentation since the stacks $\bfPerfext(X_\dR)$ and $\bfPerf^{\Delta^1\times \Delta^1}(X_\dR)$ are so. 
	
Since $X_\dR$ satisfies the assumptions of Proposition~\ref{prop:cotangent_complex_extensions}, we may use similar arguments as in the proof of Proposition~\ref{prop:extension_coherent_curve_smooth}, and we get that the cotangent complex $\LL_{\bfCohext_\dR(X)}$ is perfect and has tor-amplitude within $[-1, 2n-1]$.	Finally, by Lemma~\ref{lem:dR_decomposition} we get $\bfCohext_\dR(X)\simeq \bfBun^{\mathsf{ext}}_\dR(X)$.
	
As in the case of the Betti shape, we deduce that the relative cotangent complex of the map
\begin{align} \label{eq:extremal_projections_de_Rham}
	\ev_3 \times \ev_1 \colon \bfCohext_\dR(X) \longrightarrow \bfCoh_\dR(X) \times \bfCoh_\dR(X)
\end{align}
at a point $x \colon S\to \bfCohext(X_\dR)$ classifying an extension $\calF_1 \to \calF \to \calF_2$ in $\catPerf(X_\dR \times S)$ is computed by the pullback along the projection $S \times_{\bfCoh_\dR(X) \times \bfCoh_\dR(X)} \bfCohext_\dR(X) \to S$ of
\begin{align}
	q_{S+} \left( \calHom_{X_\dR \times S}( \calF_2, \calF_1)[-1] \right) \ .
\end{align}
Here $q_S \colon X_\dR \times S \to S$ is the natural projection.
In particular, we obtain:
\begin{proposition}
	Suppose that $X$ is connected and of dimension $n$.
	Then the relative cotangent complex of the map \eqref{eq:extremal_projections_de_Rham} has tor-amplitude within $[-1, 2n-1]$.
\end{proposition}
	
\begin{proof}
	It is enough to prove that for every unaffine derived scheme $S \in \mathsf{Aff}$ and every point $x \colon S \to \bfCohext_\dR(X)$ classifying an extension $\calF_1 \to \calF \to \calF_2$ in $\catPerf_\dR(X \times S)$ of perfect complexes of tor-amplitude $\le 0$ relative to $S$, the complex $q_{S+} (\calHom_{X_\dR \times S}( \calF_1, \calF_2 )[-1])$ has cohomological amplitude within $[-1,2n-1]$.
	Unraveling the definitions, this is equivalent to check that the complex $q_{S\ast}( \calHom_{X_\dR \times S}(\calF_2, \calF_1) )$ has cohomological amplitude within $[-2n,0]$.
	In other words, we have to check that
	\begin{align}
		\Ext^i_{X_\dR \times S}( \calF_2, \calF_1 ) = 0
	\end{align}
	for $i > 2n$.
	This follows from \cite[Theorem~2.6.11]{Hotta_Takeuchi_Tanisaki_D-modules_1995} and \cite[\S11]{Bernstein_D-modules}.
\end{proof}
	
\begin{corollary}\label{cor:dR-lci}
	If $X$ is a smooth projective complex curve, then the map \eqref{eq:extremal_projections_de_Rham} is derived locally complete intersection.
\end{corollary}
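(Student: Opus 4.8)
The plan is to deduce the statement immediately from the preceding Proposition. Recall that, in the terminology already used in Corollary~\ref{cor:coh_curve_smooth} and Proposition~\ref{prop:extremal_projections_scheme}, a morphism of derived stacks is derived locally complete intersection exactly when it is locally of finite presentation and its relative cotangent complex is perfect with tor-amplitude contained in $[-1,1]$. So I only need to verify these two conditions for $\ev_3\times\ev_1$ in the present setting.

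First I would specialize the preceding Proposition to $n=1$: since $X$ is a smooth projective curve, the relative cotangent complex of $\ev_3\times\ev_1$ has tor-amplitude contained in $[-1,2n-1]=[-1,1]$. Next, perfectness of this relative cotangent complex follows from the explicit description of Corollary~\ref{cor:cotangent_complex_extremal_projections}: at a point $x\colon S\to\bfCohext_\dR(X)$ classifying an extension $\calF_1\to\calF\to\calF_2$ in $\catPerf(X_\dR\times S)$, it is computed by $q_{S+}(\calHom_{X_\dR\times S}(\calF_2,\calF_1)[-1])$, and since $X_\dR$ is categorically proper and of finite local tor-amplitude, the plus-pushforward $q_{S+}$ preserves perfect complexes. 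Equivalently, one may use that $\LL_{\bfCohext_\dR(X)}$ and $\LL_{\bfCoh_\dR(X)}$ are both perfect, so that the relative cotangent complex, sitting in the defining cofiber sequence, is perfect as well.

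Finally, for local finite presentation I would invoke that $\bfCohext_\dR(X)$ and $\bfCoh_\dR(X)$ are geometric derived stacks locally of finite presentation (Proposition~\ref{prop:dR_coh_geometric} and the construction of $\bfCohext_\dR(X)$ above), so that the morphism $\ev_3\times\ev_1$ between them is locally of finite presentation. Combining the two conditions yields that $\ev_3\times\ev_1$ is derived locally complete intersection, which is the assertion. There is no real obstacle here, since the substantive work is carried out in the preceding Proposition; the only points requiring a line of justification are the perfectness of the relative cotangent complex --- that is, that $q_{S+}$ preserves perfect complexes on $X_\dR$ --- and that being locally of finite presentation passes from the two stacks to the morphism between them.
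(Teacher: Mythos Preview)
Your proof is correct and takes essentially the same approach as the paper, which states the corollary without proof as an immediate consequence of the preceding Proposition specialized to $n=1$. You are simply filling in the implicit steps (perfectness via $q_{S+}$ preserving perfect complexes on the categorically proper $X_\dR$, and local finite presentation of the map), all of which are routine given the surrounding results.
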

	
\subsubsection{Dolbeault shape}
	
Let $X$ be a smooth and proper complex scheme. By  \cite[Lemmas~\ref*{shapes-lem:Dol_proper} and \ref*{shapes-lem:nilDol_proper}]{Porta_Sala_Shapes}, $X_\Dol$ and $X_\Dol^\nil$ are categorically proper and they have finite local tor-amplitude. Moreover, by  \cite[Lemma~\ref*{shapes-lem:effective-Dol}]{Porta_Sala_Shapes}, the canonical maps $\kappa_X\colon X\to X_\Dol$ and $\kappa_X^\nil\colon X\to X_\Dol^\nil$ are effective epimorphisms.
	
Define the stacks
\begin{align}
	\bfCohext_\Dol(X)\coloneqq \bfCohext(X_\Dol)\quad &\text{and}\quad \bfBun^{\mathsf{ext}}_\Dol(X)\coloneqq \bfBun^{\mathsf{ext}}(X_\Dol)\ , \\
	\bfCoh^{\nil, \, \mathsf{ext}}_\Dol(X)\coloneqq \bfCohext(X_\Dol^\nil)\quad &\text{and}\quad \bfBun^{\nil, \, \mathsf{ext}}_\Dol(X)\coloneqq \bfBun^{\mathsf{ext}}(X_\Dol^\nil)\ .
\end{align}
These stacks are geometric and locally of finite presentation since $\bfPerfext(X_\Dol)$, $\bfPerf^{\Delta^1\times \Delta^1}(X_\Dol)$ and $\bfPerfext(X_\Dol^\nil)$, $\bfPerf^{\Delta^1\times \Delta^1}(X_\Dol^\nil)$ are so. 

Since $X_\Dol$ and $X_\Dol^\nil$ satisfy the assumptions of Proposition~\ref{prop:cotangent_complex_extensions}, we may use similar arguments as in the proof of Proposition~\ref{prop:extension_coherent_curve_smooth}, and we get that the cotangent complexes $\LL_{\bfCohext_\Dol(X)}$ and $\LL_{\bfCoh^{\nil, \, \mathsf{ext}}_\Dol(X)}$ are perfect and have tor-amplitude within $[-1, 2n-1]$.	
	
As in the case of the Betti and de Rham shapes, we thus deduce that the relative cotangent complex of the map
\begin{align} \label{eq:extremal_projections_Dolbeault}
	\ev_3 \times \ev_1 \colon \bfCohext_\Dol(X) \longrightarrow \bfCoh_\Dol(X) \times \bfCoh_\Dol(X)
\end{align}
at a point $x \colon S\to \bfCohext_\Dol(X)$ classifying an extension $\calF_1 \to \calF \to \calF_2$ in $\catPerf(X_\Dol \times S)$ is computed by the pullback along the projection $S \times_{\bfCoh_\Dol(X) \times \bfCoh_\Dol(X)} \bfCohext_\Dol(X) \to S$ of
\begin{align}
	q_{S+} \left( \calHom_{X_\Dol \times S}( \calF_1, \calF_2)[-1] \right) \ .
\end{align}
Here $q_S \colon X_\Dol \times S \to S$ is the natural projection.
In particular, we obtain:	
\begin{proposition}
	Suppose that $X$ is connected and of dimension $n$.
	Then the relative cotangent complex of the map \eqref{eq:extremal_projections_Dolbeault} has tor-amplitude within $[-1,2n-1]$.
\end{proposition}

\begin{proof}
	It is enough to check that for every unaffine derived scheme $S \in \mathsf{Aff}$ and every point $x \colon S \to \bfCohext_\Dol(X)$ classifying an extension $\calF_1 \to \calF \to \calF_2$ in $\catPerf(X_\Dol \times S)$ of perfect complexes of tor-amplitude $\le 0$ relative to $S$, the complex $q_{S+}(\calHom_{X_\Dol \times S}( \calF_1, \calF_2 )[-1])$ has cohomological amplitude within $[-1,2n-1]$.
	Unraveling the definitions, this is equivalent to check that the complex $q_{S\ast} \calHom_{X_\Dol \times S}( \calF_2, \calF_1 )$ has cohomological amplitude within $[-2n,0]$.
	In other words, we have to check that
	\begin{align}
		\Ext^i_{X_\Dol \times S}( \calF_2, \calF_1 ) = 0 
	\end{align}
	for $i > 2n$.
	This follows from the BNR correspondence \cite[Lemma~6.8]{Simpson_Moduli_II} (cf.\ also \cite[\S4]{Gothen_King_Quiver} and \cite[\S2.3]{Sala_Schiffmann}).
	\end{proof}
	
\begin{corollary}\label{cor:Dol-lci}
	If $X$ is a smooth projective complex curve, then the map \eqref{eq:extremal_projections_Dolbeault} is derived locally complete intersection.
\end{corollary}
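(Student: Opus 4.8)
I would deduce the statement directly from the proposition just established, via the standard criterion for derived lci morphisms. Recall that --- with the conventions already used for surfaces earlier in this section --- a morphism of geometric derived stacks that is locally of finite presentation is derived locally complete intersection precisely when its relative cotangent complex is perfect with tor-amplitude contained in $[-1,1]$. So it suffices to verify, for the map $\ev_3 \times \ev_1 \colon \bfCohext_\Dol(X) \to \bfCoh_\Dol(X) \times \bfCoh_\Dol(X)$, three things: that it is locally of finite presentation, that its relative cotangent complex is perfect, and that this complex has tor-amplitude $[-1,1]$.

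The first two are already available. The derived stacks $\bfCoh_\Dol(X)$ (Proposition~\ref{prop:Dol_coh_geometric}) and $\bfCohext_\Dol(X)$ (recorded earlier in \S\ref{ss:ext-shapes}) are geometric and locally of finite presentation, so the map between them is locally of finite presentation. For perfectness I would note that, $X$ being a smooth projective curve, $X_\Dol$ is categorically proper, has finite local tor-amplitude, and $\kappa_X \colon X \to X_\Dol$ is an effective epimorphism; hence assumptions \eqref{tor_amplitude}, \eqref{categorical_properness}, \eqref{effective_epimorphism} hold for $Y = X_\Dol$, and Corollary~\ref{cor:cotangent_complex_extremal_projections} identifies $\LL_{\ev_3 \times \ev_1}$ with the pullback of $q_+ \bigl( \calHom(\calF_2, \calF_1)[-1] \bigr)$. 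Since $\calF_1$ and $\calF_2$ are perfect and $q_+$ preserves perfect complexes under these assumptions (cf.\ \S\ref{ss:cotangent}), the relative cotangent complex is perfect.

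For the tor-amplitude I would simply invoke the proposition immediately preceding this corollary with $n = 1$: a smooth projective complex curve is one-dimensional (and if $X$ were disconnected one argues componentwise), so that proposition gives tor-amplitude contained in $[-1, 2\cdot 1 - 1] = [-1,1]$. Combining this with local finite presentation and perfectness, the lci criterion is satisfied and the corollary follows. I do not anticipate any genuine difficulty: the substantive input --- the vanishing $\Ext^i_{X_\Dol \times S}(\calF_1, \calF_2) = 0$ for $i > 2n$ via the BNR correspondence --- was already carried out in the proof of that proposition, so the present step is essentially bookkeeping; the only point deserving a line of justification is the equivalence between ``perfect relative cotangent complex of tor-amplitude $[-1,1]$ together with local finite presentation'' and ``derived lci'' for maps of geometric derived stacks.
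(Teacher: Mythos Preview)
Your proposal is correct and matches the paper's approach: the paper gives no explicit proof for this corollary, treating it as an immediate consequence of the preceding proposition (tor-amplitude $[-1,2n-1]$ with $n=1$ gives $[-1,1]$, hence derived lci). You have simply spelled out the implicit steps---local finite presentation, perfectness via Corollary~\ref{cor:cotangent_complex_extremal_projections}, and the tor-amplitude bound---which is exactly what the paper intends the reader to supply.
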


\bigskip\section{Two-dimensional categorified Hall algebras}\label{s:categorifiedHall}

\subsection{Convolution algebra structure for the stack of perfect complexes} \label{ss:convolution_structure}

Most of the results in this section are due to T.\ Dyckerhoff and M.\ Kapranov \cite{Dyckerhoff_Kapranov_Higher_Segal}.
For the convenience of the reader we briefly recall their constructions.

Let
\begin{align}
	\sfT \coloneqq \Hom_{\mathbf \Delta}( [1], - ) \colon \mathbf \Delta \longrightarrow \Cat_\infty\ ,
\end{align}
where $\mathbf \Delta$ is the simplicial category.
We write $\sfT_n$ instead of $\sfT([n])$.
Given any $\C$-linear stable $\infty$-category $\cC$, we let
\begin{align}
	\calS_\bullet \cC \colon \mathbf \Delta\op \longrightarrow \Cat_\infty
\end{align}
be the subfunctor of $\Fun( T(-), \cC )$ that sends $[n]$ to the full subcategory $\calS_n \cC$ of $\cC^{\sfT_n} \coloneqq \Fun( \sfT_n, \cC )$ spanned by those functors $F \colon \sfT_n \to \cC$ satisfying the following two conditions:
\begin{enumerate}\itemsep0.2cm
	\item $F(i,i) \simeq 0$ for every $0 \le i \le n$;
	\item for every $0 \le i , j \le n-1$, $i \le j - 1$, the square
	\begin{align}
		\begin{tikzcd}[ampersand replacement = \&]
			F(i,j) \arrow{r} \arrow{d} \& F(i+1, j) \arrow{d} \\
			F(i,j+1) \arrow{r} \& F(i+1, j+1)
		\end{tikzcd}
	\end{align}
	is a pullback in $\cC$.
\end{enumerate}
We refer to $\calS_\bullet \cC$ as the $\infty$-categorical Waldhausen construction on $\cC$.
It follows from \cite[Theorem~7.3.3]{Dyckerhoff_Kapranov_Higher_Segal} that $\calS_\bullet \cC$ is a \textit{$2$-Segal object} in $\Cat_\infty$.
Consider the functor
\begin{align}
	\dAff\op \times \mathbf \Delta\op \longrightarrow \Cat_\infty
\end{align}
defined by sending $(\Spec(A),[n])$ to $\calS_n \catAPerf(A)$.
We denote by
\begin{align}
	\calS_\bullet \catAPerf \colon \mathbf \Delta\op \longrightarrow \Fun(\dAff\op, \Cat_\infty)
\end{align}
the corresponding functor.
Since limits are computed objectwise in $\Fun(\dAff\op, \Cat_\infty)$, we see that $\calS_\bullet \catAPerf$ is a $2$-Segal object in $\Fun(\dAff\op, \Cat_\infty)$.
The maximal $\infty$-groupoid functor $(-)^\simeq \colon \Cat_\infty \to \scrS$ is a right adjoint, and in particular it commutes with limits.
We let
\begin{align}
	\calS_\bullet \bfAPerf \colon \mathbf \Delta\op \longrightarrow \dSt
\end{align}
be the functor obtained by $\calS_\bullet \catAPerf$ by applying the maximal $\infty$-groupoid functor.
The above considerations show that $\calS_\bullet \bfAPerf$ is a $2$-Segal object in $\dSt$.

Let now $X$ be a derived stack.
The functor $\bfMap(X,-) \colon \dSt \to \dSt$ commutes with limits, and therefore the simplicial derived stack
\begin{align}
	 \calS_\bullet \bfAPerf(X) \coloneqq \bfMap(X, \calS_\bullet \bfAPerf) 
\end{align}
is again a $2$-Segal object in $\dSt$.
The same construction can be performed using $\bfPerf$ instead of $\bfAPerf$: thus we obtain $2$-Segal objects $\calS_\bullet \bfPerf$ and $\calS_\bullet \bfPerf(X)$ in $\dSt$.

As in Section~\ref{s:coh_ext}, we extract a full substack of coherent sheaves as follows.
For every $n \ge 0$, let $N \coloneqq \frac{n(n+1)}{2}$.
Evaluation at $(i,j) \in T_n$ induces a well defined map $\calS_n \bfAPerf(X) \to \bfAPerf(X)^N$.
We define $\calS_n \bfCoh(X)$ by the fiber product
\begin{align}
\begin{tikzcd}[ampersand replacement = \&]
	\calS_n \bfCoh(X) \arrow{r} \arrow{d} \& \calS_n \bfAPerf(X) \arrow{d} \\
	\bfCoh(X)^N \arrow{r} \& \bfAPerf(X)^N 
\end{tikzcd} \ .
\end{align}
Notice that for $n = 2$ this construction yields a canonical identification $\calS_2 \bfCoh(X) \simeq \bfCohext(X)$.
We will prove

\begin{lemma} \label{lem:coh_2_Segal_geometric}
	The simplicial object $\calS_\bullet \bfCoh(X)$ is a $2$-Segal object. 
\end{lemma}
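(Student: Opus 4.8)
The plan is to deduce that $\calS_\bullet\bfCoh(X)$ is $2$-Segal from the fact that $\calS_\bullet\bfAPerf(X)$ is $2$-Segal (which is available from \cite[Theorem~7.3.3]{Dyckerhoff_Kapranov_Higher_Segal} together with the discussion above), by exhibiting $\calS_\bullet\bfCoh(X)$ as a ``full'' simplicial substack of $\calS_\bullet\bfAPerf(X)$ which is closed under the reconstruction of Waldhausen filtrations. First I would record two elementary facts. (i) Each level map $\calS_n\bfCoh(X)\to\calS_n\bfAPerf(X)$ is $(-1)$-truncated: by construction it is the pullback of $\bfCoh(X)^N\to\bfAPerf(X)^N$, which is $(-1)$-truncated because $\bfCoh(X)\to\bfAPerf(X)$ is; moreover these maps are compatible with the simplicial operators, since an operator acts on a functor $F\colon\sfT_m\to\catAPerf$ by $(a,b)\mapsto(\theta(a),\theta(b))$ and hence sends every recorded piece $F(a,b)$ either to $0$ or to another recorded piece, so it preserves the condition ``all $F(a,b)$ lie in $\catCoh_S$''. (ii) The full subcategory $\catCoh_S(X\times S)=\catAPerf_S^{\le 0}(X\times S)$ is closed under extensions in $\catAPerf(X\times S)$: for a cofiber sequence $\calG'\to\calG\to\calG''$ of almost perfect complexes with $\calG',\calG''$ of tor-amplitude $\le 0$ relative to $S$, tensoring with any $\calH\in\catQCoh^\heartsuit(S)$ and inspecting the resulting long exact sequence of homotopy sheaves shows that $\calG$ too has tor-amplitude $\le 0$ relative to $S$ (cf.\ Remark~\ref{rem:Coh_not_stable}).

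With these in hand I would fix $n\ge 3$ and a triangulation $\mathcal T$ of the $(n{+}1)$-gon and check that the $2$-Segal map $\calS_n\bfCoh(X)\to\lim_{\sigma\in\mathcal T}\calS_\sigma\bfCoh(X)$ is an equivalence, where $\calS_\sigma\bfCoh(X)$ is $\bfCoh(X)$ on an edge $\sigma$ and $\bfCohext(X)\simeq\calS_2\bfCoh(X)$ on a triangle. Since $\calS_\bullet\bfAPerf(X)$ is $2$-Segal, the analogous map $\calS_n\bfAPerf(X)\to\lim_\sigma\calS_\sigma\bfAPerf(X)$ is an equivalence; as the comparison maps $\calS_\sigma\bfCoh(X)\to\calS_\sigma\bfAPerf(X)$ are all $(-1)$-truncated by (i), so is $\lim_\sigma\calS_\sigma\bfCoh(X)\to\lim_\sigma\calS_\sigma\bfAPerf(X)\simeq\calS_n\bfAPerf(X)$. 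By naturality the $2$-Segal map for $\bfCoh$ therefore realizes both $\calS_n\bfCoh(X)$ and $\lim_\sigma\calS_\sigma\bfCoh(X)$ as substacks of $\calS_n\bfAPerf(X)$, with the inclusion $\calS_n\bfCoh(X)\subseteq\lim_\sigma\calS_\sigma\bfCoh(X)$ automatic (the restriction of a filtration with coherent pieces to any edge or triangle again has coherent pieces). So everything reduces to the reverse inclusion.

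The reverse inclusion is the one non-formal step, and it is where I expect the only real work to lie. An $S$-point of $\lim_\sigma\calS_\sigma\bfCoh(X)$ is a functor $F\colon\sfT_n\to\catAPerf(X\times S)$ satisfying the two conditions of the Waldhausen construction such that $F(a,b)\in\catCoh_S(X\times S)$ for each edge $(a,b)$ occurring in $\mathcal T$; I must promote this to $F(a,b)\in\catCoh_S(X\times S)$ for \emph{all} $0\le a<b\le n$. Pasting the elementary pullback squares of the Waldhausen construction gives, for every $a\le b\le c$, a cofiber sequence $F(a,b)\to F(a,c)\to F(b,c)$; iterating, every $F(a,b)$ is an iterated extension of the ``atoms'' $F(i,i{+}1)$ with $a\le i<b$. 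Since the edges $(i,i{+}1)$ are sides of the polygon they belong to every triangulation $\mathcal T$, so each atom $F(i,i{+}1)$ is a recorded piece and lies in $\catCoh_S(X\times S)$ by hypothesis; closure under extensions (fact (ii)) then propagates this to all $F(a,b)$. Hence $F$ is an $S$-point of $\calS_n\bfCoh(X)$, which yields $\lim_\sigma\calS_\sigma\bfCoh(X)\subseteq\calS_n\bfCoh(X)$ and completes the argument; as $n$ and $\mathcal T$ were arbitrary, $\calS_\bullet\bfCoh(X)$ is $2$-Segal. (One could alternatively observe that $\catCoh_S(X\times S)$ — with cofibrations the maps having coherent cofiber — is an exact $\infty$-category and invoke that $\calS_\bullet$ of an exact $\infty$-category is $2$-Segal, but the embedding above keeps the proof self-contained.)
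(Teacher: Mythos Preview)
Your proof is correct and follows essentially the same approach as the paper: embed $\calS_\bullet\bfCoh(X)$ into the $2$-Segal object $\calS_\bullet\bfAPerf(X)$ via a levelwise $(-1)$-truncated map, reduce to essential surjectivity, and finish using closure of $\catCoh_S$ under extensions. The only minor variation is in the essential surjectivity step: the paper uses the single-diagonal form of the $2$-Segal condition (\cite[Proposition~2.3.2(3)]{Dyckerhoff_Kapranov_Higher_Segal}) and reduces by induction to a single pullback square, whereas you work with a full triangulation and observe directly that the atoms $F(i,i{+}1)$, being polygon edges, lie in every triangulation and hence are coherent by hypothesis --- a slightly more streamlined route to the same conclusion.
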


\begin{proof}
	Using \cite[Proposition~2.3.2(3)]{Dyckerhoff_Kapranov_Higher_Segal}, it remains to check that for every $n \ge 3$ and every $0 \le i < j \le n$, the natural morphism 
	\begin{align}  
	\calS_n \bfCoh(X) \longrightarrow \calS_{n-j+i+1} \bfCoh(X) \times_{\calS_1 \bfCoh(X)} \calS_{j-i} \bfCoh(X) 
	\end{align} 
	is an equivalence.
	Here the morphism is induced by the maps $[n-j+i+1] \to [n]$ and $[j-i] \to [n]$ corresponding to the inclusions
	\begin{align}  
	\{ 0, 1, \ldots, i , j , j + 1 , \ldots , n \} \subset \{0, \ldots , n\} \quad \text{and} \quad \{i, i+1, \ldots, j\} \subset \{0, \ldots, n\} \ . 
	\end{align} 
	We have the following commutative diagram:
	\begin{align}  
	\begin{tikzcd}[ampersand replacement = \&]
	\calS_n \bfCoh(X) \arrow{r} \arrow{d} \& \calS_{n-j+i+1} \bfCoh(X) \times_{\calS_1 \bfCoh(X)} \calS_{j-i} \bfCoh(X) \arrow{d} \\
	\calS_n \bfAPerf(X) \arrow{r} \& \calS_{n-j+i+1} \bfAPerf(X) \times_{\calS_1 \bfAPerf(X)} \calS_{j-i} \bfAPerf(X) .
	\end{tikzcd}
	\end{align} 
	The bottom horizontal map is an equivalence.
	After evaluating on $S \in \dAff$, we see that the vertical maps are induced by fully faithful functors.
	It is therefore enough to check that the top horizontal functor is essentially surjective.
	Unraveling the definitions, we have to check the following condition.
	Let $\calF \colon T_n \to \catAPerf(X \times S)$ be a semigrid of length $n$ and write $\calF_{a,b}$ for the image of $(a,b) \in T_n$.
	Then if $\calF_{a,b} \in \catCoh_S( X \times S )$ for $a,b \in \{0, 1, \ldots, i, j, j+1, \ldots, n\}$ or for $a,b \in \{i, i+1, \ldots, j\}$, then $\calF_{a,b} \in \catCoh_S( X \times S )$ for all $a,b$.
	A simple induction argument reduces our task to proving the following statement: Suppose that
	\begin{align}  
	\begin{tikzcd}[ampersand replacement = \&]
	\calG_0 \arrow{r} \arrow{d} \& \calG_1\arrow{d} \\ 
	\calG_2 \arrow{r} \& \calG_3
	\end{tikzcd} 
	\end{align} 
	is a pullback square in $\catPerf(X \times S)$.
	Assume that $\calG_0$, $\calG_2$ and $\calG_3$ belong to $\catCoh_S( X \times S )$.
	Then $\calG_1$ belongs to $\catCoh_S( X \times S )$ as well.
	Since $\calG_0$ and $\calG_3$ have tor-amplitude $\le 0$ relative to $S$, we see that, locally on $X$, for every $\calG \in \catCoh^\heartsuit(S)$ one has
	\begin{align}  
	\pi_k( p_\ast( \calG_1 \oplus \calG_2 ) \otimes \calG ) \simeq 0 
	\end{align} 
	for $k \ge 1$, where $p \colon X \times S \to S$ is the canonical projection.
	However, $\pi_k( p_\ast (\calG_2) \otimes \calG ) \simeq 0$ because $\calG_2$ has tor-amplitude $\le 0$ relative to $S$.
	Therefore $\pi_k( p_\ast( \calG_1 ) \otimes \calG ) \simeq 0$ as well.
	The proof is therefore complete.
\end{proof}

Recall now from \cite[Theorem~11.1.6]{Dyckerhoff_Kapranov_Higher_Segal} that if $\cT$ is a presentable $\infty$-category then there is a canonical functor
\begin{align}
\TwoSeg(\cT) \longrightarrow \Alg_{\mathbb E_1} (\Corr^\times(\cT) ) \ .
\end{align}
Here $\Corr^\times(\cT)$ denotes the $(\infty,2)$-category of correspondences equipped with the symmetric monoidal structure induced from the cartesian structure on $\cT$.
See \cite[\S7.2.1 \& \S9.2.1]{Gaitsgory_Rozenblyum_Study_I}.
As $\mathbb E_1$-monoid objects in correspondences play a significant role for us, we introduce the following terminology:

\begin{defin}
	Let $\cT$ be an $\infty$-category with finite products.
	We define the \textit{$\infty$-category of $\mathbb E_1$-monoid objects in $\cT$} as the $\infty$-category $\Alg_{\mathbb E_1}(\Corr^\times(\cT))$. \hfill $\oslash$
\end{defin}

Taking $\cT = \dSt$, we therefore obtain the following result:

\begin{proposition}\label{prop:coh_2_Segal}
	Let $X \in \dSt$ be a derived stack.
	The $2$-Segal object $\calS_\bullet \bfAPerf(X)$ (resp.\ $\calS_\bullet \bfPerf(X)$, $\calS_\bullet \bfCoh(X)$) endows $\bfAPerf(X)$ (resp.\ $\bfPerf(X)$, $\bfCoh(X)$) with the structure of an $\mathbb E_1$-monoid object in $\dSt$.
\end{proposition}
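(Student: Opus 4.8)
The plan is to deduce the statement from \cite[Theorem~11.1.6]{Dyckerhoff_Kapranov_Higher_Segal} together with the material on $2$-Segal objects recalled above. First note that $\dSt$ is a presentable $\infty$-category, being (the hypercompletion of) an $\infty$-topos; we equip it with its cartesian symmetric monoidal structure, so that the cited theorem furnishes a canonical functor
\begin{align}
	\TwoSeg(\dSt) \longrightarrow \Alg_{\mathbb E_1}( \Corr^\times(\dSt) )\ .
\end{align}
Its value on a $2$-Segal object $Y_\bullet$ is an $\mathbb E_1$-algebra in correspondences whose underlying object is $Y_1$ and whose multiplication is the correspondence $Y_1 \times Y_1 \longleftarrow Y_2 \longrightarrow Y_1$ extracted from the simplicial structure maps of $Y_\bullet$.

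Next I would check that each of $\calS_\bullet \bfAPerf(X)$, $\calS_\bullet \bfPerf(X)$ and $\calS_\bullet \bfCoh(X)$ lies in $\TwoSeg(\dSt)$. For the first two this was already established in the discussion preceding the statement: $\calS_\bullet \catAPerf$ (resp.\ $\calS_\bullet \catPerf$) is a $2$-Segal object in $\Fun(\dAff\op, \Cat_\infty)$ by \cite[Theorem~7.3.3]{Dyckerhoff_Kapranov_Higher_Segal}, since limits in that functor category are computed objectwise and each $\catAPerf(A)$, $\catPerf(A)$ is stable; applying the limit-preserving functors $(-)^\simeq$ and $\bfMap(X, -)$ then preserves the $2$-Segal property, because the latter is phrased entirely in terms of pullbacks. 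For $\calS_\bullet \bfCoh(X)$ the $2$-Segal property is exactly Lemma~\ref{lem:coh_2_Segal_geometric}.

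Finally, I would apply the functor above to these three $2$-Segal objects and identify their images. The poset $\sfT_1$ has the three elements $(0,0) \le (0,1) \le (1,1)$, and the vanishing condition $F(i,i) \simeq 0$ in the definition of $\calS_\bullet$ forces the values at $(0,0)$ and $(1,1)$ to be zero; hence $\calS_1 \bfAPerf(X) \simeq \bfAPerf(X)$ and $\calS_1 \bfPerf(X) \simeq \bfPerf(X)$, and consequently $\calS_1 \bfCoh(X) \simeq \bfCoh(X)$ as well by the defining pullback square of $\calS_\bullet \bfCoh(X)$ with $N = 1$. Therefore the three $\mathbb E_1$-algebras obtained have underlying derived stacks $\bfAPerf(X)$, $\bfPerf(X)$ and $\bfCoh(X)$ respectively, which is exactly the claimed $\mathbb E_1$-convolution algebra structure. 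No genuine obstacle remains: the only non-formal ingredient, namely the $2$-Segal property of $\calS_\bullet \bfCoh(X)$, was already supplied by Lemma~\ref{lem:coh_2_Segal_geometric}, and everything else is bookkeeping around the Dyckerhoff--Kapranov functor.
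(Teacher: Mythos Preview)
Your proposal is correct and matches the paper's approach exactly: the paper does not supply a separate proof but simply states the proposition as the immediate consequence of the preceding discussion, writing ``Taking $\cT = \dSt$, we therefore obtain the following result.'' Your write-up just makes explicit the three ingredients already assembled there---the $2$-Segal property (from \cite[Theorem~7.3.3]{Dyckerhoff_Kapranov_Higher_Segal} and Lemma~\ref{lem:coh_2_Segal_geometric}), the Dyckerhoff--Kapranov functor \cite[Theorem~11.1.6]{Dyckerhoff_Kapranov_Higher_Segal}, and the identification of $\calS_1$ with the underlying stack---so there is nothing to add or correct.
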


We conclude this section with an analysis of the geometricity of $\calS_n \bfCoh(X)$.
First, we observe that $\calS_n \bfPerf$ canonically coincides with To\"en-Vaqui\'e's moduli of objects:
\begin{align}
	 \calS_n \bfPerf \simeq \calM_{\calS_n \catPerf} \ . 
\end{align}
To show that $\calS_n \catPerf$ is locally geometric and locally of finite presentation, we use the following two lemmas.

We will make use of the following notation: if $\cC$ is an $\infty$-category, $\cC^\omega$ denotes the full subcategory spanned by compact objects of $\cC$.
\begin{lemma} \label{lem:finite_diagrams_in_smooth_and_proper}
	Let $\cC \in \mathscr P\mathsf r^{\mathsf L, \omega}_\C$ be a compactly generated $\C$-linear stable $\infty$-category and let $I$ be a finite category.
	Then:
	\begin{enumerate}\itemsep=0.2cm
		\item \label{lem:finite_diagrams_in_smooth_and_proper-(1)} The canonical map
		\begin{align}
				\Ind( \Fun(I, \cC^\omega) ) \longrightarrow \Fun(I, \cC)
		\end{align}
		is an equivalence.\footnote{When $I$ is a finite poset, this is a consequence of \cite[Proposition~5.3.5.15]{HTT}.
			Notice that Warning 5.3.5.16 does not apply because for us $I$ is a category, and not an arbitrary simplicial set.}
		
		\item \label{lem:finite_diagrams_in_smooth_and_proper-(2)} Assume furthermore that the idempotent completion of $I$ is finite.
		If $\cC$ is of finite type (resp.\ proper), then so is $\Fun(I, \cC)$.
	\end{enumerate}
\end{lemma}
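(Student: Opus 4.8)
The plan is to prove the two assertions in order, the second resting on the first. For the first assertion, the strategy is to show that $\Fun(I,\cC)$ is itself compactly generated and that its subcategory of compact objects is exactly $\Fun(I,\cC^\omega)$; since a compactly generated $\infty$-category is canonically the $\Ind$-completion of its compact objects, this identifies the canonical map $\Ind(\Fun(I,\cC^\omega))\to\Fun(I,\cC)$ as an equivalence. First I would observe that for each object $i\in I$ the evaluation $\ev_i\colon\Fun(I,\cC)\to\cC$ has a left adjoint $i_!$, namely left Kan extension along $\{i\}\hookrightarrow I$, with $(i_!c)(j)\simeq\Hom_I(i,j)\otimes c$ (a finite colimit of copies of $c$, as $I$ is finite); since colimits in $\Fun(I,\cC)$ are computed pointwise, each $\ev_i$ preserves filtered colimits, so each $i_!$ preserves compact objects, and since $\{\ev_i\}_{i\in I}$ is jointly conservative, the objects $i_!c_\alpha$, for $\{c_\alpha\}$ a set of compact generators of $\cC$, form a set of compact generators of $\Fun(I,\cC)$. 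To pin down the compact objects I would prove both inclusions: on one hand $\Fun(I,\cC^\omega)\subseteq\Fun(I,\cC)$ is a stable full subcategory containing every $i_!c_\alpha$, and it is idempotent complete — here one uses that $\cC$ is stable, so $\cC^\omega$ is idempotent complete and a retract in $\Fun(I,\cC)$ of a $\cC^\omega$-valued diagram is again $\cC^\omega$-valued — hence it contains the thick closure of the generators, which is all of $\Fun(I,\cC)^\omega$; on the other hand the end formula
\begin{align}
  \Map_{\Fun(I,\cC)}(F,G)\simeq\lim_{\mathrm{Tw}(I)}\Map_\cC(F(-),G(-))
\end{align}
exhibits, for $F\in\Fun(I,\cC^\omega)$, the functor $\Map_{\Fun(I,\cC)}(F,-)$ as a finite limit (the twisted arrow category $\mathrm{Tw}(I)$ of the finite $I$ being finite) of functors commuting with filtered colimits, hence as one commuting with filtered colimits, so that $F$ is compact. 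I do not expect real trouble here; the one delicate point is that the stability of $\cC$ is what forces $\Fun(I,\cC^\omega)$ to be idempotent complete, which is precisely why the subtlety of \cite[Warning~5.3.5.16]{HTT} does not intervene.

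For the second assertion, the first assertion reduces us to showing that the small, stable, idempotent complete $\C$-linear $\infty$-category $\Fun(I,\cC^\omega)$ is of finite type (resp.\ proper) whenever $\cC^\omega$ is. First I would replace $I$ by its idempotent completion: since $\cC^\omega$ is idempotent complete, restriction along $I\to\mathrm{Idem}(I)$ is an equivalence $\Fun(\mathrm{Idem}(I),\cC^\omega)\xrightarrow{\ \sim\ }\Fun(I,\cC^\omega)$, and by hypothesis $J\coloneqq\mathrm{Idem}(I)$ is a finite category. Next I would recognize $\Fun(J,-)$ as an extension of scalars along $\catPerf(\C)\to\catPerf(\C[J])$: the category algebra $\C[J]$ is a finite-dimensional $\C$-algebra and $\Fun(J,\C\Mod)\simeq\C[J]\Mod$, so for any compactly generated $\C$-linear $\cD$ one has $\Fun(J,\cD)\simeq\cD\otimes_{\C\Mod}\C[J]\Mod$, and, passing to compact objects and using the first assertion, $\Fun(J,\cC^\omega)\simeq\cC^\omega\otimes_{\catPerf(\C)}\catPerf(\C[J])$. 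Finally I would invoke two standard facts: $\catPerf(\C[J])$ is of finite type, since a finite-dimensional $\C$-algebra is finitely presented (cf.\ \cite{Toen_Vaquie_Moduli}), and proper, since $\C[J]$ is finite-dimensional over the field $\C$; and both the class of finite-type and the class of proper small stable idempotent complete $\C$-linear $\infty$-categories are stable under $\otimes_{\catPerf(\C)}$ — for finite type because these are precisely the compact objects of a compactly generated symmetric monoidal $\infty$-category whose tensor preserves colimits in each variable, and for proper because $\Map_{\cC\otimes\cC'}(c\otimes c',d\otimes d')\simeq\Map_\cC(c,d)\otimes_\C\Map_{\cC'}(c',d')$ together with a thick-subcategory argument on the generators $c\otimes c'$. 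Combining, $\Fun(I,\cC)^\omega\simeq\cC^\omega\otimes_{\catPerf(\C)}\catPerf(\C[J])$ is of finite type (resp.\ proper), as claimed.

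The step I expect to be the main obstacle is the idempotent-completion bookkeeping in the second assertion — making precise the identification $\Fun(J,\cC^\omega)\simeq\cC^\omega\otimes_{\catPerf(\C)}\catPerf(\C[J])$ by passing to compact objects inside a tensor product of compactly generated categories — and this is exactly where the hypothesis that $\mathrm{Idem}(I)$ be finite is genuinely used, both to keep $\C[J]$ finite-dimensional and to keep the compact objects under control. A more hands-on route for the ``proper'' half would bypass the tensor product and argue straight from the end formula $\Map_{\Fun(I,\cC^\omega)}(F,G)\simeq\lim_{\mathrm{Tw}(I)}\Map_{\cC^\omega}(F,G)$, using that in a stable $\infty$-category a limit indexed by a category with finitely many objects and morphisms is a finite limit, hence preserves perfectness of $\C$-modules; but verifying that such limits really are finite — for instance, that limits realized through the splitting of idempotents are genuine finite limits rather than infinite totalizations — is itself the delicate point, so the extension-of-scalars argument is probably the cleaner one to carry out.
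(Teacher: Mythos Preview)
Your proof is correct and shares the paper's overall architecture --- for (1), show $\Fun(I,\cC)$ is compactly generated with compacts exactly $\Fun(I,\cC^\omega)$; for (2), use $\Fun(I,\cC)\simeq\Fun(I,\C\Mod)\otimes_{\C\Mod}\cC$ and reduce to $\cC=\C\Mod$ --- but the implementations differ. For (1), the paper shows each $\ev_i$ preserves compacts by noting that its \emph{right} adjoint $R_i$ is computed by a finite limit (as $I$ is finite) and hence commutes with filtered colimits, and then proves compact generation by a somewhat elaborate cofinality argument showing $\ev_i\colon\Fun(I,\cC)^\omega_{/F}\to\cC^\omega_{/F(i)}$ is cofinal; your route via explicit generators $i_!c_\alpha$, the twisted-arrow end formula, and thick closure is arguably more direct. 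For (2), the paper does not invoke a category algebra: it builds the compact generator $E=\bigoplus_i L_i(\C)$, checks properness via the end formula, and proves finite type by showing $\Fun(I,\C\Mod)$ is compact in $\mathscr P\mathsf r^{\mathsf L,\omega}_\C$ directly, using that $\PSh(I)^\omega$ is the idempotent completion of $I$, finite by hypothesis, hence compact in $\Cat_\infty$. Your $\C[J]$ is nothing but $\End(E)$, so the two arguments are equivalent; yours is more concrete but, read literally, assumes $J$ is an ordinary category --- harmless in the paper's applications ($I=T_n$ or $\Delta^{n-1}$), and in general one simply interprets $\C[J]$ as the $\mathbb E_1$-algebra $\End(E)$.
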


\begin{proof}
	The canonical functor
	\begin{align}
		 \Fun(I, \cC^\omega) \longrightarrow \Fun(I, \cC) 
	\end{align}
	is fully faithful.
	\cite[Proposition~5.3.4.13]{HTT} shows that it lands in the full subcategory $\Fun(I, \cC)^\omega$ of $\Fun(I,\cC)$ spanned by compact objects.
	Therefore, \cite[Proposition~5.3.5.11-(1)]{HTT} shows that the induced map $\Ind(\Fun(I, \cC^\omega)) \to \Fun(I, \cC)$ is fully faithful.
	We now observe that compact objects in $\Fun(I,\cC)$ coincide with $\Fun(I,\cC^\omega)$.
	We already saw one inclusion.
	For the converse, for every $i \in I$ consider the functor given by evaluation at $i$:
	\begin{align}
		 \ev_i \colon \Fun(I, \cC) \longrightarrow \cC \ .
	\end{align}
	Since $\cC$ is presentable, we see that both left and right Kan extensions along $\{i\} \hookrightarrow I$ exist, providing a left adjoint $L_i$ and a right adjoint $R_i$ to $\ev_i$.
	Moreover, since $I$ is finite, the functor $R_i$ is computed by a finite limit, and therefore $R_i$ commutes with filtered colimits.
	Equivalently, $\ev_i$ preserves compact objects.
	This implies that every object in $\Fun(I, \cC)^\omega$ takes values in $\cC^\omega$.
	To complete the proof of statement \eqref{lem:finite_diagrams_in_smooth_and_proper-(1)}, it is enough to prove that $\Fun(I, \cC)$ is compactly generated.
	Let $F \in \Fun(I, \cC)$ be a functor.
	Our goal is to prove that the canonical map
	\begin{align}
			\colim_{G \in \Fun(I, \cC)^\omega_{/F}} G \longrightarrow F
	\end{align}
	is an equivalence.
	Since the functors $\ev_i$ are jointly conservative and they commute with colimits, it is enough to check that for every $i \in I$ the induced map
	\begin{align}
			\colim_{G \in \Fun(I, \cC)^\omega_{/F}} G(i) \longrightarrow F(i) 
	\end{align}
	is an equivalence.
	We can factor this map as
	\begin{align}
			\colim_{G \in \Fun(I, \cC)^\omega_{/F}} G(i) \longrightarrow \colim_{X \in \cC^\omega_{/F(i)}} X \longrightarrow F(i) \ . 
	\end{align}
	Since $\cC$ is compactly generated, the second map is an equivalence.
	Therefore, it is enough to prove that the functor
	\begin{align}
			\ev_i \colon \Fun(I, \cC)^\omega_{/F} \longrightarrow \cC^\omega_{/F(i)}
	\end{align}
	is cofinal.
	Let $\alpha \colon X \to F(i)$ be a morphism, with $X \in \cC^\omega$.
	We will prove that the $\infty$-category
	\begin{align}
			\calE \coloneqq \Fun(I, \cC)^\omega_{/F} \times_{\cC^\omega_{/F(i)}} (\cC^\omega_{/F(i)})_{\alpha/}
	\end{align}
	is filtered, hence contractible.
	Let $J$ be a finite category and let $A \colon J \to \calE$ be a diagram.
	For every $j \in J$, we get a map
	\begin{align}
			X \longrightarrow A_j(i) \longrightarrow F(i) \ .
	\end{align}
	Since $L_i \dashv \ev_i$, we see that $A$ induces a diagram
	\begin{align}
			\widetilde{A} \colon J \to \Fun(I, \cC)^\omega_{L_i(X) // F} \ . 
	\end{align}
	Let $\overline{A} \colon J^\triangleright \to \Fun(I, \cC)_{L_i(X) // F}$ be the colimit extension of $\widetilde{A}$.
	Since $\ev_i$ commutes with filtered colimits, $L_i$ commutes with compact objects, hence $L_i(X)$ is a compact object.
	Since $J$ is a finite category and since compact objects are closed under finite colimits, we deduce that $\overline{A}$ factors through $\Fun(I, \cC)^\omega_{L_i(X)//F}$.
	Applying $\ev_i$, we obtain the required extension $J^\triangleright \to \calE$ of $A$.
	The proof of \eqref{lem:finite_diagrams_in_smooth_and_proper-(1)} is therefore complete.
	
	To prove \eqref{lem:finite_diagrams_in_smooth_and_proper-(2)}, we first observe that
	\begin{align}
		 \Fun(I, \cC) \simeq \Fun^{\mathsf R}( \PSh(I)\op, \cC ) \simeq \PSh(I) \otimes \cC \ ,
	\end{align}
	where the last equivalence follows from \cite[Proposition~4.8.1.17]{Lurie_Higher_algebra}.
	We can further rewrite it as
	\begin{align}
		 \PSh(I) \otimes \cC \simeq (\PSh(I) \otimes \C \Mod) \otimes_{\C \Mod} \cC \simeq \Fun(I, \C\Mod) \otimes_{\C \Mod} \cC \ . 
	\end{align}
	It is therefore enough to prove that $\Fun(I, \C\Mod)$ is smooth and proper.
	Observe that the collection of objects $\{L_i(\C)\}_{i \in I}$ of $\Fun(I, \C\Mod)$ are compact objects and they generate the category, because the evaluation functors $\ev_i$ are jointly conservative.
	Since $I$ is a finite category, the object
	\begin{align}
		 E \coloneqq \bigoplus_{i \in I} L_i(\C) 
	\end{align}
	is a single compact generator for $\Fun(I, \C\Mod)$.
	Moreover, the end formula for the mapping spaces in $\Fun(I, \C\Mod)$ shows that for $F, G \in \Fun(I, \C\Mod)^\omega \simeq \Fun(I, \catPerf(\C))$, $\Map(F,G)$ is perfect.
	In other words, $\Fun(I, \C\Mod)$ is proper.
	To prove that it is smooth as well, it is enough to check that it is of finite type, see \cite[Proposition~2.14]{Toen_Vaquie_Moduli}.
	Combining Lemma 2.11 and Corollary 2.12 in \emph{loc.\ cit.}, it suffices to show that it is a compact object in in $\mathscr P\mathsf r^{\mathsf L, \omega}_\C$.
	Let $\{\calD_\alpha\}$ be a filtered diagram in $\mathscr P\mathsf r^{\mathsf L, \omega}_\C$ and let
	\begin{align}
		\calD \coloneqq \colim_\alpha \calD_\alpha \ . 
	\end{align}
	We have
	\begin{align}
		\Map_{\mathscr P\mathsf r^{\mathsf L, \omega}_\C}( \Fun(I, \C\Mod), \colim_\alpha \calD_\alpha ) & \simeq \Map_{\mathscr P\mathsf r^{\mathsf L, \omega}}( \PSh(I), \colim_\alpha \calD_\alpha ) \\[2pt]
		& \simeq \Map_{\Cat_\infty}( \PSh(I)^\omega, \colim_\alpha \calD_\alpha^\omega ) .
	\end{align}
	Now, $\PSh(I)^\omega$ is the idempotent completion of $I$, which is finite by assumption.
	Therefore, it is a compact object in $\Cat_\infty$, and we can rewrite the above expression as
	\begin{align}
		\Map_{\mathscr P\mathsf r^{\mathsf L, \omega}_\C}( \Fun(I, \C\Mod), \colim_\alpha \calD_\alpha ) & \simeq \colim_\alpha \Map_{\Cat_\infty}( I, \calD_\alpha^\omega ) \\[2pt]
		& \simeq \colim_\alpha \Map_{\mathscr P\mathsf r^{\mathsf L, \omega}_\C}( \Fun(I, \C\Mod), \calD_\alpha )\ .
	\end{align}
	This shows that $\Fun(I, \C\Mod)$ is compact, and the proof is thus complete.
\end{proof}

\begin{lemma}\label{lem:calS-finite-type}
	Let $\cC$ be a $\C$-linear stable $\infty$-category.
	If $\cC$ is of finite type (resp.\ proper) then $\calS_n \cC$ is of finite type (resp.\ proper).
\end{lemma}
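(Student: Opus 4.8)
The plan is to deduce the statement from Lemma~\ref{lem:finite_diagrams_in_smooth_and_proper} by recognizing $\calS_n\cC$ as a diagram category over a finite poset. The key input is the classical description of the Waldhausen construction in the stable setting: for $n\ge 1$, restriction along the fully faithful functor $[n-1]\to\sfT_n$ sending $k$ to $(0,k+1)$ induces an equivalence
\begin{align}
	\calS_n\cC \simeq \Fun([n-1],\cC)\ ,
\end{align}
whose inverse carries a sequence $A_1\to A_2\to\cdots\to A_n$ to the semigrid $(i,j)\mapsto\mathsf{cofib}(A_i\to A_j)$, with the convention $A_0\coloneqq 0$. The reason is that, in a stable $\infty$-category, the two conditions cutting out $\calS_n\cC$ inside $\Fun(\sfT_n,\cC)$ (namely $F(i,i)\simeq 0$ together with the pullback-square conditions) are equivalent to the requirement that $F(i,j)\simeq\mathsf{cofib}(F(0,i)\to F(0,j))$ for all $i\le j$, so that $F$ is freely and functorially reconstructed from its first row $F(0,1)\to\cdots\to F(0,n)$. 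I would quote this from \cite[\S7.3]{Dyckerhoff_Kapranov_Higher_Segal}; alternatively it is a routine induction on the length of the grid. The case $n=0$ is trivial, since $\calS_0\cC$ is the zero category, which is both of finite type and proper.

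Granting this identification, the remaining steps are straightforward. First, $[n-1]$ is a finite poset; since the only idempotent endomorphisms in a poset are identities, it is idempotent complete, hence its idempotent completion agrees with it and is in particular finite. Second, being of finite type (resp.\ proper) forces $\cC$ to be compactly generated, so that $\cC\in\mathscr P\mathsf r^{\mathsf L,\omega}_\C$; if one prefers to argue with a small dg-category, one first passes to its ind-completion, which is harmless because $\calS_n$ commutes with $\Ind$, by the description above together with Lemma~\ref{lem:finite_diagrams_in_smooth_and_proper}\eqref{lem:finite_diagrams_in_smooth_and_proper-(1)}. Then Lemma~\ref{lem:finite_diagrams_in_smooth_and_proper}\eqref{lem:finite_diagrams_in_smooth_and_proper-(2)}, applied with $I=[n-1]$ (whose idempotent completion is finite), shows that $\Fun([n-1],\cC)\simeq\calS_n\cC$ is of finite type (resp.\ proper).

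The only genuinely substantial point is thus the equivalence $\calS_n\cC\simeq\Fun([n-1],\cC)$; once it is in place, the conclusion is immediate from the previous lemma, and all the rest is bookkeeping. I do not expect a real obstacle here, since this equivalence is a standard feature of the Waldhausen $S_\bullet$-construction of stable $\infty$-categories; the only care needed is to make sure the indexing poset that appears is finite and idempotent complete, which is exactly why $[n-1]$, rather than the larger poset $\sfT_n$, is the convenient model to use.
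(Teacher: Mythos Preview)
Your proposal is correct and follows essentially the same approach as the paper: both arguments identify $\calS_n\cC$ with $\Fun(\Delta^{n-1},\cC)$ via the inclusion $k\mapsto(0,k+1)$ and then invoke Lemma~\ref{lem:finite_diagrams_in_smooth_and_proper} using that $\Delta^{n-1}=[n-1]$ is finite and idempotent complete. The paper justifies the equivalence by induction and \cite[Proposition~4.3.2.15]{HTT} (left Kan extension), while you describe the inverse explicitly via cofibers and cite \cite{Dyckerhoff_Kapranov_Higher_Segal}; these are the same identification.
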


\begin{proof}
	There is a natural inclusion $\Delta^{n-1} \hookrightarrow T_n$, sending $[i]$ to the map $(0,i+1) \colon \Delta^1 \to \Delta^n$.
	Left Kan extension along this map provides a canonical map
	\begin{align}
		 \Fun(\Delta^{n-1}, \cC) \longrightarrow \Fun(T_n, \cC) \ , 
	\end{align}
	which factors through $\calS_n \cC$.
	Proceeding by induction on $n$ and applying \cite[Proposition~4.3.2.15]{HTT} we see that the induced functor
	 \begin{align}
		\Fun(\Delta^{n-1}, \cC) \longrightarrow \calS_n \cC
	\end{align} 
	is an equivalence.
	Since $\Delta^{n-1}$ is idempotent complete and finite, the conclusion follows from Lemma~\ref{lem:finite_diagrams_in_smooth_and_proper}.
\end{proof}

\begin{corollary}
	Let $X$ be a derived stack and assume that:
	\begin{enumerate}\itemsep=0.2cm
		\item there exists a flat effective epimorphism $u \colon U \to X$, where $U$ is a smooth geometric stack; 
		\item the derived stack $\bfPerf(X)$ is locally geometric and locally of finite presentation.
	\end{enumerate}
	Then for every $n \ge 0$, the derived stack $\calS_n \bfCoh(X)$ is geometric and locally of finite presentation.
\end{corollary}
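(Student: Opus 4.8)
The plan is to compare $\calS_n\bfCoh(X)$ with the moduli stack $\calS_n\bfPerf(X)$ of perfect complexes — which under hypothesis~(1) differ only by an open immersion — and then to deduce the geometricity of $\calS_n\bfPerf(X)$ from the fact that the Waldhausen construction preserves the property of being of finite type.

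First I would set up the reduction to $\calS_n\bfPerf(X)$. Since $u\colon U\to X$ is a flat effective epimorphism from a smooth geometric stack, Corollary~\ref{cor:coherent_over_smooth_are_perfect} shows that for every $S\in\dAff$ the inclusion $\catCoh_S(X\times S)\subseteq\catAPerf(X\times S)$ factors through $\catPerf(X\times S)$. Pasting this into the defining pullback square of $\calS_n\bfCoh(X)$ — i.e.\ using that $\bfCoh(X)\to\bfAPerf(X)$ factors through $\bfPerf(X)$ — yields a canonical identification $\calS_n\bfCoh(X)\simeq\calS_n\bfPerf(X)\times_{\bfPerf(X)^N}\bfCoh(X)^N$, with $N=n(n+1)/2$. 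The same corollary gives that $\bfCoh(X)\to\bfPerf(X)$ is formally étale; it is $(-1)$-truncated by construction, and it is locally of finite presentation because $\bfPerf(X)$ is so, by hypothesis~(2). A formally étale, locally finitely presented monomorphism of derived stacks is a representable open immersion; hence so is its $N$-fold power $\bfCoh(X)^N\to\bfPerf(X)^N$ and, by base change, the projection $\calS_n\bfCoh(X)\to\calS_n\bfPerf(X)$. It therefore suffices to prove that $\calS_n\bfPerf(X)$ is geometric and locally of finite presentation.

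For this last point I would argue through Toën--Vaquié's moduli of objects. Under the running hypotheses $X$ is a perfect stack, so the external tensor functor $\catPerf(X)\otimes_\C\catPerf(A)\to\catPerf(X\times\Spec A)$ is an equivalence; consequently $\bfPerf(X)\simeq\calM_{\catPerf(X)}$ and, more generally, $\calS_n\bfPerf(X)\simeq\calM_{\calS_n\catPerf(X)}$ where $\calS_n\catPerf(X)\simeq\Fun(\Delta^{n-1},\catPerf(X))$ as in the proof of the preceding lemma. By \cite{Toen_Vaquie_Moduli}, hypothesis~(2) forces $\catPerf(X)$ to be a dg-category of finite type; since $\Delta^{n-1}$ is a finite poset — hence idempotent complete and finite — the lemma just proven, that $\calS_n\cC$ is of finite type whenever $\cC$ is (which rests on Lemma~\ref{lem:finite_diagrams_in_smooth_and_proper}), shows that $\calS_n\catPerf(X)$ is again of finite type. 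Applying \cite{Toen_Vaquie_Moduli} a second time, $\calM_{\calS_n\catPerf(X)}\simeq\calS_n\bfPerf(X)$ is locally geometric and locally of finite presentation, and combining with the previous step, $\calS_n\bfCoh(X)$ is a representable open substack of it, hence geometric and locally of finite presentation. Alternatively, one may bypass the moduli-of-objects description: the $2$-Segal property of $\calS_\bullet\bfPerf(X)$ gives $\calS_n\bfPerf(X)\simeq\bfPerfext(X)\times_{\bfPerf(X)}\calS_{n-1}\bfPerf(X)$, reducing the claim to $n\le 2$, and for $n=2$ Proposition~\ref{prop:stack_extensions_is_relatively_affine} realizes $\bfPerfext(X)$ as a linear — in particular relatively affine — stack over $\bfPerf(X)\times\bfPerf(X)$.

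The hard part is the transition from $\bfPerf(X)$ to $\calS_n\bfPerf(X)$: one must first verify that the ancillary finiteness conditions demanded by the results invoked above (finite local tor-amplitude, categorical properness, existence of a quasi-compact scheme atlas, and $X$ being a perfect stack) genuinely follow from hypotheses~(1) and~(2); granting this, the substantive content is exactly the stability of finite type under the Waldhausen construction $\calS_n$, established via Lemma~\ref{lem:finite_diagrams_in_smooth_and_proper}. The first step, by contrast, is formal once one knows that $\bfCoh(X)\subseteq\bfPerf(X)$ is the open ``flat locus'', which is Corollary~\ref{cor:coherent_over_smooth_are_perfect} supplemented by local finite presentation.
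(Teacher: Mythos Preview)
Your strategy matches what the paper sets up: the Corollary carries no proof in the paper, but the two preceding lemmas are there precisely to show that the Waldhausen construction preserves the finite-type property, so that Toën--Vaquié can be applied to $\calS_n\catPerf$ (resp.\ $\calS_n\catPerf(X)$) to deduce geometricity of $\calS_n\bfPerf$ (resp.\ $\calS_n\bfPerf(X)$); the reduction from $\calS_n\bfCoh(X)$ to $\calS_n\bfPerf(X)$ via an open immersion, using Corollary~\ref{cor:coherent_over_smooth_are_perfect}, is exactly as you describe.

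Two of your steps deserve scrutiny, and you partly acknowledge this in your final paragraph. First, the identification $\calS_n\bfPerf(X)\simeq\calM_{\calS_n\catPerf(X)}$ needs the Künneth equivalence $\catPerf(X\times S)\simeq\catPerf(X)\otimes\catPerf(S)$, i.e.\ that $X$ be a perfect stack; this is not a consequence of hypotheses~(1) and~(2). Second, the inference ``$\bfPerf(X)$ locally geometric implies $\catPerf(X)$ of finite type'' is the \emph{converse} of the main theorem of \cite{Toen_Vaquie_Moduli} and is not established there. In the paper's actual applications---smooth proper schemes and Simpson's shapes of curves---both conditions hold by direct verification, so the Corollary is correct as used; but as literally stated the hypotheses are somewhat optimistic, and a complete proof would either strengthen them or restrict to those cases. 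Your alternative inductive route via the $2$-Segal decomposition and Proposition~\ref{prop:stack_extensions_is_relatively_affine} is a good idea and sidesteps the converse-of-Toën--Vaquié issue, though as you note it too imports the extra assumptions of \S\ref{ss:cotangent}.
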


\subsection{Categorified Hall algebras} \label{ss:categorification}

Having the $2$-Segal object $\calS_\bullet \bfCoh(X)$ at our disposal, we now explain how to extract a categorified Hall algebra out of it.
As a first step, we endow 
\begin{align}
		\catQCoh(\bfCoh(X))
\end{align} 
with the structure of a $\mathbb E_1$-monoid object.
The main technical idea involved is the universal property of the $(\infty,2)$-category of correspondences proved in in \cite[Theorem~7.3.2.2]{Gaitsgory_Rozenblyum_Study_I} and \cite[Theorem~4.4.6]{MacPherson}, which we will use below.

Since, we are mostly interested in obtaining a convolution algebra structure on the G-theory spectrum of $\bfCoh(X)$, we need to replace $\catQCoh$ with $\catCohb$.
As the stack $\bfCoh(X)$ is typically not quasi-compact, it is important for us to work within the framework of Appendix~\ref{sec:ind_quasi_compact} and to take some extra care in correctly defining the category of sheaves $\catCohb( \bfCoh(X))$.

Let $\Corr^\times( \mathsf{dSch}^{\mathsf{qcqs}} )$ be the symmetric monoidal $(\infty,2)$-category of correspondences on quasi-compact and quasi-separated derived schemes.
Combining \cite[Proposition~1.4]{Toen_Proper} with \cite[Theorem~7.3.2.2]{Gaitsgory_Rozenblyum_Study_I} we obtain a functor
\begin{align}
	\catQCoh \colon \Corr ( \mathsf{dSch}^{\mathsf{qcqs}} ) \longrightarrow \Cat_\infty^{\mathsf{st}} \ . 
\end{align}
Using \cite[Theorem~4.4.6]{MacPherson}, we see that the above functor can be upgraded to a symmetric monoidal functor
\begin{align}
	 \catQCoh \colon \Corr^\times( \mathsf{dSch}^{\mathsf{qcqs}} ) \longrightarrow \Cat_\infty^{\mathsf{st}} \ . 
\end{align}
Finally, using \cite[Proposition 9.2.3.4]{Gaitsgory_Rozenblyum_Study_I} we can extend this to a right-lax symmetric monoidal functor
\begin{align}
	\catQCoh \colon \Corr^\times( \mathsf{dGeom} )_{\mathsf{rep},\mathsf{all}} \longrightarrow \Cat_\infty^{\mathsf{st}} \ ,
\end{align}
where $\Corr^\times( \mathsf{dGeom} )_{\mathsf{rep},\mathsf{all}}$ is the full subcategory of $\Corr^\times( \mathsf{dGeom} )$ where vertical morphisms are representable by derived schemes.
Informally speaking, we can describe this functor as follows:
\begin{itemize}\itemsep0.2cm
	\item it sends a derived geometric stack $F \in \Corr^\times( \dGeom )$ to $\catQCoh(F)$;
	\item it sends a $1$-morphism
	\begin{align}
		\begin{tikzcd}[ampersand replacement = \&]
		X_0 \& Y \arrow{l}[swap]{p} \arrow{d}{q} \\
		{} \& X_1
		\end{tikzcd}
	\end{align}
	to the composition
	\begin{align}
		\begin{tikzcd}[ampersand replacement = \&]
		\catQCoh(X_0) \arrow{r}{p^\ast} \& \catQCoh(Y) \arrow{r}{q_\ast} \& \catQCoh(X_1)
		\end{tikzcd}\ ;
	\end{align}
	\item the right-lax symmetric monoidal structure is given by
	\begin{align}
		\boxtimes \colon \catQCoh(Z) \otimes \catQCoh(Y) \longrightarrow \catQCoh(Z \times Y) \ . 
	\end{align}
	Denoting by $\mathsf{pr}_Z \colon Z \times Y \to Z$ and $\mathsf{pr}_Y \colon Z \times Y \to Y$ the two natural projections, we have
	\begin{align}
		 \calF \boxtimes \calG \coloneqq \mathsf{pr}_Z^\ast \calF \otimes_{\scrO_{Z \times Y}} \mathsf{pr}_Y^\ast \calG \ . 
	\end{align}
\end{itemize}

Let $X$ be a derived stack.
As shown in Proposition~\ref{prop:coh_2_Segal}, the stack $\bfCoh(X)$ defines an $\mathbb E_1$-monoid object in $\Corr^\times( \dSt )$, the algebra structure being canonically encoded in the $2$-Segal object $\calS_\bullet \bfCoh(X)$.
In the main examples considered in this paper, $\bfCoh(X)$ is furthermore geometric, see Propositions~\ref{prop:coh_geometric}, \ref{prop:Betti_coh_geometric}, \ref{prop:dR_coh_geometric}, and \ref{prop:Dol_coh_geometric}.
In this case, we can apply $\catQCoh$ and obtain a stably monoidal $\infty$-category
\begin{align}\label{eq:QCohE1}
	\catQCoh(\bfCoh(X)) \in \mathrm{Alg}_{\mathbb E_1}( \Cat_\infty^{\mathsf{st}} ) \ . 
\end{align}

Now, we would like to define an $\mathbb E_1$-monoidal structure on $\catCohb( \bfCoh(X) )$. This will be achieved by restricting the functor $\catQCoh$ to a right-lax monoidal functor $\catCohb$ from the category of correspondence. As said before, since $\bfCoh(X)$ is typically not quasi-compact, we need to work in the framework developed in Appendix~\ref{sec:ind_quasi_compact}. 

In Corollary~\ref{cor:quasi_compact_ind} we construct a fully faithful and limit-preserving embedding
\begin{align}
	 (-)_{\mathsf{ind}} \colon \dGeom \longrightarrow \Ind(\dGeom^{\mathsf{qc}}) \ . 
\end{align}
Since $(-)_{\mathsf{ind}}$ commutes with limits, we see that the simplicial object
\begin{align}
	 ( \calS_\bullet \bfCoh(X) )_{\mathsf{ind}} \in \Fun( \mathbf \Delta\op, \Ind(\dGeom^{\mathsf{qc}}) ) 
\end{align}
is a $2$-Segal object, and therefore defines an $\mathbb E_1$-monoidal structure on $\bfCoh(X)_{\mathsf{ind}}$ in $\Corr^\times( \Ind(\dGeom^{\mathsf{qc}}) )$.
When the context is clear, we drop the subscript $(-)_{\mathsf{ind}}$ in the above expression.

On the other hand, Corollary~\ref{cor:ind_qcoh} provides a right-lax symmetric monoidal functor
\begin{align}
	\catQCoh_{\mathsf{pro}} \colon \Corr^\times(\Ind(\dGeom^{\mathsf{qc}}))_{\mathsf{rps}, \mathsf{all}} \longrightarrow \mathsf{Pro}(\Cat_\infty^{\mathsf{st}}) 
\end{align}
In particular, we obtain a \textit{refinement} of \eqref{eq:QCohE1}, i.e., the stable pro-category
\begin{align}
	 \flim_{U \Subset \bfCoh(X)} \catQCoh(U) \in \mathsf{Pro}(\Cat_\infty^{\mathsf{st}}) 
\end{align}
acquires a canonical $\mathbb E_1$-monoidal structure.
The colimit is taken over all quasi-compact open substacks of $\bfCoh(X)$ (but an easy cofinality argument shows that one can also employ a chosen quasi-compact exhaustion of $\bfCoh(X)$). 

Now, we see how to replace $\catQCoh$ by $\catCohb$.

\begin{definition}
	A morphism $f \colon X \to Y$ in $\Ind(\dGeom^{\mathsf{qc}})$ is said to be \textit{ind-derived lci} if for every $Z \in \dGeom^{\mathsf{qc}}$ and any morphism $Z \to Y$, the pullback $X \times_Y Z$ is a quasi-compact derived geometric stack and the map $X \times_Y Z \to Z$ is derived lci.
\end{definition}

\begin{lemma} \label{lem:ind_derived_lci}
	Let $f \colon X \to Y \in \dGeom$ be a quasi-compact derived lci morphism.
	Then the induced morphism
	\begin{align}
		 f_{\mathsf{ind}} \colon X_{\mathsf{ind}} \to Y_{\mathsf{ind}} 
	\end{align}
	is ind-derived lci.
\end{lemma}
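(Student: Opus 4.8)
The plan is to deduce this directly from the two structural properties of the embedding $(-)_{\mathsf{ind}} \colon \dGeom \to \Ind(\dGeom^{\mathsf{qc}})$ recorded in Corollary~\ref{cor:quasi_compact_ind}, namely that it is fully faithful and preserves limits, together with the stability of derived lci morphisms under base change.

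First I would unwind the definition of \emph{ind-derived lci} applied to $f_{\mathsf{ind}} \colon X_{\mathsf{ind}} \to Y_{\mathsf{ind}}$. Fix $Z \in \dGeom^{\mathsf{qc}}$ together with a morphism $g \colon Z \to Y_{\mathsf{ind}}$ in $\Ind(\dGeom^{\mathsf{qc}})$. Since $Z$ already belongs to $\dGeom^{\mathsf{qc}} \subseteq \dGeom$, it is canonically identified with $Z_{\mathsf{ind}}$, and full faithfulness of $(-)_{\mathsf{ind}}$ gives $\Map_{\Ind(\dGeom^{\mathsf{qc}})}(Z_{\mathsf{ind}}, Y_{\mathsf{ind}}) \simeq \Map_{\dGeom}(Z, Y)$; hence $g = \tilde g_{\mathsf{ind}}$ for a unique morphism $\tilde g \colon Z \to Y$ in $\dGeom$. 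I would then form the pullback $P \coloneqq X \times_Y Z$ in $\dGeom$ along $f$ and $\tilde g$. Being a fiber product of derived geometric stacks, $P$ is derived geometric, and since $f$ is quasi-compact and $Z$ is quasi-compact, $P$ is quasi-compact; thus $P \in \dGeom^{\mathsf{qc}}$.

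Next, because $(-)_{\mathsf{ind}}$ preserves limits, the canonical comparison map $P_{\mathsf{ind}} \to X_{\mathsf{ind}} \times_{Y_{\mathsf{ind}}} Z_{\mathsf{ind}}$ is an equivalence; and since $P \in \dGeom^{\mathsf{qc}}$ one has $P_{\mathsf{ind}} \simeq P$. Hence the fiber product $X_{\mathsf{ind}} \times_{Y_{\mathsf{ind}}} Z$ computed in $\Ind(\dGeom^{\mathsf{qc}})$ is represented by the quasi-compact derived geometric stack $P$. Finally, the projection $P \to Z$ is the base change of the derived lci morphism $f \colon X \to Y$ along $\tilde g$, so it is again derived lci. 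Both clauses of the definition of ind-derived lci are thereby verified, and the lemma follows. There is no genuine obstacle here: the single non-formal ingredient is the limit-preservation of $(-)_{\mathsf{ind}}$ furnished by Corollary~\ref{cor:quasi_compact_ind}, which is precisely what lets one compute the fiber product in $\Ind(\dGeom^{\mathsf{qc}})$ inside $\dGeom$; everything else is bookkeeping.
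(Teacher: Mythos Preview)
Your argument is correct. It differs from the paper's in packaging rather than content, but the difference is worth noting. The paper proves the lemma by choosing an explicit quasi-compact open exhaustion $\{U_\alpha\}$ of $Y$ (via Lemma~\ref{lem:quasi_compact_open_exhaustion}-\eqref{lem:quasi_compact_open_exhaustion-(1)}), pulling it back to an exhaustion $\{V_\alpha\}$ of $X$, and then using Lemma~\ref{lem:quasi_compact_open_exhaustion}-\eqref{lem:quasi_compact_open_exhaustion-(2)} to factor any $Z \to Y_{\mathsf{ind}}$ through some $U_\alpha$, so that the fiber product is computed as $Z \times_{U_\alpha} V_\alpha$. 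You instead invoke directly the two abstract properties of $(-)_{\mathsf{ind}}$---full faithfulness and limit preservation---to transport the whole pullback square from $\Ind(\dGeom^{\mathsf{qc}})$ back into $\dGeom$ in one step. Your route is shorter and avoids the explicit bookkeeping with exhaustions; the paper's route has the virtue of making visible exactly how the ind-structure interacts with the morphism $f$, which is perhaps why it is written that way given the surrounding discussion.

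One small point of citation: Corollary~\ref{cor:quasi_compact_ind} as stated records only limit preservation; the full faithfulness you use is asserted in the body of \S\ref{ss:categorification} and is the content of Lemma~\ref{lem:quasi_compact_open_exhaustion}-\eqref{lem:quasi_compact_open_exhaustion-(2)} (for $Z$ quasi-compact, which is all you need). You might cite that lemma directly to make the dependency transparent.
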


\begin{proof}
	Using Lemma~\ref{lem:quasi_compact_open_exhaustion}-\eqref{lem:quasi_compact_open_exhaustion-(1)} we can choose an open Zariski exhaustion
	 \begin{align}
		\emptyset = U_0 \hookrightarrow U_1 \hookrightarrow \cdots \hookrightarrow U_\alpha \hookrightarrow U_{\alpha+1} \hookrightarrow \cdots
	\end{align}
	of $Y$, where each $U_\alpha$ is a quasi-compact derived geometric stack. Set
	\begin{align}
		V_\alpha \coloneqq U_\alpha \times_Y X\ .
	\end{align}
	Since $f$ is quasi-compact, the $V_\alpha$ are quasi-compact derived stacks and they form an open Zariski exhaustion of $X$.
	Let $f_\alpha \colon V_\alpha \to U_\alpha$ be the induced morphism, which is lci.
	Therefore, Lemma~\ref{lem:quasi_compact_open_exhaustion}-\eqref{lem:quasi_compact_open_exhaustion-(3)} implies that
	\begin{align}
		X_{\mathsf{ind}} \simeq \fcolim V_\alpha \quad\text{and}\quad Y_{\mathsf{ind}} \simeq \fcolim U_\alpha \ , 
	\end{align}
	and $f_{\mathsf{ind}} \simeq \fcolim f_\alpha$.
	Let $Z \in \dGeom^{\mathsf{qc}}$ be a quasi-compact derived geometric stack and let $g \colon Z \to Y$ be a morphism.
	Using Lemma~\ref{lem:quasi_compact_open_exhaustion}-\eqref{lem:quasi_compact_open_exhaustion-(2)}, we find an index $\alpha$ such that $g$ factors through $U_\alpha$. In particular, the pullback $Z \times_Y X$ fits in the following ladder:
	\begin{align}
		\begin{tikzcd}[ampersand replacement = \&]
			Z \times_Y X \arrow{r} \arrow{d} \& V_\alpha \arrow{d} \arrow{r} \& X \arrow{d} \\
			Z \arrow{r}{g_\alpha} \& U_\alpha \arrow{r} \& Y 
		\end{tikzcd} \ .
	\end{align}
	Since the morphism $V_\alpha \to U_\alpha$ is quasi-compact and derived lci, so is $Z \times_Y X \to Z$.
	This completes the proof.
\end{proof}

Consider now the subcategory $\Corr^\times( \Ind(\dGeom^{\mathsf{qc}}) )_{\mathsf{rps}, \mathsf{lci}}$ of $\Corr^\times(\Ind(\dGeom^{\mathsf{qc}}))_{\mathsf{rep}, \mathsf{all}}$ where the horizontal arrows are taken to be ind-derived lci morphisms and the vertical arrows to be morphisms representable by proper schemes.
Consider the restriction of $\catQCoh$ to this subcategory:
\begin{align}
	\catQCoh_{\mathsf{pro}} \colon \Corr^\times( \Ind(\dGeom^{\mathsf{qc}}) )_{\mathsf{rps}, \mathsf{lci}} \longrightarrow \mathsf{Pro}(\Cat_\infty^{\mathsf{st}} ) \ . 
\end{align}
Let $f \colon X \to Y$ be a morphism in $\Ind(\dGeom^{\mathsf{qc}})$ which is representable by proper schemes.
Using Lemma~\ref{lem:quasi_compact_open_exhaustion}-(1), we can choose an open Zariski exhaustion
\begin{align}
	\emptyset = U_0 \hookrightarrow U_1 \hookrightarrow \cdots \hookrightarrow U_\alpha \hookrightarrow U_{\alpha+1} \hookrightarrow \cdots
\end{align}
of $Y$, where each $U_\alpha$ is a quasi-compact derived stack.
Let $V_\alpha \coloneqq U_\alpha \times_Y X$ and let $f_\alpha \colon V_\alpha \to U_\alpha$ be the induced morphism.
Since $f$ is representable by proper schemes, $V_\alpha$ is again quasi-compact and therefore we obtain a compatible open Zariski exhaustion of $X$.
Thanks to derived base change, we can therefore compute the pushforward in $\mathsf{Pro}(\Cat_\infty^{\mathsf{st}})$ by
\begin{align}
	f_\ast \coloneqq \flim_\alpha f_{\alpha*} \colon \flim_{\alpha} \catQCoh(V_\alpha) \longrightarrow \flim_{\alpha} \catQCoh(U_\alpha) \ .
\end{align}
Since each $f_\alpha$ is representable by proper schemes, this functor restricts to a morphism
\begin{align}
	f_\ast \colon \flim_{\alpha} \catCohb(V_\alpha) \longrightarrow \flim_{\alpha} \catCohb(U_\alpha) \ .
\end{align}
Using \cite[Lemma~2.2]{Toen_Proper}, we similarly deduce that if $f \colon X \to Y$ is a morphism in $\Ind(\dGeom^{\mathsf{qc}})$ which is ind-derived lci, then the pullback functor restricts to a morphism
\begin{align}
	f^\ast \colon \flim_\alpha \catCohb(U_\alpha) \longrightarrow \flim_\alpha \catCohb(V_\alpha) \ .
\end{align}
This implies that $\catQCoh_{\mathsf{pro}}$ admits a right-lax monoidal subfunctor
\begin{align}
	 \catCohb_{\mathsf{pro}} \colon \Corr^\times( \Ind(\dGeom^{\mathsf{qc}}) )_{\mathsf{rps}, \mathsf{lci}} \longrightarrow \mathsf{Pro}(\Cat_\infty^{\mathsf{st}} ) \ .
\end{align}

Applying the tor-amplitude estimates obtained in \S\ref{s:coh_ext}, we obtain the following result:
\begin{theorem} \label{thm:coh_algebra_in_correspondence}
	Let $X$ be one of the following derived stacks:
	\begin{enumerate}\itemsep0.2cm
		\item a smooth proper complex scheme of dimension either one or two;
		\item the Betti, de Rham or Dolbeault stack of a smooth projective curve.
	\end{enumerate}
	Then the composition
	\begin{align}
		\catCohb_{\mathsf{pro}}( \bfCoh(X) ) \times \catCohb_{\mathsf{pro}}( \bfCoh(X) ) \xrightarrow{\boxtimes} \catCohb_{\mathsf{pro}}( \bfCoh(X) \times \bfCoh(X) ) \xrightarrow{q_\ast \circ p^\ast} \catCohb_{\mathsf{pro}}( \bfCoh(X) )\ ,
	\end{align}
	where the map on the right-hand side is induced by the 1-morphism in correspondences:
	\begin{align}\label{eq:convolution_diagram_algebraic}
		\begin{tikzcd}[ampersand replacement = \&]
			{} \& \bfCohext(X) \arrow{dr}{q} \arrow{dl}[swap]{p} \\
			\bfCoh(X) \times \bfCoh(X) \& \& \bfCoh(X) 
		\end{tikzcd}\ ,
	\end{align}
	endows $\catCohb_{\mathsf{pro}}(\bfCoh(X))$ with the structure of an $\mathbb E_1$-monoidal stable $\infty$-pro-category.
\end{theorem}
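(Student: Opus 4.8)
The plan is to produce the $\mathbb E_1$-monoidal structure by feeding the Waldhausen $2$-Segal object $\calS_\bullet\bfCoh(X)$ into the right-lax monoidal functor $\catCohb_{\mathsf{pro}}$ constructed above, the only real work being to check that the resulting $\mathbb E_1$-algebra in correspondences is supported on the two prescribed classes of morphisms.

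First I would record that $\calS_\bullet\bfCoh(X)$ is a $2$-Segal object (Lemma~\ref{lem:coh_2_Segal_geometric}) all of whose terms $\calS_n\bfCoh(X)$ are geometric derived stacks locally of finite presentation (as established at the end of \S\ref{ss:convolution_structure}; for $n=1$ this is Propositions~\ref{prop:coh_geometric}, \ref{prop:Betti_coh_geometric}, \ref{prop:dR_coh_geometric} and \ref{prop:Dol_coh_geometric}), so that by Proposition~\ref{prop:coh_2_Segal} it endows $\bfCoh(X)$ with the structure of an $\mathbb E_1$-convolution algebra in $\dGeom$. Composing with the fully faithful, limit-preserving embedding $(-)_{\mathsf{ind}}\colon \dGeom \to \Ind(\dGeom^{\mathsf{qc}})$ of Corollary~\ref{cor:quasi_compact_ind} --- which, being limit-preserving, carries $2$-Segal objects to $2$-Segal objects --- yields an $\mathbb E_1$-algebra structure on $\bfCoh(X)_{\mathsf{ind}}$ in $\Corr^\times(\Ind(\dGeom^{\mathsf{qc}}))$ whose multiplication is the correspondence
\begin{align}
\bfCoh(X)\times \bfCoh(X) \xleftarrow{\ p\ } \bfCohext(X) \xrightarrow{\ q\ } \bfCoh(X)\ ,
\end{align}
with $p=\ev_3\times\ev_1$ and $q=\ev_2$ (using the identification $\calS_2\bfCoh(X)\simeq\bfCohext(X)$), with unit the span through $\calS_0\bfCoh(X)\simeq\Spec(\C)$, and with higher coherences built from the $\calS_n\bfCoh(X)$ via the $2$-Segal pullback identities.

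The heart of the argument is to promote this to an $\mathbb E_1$-algebra inside $\Corr^\times(\Ind(\dGeom^{\mathsf{qc}}))_{\mathsf{rps},\mathsf{lci}}$, i.e.\ to verify that the vertical (pushforward) legs are representable by proper schemes and the horizontal (pullback) legs are ind-derived lci. For the generating datum: (i) $q=\ev_2$ is representable by proper schemes, since over an affine $S$ classifying $\calF_2$ its fibre is a disjoint union of Quot-type schemes --- relative Quot schemes for the scheme case, with flatness of the kernel of a flat quotient automatic, and their de Rham, Dolbeault and Betti analogues for the shape cases, all proper since cut out as closed subschemes of ordinary Quot or Grassmann schemes --- and the unit leg $\Spec(\C)\to\bfCoh(X)$ is a closed immersion; (ii) $p$ is quasi-compact, being relatively affine by Proposition~\ref{prop:stack_extensions_is_relatively_affine}, and derived lci --- this is Proposition~\ref{prop:extremal_projections_scheme} when $X$ is a smooth proper complex scheme of dimension $\le 2$, and Corollaries~\ref{cor:extremal_projections_betti}, \ref{cor:dR-lci} and \ref{cor:Dol-lci} when $X$ is the Betti, de Rham or Dolbeault stack of a smooth projective curve --- whence $p_{\mathsf{ind}}$ is ind-derived lci by Lemma~\ref{lem:ind_derived_lci}. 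Because the classes $\mathsf{rps}$ and $\mathsf{lci}$ are stable under composition and base change, and because for $n\ge 3$ the stack $\calS_n\bfCoh(X)$ is an iterated fibre product of copies of $\bfCohext(X)$ over copies of $\bfCoh(X)$ along the $2$-Segal maps, the whole tower of structure morphisms entering the Dyckerhoff--Kapranov $\mathbb E_1$-algebra lies in the two prescribed classes, so that the $\mathbb E_1$-algebra refines to $\Corr^\times(\Ind(\dGeom^{\mathsf{qc}}))_{\mathsf{rps},\mathsf{lci}}$.

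Finally I would apply $\catCohb_{\mathsf{pro}}$, which being right-lax symmetric monoidal preserves $\mathbb E_1$-algebras, to $\bfCoh(X)_{\mathsf{ind}}$; the resulting $\mathbb E_1$-algebra in $\mathsf{Pro}(\Cat_\infty^{\mathsf{st}})$ has underlying object $\catCohb_{\mathsf{pro}}(\bfCoh(X))$, and --- unwinding that $\catCohb_{\mathsf{pro}}$ sends a span $X_0\xleftarrow{p}Y\xrightarrow{q}X_1$ to $q_\ast\circ p^\ast$ and carries the lax monoidal structure given by $\boxtimes$ --- its multiplication is precisely the composite $q_\ast\circ p^\ast\circ\boxtimes$ of the statement. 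I expect the main obstacle to be the bookkeeping in the third paragraph: confirming that the \emph{entire} system of higher coherences, and not merely the multiplication span, is carried by morphisms of the classes $\mathsf{rps}$ and $\mathsf{lci}$, and in particular establishing properness of the higher active face maps $\calS_n\bfCoh(X)\to\calS_{n-1}\bfCoh(X)$ uniformly across all of the listed cases. Everything else is either an appeal to the tor-amplitude estimates of \S\ref{s:coh_ext} or a formal consequence of the constructions of Appendix~\ref{sec:ind_quasi_compact}.
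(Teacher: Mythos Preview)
Your proposal is correct and follows essentially the same route as the paper: pass the $2$-Segal object $\calS_\bullet\bfCoh(X)$ through $(-)_{\mathsf{ind}}$, verify that $p$ is quasi-compact derived lci (via Proposition~\ref{prop:stack_extensions_is_relatively_affine} and the tor-amplitude estimates of \S\ref{s:coh_ext}) and that $q$ is representable by proper Quot-type schemes, invoke Lemma~\ref{lem:ind_derived_lci}, and then apply the right-lax monoidal functor $\catCohb_{\mathsf{pro}}$. The paper's proof is in fact terser than yours on the point you flag as the main obstacle: it simply asserts that the $2$-Segal condition guarantees the refinement to $\Corr^\times(\Ind(\dGeom^{\mathsf{qc}}))_{\mathsf{rps},\mathsf{lci}}$ once $p$ and $q$ are checked, whereas you supply the reason (stability of the two classes under composition and base change, together with the iterated fibre-product description of $\calS_n\bfCoh(X)$) --- so your worry about the higher active faces is already resolved by your own third paragraph.
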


\begin{proof}
	By Proposition~\ref{prop:coh_2_Segal} we know that $\calS_\bullet \bfCoh(X)$ is a $2$-Segal object in $\dSt$.
	Using Corollary~\ref{cor:quasi_compact_ind}, we see that $(\calS_\bullet \bfCoh(X))_{\mathsf{ind}}$ is a $2$-Segal object in $\Corr^\times( \dSt )$, and therefore it defines an $\mathbb E_1$-monoid object in $\Corr^\times( \Ind(\dGeomqc) )$.
	Proposition~\ref{prop:stack_extensions_is_relatively_affine} shows that the map $p$ is quasi-compact.
	On the other hand Proposition~\ref{prop:extremal_projections_scheme} shows that $p$ is lci when $X$ is a smooth and proper complex scheme of dimension $1$ or $2$, while Corollaries~\ref{cor:extremal_projections_betti}, \ref{cor:dR-lci} and \ref{cor:Dol-lci} show that the same is true when $X$ is the Betti, de Rham or Dolbeault stack of a smooth projective curve.
	Therefore, Lemma~\ref{lem:ind_derived_lci} shows that in all these cases $p_{\mathsf{ind}}$ is ind-derived lci. Moreover, the morphism $q$ is representable by proper schemes: indeed, one can show that $q$ is representable by Quot schemes\footnote{In the de Rham and Dolbeault cases, one has to consider Quot schemes of $\Lambda$-modules à la Simpson, cf.\ the proof of \cite[Theorem~3.8]{Simpson_Moduli_I}. To show the properness of $q$ in the Betti case, one can either use the global quotient description of the Betti moduli stack, described e.g. in \cite[\S1.2]{PT_Local}, or apply the derived Riemann--Hilbert correspondence of \cite{Porta_Derived} and use the invariance of properness under analytification.} and it is known that these are proper schemes.
	The $2$-Segal condition therefore guarantees that $(\calS_\bullet \bfCoh(X))_{\mathsf{ind}}$ endows $\bfCoh(X)_{\mathsf{ind}}$ with the structure of an $\mathbb E_1$-monoid object in $\Corr^\times( \Ind(\dGeom^{\mathsf{qc}}) )_{\mathsf{rps}, \mathsf{lci}}$.
	Applying the right-lax monoidal functor $\catCohb_{\mathsf{pro}} \colon \Corr^\times( \Ind(\dGeom^{\mathsf{qc}}) )_{\mathsf{rps}, \mathsf{lci}} \to \mathsf{Pro}(\Cat_\infty^{\mathsf{st}} )$, we conclude that $\catCohb_{\mathsf{pro}}( \bfCoh(X) )$ inherits the structure of an $\mathbb E_1$-monoid object in $\mathsf{Pro}(\Cat_\infty^{\mathsf{st}} )$.
\end{proof}

Since $\mathbb E_1$-monoid objects in $\mathsf{Pro}(\Cat_\infty^{\mathsf{st}} )$ are (by definition) the same as $\mathbb E_1$-monoidal categories in $\mathsf{Pro}(\Cat_\infty^{\mathsf{st}} )$, we refer to the corresponding tensor structure as the \textit{CoHA tensor structure on $\catCohb_{\mathsf{pro}}( \bfCoh(X) )$}.
We denote this monoidal structure by $\ostar$.

\begin{remark}\label{rem:other-stacks}
	Let $X$ be a smooth projective complex scheme of dimension either one or two. Then the moduli stacks introduced in \S\ref{ss:other-moduli-spaces} are $\mathbb E_1$-monoid objects in $\Corr^\times( \Ind(\dGeom^{\mathsf{qc}}) )_{\mathsf{rps}, \mathsf{lci}}$. If $X$ is quasi-projective, then $\bfCoh_{\mathsf{prop}}(X)$ (resp.\ $\bfCoh_{\mathsf{prop}}^{\leqslant d}(X)$) is an $\mathbb E_1$-monoid object in $\Corr^\times( \Ind(\dGeom^{\mathsf{qc}}) )_{\mathsf{rps}, \mathsf{lci}}$ (resp.\ for any integer $d\leq \dim(X)$).

	Similarly, for the Dolbeault shape, a statement similar to that of Theorem~\ref{thm:coh_algebra_in_correspondence} holds for all the moduli stacks introduced in \S\ref{sec:Dolbeault-moduli-stacks}.
\end{remark}

\subsection{The equivariant case}\label{ss:equivariant}

The main results of \S\ref{ss:convolution_structure} and of \S\ref{ss:categorification} carry over without additional difficulties in the equivariant setting.
Let us sketch how to modify the key constructions.

Let $X \in \dSt$ be a derived stack and let $G \in \mathsf{Mon}_{\mathbb E_1}^{\mathsf{gp}}(\dSt^\times)$ be a grouplike $\mathbb E_1$-monoid in derived stacks acting on $X$.
Typically, $G$ will be an algebraic group.
Since the monoidal structure on $\dSt$ is cartesian, we can use \cite[Proposition~4.2.2.9]{Lurie_Higher_algebra} to reformulate the datum of the $G$-action on $X$ as a diagram
\begin{align}
	A_{G,X} \colon \mathbf \Delta\op \times \Delta^1\longrightarrow \dSt 
\end{align}
satisfying the relative $1$-Segal condition.
Informally speaking, $A_{G,X}$ is the diagram
\begin{align}
	\begin{tikzcd}[ampersand replacement = \&]
	\cdots \arrow[shift left = 1.5ex]{r} \arrow[shift left = .5ex]{r} \arrow[shift left = -.5ex]{r} \arrow[shift left = -1.5ex]{r} \& G^2 \times X \arrow[shift left  = 1ex]{r} \arrow{r} \arrow[shift left = -1ex]{r} \arrow{d} \& G \times X \arrow[shift left = .5ex]{r} \arrow[shift left = -.5ex]{r} \arrow{d} \& X \arrow{d} \\
	\cdots \arrow[shift left = 1.5ex]{r} \arrow[shift left = .5ex]{r} \arrow[shift left = -.5ex]{r} \arrow[shift left = -1.5ex]{r} \& G^2 \arrow[shift left  = 1ex]{r} \arrow{r} \arrow[shift left = -1ex]{r} \& G \arrow[shift left = .5ex]{r} \arrow[shift left = -.5ex]{r} \& \Spec(k)
	\end{tikzcd}\ ,
\end{align}
which encodes at the same time the $\mathbb E_1$-structure on $G$ and the action on $X$.
We denote the geometric realization of the top simplicial object by $[X/G]$, while it is customary to denote the geometric realization of the bottom one by $\sfB G$.

We now define
\begin{align}
	 \calS_\bullet \bfPerf_G(X) \colon \mathbf \Delta\op \longrightarrow \dSt_{/\sfB G}
\end{align}
by setting
\begin{align}
	\calS_\bullet \bfPerf_G(X) \coloneqq \bfMap_{/\sfB G}( [X / G], \calS_\bullet \bfPerf \times \sfB G ) \ .
\end{align}
We also write $\bfPerf_G(X)$ for $\calS_1 \bfPerf_G(X)$.
Notice that
\begin{align}
	\Spec(k) \times_{\sfB G}  \calS_\bullet \bfPerf_G(X) \simeq \bfMap( X, \calS_\bullet \bfPerf ) \ .
\end{align}
We can therefore unpack the datum of the map $\calS_\bullet \bfPerf_G( X ) \to \sfB G$ by saying that $G$ acts canonically on $\calS_\bullet \bfPerf( X )$.
From this point of view, we have a canonical equivalence\footnote{This is nothing but a very special case of the descent for $\infty$-topoi, see \cite[Theorem~6.1.3.9 and Proposition~6.1.3.10]{HTT}.}
\begin{align}
	\calS_\bullet \bfPerf_G( X ) \simeq [ \calS_\bullet \bfPerf(X) / G ] \ .
\end{align}
As an immediate consequence we find that
\begin{align}
	\catCohb( \bfPerf_G( X ) ) \simeq \catCohb_G( \bfPerf( X ) ) \ . 
\end{align}
The right-hand side is the $G$-equivariant stable $\infty$-category of bounded coherent complexes on $\bfPerf(X)$.
Since the functor
\begin{align}
	\bfMap_{/\sfB G}( [X/G], (-) \times \sfB G ) \colon \dSt \longrightarrow \dSt_{/\sfB G}
\end{align}
commutes with limits, we deduce:

\begin{proposition}
	The simplicial derived stack $\calS_\bullet \bfPerf_G(X) \colon \mathbf \Delta\op \to \dSt_{/\sfB G}$ is a $2$-Segal object.
\end{proposition}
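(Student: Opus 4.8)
The plan is to reduce the statement to the already-established fact that $\calS_\bullet \bfPerf$ is a $2$-Segal object in $\dSt$ (see \cite[Theorem~7.3.3]{Dyckerhoff_Kapranov_Higher_Segal} and the discussion at the beginning of \S\ref{ss:convolution_structure}), combined with the observation recorded just above that the functor $\bfMap_{/\sfB G}([X/G], (-) \times \sfB G) \colon \dSt \to \dSt_{/\sfB G}$ commutes with limits. Thus almost nothing new has to be proved: one only needs to package the $2$-Segal condition as a limit condition and transport it along a limit-preserving functor.

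First I would recall from \cite[Proposition~2.3.2]{Dyckerhoff_Kapranov_Higher_Segal} that a simplicial object $Y_\bullet$ valued in an $\infty$-category with finite limits is $2$-Segal if and only if, for every $n \ge 3$ and every $0 \le i < j \le n$, the canonical map $Y_n \to Y_{n-j+i+1} \times_{Y_1} Y_{j-i}$ is an equivalence; in particular, the $2$-Segal condition is a condition on certain pullback diagrams and on equivalences between their apices. Since the forgetful functor $\dSt_{/\sfB G} \to \dSt$ creates limits, a simplicial object in $\dSt_{/\sfB G}$ is $2$-Segal if and only if its underlying simplicial object in $\dSt$ is. Applying this, I would next note that the simplicial object $\calS_\bullet \bfPerf \times \sfB G$ in $\dSt_{/\sfB G}$, with $\sfB G$ the constant simplicial object, is $2$-Segal: its underlying simplicial object in $\dSt$ is $\calS_\bullet \bfPerf \times \sfB G$, and because $\sfB G$ is constant the polygonal maps are identified with $\bigl( \calS_n \bfPerf \to \calS_{n-j+i+1} \bfPerf \times_{\calS_1 \bfPerf} \calS_{j-i} \bfPerf \bigr) \times \id_{\sfB G}$, which are equivalences precisely because $\calS_\bullet \bfPerf$ is $2$-Segal.

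Finally I would invoke limit-preservation: applying $\bfMap_{/\sfB G}([X/G], (-) \times \sfB G)$ --- equivalently, $\bfMap_{/\sfB G}([X/G], -)$ applied to the $\dSt_{/\sfB G}$-valued simplicial object $\calS_\bullet \bfPerf \times \sfB G$ --- carries each polygonal pullback to a pullback and each of the equivalences above to an equivalence, so that $\calS_\bullet \bfPerf_G(X) \simeq \bfMap_{/\sfB G}([X/G], \calS_\bullet \bfPerf \times \sfB G)$ satisfies the $2$-Segal condition in $\dSt_{/\sfB G}$. The only point needing any care is the elementary but essential remark that the $2$-Segal condition is genuinely a limit condition, hence stable under limit-preserving functors; once this is granted there is no real obstacle, the statement being a formal consequence of the $2$-Segal property of $\calS_\bullet \bfPerf$. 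One could alternatively argue via the equivalence $\calS_\bullet \bfPerf_G(X) \simeq [\calS_\bullet \bfPerf(X)/G]$ and descent in the $\infty$-topos $\dSt$, but the route above is the most direct.
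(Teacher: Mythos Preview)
Your proposal is correct and follows exactly the paper's approach: the paper states the proposition as an immediate consequence of the observation (recorded just before it) that $\bfMap_{/\sfB G}([X/G], (-)\times \sfB G)$ commutes with limits, together with the $2$-Segal property of $\calS_\bullet \bfPerf$. You have simply unpacked this one-line deduction into a careful argument, making explicit that the $2$-Segal condition is a pullback condition and hence preserved by limit-preserving functors.
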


Assume now that $G$ is geometric (e.g.\ an affine group scheme) and that there exists a geometric derived stack $U$ equipped with the action of $G$ and a $G$-equivariant, flat effective epimorphism $u \colon U \to X$.
Then the induced morphism $[U/G] \to [X/G]$ is an effective epimorphism which is flat relative to $\sfB G$.
We define $\calS_\bullet \bfCoh_G(X) \in \Fun( \mathbf \Delta\op, \dSt_{/\sfB G} )$ as follows.
Given an affine derived scheme $S = \Spec(A)$ and a morphism $x \colon S \to \sfB G$, we set
\begin{align}
	\mathsf{Map}_{/\sfB G}( S, \calS_\bullet \bfCoh_G(X) ) \coloneqq \left( \calS_\bullet \catCoh_S( S \times_{\sfB G} [X / G] ) \right)^\simeq \in \Fun( \mathbf \Delta\op, \scrS ) \ .
\end{align}
We immediately obtain:

\begin{corollary}
	Let $X$ be a geometric derived stack. Then the simplicial derived stack $\calS_\bullet \bfCoh_G( X ) \colon \mathbf \Delta\op \to \dSt_{/\sfB G}$ is a $2$-Segal object.
\end{corollary}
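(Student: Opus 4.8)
The plan is to reduce this equivariant statement to its non‑equivariant shadow, Lemma~\ref{lem:coh_2_Segal_geometric}, by descent along the atlas $a\colon\Spec(\C)\to\sfB G$. First I would record the general principle that the $2$-Segal condition is a condition on finite limits, hence is preserved and reflected by base change along $a$. Indeed, the functor $a^\ast\colon\dSt_{/\sfB G}\to\dSt_{/\Spec(\C)}=\dSt$ preserves all limits (being a right adjoint) and is conservative (since $a$ is an effective epimorphism); therefore it preserves and reflects pullback squares. Since the $2$-Segal property can be tested through the comparison maps of \cite[Proposition~2.3.2(3)]{Dyckerhoff_Kapranov_Higher_Segal}, whose sources and targets are assembled from finite limits in $\dSt_{/\sfB G}$, it follows that $Y_\bullet\in\Fun(\mathbf\Delta\op,\dSt_{/\sfB G})$ is $2$-Segal if and only if $a^\ast Y_\bullet$ is. So it suffices to compute $a^\ast(\calS_\bullet\bfCoh_G(X))$.

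Second, I would identify this pullback with the non‑equivariant simplicial stack. The $G$-action on $\Spec(\C)$ over $\sfB G$ is trivial, so for every affine derived scheme $S$ mapping to $\sfB G$ through $a$ one has $S\times_{\sfB G}[X/G]\simeq X\times S$, and hence
\begin{align}
	\Map_{/\Spec(\C)}\big(S,\,a^\ast(\calS_\bullet\bfCoh_G(X))\big)\;\simeq\;\big(\calS_\bullet\catCoh_S(X\times S)\big)^\simeq\;\simeq\;\Map\big(S,\,\calS_\bullet\bfCoh(X)\big)\ ,
\end{align}
naturally in $S$ and compatibly with the face and degeneracy maps; that is, $a^\ast(\calS_\bullet\bfCoh_G(X))\simeq\calS_\bullet\bfCoh(X)$ in $\Fun(\mathbf\Delta\op,\dSt)$. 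Since the right‑hand side is a $2$-Segal object by Lemma~\ref{lem:coh_2_Segal_geometric}, the first step gives that $\calS_\bullet\bfCoh_G(X)$ is a $2$-Segal object of $\dSt_{/\sfB G}$, which is the assertion.

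A second, more hands‑on route — parallel to the proof of Lemma~\ref{lem:coh_2_Segal_geometric} — would instead argue directly over $\sfB G$: exhibit $\calS_\bullet\bfCoh_G(X)$ inside the equivariant Waldhausen construction $\calS_\bullet\bfAPerf_G(X)\coloneqq\bfMap_{/\sfB G}([X/G],\calS_\bullet\bfAPerf\times\sfB G)$ as the locus of semigrids all of whose vertices land in $\bfCoh_G(X)$; use that $\calS_\bullet\bfAPerf_G(X)$ is $2$-Segal (by the same argument proving the proposition on $\calS_\bullet\bfPerf_G(X)$ above, since $\bfMap_{/\sfB G}([X/G],(-)\times\sfB G)$ preserves limits) so that the relevant bottom comparison maps are equivalences; note that the vertical inclusions are $(-1)$-truncated; and reduce to essential surjectivity. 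The latter amounts to the statement that a pullback square in $\catPerf([X/G]\times_{\sfB G}S)$ with three of its corners flat over $S$ has its fourth corner flat over $S$, which one verifies after pulling back along the $G$-equivariant flat effective epimorphism $[U/G]\to[X/G]$ relative to $\sfB G$, using the evident relative form of Lemma~\ref{lem:flat_effective_epimorphism}, exactly as in the non‑equivariant case. I expect the first route to be shorter and essentially obstruction‑free; the only points deserving real care are making precise that $a^\ast$ reflects the $2$-Segal condition and the identification $a^\ast(\calS_\bullet\bfCoh_G(X))\simeq\calS_\bullet\bfCoh(X)$ as a morphism of simplicial objects.
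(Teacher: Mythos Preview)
Your proposal is correct. The paper itself offers no proof beyond the phrase ``We immediately obtain,'' so either of your two routes is a legitimate way to fill in the details. Your first route (reflecting the $2$-Segal condition along the conservative, limit-preserving base change $a^\ast\colon\dSt_{/\sfB G}\to\dSt$ and identifying $a^\ast(\calS_\bullet\bfCoh_G(X))\simeq\calS_\bullet\bfCoh(X)$) is clean and makes explicit why the corollary is ``immediate'': the paper has already noted that $\Spec(k)\times_{\sfB G}\calS_\bullet\bfPerf_G(X)\simeq\calS_\bullet\bfPerf(X)$ and that $\calS_\bullet\bfCoh_G(X)\simeq[\calS_\bullet\bfCoh(X)/G]$, which is exactly the content of your identification. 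Your second route simply repeats the proof of Lemma~\ref{lem:coh_2_Segal_geometric} relative to $\sfB G$, and is presumably what the authors had in mind given the placement immediately after the equivariant $\bfPerf$ proposition; both approaches are standard and neither presents any obstruction.
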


The above $2$-Segal object endows $\bfCoh(X)$ with the structure of a $G$-equivariant $\mathbb E_1$-convolution algebra in $\dSt$.

\begin{corollary}
	Let $X \in \dSt$ be a derived stack and let $u \colon U \to X$ be a flat effective epimorphism from a geometric derived stack $U$. Assume that the $2$-Segal object $\calS_\bullet \bfCoh(X)$ endows $\bfCoh(X)$ with the structure of an $\mathbb E_1$-monoid object in $\Corr^\times(\dSt)_{\mathsf{rps}, \mathsf{lci}}$.
	Let $G$ be a smooth algebraic group acting on both $U$ and $X$ assume that $u$ has a $G$-equivariant structure.
	Then the $G$-equivariant $2$-Segal object $\calS_\bullet \bfCoh_G(X)$ induces a $\mathbb E_1$-monoidal structure on $\catCohb_{\mathsf{pro}}( \bfCoh_G(X) )\simeq \catCohb_{\mathsf{pro}, G}( \bfCoh(X) )$.
\end{corollary}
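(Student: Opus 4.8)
The plan is to mirror the proof of Theorem~\ref{thm:coh_algebra_in_correspondence}, replacing the $2$-Segal object $\calS_\bullet\bfCoh(X)$ by its $G$-equivariant avatar $\calS_\bullet\bfCoh_G(X)\simeq[\calS_\bullet\bfCoh(X)/G]$ and reducing every property of the convolution maps needed at the categorified level to the non-equivariant case by descent along the smooth surjective atlas $\Spec(k)\to\sfB G$. First I would record that, since $G$ is a smooth algebraic group and $\bfCoh(X)$ is geometric (this is part of the running hypothesis), each term $\calS_n\bfCoh_G(X)=[\calS_n\bfCoh(X)/G]$ is again a geometric derived stack, so it admits a quasi-compact open exhaustion by Lemma~\ref{lem:quasi_compact_open_exhaustion} and the ind-completion functor $(-)_{\mathsf{ind}}$ of Corollary~\ref{cor:quasi_compact_ind} applies to it. The corollary preceding the statement already supplies that $\calS_\bullet\bfCoh_G(X)\colon\mathbf\Delta\op\to\dSt_{/\sfB G}$ is a $2$-Segal object; since the forgetful functor $\dSt_{/\sfB G}\to\dSt$ preserves pullbacks, $\calS_\bullet\bfCoh_G(X)$ is $2$-Segal in $\dSt$ as well, and $(-)_{\mathsf{ind}}$, being limit-preserving and fully faithful, turns it into a $2$-Segal object in $\Corr^\times(\Ind(\dGeom^{\mathsf{qc}}))$, hence into an $\mathbb E_1$-algebra there by \cite[Theorem~11.1.6]{Dyckerhoff_Kapranov_Higher_Segal}.

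The heart of the argument is to upgrade this to an $\mathbb E_1$-algebra in the subcategory $\Corr^\times(\Ind(\dGeom^{\mathsf{qc}}))_{\mathsf{rps},\mathsf{lci}}$, i.e.\ to show that the $p$-type and $q$-type structure maps of $\calS_\bullet\bfCoh_G(X)$ become, after applying $(-)_{\mathsf{ind}}$, respectively ind-derived lci and representable by proper schemes. Here I would exploit that, by transitivity of base change along $\sfB G$ together with the identity $\calS_n\bfCoh(X)=\Spec(k)\times_{\sfB G}\calS_n\bfCoh_G(X)$, the non-equivariant structure maps $p,q$ are precisely the pullbacks of their $G$-equivariant counterparts along the smooth surjective $G$-torsor $\Spec(k)\to\sfB G$. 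Since quasi-compactness, the derived lci property, and representability by proper schemes are all fppf-local on the target, and since $p$ is quasi-compact and lci (Proposition~\ref{prop:stack_extensions_is_relatively_affine} and the hypothesis) while $q$ is representable by proper schemes (again the hypothesis), the same conclusions hold for the equivariant maps. Lemma~\ref{lem:ind_derived_lci} then promotes the $p$-type map to an ind-derived lci morphism, and the quasi-compact exhaustion argument used in the proof of Theorem~\ref{thm:coh_algebra_in_correspondence} shows the $q$-type map is representable by proper schemes. The $2$-Segal condition then equips $\bfCoh_G(X)_{\mathsf{ind}}$ with the structure of an $\mathbb E_1$-algebra in $\Corr^\times(\Ind(\dGeom^{\mathsf{qc}}))_{\mathsf{rps},\mathsf{lci}}$.

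Finally I would apply the right-lax monoidal functor $\catCohb_{\mathsf{pro}}\colon\Corr^\times(\Ind(\dGeom^{\mathsf{qc}}))_{\mathsf{rps},\mathsf{lci}}\to\mathsf{Pro}(\Cat_\infty^{\mathsf{st}})$, which preserves $\mathbb E_1$-algebras, to obtain the desired $\mathbb E_1$-monoidal structure on $\catCohb_{\mathsf{pro}}(\bfCoh_G(X))$; the equivalence $\catCohb_{\mathsf{pro}}(\bfCoh_G(X))\simeq\catCohb_{\mathsf{pro},G}(\bfCoh(X))$ is the pro-categorical version, computed termwise along the chosen exhaustion, of the descent equivalence $\catCohb(\bfPerf_G(X))\simeq\catCohb_G(\bfPerf(X))$ already recorded in the excerpt. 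The step I expect to be the main obstacle is the bookkeeping around the quasi-compact exhaustions: one must verify that $[\bfCoh(X)/G]$, and more generally each $[\calS_n\bfCoh(X)/G]$, admits a quasi-compact open exhaustion compatible with the simplicial (Waldhausen) structure maps — equivalently, a $G$-invariant quasi-compact exhaustion of $\bfCoh(X)$ compatible with the faces — so that the pro-categorical push-forwards and pull-backs entering $\catCohb_{\mathsf{pro}}$ are computed termwise exactly as in the proof of Theorem~\ref{thm:coh_algebra_in_correspondence}; by contrast, the descent of the lci and properness properties along $\Spec(k)\to\sfB G$ is routine.
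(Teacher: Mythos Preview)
Your proposal is correct and follows essentially the same route as the paper: both reduce to checking that the equivariant structure maps $\ev_3\times\ev_1$ and $\ev_2$ are quasi-compact lci and representable by proper schemes, respectively, and both accomplish this by exhibiting the non-equivariant maps as pullbacks of the equivariant ones along the smooth atlas $\Spec(k)\to\sfB G$ and invoking fppf-locality of these properties on the target. The paper's proof is terser---it records the relevant pullback square for each $\ev_i$ and concludes in one line---whereas you spell out the surrounding bookkeeping (geometricity of the quotients, the passage through $(-)_{\mathsf{ind}}$, and the exhaustion compatibility), but the core descent argument is identical.
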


\begin{proof}
	Similarly to the proof of Theorem~\ref{thm:coh_algebra_in_correspondence}, all we need to check is that the map
\begin{align}
		\ev_3 \times \ev_1 \colon \bfCohext_G(X) \longrightarrow \bfCoh_G(X) \times \bfCoh_G(X) 
\end{align}
	is quasi-compact and derived lci and that the map
\begin{align}
		 \ev_2\colon \bfCohext_G(X) \longrightarrow \bfCoh_G(X) 
\end{align}
	is representable by proper schemes.
	Observe that for $i = 1, 2, 3$ the right and the outer squares in the commutative diagram
	\begin{align}
		 \begin{tikzcd}[ampersand replacement = \&]
			\bfCohext(X) \arrow{r}{\ev_i} \arrow{d} \& \bfCoh(X) \arrow{d} \arrow{r} \& \Spec(k) \arrow{d} \\
			\bfCohext_G(X) \arrow{r}{\ev_i} \& \bfCoh_G(X) \arrow{r} \& \sfB G
		\end{tikzcd}
	\end{align}
	are pullback squares.
	Therefore so is the left one.
	The conclusion now follows because $\Spec(k) \to \sfB G$ is a smooth atlas and from the analogous statements for $\bfCoh(X)$, which have been proven in the proof of Theorem~\ref{thm:coh_algebra_in_correspondence}.
\end{proof}

\bigskip\section{Decategorification}\label{s:cohas}

Now, we investigate what happens to our construction when we \textit{decategorify}, i.e., when we pass to the G-theory (introduced in \S\ref{ss:g-theory-non-quasi-compact}).
A first consequence of our Theorem~\ref{thm:coh_algebra_in_correspondence} is the following:

\begin{proposition}
	Let $Y$ be one of the following derived stacks:
	\begin{enumerate}\itemsep0.2cm
		\item a smooth proper complex scheme of dimension either one or two;
		\item the Betti, de Rham or Dolbeault stack of a smooth projective curve.
	\end{enumerate}
	The CoHA tensor structure on $\catCohb_{\mathsf{pro}}(\bfCoh(Y))$ endows $G(\bfCoh(Y))$ with the structure of a $\mathbb E_1$-monoid object in $\mathsf{Sp}$.
\end{proposition}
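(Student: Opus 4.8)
The plan is to apply algebraic $G$-theory to the $\mathbb E_1$-algebra structure produced in Theorem~\ref{thm:coh_algebra_in_correspondence}, exploiting the multiplicativity of $K$-theory. Recall that Theorem~\ref{thm:coh_algebra_in_correspondence} does not merely exhibit $\catCohb_{\mathsf{pro}}(\bfCoh(Y))$ as an $\mathbb E_1$-algebra in $\mathsf{Pro}(\Cat_\infty^{\mathsf{st}})$: it shows that $\bfCoh(Y)_{\mathsf{ind}}$ is an $\mathbb E_1$-algebra in $\Corr^\times(\Ind(\dGeom^{\mathsf{qc}}))_{\mathsf{rps},\mathsf{lci}}$ and that $\catCohb_{\mathsf{pro}}$ is a right-lax symmetric monoidal functor out of this $(\infty,2)$-category of correspondences. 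It therefore suffices to construct a right-lax symmetric monoidal functor $G \colon \mathsf{Pro}(\Cat_\infty^{\mathsf{st}}) \to \mathsf{Sp}$ whose value on $\catCohb_{\mathsf{pro}}(\bfCoh(Y))$ is the $G$-theory spectrum $G(\bfCoh(Y))$ of §\ref{ss:g-theory-non-quasicompact}. Composing $G$ with $\catCohb_{\mathsf{pro}}$ then yields a right-lax symmetric monoidal functor $\Corr^\times(\Ind(\dGeom^{\mathsf{qc}}))_{\mathsf{rps},\mathsf{lci}} \to \mathsf{Sp}$, and since right-lax symmetric monoidal functors preserve $\mathbb E_1$-algebras, the image of $\bfCoh(Y)_{\mathsf{ind}}$ is an $\mathbb E_1$-algebra in $\mathsf{Sp}$ whose underlying spectrum is $G(\bfCoh(Y))$.

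To build $G$, one starts from non-connective algebraic $K$-theory $K \colon \Cat_\infty^{\mathsf{st}} \to \mathsf{Sp}$, which is right-lax symmetric monoidal for the tensor product of stable $\infty$-categories; equivalently, this is the standard assertion that the $K$-theory of a symmetric monoidal (resp.\ $\mathbb E_1$-monoidal) stable $\infty$-category is an $\mathbb E_\infty$- (resp.\ $\mathbb E_1$-) ring spectrum. One then pro-extends $K$ to a right-lax symmetric monoidal functor $\mathsf{Pro}(\Cat_\infty^{\mathsf{st}}) \to \mathsf{Pro}(\mathsf{Sp})$ — the pro-extension of a lax monoidal functor being again lax monoidal — and composes with $\lim \colon \mathsf{Pro}(\mathsf{Sp}) \to \mathsf{Sp}$, which is right-lax symmetric monoidal as the right adjoint of the symmetric monoidal constant-pro-object functor. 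Unwinding the definitions of $\catCohb_{\mathsf{pro}}(\bfCoh(Y)) = \flim_{U \Subset \bfCoh(Y)} \catCohb(U)$ and of the $G$-theory of a non-quasicompact stack from §\ref{ss:g-theory-non-quasicompact}, this composite sends $\catCohb_{\mathsf{pro}}(\bfCoh(Y))$ to $\lim_{U \Subset \bfCoh(Y)} K(\catCohb(U)) \simeq G(\bfCoh(Y))$, as required.

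The only point deserving care is that the multiplication so obtained on $G(\bfCoh(Y))$ is indeed the decategorified Hall product, i.e.\ that the right-lax structure map of $G$ realizes the external product $\boxtimes$ on $G$-theory and that the functorialities of $G$ along ind-derived-lci pullbacks and proper pushforwards match those of $\catCohb_{\mathsf{pro}}$. This is automatic: the $\mathbb E_1$-algebra structure, together with all its higher coherences, is transported wholesale by the composite $G \circ \catCohb_{\mathsf{pro}}$ of functors of $(\infty,2)$-categories of correspondences, and by construction all coherences descend from the $2$-Segal object $\calS_\bullet\bfCoh(Y)$, so no separate verification of associativity or of the Hall product formula is needed. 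I do not anticipate a genuine obstacle; the only non-formal inputs are the multiplicativity of $K$-theory and the lax monoidality of the pro-extension and of the limit functor, all of which are standard.
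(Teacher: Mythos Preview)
Your argument is correct and is precisely the approach the paper has in mind: the proposition is stated without proof, as an immediate consequence of Theorem~\ref{thm:coh_algebra_in_correspondence} together with the definition of $G(\bfCoh(Y))$ in \S\ref{ss:g-theory-non-quasicompact}, and your proposal spells out exactly the implicit reasoning --- namely, that $K$ is right-lax symmetric monoidal, that pro-extension and $\lim$ preserve this, and hence the composite $\lim \circ K \circ \catCohb_{\mathsf{pro}}$ carries the $\mathbb E_1$-algebra $\bfCoh(Y)_{\mathsf{ind}}$ in correspondences to an $\mathbb E_1$-algebra in $\mathsf{Sp}$.
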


\begin{remark}
	Up to our knowledge, the above result provides the first construction of a Hall algebra structure on the full G-theory spectrum of $\bfCoh(Y)$. 
	Furthermore, the above results hold also for the stack $\bfCoh_{\mathsf{prop}}^{\leqslant d}(S)$, where $S$ is a smooth (quasi-)projective complex surface and $0 \leq d \leq 2$ is an integer. 
\end{remark}

Taking $\pi_0$ of $G(\bfCoh(Y))$, we obtain an associative algebra structure on $G_0(\bfCoh(Y))$.
When $Y$ is the de Rham shape of a curve, this is a $K$-theoretical Hall algebra associated to flat vector bundles on the curve, and it has not been previously considered in the literature.
On the other hand, in \cite{Zhao_Hall,KV_Hall} and in \cite{Sala_Schiffmann} the authors considered the cases of $Y$ being a surface or being the Dolbeault shape of a curve, respectively.
Below, we briefly review the construction in \cite{KV_Hall} and prove that the two algebra structures on $G_0(\bfCoh(Y))$ obtained using our method or theirs agree.

Let $S$ be a smooth (quasi-)projective complex surface and let $0 \leq d \leq 2$ be an integer.
To lighten the notation, write
\begin{align}
	Y \coloneqq \bfCoh_{\mathsf{prop}}^{\le d}(S) , \qquad Y^{\mathsf{ext}} \coloneqq \bfCoh_{\mathsf{prop}}^{\le d, \mathsf{ext}}(S) 
\end{align}
and
\begin{align}
	Y_0 \coloneqq \trunc{Y}, \qquad Y_0^{\mathsf{ext}} \coloneqq \trunc{Y}^{\mathsf{ext}} \ .
\end{align}
Proposition~\ref{prop:stack_extensions_is_relatively_affine} implies that
\begin{align}
	Y^{\mathsf{ext}} = \Spec_{Y\times Y}( \Sym_{\scrO_{Y \times Y}}( \calE^\vee ) ) \ , 
\end{align}
where $\calE \in \catPerf(Y \times Y)$ is a certain perfect complex on $Y \times Y$.
Let
\begin{align}
	i \colon Y_0 \times Y_0 \longrightarrow Y \times Y 
\end{align}
be the natural inclusion and let $\calE_0 \coloneqq i^\ast(\calE)$. Set
\begin{align}
	\widetilde{Y}^{\mathsf{ext}} \coloneqq \Spec_{Y_0 \times Y_0}( \Sym_{\scrO_{Y_0 \times Y_0}}( \calE_0^\vee ) ) \ .
\end{align}
Consider the commutative diagram
\begin{align}
	\begin{tikzcd}[ampersand replacement = \&]
		Y_0^{\mathsf{ext}} \arrow{r}{\iota} \arrow{dr}[swap]{p_0} \& \widetilde{Y}^{\mathsf{ext}} \arrow{r}{j} \arrow{d}{\widetilde{p}} \& Y^{\mathsf{ext}} \arrow{d}{p} \\
		{} \& Y_0 \times Y_0 \arrow{r}{i} \& Y \times Y 
	\end{tikzcd} \ .
\end{align}
The right square is a pullback, by construction. Therefore, the diagram
\begin{align}
	 \begin{tikzcd}[ampersand replacement = \&]
		\catCohb( \widetilde{Y}^{\mathsf{ext}} ) \arrow{r}{j_\ast} \& \catCohb(Y^{\mathsf{ext}}) \\
		\catCohb(Y_0 \times Y_0) \arrow{u}{\widetilde{p}^\ast} \arrow{r}{i_\ast} \& \catCohb(Y \times Y) \arrow{u}{p^\ast}
	\end{tikzcd} 
\end{align}
canonically commutes.
Passing to $G$-theory, the functors $i_\ast$ and $j_\ast$ induce equivalences, thereby identifying $p^\ast$ et $\widetilde{p}^\ast$.

We now compare $\iota_\ast^{-1} \circ \widetilde{p}^\ast \colon G( Y_0 \times Y_0 ) \to G( Y^{\mathsf{ext}}_0 )$ with the construction of the virtual pullback $p_0!$ by Kapranov--Vasserot.
In \cite[\S3.3]{KV_Hall}, they take as additional input an explicit resolution of $\calE_0$ as a $3$-terms complex
\begin{align}
	\calE^\bullet_0 \coloneqq \cdots \longrightarrow 0 \longrightarrow \calE^{-1}_0 \xrightarrow{d^0} \calE^0_0 \xrightarrow{d^1} \calE^1_0 \longrightarrow 0 \longrightarrow \cdots \ . 
\end{align}
Let $\calE_0^{\le 0}$ be the $2$-terms complex $\calE_0^{-1} \to \calE_0^0$ and set
\begin{align}
	E^{\le 0} \coloneqq \mathbb V_{Y_0 \times Y_0}( (\calE_0^{\le 0})^\vee ) \quad \text{and} \quad E^1 \coloneqq \mathbb V_{Y_0 \times Y_0}( (\calE_0^1)^\vee ) \ . 
\end{align}
The canonical projection $\pi \colon E^{\le 0} \to Y_0 \times Y_0$ is smooth and the differential $d^1$ induces a section
\begin{align}
	s \colon E^{\le 0} \longrightarrow \pi^\ast E^1 \coloneqq E^1 \times_{Y_0 \times Y_0} E^{\le 0} \ , 
\end{align}
such that
\begin{align}
	\begin{tikzcd}[ampersand replacement = \&]
		\widetilde{Y}^{\mathsf{ext}} \arrow{r}{t} \arrow{d}{t} \& E^{\le 0} \arrow{d}{s} \\
		E^{\le 0} \arrow{r}{0} \& \pi^\ast E^1
	\end{tikzcd} 
\end{align}
is a derived pullback square.
Therefore, we can factor $\widetilde{p} \colon \widetilde{Y}^{\mathsf{ext}} \to Y_0 \times Y_0$ as
\begin{align}
	\begin{tikzcd}[ampersand replacement = \&]
		\widetilde{Y}^{\mathsf{ext}} \arrow{r}{t} \arrow{dr}[swap]{\widetilde{p}} \& E^{\le 0} \arrow{d}{\pi} \\
		{} \& Y_0 \times Y_0 
	\end{tikzcd} \ .
\end{align}
As in \textit{loc.cit.} the operation $p_0^!$ is defined as the composition $s^! \circ \pi^\ast \colon G_0(Y_0 \times Y_0) \to G_0(Y_0^{\mathsf{ext}})$, to compare the two constructions it is enough to verify that
\begin{align}
	 s^! = \iota_\ast^{-1} \circ t^\ast 
\end{align}
as functions $G_0(E^{\le 0}) \to G_0(Y_0^{\mathsf{ext}})$. This follows at once by unraveling the definition of $s^!$. Thus our construction of the Hall product on $G_0(Y)\simeq G_0(Y_0)$ coincides with theirs, 
and we obtain:
\begin{theorem}
	Let $S$ be a smooth (quasi-)projective complex surface and let $0\leq d\leq 2$ be an integer. There exists an algebra isomorphism between
	\begin{align}
		\pi_0\big( \lim K( \catCohb_{\mathsf{pro}}( \bfCoh_{\mathsf{prop}}^{\leqslant d}(S) )) \big)
	\end{align}
	and the K-theoretical Hall algebra of $S$ as defined in \cite{Zhao_Hall, KV_Hall}. Thus, the CoHA tensor structure on the stable $\infty$-category $\catCohb_{\mathsf{pro}}( \bfCoh^{\leqslant d}(S) )$ is a categorification of the latter. 
	
	Finally, if in addition $S$ is toric, similar results holds in the equivariant setting.
\end{theorem}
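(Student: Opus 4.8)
The plan is to combine the comparison of Hall products carried out in the discussion preceding the statement with a $G$-theoretic reduction to the classical truncation, and then to replay the argument equivariantly. Write $Y \coloneqq \bfCoh_{\mathsf{prop}}^{\leqslant d}(S)$. First I would invoke Proposition~\ref{prop:quasi_compact_G_truncation} to identify
\begin{align}
	\pi_0\big( \lim K( \catCohb_{\mathsf{pro}}( Y )) \big) \simeq \pi_0\big( \lim K( \catCohb_{\mathsf{pro}}( \trunc{Y} )) \big)\ ,
\end{align}
and then use Lemma~\ref{lem:truncation_stack_coherent_sheaves} to identify $\trunc{Y}$ with the classical stack of coherent sheaves on $S$ with proper support of dimension $\le d$. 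Evaluating along a quasi-compact open exhaustion of $\trunc{Y}$, the right-hand side becomes the $G_0$-group on which the $K$-theoretical Hall algebra of \cite{KV_Hall} is built (and, for $d = 0$, that of \cite{Zhao_Hall}, which is the $d = 0$ instance of \cite{KV_Hall}). It therefore remains only to match the two multiplications.

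By Theorem~\ref{thm:coh_algebra_in_correspondence} the CoHA tensor structure on $\catCohb_{\mathsf{pro}}(Y)$ is the composite $q_\ast \circ p^\ast \circ \boxtimes$, with $p \colon Y^{\mathsf{ext}} \to Y \times Y$ quasi-compact and derived lci by Proposition~\ref{prop:extremal_projections_scheme}, and $q$ representable by proper Quot schemes. Passing to $G$-theory, $\boxtimes$ decategorifies to the external product, $q_\ast$ to the honest proper pushforward, and $p^\ast$ to the refined Gysin pullback attached to the derived lci structure of $p$. The discussion above shows that this pullback agrees on $G_0$ with the virtual pullback $p_0^!$ of Kapranov--Vasserot: Proposition~\ref{prop:stack_extensions_is_relatively_affine} presents $Y^{\mathsf{ext}}$ as the linear stack $\Spec_{Y \times Y}(\Sym(\calE^\vee))$; the closed immersions $i$ and $j$ of the comparison diagram induce equivalences on $K(\catCohb_{\mathsf{pro}}(-))$ by insensitivity of $G$-theory to nilpotent thickenings, which identifies $p^\ast$ with $\widetilde{p}^\ast$; and, after choosing a three-term resolution $\calE_0^\bullet$ and factoring $\widetilde{p} = \pi \circ t$ through a smooth affine bundle $\pi$, one checks straight from the definition of the refined Gysin map that $s^! = \iota_\ast^{-1} \circ t^\ast$. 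Hence the two bilinear operations on $G_0$ coincide, and since both are associative the induced algebra structures are isomorphic. The ``Thus'' of the statement is then the tautology that $(\catCohb_{\mathsf{pro}}(Y), \ostar)$ decategorifies to this algebra.

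For the toric case, let $T$ be the torus acting on $S$. The convolution diagram \eqref{eq:convolution_diagram_algebraic} for $Y$ inherits a compatible $T$-action, and \S\ref{ss:equivariant} produces the $T$-equivariant categorified Hall algebra, with underlying category $\catCohb_{\mathsf{pro}, T}(Y)$. I would replay the argument above on the quotient stacks $[-/T]$: the presentation of $Y^{\mathsf{ext}}$ as a linear stack, the perfect complex $\calE$, and its resolution $\calE_0^\bullet$ can all be taken $T$-equivariantly (equivalently, one applies the non-equivariant comparison directly to the $[-/T]$-quotients), the map $\pi$ stays smooth and $t$ a regular closed immersion, so the identity $s^! = \iota_\ast^{-1} \circ t^\ast$ persists. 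This identifies the $T$-equivariant decategorification with the equivariant $K$-theoretical Hall algebra of \cite{Zhao_Hall, KV_Hall}.

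The step I expect to be the main obstacle is the middle one: matching the abstract $p^\ast$ produced by the Gaitsgory--Rozenblyum correspondence machinery (through the lax-monoidal functor $\catCohb_{\mathsf{pro}}$) with the concretely defined virtual pullback $p_0^!$ of \cite{KV_Hall}, which is built from an explicit presentation of $\calE_0$ by vector bundles together with a refined intersection. One has to unwind both definitions and use crucially that $G$-theory kills the nilpotent thickening relating $\widetilde{Y}^{\mathsf{ext}}$ and $Y^{\mathsf{ext}}$. The remaining points --- properness of $q$, compatibility of $\boxtimes$ with external products, and the passage from bilinear maps to algebra structures --- are comparatively formal.
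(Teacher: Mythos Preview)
Your proposal is correct and follows essentially the same approach as the paper: the paper's proof is precisely the discussion preceding the theorem statement, which presents $Y^{\mathsf{ext}}$ as a linear stack via Proposition~\ref{prop:stack_extensions_is_relatively_affine}, uses the $G$-theory invariance under truncation to identify $p^\ast$ with $\widetilde{p}^\ast$, and then verifies $s^! = \iota_\ast^{-1} \circ t^\ast$ by unwinding the definition of the Kapranov--Vasserot virtual pullback through the factorization $\widetilde{p} = \pi \circ t$. Your identification of the matching step as the crux is apt, and the equivariant extension via \S\ref{ss:equivariant} is exactly what the paper has in mind.
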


\subsection{The equivariant case}\label{ss:cohas-equivariant}

Let $\bfCoh_0(\C^2)\coloneqq \bfCoh_{\mathsf{prop}}^{\leqslant 0}(\C^2)$ be the geometric derived stack of zero-dimen\-sio\-nal coherent sheaves on $\C^2$. Note that the natural $\C^\ast\times\C^\ast$-action on $\C^2$ lifts to an action on $\bfCoh_0(\C^2)$.

A convolution algebra structure on the Grothendieck group $G_0^{\C^\ast\times \C^\ast}( \trunc{\bfCoh_{0}(\C^2)} )$ of the truncation of $\bfCoh_0(\C^2)$ has been defined in \cite{SV_elliptic,SV_Langlands}. In those papers, the convolution product is defined by using an explicit presentation of $\trunc{\bfCoh_0(\C^2)}$ as disjoint union of quotient stacks. Moreover, as proved in those papers, the convolution algebra on $G_0^{\C^\ast\times \C^\ast}( \trunc{\bfCoh_0(\C^2)} )$ is isomorphic to a positive nilpotent part $\mathbf{U}_{q, t}^+(\ddot{\mathfrak{gl}}_1)$ of the elliptic Hall algebra $\mathbf{U}_{q, t}(\ddot{\mathfrak{gl}}_1)$ of Burban and Schiffmann \cite{BS_elliptic}.

In \cite[Proposition~6.1.5]{KV_Hall}, the authors showed that the convolution product defined by using virtual pullbacks coincides with the convolution product defined by using the explicit description of $\trunc{\bfCoh_0(\C^2)}$ in terms of quotient stacks. Thanks to this result (which holds also equivariantly), by arguing as in the previous section, one can show the following.
\begin{proposition}
	There exists a $\Z[q,t]$-algebra isomorphism 
	\begin{align}
		\pi_0 K( \catCohb_{\mathsf{pro},\,\C^\ast\times \C^\ast}( \bfCoh_0(\C^2) ) ) \simeq \mathbf{U}_{q, t}^+(\ddot{\mathfrak{gl}}_1)\ .	
	\end{align}
	Thus, $\big(\catCohb_{\mathsf{pro},\,\C^\ast\times \C^\ast}( \bfCoh_0(\C^2) ), \ostar \big)$ is a categorification of $\mathbf{U}_{q, t}^+(\ddot{\mathfrak{gl}}_1)$. Here, $\bfCoh_0(\C^2)\coloneqq \bfCoh_{\mathsf{prop}}^{\leqslant 0}(\C^2)$.
\end{proposition}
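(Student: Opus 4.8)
The plan is to reduce the statement to the combination of two inputs: first, our own Theorem~\ref{thm:coh_algebra_in_correspondence} (together with the equivariant variant in \S\ref{ss:equivariant}), which endows $\catCohb_{\mathsf{pro},\,\C^\ast\times\C^\ast}(\bfCoh_0(\C^2))$ with an $\mathbb E_1$-monoidal structure $\ostar$ and hence, after passing to $\pi_0 K$, a (torus-equivariant) associative $\Z[q,t]$-algebra structure on $\pi_0 K(\catCohb_{\mathsf{pro},\,\C^\ast\times\C^\ast}(\bfCoh_0(\C^2)))$; and second, the identification of this decategorified algebra with the convolution algebra of Schiffmann--Vasserot, which in turn is known to be $\mathbf U^+_{q,t}(\ddot{\mathfrak{gl}}_1)$ by \cite{SV_elliptic, SV_Langlands}. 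So the real content is to check that \emph{our} Hall product on $\pi_0 K$ coincides with the Schiffmann--Vasserot product, after which the isomorphism with $\mathbf U^+_{q,t}(\ddot{\mathfrak{gl}}_1)$ is simply quoted. First I would note that, since $\C^2$ is zero-dimensional in the relevant sense and $\bfCoh_0(\C^2)=\bfCoh^{\leqslant 0}_{\mathsf{prop}}(\C^2)$, Remark~\ref{rem:other-stacks} applies and the convolution diagram \eqref{eq:convolution_diagram_algebraic} has the required $\mathsf{rps},\mathsf{lci}$ properties equivariantly, so the equivariant corollary of \S\ref{ss:equivariant} produces $\ostar$.

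The comparison step runs exactly parallel to the surface case treated earlier in \S\ref{s:cohas}. Concretely, I would invoke \cite[Proposition~6.1.5]{KV_Hall}, which says that for $\trunc{\bfCoh_0(\C^2)}$ the convolution product defined via virtual pullbacks (perfect obstruction theories) agrees with the one defined via the explicit presentation as a disjoint union of quotient stacks used by Schiffmann--Vasserot. As remarked in the excerpt, this result holds equivariantly. Then I would repeat, verbatim with $S=\C^2$ and $d=0$ and with the $\C^\ast\times\C^\ast$-equivariance carried along, the diagram-chase of \S\ref{s:cohas}: write $Y=\bfCoh_0(\C^2)$, $Y^{\mathsf{ext}}=\bfCoh^{\leqslant 0,\mathsf{ext}}_{\mathsf{prop}}(\C^2)$, use Proposition~\ref{prop:stack_extensions_is_relatively_affine} to present $Y^{\mathsf{ext}}$ as a linear stack over $Y\times Y$, pull back to the truncation, factor $\widetilde p$ through a smooth map $\pi$ and a regular section $s$ via a chosen $3$-term resolution of $\calE_0$ as in \cite[\S3.3]{KV_Hall}, and check that $s^{!}=\iota_\ast^{-1}\circ t^\ast$ as maps $G_0(E^{\leqslant 0})\to G_0(Y^{\mathsf{ext}}_0)$. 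This identifies our $q_\ast\circ p^\ast$ on $\pi_0 K = G_0$ with the virtual-pullback construction, hence (by \cite[Proposition~6.1.5]{KV_Hall}, equivariant form) with the Schiffmann--Vasserot product, hence with $\mathbf U^+_{q,t}(\ddot{\mathfrak{gl}}_1)$.

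The main obstacle I anticipate is purely bookkeeping rather than conceptual: one must be careful that all the identifications $G(i_\ast)$, $G(j_\ast)$, $G(\iota_\ast)$ are equivalences \emph{on the $\C^\ast\times\C^\ast$-equivariant} $G$-theory spectra — this is where the finer invariant $\catCohb_{\mathsf{pro}}$ and Proposition~\ref{prop:quasi_compact_G_truncation} (insensitivity of $K(\catCohb_{\mathsf{pro}}(-))$ to the derived/truncation structure) are needed, since $\bfCoh_0(\C^2)$ is not quasi-compact, and one must check these invariances commute with the torus action and with the grading by the numerical Grothendieck lattice. A second, minor, point is matching the two grading lattices and the appearance of the formal parameters $q,t$ on the two sides, so that the comparison is an isomorphism of $\Z[q,t]$-algebras and not merely of ungraded rings; this is handled by tracking the $\C^\ast\times\C^\ast$-weights through the linear-stack presentation, exactly as the leading coefficient/reduced Hilbert polynomial bookkeeping was handled in \S\ref{s:cohas}. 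Once these naturality checks are in place, the proof is the one-line assembly above.
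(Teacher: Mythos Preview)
Your proposal is correct and follows essentially the same approach as the paper: invoke \cite[Proposition~6.1.5]{KV_Hall} (equivariant form) to identify the virtual-pullback product with the Schiffmann--Vasserot quotient-stack product, repeat the diagram-chase of \S\ref{s:cohas} to match our $q_\ast\circ p^\ast$ with the virtual-pullback construction, and then quote \cite{SV_elliptic,SV_Langlands} for the isomorphism with $\mathbf U^+_{q,t}(\ddot{\mathfrak{gl}}_1)$. The paper's own argument is in fact terser than yours---it simply says ``by arguing as in the previous section''---so your explicit bookkeeping of the equivariant $G$-theory identifications and the $\Z[q,t]$-structure is, if anything, more careful than what appears in the text.
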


Let $X$ be a smooth projective complex curve and let $\bfHiggs^{\textrm{na\"if}}(X)\coloneqq \mathbf{T}^\ast[0]\bfCoh(X)$ (cf.\ Remark~\ref{rem:naive_Higgs}). Recall that $\C^\ast$ acts by ``scaling the Higgs fields". 

The Gro\-then\-dieck group $G_0^{\C^\ast}( \trunc{\bfHiggs^{\textrm{na\"if}}(X)} )$ of the truncation of $\bfHiggs^{\textrm{na\"if}}(X)$ is endowed with a convolution algebra structure as constructed in \cite{Sala_Schiffmann} and in \cite{Minets_Hall} for the rank zero case. In the rank zero case, the construction of the product follows the one in \cite{SV_elliptic,SV_Langlands} discussed above, while in the higher rank case one uses a \textit{local} description of $\bfHiggs^{\textrm{na\"if}}(X)$ as a quotient stack; then the construction of the product is performed locally and one glues suitably to get a global convolution product. By similar arguments as above and thanks to Remark~\ref{rem:naive_Higgs}, we have the following. 
\begin{proposition}
	Let $X$ be a smooth projective complex curve. There exists an algebra isomorphism between 
	\begin{align}
		\pi_0\big( \lim K( \catCohb_{\mathsf{pro},\,\C^\ast}( \bfCoh(X_\Dol)  ) \big)
	\end{align}
	and the K-theoretical Hall algebra of Higgs sheaves on $X$ introduced in \cite{Sala_Schiffmann, Minets_Hall}. Thus, the CoHA tensor structure on the stable $\infty$-category $\catCohb_{\mathsf{pro},\,\C^\ast }( \bfCoh(X_\Dol) )$ is a categorification of the latter. 
\end{proposition}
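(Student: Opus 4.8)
The plan is to argue exactly as in the proof of the surface statement earlier in this section, carrying everything out $\C^\ast$-equivariantly, and to reduce the higher-rank part to the \emph{local} construction of \cite{Sala_Schiffmann}. The only genuinely new inputs are the identification of the moduli stack provided by Remark~\ref{rem:naive_Higgs} and the Dolbeault tor-amplitude estimate of Corollary~\ref{cor:Dol-lci}; everything else is bookkeeping.

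First I would pin down the geometry. By Remark~\ref{rem:naive_Higgs}, for a smooth projective curve $X$ there is a canonical equivalence $\bfCoh(X_\Dol) \simeq \bfHiggs^{\textrm{na\"if}}(X) = \mathbf{T}^\ast[0]\bfCoh(X)$, and it intertwines the $\C^\ast$-action on $\bfCoh(X_\Dol)$ of \S\ref{ss:equivariant} with the action scaling the Higgs field; in particular its truncation is the classical stack $\mathscr{H}iggs(X)$ of Higgs sheaves on $X$. Decategorifying, Proposition~\ref{prop:quasi_compact_G_truncation} yields a canonical identification $\pi_0 K(\catCohb_{\mathsf{pro},\,\C^\ast}(\bfCoh(X_\Dol))) \simeq G_0^{\C^\ast}(\mathscr{H}iggs(X))$, which is precisely the underlying module of the K-theoretical Hall algebra of \cite{Sala_Schiffmann, Minets_Hall}. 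So the content is the comparison of the two products.

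Our product is $q_\ast\circ p^\ast\circ\boxtimes$ for the convolution diagram \eqref{eq:convolution_diagram_algebraic} attached to $\bfCoh(X_\Dol)$, i.e.\ $\bfCoh_\Dol(X)\times\bfCoh_\Dol(X)\xleftarrow{p}\bfCohext_\Dol(X)\xrightarrow{q}\bfCoh_\Dol(X)$ with $p=\ev_3\times\ev_1$ and $q=\ev_2$. By Proposition~\ref{prop:stack_extensions_is_relatively_affine} the map $p$ is affine, presenting $\bfCohext_\Dol(X)$ as the linear stack of a perfect complex over $\bfCoh_\Dol(X)^{\times 2}$, and by Corollary~\ref{cor:Dol-lci} it is derived lci, so on $G$-theory $p^\ast$ is the virtual pullback attached to the perfect obstruction theory $\LL_p$; and $q$ is representable by Simpson $\Lambda$-module Quot schemes, hence proper, so $q_\ast$ is the usual proper pushforward. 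In the \emph{rank-zero} case the Beauville--Narasimhan--Ramanan correspondence (cf.\ the proof of Corollary~\ref{cor:Dol-lci}) identifies the relevant stack with that of zero-dimensional sheaves on the surface $\sfT^\ast X$ with support proper over $X$, under which our convolution becomes verbatim the Zhao/Kapranov--Vasserot product of that surface; the identification with the virtual-pullback product is then exactly the computation carried out above for surfaces (the factorization $\widetilde p=\pi\circ t$, the identity $s^!=\iota_\ast^{-1}\circ t^\ast$), read $\C^\ast$-equivariantly and using the equivariant version of \cite[Proposition~6.1.5]{KV_Hall}. For \emph{higher rank}, \cite{Sala_Schiffmann} defines the Hall product by covering $\mathscr{H}iggs(X)$ by quotient stacks $[R_i/G_i]$, building a Hecke correspondence and a virtual pullback on each chart, and gluing. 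I would check that, \'etale-locally on $\bfCoh(X)\times\bfCoh(X)$, the derived stack $\bfCohext_\Dol(X)$ with its maps $p$ and $q$ coincides with this local model — matching their three-term resolution and the resulting obstruction bundle with $\LL_p$ — and then invoke a local analogue of \cite[Proposition~6.1.5]{KV_Hall} to identify the two products on each chart; since both constructions are compatible with restriction to open substacks and with the $\C^\ast$-action, they agree globally.

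The main obstacle is this last local-to-global comparison in the higher-rank case: matching the explicit obstruction-theoretic data of \cite{Sala_Schiffmann} on a common cover with the intrinsic derived extension stack $\bfCohext_\Dol(X)$, and verifying that their gluing prescription for the locally defined products is the one produced by the Gaitsgory--Rozenblyum / Dyckerhoff--Kapranov correspondence machine (compatibly with $p^\ast$, $q_\ast$, and the equivariant structure). Everything else — the decategorification via Proposition~\ref{prop:quasi_compact_G_truncation} and the rank-zero comparison — is essentially forced by the surface case already treated.
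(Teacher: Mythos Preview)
Your proposal is correct and follows essentially the same route as the paper: the paper's own argument is the single sentence ``for similar arguments as above and thanks to Remark~\ref{rem:naive_Higgs}'', where ``above'' means the surface comparison (virtual pullback $=$ $\iota_\ast^{-1}\circ t^\ast\circ\pi^\ast$) together with \cite[Proposition~6.1.5]{KV_Hall}, and the local quotient-stack description of \cite{Sala_Schiffmann} in higher rank. You have simply unpacked this, and the obstacle you flag (the local-to-global matching of obstruction data in higher rank) is exactly what the paper leaves implicit.
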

\begin{remark}
	The Betti K-theoretical Hall algebra of a smooth projective complex curve $X$ can be defined by using a K-theoretic analog of the Kontsevich-Soibelman CoHA formalism due to Pădurariu in \cite{Padurariu_CoHA} for the quiver with potential defined by Davison in \cite{Davison_character}. We expect that this algebra is isomorphic to our decategorification of the Betti Cat-HA.
\end{remark}

Finally, it is relevant to mention that our approach defines the de Rham K-theoretical Hall algebra of a smooth projective curve $X$. The nature of this algebra is at the moment mysterious. Note that \cite{SV_Langlands} gives an indication that the algebra should at least contain the K-theoretical Hall algebra of the preprojective algebra of the $g$-loop quiver, where $g$ is the genus of $X$.

\begin{remark}
	By using the formalism of Borel--Moore homology of higher stacks developed in \cite{KV_Hall} and their construction of the Hall product via virtual pullbacks, we obtain equivalent realizations of the COHA of a surface by \cite{KV_Hall} and of the Dolbeault CoHA of a curve \cite{Sala_Schiffmann, Minets_Hall}. Moreover, we define the de Rham cohomological Hall algebra of a curve.
\end{remark}

\bigskip\section{A Cat-HA version of the Hodge filtration}\label{s:Hodge_filtration}

In this section, we shall present a relation between the de Rham categorified Hall algebra and the Dolbeault categorified Hall algebra, which is induced by the Deligne categorified Hall algebra $( \catCohb_{\C^\ast}( \bfCoh(X_\Del) ) , \ostar_\Del )$. Deligne's $\lambda$-connections interpolate Higgs bundles with vector bundles with flat connections, and they were used by Simpson \cite{Simpson_Hodge} to prove the non-abelian Hodge correspondence. For this reason, the relation we prove in this section can be interpreted as a version of the Hodge filtration in the setting of categorified Hall algebras.

\subsection{Categorical filtrations}

We let
\begin{align}
	\catPerffilt \coloneqq \catPerf( [\A^1_\C / \G_m] ) \ , \quad \catPerfgr \coloneqq \catPerf( \sfB \G_m ) \ .
\end{align}
The two morphisms
\begin{align}
	\begin{tikzcd}[ampersand replacement = \&]
	\sfB \G_m \arrow{r}{j} \& {[ \A^1_\C / \G_m]} \& \Spec(\C) \simeq [\G_m / \G_m] \arrow{l}[swap]{i}
	\end{tikzcd}
\end{align}	 
induce canonical morphisms
\begin{align}
	j^\ast \colon \catPerffilt \longrightarrow \catPerfgr \ , \quad \catPerffilt \longrightarrow \catPerf \ .
\end{align}
The group structure on $\sfB \G_m$ endows $\catPerfgr$ with a Künneth monoidal structure. The same holds for $\catPerffilt$.
With respect to these monoidal structures, the above functors are symmetric monoidal.

\begin{definition}
	Let $\calC$ be a stable $\C$-linear $\infty$-category.
	A \textit{lax filtered structure on $\calC$} is the given of an $\infty$-category $\calC^\bullet \in \catPerffilt \Mod( \Cat_\infty^{\mathsf{st}} )$ equipped with a functor
	\begin{align}
		\Phi \colon \calC^\bullet \otimes_{\catPerffilt} \catPerf \longrightarrow \calC \ . 
	\end{align}
	We refer to the datum $(\calC, \calC^\bullet, \Phi)$ as the datum of a \textit{lax filtered stable ($\C$-linear) $\infty$-category}.
	We say that a lax filtered $\infty$-category is \textit{filtered} if $\Phi$ is an equivalence.
\end{definition}

\begin{definition}
	Let $(\calC, \calC^\bullet, \Phi)$ be a lax filtered stable $\infty$-category.
	A \textit{lax associated graded category} is the given of an $\infty$-category $\calG \in\catPerfgr \Mod( \Cat_\infty^{\mathsf{st}} )$ together with a morphism
	\begin{align}
		 \Psi \colon \calC^\bullet \otimes_{\catPerffilt} \catPerfgr \longrightarrow \calG \ . 
	\end{align}
	We say that $(\calG, \Psi)$ is the \textit{associated graded} if the morphism $\Psi$ is an equivalence.
\end{definition}

\subsection{Hodge filtration}

Let $X$ be a smooth projective complex curve. We will apply the formalism in the previous section with $\calC=\catCohb_{\mathsf{pro}}(\bfCoh(X_\dR))$ and $\calG=\catCohb_{\mathsf{pro},\,\C^\ast}(\bfCoh^{\mathsf{ss},\, 0}(X_\Dol))$.

Let $X_\Del$ be the deformation to the normal bundle of the map $X \to X_\dR$ as constructed in \cite[\S 9.2.4]{Gaitsgory_Rozenblyum_Study_II}.
Then $X_\Del$ admits a canonical $\G_m$-action and it is equipped with a canonical $\G_m$-equivariant map $X_\Del \to \A^1$.
We refer to $X_\Del$ as \emph{Deligne's shape} of $X$.
Furthermore, we let
\begin{align}
	X_{\Del, \, \G_m} \coloneqq [X_\Del / \G_m] 
\end{align}
be the quotient by the action of $\G_m$.
We refer to $X_{\Del, \, \G_m}$ as the \emph{equivariant} Deligne shape of $X$.
See also \cite[\S\ref*{shapes-s:Del-definition}]{Porta_Sala_Shapes} for a more in-depth treatment of the Deligne shape.
We define $\bfCoh_{/\A^1}(X_\Del)$ as the functor
\begin{align}
	(\dAff_{/\A^1})\op \longrightarrow \cS 
\end{align}
sending $S \to \A^1$ to the maximal $\infty$-groupoid $\catCoh_S( S \times_{\A^1} X_\Del )^\simeq$ contained in the $\infty$-category of families of coherent sheaves on $S \times_{\A^1} X_\Del$ that are flat relative to $S$.
Similarly, we define $\bfCoh_{/[\A^1 / \G_m]}( X_{\Del, \G_m} )$ as the functor
\begin{align}
	 (\dAff_{/[\A^1 / \G_m]})\op \longrightarrow \cS 
\end{align}
sending $S \to [\A^1 / \G_m]$ to the maximal $\infty$-groupoid $\catCoh_S( S \times_{[\A^1 / \G_m]} X_{\Del, \G_m} )^\simeq$ contained in the $\infty$-category of families of coherent sheaves on $S \times_{[\A^1 / \G_m]} X_{\Del, \G_m}$ that are flat relative to $S$.

\begin{proposition}
	The derived stack $\bfCoh_{/\A^1}(X_\Del)$ (resp.\ $\bfCoh_{/[\A^1 / \G_m]}(X_{\Del, \, \G_m} )$) is a geometric derived stack, locally of finite presentation in $\dSt_{/ \A^1}$ (resp.\ $\dSt_{/[\A^1 / \G_m]}$).
\end{proposition}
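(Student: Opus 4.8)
The plan is to run, relative to the base $B \coloneqq \A^1_\C$ (resp.\ $B \coloneqq [\A^1_\C/\G_m]$), the same argument that was used for Propositions~\ref{prop:dR_coh_geometric} and \ref{prop:Dol_coh_geometric}. The first step is to record the basic geometric properties of the Deligne shape relative to its structure map to $\A^1$: one has from \cite{Porta_Sala_Shapes} (see also \cite[\S\ref*{shapes-s:Del-definition}]{Porta_Sala_Shapes}) that $X_\Del \to \A^1$ and $X_{\Del,\G_m} \to [\A^1/\G_m]$ are categorically proper and of finite local tor-amplitude, and that there is a canonical map $\lambda_{X_\Del}\colon X \times \A^1 \to X_\Del$ over $\A^1$ (and its $\G_m$-quotient over $[\A^1/\G_m]$), interpolating $\lambda_X$ over $1 \in \A^1$ and $\kappa_X$ over $0 \in \A^1$, which is a flat effective epimorphism in $\dSt_{/B}$. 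Since $X \times \A^1$ is a smooth $\A^1$-scheme, this places us exactly in the setting to which the constructions of \S\ref{s:coh} apply once they are relativized over $B$.

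Next I would relativize the chain of results of \S\ref{s:coh} over $B$. Working with the relative moduli functors $\bfAPerf_{/B}(X_\Del)$, $\bfPerf_{/B}(X_\Del)$ and $\bfCoh_{/B}(X_\Del)$ (defined on $\dAff_{/B}\op$ exactly as in the statement), the same proofs as Corollary~\ref{cor:coh_inf_cohesive_nilcomplete}, Corollary~\ref{cor:formally_etale}, Lemma~\ref{lem:universally_flat_atlas} and Corollary~\ref{cor:coherent_over_smooth_are_perfect} — with $\Spec(A)$-points replaced by $S$-points for $S \to B$, and using that $\lambda_{X_\Del}$ is a smooth flat effective epimorphism relative to $B$ — show that $\bfCoh_{/B}(X_\Del)$ is infinitesimally cohesive and nilcomplete over $B$, that the natural map $\bfCoh_{/B}(X_\Del) \to \bfAPerf_{/B}(X_\Del)$ factors through a relatively formally étale map $\bfCoh_{/B}(X_\Del) \to \bfPerf_{/B}(X_\Del)$, and that the latter sits in a pullback square against $\bfCoh(X) \to \bfPerf(X)$. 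In particular, by the relative version of Corollary~\ref{cor:coh_global_cotangent_complex_smooth_case}, once $\bfPerf_{/B}(X_\Del)$ is known to be geometric and locally of finite presentation over $B$, the same follows for the existence of a relative global cotangent complex of $\bfCoh_{/B}(X_\Del)$.

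The geometricity and local finite presentation of $\bfPerf_{/B}(X_\Del) = \bfMap_{/B}(X_\Del, \bfPerf \times B)$ over $B$ is the relative To\"en--Vaqui\'e representability for mapping stacks, which applies because of the relative categorical properness and finite local tor-amplitude of $X_\Del \to \A^1$ (resp.\ $X_{\Del,\G_m} \to [\A^1/\G_m]$); this is carried out exactly as in \cite[\S\ref*{shapes-s:dR_representability}]{Porta_Sala_Shapes} and \cite[\S\ref*{shapes-s:Dolbeault_representability}]{Porta_Sala_Shapes}, and since $\A^1$ is affine and $[\A^1/\G_m]$ is a geometric stack, relative geometricity here is equivalent to geometricity of the total stacks. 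It remains to identify the truncation: $\trunc{\bfCoh_{/\A^1}(X_\Del)}$ is the classical moduli stack of Deligne $\lambda$-connections on $X$, whose geometricity is classical (Simpson); alternatively one uses the $\G_m$-equivariant identifications $\bfCoh_{/\A^1}(X_\Del)\times_{\A^1}\{0\} \simeq \bfCoh(X_\Dol)$ and $\bfCoh_{/\A^1}(X_\Del)\times_{\A^1}\{1\} \simeq \bfCoh(X_\dR)$ to reduce to Propositions~\ref{prop:Dol_coh_geometric} and \ref{prop:dR_coh_geometric}. At this point Lurie's representability theorem \cite[Theorem~18.1.0.2]{Lurie_SAG}, in its relative form over $B$, applies — infinitesimal cohesiveness, nilcompleteness, a global relative cotangent complex, and a geometric truncation are all in place — and yields that $\bfCoh_{/\A^1}(X_\Del)$ (resp.\ $\bfCoh_{/[\A^1/\G_m]}(X_{\Del,\G_m})$) is geometric and locally of finite presentation in $\dSt_{/\A^1}$ (resp.\ $\dSt_{/[\A^1/\G_m]}$); the equivariant case follows from the non-equivariant one by $\G_m$-descent, exactly as in \S\ref{ss:equivariant}. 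The only genuinely non-routine input is the package of properties of the deformation-to-the-normal-bundle $X_\Del$ quoted in the first step — relative categorical properness, finite local tor-amplitude, and the existence of the smooth relative flat effective epimorphism $\lambda_{X_\Del}$ — which I expect to be the main point to pin down carefully (and which is provided by \cite{Porta_Sala_Shapes}); everything else is a mechanical relativization of \S\ref{s:coh}.
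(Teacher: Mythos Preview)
Your approach is correct but takes a longer detour than the paper's, and in fact longer than the very argument you cite as your model. The paper (and the proofs of Propositions~\ref{prop:dR_coh_geometric} and \ref{prop:Dol_coh_geometric} that you say you are imitating) proceeds directly from the pullback square
\[
\begin{tikzcd}
\bfCoh_{/B}(X_\Del) \arrow{r} \arrow{d} & \bfCoh(X) \times B \arrow{d} \\
\bfPerf_{/B}(X_\Del) \arrow{r} & \bfPerf(X) \times B
\end{tikzcd}
\]
obtained from Lemma~\ref{lem:universally_flat_atlas} and Corollary~\ref{cor:coherent_over_smooth_are_perfect} applied to the flat effective epimorphism $X \times B \to X_\Del$. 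Since geometric derived stacks locally of finite presentation are stable under fiber products, and the three other corners are geometric (the bottom-left by \cite[\S\ref*{shapes-s:Deligne_representability}]{Porta_Sala_Shapes}, the right column by Proposition~\ref{prop:coh_geometric} and \cite{Toen_Vaquie_Moduli}), the conclusion is immediate.

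You record this same pullback square, but then use it only to extract a cotangent complex, and instead pass through Lurie's representability theorem --- verifying infinitesimal cohesiveness, nilcompleteness, and identifying the truncation with Simpson's classical moduli of $\lambda$-connections. This works, but it is strictly more effort: the pullback square already hands you geometricity without ever needing to name the truncation or invoke representability. The only genuinely non-formal input in either approach is the package of relative properties of $X_\Del \to \A^1$ from \cite{Porta_Sala_Shapes}, which you correctly isolate.
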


\begin{proof}
	Given a morphism of derived stacks $Y \to S$ we write
\begin{align}
		\bfPerf_{/S}(Y) \coloneqq \bfMap_{/S}( Y, \bfPerf \times S) \ . 
\end{align}
	
	The canonical map $X\times \A^1\to X_\Del$ (resp.\ $X\times [\A^1/\G_m]\to X_{\Del, \, \G_m}$) is a flat effective epimorphism as a $\A^1$-map (resp.\ $[\A^1/\G_m]$-map) (cf.\  \cite[\S\ref*{shapes-s:Del-definition}]{Porta_Sala_Shapes}). Thus, $\bfCoh_{/\A^1}(X_\Del)$ and $\bfCoh_{/[\A^1 / \G_m]}(X_{\Del, \, \G_m})$ fit into the pullback squares (cf.\ Lemma~\ref{lem:universally_flat_atlas} and Corollary~\ref{cor:coherent_over_smooth_are_perfect})
	\begin{align}
		\begin{tikzcd}[ampersand replacement = \&]
			\bfCoh_{/\A^1}(X_\Del) \arrow{r} \arrow{d} \& \bfCoh(X) \times \A^1 \arrow{d} \\
			\bfPerf_{/\A^1}(X_\Del) \arrow{r} \& \bfPerf(X) \times \A^1
		\end{tikzcd}\quad\textrm{and}\quad 
		\begin{tikzcd}[ampersand replacement = \&]
			\bfCoh_{/[\A^1 / \G_m]}(X_{\Del, \, \G_m}) \arrow{r} \arrow{d} \& \bfCoh(X) \times [\A^1/\G_m] \arrow{d} \\
			\bfPerf_{/[\A^1 / \G_m]}( X_{\Del, \, \G_m} ) \arrow{r} \& \bfPerf(X) \times [\A^1/\G_m]
		\end{tikzcd}\ .
	\end{align}
		Since $\bfPerf_{/\A^1}(X_\Del)$ and $\bfPerf(X)\times \A^1$ (resp.\ $\bfPerf_{/[\A^1 / \G_m]}(X_{\Del, \, \G_m})$ and $\bfPerf(X)\times [\A^1/\G_m]$) are geometric as derived stacks over $\A^1$ (resp.\ $[\A^1 / \G_m]$) by  \cite[\S\ref*{shapes-s:Deligne_representability}]{Porta_Sala_Shapes} and \cite[Corollary~3.29]{Toen_Vaquie_Moduli} respectively, and $\bfCoh(X) \times \A^1$ (resp.\ $\bfCoh(X) \times [\A^1/\G_m]$) is geometric in $\dSt_{/ \A^1}$ (resp.\ $\dSt_{/[\A^1 / \G_m]}$) because of Proposition~\ref{prop:coh_geometric}, we conclude that $\bfCoh_{/\A^1}(X_\Del)$ and $\bfCoh_{/[\A^1 / \G_m]}(X_{\Del, \, \G_m})$ are geometric as well.
\end{proof}

We have canonical maps $\bfCoh_{/\A^1}(X_\Del)\to \A^1$ and $\bfCoh_{/[\A^1 / \G_m]}(X_{\Del,\, \G_m})\to [\A^1/\G_m]$. Unraveling the definitions, we see that
\begin{align}
	\bfCoh_{/\A^1}(X_\Del) \times_{\A^1} \{0\} \simeq \bfCoh(X_\Dol) \quad \text{and} \quad \bfCoh_{/\A^1}(X_\Del) \times_{\A^1} \{1\} \simeq \bfCoh(X_\dR) \ ,
\end{align} 
while
\begin{align}\label{eq:fibers}
	\begin{split}
		\bfCoh_{/[\A^1 / \G_m]}(X_{\Del,\, \G_m}) &\times_{[\A^1/\G_m]} \sfB \G_m \simeq \bfCoh_{\C^\ast}(X_\Dol) \\ 
		\bfCoh(X_{\Del, \, \G_m}) &\times_{[\A^1 / \G_m]} [\G_m/\G_m] \simeq \bfCoh(X_\dR)\times \sfB \G_m \ .
	\end{split}
\end{align} 

We also consider the open substack $\bfCoh_{/\A^1}^\ast(X_\Del)\subset \bfCoh_{/\A^1}(X_\Del)$ for which the fiber at zero is the derived moduli stack $\bfCoh^{\mathsf{ss}, \,0}(X_\Dol)$ of semistable Higgs bundles on $X$ of degree zero (cf.\ \cite[\S7]{Simpson_Geometricity}). 

Similarly, we can define the derived moduli stacks of extensions of Deligne's $\lambda$-connections. Thus, we have the convolution diagram in $\dSt_{/ \A^1}$:
\begin{align}
\begin{tikzcd}[ampersand replacement = \&]
{} \& \bfCohext_{/\A^1}(X_\Del) \arrow{dl}[swap]{p} \arrow{dr}{q} \\
\bfCoh_{/\A^1}(X_\Del)\times_{\A^1} \bfCoh_{/\A^1}(X_\Del) \& \& \bfCoh_{/\A^1}(X_\Del)
\end{tikzcd}
\end{align}
and the convolution diagram in $\dSt_{/[\A^1 / \G_m]}$:
\begin{align}
\begin{tikzcd}[ampersand replacement = \&,column sep = small]
{} \& \bfCohext_{/[\A^1/ \G_m]}(X_{\Del,\, \G_m}) \arrow{dl}[swap]{p} \arrow{dr}{q} \\
\bfCoh_{/[\A^1/ \G_m]}(X_{\Del,\, \G_m})\times_{[\A^1/\G_m]} \bfCoh_{/[\A^1/ \G_m]}(X_{\Del,\, \G_m}) \& \& \bfCoh_{/[\A^1/ \G_m]}(X_{\Del,\, \G_m})
\end{tikzcd}
\end{align}
Because of Corollaries~\ref{cor:dR-lci} and \ref{cor:Dol-lci}, it follows that the map $p$ above is derived lci. A similar result holds when we restrict to the open substack $\bfCoh^\ast_{/\A^1}(X_\Del)$ and the corresponding open substack of extensions. Following the same arguments as in \S\ref{s:categorifiedHall}, we can encode such convolution diagrams into $2$-Segal objects, and obtain the following: 

\begin{proposition}
	Let $X$ be a smooth projective complex curve. Then
	\begin{itemize}\itemsep0.2cm
		\item there is a $2$-Segal object $\calS_\bullet \bfCoh_{/\A^1}(X_\Del)$ which endows $\bfCoh_{/\A^1}(X_\Del)$ with the structure of an $\mathbb E_1$-monoid object in $\Corr^\times\big(\dGeom_{/\A^1}\big)_{\mathsf{lci,rps}}$;
		\item there is a $2$-Segal object $\calS_\bullet \bfCoh_{/[\A^1/ \G_m]}(X_{\Del,\, \G_m})$ which endows $\bfCoh_{/[\A^1/ \G_m]}(X_{\Del,\, \G_m})$ with the structure of an $\mathbb E_1$-monoid object in $\Corr^\times\big(\dGeom_{/[\A^1/\G_m]}\big)_{\mathsf{lci,rps}}$.
	\end{itemize}
	A similar result holds for $\bfCoh^\ast_{/\A^1}(X_\Del)$ and $\bfCoh^\ast_{/[\A^1/ \G_m]}(X_{\Del,\, \G_m})$.
\end{proposition}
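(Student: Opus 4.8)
The plan is to replay, relative to the base $\A^1$ (resp.\ $[\A^1 / \G_m]$), the construction of \S\ref{s:categorifiedHall}. First I would introduce the relative Waldhausen object $\calS_\bullet\bfCoh_{/\A^1}(X_\Del)\colon\mathbf\Delta\op\to\dSt_{/\A^1}$ by sending $(S\to\A^1,[n])$ to the maximal $\infty$-groupoid $(\calS_n\catCoh_S(S\times_{\A^1}X_\Del))^\simeq$, and analogously $\calS_\bullet\bfCoh_{/[\A^1/\G_m]}(X_{\Del, \, \G_m})$; for $n=2$ these recover the stacks of extensions of $\lambda$-connections introduced above. That these are $2$-Segal objects follows exactly as in Lemma~\ref{lem:coh_2_Segal_geometric}: the $2$-Segal property of $\calS_\bullet\bfAPerf$ is detected objectwise and preserved by $\bfMap_{/\A^1}(-,-)$, and the only thing to verify is that the coherent-flatness condition propagates along the pullback squares occurring in the Waldhausen diagram; this uses that $X\times\A^1\to X_\Del$ (resp.\ $X\times[\A^1/\G_m]\to X_{\Del, \, \G_m}$) is a flat effective epimorphism as a map over $\A^1$ (resp.\ over $[\A^1/\G_m]$), cf.\ \cite[\S\ref*{shapes-s:Del-definition}]{Porta_Sala_Shapes}, combined with the same $\pi_k$-vanishing argument used there. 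Geometricity and local finite presentation of $\calS_n\bfCoh_{/\A^1}(X_\Del)$ in $\dSt_{/\A^1}$, and of the extension stacks, follow as in the previous proposition together with the representability of $\bfPerf_{/\A^1}(X_\Del)$ over $\A^1$ from \cite[\S\ref*{shapes-s:Deligne_representability}]{Porta_Sala_Shapes}.

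With this $2$-Segal object in hand, applying the canonical functor $\TwoSeg(\dSt_{/\A^1})\to\Alg_{\mathbb E_1}(\Corr^\times(\dSt_{/\A^1}))$ and its geometric refinement à la Gaitsgory--Rozenblyum, exactly as in the proof of Theorem~\ref{thm:coh_algebra_in_correspondence}, reduces the statement to two morphism-theoretic facts about the relative convolution diagram: that $q=\ev_2$ is representable by proper schemes, and that $p=\ev_3\times\ev_1\colon\bfCohext_{/\A^1}(X_\Del)\to\bfCoh_{/\A^1}(X_\Del)\times_{\A^1}\bfCoh_{/\A^1}(X_\Del)$ is quasi-compact and derived lci. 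For $q$ one argues that, relatively over $\A^1$, it is representable by Quot schemes of $\Lambda$-modules à la Simpson, where $\Lambda$ is now the Rees algebra interpolating the de Rham and Dolbeault sheaves of rings of differential operators, and these are proper; alternatively one checks properness after analytification, fiberwise over $\A^1$, reducing to the de Rham and Dolbeault cases already treated. For $p$, I would first note that $X_\Del$ satisfies over $\A^1$ the relative analogues of assumptions \eqref{tor_amplitude}, \eqref{categorical_properness}, and \eqref{effective_epimorphism} of \S\ref{ss:cotangent}, so that the relative plus-pushforward $q_{S+}$ exists; the relative version of Proposition~\ref{prop:stack_extensions_is_relatively_affine} then shows $p$ is representable by affine morphisms, hence quasi-compact, and the relative version of Corollary~\ref{cor:cotangent_complex_extremal_projections} identifies $\LL_p$ at a point classifying $\calF_1\to\calF_2\to\calF_3$ with the perfect complex $q_{S+}(\calHom_{X_\Del\times_{\A^1}S}(\calF_3,\calF_1)[-1])$.

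The only substantial point — and the one I expect to be the main obstacle — is the tor-amplitude estimate $[-1,1]$ for $\LL_p$, i.e.\ that $X_\Del$ has relative cohomological dimension $2$ over $\A^1$. One could prove the required vanishing $\Ext^i_{X_\Del\times_{\A^1}S}(\calF_1,\calF_3)=0$ for $i>2$ directly, running the arguments behind Corollaries~\ref{cor:dR-lci} and \ref{cor:Dol-lci} uniformly in $\lambda$ via the description of coherent sheaves on $X_\Del$ as Rees modules over the interpolating $\Lambda$. The route I would actually take, however, is to test tor-amplitude on geometric points: $\LL_p$ is already known to be perfect, so its tor-amplitude may be checked at the residue fields of geometric points of $\bfCohext_{/\A^1}(X_\Del)$; every such point lies over a geometric point $\lambda$ of $\A^1$, and since all nonzero fibers of $X_\Del\to\A^1$ are isomorphic (via the $\G_m$-action) to $X_\dR$ while the zero fiber is $X_\Dol$, the base change of $p$ to that point is the map $\ev_3\times\ev_1$ for $X_\dR$ or for $X_\Dol$, whose relative cotangent complex has tor-amplitude $[-1,1]$ by Corollaries~\ref{cor:dR-lci} and \ref{cor:Dol-lci}. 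Hence $p$ is quasi-compact and derived lci, and the $2$-Segal object endows $\bfCoh_{/\A^1}(X_\Del)$ with the asserted $\mathbb E_1$-algebra structure in $\Corr^\times(\dGeom_{/\A^1})_{\mathsf{lci,rps}}$.

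Finally, the $[\A^1/\G_m]$-equivariant statement follows as in \S\ref{ss:equivariant}: by descent for $\infty$-topoi, $\calS_\bullet\bfCoh_{/[\A^1/\G_m]}(X_{\Del, \, \G_m})$ is the quotient of $\calS_\bullet\bfCoh_{/\A^1}(X_\Del)$ by the residual $\G_m$-action, and since $\Spec(\C)\to\sfB\G_m$ is a smooth atlas and the squares comparing $\ev_i$ over $[\A^1/\G_m]$ with $\ev_i$ over $\A^1$ are pullbacks, both ``representable by proper schemes'' and ``quasi-compact derived lci'' descend. The same proof applies verbatim to $\bfCoh^\ast_{/\A^1}(X_\Del)$ and $\bfCoh^\ast_{/[\A^1/\G_m]}(X_{\Del, \, \G_m})$: the defining open condition (degree-zero semistability of the fiber at $0$) is preserved under extensions, so $\calS_\bullet$ restricts to a sub-$2$-Segal object, and the morphism properties, being stable under base change, restrict accordingly.
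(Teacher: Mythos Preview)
Your proposal is correct and follows essentially the same approach as the paper, which does not give a standalone proof but simply asserts that the lci property of $p$ follows from Corollaries~\ref{cor:dR-lci} and \ref{cor:Dol-lci} and that the rest proceeds ``following the same arguments as in \S\ref{s:categorifiedHall}.'' Your write-up is in fact more explicit than the paper's: you spell out the fiberwise reduction for the tor-amplitude estimate (perfectness of $\LL_p$ plus checking on geometric points of $\A^1$, using the $\G_m$-action to identify all nonzero fibers with the de Rham case), whereas the paper leaves this implicit.
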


\begin{corollary}
	$\catCohb_{\mathsf{pro}}( \bfCoh_{/\A^1}(X_\Del) )$ and $\catCohb_{\mathsf{pro}}( \bfCoh_{/[\A^1/ \G_m]}(X_{\Del,\, \G_m}) )$ are $\mathbb E_1$-monoid objects in $\mathsf{Pro}(\Cat_\infty^{\mathsf{st}} )$. A similar result holds for $\bfCoh^\ast_{/\A^1}(X_\Del)$ and $\bfCoh^\ast_{/[\A^1/ \G_m]}(X_{\Del,\, \G_m})$.
\end{corollary}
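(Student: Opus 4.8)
The corollary asserts that applying the right-lax monoidal functor $\catCohb_{\mathsf{pro}}$ of \S\ref{ss:categorification} to the $\mathbb E_1$-algebras furnished by the preceding proposition yields $\mathbb E_1$-algebras in $\mathsf{Pro}(\Cat_\infty^{\mathsf{st}})$. The plan is to mimic verbatim the proof of Theorem~\ref{thm:coh_algebra_in_correspondence}, replacing $\dSt$ (resp.\ $\dGeom$) by the slice categories $\dSt_{/\A^1}$ and $\dSt_{/[\A^1/\G_m]}$ (resp.\ $\dGeom_{/\A^1}$ and $\dGeom_{/[\A^1/\G_m]}$) throughout. The key point is that all the machinery of Appendix~\ref{sec:ind_quasi_compact} — the fully faithful limit-preserving embedding $(-)_{\mathsf{ind}}$ into $\Ind(\dGeom^{\mathsf{qc}})$, the notion of ind-derived lci morphism, Lemma~\ref{lem:ind_derived_lci}, and the right-lax monoidal functor $\catCohb_{\mathsf{pro}}$ on $\Corr^\times(\Ind(\dGeom^{\mathsf{qc}}))_{\mathsf{rps},\mathsf{lci}}$ — is insensitive to working over a fixed base: slicing over a fixed geometric stack $B$ (here $B=\A^1$ or $B=[\A^1/\G_m]$) preserves fibre products, open immersions, quasi-compactness, properness and lci-ness of morphisms, so the constructions carry over word for word. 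I would therefore begin by observing this, perhaps phrasing it as: the base change $\dSt_{/B}\to\dSt$ is conservative, preserves limits and detects geometricity, quasi-compactness and the relevant regularity classes of morphisms.

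Concretely, the steps are as follows. First, invoke the preceding proposition to get that $\calS_\bullet\bfCoh_{/\A^1}(X_\Del)$ (resp.\ $\calS_\bullet\bfCoh_{/[\A^1/\G_m]}(X_{\Del,\,\G_m})$, and likewise for the starred open substacks) is a $2$-Segal object in $\dSt_{/\A^1}$ (resp.\ $\dSt_{/[\A^1/\G_m]}$) endowing $\bfCoh_{/\A^1}(X_\Del)$ with an $\mathbb E_1$-algebra structure in $\Corr^\times(\dGeom_{/\A^1})_{\mathsf{lci},\mathsf{rps}}$. Second, apply $(-)_{\mathsf{ind}}$: since it is limit-preserving and fully faithful, $(\calS_\bullet\bfCoh_{/\A^1}(X_\Del))_{\mathsf{ind}}$ is again a $2$-Segal object, now in $\Corr^\times(\Ind(\dGeomqc_{/\A^1}))$. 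Third, check that the maps $p$ and $q$ of the convolution diagram land in the correct classes: Proposition~\ref{prop:stack_extensions_is_relatively_affine} (applied over the base, using that $X_\Del\to\A^1$ and $X_{\Del,\,\G_m}\to[\A^1/\G_m]$ still satisfy assumptions \eqref{tor_amplitude}–\eqref{effective_epimorphism} fibrewise, by \cite[\S\ref*{shapes-s:Del-definition}]{Porta_Sala_Shapes}) gives that $p$ is quasi-compact and relatively affine; Corollaries~\ref{cor:dR-lci} and \ref{cor:Dol-lci}, combined with the fact that lci-ness can be checked on the fibres $\{0\}$ and $\{1\}$ over $\A^1$ (and on their $\G_m$-quotients), give that $p$ is derived lci, whence $p_{\mathsf{ind}}$ is ind-derived lci by Lemma~\ref{lem:ind_derived_lci}; and $q$ is representable by (relative) Quot schemes of $\Lambda$-modules à la Simpson, hence representable by proper schemes, exactly as in the proof of Theorem~\ref{thm:coh_algebra_in_correspondence}. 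Fourth, conclude that $(\calS_\bullet\bfCoh_{/\A^1}(X_\Del))_{\mathsf{ind}}$ defines an $\mathbb E_1$-algebra in $\Corr^\times(\Ind(\dGeomqc_{/\A^1}))_{\mathsf{rps},\mathsf{lci}}$, and apply the right-lax monoidal functor $\catCohb_{\mathsf{pro}}$ to obtain the desired $\mathbb E_1$-algebra structure in $\mathsf{Pro}(\Cat_\infty^{\mathsf{st}})$. The same four steps run identically over $[\A^1/\G_m]$ and for the starred open substacks, the latter because openness of the stability condition (following \cite[\S7]{Simpson_Geometricity}) makes $\bfCoh^\ast_{/\A^1}(X_\Del)\hookrightarrow\bfCoh_{/\A^1}(X_\Del)$ a Zariski open immersion compatible with the $2$-Segal structure, so the restricted simplicial object is still $2$-Segal with the same regularity properties for $p$ and $q$.

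The main obstacle — really the only non-formal point — is verifying that $p$ is \emph{globally} derived lci over the base, rather than merely fibrewise so over $0$ and $1$. Over $\A^1$ this needs the observation that the relative cotangent complex $\LL_p$ of $p\colon\bfCohext_{/\A^1}(X_\Del)\to\bfCoh_{/\A^1}(X_\Del)\times_{\A^1}\bfCoh_{/\A^1}(X_\Del)$ is computed pointwise, via Corollary~\ref{cor:cotangent_complex_extremal_projections}, as the relative plus-pushforward of $\calHom(\calF_2,\calF_1)[-1]$ along $X_\Del\times S\to S$; its tor-amplitude is bounded above by the relative cohomological dimension of $X_\Del$ over $\A^1$, and one uses that for $\lambda$-connections this dimension is $2$ uniformly in $\lambda$ — this is precisely the content, fibrewise, of the Ext-vanishing inputs \cite[Theorem~2.6.11]{Hotta_Takeuchi_Tanisaki_D-modules_1995}, \cite[\S11]{Bernstein_D-modules} used for $X_\dR$ and the BNR argument \cite[Lemma~6.8]{Simpson_Moduli_II} used for $X_\Dol$, together with the fact that the Rees construction defining $X_\Del$ interpolates these two cases. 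Since tor-amplitude can be checked after pullback to fields, and every field point of $\bfCoh_{/\A^1}(X_\Del)$ lies over some $\lambda\in\A^1(\bar K)$, the fibrewise bounds suffice to conclude that $\LL_p$ has tor-amplitude $[-1,1]$ globally, i.e.\ $p$ is derived lci. For the $\G_m$-equivariant version this is inherited because $[\A^1/\G_m]\to\ast$ has a smooth atlas $\A^1\to[\A^1/\G_m]$ and lci-ness descends along smooth surjections. I would therefore structure the written proof as a short paragraph recording the slice-category bookkeeping, a reference to Corollary~\ref{cor:cotangent_complex_extremal_projections} plus the fibrewise Ext-vanishing for the lci claim, and a one-line invocation of Lemma~\ref{lem:ind_derived_lci} and the functor $\catCohb_{\mathsf{pro}}$.
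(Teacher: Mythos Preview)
Your proposal is correct and follows essentially the same approach as the paper: the paper gives no proof for this corollary, treating it as an immediate consequence of the preceding proposition (which already packages the lci and rps claims for $p$ and $q$) together with the general machinery of Theorem~\ref{thm:coh_algebra_in_correspondence} and Appendix~\ref{sec:ind_quasi_compact}. Your write-up is a faithful and detailed unpacking of that implicit argument; note in particular that the paper already records the global lci-ness of $p$ (from Corollaries~\ref{cor:dR-lci} and~\ref{cor:Dol-lci}) in the paragraph preceding the proposition, so your ``main obstacle'' discussion, while correct, is elaborating a point the paper has already asserted rather than filling a gap.
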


By combining the results above with \eqref{eq:fibers}, we get:
\begin{theorem}
	Let $X$ be a smooth projective complex curve. Then
	\begin{align}
		\catCohb_{\mathsf{pro},\,\C^\ast}( \bfCoh^{(\ast)}_{/\A^1}(X_\Del) )\simeq \catCohb_{\mathsf{pro}}( \bfCoh^{(\ast)}_{/[\A^1/ \G_m]}(X_{\Del,\, \G_m}) )
	\end{align}
	is a module over $\catPerffilt$ and we have $\mathbb E_1$-monoidal functors:
	\begin{align}
	\Phi^{(\ast)} \colon \catCohb_{\mathsf{pro},\,\C^\ast}( \bfCoh^{(\ast)}_{/\A^1}(X_\Del) ) & \otimes_{\catPerffilt} \catPerf_\C \longrightarrow \catCohb( \bfCoh(X_\dR) )\ ,\\[3pt]
	\Psi^{(\ast)} \colon \catCohb_{\mathsf{pro},\,\C^\ast}( \bfCoh_{/\A^1}^{(\ast)}(X_\Del) ) & \otimes_{\catPerffilt} \catPerfgr \longrightarrow \catCohb_{\C^\ast}( \bfCoh^{(\mathsf{ss}, \, 0)}(X_\Dol) )\ .
	\end{align}
\end{theorem}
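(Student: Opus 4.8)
The plan is to run the entire construction of \S\ref{s:categorifiedHall} \emph{relative to the base} $[\A^1/\G_m]$, using the $2$-Segal objects and $\mathbb E_1$-algebra structures produced in the Proposition above, and then to obtain $\Phi^{(\ast)}$ and $\Psi^{(\ast)}$ by restricting along the inclusions of the fibres of $[\A^1/\G_m]$ over $[\G_m/\G_m]\simeq\Spec(\C)$ and over $\sfB\G_m$. Only the functors are to be built here: the claim that they are equivalences (the non-abelian Hodge correspondence) is the subsequent Conjecture and will not be touched.

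First I would pass to the quotient-stack picture. As in \S\ref{ss:equivariant}, descent for $\infty$-topoi along $\Spec(\C)\to\sfB\G_m$ (cf.\ \cite[Theorem~6.1.3.9]{HTT}) gives a canonical equivalence $\catCohb_{\mathsf{pro},\C^\ast}(\bfCoh^{(\ast)}_{/\A^1}(X_\Del))\simeq\catCohb_{\mathsf{pro}}(\bfCoh^{(\ast)}_{/[\A^1/\G_m]}(X_{\Del,\,\G_m}))$, with which I would work throughout. Every term of the $2$-Segal object $\calS_\bullet\bfCoh^{(\ast)}_{/[\A^1/\G_m]}(X_{\Del,\,\G_m})$ lives in $\dSt_{/[\A^1/\G_m]}$, hence carries a compatible structure map to $[\A^1/\G_m]$. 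Since $[\A^1/\G_m]$ is smooth, $\catCohb([\A^1/\G_m])=\catPerf([\A^1/\G_m])=\catPerffilt$, and I would check that the right-lax monoidal functor $\catCohb_{\mathsf{pro}}$ of \S\ref{ss:categorification}, restricted to the slice $\Corr^\times(\dGeom_{/[\A^1/\G_m]})_{\mathsf{rps},\mathsf{lci}}$, refines to a right-lax monoidal functor valued in $\catPerffilt\Mod(\mathsf{Pro}(\Cat_\infty^{\mathsf{st}}))$: indeed $\catCohb$ of any stack over $[\A^1/\G_m]$ is naturally a $\catPerffilt$-module, and pull--push along correspondences over $[\A^1/\G_m]$ is $\catPerffilt$-linear by the projection formula. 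Feeding the $\mathbb E_1$-algebra structure of the Proposition above into this refinement would then exhibit $\calC^\bullet\coloneqq\catCohb_{\mathsf{pro}}(\bfCoh^{(\ast)}_{/[\A^1/\G_m]}(X_{\Del,\,\G_m}))$ as an $\mathbb E_1$-algebra in $\catPerffilt\Mod(\mathsf{Pro}(\Cat_\infty^{\mathsf{st}}))$, in particular as a $\catPerffilt$-module.

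Next I would specialise to the two fibres. The open immersion $i\colon[\G_m/\G_m]\simeq\Spec(\C)\hookrightarrow[\A^1/\G_m]$ and the closed immersion $j\colon\sfB\G_m\hookrightarrow[\A^1/\G_m]$ induce monoidal base-change functors on slice categories of correspondences (the classes $\mathsf{rps}$, $\mathsf{lci}$ are stable under pullback), specialising on perfect complexes to $i^\ast\colon\catPerffilt\to\catPerf_\C$ and $j^\ast\colon\catPerffilt\to\catPerfgr$. Because $[\A^1/\G_m]$ is a perfect stack, $\catQCoh$ takes fibre products over it to relative tensor products; combined with derived base change for $\catCohb_{\mathsf{pro}}$ — available since $p$ is ind-derived lci and $q$ is representable by proper schemes, exactly as in the proof of Theorem~\ref{thm:coh_algebra_in_correspondence} — this would produce $\mathbb E_1$-algebra equivalences
\begin{align}
	\calC^\bullet\otimes_{\catPerffilt}\catPerf_\C &\simeq \catCohb_{\mathsf{pro}}\bigl(\bfCoh^{(\ast)}_{/[\A^1/\G_m]}(X_{\Del,\,\G_m})\times_{[\A^1/\G_m]}[\G_m/\G_m]\bigr)\ ,\\
	\calC^\bullet\otimes_{\catPerffilt}\catPerfgr &\simeq \catCohb_{\mathsf{pro}}\bigl(\bfCoh^{(\ast)}_{/[\A^1/\G_m]}(X_{\Del,\,\G_m})\times_{[\A^1/\G_m]}\sfB\G_m\bigr)\ .
\end{align}
By \eqref{eq:fibers} the first fibre product is $\bfCoh(X_\dR)\times\sfB\G_m$ and the second is $\bfCoh_{\C^\ast}(X_\Dol)$; for the $\ast$-versions the semistability condition only bites over $0\in\A^1$, so the fibre over $[\G_m/\G_m]$ is unchanged while the fibre over $\sfB\G_m$ becomes $\bfCoh^{\mathsf{ss},\,0}_{\C^\ast}(X_\Dol)$. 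The second equivalence, followed by the $\C^\ast$-versus-quotient-stack identification of \S\ref{ss:equivariant}, is then $\Psi^{(\ast)}$. For $\Phi^{(\ast)}$ I would post-compose the first equivalence with the monoidal functor $\sigma^\ast\colon\catCohb_{\mathsf{pro}}(\bfCoh(X_\dR)\times\sfB\G_m)\to\catCohb_{\mathsf{pro}}(\bfCoh(X_\dR))$ given by pullback along $\sigma=\id_{\bfCoh(X_\dR)}\times(\Spec(\C)\to\sfB\G_m)$, which is monoidal because $\sigma^\ast$ is a pullback functor and, the $\G_m$-action being trivial, the Hall product on $\bfCoh(X_\dR)\times\sfB\G_m$ is the external product of $\ostar_\dR$ with the K\"unneth product on $\sfB\G_m$. (The $\catCohb_{\mathsf{pro}}$-versus-$\catCohb$ discrepancy in the stated targets is the innocuous one already present in Theorem~\ref{thm:coh_algebra_in_correspondence}.)

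The hard part will be the refinement of $\catCohb_{\mathsf{pro}}$ over $[\A^1/\G_m]$ to a $\catPerffilt$-linear functor and the correlative identification of base change along $i$ and $j$ with $-\otimes_{\catPerffilt}\catPerf_\C$ and $-\otimes_{\catPerffilt}\catPerfgr$: this is a continuity/projection-formula statement for the Gaitsgory--Rozenblyum correspondence machine, resting on $[\A^1/\G_m]$ being a perfect stack so that $\catQCoh$ is monoidal with respect to fibre products over it. Granting this, the remainder is a routine transport of the arguments of \S\ref{ss:convolution_structure}--\S\ref{ss:categorification} through the slice $\dGeom_{/[\A^1/\G_m]}$.
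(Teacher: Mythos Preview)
Your overall strategy—work relative to $[\A^1/\G_m]$, obtain a $\catPerffilt$-linear $\mathbb E_1$-algebra, then restrict along the inclusions of the two fibres—is exactly what the paper has in mind; its own justification is the single clause ``by combining the results above with \eqref{eq:fibers}''.

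However, there is a genuine overclaim in the body of your argument, and it contradicts the caveat you state at the outset. You assert that base change yields $\mathbb E_1$-algebra \emph{equivalences}
\[
\calC^\bullet\otimes_{\catPerffilt}\catPerf_\C \;\simeq\; \catCohb_{\mathsf{pro}}\bigl(\text{fibre over } [\G_m/\G_m]\bigr) , \qquad
\calC^\bullet\otimes_{\catPerffilt}\catPerfgr \;\simeq\; \catCohb_{\mathsf{pro}}\bigl(\text{fibre over } \sfB\G_m\bigr) ,
\]
appealing to perfectness of $[\A^1/\G_m]$. But perfectness gives a K\"unneth formula for $\catQCoh$, not for $\catCohb$ (let alone $\catCohb_{\mathsf{pro}}$). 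What the $\catPerffilt$-linearity of the pullback functors $\calC^\bullet \to \catCohb_{\mathsf{pro}}(\text{fibre})$ actually buys, via the universal property of the relative tensor product, is a \emph{comparison morphism} in each case. Those comparison morphisms are $\Phi^{(\ast)}$ and $\Psi^{(\ast)}$, and the theorem claims nothing more. Were your displayed equivalences correct, $\Psi^\ast$ would already be an equivalence and the Conjecture immediately following the theorem would be a triviality; the paper leaves it open precisely because the passage from K\"unneth-for-$\catQCoh$ to K\"unneth-for-$\catCohb$ over $[\A^1/\G_m]$ is the substantive content of non-abelian Hodge at this level. The ``hard part'' you isolate at the end is therefore not a technical lemma to be granted—it is the Conjecture itself—and must be excised from the proof of the theorem. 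Once you downgrade ``equivalences'' to ``comparison morphisms'' throughout, your argument is a correct elaboration of the paper's.
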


Following Simpson \cite[\S7]{Simpson_Geometricity}, we expect the following to be true:
\begin{conjecture}[Cat-HA version of the non-abelian Hodge correspondence]
	The morphisms $\Phi^\ast$ and $\Psi^\ast$ are equivalences, i.e., $\catCohb_{\mathsf{pro}}(\bfCoh(X_\dR))$ is filtered by $\catCohb_{\mathsf{pro},\,\C^\ast}(\bfCoh^\ast_{/\A^1}(X_\Del))$ with associated graded $\catCohb_{\mathsf{pro},\,\C^\ast}(\bfCoh^{\mathsf{ss}, \, 0}(X_\Dol))$.
\end{conjecture}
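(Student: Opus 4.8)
The plan is to obtain the statement by running the $2$-Segal machinery of Theorem~\ref{thm:coh_algebra_in_correspondence} \emph{relative to the base} $[\A^1/\G_m]$, and then restricting along the two strata $i\colon\Spec\C\simeq[\G_m/\G_m]\hookrightarrow[\A^1/\G_m]$ and $j\colon\sfB\G_m\hookrightarrow[\A^1/\G_m]$. To begin with, the equivalence $\catCohb_{\mathsf{pro},\,\C^\ast}(\bfCoh^{(\ast)}_{/\A^1}(X_\Del))\simeq\catCohb_{\mathsf{pro}}(\bfCoh^{(\ast)}_{/[\A^1/\G_m]}(X_{\Del,\,\G_m}))$ is the Deligne-shape instance of the descent identity $\catCohb(\bfPerf_G(Y))\simeq\catCohb_G(\bfPerf(Y))$ of \S\ref{ss:equivariant} with $G=\G_m$, and it is compatible with the $2$-Segal structures because $[-/\G_m]$ preserves the relevant limits. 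From here on I would work with $Y\coloneqq\bfCoh^{(\ast)}_{/[\A^1/\G_m]}(X_{\Del,\,\G_m})$ and its convolution $2$-Segal object $\calS_\bullet Y$ in $\dGeom_{/[\A^1/\G_m]}$; by the Proposition preceding this theorem — using that $p$ is derived lci over $[\A^1/\G_m]$ (Corollaries~\ref{cor:dR-lci} and \ref{cor:Dol-lci}) and that $q$ is representable by proper Quot schemes of $\Lambda$-modules as in the proof of Theorem~\ref{thm:coh_algebra_in_correspondence} — the object $Y$ carries the structure of an $\mathbb E_1$-algebra in $\Corr^\times(\dGeom_{/[\A^1/\G_m]})_{\mathsf{lci},\mathsf{rps}}$.

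Next I would promote the functor $\catCohb_{\mathsf{pro}}$ of Appendix~\ref{sec:ind_quasi_compact} to the relative setting over $B\coloneqq[\A^1/\G_m]$. For $Z\to B$, pull-back along the structure map endows $\catCohb_{\mathsf{pro}}(Z)$ with a $\catPerf(B)$-module structure, and the Künneth identity $\catCohb_{\mathsf{pro}}(Z_1\times_B Z_2)\simeq\catCohb_{\mathsf{pro}}(Z_1)\otimes_{\catPerf(B)}\catCohb_{\mathsf{pro}}(Z_2)$ — valid here because $B$ is a perfect stack (an affine quotiented by a linearly reductive group) and the structure maps in sight are geometric — makes $\catCohb_{\mathsf{pro}}$ into a right-lax symmetric monoidal functor $\Corr^\times(\Ind(\dGeom^{\mathsf{qc}}_{/B}))_{\mathsf{rps},\mathsf{lci}}\to\catPerffilt\Mod(\mathsf{Pro}(\Cat_\infty^{\mathsf{st}}))$. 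Applying it to $Y$ turns the $\mathbb E_1$-algebra of the previous paragraph into an $\mathbb E_1$-algebra object of $\catPerffilt\Mod(\mathsf{Pro}(\Cat_\infty^{\mathsf{st}}))$; this is precisely the asserted $\catPerffilt$-module structure, and it is $\mathbb E_1$-compatible by construction.

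For $\Phi^{(\ast)}$ and $\Psi^{(\ast)}$ I would base-change this $\mathbb E_1$-algebra along the two monoidal functors $i^\ast\colon\catPerffilt\to\catPerf_\C$ (``underlying object'') and $j^\ast\colon\catPerffilt\to\catPerfgr$ (``associated graded'') of \S\ref{s:Hodge_filtration}, obtaining $\mathbb E_1$-algebras in $\catPerf_\C\Mod(\mathsf{Pro}(\Cat_\infty^{\mathsf{st}}))$, resp.\ $\catPerfgr\Mod(\mathsf{Pro}(\Cat_\infty^{\mathsf{st}}))$. Since $i$ is a Zariski open immersion and $j$ a regular closed immersion, base change for $\catCohb_{\mathsf{pro}}$ identifies these base-changes with $\catCohb_{\mathsf{pro}}$ of the derived fibres $Y\times_{B}[\G_m/\G_m]$ and $Y\times_{B}\sfB\G_m$; the identifications \eqref{eq:fibers} — together with the fact that, in the $(\ast)$-case, the zero-fibre of $Y^\ast$ is $\bfCoh^{\mathsf{ss},\,0}(X_\Dol)$ with its scaling $\C^\ast$-action — rewrite the second fibre as $\catCohb_{\C^\ast}(\bfCoh^{(\mathsf{ss},\,0)}(X_\Dol))$, yielding $\Psi^{(\ast)}$, and the first as $\catCohb(\bfCoh(X_\dR))$ after collapsing the residual $\sfB\G_m$-factor by pull-back along the unit point $\bfCoh(X_\dR)\to\bfCoh(X_\dR)\times\sfB\G_m$, yielding $\Phi^{(\ast)}$. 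Each of $\Phi^{(\ast)}$, $\Psi^{(\ast)}$ is then a monoidal functor, being a composite of base-changes of $\mathbb E_1$-algebras along monoidal functors and of symmetric-monoidal pull-backs.

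The main obstacle, I expect, is not this formal skeleton but the relative pro-coherent formalism over the stacky, non-affine base $[\A^1/\G_m]$: one has to verify that the right-lax monoidal functor $\catCohb_{\mathsf{pro}}$ of Appendix~\ref{sec:ind_quasi_compact} still satisfies base change and the Künneth identity once everything is taken relative to $[\A^1/\G_m]$ and the stacks involved fail to be quasi-compact, which means redoing the quasi-compact-exhaustion bookkeeping of \S\ref{ss:categorification} fibrewise over $[\A^1/\G_m]$ and checking its compatibility with the $\G_m$-quotient. A secondary technical point is to make the identifications \eqref{eq:fibers} functorial enough to transport the $\mathbb E_1$-structures — i.e.\ to realise them as equivalences of $2$-Segal objects obtained by base-changing the whole convolution diagram along $i$ and $j$ — rather than merely as equivalences of the underlying categories.
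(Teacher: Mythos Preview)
The statement you are addressing is a \emph{Conjecture} in the paper; there is no proof given there, only the remark that it is expected to follow from Simpson's work \cite{Simpson_Geometricity}. What you have written is, in effect, a correct sketch of the \emph{Theorem} immediately preceding the conjecture --- the construction of $\Phi^{(\ast)}$ and $\Psi^{(\ast)}$ as $\mathbb E_1$-algebra maps and of the $\catPerffilt$-module structure --- and your outline there matches the paper's approach closely.

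The genuine gap is in the passage from ``morphism'' to ``equivalence''. You obtain $\Phi^\ast$ and $\Psi^\ast$ as equivalences by invoking a Künneth identity
\[
\catCohb_{\mathsf{pro}}(Z_1\times_B Z_2)\;\simeq\;\catCohb_{\mathsf{pro}}(Z_1)\otimes_{\catPerf(B)}\catCohb_{\mathsf{pro}}(Z_2)
\]
and a base-change identity $\catCohb_{\mathsf{pro}}(Y)\otimes_{\catPerffilt}\catPerf(B')\simeq\catCohb_{\mathsf{pro}}(Y\times_B B')$ for $B'\in\{\Spec\C,\,\sfB\G_m\}$. These identities are the entire content of the conjecture, and they are \emph{not} formal bookkeeping. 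Künneth and base-change of this shape are known for $\catQCoh$ or $\mathsf{IndCoh}$ over perfect bases, but they typically \emph{fail} for $\catCohb$: tensoring categories of bounded coherent sheaves does not in general recover bounded coherent sheaves on the fibre product, because coherence and boundedness are not preserved by the relevant colimits unless one has additional input (flatness of the family, finite Tor-dimension of the fibre inclusion relative to the total space, etc.). The stacks $\bfCoh^\ast_{/\A^1}(X_\Del)$ are derived lci but far from smooth, and the closed stratum $j\colon\sfB\G_m\hookrightarrow[\A^1/\G_m]$ is exactly where one expects the comparison map $\Psi^\ast$ to require nontrivial geometric control.

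What is actually needed --- and what the reference to \cite{Simpson_Geometricity} signals --- is a geometric statement to the effect that the Deligne family $\bfCoh^\ast_{/\A^1}(X_\Del)\to\A^1$ is, after passing to a suitable formal or analytic neighbourhood, a trivial family (this is Simpson's ``preferred sections'' picture). That is a derived-geometric avatar of the non-abelian Hodge theorem itself, not a consequence of the $2$-Segal and correspondence machinery. Your proposal correctly isolates \emph{where} the difficulty lies, but underestimates it: it is not a matter of redoing the exhaustion bookkeeping of Appendix~\ref{sec:ind_quasi_compact} fibrewise, but of supplying an honest piece of Hodge theory.
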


\bigskip\section{A Cat-HA version of the Riemann--Hilbert correspondence}\label{s:RH}

In this section we briefly consider a complex analytic analogue of the theory developed so far.
Thanks to the foundational work on derived analytic geometry \cite{DAG-IX,Porta_Yu_Higher_analytic_stacks_2014,Porta_GAGA,Porta_Holstein_Mapping} most of the constructions and results obtained so far carry over in the analytic setting.
After sketching how to define the derived analytic stack of coherent sheaves, we focus on two main results.
The first, is the construction of a monoidal functor between the algebraic and the analytic categorified Hall algebras coming from nonabelian Hodge theory.
The second is to provide an equivalence between the analytic categorified Betti algebra and the de Rham one.
This equivalence is an instance of the Riemann--Hilbert correspondence, and it is indeed induced by the main results of \cite{Porta_Derived,Porta_Holstein_Mapping}.

\subsection{The analytic stack of coherent sheaves}

We refer to \cite[\S2]{Porta_Holstein_Mapping} for a review of derived analytic geometry.
Using the notations introduced there, we denote by $\bfAnPerf$ the complex analytic stack of perfect complexes (see \S4 in \textit{loc.\ cit.}).
Similarly, given derived analytic stacks $X$ and $Y$, we let $\bfAnMap(X,Y)$ be the derived analytic stack of morphisms between them.

Fix a derived geometric analytic stack $X$.
We wish to define a substack of $\bfAnPerf(X) \coloneqq \bfAnMap(X, \bfAnPerf)$ classifying families of coherent sheaves on $X$.
The same ideas of \S\ref{s:coh} apply, but as usual some extra care to deal with the notion of flatness in analytic geometry is needed.

\begin{definition}
	Let $S$ be a derived Stein space and let $f \colon X \to S$ be a morphism of derived analytic stacks.
	We say that an almost perfect complex $\calF \in \catAPerf(X)$ has \textit{tor-amplitude within $[a,b]$ relative to $S$} (resp.\ \textit{tor-amplitude $\le n$ relative to $S$}) if for every $\calG \in \catAPerf^\heartsuit(S)$ one has
	\begin{align}
		\pi_i( \calF \otimes f^\ast \calG ) \simeq 0 \quad i \notin [a,b] \quad (\text{resp.\ } i > n) \ .
	\end{align}
	We let $\catAPerf_S^{\le n}(X)$ denote the full subcategory of $\catAPerf(X)$ spanned by those sheaves of almost perfect modules $\calF$ on $X$ having tor-amplitude $\le n$ relative to $S$.
	We write
	\begin{align}
		\catCoh_S(X) \coloneqq \catAPerf_S^{\le 0}(X) \ , 
	\end{align}
	and we refer to it as the $\infty$-category of flat families of coherent sheaves on $X$ relative to $Y$.
\end{definition}

The above definition differs from \cite[Definitions~7.1 \& 7.2]{Porta_Yu_Mapping}.
We prove in Lemma~\ref{lem:relative_tor_amplitude_geometric_case_analytic} that they are equivalent.

\begin{lemma} \label{lem:flat_effective_epimorphism_analytic}
	Let $X$ be a derived analytic stack, let $S \in \dStn$ and let $f \colon X \to S$ be a morphism in $\dAnSt$.
	Assume that there exists a flat\footnote{A morphism $f \colon U \to X$ of derived analytic stacks is said to be flat if the pullback functor $f^\ast \colon \catAPerf(X) \to \catAPerf(U)$ is $t$-exact.} effective epimorphism $u \colon U \to X$.
	Then $\calF \in \catAPerf(X)$ has tor-amplitude within $[a,b]$ relative to $S$ if and only if $u^\ast(\calF)$ has tor-amplitude within $[a,b]$ relative to $S$.
\end{lemma}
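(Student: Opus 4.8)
The plan is to mirror exactly the proof of the algebraic analogue, Lemma~\ref{lem:flat_effective_epimorphism}, replacing the use of $\catQCoh$ by $\catAPerf$ throughout, since in the derived analytic setting we do not have a full theory of quasi-coherent sheaves at our disposal but only of (almost) perfect complexes. The statement is symmetric in the two implications up to replacing $X$ by $U$, so it suffices to prove one direction; in fact the ``only if'' direction is immediate (pullback along $u$ commutes with tensor products and with $f^\ast$), so the content is the converse: if $u^\ast(\calF)$ has tor-amplitude $[a,b]$ relative to $S$, then so does $\calF$.

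First I would record the two key properties of the flat effective epimorphism $u \colon U \to X$ that make the argument run. Being flat, $u^\ast \colon \catAPerf(X) \to \catAPerf(U)$ is $t$-exact; being an effective epimorphism of derived analytic stacks, $u^\ast$ is moreover conservative on $\catAPerf$ — this is the analytic counterpart of the fact used in the algebraic proof, and it follows from descent for almost perfect complexes along effective epimorphisms together with the fact that a morphism of sheaves which pulls back to an equivalence on a cover is an equivalence. (I would cite the relevant statement from \cite{Porta_Holstein_Mapping} or \cite{Porta_Yu_Higher_analytic_stacks_2014}; if no such statement is available off the shelf for the class of derived analytic stacks considered, the fall-back is to reduce to the case where $u$ admits local sections and argue stalk-wise.) Combining $t$-exactness and conservativity, for any $\calF \in \catAPerf(X)$ and any $i \in \Z$ one has $\pi_i(\calF) \simeq 0$ if and only if $u^\ast(\pi_i(\calF)) \simeq \pi_i(u^\ast(\calF)) \simeq 0$.

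The core computation is then the following. Fix $\calG \in \catAPerf^\heartsuit(S)$; I want to show $\pi_i(\calF \otimes f^\ast \calG) \simeq 0$ for $i \notin [a,b]$. By the displayed equivalence it is enough to check $\pi_i(u^\ast(\calF \otimes f^\ast\calG)) \simeq 0$, and since $u^\ast$ is symmetric monoidal, $u^\ast(\calF \otimes f^\ast\calG) \simeq u^\ast(\calF) \otimes u^\ast f^\ast(\calG) \simeq u^\ast(\calF) \otimes (f\circ u)^\ast(\calG)$. Writing $f_U \coloneqq f\circ u \colon U \to S$, this is precisely the condition that $u^\ast(\calF)$ has tor-amplitude $[a,b]$ relative to $S$, which holds by hypothesis. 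This concludes the proof, so the lemma would be only a few lines once the two properties of $u$ are in hand.

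The main obstacle, and the only non-formal point, is the conservativity of $u^\ast$ on $\catAPerf$ for a flat effective epimorphism of derived \emph{analytic} stacks: in the analytic world effective epimorphisms are a more delicate notion than in the algebraic world, and one must be sure that the relevant descent statement for almost perfect complexes is available in the generality needed here (arbitrary derived geometric analytic stacks, not just Stein spaces). I expect this to be either already recorded in \cite{Porta_Holstein_Mapping} or reducible to it via the analytic analogues of Lemma~\ref{lem:base_change_tor_amplitude} and the Stein local structure; in the worst case one localizes on $X$ to reduce to $U$ and $X$ both Stein, where descent along a flat effective epimorphism of Stein spaces is classical. Everything else — $t$-exactness of $u^\ast$, symmetric monoidality of $u^\ast$, compatibility of pullback with the composite $f\circ u$ — is formal and identical to the algebraic case.
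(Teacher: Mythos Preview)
Your proposal is correct and follows exactly the same approach as the paper: both argue that since $u$ is a flat effective epimorphism, $u^\ast$ is $t$-exact and conservative on $\catAPerf$, so $\pi_i(\calF \otimes f^\ast\calG) \simeq 0$ if and only if $\pi_i(u^\ast(\calF) \otimes u^\ast f^\ast \calG) \simeq 0$. The paper's proof is in fact even terser than yours, simply asserting conservativity without the discussion you add; your caution about justifying it in the analytic setting is reasonable but not elaborated on in the paper.
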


\begin{proof}
	Let $\calG \in \catAPerf^\heartsuit(S)$.
	Then since $u$ is a flat effective epimorphism, we see that the pullback functor
	\begin{align}  
		u^\ast \colon \catAPerf(X) \longrightarrow \catAPerf(U)
	\end{align} 
	is $t$-exact and conservative.
	Therefore $\pi_i( \calF \otimes f^\ast \calG ) \simeq 0$ if and only if
	\begin{align}  
		u^\ast( \pi_i(\calF \otimes f^\ast \calG) ) \simeq \pi_i( u^\ast(\calF) \otimes u^\ast f^\ast \calG ) \simeq 0 \ . 
	\end{align} 
	The conclusion follows.
\end{proof}

\begin{lemma} \label{lem:relative_tor_amplitude_geometric_case_analytic}
	Let $f \colon X \to S$ be a morphism of derived analytic stacks.
	Assume that $X$ is geometric and that $S$ is a derived Stein space.
	Then $\calF \in \catQCoh(X)$ has tor-amplitude within $[a,b]$ relative to $S$ if and only if there exists a smooth Stein covering $\{u_i \colon U_i \to X\}$ such that $\Gamma(U_i; u_i^\ast \calF)$ has tor-amplitude within $[a,b]$ as $\Gamma(S; \scrO_S^{\mathsf{alg}})$-module.
\end{lemma}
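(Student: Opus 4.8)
The statement is the analytic analogue of Lemma~\ref{lem:relative_tor_amplitude_geometric_case}, so the plan is to mimic that proof, using the analytic plus-pushforward and projection formula in place of their algebraic counterparts. First I would reduce to the affine (i.e.\ Stein) case: applying Lemma~\ref{lem:flat_effective_epimorphism_analytic} to the smooth Stein covering $\{u_i\colon U_i\to X\}$, the property "$\calF$ has tor-amplitude $[a,b]$ relative to $S$" can be checked after pulling back to each $U_i$. Thus we are reduced to proving that for a morphism $f\colon U\to S$ with $U$ and $S$ derived Stein spaces, $\calF\in\catAPerf(U)$ has tor-amplitude $[a,b]$ relative to $S$ if and only if $\Gamma(U;\calF)$ has tor-amplitude $[a,b]$ as $\Gamma(S;\scrO_S^{\mathsf{alg}})$-module. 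This is the analogue of the second half of the proof of Lemma~\ref{lem:relative_tor_amplitude_geometric_case}.

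For the affine case, the key inputs are: (1) on a derived Stein space $U$ the global sections functor $\Gamma(U;-)\colon\catAPerf(U)\to \Gamma(U;\scrO_U^{\mathsf{alg}})\Mod$ is $t$-exact and fully faithful (this is part of the Stein GAGA package, cf.\ \cite{Porta_GAGA,Porta_Yu_Mapping}), and more precisely identifies $\catAPerf(U)$ with a full subcategory of modules; (2) for the morphism $f\colon U\to S$ of Stein spaces, $f_\ast=\Gamma$-relative is $t$-exact and conservative, and the projection formula $f_\ast(\calF\otimes f^\ast\calG)\simeq f_\ast(\calF)\otimes\calG$ holds for $\calG\in\catAPerf^\heartsuit(S)$. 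Granting these, the argument is formal: $\pi_i(\calF\otimes f^\ast\calG)\simeq 0$ iff $\pi_i(f_\ast(\calF\otimes f^\ast\calG))\simeq 0$ (by $t$-exactness and conservativity of $f_\ast$) iff $\pi_i(f_\ast(\calF)\otimes\calG)\simeq 0$ (projection formula), and since every $\calG\in\catAPerf^\heartsuit(S)$ corresponds under $\Gamma$ to a discrete $\Gamma(S;\scrO_S^{\mathsf{alg}})$-module and conversely every such module giving rise to a coherent analytic sheaf, this last condition for all $\calG$ is exactly the statement that $\Gamma(U;\calF)=f_\ast(\calF)$ has tor-amplitude $[a,b]$ over $\Gamma(S;\scrO_S^{\mathsf{alg}})$. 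One must be slightly careful that the analytic ring $\Gamma(S;\scrO_S^{\mathsf{alg}})$ is Noetherian and that every finitely generated (discrete) module over it underlies a coherent sheaf on $S$ — this is standard for Stein spaces of finite embedding dimension — so that "tor-amplitude over the ring" and "$\pi_i(-\otimes\calG)=0$ for all coherent $\calG$" agree.

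The main obstacle I expect is the second input: establishing the existence and the base-change/projection-formula properties of the relative pushforward $f_\ast$ for a morphism of derived Stein spaces, and its $t$-exactness. In the algebraic setting one uses affineness of $f$ directly; in the analytic setting $f$ is not affine in any naive sense, but a morphism of Stein spaces is Stein, and Cartan's Theorem B (in its derived-analytic incarnation, as in \cite{Porta_GAGA}) gives that higher direct images of coherent sheaves vanish, i.e.\ $f_\ast$ is $t$-exact on $\catCoh^+$. Conservativity follows because $\Gamma$ detects zero objects on Stein spaces. The projection formula then follows from the fact that $f^\ast\calG$, for $\calG$ coherent, is again coherent and the usual dualizability/adjunction juggling. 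I would phrase the proof so that all of these are cited from the derived-analytic GAGA literature rather than reproved. A secondary, purely bookkeeping obstacle is to make sure the reduction via $u_i$ is legitimate: one needs that a smooth Stein covering of a geometric derived analytic stack consists of flat effective epimorphisms, which is the analytic counterpart of Example~\ref{eg:applications}(1) and should be available in \cite{Porta_Holstein_Mapping}.
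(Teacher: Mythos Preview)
Your plan is essentially the paper's: reduce to the Stein case via Lemma~\ref{lem:flat_effective_epimorphism_analytic}, use Cartan's theorem~B to get $t$-exactness and conservativity of $f_\ast$ on $\catAPerf$, and then invoke the projection formula $f_\ast(\calF)\otimes\calG\simeq f_\ast(\calF\otimes f^\ast\calG)$. The difference lies in how the projection formula is justified, and here your sketch has a genuine gap.

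You propose to get the projection formula by ``dualizability/adjunction juggling'', but $\calG\in\catAPerf^\heartsuit(S)$ is in general only almost perfect, not perfect, hence not dualizable; the usual dualizability argument does not apply. The paper instead argues as follows: the comparison map $\eta_{\calF,\calG}\colon f_\ast(\calF)\otimes\calG\to f_\ast(\calF\otimes f^\ast\calG)$ is obviously an equivalence for $\calG=\scrO_S$, hence for any perfect $\calG$; for general $\calG\in\catAPerf(S)$ one works locally on $S$, uses \cite[Lemma~4.12]{Porta_Holstein_Mapping} to write $\calG\simeq\varepsilon_S^\ast(M)$ for some $M\in\catAPerf(A_S)$, takes a simplicial resolution $P^\bullet$ of $M$ by finite free modules (via \cite[7.2.4.11(5)]{Lurie_Higher_algebra}), and then checks that $f_\ast$ commutes with the geometric realization $\vert\calF\otimes f^\ast\varepsilon_S^\ast(P^\bullet)\vert$ --- which again comes down to Cartan~B making the descent spectral sequence degenerate. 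This is the actual content of the lemma, and you should supply it rather than cite it.

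A secondary point: your claim that $\Gamma(S;\scrO_S^{\mathsf{alg}})$ is Noetherian is false for general Stein spaces, so you cannot rely on that to match ``tor-amplitude tested against $\catAPerf^\heartsuit(S)$'' with ``tor-amplitude as a module''. The bridge is again \cite[Lemma~4.12]{Porta_Holstein_Mapping}, which (locally on $S$) identifies $\catAPerf(S)$ with almost perfect $A_S$-modules via $\varepsilon_S^\ast$; this is what allows one to pass between the sheaf-theoretic and module-theoretic formulations.
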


\begin{proof}
	Using Lemma~\ref{lem:flat_effective_epimorphism_analytic}, we can reduce ourselves to the case where $X$ is a derived Stein space.
	Notice that $\calF \otimes_{\scrO_X} f^\ast \calG \in \catAPerf(X)$.
	Therefore, Cartan's theorem B applies and shows that $\pi_i( \calF \otimes_{\scrO_X} f^\ast \calG ) = 0$ if and only if $\pi_i( f_\ast( \calF \otimes_{\scrO_X} f^\ast \calG ) ) = 0$.
	Observe now that there is a canonical morphism
	\begin{align}
	\eta_{\calF, \calG} \colon f_\ast(\calF) \otimes_{\scrO_S} \calG \longrightarrow f_\ast( \calF \otimes_{\scrO_X} f^\ast \calG ) \ .
	\end{align}
	When $\calG = \scrO_S$ this morphism is obviously an equivalence.
	We claim that it is an equivalence for any $\calG \in \catAPerf(S)$.
	
	This question is local on $S$.
	Write $A_S \coloneqq \Gamma(S; \scrO_S^{\mathsf{alg}})$.
	Using \cite[Lemma~4.12]{Porta_Holstein_Mapping} we can restrict ourselves to the case where $\calG \simeq \varepsilon_S^\ast(M)$ for some $M \in \catAPerf(A_S)$.
	Here $\varepsilon_S^\ast \colon A_S \Mod \to \scrO_S \Mod$ is the functor introduced in \cite[\S4.2]{Porta_Holstein_Mapping}.
	In this case, we see that since $\eta_{\calF, \calG}$ is an equivalence when $\calG = \scrO_S$, it is also an equivalence whenever $M$ (and hence $\calG$) is perfect.
	In the general case, we use \cite[7.2.4.11(5)]{Lurie_Higher_algebra} to find a simplicial object $P^\bullet \in \Fun(\mathbf \Delta\op, \catAPerf(A_S))$ such that
	\begin{align}
		\vert P^\bullet \vert \simeq M \ . 
	\end{align}
	Write $\calP^\bullet \coloneqq \varepsilon_S^\ast(P^\bullet)$.
	Reasoning as in \cite[Corollary 3.5]{Porta_Yu_Mapping}, we deduce that
	\begin{align}
		\vert \calP^\bullet \vert \simeq \varepsilon_S^\ast( M ) \simeq \calG \ . 
	\end{align}
	It immediately follows that
	\begin{align}
		\calF \otimes_{\scrO_X} f^\ast \calG \simeq \vert \calF \otimes_{\scrO_X} f^\ast \calP^\bullet \vert \ ,
	\end{align}
	and proving that $\eta_{\calF, \calG}$ is an equivalence is reduced to checking that $f_\ast$ preserves the above colimit.
	Since the above diagram as well as its colimit takes values in $\catAPerf(X)$, we can apply Cartan's theorem B.
	The descent spectral sequence degenerates, and therefore the conclusion follows.
\end{proof}

\begin{corollary} \label{cor:relative_tor_amplitude_truncation_analytic}
	Let $f \colon X \to S$ be a morphism as in the previous lemma.
	Let $j \colon \trunc{S} \to S$ be the canonical morphism and consider the pullback diagram
	\begin{align}
		 \begin{tikzcd}[ampersand replacement = \&]
			X_0 \arrow{d}{f_0} \arrow{r}{i} \& X \arrow{d}{f} \\
			\trunc{S} \arrow{r}{j} \& S 
		\end{tikzcd}\ .
	\end{align}
	Then an almost perfect complex $\calF \in \catAPerf(X)$ has tor-amplitude within $[a,b]$ relative to $S$ if and only if $i^\ast \calF$ has tor-amplitude within $[a,b]$ relative to $\trunc{S}$.
\end{corollary}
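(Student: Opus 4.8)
\emph{Proof strategy.} The plan is to exploit the fact that, by the very definition of relative tor-amplitude, the condition only involves pullbacks of sheaves lying in $\catAPerf^\heartsuit(S)$, and that every such sheaf is pushed forward from the truncation. Write $j\colon\trunc{S}\to S$ for the canonical closed immersion, so that $\pi_0(\scrO_S)\simeq\scrO_{\trunc{S}}$. Then $j_\ast$ is $t$-exact and $j_\ast\colon\catAPerf^\heartsuit(\trunc{S})\to\catAPerf^\heartsuit(S)$ is an equivalence; in particular every $\calG\in\catAPerf^\heartsuit(S)$ can be written $\calG\simeq j_\ast\calG_0$ with $\calG_0\in\catAPerf^\heartsuit(\trunc{S})$. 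First I would record these two formal consequences of $\pi_0(\scrO_S)\simeq\scrO_{\trunc{S}}$.

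Next, working in the pullback square of the statement, I would use base change along $j$ to rewrite $f^\ast j_\ast\calG_0\simeq i_\ast f_0^\ast\calG_0$, and the projection formula for the closed immersion $i$ to rewrite $\calF\otimes_{\scrO_X}i_\ast f_0^\ast\calG_0\simeq i_\ast\bigl(i^\ast\calF\otimes_{\scrO_{X_0}}f_0^\ast\calG_0\bigr)$, obtaining a natural equivalence
\[
\calF\otimes_{\scrO_X}f^\ast(j_\ast\calG_0)\ \simeq\ i_\ast\bigl(i^\ast\calF\otimes_{\scrO_{X_0}}f_0^\ast\calG_0\bigr)\ .
\]
Since $i$ is a closed immersion, $i_\ast$ is $t$-exact and conservative, so $\pi_k$ of the left-hand side vanishes if and only if $\pi_k\bigl(i^\ast\calF\otimes_{\scrO_{X_0}}f_0^\ast\calG_0\bigr)$ does. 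Letting $\calG$ range over $\catAPerf^\heartsuit(S)$ --- equivalently, letting $\calG_0$ range over $\catAPerf^\heartsuit(\trunc{S})$ --- the vanishing of the left-hand side for $k\notin[a,b]$ is exactly the statement that $\calF$ has tor-amplitude $[a,b]$ relative to $S$, while the vanishing of the right-hand side is exactly the statement that $i^\ast\calF$ has tor-amplitude $[a,b]$ relative to $\trunc{S}$. This proves the equivalence in both directions at once, and in fact uses neither the geometricity of $X$ nor the Stein hypothesis on $S$.

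An alternative, closer in spirit to the proof of Lemma~\ref{lem:relative_tor_amplitude_geometric_case_analytic}, would be to first reduce to $X$ and $S$ derived Stein by applying Lemma~\ref{lem:flat_effective_epimorphism_analytic} to a smooth Stein atlas of $X$ and its base change to $X_0$, then translate both conditions into conditions on the $A_S$-module $M\coloneqq\Gamma(X;\calF)$ and on $M\otimes_{A_S}\pi_0(A_S)$ --- using $\Gamma(\trunc{S};\scrO_{\trunc{S}}^{\mathsf{alg}})\simeq\pi_0(A_S)$ and the projection-formula equivalence established in the proof of that lemma to identify $\Gamma(X_0;i^\ast\calF)\simeq M\otimes_{A_S}\pi_0(A_S)$ --- and finally invoke the purely algebraic fact that an almost perfect $A_S$-module has tor-amplitude $[a,b]$ if and only if its derived base change to $\pi_0(A_S)$ does, which is itself immediate from $M\otimes_{A_S}N\simeq(M\otimes_{A_S}\pi_0(A_S))\otimes_{\pi_0(A_S)}N$ for discrete $N$. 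The only point that genuinely requires care --- in either approach --- is the availability in the derived complex analytic setting of base change and the projection formula along the closed immersions $j$ and $i$; these are by now standard, cf.\ \cite{Porta_GAGA,Porta_Holstein_Mapping}, and everything else is a direct unwinding of the definition of relative tor-amplitude.
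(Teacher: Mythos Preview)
Your primary argument is correct and essentially identical to the paper's: both use base change along the closed immersion $j$, the projection formula for $i$, $t$-exactness of $i_\ast$ and $j_\ast$, and the equivalence $j_\ast\colon\catAPerf^\heartsuit(\trunc{S})\xrightarrow{\sim}\catAPerf^\heartsuit(S)$. You package both directions at once via conservativity of $i_\ast$, whereas the paper treats them separately, but the content is the same; your remark that the geometric and Stein hypotheses are not actually used is also correct.
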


\begin{proof}
	The map $j$ is a closed immersion and therefore so is $i$.
	In particular, for any $\calG \in \catAPerf(\trunc{S})$ the canonical map
	\begin{align}
		f^\ast j_\ast (\calG) \longrightarrow i_\ast f_0^\ast(\calG) 
	\end{align}
	is an equivalence.\footnote{Ultimately, this can be traced back to the unramifiedness of the analytic pregeometry $\calT_{\mathsf{an}}(\C)$. See \cite[Lemma~6.1]{Porta_Yu_Mapping} for an argument in the non-archimedean case.}
	Moreover, the projection formula holds for $i$ and $i_\ast$ is $t$-exact.
	Suppose that $\calF$ has tor-amplitude within $[a,b]$ relative to $S$.
	Let $\calG \in \catAPerf^\heartsuit(\trunc{S})$.
	Then
	\begin{align}
		 i_\ast ( i^\ast \calF \otimes_{\scrO_{X_0}} f_0^\ast \calG ) \simeq \calF \otimes_{\scrO_X} i_\ast f_0^\ast \calG \simeq \calF \otimes_{\scrO_X} f^\ast j_\ast \calG\ .
	\end{align}
	Since $j_\ast$ is $t$-exact, $j_\ast \calG \in \catAPerf^\heartsuit(S)$, and therefore the above tensor product is concentrated in homological degree $[a,b]$.
	In other words, $i^\ast \calF$ has tor-amplitude within $[a,b]$ relative to $\trunc{S}$.
	For the converse, it is enough to observe that $j_\ast$ induces an equivalence $\catAPerf^\heartsuit(\trunc{S}) \simeq \catAPerf^\heartsuit(S)$.
\end{proof}

\begin{definition}
	Let $S \in \dStn$ and let $f \colon X \to S$ be a morphism in $\dAnSt$.
	A morphism $u \colon U \to X$ is said to be \textit{universally flat relative to $S$} if for every derived Stein space $S' \in \dSt$ and every morphism $S' \to S$ the induced map $S' \times_S U \to S' \times_S X$ is flat.
	We say that a morphism $u \colon U \to X$ is \textit{universally flat} if it is universally flat relative to $\Spec(\C)$.
\end{definition}

\begin{rem}
	Let $S$ be an affine derived scheme and let $f \colon X \to S$ and $u \colon U \to X$ be morphisms of derived stacks.
	If $f$ is flat, then for every morphism $S' \to S$ of affine derived schemes, the morphism $S' \times_S U \to S' \times_S X$ is flat.
	See \cite[Proposition~\ref*{shapes-prop:local_tor_amplitude}]{Porta_Sala_Shapes}.
	In the analytic setting, it is difficult to prove a similar result, because it essentially relies on base change for maps between derived affine schemes (see \cite[Proposition~\ref*{shapes-prop:representable_by_affine}]{Porta_Sala_Shapes}), which is not available in the analytic setting.
\end{rem}

\begin{corollary} \label{cor:base_change_coherent_analytic}
	Let $X$ be a derived analytic stack and let $S$ be a derived Stein space.
	Assume that there exists a universally flat effective epimorphism $u \colon U \to X$ where $U$ is geometric and underived.
	Let $f \colon S' \to S$ be a morphism of derived Stein spaces and consider the pullback
	\begin{align}
		\begin{tikzcd}[ampersand replacement = \&]
			X \times S' \arrow{r}{g} \arrow{d}{q} \& X \times S \arrow{d}{p} \\
			S' \arrow{r}{f} \& S 
		\end{tikzcd}\ .
	\end{align}
	If $\calF \in \catAPerf(X \times S)$ has tor-amplitude within $[0,0]$ relative to $S$, then $g^\ast \calF$ has tor-amplitude within $[0,0]$ relative to $S'$.
\end{corollary}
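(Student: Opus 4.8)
The statement is the complex‑analytic analogue of Lemma~\ref{lem:base_change_tor_amplitude}, and the natural strategy is to reduce to the situation controlled by Lemma~\ref{lem:relative_tor_amplitude_geometric_case_analytic}, which expresses relative tor‑amplitude over a Stein base in terms of honest tor‑amplitude of modules over the algebraic ring of functions. First I would use the universally flat effective epimorphism $u \colon U \to X$ together with Lemma~\ref{lem:flat_effective_epimorphism_analytic} to replace $X$ by $U$: indeed $u$ being universally flat guarantees that the base‑changed map $u_{S'} \colon U \times S' \to X \times S'$ is again a flat effective epimorphism, so tor‑amplitude relative to $S'$ can be checked after pulling back along it, and likewise on the $S$‑side. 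Thus we may assume that $X$ is geometric and underived; applying Lemma~\ref{lem:flat_effective_epimorphism_analytic} once more to a smooth Stein atlas of $X$, we may even assume $X$ is a derived (in fact underived) Stein space.

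Next, I would invoke Corollary~\ref{cor:relative_tor_amplitude_truncation_analytic} to reduce $S$ and $S'$ to their truncations: having tor‑amplitude $[0,0]$ relative to $S$ is equivalent to having it relative to $\trunc S$, and the pullback of $\calF$ along $X \times \trunc{S'} \to X \times S'$ determines whether $g^\ast \calF$ has tor‑amplitude $[0,0]$ relative to $S'$. So without loss of generality $S$ and $S'$ are underived Stein spaces and $f \colon S' \to S$ is a morphism of such. At this point the key input is Lemma~\ref{lem:relative_tor_amplitude_geometric_case_analytic}: over an underived Stein base, $\calF$ having tor‑amplitude $[0,0]$ relative to $S$ means that, after taking global sections, $\Gamma(X \times S; \calF)$ has tor‑amplitude $[0,0]$ as a module over $A_S \coloneqq \Gamma(S; \scrO_S^{\mathsf{alg}})$ — equivalently $\calF$ is $S$‑flat in the sense of usual commutative algebra, by Remark~\ref{rem:tor_amplitude_underived}. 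The morphism $f$ induces a ring map $A_S \to A_{S'}$, and the base‑change formula $\eta_{\calF,\calG}$ established inside the proof of Lemma~\ref{lem:relative_tor_amplitude_geometric_case_analytic} (applied now with the roles of the two Stein spaces) gives $\Gamma(X \times S'; g^\ast \calF) \simeq \Gamma(X \times S; \calF) \otimes_{A_S} A_{S'}$. Since flatness (tor‑amplitude $\le 0$) of a module is preserved under arbitrary base change of rings, $g^\ast \calF$ has tor‑amplitude $[0,0]$ relative to $S'$.

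Finally, one must check that $g^\ast \calF$ is still almost perfect, so that it makes sense to speak of its relative tor‑amplitude; this is automatic because pullback along the morphism $g$ of derived analytic stacks preserves $\catAPerf$ (the functor $g^\ast$ is symmetric monoidal and sends the structure sheaf to the structure sheaf, hence preserves almost perfect complexes). Assembling the reductions in reverse — first undoing the truncation step via Corollary~\ref{cor:relative_tor_amplitude_truncation_analytic}, then undoing the atlas reductions via Lemma~\ref{lem:flat_effective_epimorphism_analytic} — yields the claim for the original $X$, $S$, $S'$.

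\textbf{Main obstacle.} The delicate point is the base‑change identity $\Gamma(X \times S'; g^\ast\calF) \simeq \Gamma(X \times S; \calF)\otimes_{A_S} A_{S'}$ in the analytic setting. As the remark preceding this corollary in the excerpt emphasizes, base change for pushforward along maps of derived Stein spaces is not available in the analytic world with the same ease as in algebraic geometry, which is precisely why the hypothesis that $u$ be \emph{universally} flat (and not merely flat) is imposed. The way around it is to not prove a general base‑change theorem but to exploit the specific structure already unwound in the proof of Lemma~\ref{lem:relative_tor_amplitude_geometric_case_analytic}: there the projection formula $f_\ast(\calF)\otimes_{\scrO_S}\calG \simeq f_\ast(\calF\otimes_{\scrO_X}f^\ast\calG)$ was verified for all $\calG\in\catAPerf(S)$ by a simplicial resolution argument using Cartan's theorem B, and the same mechanism — combined with the comparison functor $\varepsilon_S^\ast$ between algebraic and analytic modules from \cite[\S4.2]{Porta_Holstein_Mapping} — supplies the needed compatibility after passing to global sections over the Stein base. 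Keeping the argument purely at the level of the algebraic rings $A_S$, $A_{S'}$ and invoking Lemma~\ref{lem:relative_tor_amplitude_geometric_case_analytic} at both ends is what sidesteps the missing general analytic base change.
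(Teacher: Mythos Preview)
Your reductions match the paper's exactly: use universal flatness of $u$ together with Lemma~\ref{lem:flat_effective_epimorphism_analytic} to replace $X$ by $U$, use Corollary~\ref{cor:relative_tor_amplitude_truncation_analytic} to assume $S$ and $S'$ are underived, and localize so that $X$ is Stein. The divergence is at the final step, and there your argument has a real gap.

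You assert the identity
\[
\Gamma(X \times S';\, g^\ast \calF)\;\simeq\;\Gamma(X \times S;\,\calF)\otimes_{A_S} A_{S'}
\]
and then invoke preservation of flatness under base change of rings. But this identity is precisely an instance of the analytic base change that the paper warns is unavailable (see the remark immediately preceding the corollary). The projection formula $f_\ast(\calF)\otimes_{\scrO_S}\calG \simeq f_\ast(\calF\otimes_{\scrO_X} f^\ast\calG)$ established in the proof of Lemma~\ref{lem:relative_tor_amplitude_geometric_case_analytic} does not supply it: that formula concerns tensoring with an \emph{almost perfect} $\calG$ on the fixed base $S$, whereas here you need compatibility of global sections with pullback along $f\colon S'\to S$, and $A_{S'}$ is not an object of $\catAPerf(S)$. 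The functors $\varepsilon_S^\ast$ do not automatically intertwine with analytic pullback either; establishing that would itself amount to proving the missing base change.

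The paper does not attempt to prove this algebraically. After the same reductions, it simply invokes a classical theorem of Douady from complex-analytic geometry \cite[\S8.3, Proposition~3]{Douady_Hilbert_scheme}, which directly states that flatness of a coherent analytic sheaf over a Stein base is preserved under arbitrary base change. That is the genuine input you are missing.
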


\begin{proof}
	Since $u \colon U \to X$ is universally flat, the morphism $U \times S \to X \times S$ and $U \times S' \to X \times S'$ are flat.
	Therefore Lemma~\ref{lem:flat_effective_epimorphism_analytic} shows that we can restrict ourselves to the case $X = U$.
	Using Corollary~\ref{cor:relative_tor_amplitude_truncation_analytic}, we can reduce the problem to the case where $S$ and $S'$ are underived.
	Since the question is local on $X$, we can furthermore assume that $X$ is a Stein space.
	At this point, the conclusion follows directly from \cite[\S8.3, Proposition~3]{Douady_Hilbert_scheme}.
\end{proof}

Using the above corollary, we can therefore define a derived analytic stack $\bfAnCoh(X)$, which is a substack of $\bfAnPerf(X)$.

In what follows, we will often restrict ourselves to the study of $\bfAnCoh(X\an)$, where now $X$ is an algebraic variety.
Combining \cite[Proposition~5.2 \& Theorem~5.5]{Porta_Holstein_Mapping} we see that if $X$ is a proper complex scheme, then there is a natural equivalence\footnote{The derived analytification functor has been firstly introduced in \cite[Remark 12.26]{DAG-IX} and studied extensively in \cite[\S4]{Porta_GAGA}. For a review, see \cite[\S3.1]{Porta_Holstein_Mapping}.}
\begin{align}\label{eq:isomuX}
	\bfPerf(X)\an \simeq \bfAnPerf(X\an) \ .
\end{align}
We wish to extend this result to $\bfCoh(X)\an$ and $\bfAnCoh(X\an)$.
Let us start by constructing the map between them.
The map $\bfPerf(X)\an \to \bfAnPerf(X\an)$ is obtained by adjunction from the map
\begin{align}
	\bfPerf(X) \longrightarrow \bfAnPerf(X\an) \circ (-)\an  \ ,
\end{align}
which, for $S \in \dAff^{\mathsf{afp}}$, is induced by applying $(-)^\simeq \colon \Cat_\infty \to \cS$ to the analytification functor
\begin{align}
	\catPerf(X \times S) \longrightarrow \catPerf(X\an \times S\an) \ .
\end{align}
It is therefore enough to check that this functor respects the two subcategories of families of coherent sheaves relative to $S$ and $S\an$, respectively.

\begin{lemma} \label{lem:analytification_relative_tor_amplitude}
	Let $f \colon X \to S$ be a morphism of derived complex stacks locally almost of finite presentation.
	Suppose that $X$ is geometric and that $S$ is affine.
	Then $\calF \in \catAPerf(S)$ has tor-amplitude within $[a,b]$ relative to $S$ if and only if $\calF\an \in \catAPerf(X\an)$ has tor-amplitude within $[a,b]$ relative to $S\an$.
\end{lemma}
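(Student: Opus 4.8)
The plan is to reduce the statement to a fibrewise criterion over $S$ on both the algebraic and the analytic side, and then to match the two criteria using the compatibility of analytification with pullback to fibres.

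\emph{Reduction to $X$ affine.} Both ``tor-amplitude $[a,b]$ relative to the base'' conditions are local on $X$: they can be tested after pulling back along a flat effective epimorphism, by Lemma~\ref{lem:flat_effective_epimorphism} algebraically and by Lemma~\ref{lem:flat_effective_epimorphism_analytic} analytically. Choosing a smooth affine atlas $u\colon U\to X$, its analytification $u\an\colon U\an\to X\an$ is again a smooth — hence flat — effective epimorphism (analytification preserves smoothness and surjectivity, cf.\ \cite{Porta_GAGA}), so we may assume $X=\Spec(B)$, with $S=\Spec(A)$ and $A,B$ almost of finite presentation over $\C$. In this situation Lemma~\ref{lem:relative_tor_amplitude_geometric_case} identifies ``$\calF$ has tor-amplitude $[a,b]$ relative to $S$'' with ``$f_\ast\calF$ has tor-amplitude $[a,b]$ as an $A$-module'', while Lemma~\ref{lem:relative_tor_amplitude_geometric_case_analytic} identifies ``$\calF\an$ has tor-amplitude $[a,b]$ relative to $S\an$'' with ``$\Gamma(X\an;\calF\an)$ has tor-amplitude $[a,b]$ as a $\Gamma(S\an;\scrO_{S\an}^{\mathsf{alg}})$-module''.

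\emph{The two fibrewise criteria.} On the algebraic side, combining Lemma~\ref{lem:base_change_tor_amplitude} (for the ``only if'') with Lemma~\ref{lem:local_criterion_flatness} and Lemma~\ref{lem:checking_perfectness_on_fibers} — together with the fact that the local criterion of flatness over the Noetherian ring $\pi_0(A)$ reduces to closed points, whose residue fields are $\C$ by the Nullstellensatz — one obtains: $\calF$ has tor-amplitude $[a,b]$ relative to $S$ if and only if for every $\C$-point $s\colon\Spec(\C)\to S$ the complex $\calF|_{X_s}\in\catAPerf(X_s)$ has cohomological amplitude $[a,b]$ (over a field, tor-amplitude $[a,b]$ relative to the point \emph{is} cohomological amplitude $[a,b]$). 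On the analytic side the analogous fibrewise criterion holds: the analytic local criterion of flatness (Douady, \cite[\S8.3]{Douady_Hilbert_scheme}, which is already the key input in Corollary~\ref{cor:base_change_coherent_analytic}; the general amplitude window is obtained from the coherent case by a shift together with Corollary~\ref{cor:relative_tor_amplitude_truncation_analytic} to control the lower bound) shows that $\calF\an$ has tor-amplitude $[a,b]$ relative to $S\an$ if and only if for every point $s\in S\an$ — equivalently, for every $\C$-point of $S$ — the complex $\calF\an|_{(X\an)_s}$ has cohomological amplitude $[a,b]$.

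\emph{Matching the fibres.} For a $\C$-point $s$ of $S$ there are canonical equivalences $(X_s)\an\simeq(X\an)_s$ and $(\calF|_{X_s})\an\simeq\calF\an|_{(X\an)_s}$, by the compatibility of analytification with pullbacks (cf.\ \cite{Porta_GAGA,Porta_Holstein_Mapping}). Since analytification is flat, it is $t$-exact on coherent modules, so $\pi_i\big((\calF|_{X_s})\an\big)\simeq\big(\pi_i(\calF|_{X_s})\big)\an$; and a coherent sheaf on the finite type $\C$-scheme $X_s$ vanishes if and only if its analytification does, because the maps of local rings $\scrO_{X_s,x}\to\scrO_{(X_s)\an,x}$ are faithfully flat. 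Hence $\calF|_{X_s}$ has cohomological amplitude $[a,b]$ exactly when $\calF\an|_{(X\an)_s}$ does, so the two fibrewise criteria coincide and the lemma follows.

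\emph{Main obstacle.} The delicate point is the analytic fibrewise criterion for tor-amplitude $[a,b]$: upgrading the Douady-type statement underlying Corollary~\ref{cor:base_change_coherent_analytic} from the coherent (i.e.\ $[0,0]$) case to an arbitrary amplitude window, and verifying the derived-analytification compatibilities (commutation with pullback to fibres, $t$-exactness on coherent modules) within the foundations of \cite{Porta_GAGA,Porta_Holstein_Mapping}. The reduction steps and the algebraic fibrewise criterion, by contrast, are routine consequences of the lemmas already established.
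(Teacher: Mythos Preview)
Your strategy is genuinely different from the paper's, and the gap you flag as the ``main obstacle'' is exactly where your argument is incomplete.

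The paper does not use any fibrewise criterion. For the direction $\calF\an$ good $\Rightarrow$ $\calF$ good, it simply observes that analytification is $t$-exact and conservative and that $(\calF\otimes f^\ast\calG)\an\simeq\calF\an\otimes f^{\mathsf{an}*}(\calG\an)$; since $\calG\an\in\catAPerf^\heartsuit(S\an)$ when $\calG\in\catAPerf^\heartsuit(S)$, the algebraic tor-amplitude follows immediately. For the harder direction $\calF$ good $\Rightarrow$ $\calF\an$ good, the paper works locally on $S\an$: for a double covering $\{V_i\Subset U_i\}$ by relatively compact Stein opens, \cite[Lemma~4.12]{Porta_Holstein_Mapping} says that every $\calG\in\catAPerf^\heartsuit(S\an)$ is, after restriction to $V_i$, of the form $\varepsilon_{V_i}^\ast(\widetilde{\calG}_i)$ for some honest $A_{V_i}$-module $\widetilde{\calG}_i$. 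One then base-changes $\calF$ along $A\to A_{V_i}$ (Lemma~\ref{lem:base_change_tor_amplitude} preserves the relative tor-amplitude) and applies the easy direction to $\widetilde{\calG}_i$. In other words, the paper \emph{algebraizes} the analytic test sheaf rather than reducing both sides to fibres.

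Your route, by contrast, pivots on an analytic local criterion of tor-amplitude $[a,b]$ over a Stein base, and this is not what Douady proves. Douady gives the $[0,0]$ case; your suggested upgrade ``shift together with Corollary~\ref{cor:relative_tor_amplitude_truncation_analytic}'' only produces windows of the form $[a,a]$, not an arbitrary $[a,b]$. To get the general window one needs a d\'evissage (peel off the top cohomology, show it is flat via Douady, truncate, and iterate), which in the derived analytic setting requires knowing that the cohomology sheaves of an almost perfect complex are coherent and that the truncations stay almost perfect---true, but you have not argued it, and it is not a one-liner. Until that criterion is established your ``matching the fibres'' step, while correct in itself, does not close the argument. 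The paper's local-algebraization trick sidesteps this issue entirely and is the cleaner path here.
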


\begin{proof}
	Suppose first that $\calF\an$ has tor-amplitude within $[a,b]$ relative to $S\an$.
	Let $\calG \in \catAPerf^\heartsuit(S)$.
	Then we have to check that $\pi_i( \calF \otimes_{\scrO_X} f^\ast \calG ) = 0$ for $i \notin [a,b]$.
	As the analytification functor $(-)\an$ is $t$-exact and conservative, this is equivalent to checking that we have $\pi_i( (\calF \otimes_{\scrO_X} f^\ast \calG)\an ) = 0$.
	But,
	\begin{align} \label{eq:analytification_relative_tor_amplitude}
		( \calF\otimes_{\scrO_X} f^\ast \calG )\an \simeq \calF\an \otimes_{\scrO_{X\an}} f^{\mathsf{an} \ast}(\calG\an) \ ,
	\end{align}
	and the conclusion follows from the fact that $\calG\an \in \catAPerf^\heartsuit(S\an)$.
	
	Suppose now that $\calF$ has tor-amplitude within $[a,b]$ relative to $S = \Spec(A)$.
	We can check that $\calF\an$ has tor-amplitude within $[a,b]$ relative to $S\an$ locally on $S\an$.
	For every derived Stein open subspace $j_U \colon U \subset S\an$, write
	\begin{align}
		A_U \coloneqq \Gamma(U; \scrO_{S\an}^{\mathsf{alg}} \vert_U) \ . 
	\end{align}
	Write $a_U \colon \Spec(A_U) \to S$ for the morphism induced by the canonical map $A \to A_U$.
	Consider the two pullback squares
	\begin{align}
		\begin{tikzcd}[ampersand replacement = \&]
			X_U \arrow{r}{b_U} \arrow{d}{f_U} \& X \arrow{d}{f} \\
			\Spec(A_U) \arrow{r}{a_U} \& S
			\end{tikzcd} \quad , \quad \begin{tikzcd}[ampersand replacement = \&]
			X\an_U \arrow{r}{i_U} \arrow{d}{f_U\an} \& X\an \arrow{d}{f\an} \\
			U \arrow{r}{j_U} \& S\an 
		\end{tikzcd}\ .
	\end{align}
	There is a natural analytification functor relative to $U$
	\begin{align}
		(-)\an_U \colon \catAPerf(X_U) \longrightarrow \catAPerf(X\an_U) \ .
	\end{align}
	Moreover, the canonical map
	\begin{align}
		 i_U^\ast( \calH\an ) \longrightarrow ( b_U^\ast( \calH ) )\an_U
	\end{align}
	is an equivalence for every $\calH \in \catAPerf(X)$.
	
	Fix now $\calG \in \catAPerf(S\an)$.
	If $\calG \simeq (\widetilde{\calG})\an$ for some $\widetilde{\calG} \in \catAPerf^\heartsuit(S)$, then the equivalence \eqref{eq:analytification_relative_tor_amplitude} shows that
	\begin{align}
		\pi_i( \calF\an \otimes_{\scrO_{X\an}} f^{\mathsf{an}*}(\calG) ) = 0 
	\end{align}
	for $i \notin [a,b]$.
	In the general case, we choose a double covering $\{V_i \Subset U_i \Subset S\an\}$ by relatively compact derived Stein open subspaces of $S\an$.
	Using \cite[Lemma~4.12]{Porta_Holstein_Mapping} we can find $\widetilde{\calG}_i \in \catAPerf(A_{V_i})$ such that $\calG \vert_{V_i} \simeq \varepsilon_{V_i}^\ast( \widetilde{\calG}_i )$.
	Here $\varepsilon_{V_i}^\ast$ is the functor introduced in \cite[\S4.2]{Porta_Holstein_Mapping}.
	At this point, we observe that Lemma \ref{lem:base_change_tor_amplitude} guarantees that $b_U^\ast(\calF)$ has tor-amplitude within $[a,b]$ relative to $\Spec(A_U)$.
	The conclusion then follows from the argument given in the first case.
\end{proof}

As a consequence, we find a morphism
\begin{align} \label{eq:comparing_analytified_coh_with_ancoh}
	\bfCoh(X) \longrightarrow \bfAnCoh(X\an) \circ (-)\an \ ,
\end{align}
which by adjunction induces
\begin{align}
	\mu_X \colon \bfCoh(X)\an \longrightarrow \bfAnCoh(X\an) \ ,
\end{align}
which is compatible with the morphism $\bfPerf(X)\an \to \bfAnPerf(X\an)$.

\begin{proposition} \label{prop:analytification_stack_coherent_sheaves}
	If $X$ is a proper complex scheme, the natural transformation
	\begin{align}
		\mu_X \colon \bfCoh(X)\an \to \bfAnCoh(X\an)
	\end{align}
	is an equivalence.
\end{proposition}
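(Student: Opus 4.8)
The plan is to reduce the statement to the already-established equivalence \eqref{eq:isomuX} between $\bfPerf(X)\an$ and $\bfAnPerf(X\an)$, and then to check that $\mu_X$ identifies the two relevant substacks. Since $\bfCoh(X)\an$ is (by construction) a substack of $\bfPerf(X)\an$, and $\bfAnCoh(X\an)$ is a substack of $\bfAnPerf(X\an)$, and $\mu_X$ is compatible with the equivalence $\bfPerf(X)\an \simeq \bfAnPerf(X\an)$, it suffices to show the following: for every $S \in \dStn$, a perfect complex $\calF \in \catPerf(X\an \times S)$ lies in $\catAnCoh_S(X\an \times S)$ if and only if, under GAGA-type comparison, it corresponds to an algebraic family which is flat relative to a suitable algebraic base. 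This is, however, slightly delicate because an $S$-point of $\bfCoh(X)\an$ is not literally an algebraic object: it is a morphism of derived analytic stacks $S \to \bfCoh(X)\an$, which by adjunction corresponds to a morphism $\Spec(A_S)^{\mathsf{alg}} \to \bfCoh(X)$ only after passing through the algebraization of (the truncation of) $S$ when $S$ is affinoid/Stein.

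First I would handle the affine case: assume $S$ is a derived Stein space. By \cite[Lemma~4.12]{Porta_Holstein_Mapping} (used as in the proof of Lemma~\ref{lem:relative_tor_amplitude_geometric_case_analytic}), locally on $S$ one can write an almost perfect complex on $X\an \times S$ as $\varepsilon_S^\ast$ of an almost perfect $A_S$-module on $X \times \Spec(A_S)$, where $A_S \coloneqq \Gamma(S; \scrO_S^{\mathsf{alg}})$. Since $X$ is proper, $X\an$ is compact, so $X \times \Spec(A_S)$ is proper over $\Spec(A_S)$ and the analytification functor $\catPerf(X \times \Spec(A_S)) \to \catPerf(X\an \times S)$ is fully faithful with the expected essential image by the relative GAGA results of \cite[\S4]{Porta_GAGA} (or \cite[Theorem~5.5]{Porta_Holstein_Mapping}). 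This already shows that $\mu_X$ is fully faithful on $S$-points over Stein $S$: an analytic family of perfect complexes on $X\an \times S$ is the analytification of a unique algebraic family on $X \times \Spec(A_S)$. It then remains to match the flatness conditions, and this is exactly the content of Lemma~\ref{lem:analytification_relative_tor_amplitude}: an algebraic perfect complex $\calG \in \catPerf(X \times \Spec(A_S))$ has tor-amplitude $\le 0$ relative to $\Spec(A_S)$ if and only if $\calG\an$ has tor-amplitude $\le 0$ relative to $S$. Combining this with the characterization of $\catAnCoh_S(X\an \times S)$ via Lemma~\ref{lem:relative_tor_amplitude_geometric_case_analytic} (tor-amplitude can be checked on a smooth Stein atlas, hence locally, where $\varepsilon_S^\ast$ is available), we obtain that $\mu_X$ restricts to an equivalence on $S$-points for every derived Stein $S$.

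The next step is to promote this from Stein test objects to all derived analytic test objects, i.e.\ to check that $\mu_X$ is an equivalence of derived analytic stacks and not merely on Stein points. Here I would invoke the fact that both $\bfCoh(X)\an$ and $\bfAnCoh(X\an)$ are sheaves on the $\infty$-site $\dStn$ (derived analytification of a geometric algebraic stack is a derived analytic stack by \cite[\S5]{Porta_Holstein_Mapping}, and $\bfAnCoh(X\an)$ is a substack of $\bfAnPerf(X\an)$, hence a sheaf), together with the fact that derived analytic stacks are hypercomplete sheaves on $\dStn$ determined by their values on (derived) Stein spaces. Since a natural transformation of such sheaves that is an equivalence on all Stein spaces is an equivalence, the previous paragraph concludes the proof. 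One small technical point to dispatch along the way: one needs $\mu_X$ to be well-defined, i.e.\ the morphism \eqref{eq:comparing_analytified_coh_with_ancoh} factors through $\bfAnCoh(X\an) \circ (-)\an$; but this is precisely guaranteed by Lemma~\ref{lem:analytification_relative_tor_amplitude}, which says analytification of a flat family is flat.

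\textbf{Main obstacle.} I expect the genuine difficulty to be the fully-faithfulness/essential-surjectivity bookkeeping over a general derived Stein base $S$: one must know that every analytic family of coherent sheaves (relative to $S$) on $X\an \times S$ \emph{globally} algebraizes over $\Spec(A_S)$, not just locally on $S$. The local statement comes cheaply from \cite[Lemma~4.12]{Porta_Holstein_Mapping}, but to glue the local algebraizations into a global one, and to know the glued object is again flat relative to $\Spec(A_S)$, one must use the proper relative GAGA equivalence $\catPerf(X\times \Spec(A_S)) \simeq \catAnPerf(X\an\times S)$ of \cite[\S4]{Porta_GAGA}/\cite[Theorem~5.5]{Porta_Holstein_Mapping} in its full strength (as an equivalence of $\infty$-categories, so that uniqueness up to contractible choice handles the cocycle conditions automatically). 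Once that equivalence is in hand, the matching of tor-amplitudes is the formal Lemma~\ref{lem:analytification_relative_tor_amplitude}, and the descent from Stein to all test objects is routine. So the crux is really to correctly cite and apply relative derived GAGA over a derived Stein base.
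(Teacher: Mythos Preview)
Your high-level strategy—use the established equivalence $\bfPerf(X)\an \simeq \bfAnPerf(X\an)$ as a black box and then check that the two coherent-substack conditions match—is reasonable in principle, but your implementation has a genuine gap. You assert that for a derived Stein space $S$ the relative analytification
\[
\catPerf(X \times \Spec(A_S)) \longrightarrow \catPerf(X\an \times S)
\]
is an equivalence (``fully faithful with the expected essential image''), and you use this to conclude that every analytic family on $X\an \times S$ algebraizes over $\Spec(A_S)$. This is not what the cited results prove. For a fixed Stein $V$, \cite[Lemma~5.13]{Porta_Holstein_Mapping} gives only full faithfulness; essential surjectivity holds only after passing to the filtered colimit over open Stein neighborhoods $V$ of a compact Stein subset $K$ (this is \cite[Proposition~5.15]{Porta_Holstein_Mapping}). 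An analytic perfect complex on $X\an \times V$ need not algebraize over $\Spec(A_V)$; it only algebraizes after shrinking $V$. Your ``main obstacle'' paragraph correctly identifies global algebraization as the crux, but then resolves it by asserting precisely the equivalence that fails, so the gap is not closed there either.

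The paper does not use the equivalence for $\bfPerf$ as a black box; instead it reruns the \emph{method} of \cite[Proposition~5.2]{Porta_Holstein_Mapping} for $\bfCoh$ directly. That method reduces the question to showing that, for every compact Stein $K \subset U$, the map
\[
\fcolim_{K \subset V \subset U} \catCoh_{\Spec(A_V)}(\Spec(A_V)\times X) \longrightarrow \fcolim_{K \subset V \subset U} \catCoh_V(V \times X\an)
\]
is an equivalence in $\Ind(\Cat_\infty^{\mathsf{st}})$. Full faithfulness then comes levelwise from \cite[Lemma~5.13]{Porta_Holstein_Mapping}, and essential surjectivity from \cite[Proposition~5.15]{Porta_Holstein_Mapping} combined with the ``only if'' direction of Lemma~\ref{lem:analytification_relative_tor_amplitude} (to ensure that the algebraized perfect complex is still relatively flat). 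So your use of the tor-amplitude lemma is exactly right; what needs correcting is that the surrounding GAGA input must be stated at the ind-level, not for a single Stein base.
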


\begin{proof}
	Reasoning as in the proof of the equivalence \eqref{eq:isomuX} in \cite[Proposition~5.2]{Porta_Holstein_Mapping}, we reduce ourselves to check that for every derived Stein space $U \in \dStn$ and every compact derived Stein subspace $K$ of $U$, the natural morphism
	\begin{align}
		\fcolim_{K \subset V \subset U} \catCoh_{\Spec(A_V)}( \Spec(A_V) \times X ) \longrightarrow \fcolim_{K \subset V \subset U} \catCoh_V( V \times X\an )
	\end{align}
	is an equivalence in $\Ind( \Cat_\infty^{\mathsf{st}} )$.
	Here the colimit is taken over the family of open Stein neighborhoods $V$ of $K$ inside $U$.
	Using \cite[Lemma~5.13]{Porta_Holstein_Mapping} we see that for every $V$, the functor
	\begin{align}
		\catCoh_{\Spec(A_V)}( \Spec(A_V) \times X ) \longrightarrow \catCoh_V( V \times X\an )
	\end{align}
	is fully faithful.
	The conclusion now follows by combining \cite[Proposition~5.15]{Porta_Holstein_Mapping} and the ``only if'' direction of Lemma~\ref{lem:analytification_relative_tor_amplitude}.
\end{proof}

\subsection{Categorical Hall algebras in the $\C$-analytic setting}

Let $X\in \dAnSt$ be a derived analytic stack. In the previous section, we have introduced the analytic stack $\bfAnCoh(X)$ parameterizing families of sheaves of almost perfect modules over $X$ of tor-amplitude $\le 0$ relative to the base.
Similarly, we can define the derived analytic stacks $\bfAnPerf^{\mathsf{ext}}$, $\bfAnPerf^\mathsf{ext}(X)$, and $\bfAnCohext(X)$.
We deal directly with the Waldhausen construction.

We define the simplicial derived analytic stack
\begin{align}
	\calS_\bullet \bfAnPerf \colon \dStn\op \longrightarrow \Fun(\mathbf \Delta\op, \cS)
\end{align}
by sending an object $[n] \in \mathbf \Delta$ and a derived Stein space $S$ to the full subcategory of\footnote{See \S\ref{ss:convolution_structure} for the notations used here.}
\begin{align}
	\calS_n \catPerf(S) \hookrightarrow \Fun(T_n, \catPerf(S)) \ .
\end{align}
Since each $T_n$ is a finite category, \cite[Corollary~7.2]{Porta_Holstein_Mapping} and the flatness of the relative analytification proven in \cite[Proposition~4.17]{Porta_Yu_Representability} imply that the natural map
\begin{align}
	(\calS_\bullet \bfPerf)\an \longrightarrow \calS_\bullet \bfAnPerf
\end{align}
is an equivalence.
Moreover, \cite[Proposition~7.3]{Porta_Holstein_Mapping} implies that the analytification commutes with the limits appearing in the $2$-Segal condition.
We can therefore deduce that $\calS_\bullet \bfAnPerf$ is a $2$-Segal object in $\dAnSt$.
From this, we deduce immediately that for every derived analytic stack $X$, $\bfAnMap(X, \calS_\bullet \bfAnPerf)$ is again a $2$-Segal object.
At this point, the same reasoning of Lemma~\ref{lem:coh_2_Segal_geometric} yields:

\begin{proposition} \label{prop:analytic_convolution_tensor_product}
	Let $X\in \dAnSt$ be a derived geometric analytic stack. Then $\calS_\bullet \bfAnCoh(X)$ is a $2$-Segal object in $\dAnSt$, and therefore it endows the derived analytic stack $\bfAnCoh(X)$ with the structure of an $\mathbb E_1$-monoid object.
\end{proposition}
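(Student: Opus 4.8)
The plan is to follow exactly the blueprint already used in the algebraic setting, transporting each ingredient to derived analytic geometry via the results of \cite{Porta_Holstein_Mapping} and \cite{Porta_Yu_Representability} that have just been recalled. First I would observe that the statement has two parts: (i) $\calS_\bullet \bfAnCoh(X)$ is a $2$-Segal object in $\dAnSt$, and (ii) this $2$-Segal object endows $\bfAnCoh(X)$ with an $\mathbb E_1$-convolution algebra structure. Part (ii) is formal once (i) is established: by \cite[Theorem~11.1.6]{Dyckerhoff_Kapranov_Higher_Segal} applied to the presentable $\infty$-category $\dAnSt$ (or more precisely to the appropriate $\Ind$-completion of geometric analytic stacks, exactly as in \S\ref{ss:convolution_structure}), every $2$-Segal object determines an $\mathbb E_1$-algebra in $\Corr^\times(\dAnSt)$, and the simplices $\calS_0 \simeq \Spec(\C)$, $\calS_1 \simeq \bfAnCoh(X)$, $\calS_2 \simeq \bfAnCohext(X)$ recover the convolution diagram. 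So the real content is part (i).

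For part (i) I would argue in three steps. First, the absolute case: $\calS_\bullet \bfAnPerf$ is a $2$-Segal object in $\dAnSt$. As the excerpt already notes, $(\calS_\bullet \bfPerf)\an \simeq \calS_\bullet \bfAnPerf$ by \cite[Corollary~7.2]{Porta_Holstein_Mapping} combined with the flatness of relative analytification \cite[Proposition~4.17]{Porta_Yu_Representability} (each $T_n$ being a finite category), and analytification commutes with the finite limits appearing in the $2$-Segal condition by \cite[Proposition~7.3]{Porta_Holstein_Mapping}; hence the $2$-Segal property descends from $\calS_\bullet \bfPerf$. Second, mapping into a fixed geometric analytic stack $X$: since $\bfAnMap(X,-)$ commutes with limits, $\bfAnMap(X, \calS_\bullet \bfAnPerf) = \calS_\bullet \bfAnPerf(X)$ is again $2$-Segal. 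Third, the passage from $\bfAnPerf$ to $\bfAnCoh$: one defines $\calS_n \bfAnCoh(X)$ as the fiber product of $\calS_n \bfAnPerf(X)$ with $\bfAnCoh(X)^N$ over $\bfAnPerf(X)^N$ ($N = n(n+1)/2$, evaluation at the $(i,j)\in T_n$), exactly mirroring the algebraic definition preceding Lemma~\ref{lem:coh_2_Segal_geometric}. The key point — and this is where I would spend the real work — is to reprove the analogue of Lemma~\ref{lem:coh_2_Segal_geometric}: given a pullback square of almost perfect complexes on $X \times S$ whose three outer terms $\calG_0, \calG_2, \calG_3$ are flat of tor-amplitude $\le 0$ relative to $S$, the fourth term $\calG_1$ is also flat relative to $S$. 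In the algebraic proof this used $t$-exactness and conservativity of the pushforward along an affine projection plus the projection formula; in the analytic setting the correct substitute is Cartan's theorem~B (for the pushforward along $X \times S \to S$ on $\catAPerf$, as used already in Lemma~\ref{lem:relative_tor_amplitude_geometric_case_analytic}) together with the base-change isomorphism $\eta_{\calF,\calG}$ established there. Concretely, one reduces (using Lemma~\ref{lem:flat_effective_epimorphism_analytic} to replace $X$ by an atlas, then by a Stein chart, so $X \times S$ is Stein) to checking $\pi_k(p_\ast(\calG_1) \otimes \calG) \simeq 0$ for $k \ge 1$ and all $\calG \in \catAPerf^\heartsuit(S)$, which follows from the analogous vanishing for $\calG_0, \calG_2, \calG_3$ and the long exact sequence of the pullback square, exactly as in the algebraic argument.

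The main obstacle I anticipate is not conceptual but bookkeeping: one must ensure that the $2$-Segal verification (via \cite[Proposition~2.3.2(3)]{Dyckerhoff_Kapranov_Higher_Segal}, reducing to the lower Segal maps $\calS_n \bfAnCoh(X) \to \calS_{n-j+i+1}\bfAnCoh(X) \times_{\calS_1 \bfAnCoh(X)} \calS_{j-i}\bfAnCoh(X)$ being equivalences) goes through, and here the only genuine input beyond the absolute $2$-Segal property of $\calS_\bullet \bfAnPerf(X)$ is precisely the flatness-stability-under-pullback-squares lemma just described. Once that analytic flatness lemma is in place, the induction on $n$ reducing a general semigrid condition to a single pullback square is verbatim the same as in Lemma~\ref{lem:coh_2_Segal_geometric}. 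A secondary technical point to handle carefully is whether all the $\calS_n \bfAnCoh(X)$ are themselves geometric analytic stacks — needed if one wants the convolution algebra to live in $\Corr^\times$ of geometric stacks rather than of all analytic stacks — but this follows by the same pullback-square argument together with the geometricity of $\bfAnPerf(X)$ (hence of $\calS_n \bfAnPerf(X) \simeq \bfAnMap(X, \Fun(\Delta^{n-1}, \bfAnPerf))$) and of $\bfAnCoh(X)$, exactly as in the algebraic corollary following Lemma~\ref{lem:coh_2_Segal_geometric}; for the mere statement of Proposition~\ref{prop:analytic_convolution_tensor_product} as written, one only needs the $2$-Segal property and can work in $\dAnSt$ throughout.
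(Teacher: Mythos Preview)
Your proposal is correct and follows exactly the paper's approach: the paper does not give an explicit proof block for this proposition but simply states that ``the same reasoning of Lemma~\ref{lem:coh_2_Segal_geometric} yields'' the result, after having already established (in the paragraph preceding the proposition) that $\calS_\bullet \bfAnPerf$ and hence $\calS_\bullet \bfAnPerf(X)$ are $2$-Segal via analytification and \cite{Porta_Holstein_Mapping}. You have faithfully unpacked what that implicit reasoning entails in the analytic setting, correctly identifying Cartan's theorem~B and the projection-formula argument of Lemma~\ref{lem:relative_tor_amplitude_geometric_case_analytic} as the substitutes for the affine-pushforward step in the algebraic proof.
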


The morphism \eqref{eq:analytification_relative_tor_amplitude} can be naturally upgraded to a natural transformation
\begin{align}
	\calS_\bullet \bfCoh(X) \longrightarrow \calS_\bullet \bfAnCoh(X\an) \circ (-)\an
\end{align}
in $\Fun(\mathbf \Delta\op, \dSt)$.
By adjunction, we therefore find a morphism of simplicial objects
\begin{align} \label{eq:comparison_analytification_Segal_objects}
	\left( \calS_\bullet \bfCoh(X) \right)\an \longrightarrow \calS_\bullet \bfAnCoh(X\an) \ .
\end{align}

\begin{remark}
	Suppose that $X$ is such that each $\calS_n \bfCoh(X)$ is geometric.
	Then \cite[Proposition 7.3]{Porta_Holstein_Mapping} implies that $(\calS_\bullet \bfCoh(X))\an$ is a $2$-Segal object in $\dAnSt$.
\end{remark}

Let $Y \in \dAnSt$ be a derived analytic stack and let $u \colon U \to Y$ be a flat effective epimorphism from an underived geometric analytic stack $U$. As above, we are able to define the derived stack $\bfAnCoh(Y)$.
Notice that $\bfAnCoh(Y)$ only depends on $Y$ and not on $U$.
However, as in the algebraic case, the proof of the functoriality of $\bfAnCoh(Y)$ relies on the existence of $U$ and on Lemma~\ref{cor:base_change_coherent_analytic}.
In addition, we have
\begin{align}\label{eq:coh-shapes_analytic}
	\bfAnCoh(Y) \simeq \bfAnPerf(Y) \times_{\bfAnPerf(U)} \bfAnCoh(U) \ .
\end{align}
This is the analytic counterpart of Lemma~\ref{lem:universally_flat_atlas}.

Similarly, we can define $\bfAnCohext(Y)$ and $\bfAnBun^\mathsf{ext}(Y)$ and more generally their Waldhausen analogues $\calS_\bullet \bfAnCoh(Y)$ and $\calS_\bullet \bfAnBun(Y)$. 
We immediately obtain:

\begin{proposition}
	Let $Y \in \dAnSt$ be a derived analytic stack and let $u \colon U \to Y$ be a flat effective epimorphism from an underived geometric analytic stack $U$.
	Then $\calS_\bullet \bfAnCoh(Y)$ is a $2$-Segal object and it endows $\bfAnCoh(Y)$ with the structure of an $\mathbb E_1$-monoid object in $\dAnSt$.
\end{proposition}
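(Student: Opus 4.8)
The plan is to reduce the statement to the analytic counterpart of Lemma~\ref{lem:universally_flat_atlas}, applied levelwise to the Waldhausen construction, and then to invoke the stability of the $2$-Segal condition under limits. Concretely, I would first show that the square
\begin{align}
	\begin{tikzcd}[ampersand replacement = \&]
		\calS_\bullet \bfAnCoh(Y) \arrow{r} \arrow{d} \& \calS_\bullet \bfAnCoh(U) \arrow{d} \\
		\calS_\bullet \bfAnPerf(Y) \arrow{r} \& \calS_\bullet \bfAnPerf(U)
	\end{tikzcd}
\end{align}
is a pullback in $\Fun(\mathbf\Delta\op, \dAnSt)$, where $\calS_\bullet \bfAnPerf(Y) \coloneqq \bfAnMap(Y, \calS_\bullet \bfAnPerf)$; its degree-one part is exactly the equivalence \eqref{eq:coh-shapes_analytic}, and its degree-two part recovers $\bfAnCohext(Y)$. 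Granting this, the conclusion is immediate: $\calS_\bullet \bfAnPerf$ is a $2$-Segal object in $\dAnSt$ (established in the previous subsection), hence so are $\calS_\bullet \bfAnPerf(Y)$ and $\calS_\bullet \bfAnPerf(U)$, since $\bfAnMap(-,-)$ preserves limits; and $\calS_\bullet \bfAnCoh(U)$ is a $2$-Segal object by Proposition~\ref{prop:analytic_convolution_tensor_product}, because $U$ is (underived) geometric. As the $2$-Segal condition requires only that certain squares of the simplicial object be pullbacks, it is preserved by limits of simplicial objects, so $\calS_\bullet \bfAnCoh(Y)$ is $2$-Segal. Finally, \cite[Theorem~11.1.6]{Dyckerhoff_Kapranov_Higher_Segal} applied to the presentable $\infty$-category $\dAnSt$ promotes $\bfAnCoh(Y)$ to an $\mathbb E_1$-algebra in $\Corr^\times(\dAnSt)$, i.e.\ an $\mathbb E_1$-convolution algebra.

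It remains to establish the pullback square above, which — mirroring the proofs of Lemmas~\ref{lem:universally_flat_atlas} and \ref{lem:coh_2_Segal_geometric} — comes down to a flatness-detection statement: for $S \in \dStn$ and a pullback square
\begin{align}
	\begin{tikzcd}[ampersand replacement = \&]
		\calG_0 \arrow{r} \arrow{d} \& \calG_1 \arrow{d} \\ \calG_2 \arrow{r} \& \calG_3
	\end{tikzcd}
\end{align}
in $\catPerf(Y \times S)$ with $\calG_0, \calG_2, \calG_3$ of tor-amplitude $\le 0$ relative to $S$, the complex $\calG_1$ also has tor-amplitude $\le 0$ relative to $S$; one then passes from this to the full statement about semigrids by the same induction on length used in Lemma~\ref{lem:coh_2_Segal_geometric}, and checks as there that $\bfAnCoh \hookrightarrow \bfAnPerf$ is $(-1)$-truncated so that the comparison square of Waldhausen constructions has fully faithful vertical legs. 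To prove the flatness-detection statement I would apply Lemma~\ref{lem:flat_effective_epimorphism_analytic} to the base change $u \times \id_S \colon U \times S \to Y \times S$ in order to reduce to $Y = U$, then, the question being local on $U$, reduce further to $U$ a Stein space. For such $U$ the projection $f \colon U \times S \to S$ has a $t$-exact conservative pushforward $f_\ast$ by Cartan's theorem~B, and the projection-formula equivalence $f_\ast(\calG) \otimes_{\scrO_S} \calH \simeq f_\ast(\calG \otimes_{\scrO_{U \times S}} f^\ast \calH)$ for $\calH \in \catAPerf(S)$ proved inside Lemma~\ref{lem:relative_tor_amplitude_geometric_case_analytic} lets me run the argument of Lemma~\ref{lem:coh_2_Segal_geometric} verbatim: for $\calH \in \catAPerf^\heartsuit(S)$ one gets $\pi_k(f_\ast(\calG_1 \oplus \calG_2) \otimes \calH) = 0$ for $k \ge 1$ from flatness of $\calG_0$ and $\calG_3$, while $\pi_k(f_\ast \calG_2 \otimes \calH) = 0$ from flatness of $\calG_2$, whence $\pi_k(f_\ast \calG_1 \otimes \calH) = 0$ and, by $t$-exactness and conservativity, $\calG_1$ is flat relative to $S$.

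The main obstacle I expect is the analytic subtlety in the reduction step replacing $Y$ by $U$: one needs that $u \times \id_S$ is a flat effective epimorphism for every derived Stein $S$, and — when establishing the pullback square at the level of the stacks $\bfAnCoh$ rather than merely checking tor-amplitude pointwise — that the formation of $\bfAnCoh$ commutes with the relevant base changes. In the algebraic setting the analogous facts are Lemma~\ref{lem:universally_flat_effective_epi} and its consequences, whose proofs rely on the good behaviour of base change for maps of affine derived schemes; as the remark preceding Corollary~\ref{cor:base_change_coherent_analytic} observes, this ingredient is not directly available analytically, so one must instead lean on Corollary~\ref{cor:base_change_coherent_analytic} itself and on the flatness and effective-epimorphism hypotheses on $u$ (which, being geometric assumptions, are inherited by the base changes of $u$ along maps of derived Stein spaces). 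Everything else is a routine transcription of the arguments of \S\ref{ss:convolution_structure} into the analytic setting.
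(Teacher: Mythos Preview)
Your proposal is correct and is precisely the argument the paper has in mind: the paper states this proposition with the words ``We immediately obtain'' after recording the pullback square \eqref{eq:coh-shapes_analytic} and noting that the Waldhausen analogues $\calS_\bullet\bfAnCoh(Y)$ are defined similarly, so your unpacking via the levelwise pullback square, stability of the $2$-Segal condition under limits, and the flatness-detection argument of Lemma~\ref{lem:coh_2_Segal_geometric} is exactly what is being invoked. Your identification of the analytic base-change subtlety (handled via Corollary~\ref{cor:base_change_coherent_analytic} rather than the unavailable affine base change) is also on target.
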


As a particular case, let $X$ be a smooth proper connected analytic space. Simpson's shapes $X_\B$, $X_\dR$, $X_\Dol$, and $X_\Del$ also exist in derived analytic geometry (as introduced e.g. in \cite[\S~5.2]{Porta_Holstein_Mapping}). We have the following analytic analog of Proposition~\ref{prop:coh_2_Segal}.

\begin{corollary}
	Let $X\in \dAnSt$ be a derived geometric analytic stack and let  $Y$ be one of the following stacks: $X_\B$, $X_\dR$, or $X_\Dol$. Then $\calS_\bullet \bfAnCoh(Y)$ is a $2$-Segal object in $\dAnSt$, and therefore it endows the derived analytic stack $\bfAnCoh(Y)$ with the structure of an $\mathbb E_1$-monoid object.
\end{corollary}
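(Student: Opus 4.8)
The plan is to deduce the statement directly from the preceding proposition, which says that for any derived analytic stack $Y$ admitting a flat effective epimorphism $u\colon U\to Y$ from an underived geometric analytic stack $U$, the simplicial object $\calS_\bullet\bfAnCoh(Y)$ is $2$-Segal and equips $\bfAnCoh(Y)$ with an $\mathbb E_1$-convolution algebra structure. Moreover, as recorded above, $\bfAnCoh(Y)$ does not depend on the chosen atlas, so the resulting algebra structure is canonical. Thus the whole content of the corollary reduces to exhibiting, for each of $Y=X_\B$, $Y=X_\dR$, $Y=X_\Dol$, a flat effective epimorphism onto $Y$ from an underived geometric analytic stack, in perfect analogy with Example~\ref{eg:applications}.

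Concretely, I would proceed case by case. For the Betti shape, assuming $X$ connected, fix a point $x\colon\ast\to X$ and take the induced map $u_x\colon\Spec(\C)\an\simeq\ast_\B\to X_\B$; if $X$ is not connected one takes one such point per connected component, so that $U$ is a finite disjoint union of copies of $\Spec(\C)\an$, which is still underived and geometric. For the de Rham shape I would use the natural morphism $\lambda_X\colon X\to X_\dR$, and for the Dolbeault shape the natural morphism $\kappa_X\colon X\to X_\Dol$; in both cases the source $X$ is an underived geometric analytic stack. In each of the three cases the relevant map is a flat effective epimorphism: effectivity is a statement about $\pi_0$ and reduces to the classical (topological, resp.\ schematic) situation, while $t$-exactness of the pullback functor is the analytic counterpart of the facts listed in Example~\ref{eg:applications}, obtained from the construction of the analytic Simpson shapes and the flatness/comparison results of \cite[\S5.2]{Porta_Holstein_Mapping} (see also \cite{Porta_Yu_Representability} and the compendium \cite{Porta_Sala_Shapes}).

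Feeding these atlases into the preceding proposition for $Y=X_\B$, $X_\dR$, $X_\Dol$ respectively yields that each $\calS_\bullet\bfAnCoh(Y)$ is a $2$-Segal object in $\dAnSt$ and hence that $\bfAnCoh(Y)$ carries an $\mathbb E_1$-convolution algebra structure, which is what we want. The only genuinely delicate point — the main obstacle — is the verification that $\lambda_X$, $\kappa_X$ and $u_x$ remain flat effective epimorphisms in the analytic category: unlike in the algebraic setting one cannot check $t$-exactness of these pullbacks by a naive base change along maps of affine derived schemes (cf.\ the remark preceding Corollary~\ref{cor:base_change_coherent_analytic}), so one must route the argument through the relative analytification and the coherent sheaf theory of \cite{Porta_Holstein_Mapping, Porta_Yu_Representability}. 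Once this flatness is in place, everything else is a formal consequence of the $2$-Segal machinery already developed in the preceding proposition and in \S\ref{ss:convolution_structure}.
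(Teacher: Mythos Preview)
Your proposal is correct and follows exactly the approach intended by the paper: the corollary is stated without proof because it is a direct application of the preceding proposition, once one knows that each of the analytic Simpson shapes $X_\B$, $X_\dR$, $X_\Dol$ admits the required flat effective epimorphism from an underived geometric analytic stack (the paper records this in the sentence immediately before the corollary, referring to \cite[\S5.2]{Porta_Holstein_Mapping}). Your identification of the only nontrivial input---the analytic flatness of $u_x$, $\lambda_X$, $\kappa_X$---and your pointer to the relevant analytic references match the paper's implicit argument precisely.
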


Our next step is to construct the categorified Hall algebras in the analytic setting.
The lack of quasi-coherent sheaves in analytic geometry forces us to consider a variation of the construction considered in \S \ref{ss:categorification}.
We start with the following construction:

\begin{construction}
	Let $\cTdisc(\C)$ be the full subcategory of $\mathsf{Sch}_\C$ spanned by finite dimensional affine spaces $\A^n_\C$.
	Given an $\infty$-topos $\calX$, sheaves on $\calX$ with values in $\CAlg_\C$ can be canonically identified with product preserving functors $\cTdisc(\C) \to \calX$.
	We let $\RTop(\cTdisc(\C))$ denote the $\infty$-category of $\infty$-topoi equipped with a sheaf of derived commutative $\C$-algebras.
	The construction performed in \cite[Notation~2.2.1]{DAG-V} provides us with a functor
	\begin{align}
		\Gamma \colon \big( \RTop( \cTdisc(\C) ) \big)\op \longrightarrow \CAlg_\C \ .
	\end{align}
	Equipping both $\infty$-categories with the cocartesian monoidal structure, we see that $\Gamma$ can be upgraded to a right-lax symmetric monoidal structure.
	Composing with the symmetric monoidal functor $\catQCoh \colon \CAlg_\C \to \Cat_\infty^{\mathsf{st}}$ we therefore obtain a right-lax symmetric monoidal functor
	\begin{align}
		\big( \RTop( \cTdisc(\C) ) \big)\op \longrightarrow \Cat_\infty^{\mathsf{st}} \ .
	\end{align}
	We denote the sheafification of this functor with respect to the \'etale topology on $\RTop(\cTdisc(\C))$ (see \cite[Definition~2.3.1]{DAG-V}) by
	\begin{align}
		\scrO \Mod \colon ( \RTop(\cTdisc(\C)) )\op \longrightarrow \Cat_\infty^{\mathsf{st}} \ .
	\end{align}
	Observe that $\scrO \Mod$ is canonically endowed with a right-lax symmetric monoidal structure.
\end{construction}

Consider the natural forgetful functor
\begin{align}
	(-)^{\mathsf{alg}} \colon \dAn \longrightarrow \RTop(\cTdisc(\C)) \ . 
\end{align}
Equipping both $\infty$-categories with the cartesian monoidal structure, we see that $(-)^{\mathsf{alg}}$ can be upgraded to a left-lax monoidal functor.
We still denote by $\scrO \Mod$ the composition
\begin{align}
\begin{tikzcd}[ampersand replacement = \&, column sep = large]
	(\dAn)\op \arrow{r}{(-)^{\mathsf{alg}}} \& (\RTop(\cTdisc(\C)))\op \arrow{r}{\scrO \Mod} \& \Cat_\infty^{\mathsf{st}}
\end{tikzcd}  \ ,
\end{align}
which canonically inherits the structure of a right-lax monoidal functor.
Given $X \in \dAn$, we denote by $\scrO_X \Mod$ its image via this functor.

This functor admits a canonical subfunctor
\begin{align}
	\catAPerf \colon \dAn\op \longrightarrow \Cat_\infty^{\mathsf{st}}\ ,
\end{align}
which sends a derived $\C$-analytic space to the full subcategory of $\scrO_X \Mod$ spanned by sheaves of almost perfect modules.
Observe that sheaves of almost perfect modules are closed under exterior product, and therefore $\catAPerf$ inherits the structure of a right-lax monoidal functor.
Moreover, if $f \colon X \to Y$ is proper, then \cite[Theorem~6.5]{Porta_GAGA} implies that the functor
\begin{align}
	f_\ast \colon \scrO_X \Mod \longrightarrow \scrO_Y \Mod
\end{align}
restricts to a functor
\begin{align}
	f_\ast \colon \catAPerf(X) \longrightarrow \catAPerf(Y)\ ,
\end{align}
which is right adjoint to $f^\ast$.

\begin{lemma}\label{lem:rightadjointable}
	Let
	\begin{align}
		\begin{tikzcd}[ampersand replacement = \&]
			X' \arrow{r}{u} \arrow{d}{g} \& X \arrow{d}{f} \\
			Y' \arrow{r}{v} \& Y
		\end{tikzcd}
	\end{align}
	be a pullback square in $\dAn$.
	Assume that the truncations of $X, X', Y$ and $Y'$ are separated analytic spaces.
		If $f$ is proper then the commutative diagram
	\begin{align}
		\begin{tikzcd}[ampersand replacement = \&]
		\catAPerf(Y) \arrow{r}{v^\ast} \arrow{d}{f^\ast} \& \catAPerf(Y') \arrow{d}{g^\ast} \\
		\catAPerf(X) \arrow{r}{u^\ast} \& \catAPerf(X')
		\end{tikzcd}
	\end{align}
	is vertically right adjointable.
\end{lemma}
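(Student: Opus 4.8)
The statement is a base-change/right-adjointability assertion for $\catAPerf$ along a pullback square in which the vertical maps $f, g$ are proper. Recall that ``vertically right adjointable'' means: the functors $f^\ast$ and $g^\ast$ admit right adjoints $f_\ast$ and $g_\ast$ (which they do, by the GAGA-type properness theorem \cite[Theorem~6.5]{Porta_GAGA} quoted just above the lemma, since $f$ and $g$ are proper and the relevant truncations are separated), and the Beck--Chevalley natural transformation
\begin{align}
	v^\ast f_\ast \longrightarrow g_\ast u^\ast
\end{align}
obtained by adjunction from the commutativity $u^\ast f^\ast \simeq g^\ast v^\ast$ is an equivalence. So the real content is proper base change for almost perfect complexes in derived complex analytic geometry.

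\textbf{First reduction: to the underived, affine/Stein situation.} The plan is to reduce the claim to a local statement on $Y'$ and $X'$ where classical proper base change applies. Since equivalences of functors between $\catAPerf(-)$ can be checked after pulling back along an atlas, I would first use that $X, X', Y, Y'$ are (derived) geometric stacks (or at least admit Stein/smooth atlases) to reduce $Y'$ to a derived Stein space; then, using nilcompleteness of $\catAPerf$ together with Corollary~\ref{cor:relative_tor_amplitude_truncation_analytic}-style arguments (the $t$-exactness and projection-formula properties of closed immersions $\trunc{Y'} \to Y'$ recorded in the proof of that corollary), reduce further to the case where $Y'$ — and hence, via the Stein atlas of $X'$, also $X'$ — is underived. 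Concretely: pick a Stein open cover of $Y'$; base-changing the whole square along these opens is harmless because $v^\ast$ and the pushforwards commute with the relevant restrictions (pushforward along proper maps of derived analytic spaces commutes with Stein base change — this is part of \cite[Theorem~6.5]{Porta_GAGA} and the coherence results of \cite{Porta_GAGA}). At each stage one must check that the comparison map is unaffected, which is formal from the compatibility of Beck--Chevalley transformations with composition of squares.

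\textbf{Second step: classical proper base change.} Once reduced to $Y, Y', X, X'$ all underived and $Y, Y'$ Stein (with $f$ proper), the statement becomes: for $\calF \in \catAPerf(Y)$, the canonical map $v^\ast f_\ast \calF \to g_\ast u^\ast \calF$ is an equivalence in $\catAPerf(X')$. This is now the Grauert--Remmert proper base change / coherence theorem in the analytic category, in its derived-enhanced form. I would invoke it through the coherence package of \cite{Porta_GAGA} (Theorem~6.5 and its proof), which already gives commutation of $f_\ast$ with arbitrary Stein base change on coherent/almost perfect complexes; one then bootstraps from $\catCohb$ to all of $\catAPerf$ by the usual $\tau_{\geq -n}$-truncation argument, using that each truncation of an almost perfect complex is coherent and that $f_\ast$ has bounded cohomological amplitude because $f$ is proper (finite-dimensional fibers). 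The separatedness hypothesis on the truncations is exactly what is needed to invoke these analytic coherence theorems and to know $f_\ast$ preserves almost perfect complexes and is right adjoint to $f^\ast$.

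\textbf{Main obstacle.} The subtle point, and the one I expect to require the most care, is the reduction to the underived Stein case in a way that genuinely controls the Beck--Chevalley transformation, because — as the authors themselves flag in the Remark just before the lemma — base change along maps of derived \emph{affine analytic} spaces is \emph{not} available in the analytic setting the way it is algebraically (\cite[Proposition~\ref*{shapes-prop:representable_by_affine}]{Porta_Sala_Shapes} has no analytic analogue). So one cannot naively reduce everything to algebra over $A_{V}$-modules. The workaround is to use properness of $f$ (not flatness or affineness of anything) as the engine: proper pushforward commutes with Stein base change by the analytic coherence theorem, and that is precisely the input that replaces algebraic affine base change here. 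I would be careful to phrase the whole reduction so that at every step the square being base-changed still has a proper vertical map, so that \cite[Theorem~6.5]{Porta_GAGA} keeps applying. The rest — checking $f^\ast, g^\ast$ are the left adjoints, assembling the local equivalences into a global one by descent along the Stein cover of $Y'$ — is routine.
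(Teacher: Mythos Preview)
Your route diverges from the paper's and has a genuine gap. The paper's proof is short and structural: the key input is \emph{unramifiedness} of the analytic pregeometry $\cT_{\mathsf{an}}(\C)$ \cite[Proposition~11.6]{DAG-IX}, which (via Proposition~11.12(3) in loc.\ cit.) yields the lemma directly in the special case where $v$ is a closed immersion. One then follows Steps~1--3 of the non-archimedean analogue \cite[Theorem~6.8]{Porta_Yu_Mapping} to reduce the general case to $Y' = \mathsf{Sp}(\C)$ a single point. Since the truncations are separated, the resulting map $\mathsf{Sp}(\C) \to Y$ is automatically a closed immersion, and the closed-immersion case finishes. In particular, the separatedness hypothesis is used precisely to guarantee that points are closed, not (as you suggest) to feed into a Grauert-type coherence package.

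The gap in your argument is the step ``reduce to $Y, Y', X, X'$ all underived.'' Localizing $Y'$ to a Stein open is fine, and you gesture at truncating $Y'$ via nilcompleteness, but you never explain how $X$ and $Y$ become underived. The parenthetical claim that $X' = X \times_Y Y'$ is underived once $Y'$ is, is simply false: the derived fiber product of a proper $X \to Y$ against an underived $Y' \to Y$ is typically still derived. Nilcompleteness expresses $\catAPerf(Z)$ as a \emph{limit} over the Postnikov tower of $Z$; passing the Beck--Chevalley comparison through that limit would require knowing how $f_\ast$ interacts with each truncation map, which is essentially what you are trying to prove. Finally, \cite[Theorem~6.5]{Porta_GAGA} asserts that proper pushforward preserves $\catAPerf$; it is not a base-change statement, so your appeal to it for ``pushforward commutes with Stein base change'' is at best imprecise (base change along Stein \emph{open immersions} is of course easy and local, but that is not what that reference provides, and it does not help with the derived structure on $X, Y$).
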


\begin{proof}
	We adapt the proof of \cite[Theorem~6.8]{Porta_Yu_Mapping} to the complex analytic setting.
	The key input is unramifiedness for the pregeometry $\cT_{\mathsf{an}}(\C)$, proven in \cite[Proposition~11.6]{DAG-IX}, which has as a consequence Proposition 11.12(3) in \textit{loc.\ cit.}
	In turn, this implies that the statement of this lemma holds true when $g$ is a closed immersion.
	Knowing this, Steps 1 and 2 of the proof of \cite[Theorem~6.8]{Porta_Yu_Mapping} apply without changes.
	Step 3 applies as well, with the difference that in the $\C$-analytic setting we can reduce to the case where $Y' = \mathsf{Sp}(\C)$ is the $\C$-analytic space associated to a point.
	In particular, the map $Y' = \mathsf{Sp}(\C) \to Y$ is now automatically a closed immersion, and therefore the conclusion follows.
\end{proof}
Let $\dAn^{\mathrm{sep}}$ denote the full subcategory of $\dAn$ spanned by derived $\C$-analytic spaces whose truncation is a separated analytic space. Lemma \ref{lem:rightadjointable} shows that the assumptions of \cite[Theorem~3.2.2(b)]{Gaitsgory_Rozenblyum_Study_I} are satisfied with $\mathsf{horiz} = \mathsf{all}$ and $\mathsf{vert} = \mathsf{proper}$. As a consequence, we can extend $\catAPerf$ to a functor
\begin{align}
	\catAPerf \colon \Corr( \dAn^{\mathrm{sep}} )_{\mathsf{proper}, \mathsf{all}}^{\mathsf{isom}} \longrightarrow \Cat_\infty^{\mathsf{st}} \ .
\end{align}
Moreover, the considerations in \cite[\S~3.3.1]{Gaitsgory_Rozenblyum_Study_I} show that this functor inherits a canonical right-lax monoidal structure.
Using \cite[Theorem~8.6.1.5]{Gaitsgory_Rozenblyum_Study_I} we obtain (via right Kan extension) a right-lax monoidal functor
\begin{align}
	\catAPerf \colon \Corr( \dAnSt )_{\mathsf{all}, \mathsf{rps}}^{\mathsf{isom}} \longrightarrow \Cat_\infty^{\mathsf{st}} \ . 
\end{align}
Here $\mathsf{rps}$ denotes the class of $1$-morphisms representable by proper derived $\C$-analytic spaces.

Given a derived $\C$-analytic space $X$, we denote by $\catCohb(X)$ the full subcategory of $\catAPerf(X)$ spanned by locally cohomologically bounded sheaves of almost perfect modules.

\begin{lemma}
	Let $f \colon X \to Y$ be a morphism of derived geometric analytic stacks.
	If $f$ is lci\footnote{In this setting, it means that the \textit{analytic} cotangent complex $\mathbb L^{\mathsf{an}}_{X/Y}$ introduced in \cite{Porta_Yu_Representability} is perfect and has tor-amplitude within $[0,1]$.} then it has finite tor-amplitude and in particular it induces a functor
	\begin{align}
		f^\ast \colon \catCohb(Y) \longrightarrow \catCohb(X)\ .
	\end{align}
\end{lemma}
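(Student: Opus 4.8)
The plan is to reduce the statement to a purely local computation of tor-amplitude for a morphism of derived $\C$-analytic spaces, and then to invoke the analytic analogue of the standard fact that an lci morphism has finite tor-amplitude. First I would recall that tor-amplitude and the functoriality of $\catCohb$ under pullback is a local question on both $X$ and $Y$: since $f$ is a morphism of derived geometric analytic stacks, we may choose smooth (Stein) atlases and, using the flatness of smooth morphisms together with Lemma~\ref{lem:flat_effective_epimorphism_analytic}, reduce to the case where $X$ and $Y$ are derived Stein spaces and $f$ is an lci morphism between them. In that situation the analytic cotangent complex $\mathbb L^{\mathsf{an}}_{X/Y}$ of \cite{Porta_Yu_Representability} is perfect of tor-amplitude $[0,1]$ by hypothesis.

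Next I would argue that an lci morphism of derived Stein spaces is, locally, a composition of a smooth morphism (which is flat, hence of tor-amplitude $0$) followed by the closed immersion cut out by a Koszul-regular sequence, exactly as in the algebraic setting: the perfectness and amplitude bound on $\mathbb L^{\mathsf{an}}_{X/Y}$ allows one to write $X$, Zariski-analytically locally over $Y$, as the derived zero locus of a section of a vector bundle on an affine-space bundle over $Y$. The pushforward $f_\ast$ then has finite cohomological dimension, and correspondingly $f^\ast$ has finite tor-amplitude: concretely, if $\calG \in \catCohb(Y)$ then $f^\ast \calG$ is computed by a Koszul complex and therefore $\pi_i(f^\ast \calG) = 0$ outside a bounded range depending only on the rank of the bundle and the cohomological bounds on $\calG$. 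For the analytic bookkeeping I would lean on the GAGA-type results of \cite{Porta_GAGA} and the deformation-theoretic machinery of \cite{Porta_Yu_Representability}, together with Lemma~\ref{lem:relative_tor_amplitude_geometric_case_analytic} to pass between the ``abstract'' notion of tor-amplitude and its concrete description via Stein pushforward. Once $f^\ast$ preserves finite tor-amplitude, it manifestly sends locally cohomologically bounded objects to locally cohomologically bounded ones, i.e.\ it restricts to $f^\ast \colon \catCohb(Y) \to \catCohb(X)$.

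The main obstacle I anticipate is the purely analytic input: in derived complex analytic geometry there is no direct analogue of the base-change statement \cite[Proposition~\ref*{shapes-prop:representable_by_affine}]{Porta_Sala_Shapes} for affine morphisms, so one cannot blithely transport the algebraic factorization of an lci morphism. The cleanest route is probably to work with the analytic cotangent complex directly: using that $\mathbb L^{\mathsf{an}}_{X/Y}$ is perfect of amplitude $[0,1]$, deformation theory (\cite{Porta_Yu_Representability}) produces, locally on $X$, a factorization $X \hookrightarrow Z \to Y$ with $Z \to Y$ smooth and $X \hookrightarrow Z$ a quasi-smooth closed immersion whose conormal sheaf is a vector bundle; from this point the argument is formal. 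I would also double-check that the ``finite tor-amplitude'' notion here is the one that controls boundedness of $f^\ast$ on $\scrO_X\Mod$-modules, which is immediate once one has the Koszul description, since tensoring with a bounded perfect complex shifts cohomological amplitude by a bounded amount. No further subtlety beyond this local structure result should be needed.
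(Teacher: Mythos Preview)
Your proposal is essentially correct in spirit, but you have done far more work than the paper does: the paper's entire proof is the single sentence ``The same argument of \cite[Corollary~2.9]{Porta_Yu_NQK} applies.'' In other words, the authors defer the content entirely to the non-archimedean analogue already established in \cite{Porta_Yu_NQK}, and assert that the argument there transports without change to the complex-analytic setting.

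What you have sketched --- reducing to Stein affinoids via smooth atlases, then using the tor-amplitude bound on $\mathbb L^{\mathsf{an}}_{X/Y}$ to obtain a local factorization through a quasi-smooth closed immersion into something smooth over $Y$, and finally reading off finite tor-amplitude from the Koszul resolution --- is exactly the shape of argument one expects behind such a citation. Your caution about the lack of a direct analogue of affine base change in the analytic world is well placed, and your proposed workaround via the deformation theory of \cite{Porta_Yu_Representability} is the right instinct. So there is no genuine gap in your outline; you have simply reconstructed (a plausible version of) the cited argument rather than invoking it. If you want to match the paper, replace your paragraphs with the one-line citation; if you want a self-contained proof, your sketch is a reasonable blueprint, though you should actually verify the local factorization step against what \cite{Porta_Yu_NQK} does rather than leave it as an expectation.
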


\begin{proof}
	The argument of \cite[Corollary~2.9]{Porta_Yu_NQK} applies.
\end{proof}

As a consequence, we obtain a right-lax monoidal functor
\begin{align}
	\catCohb \colon \Corr^\times( \dAnSt )_{\mathsf{rps}, \mathsf{lci}} \longrightarrow \Cat_\infty^{\mathsf{st}} \ . 
\end{align}

Finally, we want to restrict ourselves to derived geometric analytic stacks. In particular, we need that $\bfAnCoh(Y)$ and the corresponding $2$-Segal space to be geometric. So, first note that if $Y \in \dSt$ is a derived stack, then we obtain as before a natural transformation
\begin{align}\label{eq:an_An}
	\calS_\bullet \bfCoh(Y)\an \longrightarrow \calS_\bullet \bfAnCoh(Y)
\end{align}
in $\Fun( \mathbf \Delta\op, \dAnSt )$.

Let $X$ be a smooth and proper complex scheme. By \cite[Proposition~5.2]{Porta_Holstein_Mapping}, $\bfAnPerf(X)$ is equivalent to the analytification $\bfPerf(X)\an$ of the derived stack $\bfPerf(X)=\bfMap(X, \bfPerf)$. Thus, $\bfAnPerf(X)$ is a locally geometric derived stack, locally of finite presentation.

\begin{lemma}\label{lem:equivalence_1}
	The map \eqref{eq:an_An} induces an equivalence $(\calS_\bullet \bfCoh(X))\an \simeq \calS_\bullet \bfAnCoh(X\an)$. In particular, for each $n \ge 0$ the derived analytic stack $\calS_n \bfAnCoh(X\an)$ is locally geometric and locally of finite presentation.
\end{lemma}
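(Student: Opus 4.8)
The plan is to establish the equivalence $(\calS_\bullet \bfCoh(X))\an \simeq \calS_\bullet \bfAnCoh(X\an)$ levelwise in the simplicial direction, by reducing each $\calS_n$ to a pullback of simpler stacks whose analytifications are already understood. First I would recall the defining pullback square for $\calS_n \bfCoh(X)$ inside $\calS_n \bfAPerf(X)$ over $\bfAPerf(X)^N$ (with $N = \tfrac{n(n+1)}{2}$), together with the corresponding substack description of $\calS_n \bfAnCoh(X\an)$ inside $\calS_n \bfAnPerf(X\an)$ over $\bfAnPerf(X\an)^N$. Since analytification $(-)\an \colon \dSt \to \dAnSt$ is a left adjoint that preserves finite limits (cf.\ the flatness of relative analytification in \cite[Proposition~4.17]{Porta_Yu_Representability} and \cite[Proposition~7.3]{Porta_Holstein_Mapping}), applying $(-)\an$ to the pullback square defining $\calS_n \bfCoh(X)$ yields a pullback square in $\dAnSt$. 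It therefore suffices to identify each corner of the analytified square with the corresponding corner of the analytic Waldhausen square.

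The identification on the "$\bfPerf$-part" is already available: by \cite[Proposition~5.2]{Porta_Holstein_Mapping} we have $\bfPerf(X)\an \simeq \bfAnPerf(X\an)$, and since $\calS_n \bfPerf$ canonically coincides with the Toën–Vaquié moduli construction $\calM_{\calS_n \catPerf}$, the same GAGA-type argument — combined with the fact that $T_n$ is a finite category, so that $\Fun(T_n, -)$ commutes with the relevant analytification comparisons by \cite[Corollary~7.2]{Porta_Holstein_Mapping} — gives $(\calS_n \bfPerf(X))\an \simeq \calS_n \bfAnPerf(X\an)$. The identification on the "$\bfCoh$-part" corners over $\bfAPerf(X)^N$ is exactly the content of Proposition~\ref{prop:analytification_stack_coherent_sheaves}, namely $\mu_X \colon \bfCoh(X)\an \xrightarrow{\ \sim\ } \bfAnCoh(X\an)$, applied factorwise to the $N$-fold product (analytification commutes with finite products). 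Assembling these identifications, and using that the comparison map \eqref{eq:an_An} is by construction compatible with all the evaluation maps $\ev_{(i,j)}$ and hence with the two pullback squares, one concludes that \eqref{eq:an_An} is an equivalence at level $n$. Letting $n$ vary, and noting that all the comparison maps are natural in $[n] \in \mathbf\Delta\op$, gives the equivalence of simplicial objects $(\calS_\bullet \bfCoh(X))\an \simeq \calS_\bullet \bfAnCoh(X\an)$.

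Finally, the geometricity and local finite presentation of $\calS_n \bfAnCoh(X\an)$ follow from transporting the algebraic statement across the equivalence: we know from the last Corollary of \S\ref{ss:categorification} that $\calS_n \bfCoh(X)$ is geometric and locally of finite presentation (here $X$ is smooth and proper, so $\lambda_X$ or $\id_X$ serves as a smooth flat effective epimorphism from a smooth geometric stack, and $\bfPerf(X)$ is locally geometric and locally of finite presentation by \cite[Corollary~3.29]{Toen_Vaquie_Moduli}), and \cite[Proposition~7.3]{Porta_Holstein_Mapping} together with \cite[Proposition~5.2]{Porta_Holstein_Mapping} guarantees that analytification sends such stacks to locally geometric derived analytic stacks locally of finite presentation. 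The main obstacle I anticipate is a bookkeeping one: making sure that the pullback-square description of $\calS_n \bfAnCoh(X\an)$ inside $\calS_n \bfAnPerf(X\an)$ is set up so that applying Proposition~\ref{prop:analytification_stack_coherent_sheaves} factorwise is legitimate — in particular that the relevant squares really are limit diagrams preserved by $(-)\an$, which ultimately rests on the $t$-exactness and base-change properties of relative analytification recorded in \cite{Porta_Holstein_Mapping,Porta_Yu_Representability} and on Lemma~\ref{lem:analytification_relative_tor_amplitude}. Once those compatibilities are in place, the argument is a formal diagram chase.
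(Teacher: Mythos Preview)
Your approach is correct and is a genuinely different route from the paper's. The paper's proof is essentially a one-liner: it says the case $n=1$ is exactly Proposition~\ref{prop:analytification_stack_coherent_sheaves}, and for general $n$ one simply reruns that same Stein-local GAGA argument (the $\fcolim_{K\subset V\subset U}$ comparison of $\catCoh$-categories) with $\calS_n\catCoh$ in place of $\catCoh$. In other words, the paper proves each level directly by redoing the Ind-categorical comparison, rather than by any formal decomposition.

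Your argument instead bootstraps from the $n=1$ case via the pullback square
\[
\calS_n\bfCoh(X)\;\simeq\;\calS_n\bfPerf(X)\times_{\bfPerf(X)^N}\bfCoh(X)^N,
\]
together with the already-known equivalences $(\calS_n\bfPerf(X))\an\simeq\calS_n\bfAnPerf(X\an)$ and $\bfCoh(X)\an\simeq\bfAnCoh(X\an)$. This is cleaner and avoids reopening the Stein-space machinery. The trade-off is that you must justify that analytification preserves this particular pullback; your phrase ``left adjoint that preserves finite limits'' is not quite right as stated (left adjoints preserve colimits, not limits), so you should instead invoke that $\bfCoh(X)^N\to\bfPerf(X)^N$ is representable by \'etale $0$-stacks (Proposition~\ref{prop:coh_geometric}) and that analytification of geometric stacks preserves fiber products along such maps (this is what the flatness result \cite[Proposition~4.17]{Porta_Yu_Representability} and \cite[Proposition~7.3]{Porta_Holstein_Mapping} actually give you). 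With that correction the argument goes through, and the deduction of geometricity is the same in both approaches.
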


\begin{proof}
	When $n = 1$, this is exactly the statement of Proposition \ref{prop:analytification_stack_coherent_sheaves}.
	The proof of the general case is similar, and there are no additional subtleties.
\end{proof}

Let $X$ be a smooth proper connected complex scheme. As proved in \cite[\S5.2]{Porta_Holstein_Mapping}, the analytification functor commutes with Simpson's shape functor, i.e., we have the following canonical equivalences:
\begin{align}
	(X_\dR)\an \simeq (X\an)_\dR\ , \ (X_\B)\an \simeq (X_\B)\an \ , \ (X_\Dol)\an \simeq (X\an)_\Dol\ .
\end{align}
\begin{lemma}\label{lem:equivalence_2}
	Let $\ast\in \{\B, \dR, \Dol\}$. Then the map \eqref{eq:an_An} induces an equivalence $(\calS_\bullet \bfCoh(X_\ast))\an \simeq \calS_\bullet \bfAnCoh((X\an)_\ast)$. In particular, for each $n \ge 0$ the derived analytic stack $\calS_n \bfAnCoh((X\an)_\ast)$ is locally geometric and locally of finite presentation.
\end{lemma}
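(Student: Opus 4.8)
The plan is to reduce Lemma~\ref{lem:equivalence_2} to the already-established case of Lemma~\ref{lem:equivalence_1} by leveraging the presentation of each Simpson's shape coherent stack as a fiber product over the corresponding $\bfPerf$-stack. Concretely, for each shape $\ast \in \{\B, \dR, \Dol\}$ there is a flat effective epimorphism $u_\ast \colon X \to X_\ast$ (given by $\Spec(\C)\simeq \ast_\B \to X_\B$, by $\lambda_X \colon X \to X_\dR$, and by $\kappa_X \colon X \to X_\Dol$ respectively, cf.\ Example~\ref{eg:applications}), and the same for the extension stacks and more generally for the Waldhausen pieces. Hence, by the analytic analogue of Lemma~\ref{lem:universally_flat_atlas} (i.e.\ \eqref{eq:coh-shapes_analytic}) together with Corollary~\ref{cor:coherent_over_smooth_are_perfect}, one has for every $n\ge 0$
\begin{align}
	\calS_n \bfAnCoh((X\an)_\ast) \simeq \calS_n \bfAnPerf((X\an)_\ast) \times_{\calS_n \bfAnPerf(X\an)} \calS_n \bfAnCoh(X\an)\ ,
\end{align}
and analogously, using Lemma~\ref{lem:universally_flat_atlas} again in the algebraic setting, one has
\begin{align}
	\calS_n \bfCoh(X_\ast) \simeq \calS_n \bfPerf(X_\ast) \times_{\calS_n \bfPerf(X)} \calS_n \bfCoh(X)\ .
\end{align}

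The next step is to observe that analytification, being a left adjoint that nevertheless preserves finite limits on the relevant subcategory (cf.\ \cite[Proposition~7.3]{Porta_Holstein_Mapping}), carries the second fiber product to the analytification of its terms, so it suffices to know the comparison map is an equivalence on each of the three corners. For the corner $\calS_n \bfCoh(X)$ this is exactly Lemma~\ref{lem:equivalence_1}. For the corners $\calS_n \bfPerf(X)$ and $\calS_n \bfPerf(X_\ast)$ one invokes \cite[Proposition~5.2 and Theorem~5.5]{Porta_Holstein_Mapping} (the equivalence $\bfPerf(X)\an \simeq \bfAnPerf(X\an)$ from \eqref{eq:isomuX}), together with the fact that analytification commutes with the Waldhausen construction — this was already used in the body right after \eqref{eq:comparing_analytified_coh_with_ancoh} and follows from \cite[Corollary~7.2 and Proposition~7.3]{Porta_Holstein_Mapping} since each $T_n$ is a finite category — and the fact, recalled just before the statement, that analytification commutes with the Simpson shape functors, i.e.\ $(X_\ast)\an \simeq (X\an)_\ast$. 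Combining these, the comparison map \eqref{eq:an_An} is an equivalence on each corner of the fiber product, hence on the fiber product itself, which gives $(\calS_\bullet \bfCoh(X_\ast))\an \simeq \calS_\bullet \bfAnCoh((X\an)_\ast)$.

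For the final assertion on geometricity and local finite presentation, one argues as in the proof of Proposition~\ref{prop:analytification_stack_coherent_sheaves} and Lemma~\ref{lem:equivalence_1}: $\calS_n \bfAnPerf((X\an)_\ast) \simeq \calS_n \bfPerf(X_\ast)\an$ is locally geometric and locally of finite presentation because $\bfPerf(X_\ast)$ is (by \cite[\S\ref*{shapes-s:dR_representability}]{Porta_Sala_Shapes} and its Betti and Dolbeault analogues) and analytification of a locally geometric, locally finitely presented derived stack is again such; likewise $\calS_n \bfAnCoh(X\an) \simeq \calS_n \bfCoh(X)\an$ is, by the previous lemma; hence the fiber product displayed above is locally geometric and locally of finite presentation. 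The main obstacle I anticipate is the careful bookkeeping needed to check that the analytification functor genuinely commutes with the pullback square defining $\calS_n \bfAnCoh((X\an)_\ast)$ — one must verify that the projection $X \to X_\ast$ (and its product with $S$) remains a \emph{flat} effective epimorphism after analytification so that \eqref{eq:coh-shapes_analytic} applies with the analytified atlas, and that the relative analytification of families of coherent sheaves is compatible with the tor-amplitude condition along each face of $T_n$; both points are handled by Lemma~\ref{lem:analytification_relative_tor_amplitude} and \cite[Proposition~4.17]{Porta_Yu_Representability}, but assembling them coherently across the simplicial diagram is where the real work lies. Everything else is a direct transcription of the $n=1$ argument.
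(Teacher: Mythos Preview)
Your approach is correct and takes a genuinely different route from the paper. The paper's proof simply reruns the Stein-theoretic argument of Proposition~\ref{prop:analytification_stack_coherent_sheaves} verbatim for each shape, replacing the GAGA inputs \cite[Lemma~5.13 \& Proposition~5.15]{Porta_Holstein_Mapping} by their shape-specific analogues (Propositions~5.26, 5.28, 5.32 in \emph{loc.\ cit.}\ for the de Rham, Betti, and Dolbeault cases respectively). You instead factor through the fiber-product presentation $\calS_n \bfCoh(X_\ast) \simeq \calS_n \bfPerf(X_\ast) \times_{\calS_n \bfPerf(U)} \calS_n \bfCoh(U)$ coming from the flat effective epimorphism $u_\ast \colon U \to X_\ast$, and reduce to Lemma~\ref{lem:equivalence_1} on the $\bfCoh(U)$-corner plus a $\bfPerf$-level analytification comparison on the other two corners. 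This is more modular: it cleanly separates the ``coherent versus perfect'' issue (handled by the atlas and Lemma~\ref{lem:universally_flat_atlas}) from the ``algebraic versus analytic'' issue (handled by GAGA for $\bfPerf$). One caveat: your citation of \cite[Proposition~5.2 and Theorem~5.5]{Porta_Holstein_Mapping} for the corner $\calS_n \bfPerf(X_\ast)\an \simeq \calS_n \bfAnPerf((X\an)_\ast)$ is imprecise --- those results are stated for proper \emph{schemes}, and do not directly apply when the source is a Simpson shape. The correct inputs for that corner are precisely the shape-specific Propositions~5.26, 5.28, 5.32 that the paper's proof invokes, so ultimately both arguments rest on the same external results. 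A second minor point: for $\ast = \B$ the atlas is $U = \Spec(\C)$, not $X$, so your displayed fiber product should have $\bfPerf(\Spec(\C))$ rather than $\bfPerf(X)$ in that case (you note this in the parenthetical but not in the formula).
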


\begin{proof}
	The proof of Proposition \ref{prop:analytification_stack_coherent_sheaves} applies, with the following caveat: rather than invoking \cite[Lemma~5.13 \& Proposition~5.15]{Porta_Holstein_Mapping}, we instead use Propositions 5.26 (for the de Rham case), 5.28 (for the Betti case) and 5.32 (for the Dolbeault case) in \cite{Porta_Holstein_Mapping}.
\end{proof}

Finally, we are able to give the analytic counterpart of Theorem~\ref{thm:coh_algebra_in_correspondence}:
\begin{theorem} \label{thm:coh_algebra_in_correspondence_analytic}
	Let $Y$ be one of the following derived stacks:
	\begin{enumerate}\itemsep0.2cm
		\item a smooth proper complex scheme of dimension either one or two;
		\item the Betti, de Rham or Dolbeault stack of a smooth projective curve.
	\end{enumerate}
	Then the composition
	\begin{align}
		\catCohb( \bfAnCoh(Y\an) ) \times \catCohb( \bfAnCoh(Y\an) ) &\xrightarrow{\boxtimes} \catCohb( \bfAnCoh(Y\an) \times \bfAnCoh(Y\an) )\\[3pt]
		 &\xrightarrow{q_\ast \circ p^\ast} \catCohb( \bfAnCoh(Y\an) )\ ,
	\end{align}
	where the map on the right-hand side is induced by the 1-morphism in correspondences:
	\begin{align}
		\begin{tikzcd}[ampersand replacement = \&]
		{} \& \bfAnCohext(Y\an) \arrow{dr}{q} \arrow{dl}[swap]{p} \\
		\bfAnCoh(Y\an) \times \bfAnCoh(Y\an) \& \& \bfAnCoh(Y\an) 
		\end{tikzcd}\ ,
	\end{align}
	endows $\catCohb(\bfAnCoh(Y))$ with the structure of an $\mathbb E_1$-monoidal stable $\infty$-category.
\end{theorem}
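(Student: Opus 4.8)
The plan is to mirror, in the complex analytic setting, the three-step argument used in the algebraic case (Theorem~\ref{thm:coh_algebra_in_correspondence}): produce a $2$-Segal object in derived analytic stacks, verify the relevant regularity properties of the convolution maps $p$ and $q$, and then push the $\mathbb E_1$-algebra structure through the lax monoidal functor $\catCohb \colon \Corr^\times(\dAnSt)_{\mathsf{rps},\mathsf{lci}} \to \Cat_\infty^{\mathsf{st}}$ constructed just above. First I would invoke the $2$-Segal property of $\calS_\bullet \bfAnCoh(Y\an)$: for $Y$ a smooth proper scheme this is Proposition~\ref{prop:analytic_convolution_tensor_product}, and for $Y$ one of the Simpson shapes of a curve it is the corollary immediately following, applied to the flat effective epimorphism $U\an \to (X_\ast)\an$ coming from analytifying the algebraic atlas. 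By Lemmas~\ref{lem:equivalence_1} and~\ref{lem:equivalence_2} each $\calS_n \bfAnCoh(Y\an)$ is moreover locally geometric and locally of finite presentation, which is what is needed to land inside $\dAnSt$ (i.e.\ derived geometric analytic stacks) rather than merely $\dAnSt$ as a presheaf category.

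The second step is to check that the convolution maps have the required properties, namely that $p = \ev_3\times\ev_1 \colon \bfAnCohext(Y\an) \to \bfAnCoh(Y\an)\times\bfAnCoh(Y\an)$ is derived lci and that $q = \ev_2$ is representable by proper derived analytic spaces. For this I would use the equivalences $(\calS_\bullet\bfCoh(Y))\an \simeq \calS_\bullet\bfAnCoh(Y\an)$ of Lemmas~\ref{lem:equivalence_1} and~\ref{lem:equivalence_2} to transport the algebraic statements: $\calS_2\bfCoh(Y)\simeq \bfCohext(Y)$ analytifies to $\bfAnCohext(Y\an)$, the simplicial face maps analytify to $p$ and $q$, and analytification preserves derived lci morphisms (the analytic cotangent complex is the analytification of the algebraic one, by \cite{Porta_Yu_Representability}) as well as properness of representable morphisms (GAGA, \cite{Porta_GAGA}, together with the fact that the Quot schemes representing $q$ in the algebraic case have proper analytifications). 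Thus Propositions~\ref{prop:extremal_projections_scheme}, and Corollaries~\ref{cor:extremal_projections_betti}, \ref{cor:dR-lci}, \ref{cor:Dol-lci} give, after analytification, exactly the lci-ness of $p$ in all the listed cases, while the argument sketched in the proof of Theorem~\ref{thm:coh_algebra_in_correspondence} for the properness of $q$ carries over verbatim.

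The third step is formal: the $2$-Segal object $\calS_\bullet\bfAnCoh(Y\an)$, together with the verification that its face maps lie in the classes $\mathsf{rps}$ (for $q$) and $\mathsf{lci}$ (for $p$), exhibits $\bfAnCoh(Y\an)$ as an $\mathbb E_1$-algebra in $\Corr^\times(\dAnSt)_{\mathsf{rps},\mathsf{lci}}$ via \cite[Theorem~11.1.6]{Dyckerhoff_Kapranov_Higher_Segal}. Applying the right-lax monoidal functor $\catCohb$ constructed above, which sends the $1$-morphism in correspondences to $q_\ast\circ p^\ast$ and whose lax structure is given by $\boxtimes$, we conclude that $\catCohb(\bfAnCoh(Y\an))$ inherits an $\mathbb E_1$-algebra structure in $\Cat_\infty^{\mathsf{st}}$, which is precisely the claimed $\mathbb E_1$-monoidal structure with product $q_\ast\circ p^\ast\circ\boxtimes$. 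Note that here, in contrast to the algebraic case, one does \emph{not} need the $\mathsf{Pro}$-refinement of Appendix~\ref{sec:ind_quasi_compact}: the decategorification issues that forced the introduction of $\catCohb_{\mathsf{pro}}$ do not arise because we are only asserting the monoidal structure, not computing $G$-theory, so one may work directly with $\catCohb$ of the (possibly non-quasi-compact) analytic stacks.

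The main obstacle I expect is not the formal categorical bookkeeping but rather making sure that analytification genuinely commutes with all the constructions in play simultaneously --- the Waldhausen construction $\calS_\bullet$, the passage from $\bfAnPerf$ to $\bfAnCoh$ (which, unlike $\bfAnPerf$, is only defined via an auxiliary flat atlas and the analytic base-change of Corollary~\ref{cor:base_change_coherent_analytic}), and the Simpson shape functors --- with enough compatibility that the face maps of the analytic $2$-Segal object are literally the analytifications of the algebraic ones. Lemmas~\ref{lem:equivalence_1} and~\ref{lem:equivalence_2} are designed exactly to supply this, so the real work is to assemble them into a single statement of the form ``$(\calS_\bullet\bfCoh(Y))\an \simeq \calS_\bullet\bfAnCoh(Y\an)$ as $2$-Segal objects'' and to observe that the derived lci and properness properties are invariant under analytification; once that is in place the theorem follows by the same two-line argument as Theorem~\ref{thm:coh_algebra_in_correspondence}.
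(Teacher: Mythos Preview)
Your proposal is correct and follows essentially the same approach as the paper's proof: identify $\calS_\bullet\bfAnCoh(Y\an)$ with $(\calS_\bullet\bfCoh(Y))\an$ via Lemmas~\ref{lem:equivalence_1} and~\ref{lem:equivalence_2}, transport the algebraic lci estimates through analytification, and then apply the right-lax monoidal functor $\catCohb$ on correspondences. The paper's proof is more terse and singles out one precise ingredient you only invoke implicitly, namely Lemma~\ref{lem:analytification_relative_tor_amplitude} (analytification preserves relative tor-amplitude) together with \cite[Theorem~5.21]{Porta_Yu_Representability}, to justify that $p\an$ is derived lci; your remark that ``the analytic cotangent complex is the analytification of the algebraic one'' is the second of these two facts, and you should also cite the first to make the tor-amplitude conclusion airtight.
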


\begin{proof}
	The only main point to emphasize is how to use the tor-amplitude estimates for the map $p$ in the algebraic case (i.e., Proposition~\ref{prop:extremal_projections_scheme} and Corollaries~\ref{cor:extremal_projections_betti}, \ref{cor:dR-lci}, and \ref{cor:Dol-lci}) in the analytic setting.
	First of all, we use Lemmas \ref{lem:equivalence_1} and \ref{lem:equivalence_2} to identify the $2$-Segal object $\calS_\bullet \bfAnCoh(Y\an)$ with $( \calS_\bullet \bfCoh(Y) )\an$.
	Then it remains to check that $p\an$ is derived lci, where now $p$ is the map appearing in \eqref{eq:convolution_diagram_algebraic}.
	This follows by combining Lemma~\ref{lem:analytification_relative_tor_amplitude} and \cite[Theorem~5.21]{Porta_Yu_Representability}.
\end{proof}

\begin{corollary} \label{cor:comparison_algebraic_vs_analytic_CoHA}
	Let $Y$ be as in Theorem~\ref{thm:coh_algebra_in_correspondence_analytic}.
	Then the derived analytification functor induces a morphism in $\Alg_{\mathbb E_1}( \Cat_\infty^{\mathsf{st}} )$
	\begin{align}
		\catCohb( \bfCoh(Y) ) \longrightarrow \catCohb( \bfAnCoh(Y\an) ) \ . 
	\end{align}
\end{corollary}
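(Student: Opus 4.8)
The plan is to obtain the morphism by promoting the levelwise analytification maps on categories of coherent sheaves to a right-lax monoidal natural transformation between the two coefficient functors defined on correspondence $(\infty,2)$-categories, and then evaluating it on the $\mathbb E_1$-algebra $\bfCoh(Y)$. First I would record the geometric input: the analytification functor $(-)\an\colon \dGeom \to \dAnSt$ preserves finite limits and restricts to a symmetric monoidal functor $\Corr^\times(\dGeom)_{\mathsf{rps},\mathsf{lci}} \to \Corr^\times(\dAnSt)_{\mathsf{rps},\mathsf{lci}}$. Indeed, by Lemma~\ref{lem:analytification_relative_tor_amplitude} together with \cite[Theorem~5.21]{Porta_Yu_Representability}, analytification carries derived lci morphisms to derived lci morphisms (this is exactly the comparison already used in the proof of Theorem~\ref{thm:coh_algebra_in_correspondence_analytic}), while the analytification of a proper scheme is a proper complex analytic space, so morphisms representable by proper schemes are sent to morphisms representable by proper derived analytic spaces. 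Since $(-)\an$ commutes with pullbacks, every structure map of a correspondence is respected.

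The technical heart is the construction of a right-lax monoidal natural transformation
\[
\varepsilon\colon \catCohb \longrightarrow \catCohb \circ (-)\an
\]
between the two right-lax monoidal functors $\Corr^\times(\dGeom)_{\mathsf{rps},\mathsf{lci}} \to \Cat_\infty^{\mathsf{st}}$, where the algebraic one is the restriction of $\catQCoh$ to this subcategory (along lci horizontal and proper-representable vertical morphisms both $f^\ast$ and $f_\ast$ preserve bounded coherence, so the restriction lands in $\catCohb$), and the analytic one is the functor constructed in Theorem~\ref{thm:coh_algebra_in_correspondence_analytic}. On objects $\varepsilon$ is the analytification functor $\catCohb(Z) \to \catCohb(Z\an)$; its compatibility with $f^\ast$ is the standard commutation of analytification with pullback, and its compatibility with $f_\ast$ along morphisms representable by proper schemes is relative GAGA \cite[Theorem~6.5]{Porta_GAGA}, while the required Beck--Chevalley coherence between proper pushforward and arbitrary pullback is supplied by Lemma~\ref{lem:rightadjointable} together with the adjointability statement of \cite[Theorem~6.8]{Porta_Yu_Mapping}. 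These pointwise compatibilities are then assembled into a transformation of functors out of the $(\infty,2)$-category of correspondences via the universal property of $\Corr^\times$, \cite[Theorem~7.3.2.2]{Gaitsgory_Rozenblyum_Study_I} (equivalently \cite[Theorem~4.4.6]{MacPherson}), exactly as these functors were produced in the first place; the right-lax monoidal structure on $\varepsilon$ comes from the compatibility of analytification with exterior products.

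With $\varepsilon$ in hand the conclusion is formal. The $2$-Segal object $\calS_\bullet \bfCoh(Y)$ endows $\bfCoh(Y)$ with the structure of an $\mathbb E_1$-algebra in $\Corr^\times(\dGeom)_{\mathsf{rps},\mathsf{lci}}$ — this is precisely what is checked in the proof of Theorem~\ref{thm:coh_algebra_in_correspondence} ($p$ is (ind-)derived lci, $q$ is representable by proper schemes). Applying $(-)\an$ and invoking Lemmas~\ref{lem:equivalence_1} and~\ref{lem:equivalence_2}, which identify $(\calS_\bullet \bfCoh(Y))\an$ with $\calS_\bullet \bfAnCoh(Y\an)$ as $2$-Segal objects, shows that $\bfCoh(Y)\an \simeq \bfAnCoh(Y\an)$ as $\mathbb E_1$-algebras in $\Corr^\times(\dAnSt)_{\mathsf{rps},\mathsf{lci}}$. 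Since right-lax monoidal functors preserve $\mathbb E_1$-algebras and right-lax monoidal natural transformations induce morphisms of $\mathbb E_1$-algebras, evaluating $\catCohb$ and $\varepsilon$ on this $\mathbb E_1$-algebra yields the desired morphism $\catCohb(\bfCoh(Y)) \to \catCohb(\bfAnCoh(Y\an))$ in $\Alg_{\mathbb E_1}(\Cat_\infty^{\mathsf{st}})$.

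The main obstacle I expect is the second step: upgrading the pointwise statements of relative GAGA and of proper base change into a genuinely coherent right-lax monoidal transformation of functors on the correspondence $(\infty,2)$-categories. All the inputs are available in \cite{Porta_GAGA,Porta_Yu_Mapping}, but the bookkeeping required to feed them into the Gaitsgory--Rozenblyum machinery — in particular the coherent comparison of the two $(2,2)$-squares appearing in the definition of composition in $\Corr^\times$ — is where the real work lies. A minor secondary point is reconciling the pro-theoretic framework of Theorem~\ref{thm:coh_algebra_in_correspondence} with the non-pro framework of Theorem~\ref{thm:coh_algebra_in_correspondence_analytic}; this is harmless, since for the mere existence of the $\mathbb E_1$-monoidal structure one may work with $\catCohb$ in $\Cat_\infty^{\mathsf{st}}$ throughout, the pro-refinement being needed only for the $G$-theory statements of \S\ref{s:cohas}.
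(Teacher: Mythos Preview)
Your proposal is correct and follows essentially the same approach as the paper: promote $(-)\an$ to a symmetric monoidal functor on correspondences (checking preservation of lci and proper-representable classes), build the right-lax monoidal transformation $\catCohb \to \catCohb \circ (-)\an$ via the universal property of $\Corr^\times$ with the key pushforward square supplied by relative GAGA, and then evaluate on the $\mathbb E_1$-algebra $\bfCoh(Y)$ using Lemmas~\ref{lem:equivalence_1} and~\ref{lem:equivalence_2}. One small citation correction: the commutation of analytification with proper pushforward is \cite[Theorem~7.1]{Porta_GAGA} rather than Theorem~6.5 (the latter only says $f_\ast$ preserves almost perfect modules); and the paper also invokes \cite[Theorems~7.1 \& 7.2]{Porta_GAGA} together with \cite[Proposition~6.3]{Porta_Yu_Higher_analytic_stacks_2014} to verify that $(-)\an$ sends morphisms representable by proper schemes to morphisms representable by proper analytic spaces, a point you pass over quickly.
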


\begin{proof}
	By using Lemmas~\ref{lem:equivalence_1} and \ref{lem:equivalence_2}, we have $\catCohb( \bfCoh(Y)\an ) \simeq \catCohb( \bfAnCoh(Y\an) )$ as $\mathbb E_1$-monoid objects. 
	The analytification functor $(-)\an$ promotes to a symmetric monoidal functor
	\begin{align}
		(-)\an \colon \Corr^\times( \dSt ) \longrightarrow \Corr^\times( \dAnSt ) \ .
	\end{align}
	Combining Lemma \ref{lem:analytification_relative_tor_amplitude} and \cite[Theorem 5.21]{Porta_Yu_Representability}, we conclude that $(-)\an$ preserves lci morphisms.
	Moreover, \cite[Lemma 3.1(3)]{Porta_Yu_Representability} and \cite[Proposition 6.3]{Porta_Yu_Higher_analytic_stacks_2014}, we see that $(-)\an$ also preserves proper morphisms.
	Finally, using the derived GAGA theorems \cite[Theorems 7.1 \& 7.2]{Porta_GAGA} we see that $(-)\an$ takes morphisms which are representable by proper schemes to morphisms which are representable by proper analytic spaces\footnote{Using \cite[Proposition 6.3]{Porta_Yu_Higher_analytic_stacks_2014} it is enough to prove that the analytification takes representable morphisms with geometric target to representable morphisms. This immediately follows from \cite[Proposition 2.25]{Porta_Yu_Higher_analytic_stacks_2014}.}.
	Therefore, it restricts to a symmetric monoidal functor
	\begin{align}
		(-)\an \colon \Corr^\times( \dSt )_{\mathsf{rps}, \mathsf{lci}} \longrightarrow \Corr^\times( \dAnSt )_{\mathsf{rps}, \mathsf{lci}} \ .
	\end{align}
	The analytification functor for coherent sheaves induces a natural transformation of right-lax symmetric monoidal functors
	\begin{align}
		\catCohb \longrightarrow \catCohb \circ (-)\an \ .
	\end{align}
	Here both functors are considered as functors $\dSt \to \Cat_\infty$.
	Using the universal property of the category of correspondences, we can extend this natural transformation of right-lax symmetric monoidal functors defined over the category of correspondences.
	The key point is to verify that if $p \colon X \to Y$ is a proper morphism of geometric derived stacks locally almost of finite presentation, then the diagram
	\begin{align}
		\begin{tikzcd}[ampersand replacement = \&]
			\catCohb(X) \arrow{r}{(-)\an} \arrow{d}{p_\ast} \& \catCohb( X\an ) \arrow{d}{p\an_\ast} \\
			\catCohb(Y) \arrow{r}{(-)\an} \& \catCohb( Y\an ) 
		\end{tikzcd}
	\end{align}
	commutes.
	This is a particular case of \cite[Theorem~7.1]{Porta_GAGA}.
	The conclusion follows.
\end{proof}

\subsection{The derived Riemann--Hilbert correspondence}

Let $X$ be a smooth proper connected complex scheme.
In \cite[\S3]{Porta_Derived} there is constructed a natural transformation
\begin{align}
	\eta_{\mathsf{RH}} \colon X\an_\dR \longrightarrow X\an_\B \ ,
\end{align}
which induces for every derived analytic stack $Y \in \dAnSt$ a morphism
\begin{align}
	 \eta_{\mathsf{RH}}^\ast \colon \bfAnMap( X\an_\dR, Y ) \longrightarrow \bfAnMap( X\an_\B, Y ) \ . 
\end{align}
It is then shown in \cite[Theorem 6.11]{Porta_Derived} that this map is an equivalence when $Y = \bfAnPerf$.\footnote{See \cite[Corollary~7.6]{Porta_Holstein_Mapping} for a discussion of which other derived analytic stacks $Y$ see $\eta_{\mathsf{RH}}$ as an equivalence.}
Taking $Y = \calS_\bullet \bfAnPerf$, we see that $\eta_{\mathsf{RH}}$ induces a morphism of $2$-Segal objects
\begin{align}
	 \eta_{\mathsf{RH}}^\ast \colon \calS_\bullet \bfAnPerf( X\an_\dR ) \longrightarrow \calS_\bullet \bfAnPerf( X\an_\B ) \ .
\end{align}
By applying the functor $\TwoSeg( \dAnSt ) \to \Alg_{\mathbb E_1}( \Corr^\times( \dAnSt ) )$, we therefore conclude that
\begin{align}
	 \eta_{\mathsf{RH}}^\ast \colon \bfAnPerf( X\an_\dR ) \longrightarrow \bfAnPerf( X\an_\B )
\end{align}
acquires a natural structure of morphism between $\mathbb E_1$-convolution algebras:

\begin{proposition}
	The morphism
	\begin{align}
		\eta_{\mathsf{RH}}^\ast \colon \calS_\bullet \bfAnPerf( X\an_\dR ) \longrightarrow \calS_\bullet \bfAnPerf( X\an_\B )
	\end{align}
	is an equivalence.
	Moreover, it restricts to an equivalence
	\begin{align}
		\eta_{\mathsf{RH}}^\ast \colon \calS_\bullet \bfAnCoh_\dR( X ) \longrightarrow \scrS\bfAnCoh_\B( X ) \ . 
	\end{align}
\end{proposition}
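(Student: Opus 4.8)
The plan is to deduce the statement from the already-established equivalence $\eta_{\mathsf{RH}}^\ast\colon \bfAnPerf(X\an_\dR)\xrightarrow{\ \simeq\ } \bfAnPerf(X\an_\B)$ of \cite[Theorem~6.11]{Porta_Derived}, together with the fact that the Waldhausen construction is built out of finite limits which are preserved by the mapping-stack functor. First I would note that $\eta_{\mathsf{RH}}^\ast$ at the level of the full $2$-Segal objects is computed levelwise: by definition
\begin{align}
	\calS_n \bfAnPerf(X\an_\dR) \simeq \bfAnMap(X\an_\dR, \calS_n\bfAnPerf)\ , \quad \calS_n \bfAnPerf(X\an_\B) \simeq \bfAnMap(X\an_\B, \calS_n\bfAnPerf)\ ,
\end{align}
and $\calS_n\bfAnPerf$ is the full substack of $\bfAnPerf^{\sfT_n}$ cut out by the vanishing and pullback conditions recalled in \S\ref{ss:convolution_structure}; since $\bfAnMap(-,Y)$ and $\bfAnMap(X\an_?, -)$ both commute with limits, $\calS_n\bfAnPerf(X\an_?)$ is the corresponding full substack of $\bfAnPerf(X\an_?)^{\sfT_n}$. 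Applying $(-)^{\sfT_n}$ to the equivalence $\eta_{\mathsf{RH}}^\ast$ and using that $\eta_{\mathsf{RH}}$ induces an equivalence on mapping stacks into $\bfAnPerf$ (hence into $\bfAnPerf^{\sfT_n}$, again by commutation with limits), the levelwise map $\calS_n\bfAnPerf(X\an_\dR)\to\calS_n\bfAnPerf(X\an_\B)$ is an equivalence; since the conditions defining $\calS_n$ are preserved and reflected by an equivalence, it restricts to an equivalence on the substacks. Assembling over $[n]\in\mathbf\Delta$ gives the first claim.

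For the second claim, the point is that $\eta_{\mathsf{RH}}^\ast$ is compatible with the full substacks of \emph{coherent} families. Recall from \S\ref{ss:coherent-shapes} that $\calS_n\bfAnCoh(X\an_?)$ is defined as the fiber product of $\calS_n\bfAnPerf(X\an_?)$ with $\bfAnCoh(X\an_?)^{N}$ over $\bfAnPerf(X\an_?)^{N}$, where $N=\binom{n+1}{2}$, and that $\bfAnCoh(X\an_?)$ is itself a full substack of $\bfAnPerf(X\an_?)$ — by \eqref{eq:coh-shapes_analytic}, it is the pullback of $\bfAnCoh$ of a smooth atlas ($\Spec(\C)$ for the Betti shape, $X\an$ for the de Rham shape) against the restriction along the flat effective epimorphism. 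So it suffices to check that, under the equivalence $\eta_{\mathsf{RH}}^\ast\colon \bfAnPerf(X\an_\dR)\simeq\bfAnPerf(X\an_\B)$, a point corresponds to a family of coherent sheaves on $X\an_\dR$ (relative to the base $S$) if and only if its image corresponds to a family of coherent sheaves on $X\an_\B$. Concretely: an $S$-point of $\bfAnPerf(X\an_\dR)$ is a perfect complex $\calF$ on $X\an_\dR\times S$, and it lies in $\bfAnCoh$ iff its pullback along $\lambda_{X\an}\colon X\an\to X\an_\dR$ has tor-amplitude $\le 0$ relative to $S$; similarly an $S$-point of $\bfAnPerf(X\an_\B)$ lies in $\bfAnCoh$ iff its pullback along a point $\ast_\B\simeq\Spec(\C)\to X\an_\B$ has tor-amplitude $\le 0$ relative to $S$. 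One then observes that $\eta_{\mathsf{RH}}$ fits into a commutative square relating $\lambda_{X\an}$ and the chosen point of $X\an_\B$ (this is part of the compatibility of $\eta_{\mathsf{RH}}$ with the atlases, already implicit in \cite{Porta_Derived}, and reflects the fact that the monodromy representation of a flat bundle is computed by its fiber at a point), so the two flatness conditions match. Hence $\eta_{\mathsf{RH}}^\ast$ carries $\bfAnCoh(X\an_\dR)$ isomorphically onto $\bfAnCoh(X\an_\B)$, and by the fiber-product description it carries each $\calS_n\bfAnCoh(X\an_\dR)$ isomorphically onto $\calS_n\bfAnCoh(X\an_\B)$.

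The main obstacle I anticipate is precisely this last compatibility: one must verify that the Riemann--Hilbert transformation $\eta_{\mathsf{RH}}\colon X\an_\dR\to X\an_\B$ intertwines the canonical flat atlas $\lambda_{X\an}\colon X\an\to X\an_\dR$ with the basepoint atlas of $X\an_\B$ in a way that is compatible with pullback of perfect complexes, so that ``tor-amplitude $\le 0$ after restriction to the atlas'' is a property stable under $\eta_{\mathsf{RH}}^\ast$. This is where one genuinely uses that $\eta_{\mathsf{RH}}$ realizes the classical correspondence between flat connections and local systems (a flat bundle of rank $n$ goes to a rank-$n$ local system, torsion-freeness and rank being detected on a fiber in both pictures); modulo this — which can be extracted from the construction in \cite[\S3]{Porta_Derived} and the analysis of the Betti and de Rham shapes in \cite{Porta_Sala_Shapes,Porta_Holstein_Mapping} — everything else is formal manipulation of full substacks cut out by limit conditions. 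A clean way to package the compatibility is to note that both $\bfAnCoh(X\an_\dR)$ and $\bfAnCoh(X\an_\B)$ sit inside $\bfAnCoh(X\an)$-decorated data via the respective atlases, and $\eta_{\mathsf{RH}}$ together with the GAGA-type identifications makes these agree; then the substack being coherent is equivalent to lying in the image of the same locally geometric stack, which is visibly preserved by an equivalence.
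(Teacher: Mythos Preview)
Your proposal is correct and follows essentially the same route as the paper. For the first statement, the paper argues pointwise: for each derived Stein $S$ the Riemann--Hilbert equivalence of \cite[Theorem~6.11]{Porta_Derived} gives an equivalence of \emph{categories} $\catPerf(X\an_\dR\times S)\simeq\catPerf(X\an_\B\times S)$, whence $\calS_n$ on both sides agree; your mapping-stack formulation is a repackaging of the same idea.

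For the second statement, you slightly overcomplicate things by testing coherence on the Betti side against the \emph{basepoint} atlas $\Spec(\C)\to X\an_\B$ and then worrying about compatibility with $\lambda_{X\an}$. The paper instead observes the commutative triangle
\[
\begin{tikzcd}[column sep=small]
& X\an \arrow{dl}[swap]{\lambda_X} \arrow{dr} & \\
X\an_\dR \arrow{rr}{\eta_{\mathsf{RH}}} & & X\an_\B
\end{tikzcd}
\]
and uses the composite $X\an\to X\an_\B$ as the flat effective epimorphism on the Betti side. With this choice the coherence condition on both shapes is literally ``pullback to $X\an$ has tor-amplitude $\le 0$ relative to $S$'', so preservation under $\eta_{\mathsf{RH}}^\ast$ is tautological. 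Your anticipated obstacle thus dissolves: there is no need to match two different atlases, only to note that the same atlas works on both sides via the triangle.
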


\begin{proof}
	Fix a derived Stein space $S \in \dStn$.
	Then \cite[Theorem~6.11]{Porta_Derived} provides an equivalence of stable $\infty$-categories
	\begin{align}
		\catPerf( X\an_\dR \times S ) \simeq \catPerf( X\an_\B \times S ) \ . 
	\end{align}
	Therefore, for every $n \ge 0$ we obtain an equivalence
	\begin{align}
		\calS_n \bfAnPerf( X\an_\dR )(S) \simeq \Fun( T_n , \catPerf( X\an_\dR \times S ) ) \simeq \Fun( T_n, \catPerf( X\an_\B \times S ) ) \simeq \calS_n \bfAnPerf( X\an_\B )(S) \ .
	\end{align}
	The first statement follows at once.
	The second statement follows automatically given the commutativity of the natural diagram
	\begin{align}
		\begin{tikzcd}[ampersand replacement = \&]
		{} \& X\an \arrow{dl}[swap]{\lambda_X} \arrow{dr} \\
		X\an_\dR \arrow{rr}{\eta_{\mathsf{RH}}} \& \& X\an_\B .
		\end{tikzcd}
	\end{align}
	\end{proof}

\begin{theorem}[CoHA version of the derived Riemann--Hilbert correspondence]\label{thm:RH}
	There is an equivalence of stable $\mathbb E_1$-monoidal $\infty$-categories
	\begin{align}
		( \catCohb( \bfAnCoh_\dR(X) ) , \ostar_\dR\an ) \simeq ( \catCohb( \bfAnCoh_\B(X) ), \ostar_\B\an ) \ .
	\end{align}
\end{theorem}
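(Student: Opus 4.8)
The plan is to upgrade the already-established equivalence at the level of $2$-Segal objects to an equivalence of $\mathbb E_1$-monoidal stable $\infty$-categories by transporting it through the functorial machinery of correspondences. Concretely, the previous proposition gives an equivalence of $2$-Segal objects
\begin{align}
	\eta_{\mathsf{RH}}^\ast \colon \calS_\bullet \bfAnCoh_\dR(X) \xrightarrow{\ \sim\ } \calS_\bullet \bfAnCoh_\B(X)
\end{align}
in $\dAnSt$. First I would invoke the functor $\TwoSeg(\dAnSt) \to \Alg_{\mathbb E_1}(\Corr^\times(\dAnSt))$ of \cite[Theorem~11.1.6]{Dyckerhoff_Kapranov_Higher_Segal} (combined with the symmetric monoidal structure on correspondences from \cite[\S7.2.1 \& \S9.2.1]{Gaitsgory_Rozenblyum_Study_I}), which is itself functorial, so an equivalence of $2$-Segal objects is sent to an equivalence of $\mathbb E_1$-algebras in $\Corr^\times(\dAnSt)$. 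Thus $\bfAnCoh_\dR(X)$ and $\bfAnCoh_\B(X)$ become equivalent as $\mathbb E_1$-convolution algebras, \emph{and} the equivalence is compatible with the structure maps $p, q$ of the convolution diagrams.

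The next step is to check that this equivalence lands in the subcategory $\Corr^\times(\dAnSt)_{\mathsf{rps}, \mathsf{lci}}$ on which the right-lax monoidal functor $\catCohb$ is defined. For this I would note that Corollary~\ref{cor:dR-lci} (together with Corollary~\ref{cor:extremal_projections_betti} and the analytic transport of tor-amplitude estimates already used in the proof of Theorem~\ref{thm:coh_algebra_in_correspondence_analytic}, i.e.\ Lemma~\ref{lem:analytification_relative_tor_amplitude} and \cite[Theorem~5.21]{Porta_Yu_Representability}) shows that the map $p$ is derived lci in both the de Rham and Betti cases, and $q$ is representable by proper analytic spaces (via the Quot-scheme description in the footnote of Theorem~\ref{thm:coh_algebra_in_correspondence}, plus derived GAGA). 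Since $\eta_{\mathsf{RH}}^\ast$ is an equivalence of simplicial objects, it identifies the $p$ and $q$ maps on the two sides, so both convolution algebras are genuinely $\mathbb E_1$-algebras in $\Corr^\times(\dAnSt)_{\mathsf{rps}, \mathsf{lci}}$ and the equivalence between them is an equivalence of such. Applying the right-lax monoidal functor
\begin{align}
	\catCohb \colon \Corr^\times(\dAnSt)_{\mathsf{rps}, \mathsf{lci}} \longrightarrow \Cat_\infty^{\mathsf{st}}
\end{align}
constructed in \S\ref{s:RH}, which preserves $\mathbb E_1$-algebras, then yields the desired equivalence of $\mathbb E_1$-monoidal stable $\infty$-categories $(\catCohb(\bfAnCoh_\dR(X)), \ostar_\dR\an) \simeq (\catCohb(\bfAnCoh_\B(X)), \ostar_\B\an)$.

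The one genuinely delicate point — the main obstacle — is verifying that the monoidal structures $\ostar_\dR\an$ and $\ostar_\B\an$ produced by this abstract transport along $\eta_{\mathsf{RH}}^\ast$ coincide with the ones defined intrinsically on each side in Theorem~\ref{thm:coh_algebra_in_correspondence_analytic} (i.e.\ that the equivalence is monoidal \emph{for the given} tensor structures, not merely that the two sides happen to carry some equivalent $\mathbb E_1$-structures). This requires unwinding that $\eta_{\mathsf{RH}}^\ast$ is compatible with the canonical maps $\lambda_X \colon X\an \to X\an_\dR$ and $X\an \to X\an_\B$ — which is exactly the commutative triangle recorded in the proof of the preceding proposition — so that the induced equivalences on $\calS_n$ intertwine the full simplicial structure, not just the low-degree faces. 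Once that compatibility is in place, functoriality of all the constructions ($\TwoSeg \to \Alg_{\mathbb E_1}(\Corr^\times)$, then $\catCohb$) does the rest; the remaining verifications (finiteness of tor-amplitude, properness of $q$) are routine given the results of \S\ref{s:coh_ext} and \S\ref{s:RH}. I would also remark that the same argument, run with $\calS_\bullet \bfAnPerf$ in place of $\calS_\bullet \bfAnCoh$, upgrades the Riemann-Hilbert equivalence to the level of perfect complexes, and that restricting along the analytification functor of Corollary~\ref{cor:comparison_algebraic_vs_analytic_CoHA} recovers the full commutative square of monoidal functors asserted in Theorem~\ref{thm:Cat-HA-RH}.
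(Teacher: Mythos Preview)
Your proposal is correct and follows exactly the approach the paper takes—indeed, the paper gives no explicit proof of this theorem, treating it as an immediate consequence of the preceding proposition (the equivalence of $2$-Segal objects $\calS_\bullet \bfAnCoh_\dR(X) \simeq \calS_\bullet \bfAnCoh_\B(X)$ induced by $\eta_{\mathsf{RH}}$) together with the functoriality of $\TwoSeg(\dAnSt) \to \Alg_{\mathbb E_1}(\Corr^\times(\dAnSt))$ and of $\catCohb$ on $\Corr^\times(\dAnSt)_{\mathsf{rps},\mathsf{lci}}$ established earlier in \S\ref{s:RH}. Your write-up simply spells out carefully what the paper leaves implicit, including the verification that the equivalence respects the $(\mathsf{rps},\mathsf{lci})$ markings; the ``delicate point'' you flag about the monoidal structures coinciding with the intrinsic ones is automatically handled by the functoriality of the $2$-Segal-to-$\mathbb E_1$-algebra construction, and the paper does not isolate it as a separate issue.
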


\begin{rem}
	In the algebraic setting we considered the finer invariant $\catCohb_{\mathsf{pro}}$, which is more adapted to the study of non-quasi-compact stacks.
	Among its features, there is the fact that for every derived stack $Y$ there is a canonical equivalence (cf.\ Proposition \ref{prop:quasi_compact_G_truncation})
\begin{align}
		 K( \catCohb_{\mathsf{pro}}(Y) ) \simeq K( \catCohb_{\mathsf{pro}}(\trunc{Y}) ) . 
\end{align}
	In the $\C$-analytic setting, a similar treatment is possible, but it is more technically involved.
	In the algebraic setting, the construction of $\catCohb_{\mathsf{pro}}$ relies on the machinery developed in \S\ref{sec:ind_quasi_compact}, which provides a canonical way of organizing exhaustion by quasi-compact substacks into a canonical ind-object.
	In the $\C$-analytic setting, one cannot proceed verbatim, because quasi-compact $\C$-analytic substacks are extremely rare and it is not true that every geometric derived analytic stack admits an open exhaustion by quasi-compact ones.
	Rather, one would have to use compact Stein subsets, see \cite[Definition 2.14]{Porta_Holstein_Mapping}.
	Combining \cite[Corollary 4.5.1.10]{Lurie_SAG} and \cite[Theorem 4.13]{Porta_Holstein_Mapping}, it would then be possible to compare the $K$-theory of the resulting pro-category of bounded coherent sheaves on a derived analytic stack $Y$ with the one of the classical trucation $\trunc{Y}$.
	We will not develop the full details here.
\end{rem}

\appendix

\bigskip\section{Ind quasi-compact stacks} \label{sec:ind_quasi_compact}

The main object of study of the paper is the derived stack $\bfCoh(S)$ of coherent sheaves on $S$, where $S$ is a smooth and proper scheme or one of Simpson's shapes of a smooth and proper scheme.
This stack is typically not quasi-compact, and this requires some care when studying its invariants, such as the $G$-theory.
For example, when $X$ is a quasi-compact geometric derived stack, the inclusion $i \colon \trunc{X} \hookrightarrow X$ induces a canonical equivalence
\begin{align}
	i_\ast \colon G(\trunc{X}) \xrightarrow{\sim} G(X) \ .
\end{align}
This relies on Quillen's theorem of the heart and the equivalence $\catCoh^\heartsuit(\trunc{X}) \simeq \catCoh^\heartsuit(X)$ induced by $i_\ast$.
In particular, one needs quasi-compactness of $X$ to ensure that the \textit{t}-structure on $\catCohb(X)$ is (globally) bounded.
In this appendix, we set up a general framework to deal with geometric derived stacks that are not necessarily quasi-compact.

\subsection{Open exhaustions}

Let $j \colon \dGeomqc \hookrightarrow \dSt$ be the inclusion of the full subcategory of $\dSt$ spanned by quasi-compact geometric derived stacks.
Left Kan extension along $j$ induces a functor
\begin{align}
	\Psi \colon \dSt \longrightarrow \PSh( \dGeomqc ) \ .
\end{align}
We have:

\begin{lemma} \label{lem:quasi_compact_open_exhaustion}
	Let $X \in \dGeom$ be a locally geometric derived stack.
	Then:
	\begin{enumerate}\itemsep=0.2cm
		\item \label{lem:quasi_compact_open_exhaustion-(1)} There exists a (possibly transfinite) sequence 
		\begin{align}
				\emptyset = U_0 \hookrightarrow U_1 \hookrightarrow \cdots U_\alpha \hookrightarrow U_{\alpha+1} \hookrightarrow \cdots 
		\end{align}
		of quasi-compact Zariski open substacks of $X$ whose colimit is $X$.
		
		\item \label{lem:quasi_compact_open_exhaustion-(2)} Let $Y \in \dGeomqc$ be a quasi-compact geometric derived stack.
		For any exhaustion of $X$ by quasi-compact Zariski open substacks of $X$ as in the previous point, the canonical morphism
		\begin{align}
			\colim_\alpha \Map_{\dSt}( Y, U_\alpha ) \longrightarrow \Map_{\dSt}( Y, X )
		\end{align}
		is an equivalence.
		
		\item \label{lem:quasi_compact_open_exhaustion-(3)} The object $\Psi(X)$ belongs to the full subcategory $\Ind( \dGeomqc )$ of $\PSh( \dGeomqc )$.
	\end{enumerate}
\end{lemma}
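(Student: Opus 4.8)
The plan is to establish the three statements in order, each feeding into the next. For \eqref{lem:quasi_compact_open_exhaustion-(1)}, I would recall that a locally geometric derived stack is by definition a (small, filtered) colimit along open immersions of geometric derived stacks; each such geometric piece is covered by finitely many affine charts in its atlas (by definition of geometricity, an $n$-geometric stack has an atlas and its charts are quasi-compact), hence is quasi-compact. Taking the images of these charts and organizing the union over the indexing of the filtered system into a transfinite chain, one produces an increasing chain of quasi-compact Zariski open substacks $U_0 = \emptyset \hookrightarrow U_1 \hookrightarrow \cdots$ whose colimit (= union, since these are open substacks) is $X$. The only mild subtlety is to argue that a finite union of quasi-compact open substacks is again quasi-compact, which follows from the fact that quasi-compactness is stable under finite colimits in $\dSt$.

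For \eqref{lem:quasi_compact_open_exhaustion-(2)}, let $Y \in \dGeomqc$. Since $Y$ is quasi-compact, it admits a finite affine atlas $\{V_i \to Y\}_{i=1}^m$ with each $V_i \in \dAff$. A morphism $Y \to X$ is detected, via descent, by the compatible system of the composites $V_i \to X$ and the glueing data over the finite diagram of fibre products $V_i \times_Y V_j$, which are themselves quasi-compact (even affine, as $Y$ is separated enough — or in general one passes to a further finite affine cover). Since each $V_i$ and each $V_i \times_Y V_j$ is quasi-compact, and the colimit $\colim_\alpha U_\alpha \simeq X$ is filtered, each of the finitely many maps out of these quasi-compact pieces factors through some $U_{\alpha_i}$; choosing $\alpha$ larger than all the finitely many indices $\alpha_i$, the whole descent datum factors through $U_\alpha$. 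Conversely any map factoring through some $U_\alpha$ obviously gives a map to $X$. This shows $\colim_\alpha \Map(Y, U_\alpha) \to \Map(Y, X)$ is essentially surjective on $\pi_0$; a completely analogous argument applied to the mapping spaces $\Map(Y \times \Delta^k, -)$ (or, more cleanly, to the path space of the mapping space) gives the statement on higher homotopy, so the map is an equivalence. The main obstacle here is the bookkeeping of descent data along a finite atlas and checking that \emph{filtered} colimits commute with the relevant finite limits — this is where quasi-compactness of $Y$ is used essentially.

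For \eqref{lem:quasi_compact_open_exhaustion-(3)}, recall that $\Psi$ is the left Kan extension of the Yoneda embedding of $\dGeomqc$ along $j \colon \dGeomqc \hookrightarrow \dSt$, so by the pointwise formula $\Psi(X) \simeq \colim_{(Z \to X) \in (\dGeomqc)_{/X}} \mathbf{y}(Z)$, i.e.\ $\Psi(X)(Y) \simeq \Map_{\dSt}(Y, X)$ for $Y \in \dGeomqc$ by fully faithfulness considerations — in any case $\Psi(X)$ is, as a presheaf on $\dGeomqc$, the functor $Y \mapsto \Map_{\dSt}(Y,X)$. By \eqref{lem:quasi_compact_open_exhaustion-(1)} fix an open exhaustion $\{U_\alpha\}$; I claim $\Psi(X) \simeq \colim_\alpha \mathbf{y}(U_\alpha)$ in $\PSh(\dGeomqc)$, where the colimit is over the chain $\alpha$. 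Since colimits of presheaves are computed pointwise, it suffices to check this on each $Y \in \dGeomqc$, and there it reads $\colim_\alpha \Map_{\dSt}(Y, U_\alpha) \simeq \Map_{\dSt}(Y, X)$, which is precisely \eqref{lem:quasi_compact_open_exhaustion-(2)}. As the indexing chain is filtered and each $\mathbf{y}(U_\alpha)$ lies in the essential image of the Yoneda embedding $\dGeomqc \hookrightarrow \PSh(\dGeomqc)$, the colimit $\Psi(X)$ is a filtered colimit of representables, hence by definition belongs to $\Ind(\dGeomqc) \subseteq \PSh(\dGeomqc)$. This completes the proof.
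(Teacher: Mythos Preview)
Your argument for \eqref{lem:quasi_compact_open_exhaustion-(1)} contains an error: you assert that each geometric piece appearing in the open decomposition of a locally geometric stack is ``covered by finitely many affine charts in its atlas'' and is therefore quasi-compact. This is false---an $n$-geometric derived stack admits a smooth atlas by affines, but no finiteness is built into the definition, and a geometric derived stack need not be quasi-compact (already an infinite disjoint union of points is $0$-geometric). The paper instead fixes a smooth atlas $V \to X$ with $V$ a scheme, exhausts $V$ by quasi-compact Zariski opens $V' \hookrightarrow V$, and for each such $V'$ forms the image substack $U' \coloneqq \vert \check{\calC}(V' \to X) \vert$, which is a quasi-compact Zariski open of $X$; the resulting chain of $U'$'s gives the exhaustion.

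For \eqref{lem:quasi_compact_open_exhaustion-(2)} your descent strategy also has a gap: the descent diagram for $\Map(Y, -)$ along a finite atlas $\{V_i \to Y\}$ is the full cosimplicial \v{C}ech nerve, which is \emph{not} a finite limit, so the slogan ``filtered colimits commute with finite limits'' does not apply directly; and your handling of higher homotopy via ``$\Map(Y \times \Delta^k, -)$'' is not well-formed in $\dSt$. The paper's argument bypasses this entirely: since each $U_\alpha \hookrightarrow X$ is an open immersion, hence $(-1)$-truncated, every map $\Map_{\dSt}(Y, U_\alpha) \to \Map_{\dSt}(Y, X)$ is $(-1)$-truncated, and therefore so is the filtered-colimit map. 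It then remains only to check surjectivity on $\pi_0$, which follows by pulling the exhaustion back along a given $f \colon Y \to X$ and using quasi-compactness of $Y$ to find $\alpha$ with $U_\alpha \times_X Y = Y$. Your argument for \eqref{lem:quasi_compact_open_exhaustion-(3)} is correct and matches the paper's.
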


\begin{proof}
	Let $V \to X$ be a smooth atlas, where $V$ is a scheme.
	Let $V' \hookrightarrow V$ be the inclusion of a quasi-compact open Zariski subset.
	Let
	\begin{align}
		V'_\bullet \coloneqq \check{\calC}( V' \to X ) 
	\end{align}
	be the \v{C}ech nerve of $V' \to X$ and set
	\begin{align}
		U' \coloneqq | V'_\bullet | \ . 
	\end{align}
	The canonical map $U' \to X$ is representable by open Zariski immersions, and $U'$ is a quasi-compact stack.
	Since this construction is obviously functorial in $V'$, we see that any exhaustion of $V$ by quasi-compact Zariski open subschemes induces a similar exhaustion of $X$, thus completing the proof of point \eqref{lem:quasi_compact_open_exhaustion-(1)}.
	
	We now prove point \eqref{lem:quasi_compact_open_exhaustion-(2)}.
	Fix an exhaustion of $X$ by quasi-compact Zariski open substacks of $X$ as in point \eqref{lem:quasi_compact_open_exhaustion-(1)}.
	For every index $\alpha$, the map $U_\alpha \to U_{\alpha+1}$ is an open immersion and therefore it is $(-1)$-truncated in $\dSt$.
	Using \cite[Proposition~5.5.6.16]{HTT}, we see that
	\begin{align}
		\Map_{\dSt}(Y, U_\alpha) \longrightarrow \Map_{\dSt}(Y, U_{\alpha + 1}) 
	\end{align}
	is $(-1)$-truncated as well.
	The same goes for the maps $\Map_{\dSt}(Y, U_\alpha) \to \Map_{\dSt}(Y, X)$.
	As a consequence, the map
	\begin{align}
		 \colim_\alpha \Map_{\dSt}( Y, U_\alpha ) \longrightarrow \Map_{\dSt}( Y, X ) 
	\end{align}
	is $(-1)$-truncated.
	To prove that it is an equivalence, we are left to check that it is surjective on $\pi_0$.
	Let $f \colon Y \to X$ be a morphism.
	Write $Y_\alpha \coloneqq U_\alpha \times_X Y$.
	Then the sequence $\{Y_\alpha\}$ is an open exhaustion of $Y$, and since $Y$ is quasi-compact there must exist an index $\alpha$ such that $Y_\alpha = Y$.
	This implies that $f$ factors through $U_\alpha$, and therefore the proof of \eqref{lem:quasi_compact_open_exhaustion-(2)} is achieved.
	
	As for point \eqref{lem:quasi_compact_open_exhaustion-(3)}, this immediately follows from \eqref{lem:quasi_compact_open_exhaustion-(2)} and \cite[Corollary~5.3.5.4-(1)]{HTT}.
\end{proof}

\begin{corollary} \label{cor:quasi_compact_ind}
	The functor $\Psi$ restricts to a limit preserving functor
	\begin{align}
		(-)_{\mathsf{ind}} \colon \dGeom \longrightarrow \Ind( \dGeomqc ) \ .
	\end{align}
\end{corollary}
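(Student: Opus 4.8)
The plan is to deduce the corollary directly from Lemma~\ref{lem:quasi_compact_open_exhaustion}. First I would recall the setup: $\Psi \colon \dSt \to \PSh(\dGeomqc)$ is the left Kan extension of the inclusion $j \colon \dGeomqc \hookrightarrow \dSt$ along the Yoneda embedding, so by the usual colimit formula for left Kan extensions $\Psi(X)$ is computed as the colimit over the comma category $(\dGeomqc)_{/X}$ of the representable presheaves; equivalently, its value on $Y \in \dGeomqc$ is $\Map_{\dSt}(Y, X)$, since $j$ is fully faithful and $\dSt$ is already cocomplete (concretely, $\Psi$ is just the restricted-Yoneda functor $X \mapsto \Map_{\dSt}(-, X)\vert_{\dGeomqc}$).

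The first real step is to restrict $\Psi$ to the subcategory $\dGeom$ of locally geometric derived stacks and observe that it lands in $\Ind(\dGeomqc)$: this is exactly Lemma~\ref{lem:quasi_compact_open_exhaustion}-\eqref{lem:quasi_compact_open_exhaustion-(3)}, which says $\Psi(X)$ preserves finite limits of $\dGeomqc$, hence by the standard characterization of ind-objects (a presheaf on a category with finite limits is an ind-object iff it is left exact — cf.\ \cite[Corollary~5.3.5.4]{HTT}) it belongs to $\Ind(\dGeomqc) \subseteq \PSh(\dGeomqc)$. Denoting this restricted functor by $(-)_{\mathsf{ind}}$, we get a functor $(-)_{\mathsf{ind}} \colon \dGeom \to \Ind(\dGeomqc)$ together with the reformulation, for any $X \in \dGeom$, that
\begin{align}
	X_{\mathsf{ind}} \simeq \colim_\alpha U_\alpha
\end{align}
in $\Ind(\dGeomqc)$, where $U_0 \hookrightarrow U_1 \hookrightarrow \cdots$ is any exhaustion of $X$ by quasi-compact Zariski open substacks as produced by Lemma~\ref{lem:quasi_compact_open_exhaustion}-\eqref{lem:quasi_compact_open_exhaustion-(1)}; indeed this colimit presheaf evaluates on $Y \in \dGeomqc$ to $\colim_\alpha \Map_{\dSt}(Y, U_\alpha)$, which agrees with $\Map_{\dSt}(Y, X) = \Psi(X)(Y)$ by Lemma~\ref{lem:quasi_compact_open_exhaustion}-\eqref{lem:quasi_compact_open_exhaustion-(2)}.

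The remaining step is to check that $(-)_{\mathsf{ind}}$ preserves limits. Since limits in $\Ind(\dGeomqc)$ are computed in $\PSh(\dGeomqc)$ (the Yoneda embedding $\dGeomqc \hookrightarrow \PSh(\dGeomqc)$ preserves finite limits and $\Ind(\dGeomqc)$ is closed under filtered colimits, but more to the point $\Ind(\dGeomqc)$ is an accessible localization-free full subcategory closed under limits), and since limits in $\PSh(\dGeomqc)$ are computed objectwise, it suffices to see that for each fixed $Y \in \dGeomqc$ the functor $X \mapsto \Map_{\dSt}(Y, X)$ preserves limits — which is immediate, as $\Map_{\dSt}(Y, -)$ is a right adjoint (corepresentable). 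One subtlety worth spelling out: a priori $\Psi$ is only defined as a left Kan extension and need not preserve limits on all of $\dSt$, so the point is precisely that on $\dGeom$ we have the honest identification $\Psi(X)(Y) \simeq \Map_{\dSt}(Y, X)$, valid because $Y$ is quasi-compact and Lemma~\ref{lem:quasi_compact_open_exhaustion} controls maps out of quasi-compact objects. The main obstacle is really bookkeeping: making sure the target category in which the limit is taken is $\Ind(\dGeomqc)$ (not $\PSh(\dGeomqc)$) and that a limit of left-exact presheaves is again left-exact, so that the limit genuinely lives in $\Ind(\dGeomqc)$; this follows since left-exactness is a condition preserved under limits of presheaves (finite limits commute with arbitrary limits).
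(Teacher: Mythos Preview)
Your argument is essentially correct and is exactly the unpacking the paper has in mind --- the corollary is stated without proof there, as an immediate consequence of Lemma~\ref{lem:quasi_compact_open_exhaustion}. One small inaccuracy: the early claim that $\Psi(X)(Y) \simeq \Map_{\dSt}(Y,X)$ for \emph{all} $X \in \dSt$ does not follow just from ``$j$ fully faithful and $\dSt$ cocomplete''; in general the left Kan extension $\mathrm{Lan}_j y$ and the restricted Yoneda functor differ. You in fact acknowledge this later (``a priori $\Psi$ \ldots\ need not preserve limits on all of $\dSt$''), and your proof only uses the identification on $\dGeom$, where it is precisely the content of Lemma~\ref{lem:quasi_compact_open_exhaustion}-\eqref{lem:quasi_compact_open_exhaustion-(2)} (the exhaustion $\{U_\alpha\}$ is cofinal in $(\dGeomqc)_{/X}$). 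So the argument is sound; just drop the general assertion in the opening paragraph, or replace its justification by a forward reference to Lemma~\ref{lem:quasi_compact_open_exhaustion}-\eqref{lem:quasi_compact_open_exhaustion-(2)}.
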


\subsection{G-theory of non-quasi-compact stacks}\label{ss:g-theory-non-quasi-compact}

As a consequence, when $X$ is a locally geometric derived stack, we can canonically promote $\catCohb(X)$ to a \textit{pro-category}
\begin{align}
	\catCohb_{\mathsf{pro}}(X) \coloneqq \catCohb( X_{\mathsf{ind}} ) \simeq \flim_\alpha \catCohb(U_\alpha) \in \mathsf{Pro}(\Cat_\infty^{\mathsf{st}}) . 
\end{align}
In particular, we can give the following definition:

\begin{definition}
	Let $X \in \dGeom$ be a locally geometric derived stack.
	The pro-spectrum of $G$-theory of $X$ is
	\begin{align}
		G_{\mathsf{pro}}(X) \coloneqq K(\catCohb_{\mathsf{pro}}(X)) \in \mathsf{Pro}( \mathsf{Sp} ) \ .
	\end{align}
	The spectrum of $G$-theory of $X$ is the realization of $G_{\mathsf{pro}}(X)$:
	\begin{align}
		G(X) \coloneqq \lim G_{\mathsf{pro}}(X) \in \mathsf{Sp}\ . 
	\end{align}
\end{definition}

\begin{remark}
	If $X$ is quasi-compact, then $X_{\mathsf{ind}}$ is equivalent to a constant ind-object.
	As a consequence, both $\catCohb_{\mathsf{pro}}(X)$ and $G_{\mathsf{pro}}(X)$ are equivalent to constant pro-objects and $G(X)$ simply coincides with the spectrum $K( \catCohb(X) )$.
\end{remark}

\begin{proposition} \label{prop:quasi_compact_G_truncation}
	Let $X \in \dGeom$ be a locally geometric derived stack.
	The inclusion $i \colon \trunc{X} \hookrightarrow X$ induces a canonical equivalence
	\begin{align}
		 i_\ast \colon G_{\mathsf{pro}}(\trunc{X}) \xrightarrow{\sim} G_{\mathsf{pro}}(X)\ , 
	\end{align}
	and therefore an equivalence
	 \begin{align}
		 i_\ast \colon G(\trunc{X}) \xrightarrow{\sim} G(X) \ . 
	 \end{align}
\end{proposition}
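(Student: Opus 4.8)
The plan is to reduce the statement to the quasi-compact case, where it is a consequence of Quillen's theorem of the heart. First I would fix, using Lemma~\ref{lem:quasi_compact_open_exhaustion}-\eqref{lem:quasi_compact_open_exhaustion-(1)}, a transfinite exhaustion
\begin{align}
	\emptyset = U_0 \hookrightarrow U_1 \hookrightarrow \cdots \hookrightarrow U_\alpha \hookrightarrow U_{\alpha+1} \hookrightarrow \cdots
\end{align}
of $X$ by quasi-compact Zariski open substacks. Setting $U_\alpha^{\mathsf{cl}} \coloneqq \trunc{U_\alpha} = \trunc{X} \times_X U_\alpha$, we obtain a compatible exhaustion of $\trunc{X}$ by quasi-compact Zariski open substacks, and the closed immersions $i_\alpha \colon U_\alpha^{\mathsf{cl}} \hookrightarrow U_\alpha$ assemble into a morphism of $\Ind(\dGeomqc)$-diagrams. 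By Corollary~\ref{cor:quasi_compact_ind} and the computation $\catCohb_{\mathsf{pro}}(X) \simeq \flim_\alpha \catCohb(U_\alpha)$, it suffices to show that each $i_{\alpha*} \colon \catCohb(U_\alpha^{\mathsf{cl}}) \to \catCohb(U_\alpha)$ induces an equivalence on $K$-theory; the equivalence $G_{\mathsf{pro}}(\trunc{X}) \simeq G_{\mathsf{pro}}(X)$ then follows by taking the limit over $\alpha$ in $\mathsf{Pro}(\mathsf{Sp})$, and the last equivalence $G(\trunc{X}) \simeq G(X)$ by applying the realization functor $\lim$.

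Next I would treat the quasi-compact case. Fix a quasi-compact geometric derived stack $U$ with closed immersion $i \colon \trunc{U} \hookrightarrow U$. The functor $i_*$ is $t$-exact, fully faithful, and restricts to an equivalence of hearts $\catCoh^\heartsuit(\trunc{U}) \simeq \catCoh^\heartsuit(U)$ (this is the derived-stack analogue of the classical statement that a quasi-coherent sheaf on $U$ in the heart is scheme-theoretically supported on $\trunc{U}$, which holds since $\pi_0(\scrO_U) = \scrO_{\trunc{U}}$). Because $U$ is quasi-compact, the $t$-structure on $\catCohb(U)$ is bounded: every object lies in $\catCoh^{[a,b]}(U)$ for some $a \le b$, and likewise on $\trunc{U}$. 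I would then invoke Quillen's theorem of the heart (in its stable $\infty$-categorical form, e.g.\ via the dévissage/additivity package that identifies $K(\calC) \simeq K(\calC^\heartsuit)$ for a bounded $t$-structure with Noetherian heart — this applies to $\catCohb(U)$ since $U$ is geometric and the heart is a Noetherian abelian category). This yields a commutative square
\begin{align}
	\begin{tikzcd}[ampersand replacement = \&]
		K(\catCoh^\heartsuit(\trunc{U})) \arrow{r}{\sim} \arrow{d}{\sim} \& K(\catCohb(\trunc{U})) \arrow{d}{i_*} \\
		K(\catCoh^\heartsuit(U)) \arrow{r}{\sim} \& K(\catCohb(U))
	\end{tikzcd}
\end{align}
in which the horizontal maps are the theorem-of-the-heart equivalences and the left vertical map is an equivalence because $i_*$ is an equivalence of hearts; hence the right vertical map $i_* \colon G(\trunc{U}) \to G(U)$ is an equivalence as well.

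Finally I would assemble the pieces: the naturality of the theorem-of-the-heart equivalence in the bounded $t$-exact functor $i_{\alpha*}$ (together with the fact that all transition maps in the two exhaustions, being open immersions, are compatible with truncation) shows that the squares above are functorial in $\alpha$, so we obtain an equivalence of $\Ind$-diagrams $\{G(U_\alpha^{\mathsf{cl}})\} \xrightarrow{\sim} \{G(U_\alpha)\}$, hence an equivalence $\flim_\alpha G(U_\alpha^{\mathsf{cl}}) \xrightarrow{\sim} \flim_\alpha G(U_\alpha)$ in $\mathsf{Pro}(\mathsf{Sp})$, which is exactly $i_* \colon G_{\mathsf{pro}}(\trunc{X}) \xrightarrow{\sim} G_{\mathsf{pro}}(X)$. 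Applying $\lim \colon \mathsf{Pro}(\mathsf{Sp}) \to \mathsf{Sp}$ gives the realization statement. The main obstacle I anticipate is making the theorem-of-the-heart input rigorous and sufficiently functorial in the stable $\infty$-categorical setting — one needs a version valid for $\catCohb$ of a (possibly non-smooth, possibly derived) quasi-compact geometric stack with its canonical bounded $t$-structure, and one needs that the identification $K(\catCohb(-)) \simeq K(\catCoh^\heartsuit(-))$ is natural with respect to $t$-exact pushforwards along closed immersions; this is where I would have to be careful to cite the right form of Barwick's theorem of the heart (or Antieau–Gepner–Heller) and to check the Noetherianity and boundedness hypotheses hold uniformly along the exhaustion.
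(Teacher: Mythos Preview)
Your proposal is correct and follows essentially the same approach as the paper: reduce to the quasi-compact case via an open exhaustion, then invoke the theorem of the heart (the paper states this reduction explicitly in the paragraph preceding the appendix and simply cites it as ``a level-wise equivalence'' in the proof). Your write-up is in fact more detailed than the paper's, spelling out the heart argument and the functoriality needed to assemble the pro-object; the only caveat you yourself flag --- pinning down the correct $\infty$-categorical form of the theorem of the heart (Barwick or Antieau--Gepner--Heller) and its naturality in $t$-exact pushforwards --- is real but standard, and the paper does not address it either.
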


\begin{proof}
	Choose an exhaustion $\{U_\alpha\}$ of $X$ by quasi-compact open Zariski subsets as in Lemma~\ref{lem:quasi_compact_open_exhaustion}-\eqref{lem:quasi_compact_open_exhaustion-(1)}.
	Then $\{ \trunc{U}_\alpha \}$ is an exhaustion of $\trunc{X}$, and the map $i_\ast \colon G_{\mathsf{pro}}(\trunc{X}) \to G_{\mathsf{pro}}(X)$ can be computed as
	\begin{align}
		 \flim_\alpha K( \catCohb(\trunc{U}_\alpha) ) \longrightarrow \flim_\alpha K( \catCohb( U_\alpha ) ) \ .
	\end{align}
	Since each $U_\alpha$ is quasi-compact, this is a level-wise equivalence.
	Therefore, it is also an equivalence at the level of pro-objects.
	The second statement follows by passing to realizations.
\end{proof}

\begin{definition}
	Let $X \in \dGeom$ be a locally geometric derived stack. We define
	\begin{align}
		G_0(X)\coloneqq \pi_0 G(X) \ .
	\end{align}
\end{definition}

\begin{remark}
	In \cite{Sala_Schiffmann, KV_Hall}, the authors defined $G_0$ of a non-quasi-compact geometric classical stack $Y$ as the limit of the $G_0(V_\alpha)$ for an exhaustion $\{V_\alpha\}$ of $Y$ by quasi-compact Zariski open substacks. 
	
	The relation between the above two definitions is given as follows. Let $X \in \dGeom$ be a locally geometric derived stack and let $\{U_\alpha\}$ be an exhaustion of $X$ by quasi-compact Zariski open substacks. Then there exists a short exact sequence
	\begin{align}
		0\longrightarrow \lim_\alpha{}^{1}\,  \pi_1 G(U_\alpha) \longrightarrow G_0(X) \longrightarrow \lim_\alpha G_0(U_\alpha)\longrightarrow 0
	\end{align} 
	in the abelian category of abelian groups.
\end{remark}

\begin{remark}\label{rem:quasi-compactness-Coh}
	Now, we discuss the quasi-compactness of the moduli stacks of coherent sheaves we deal with in the main body of the paper.
	
	Let $Y$ be a smooth projective complex variety. First, recall that the classical stack $\trunc{\bfCoh(Y)}$ decomposes into the disjoint union
	\begin{align}
		\trunc{\bfCoh(Y)} = \bigsqcup_{P}\, \trunc{\bfCoh^{P}(Y)}\ ,
	\end{align}
	with respect to the Hilbert polynomials of coherent sheaves. Here, the stacks on the right-hand-side are introduced in \S~\ref{ss:other-moduli-spaces}. We have a corresponding decomposition at the level of the derived enhancements
	\begin{align}\label{eq:decomposition}
		\bfCoh(Y) = \bigsqcup_{P}\, \bfCoh^{P}(Y)\ .
	\end{align}
	
	Now, fix $P(m)\in \Q[m]$. As shown e.g. in the proof of \cite[Théorè\-me~4.6.2.1]{Laumon_Champs}, there exists an exhaustion $\{U_n^P\}_{n\in N}$ of $\bfCoh^P(Y)$ by quasi-compact open substacks, such that the truncation $\trunc{U}_n^P$ is a quotient stack by a certain open subset of a Quot scheme for any $n$. 
	By \cite[Proposition~4.1.1]{KV_Hall}, a similar description holds for the moduli stack $\bfCoh_{\mathsf{prop}}(Y)$ of coherent sheaves with proper support on a smooth quasi-projective complex variety $Y$. Note that the decomposition \eqref{eq:decomposition} for the stack $\bfCoh_0(Y)\coloneqq \bfCoh_{\mathsf{prop}}^{\leq 0}(Y)$ reduces to
	\begin{align}
		\bfCoh_0(Y) = \bigsqcup_{k\in \Z_{\geq 0}}\, \bfCoh_0^{k}(Y)\ .
	\end{align}
	By using the explicit description of the $U_n^k$'s, one can show that $\trunc{\bfCoh_0^{k}(Y)}$ is a quasi-compact quotient stack, hence the stack $\bfCoh_0^{k}(Y)$ is quasi-compact.
	
	Now, let $Y$ be a smooth projective complex curve. The moduli stack $\bfCoh_\Dol(Y)$ is not quasi-compact. On the other hand, the moduli stacks $\bfBun^n_\B(Y)$ and $\bfBun^n_\dR(Y)$ are quasi-compact quotient stacks. The truncations of these stacks are quotients by the Betti and de Rham \textit{representation spaces} respectively (cf.\ \cite{Simpson_Moduli_II}). The derived stacks are quotients by the derived versions of these representation spaces (see e.g.  \cite[\S1.2]{PT_Local}).
\end{remark}

\subsection{Correspondences}\label{ss:correspondences-ind}

We finish this section by providing a formal extension of Gaitsgory--Rozenblyum correspondence machine in the setting of not necessarily quasi-compact stacks.
Let $\mathbb S$ be an $(\infty,2)$-category, seen as an $(\infty,1)$-category weakly enriched in $\Cat_\infty$, in the sense of \cite{Gepner_Haugseng_Enriched,Hinich_Yoneda}.
We write $\Cat_\infty^{(2)}$ for $\Cat_\infty$ thought as weakly enriched over itself in the natural way (i.e.\ for the $(\infty,2)$-category of $(\infty,1)$-categories).
Consider the $2$-categorical Yoneda embedding
\begin{align}
	y \colon \mathbb S \longrightarrow 2\textrm{-}\Fun\Big(\mathbb S^{1\textrm{-}\mathsf{op}}, \Cat_\infty^{(2)}\Big) \ . 
\end{align}
Then \cite[Corollary~6.2.7]{Hinich_Yoneda} guarantees that is $2$-fully faithful.
We let $2\textrm{-}\Ind(\mathbb S)$ be the full $2$-subcategory of $2\textrm{-}\Fun\Big(\mathbb S^{1\textrm{-}\mathsf{op}}, \Cat_\infty^{(2)}\Big)$ spanned by those functors that commute with finite $\Cat_\infty$-limits.
The fully faithful functor $\scrS \to \Cat_\infty$ induces a fully faithful embedding
\begin{align}
	\Ind(\mathbb S^{1\textrm{-}\mathsf{cat}}) \to \left(2\textrm{-}\Fun\Big(\mathbb S^{1\textrm{-}\mathsf{op}}, \Cat_\infty^{(2)}\Big)\right)^{1\textrm{-}\mathsf{cat}} \ . 
\end{align}
We let $2 \textrm{-} \Ind(\mathbb S)$ be the full $2$-subcategory of $2\textrm{-} \Fun\Big(\mathbb S^{1\textrm{-}\mathsf{op}}, \Cat_\infty^{(2)}\Big)$ spanned by the essential image of the above functor. \footnote{We thank Andrea Gagna e Ivan Di Liberti in helping us conceiving this definition.}
By construction, the $1$-category underlying $2 \textrm{-} \Ind(\mathbb S)$ coincides with $\Ind(\mathbb S^{1\textrm{-}\mathsf{cat}})$.

\begin{remark}
	We are not sure about the intrinsic meaning of the above definition of $2\textrm{-}\Ind(\mathbb S)$ from a $2$-categorical perspective.
	It is probably too little to be the correct $2\textrm{-}\Ind$ construction, and we are not aware whether it satisfies some $2$-categorical universal property.
	If $\fcolim_\alpha x_\alpha$ and $\fcolim_\beta y_\beta$ are two objects in $2 \textrm{-} \Ind(\mathbb S)$ in the above sense, the mapping category is given by the formula
	\begin{align}
	 	\Fun_{2\textrm{-}\Ind(\mathbb S)}\left( \fcolim_\alpha x_\alpha, \fcolim_\beta y_\beta \right) = \lim_\alpha \colim_\beta \Fun_{\mathbb S}(x_\alpha, y_\beta) \ .
	\end{align} 
	This is all that is needed in what follows.
\end{remark}

We now equip $\calC$ with three markings $\calC_\mathsf{horiz}$, $\calC_\mathsf{vert}$ and $\calC_\mathsf{adm}$ satisfying the conditions of \cite[\S7.1.1.1]{Gaitsgory_Rozenblyum_Study_I}.
We define three markings $\Ind(\calC)_\mathsf{horiz}$, $\Ind(\calC)_\mathsf{vert}$ and $\Ind(\calC)_\mathsf{adm}$ on $\Ind(\calC)$ by declaring that a morphism $f \colon X \to Y$ in $\Ind(\calC)$ belongs to $\Ind(\calC)_\mathsf{horiz}$ (resp.\ $\Ind(\calC)_\mathsf{vert}$, $\Ind(\calC)_\mathsf{adm}$) if it is representable by morphisms in $\calC_\mathsf{horiz}$ (resp.\ $\calC_\mathsf{vert}$, $\calC_\mathsf{adm}$).
It is then straightforward to check that the conditions of \cite[\S7.1.1.1]{Gaitsgory_Rozenblyum_Study_I} are again satisfied.
Next, we let
\begin{align}
	 \Phi \colon \calC\op \longrightarrow \Cat_\infty
\end{align}
be a functor and let $\Phi_{\mathsf{horiz}}$ be its restriction to $(\calC_{\mathsf{horiz}})\op$.
Passing to ind-objects, we obtain a functor
\begin{align}
	\Phi^{\mathsf{pro}} \colon \Ind(\calC)\op \longrightarrow \mathsf{Pro}(\Cat_\infty) \ .
\end{align}
We let $\Phi^{\mathsf{pro}}_{\mathsf{horiz}}$ be its restriction to $(\Ind(\calC)_{\mathsf{horiz}})\op$.
Before stating the next key proposition, we recall the definition of the bivariance property:

\begin{definition}
	A functor $\Phi \colon \calC\op \to \Cat_\infty$ is said to be \emph{$\calC_{\mathsf{vert}}$-right bivariant} if for every morphism $f \colon X \to Y$ in $\calC_{\mathsf{vert}}$, the functor $\Phi(f) \colon \Phi(Y) \to \Phi(X)$ admit a right adjoint $\Phi_\ast(f)$.
	A $\calC_{\mathsf{vert}}$-right bivariant functor is said to have \emph{base change with respect to $\calC_{\mathsf{horiz}}$} if for every pullback diagram
	\begin{align}
		 \begin{tikzcd}[ampersand replacement = \&]
			W \arrow{r}{g'} \arrow{d}{f'} \& X \arrow{d}{f} \\
			Z \arrow{r}{g} \& Y
		\end{tikzcd} 
	\end{align}
	where $f \in \calC_{\mathsf{vert}}$ and $g \in \calC_{\mathsf{horiz}}$, the square
	\begin{align}
		 \begin{tikzcd}[ampersand replacement = \&]
			\Phi(Y) \arrow{r}{\Phi(g)} \arrow{d}{\Phi(f)} \& \Phi(Z) \arrow{d}{\Phi(f')} \\
			\Phi(X) \arrow{r}{\Phi(g')} \& \Phi(W)
		\end{tikzcd}
	\end{align}
	is vertically right adjointable.
\end{definition}

\begin{remark}
	In \cite[Chapter~7]{Gaitsgory_Rozenblyum_Study_I}, the above property is not directly considered.
	It rather corresponds to the right Beck--Chevalley property (see Definition~7.3.2.2 in \textit{loc.\ cit.}) for functors with values in $(\Cat_\infty^{(2)})^{2\textrm{-}\mathsf{op}}$.
\end{remark}

\begin{proposition} \label{prop:bivariant_functors_ind}
	Keeping the above notation and assumptions, suppose furthermore that:
	\begin{enumerate}\itemsep=0.2cm
		\item $\Phi$ is $\calC_{\mathsf{vert}}$-right bivariant and has base change with respect to $\calC_{\mathsf{horiz}}$.
		
		\item Every object in $\Ind(\cC)$ can be represented as a filtered colimit whose transition maps belong to $\calC_{\mathsf{horiz}}$.
			\end{enumerate}
	Then $\Phi^{\mathsf{pro}}_{\mathsf{horiz}}$ is $\Ind(\calC)_{\mathsf{vert}}$-right bivariant and has base change with respect to $\Ind(\calC)_{\mathsf{horiz}}$.
\end{proposition}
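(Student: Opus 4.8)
The plan is to establish the two assertions of Proposition~\ref{prop:bivariant_functors_ind} separately, reducing each to the corresponding property of $\Phi$ on $\calC$ together with a formal manipulation of filtered (co)limits.

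\textbf{Right bivariance.} First I would take a morphism $f \colon X \to Y$ in $\Ind(\calC)_{\mathsf{vert}}$ and produce a right adjoint for $\Phi^{\mathsf{ind}}_{\mathsf{horiz}}(f) \colon \Phi^{\mathsf{ind}}(Y) \to \Phi^{\mathsf{ind}}(X)$. Using assumption (2), I would write $Y \simeq \fcolim_\alpha Y_\alpha$ with $Y_\alpha \in \calC$ and transition maps in $\calC_{\mathsf{horiz}}$, and set $X_\alpha \coloneqq Y_\alpha \times_Y X$; since $f$ is representable by $\calC_{\mathsf{vert}}$-morphisms, each $f_\alpha \colon X_\alpha \to Y_\alpha$ lies in $\calC_{\mathsf{vert}}$, and the transition maps $X_\alpha \to X_\beta$ lie in $\calC_{\mathsf{horiz}}$ (being base changes of $\calC_{\mathsf{horiz}}$-morphisms along $\calC_{\mathsf{vert}}$-morphisms, using the stability conditions of \cite[\S7.1.1.1]{Gaitsgory_Rozenblyum_Study_I}). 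By the construction of $\Phi^{\mathsf{ind}}$ one then has $\Phi^{\mathsf{ind}}(Y) \simeq \flim_\alpha \Phi(Y_\alpha)$ and $\Phi^{\mathsf{ind}}(X) \simeq \flim_\alpha \Phi(X_\alpha)$ in $\mathsf{Pro}(\Cat_\infty)$, and $\Phi^{\mathsf{ind}}_{\mathsf{horiz}}(f)$ is computed level-wise by $\Phi(f_\alpha)$. Each $\Phi(f_\alpha)$ has a right adjoint $\Phi_\ast(f_\alpha)$ by hypothesis (1); the base change property (1) guarantees that these right adjoints are compatible with the transition functors $\Phi(g'_{\alpha\beta})$ (this is precisely the vertical right adjointability of the relevant squares), so the $\Phi_\ast(f_\alpha)$ assemble into a morphism of pro-objects $\flim_\alpha \Phi(X_\alpha) \to \flim_\alpha \Phi(Y_\alpha)$. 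A level-wise adjunction between compatible functors of pro-objects is an adjunction in $\mathsf{Pro}(\Cat_\infty)$, which gives the desired right adjoint $\Phi^{\mathsf{ind}}_\ast(f)$. I would also check independence of the chosen exhaustion via a cofinality argument, so that the construction is canonical.

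\textbf{Base change.} Next I would take a pullback square in $\Ind(\calC)$ with $f \in \Ind(\calC)_{\mathsf{vert}}$ and $g \in \Ind(\calC)_{\mathsf{horiz}}$, and show the induced square of functors is vertically right adjointable. Presenting the base $Y$ and also $Z$ as filtered colimits with $\calC_{\mathsf{horiz}}$-transition maps, and pulling everything back, I obtain a compatible family of pullback squares in $\calC$ to which (1) applies level-wise. The point is that vertical right adjointability of a square of pro-objects can again be checked level-wise, since limits of pro-objects are computed by the formation of the indexing diagram; so the level-wise base change isomorphisms $\Phi(f'_\alpha) \circ \Phi(g_\alpha) \simeq \Phi(g'_\alpha) \circ \Phi(f_\alpha)$ (after passing to right adjoints, using that $f_\alpha, f'_\alpha \in \calC_{\mathsf{vert}}$) assemble to the required adjointability of the pro-categorical square.

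\textbf{Main obstacle.} The delicate point is the bookkeeping needed to make the two filtered presentations (for $Y$ and for $Z$, or more generally all objects in the square) \emph{simultaneously} compatible, so that base change squares in $\calC$ are obtained level-wise over a common cofiltered index category; this requires choosing a cofinal common refinement and invoking that finite limits commute with filtered colimits (as in the proof of Lemma~\ref{lem:quasi_compact_open_exhaustion}), together with the stability of the markings $\calC_{\mathsf{horiz}}, \calC_{\mathsf{vert}}$ under the relevant base changes. The other subtlety is purely $2$-categorical: one must verify that ``vertical right adjointability checked level-wise'' is legitimate in $2\textrm{-}\Ind(\mathbb S)$, i.e.\ that passing to pro-objects commutes with the formation of adjoints and of the adjointability $2$-cells; this follows from the fact that $\mathsf{Pro}(\Cat_\infty) = \Ind(\Cat_\infty^{1\textrm{-}\mathsf{op}})\op$ and that adjunctions transfer along fully faithful $2$-functors such as the Yoneda embedding of \cite[Corollary~6.2.7]{Hinich_Yoneda}, but it deserves to be spelled out carefully. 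Once these two points are handled, everything else is a routine unwinding of definitions.
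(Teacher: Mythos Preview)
Your proposal is correct and follows essentially the same strategy as the paper: present $Y$ as a filtered colimit with $\calC_{\mathsf{horiz}}$ transition maps, pull back to obtain level-wise data in $\calC$, use assumption~(1) to get level-wise adjoints and base-change isomorphisms, and assemble these in $2\textrm{-}\mathsf{Pro}(\Cat_\infty^{(2)})$ using the Beck--Chevalley compatibility.

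The one place where you make things harder than necessary is your ``main obstacle''. You propose presenting both $Y$ and $Z$ separately and then finding a common cofinal refinement. The paper avoids this entirely: since $g \colon Z \to Y$ lies in $\Ind(\calC)_{\mathsf{horiz}}$, which is \emph{defined} by representability, the pullback $Z_\alpha \coloneqq Y_\alpha \times_Y Z$ already lies in $\calC$ and $Z_\alpha \to Y_\alpha$ lies in $\calC_{\mathsf{horiz}}$. Thus a single presentation of $Y$ automatically induces compatible presentations of $X$, $Z$, and $W = X \times_Y Z$, and the base-change square decomposes level-wise over the \emph{same} index category with no refinement needed. Your more elaborate bookkeeping would also work, but it is not required; the representability definition of the markings on $\Ind(\calC)$ is precisely what makes the reduction to $\calC$ immediate.
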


\begin{proof}
	Let $f \colon X \to Y$ be a morphism in $\Ind(\cC)$.
	Choose a representation $Y \simeq \fcolim_\alpha Y_\alpha$, where the transition maps belong to $\calC_{\mathsf{horiz}}$.
	For every index $\alpha$, we let
	\begin{align}
		 X_\alpha \coloneqq Y_\alpha \times_Y X 
	\end{align}
	and we let $f_\alpha \colon X_\alpha \to Y_\alpha$ be the induced morphism.
	By definition of $\Ind(\calC)_{\mathsf{vert}}$, $X_\alpha$ belongs to $\calC$ and $f_\alpha$ is a morphism in $\calC_{\mathsf{vert}}$.
	The morphisms
	\begin{align}
		 \Phi(f_\alpha) \colon \Phi(Y_\alpha) \longrightarrow \Phi(X_\alpha) 
	\end{align}
	admit a left adjoint $\Phi_!(f_\alpha)$.
	Since $\Phi_{\mathsf{horiz}}$ satisfies the right Beck--Chevalley property with respect to $\calC_{\mathsf{vert}}$, the morphisms $\Phi_!(f_\alpha)$ assemble into a morphism
	\begin{align}
		\Phi_!(f) \colon \flim_\alpha \Phi(X_\alpha) \longrightarrow \flim_\alpha \Phi(Y_\alpha) 
	\end{align}
	in $2\textrm{-}\mathsf{Pro}(\Cat_\infty^{(2)})$.
	The triangular identities for the adjunction $\Phi_!(f_\alpha) \dashv \Phi(f_\alpha)$ induce triangular identities exhibiting $\Phi_!(f)$ as a left adjoint to $\Phi(f)$ in the $(\infty,2)$-category $2\textrm{-}\mathsf{Pro}(\Cat_\infty^{(2)})$.
	For every morphism $Z \to Y$ in $\Ind(\calC)_{\mathsf{horiz}}$, we let $Z_\alpha \coloneqq Y_\alpha \times_Y Z$.
	The induced morphism $Z_\alpha \to Y_\alpha$ belongs to $\calC_{\mathsf{horiz}}$ by definition.
	In this way, we can describe the Beck--Chevalley transformation for the diagram
	\begin{align}
		\begin{tikzcd}[ampersand replacement = \&]
		\Phi(Y) \arrow{r} \arrow{d} \& \Phi(X) \arrow{d} \\
		\Phi(Z) \arrow{r} \& \Phi(X \times_Y Z)
		\end{tikzcd} 
	\end{align}
	in terms of the Beck--Chevalley transformation for the diagram
	\begin{align}
		\begin{tikzcd}[ampersand replacement = \&]
		\Phi(Y_\alpha) \arrow{r} \arrow{d} \& \Phi(X_\alpha) \arrow{d} \\
		\Phi(Z_\alpha) \arrow{r} \& \Phi(X_\alpha \times_{Y_\alpha} Z_\alpha) 
		\end{tikzcd}\ ,
	\end{align}
	which holds by assumption.
\end{proof}

Seeing $\mathsf{Pro}(\Cat_\infty)$ as the underlying $1$-category of $2\textrm{-}\mathsf{Pro}(\Cat_\infty^{(2)})$, we obtain:

\begin{corollary} \label{cor:bivariant_extension_ind}
	Keeping the above notation, assume $\calC_{\mathsf{vert}} \subset \calC_{\mathsf{horiz}}$ and $\calC_{\mathsf{vert}} = \calC_{\mathsf{adm}}$.
	Then under the same assumptions of Proposition~\ref{prop:bivariant_functors_ind}, the functor
	\begin{align}
		\Phi^{\mathsf{pro}} \colon \Ind(\calC)\op \longrightarrow \mathsf{Pro}(\Cat_\infty) 
	\end{align}
	uniquely extends to a functor
	\begin{align}
		\Phi^{\mathsf{pro}}_{\mathsf{corr}} \colon \Corr( \Ind(\calC) )_{\mathsf{vert}, \mathsf{horiz}}^{\mathsf{vert}} \longrightarrow 2 \textrm{-} \mathsf{Pro}(\Cat_\infty^{(2)}) \ , 
	\end{align}
	and its restriction to $\Corr(\Ind(\calC))_{\mathsf{vert}, \mathsf{horiz}}$ factors through the maximal $(\infty,1)$-category $\mathsf{Pro}(\Cat_\infty)$ of $2\textrm{-}\mathsf{Pro}(\Cat_\infty^{(2)})$.
\end{corollary}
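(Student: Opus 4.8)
The plan is to deduce the statement by a direct application of the Gaitsgory--Rozenblyum universal property of the $(\infty,2)$-category of correspondences, feeding into it the bivariance of $\Phi^{\mathsf{ind}}$ established in Proposition~\ref{prop:bivariant_functors_ind}. First I would record the elementary bookkeeping: by hypothesis $\calC_{\mathsf{adm}} = \calC_{\mathsf{vert}} \subseteq \calC_{\mathsf{horiz}}$, so the induced markings on $\Ind(\calC)$ satisfy $\Ind(\calC)_{\mathsf{adm}} = \Ind(\calC)_{\mathsf{vert}} \subseteq \Ind(\calC)_{\mathsf{horiz}}$; combined with the observation --- made in the paragraph preceding Proposition~\ref{prop:bivariant_functors_ind} --- that these markings satisfy the axioms of \cite[\S7.1.1.1]{Gaitsgory_Rozenblyum_Study_I}, this guarantees that the $(\infty,2)$-category $\Corr(\Ind(\calC))_{\mathsf{vert},\mathsf{horiz}}^{\mathsf{vert}}$ is defined. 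Next I would invoke Proposition~\ref{prop:bivariant_functors_ind} verbatim --- its hypotheses are the ones assumed here --- to conclude that $\Phi^{\mathsf{ind}}_{\mathsf{horiz}}$ is $\Ind(\calC)_{\mathsf{vert}}$-right bivariant and has base change with respect to $\Ind(\calC)_{\mathsf{horiz}}$. Since $\Ind(\calC)_{\mathsf{vert}} \subseteq \Ind(\calC)_{\mathsf{horiz}}$ and $\Phi^{\mathsf{ind}}$ and $\Phi^{\mathsf{ind}}_{\mathsf{horiz}}$ agree on $\Ind(\calC)_{\mathsf{horiz}}$, this in fact says that $\Phi^{\mathsf{ind}}$ itself sends every vertical morphism to a morphism admitting a right adjoint in $2\textrm{-}\mathsf{Pro}(\Cat_\infty^{(2)})$, and that the Beck--Chevalley squares attached to pullbacks along horizontal morphisms are vertically right adjointable.

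With these two properties in hand, the extension is produced by \cite[Theorem~7.3.2.2]{Gaitsgory_Rozenblyum_Study_I} (in the variant relevant here, where $\calC_{\mathsf{adm}} = \calC_{\mathsf{vert}}$; cf.\ the uses of this machine in \S\ref{ss:categorification} and \S\ref{s:RH}): it yields a functor
\begin{align}
	\Phi^{\mathsf{ind}}_{\mathsf{corr}} \colon \Corr(\Ind(\calC))_{\mathsf{vert},\mathsf{horiz}}^{\mathsf{vert}} \longrightarrow 2\textrm{-}\mathsf{Pro}(\Cat_\infty^{(2)})
\end{align}
restricting to $\Phi^{\mathsf{ind}}$ along $\Ind(\calC)\op \hookrightarrow \Corr(\Ind(\calC))_{\mathsf{vert},\mathsf{horiz}}^{\mathsf{vert}}$ and unique with this property. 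Here I would remark that \emph{loc.\ cit.} applies with an arbitrary target $(\infty,2)$-category admitting the relevant (co)limits, and $2\textrm{-}\mathsf{Pro}(\Cat_\infty^{(2)})$ is such, being the full $2$-subcategory of $2\textrm{-}\Fun(\Ind(\calC)^{1\textrm{-}\mathsf{op}}, \Cat_\infty^{(2)})$ spanned by the functors preserving finite $\Cat_\infty$-limits. For the final assertion I would restrict $\Phi^{\mathsf{ind}}_{\mathsf{corr}}$ along the inclusion of the maximal $(\infty,1)$-category $\Corr(\Ind(\calC))_{\mathsf{vert},\mathsf{horiz}} \hookrightarrow \Corr(\Ind(\calC))_{\mathsf{vert},\mathsf{horiz}}^{\mathsf{vert}}$ and use that a $2$-functor carries the maximal $(\infty,1)$-category of its source into that of its target; dually to the identification of the $1$-category underlying $2\textrm{-}\Ind(\mathbb S)$ with $\Ind(\mathbb S^{1\textrm{-}\mathsf{cat}})$, the $1$-category underlying $2\textrm{-}\mathsf{Pro}(\Cat_\infty^{(2)})$ is $\mathsf{Pro}(\Cat_\infty)$, which gives the claimed factorization.

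I expect the only genuine point of care --- rather than a new idea --- to be that the adjunctions and the Beck--Chevalley data passed to \cite[Theorem~7.3.2.2]{Gaitsgory_Rozenblyum_Study_I} must be full $(\infty,2)$-categorical data in $2\textrm{-}\mathsf{Pro}(\Cat_\infty^{(2)})$ (the units, counits, and triangle identities), and not merely adjunctions in the homotopy $2$-category or in the underlying $(\infty,1)$-category $\mathsf{Pro}(\Cat_\infty)$, since the correspondence machine uses precisely this structure. This is exactly what the closing part of the proof of Proposition~\ref{prop:bivariant_functors_ind} arranges --- the adjoints and their units and counits are constructed there inside $2\textrm{-}\mathsf{Pro}(\Cat_\infty^{(2)})$, with the triangle identities obtained level-wise --- so it suffices to cite it, and no additional verification is needed.
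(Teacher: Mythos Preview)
Your proposal is correct and follows essentially the same approach as the paper: both deduce the extension by feeding the bivariance from Proposition~\ref{prop:bivariant_functors_ind} into \cite[Theorem~7.3.2.2]{Gaitsgory_Rozenblyum_Study_I}. The paper's proof is terser and makes one point explicit that you leave implicit: the target $(\infty,2)$-category must be taken as $\mathbb S = 2\textrm{-}\mathsf{Pro}(\Cat_\infty^{(2)})^{2\textrm{-}\mathsf{op}}$, since right bivariance corresponds to the left Beck--Chevalley condition in the $2$-opposite (cf.\ the Remark preceding Proposition~\ref{prop:bivariant_functors_ind}).
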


\begin{proof}
	It is enough to apply \cite[Theorem~7.3.2.2-(b)]{Gaitsgory_Rozenblyum_Study_I} to the $(\infty,2)$-category
	\begin{align}
			\mathbb S = 2\textrm{-}\mathsf{Pro}(\Cat_\infty^{(2)})^{2\textrm{-}\mathsf{op}} \ .
	\end{align}
	See also \cite[Theorem~4.2.6]{MacPherson}.
\end{proof}

\begin{corollary} \label{cor:ind_qcoh}
	There exists a uniquely defined right lax symmetric monoidal functor
	\begin{align}
		 \catQCoh_{\mathsf{pro}} \colon \Corr^\times(\Ind(\dGeom^{\mathsf{qc}}))_{\mathsf{rps}, \mathsf{all}} \longrightarrow \mathsf{Pro}(\Cat_\infty) \ .
	\end{align}
\end{corollary}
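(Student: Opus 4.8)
The plan is to obtain $\catQCoh_{\mathsf{pro}}$ by applying Corollary~\ref{cor:bivariant_extension_ind} to $\catQCoh$, regarded as a functor on quasi-compact geometric derived stacks, and then to upgrade the resulting functor to a right-lax symmetric monoidal one by the same bookkeeping used for $\catQCoh$ itself in \S\ref{ss:categorification}. The inputs of Corollary~\ref{cor:bivariant_extension_ind} are essentially the properties of $\catQCoh$ already recalled in this section, now combined with the fact that every ind-quasi-compact stack is a filtered colimit of quasi-compact ones.

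\textbf{Setting up and checking the hypotheses.} I would take $\calC \coloneqq \dGeom^{\mathsf{qc}}$, equipped with the markings in which $\calC_{\mathsf{horiz}}$ is the class of all morphisms and $\calC_{\mathsf{vert}} = \calC_{\mathsf{adm}}$ is the class $\mathsf{rps}$ of morphisms representable by proper schemes; these classes are closed under composition and base change and contain all equivalences, so they fulfil the requirements of \cite[\S7.1.1.1]{Gaitsgory_Rozenblyum_Study_I}, and one has $\calC_{\mathsf{vert}} \subseteq \calC_{\mathsf{horiz}}$ together with $\calC_{\mathsf{vert}} = \calC_{\mathsf{adm}}$ as needed. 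For $\Phi$ I take $\catQCoh \colon \calC\op \to \Cat_\infty$; since $\calC_{\mathsf{horiz}} = \mathsf{all}$ we have $\Phi_{\mathsf{horiz}} = \Phi$. The two hypotheses of Proposition~\ref{prop:bivariant_functors_ind} are then verified as follows. Hypothesis~(1): the existence of the functor $\catQCoh \colon \Corr^\times(\dGeom)_{\mathsf{rep},\mathsf{all}} \to \Cat_\infty^{\mathsf{st}}$ recalled above (which rests on \cite[Proposition~1.4]{Toen_Proper}, \cite[Theorem~7.3.2.2]{Gaitsgory_Rozenblyum_Study_I} and \cite[Proposition~9.2.3.4]{Gaitsgory_Rozenblyum_Study_I}) encodes, via \cite[Theorem~7.3.2.2]{Gaitsgory_Rozenblyum_Study_I}, exactly the statements that $f^\ast$ admits a right adjoint $f_\ast$ for every $f$ in the class of representable-by-schemes morphisms and that the attached Beck--Chevalley transformations against arbitrary base change are equivalences; restricting the vertical class to $\mathsf{rps} \subseteq \{\text{representable by schemes}\}$ gives that $\Phi$ is $\calC_{\mathsf{vert}}$-right bivariant and has base change with respect to $\calC_{\mathsf{horiz}}$. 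Hypothesis~(2) is automatic here: every object of $\Ind(\dGeom^{\mathsf{qc}})$ is by construction a filtered colimit of objects of $\dGeom^{\mathsf{qc}}$ whose transition maps lie in $\dGeom^{\mathsf{qc}} \subseteq \calC_{\mathsf{horiz}}$, and for an object of the form $X_{\mathsf{ind}}$ with $X$ locally geometric such an exhaustion may be taken by quasi-compact Zariski open substacks, by Lemma~\ref{lem:quasi_compact_open_exhaustion}.

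\textbf{Concluding, and the monoidal upgrade.} Corollary~\ref{cor:bivariant_extension_ind} now produces a uniquely determined functor
\begin{align}
	\catQCoh_{\mathsf{pro}} \colon \Corr(\Ind(\dGeom^{\mathsf{qc}}))_{\mathsf{rps}, \mathsf{all}} \longrightarrow \mathsf{Pro}(\Cat_\infty)
\end{align}
extending $\Phi^{\mathsf{ind}}$; unwinding the construction it sends $X_{\mathsf{ind}} \simeq \fcolim_\alpha U_\alpha$ to $\flim_\alpha \catQCoh(U_\alpha)$, a proper-representable vertical leg to the levelwise pushforward, and an arbitrary horizontal leg to the levelwise pullback. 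It remains to promote this to a right-lax symmetric monoidal functor. Here I would observe that $\Ind(-)$ and $\mathsf{Pro}(-)$ preserve finite products (filtered colimits, resp.\ cofiltered limits, commute with finite products), so $\Ind(\dGeom^{\mathsf{qc}})$ and $\mathsf{Pro}(\Cat_\infty)$ carry the cartesian symmetric monoidal structures, the embedding $(-)_{\mathsf{ind}}$ of Corollary~\ref{cor:quasi_compact_ind} is symmetric monoidal, and hence so is $\Corr^\times(-)$ of it; running the proof of Corollary~\ref{cor:bivariant_extension_ind} — that is, \cite[Theorem~7.3.2.2-(b)]{Gaitsgory_Rozenblyum_Study_I} and \cite[Theorem~4.4.6]{MacPherson} — on the $\mathbb{E}_\infty$-algebra objects of the relevant $(\infty,2)$-categories, exactly as in the construction of the symmetric monoidal $\catQCoh$ earlier, upgrades $\catQCoh_{\mathsf{pro}}$ to a right-lax symmetric monoidal functor, the lax structure map being the $\mathsf{Pro}$-limit of the exterior products $\boxtimes$ along the defining exhaustions and using $(X \times Y)_{\mathsf{ind}} \simeq X_{\mathsf{ind}} \times Y_{\mathsf{ind}}$. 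Since $\catQCoh$ is valued in $\Cat_\infty^{\mathsf{st}}$ and $\mathsf{Pro}$ of the inclusion $\Cat_\infty^{\mathsf{st}} \hookrightarrow \Cat_\infty$ is fully faithful, $\catQCoh_{\mathsf{pro}}$ refines to a functor valued in $\mathsf{Pro}(\Cat_\infty^{\mathsf{st}})$. I expect the genuine difficulty to be precisely this last step: verifying that the $2$-categorical $\mathsf{Pro}/\Ind$-machinery underlying Corollary~\ref{cor:bivariant_extension_ind} is compatible with the cartesian symmetric monoidal structures and that right-lax-ness survives the passage to correspondences — none of which is formal, but all of which follows by the same arguments already deployed for $\catQCoh$.
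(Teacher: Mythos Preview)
Your approach is correct and reaches the same conclusion, but the route differs from the paper's in one structural respect. You take $\calC = \dGeom^{\mathsf{qc}}$ and apply Corollary~\ref{cor:bivariant_extension_ind} directly, using the already-constructed functor $\catQCoh \colon \Corr^\times(\dGeom)_{\mathsf{rep},\mathsf{all}} \to \Cat_\infty^{\mathsf{st}}$ from \S\ref{ss:categorification} to verify the right-bivariance and base-change hypotheses of Proposition~\ref{prop:bivariant_functors_ind}. The paper instead takes $\calC = \mathsf{dSch}^{\mathsf{qc}}$ (quasi-compact derived \emph{schemes}), with $\mathsf{vert} = \mathsf{adm} = \mathsf{proper}$, applies Corollary~\ref{cor:bivariant_extension_ind} at the level of schemes first, and only afterwards invokes \cite[Theorem~8.6.1.5]{Gaitsgory_Rozenblyum_Study_I} to pass from schemes to quasi-compact geometric stacks, with $\mathsf{vert} = \mathsf{rps}$ and $\mathsf{adm} = \mathsf{isom}$, already inside the $2$-pro target. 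Your route is more economical because it recycles the schemes-to-stacks extension already performed in \S\ref{ss:categorification}; the paper's route keeps the appendix self-contained and, by staying with schemes (where $\mathsf{vert} = \mathsf{proper}$ and the adjunctions are concrete) before the extension step, retains direct $2$-categorical control throughout. For the right-lax symmetric monoidal upgrade both arguments invoke \cite[Theorem~4.4.6]{MacPherson}; the paper additionally cites \cite[Proposition~9.3.2.4]{Gaitsgory_Rozenblyum_Study_I} to handle the compatibility of the monoidal structure with the extension to stacks, which in your approach is absorbed into the reference back to \S\ref{ss:categorification}.
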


\begin{proof}
	Take $\calC = \mathsf{dSch}^{\mathsf{qc}}$ and $\calD = \dGeom^{\mathsf{qc}}$.
	For $\calC$, we take $\mathsf{horiz} = \mathsf{all}$ and $\mathsf{adm} = \mathsf{vert} = \mathsf{proper}$.
	Observe that condition (5) in \cite[\S7.1.1.1]{Gaitsgory_Rozenblyum_Study_I} is satisfied.
	Consider the functor
	\begin{align}
	 	\catQCoh \colon (\mathsf{dSch}^{\mathsf{qc}})\op \longrightarrow \Cat_\infty \ . 
	\end{align}
	Applying \cite[Theorem~7.3.2.2]{Gaitsgory_Rozenblyum_Study_I}, we obtain a functor
	\begin{align}
		\catQCoh \colon \Corr( \mathsf{dSch}^{\mathsf{qc}} )_{\mathsf{proper}, \mathsf{all}}^{\mathsf{proper}} \longrightarrow \Cat_\infty^{(2)} \ .
	\end{align}
	For $\calC, \calD$ we now take $\mathsf{horiz} = \mathsf{all}$ and $\mathsf{vert} = \mathsf{rps}$ (morphisms that are representable by proper schemes) and $\mathsf{adm} = \mathsf{isom}$.
	Then \cite[Theorem~8.6.1.5]{Gaitsgory_Rozenblyum_Study_I} supplies an extension of $\catQCoh$ as
	\begin{align}
		\catQCoh \colon \Corr( \dGeom^{\mathsf{qc}} )_{\mathsf{rps}, \mathsf{all}}^{\mathsf{rps}}  \longrightarrow \Cat_\infty^{(2)}\ . 
	\end{align}
	Thanks again to \cite[Theorem~7.3.2.2]{Gaitsgory_Rozenblyum_Study_I}, the above functor is uniquely determined by its restriction
	\begin{align}
		\catQCoh \colon( \dGeom^{\mathsf{qc}} )\op  \longrightarrow \Cat_\infty\ . 
	\end{align}
	This is our $\Phi$. It satisfies the hypothesis of Proposition~\ref{prop:bivariant_functors_ind}. Thus, applying Corollary~\ref{cor:bivariant_extension_ind}, we obtain a functor
	\begin{align}
		\catQCoh \colon \Corr( \Ind( \mathsf{dGeom}^{\mathsf{qc}} ) )_{\mathsf{rps}, \mathsf{all}}^{\mathsf{rps}} \longrightarrow 2 \textrm{-} \mathsf{Pro}(\Cat_\infty^{(2)}) \ , 
	\end{align}
	which we restrict to a functor
	\begin{align}
		\catQCoh_{\mathsf{pro}} \colon \Corr( \Ind( \mathsf{dGeom}^{\mathsf{qc}} ))_{\mathsf{rps}, \mathsf{all}} \longrightarrow \mathsf{Pro}(\Cat_\infty) \ . 
	\end{align}
	Combining \cite[Theorem~4.4.6]{MacPherson} and \cite[Proposition~9.3.2.4]{Gaitsgory_Rozenblyum_Study_I} we conclude that we can upgrade these constructions to right lax symmetric monoidal functors.
\end{proof}

\bigskip

\providecommand{\bysame}{\leavevmode\hbox to3em{\hrulefill}\thinspace}
\providecommand{\href}[2]{#2}

\end{document}